\documentclass[10pt]{amsart}
\usepackage{latexsym,exscale,enumerate}
\usepackage{mathtools,amsmath,mathscinet,amsfonts,amssymb,amsthm,bbm,euscript}
\usepackage{amscd, stmaryrd}
\usepackage[normalem]{ulem}
\usepackage[all]{xy}
\SelectTips{cm}{}
\usepackage{graphicx}
\usepackage{wrapfig}
\usepackage{xcolor}
\usepackage{bm}

%%%%%%%%%%% page formatting %%%%%%%%%%%

\addtolength{\hoffset}{-1.6cm}
\addtolength{\textwidth}{3cm}
\hfuzz=6pc

%%%%%%%%%%% hyperref %%%%%%%%%%%

\RequirePackage{color}
\definecolor{references}{rgb}{.7,.1,.6}

\RequirePackage[pdftex,
colorlinks = true,
urlcolor = references, % \href{...}{...} external (URL)
citecolor = references, % \cite{...}
linkcolor = references, % \ref{...} and \pageref{...}
]
{hyperref}

%%%%%%%%%%% tikz %%%%%%%%%%%

\usepackage{tikz}
\usepackage{tikz-cd}
\usetikzlibrary{math}
\usetikzlibrary{decorations.markings}
\usetikzlibrary{decorations.pathreplacing}
\usetikzlibrary{arrows,shapes,positioning}
%%multiple arrow options
\tikzstyle directed=[postaction={decorate,decoration={markings,
    mark=at position #1 with {\arrow{>}}}}]
\tikzstyle rdirected=[postaction={decorate,decoration={markings,
    mark=at position #1 with {\arrow{<}}}}]

\tikzset{anchorbase/.style={baseline={([yshift=-0.5ex]current bounding box.center)}},
    tinynodes/.style={font=\tiny,text height=0.75ex,text depth=0.15ex},
    smallnodes/.style={font=\scriptsize,text height=0.75ex,text depth=0.15ex},
    >={Latex[length=1mm, width=1.5mm]}
  }
\tikzcdset{arrow style=tikz, diagrams={>=stealth}}

  \newcommand{\pu}{to [out=90,in=270]}
  \newcommand{\pr}{to [out=0,in=180]}

\newcommand\mydots{\makebox[1em][c]{$\cdot$\hfil$\cdot$\hfil$\cdot$}}

\allowdisplaybreaks

%%%%%%%%%%% macros %%%%%%%%%%%

%%% cheat to align different cases or align environments%%%

%%% color macros %%%

\colorlet{green}{black!30!green}
\newcommand{\BLUE}[1]{\textcolor[rgb]{0.00,0.00,0.80}{#1}}
\newcommand{\GREEN}[1]{\textcolor[rgb]{0.00,0.70,0.00}{#1}}

\newcommand{\GRAY}[1]{\textcolor[rgb]{0.50,0.50,0.50}{#1}}
\definecolor{CQG}{RGB}{0,153,76}
\definecolor{FS}{RGB}{0,76,153} % foam slice
\def\CQGsgn#1{{\color{CQG}#1}}

%%% bibliography macros %%%

\newcommand{\arxiv}[1]{\href{https://arxiv.org/abs/#1}{\small  arXiv:#1}}

\newcommand{\googlebooks}[1]{(preview at \href{https://books.google.com/books?id=#1}{google books})}

\newcommand{\numdam}[1]{}

%%% abbreviations in italics %%%

\def\emph#1{{\sl #1\/}}

\def\eg{{\sl e.g.\;}}

%
%

%%% lazyness macros %%%
\def\cal#1{\mathcal{#1}}

\def\mf#1{\mathfrak{#1}}

\let\hat=\widehat
\let\tilde=\widetilde

\newcommand{\qiso}{\stackrel{\mathrm{qis}}{\cong}}

\newcommand*{\defeq}{=}
\renewcommand{\to}{\rightarrow}

\newcommand{\scs}{\scriptstyle}

%%% alphabet macros %%%

\def\B{\mathbb{B}}
\def\C{\mathbb{C}}
\def\D{\mathbb{D}}

\renewcommand{\k}{\mathbb{Q}}
\renewcommand{\L}{\mathbb{L}}
\def\N{\mathbb{N}}

\def\U{\mathbb{U}}
\def\V{\mathbb{V}}
\def\W{\mathbb{W}}
\def\X{\mathbb{X}}
\def\Y{\mathbb{Y}}
\def\Z{\mathbb{Z}}

\renewcommand{\aa}{\bm{a}}
\newcommand{\bb}{\bm{b}}
\newcommand{\cc}{\mathbf{c}}
\newcommand{\rr}{\mathbf{r}}
\newcommand{\kk}{\mathbf{k}}
\newcommand{\ii}{\mathbf{i}}
\newcommand{\KB}{\mathbf{K}}
\newcommand{\LB}{\mathbf{L}}
\newcommand{\UB}{\mathbf{U}}
\newcommand{\DB}{\mathbf{D}}

\newcommand{\AS}{\EuScript A}
\newcommand{\BS}{\EuScript B}
\newcommand{\CS}{\EuScript C}
\newcommand{\DS}{\EuScript D}
\newcommand{\KS}{\EuScript K}
\newcommand{\RS}{\EuScript R}

\newcommand{\US}{\EuScript U}
\newcommand{\USd}{\dot{\EuScript U}}
\newcommand{\VS}{\EuScript V}
\newcommand{\VSred}{\overline{\EuScript V}}

\newcommand{\YS}{\EuScript Y}

%%% greek letters %%%

\renewcommand{\a}{\alpha}
\renewcommand{\b}{\beta}

\renewcommand{\d}{\delta}
\newcommand{\e}{\varepsilon}

\renewcommand{\phi}{\varphi}
\renewcommand{\theta}{\vartheta}

\newcommand{\thickchi}{\mathcal{X}}

%%% algebraic structure macros %%%

\newcommand{\slnn}[1]{\mf{sl}_{#1}}

\newcommand{\gln}{\mf{gl}_n}
\newcommand{\glN}{\mf{gl}_N}

\newcommand{\glnn}[1]{\mf{gl}_{#1}}

\newcommand{\Bim}{\mathrm{Bim}}
\newcommand{\SBim}{\mathrm{SBim}}
\newcommand{\SSBim}{\mathrm{SSBim}}
\newcommand{\Hilb}{\mathrm{Hilb}}
\newcommand{\FHilb}{\mathrm{FHilb}}
\newcommand{\symg}{\mathfrak{S}}
\newcommand{\Proj}{\mathrm{Proj}}
\newcommand{\Spec}{\mathrm{Spec}}

%%% elements, objects, morphisms %%%

\newcommand{\E}{{\sf{E}}}

\newcommand{\F}{{\sf{F}}}
\newcommand{\id}{\mathrm{id}}
\newcommand{\Id}{\mathrm{id}}

\newcommand{\oone}{\mathbf{1}}

%%% operator & functor macros %%%

\newcommand{\Br}{\operatorname{Br}}

\newcommand{\cone}{\operatorname{cone}}

\newcommand{\End}{\operatorname{End}}

\newcommand{\Ext}{\operatorname{Ext}}
\newcommand{\FT}{\operatorname{FT}}
\newcommand{\HH}{\operatorname{HH}}
\newcommand{\HHH}{\operatorname{HHH}}
\newcommand{\Hom}{\operatorname{Hom}}

\newcommand{\im}{{\operatorname{im}}}

\newcommand{\KhR}{\operatorname{KR}}
\newcommand{\largewedge}{\mbox{\Large $\wedge$}}

\newcommand{\rk}{\operatorname{\rk}}
\newcommand{\spann}{\operatorname{span}}

\newcommand{\Sym}{\operatorname{Sym}}
 
\newcommand{\Tor}{\operatorname{Tor}}
\newcommand{\Tr}{\operatorname{Tr}}
\newcommand{\tw}{\operatorname{tw}}

\newcommand{\wt}{\operatorname{wt}}

%%% suffix macros %%%

   %% finitely-generated modules

   %% finite-dimensional modules
  %% fin-gen projective modules

%%% sub/superscript macros %%%

\newcommand{\op}{\mathrm{op}}

\newcommand{\inv}{^{-1}}

\newcommand{\dg}{\mathrm{dg}}	
\newcommand{\tot}{\mathrm{tot}}

%%% very project specific macros %%%

\newcommand{\adeg}{\mathbf{a}}
\newcommand{\qdeg}{\mathbf{q}}
\newcommand{\tdeg}{\mathbf{t}}

\newcommand{\bDelta}{\bar{\Delta}}
\newcommand{\cross}{\mathsf{x}}
\newcommand{\point}{\mathsf{p}}
\newcommand{\comp}{\mathsf{c}}
\newcommand{\perm}{\omega}
\newcommand{\agen}{\beta}
\newcommand{\trans}{\mathsf{t}}
\newcommand{\diffdeg}{\mathsf{d}}

\newcommand{\KR}{\mathrm{KR}}

\newcommand{\YHH}{\YS \!\HH}
\newcommand{\YH}{\YS H}

\newcommand{\M}{\mathbb{M}}
\newcommand{\Fr}{\mathbb{F}}
\newcommand{\leftX}{\mathbb{X}}
\newcommand{\rightX}{\mathbb{X}'}
\newcommand{\rightrightX}{\mathbb{X}''}
\newcommand{\leftM}{\mathbb{M}}
\newcommand{\rightM}{\mathbb{M}'}
\newcommand{\leftL}{\mathbb{L}}
\newcommand{\rightL}{\mathbb{L}'}

\newcommand{\I}{I}

\newcommand{\VFT}{\VS\!\FT}
\newcommand{\VFTmin}{\VS\mathsf{FT}}

\newcommand{\MCS}{\mathrm{MCS}}
\newcommand{\MCSmin}{\mathsf{MCS}}

\newcommand{\KMCS}{\mathrm{KMCS}}
\newcommand{\KMCSmin}{\mathsf{KMCS}}

\newcommand{\YKMCSmin}{\VS\mathsf{KMCS}}

\newcommand{\MCCS}{\mathrm{MCCS}}
\newcommand{\MCCSmin}{\mathsf{MCCS}}

\newcommand{\YMCCSmin}{\VS\mathsf{MCCS}}

\newcommand{\TD}{\operatorname{TD}} %threaded digons
 %"minimal" threaded digons
 %vified TD
\newcommand{\VTDmin}{\VS\mathsf{TD}} %vified TDmin
 %yified TD
 %yified TDmin
\newcommand{\sqmatrix}[1]{\left[\begin{matrix} #1\end{matrix}\right]}
\newcommand{\PSI}{\mathrm{CM}}

\newcommand{\Dig}{\mathrm{Dig}}

\newcommand{\Key}{\mathrm{Key}}
\newcommand{\hComp}{\star}

\newcommand{\qbinom}[2]{\genfrac[]{0pt}{2}{#1}{#2}}

\newcommand{\Schur}{\mathfrak{s}}

\newcommand{\splitt}{\mathcal{S}}
\newcommand{\spli}{\mathrm{split}}

\newcommand{\uvar}{u}
\newcommand{\YK}{{\VS}\!K}
\newcommand{\ZZ}{\mathcal{Z}}
\newcommand{\cre}{\mathbf{cr}}%{\cup}
\newcommand{\col}{\mathbf{col}}%{\cap}
\newcommand{\zip}{\mathbf{zip}}
\newcommand{\un}{\mathbf{un}}
\newcommand{\sectionval}{D}

\newcommand{\bre}{b} % entry of \brc
\newcommand{\brc}{{\bb}} % sequence
\newcommand{\brcc}{{\cc}} % other sequence

\newcommand{\VMCCSmin}{\VS\mathsf{MCCS}}

\newcommand{\hdet}{\operatorname{hdet}}
\newcommand{\FTmin}{\mathsf{FT}}

\newcommand{\yred}{\overline{y}}
\newcommand{\psired}{\overline{\psi}}

\newcommand{\CQGbox}[1]{
\begin{tikzpicture}
\node[draw,  fill=white,rounded corners=4pt,inner sep=3pt] (X) at (0,.75) {$\scs#1$};
\end{tikzpicture}}

%%% Complicated figures %%%

\newcommand{\diffslices}{
%% local variables
\tikzmath{
\ylift = 0.2;
\yplus = 0.6;
\xshift = -1;
\fx = -.25;
\fxx = -.2;
\fy = -.5;
\fyy = -.25;
	 } 
\begin{tikzpicture} [scale=.5,fill opacity=0.1,anchorbase]
%% shading
\path[fill=green, opacity=0.3] (.5+\xshift,1.25+\ylift+\yplus) to [out=135,in=0] (\xshift,1.5+\ylift+\yplus) to [out=180,in=45] 
(-.5+\xshift,1.25+\ylift+\yplus) to [out=225,in=270]  (-2,2.75+\ylift) to [out=90,in=180] (0,4.25+\ylift) to [out=0,in=90] (2,2.75+\ylift) to [out=270,in=315] (.5+\xshift,1.25+\ylift+\yplus) ;
\path[fill=green] (-.5+\xshift,0) to (-.5+\xshift,5) to (-2,6) to (-2,1) to (-.5+\xshift,0);
\path[fill=green] (.5+\xshift,0) to (.5+\xshift,5) to (2,6) to (2,1) to (.5+\xshift,0);
\path[fill=blue] (-3,1) to (-2,1) to [out=20,in=180] (0,1.4) to [out=0,in=160] (2,1) to (3,1) to (3,6) to (2,6) to [out=160,in=0] (0,6.4) to [out=180,in=20] (-2,6) to  (-3,6);
\path[fill=blue]  (3+\xshift+\fx,5+\fy)  to (3+\xshift+\fx,\fy)  to [out=180,in=340] (.5+\xshift,0) to (-.5+\xshift,0) to [out=200,in=0] (-3+\xshift+\fx,\fy) to (-3+\xshift+\fx,5+\fy)to [out=0,in=200] (-.5+\xshift,5) to (.5+\xshift,5) to [out=340,in=180] (3+\xshift+\fx,5+\fy);
%% semi-circle seams	
	\draw[very thick, red] (.5+\xshift,1.25+\ylift+\yplus) to [out=135,in=0] (0+\xshift,1.5+\ylift+\yplus) to [out=180,in=45] (-.5+\xshift,1.25+\ylift+\yplus);
	\draw[very thick, red, rdirected=.55] (-2,2.75+\ylift) to [out=90,in=180] (0,4.25+\ylift) to [out=0,in=90] (2,2.75+\ylift);
%% back seams
\draw[very thick, red, directed=.80] (-2,6) to (-2,1);
\draw[very thick, red, directed=.15] (2,1) to (2,6);
%% blue cross section
%\draw[very thick, blue, rdirected=.80] (0,6.4) to (0,1.4);
%\draw[very thick, blue, rdirected=.80] (\xshift,5) to (\xshift,0);
%\draw[thick, blue] (\xshift,1.5+\ylift+\yplus)  \pu (0,4.25+\ylift);
%\draw[blue] (\xshift+\fx,\fy) to (0-\fx,1.9) to (0-\fx,6.9) to (\xshift+\fx,5+\fy) to (\xshift+\fx,\fy); 
%% back bottom web
	\draw[very thick, directed=.44] (3,1) to (2,1) to [out=160,in=0] (0,1.4) to [out=180,in=20] (-2,1) to (-3,1);
%% green cross section
\draw[very thick, CQG, directed=.75] (1.25+\xshift/2,.5) to (1.25+\xshift/2,5.5);
\draw[very thick, CQG, rdirected=.75] (-1.25+\xshift/2,.5) to (-1.25+\xshift/2,5.5);
\draw[thick, CQG] (1.25+\xshift/2,1.25+\ylift+\yplus-.2) to [out=90,in=90] (-1.25+\xshift/2,1.25+\ylift+\yplus);
\draw[CQG] (3+\xshift/2,.5) to (-3+\xshift/2,.5) to (-3+\xshift/2,5.5) to (3+\xshift/2,5.5) to (3+\xshift/2,.5); 
%% front seams
\draw[very thick, red, directed=.22] (-.5+\xshift,0) to (-.5+\xshift,5);
\draw[very thick, red, directed=.83] (.5+\xshift,5) to (.5+\xshift,0);
\draw[very thick, red, rdirected=.5] (2,2.75+\ylift) to [out=270,in=315] (.5+\xshift,1.25+\ylift+\yplus);
\draw[very thick, red] (-.5+\xshift,1.25+\ylift+\yplus) to [out=225,in=270] (-2,2.75+\ylift);
%% bottom web
	\draw[very thick, directed=.44] (3+\xshift+\fx,\fy) to [out=180,in=340] (.5+\xshift,0) to (-.5+\xshift,0) to [out=200,in=0] (-3+\xshift+\fx,\fy);
	\draw[very thick, directed=.65] (2,1) to  (.5+\xshift,0);
	\draw[very thick, directed=.55] (-.5+\xshift,0) to  (-2,1);
%% top web
	\draw[very thick, directed=.44] (3,6) to (2,6) to [out=160,in=0] (0,6.4) to [out=180,in=20] (-2,6) to (-3,6);
	\draw[very thick, directed=.44] (3+\xshift+\fx,5+\fy) to [out=180,in=340] (.5+\xshift,5) to (-.5+\xshift,5) to [out=200,in=0] (-3+\xshift+\fx,5+\fy);
	\draw[very thick, directed=.65] (2,6) to  (.5+\xshift,5);
	\draw[very thick, directed=.55] (-.5+\xshift,5) to  (-2,6);
%% verticals
\draw[very thick] (-3,6.04) to (-3,1-.04);
\draw[very thick] (3,6.04) to (3,1-.04);
\draw[very thick] (-3+\xshift+\fx,5+\fy+.04) to (-3+\xshift+\fx,\fy-.04);
\draw[very thick] (3+\xshift+\fx,5+\fy+.04)  to (3+\xshift+\fx,\fy-.04) ;
	\end{tikzpicture}
\qquad , \qquad
\begin{tikzpicture} [scale=.5,fill opacity=0.1,anchorbase]
%% shading
\path[fill=green, opacity=0.3] (.5+\xshift,1.25+\ylift+\yplus) to [out=135,in=0] (\xshift,1.5+\ylift+\yplus) to [out=180,in=45] 
(-.5+\xshift,1.25+\ylift+\yplus) to [out=225,in=270]  (-2,2.75+\ylift) to [out=90,in=180] (0,4.25+\ylift) to [out=0,in=90] (2,2.75+\ylift) to [out=270,in=315] (.5+\xshift,1.25+\ylift+\yplus) ;
\path[fill=green] (-.5+\xshift,0) to (-.5+\xshift,5) to (-2,6) to (-2,1) to (-.5+\xshift,0);
\path[fill=green] (.5+\xshift,0) to (.5+\xshift,5) to (2,6) to (2,1) to (.5+\xshift,0);
\path[fill=blue] (-3,1) to (-2,1) to [out=20,in=180] (0,1.4) to [out=0,in=160] (2,1) to (3,1) to (3,6) to (2,6) to [out=160,in=0] (0,6.4) to [out=180,in=20] (-2,6) to  (-3,6);
\path[fill=blue]  (3+\xshift+\fx,5+\fy)  to (3+\xshift+\fx,\fy)  to [out=180,in=340] (.5+\xshift,0) to (-.5+\xshift,0) to [out=200,in=0] (-3+\xshift+\fx,\fy) to (-3+\xshift+\fx,5+\fy)to [out=0,in=200] (-.5+\xshift,5) to (.5+\xshift,5) to [out=340,in=180] (3+\xshift+\fx,5+\fy);
%% back part blue slice
\draw[blue] (\xshift+\fxx,\fyy) to (0-\fx,1.75) to (0-\fx,6.75) to (\xshift+\fxx,5+\fyy);
%% back semi-circle seam	
	\draw[very thick, red, rdirected=.55] (-2,2.75+\ylift) to [out=90,in=180] (0,4.25+\ylift) to [out=0,in=90] (2,2.75+\ylift);
%% back seams
\draw[very thick, red, directed=.80] (-2,6) to (-2,1);
\draw[very thick, red, directed=.15] (2,1) to (2,6);
%% back bottom web
	\draw[very thick, directed=.44] (3,1) to (2,1) to [out=160,in=0] (0,1.4) to [out=180,in=20] (-2,1) to (-3,1);
%% blue cross section
\draw[very thick, blue, rdirected=.80] (0,6.4) to (0,1.4);
\draw[very thick, blue, rdirected=.80] (\xshift,5) to (\xshift,0);
\draw[thick, blue] (\xshift,1.5+\ylift+\yplus)  \pu (0,4.25+\ylift);
%% green cross section
%\draw[very thick, CQG, directed=.75] (1.25+\xshift/2,.5) to (1.25+\xshift/2,5.5);
%\draw[very thick, CQG, rdirected=.75] (-1.25+\xshift/2,.5) to (-1.25+\xshift/2,5.5);
%\draw[thick, CQG] (1.25+\xshift/2,1.25+\ylift+\yplus-.2) to [out=90,in=90] (-1.25+\xshift/2,1.25+\ylift+\yplus);
%\draw[CQG] (3+\xshift/2,.5) to (-3+\xshift/2,.5) to (-3+\xshift/2,5.5) to (3+\xshift/2,5.5) to (3+\xshift/2,.5); 
%% front seams
\draw[very thick, red, directed=.22] (-.5+\xshift,0) to (-.5+\xshift,5);
\draw[very thick, red, directed=.83] (.5+\xshift,5) to (.5+\xshift,0);
\draw[very thick, red, rdirected=.5] (2,2.75+\ylift) to [out=270,in=315] (.5+\xshift,1.25+\ylift+\yplus);
\draw[very thick, red] (-.5+\xshift,1.25+\ylift+\yplus) to [out=225,in=270] (-2,2.75+\ylift);
%% front semi-circle seam	
	\draw[very thick, red] (.5+\xshift,1.25+\ylift+\yplus) to [out=135,in=0] (0+\xshift,1.5+\ylift+\yplus) to [out=180,in=45] (-.5+\xshift,1.25+\ylift+\yplus);
%% front part blue slice
	\draw[blue] (\xshift+\fxx,5+\fyy) to (\xshift+\fxx,\fyy); 
%% bottom web
	\draw[very thick, directed=.44] (3+\xshift+\fx,\fy) to [out=180,in=340] (.5+\xshift,0) to (-.5+\xshift,0) to [out=200,in=0] (-3+\xshift+\fx,\fy);
	\draw[very thick, directed=.65] (2,1) to  (.5+\xshift,0);
	\draw[very thick, directed=.55] (-.5+\xshift,0) to  (-2,1);
%% top web
	\draw[very thick, directed=.44] (3,6) to (2,6) to [out=160,in=0] (0,6.4) to [out=180,in=20] (-2,6) to (-3,6);
	\draw[very thick, directed=.44] (3+\xshift+\fx,5+\fy) to [out=180,in=340] (.5+\xshift,5) to (-.5+\xshift,5) to [out=200,in=0] (-3+\xshift+\fx,5+\fy);
	\draw[very thick, directed=.65] (2,6) to  (.5+\xshift,5);
	\draw[very thick, directed=.55] (-.5+\xshift,5) to  (-2,6);
%% verticals
\draw[very thick] (-3,6.04) to (-3,1-.04);
\draw[very thick] (3,6.04) to (3,1-.04);
\draw[very thick] (-3+\xshift+\fx,5+\fy+.04) to (-3+\xshift+\fx,\fy-.04);
\draw[very thick] (3+\xshift+\fx,5+\fy+.04)  to (3+\xshift+\fx,\fy-.04) ;
	\end{tikzpicture}
}

%%%%%%%%%%% Environments %%%%%%%%%%%

\theoremstyle{plain}
\newtheorem{thm}{Theorem}[section]
\newtheorem{prop}[thm]{Proposition}
\newtheorem{proposition}[thm]{Proposition}
\newtheorem{cor}[thm]{Corollary}
\newtheorem{principle}[thm]{Principle}
\newtheorem{lem}[thm]{Lemma}
\newtheorem{lemma}[thm]{Lemma}
\newtheorem{conj}[thm]{Conjecture}

\theoremstyle{definition}
\newtheorem{defi}[thm]{Definition}
\newtheorem{definition}[thm]{Definition}

\newtheorem{rem}[thm]{Remark}
\newtheorem{remark}[thm]{Remark}

\newtheorem{exa}[thm]{Example}
\newtheorem{example}[thm]{Example}
\newtheorem{conv}[thm]{Convention}
\newtheorem*{exa-nono}{Example}

%% references to HRW1 paper only need to be changed here (updated to arXiv
%versions on 7/17, deleted unused ones on 7/19)
\newcommand{\HRWsym}{Section 2.1} % symmetric functions
\newcommand{\HRWDefFLA}{Definition 2.3} % formal linear combinations of alphabets
\newcommand{\HRWCQGSSBim}{Proposition 2.18} % 2-functor from CQG to SSBim
\newcommand{\HRWPropRick}{Proposition 2.25} %Rickards satisfy braid relations
\newcommand{\HRWPropWeb}{Proposition 2.27} %Forksliding etc.
\newcommand{\HRWDotslide}{Lemma 2.30} %Dotsliding
\newcommand{\shiftedRickard}{Proposition 2.31}%shifted Rickard
\newcommand{\HRWSkeinrel}{Theorem 3.4}%Colored skein relation, topological
\newcommand{\MCSMCSmin}{(26)} %MCS equals MCSmin etc
\newcommand{\Zetadiffs}{Proposition 3.10}%vertical and horizontal diff in terms of zetas
\newcommand{\KMCSdiffs}{Proposition 3.12} % Decomposition of KMCSmin
\newcommand{\PropRows}{Proposition 3.20}%Description of $R_l$
\newcommand{\QzeroFT}{Theorem 3.24} %Cautis conjecture
\newcommand{\PropQs}{Proposition 3.27} %MCCSmin^s equals I^s(MCCSmin^0)
\newcommand{\IofWk}{Lemma 3.28} %Description of $I(W_k)$
\newcommand{\MCCSCor}{Corollary 3.29} %MCCSmin=MCCS

\usepackage{etoolbox}
\tracingpatches
\makeatletter
\patchcmd{\@setref}{\bfseries ??}{\bfseries\color{red} OWE A COFFEE/BEER}{}{}
\makeatother

%%%%temporary new commands
\renewcommand{\l}{\lambda} % this causes trouble when citing Sulkowski
\renewcommand{\u}{\uvar}
\newcommand{\pv}{\dot{v}}
\newcommand{\pV}{\dot{\V}}

\begin{document}

\author{Matthew Hogancamp}
\address{Department of Mathematics, Northeastern University, 360 Huntington Ave, Boston,
MA 02115, USA}
\email{m.hogancamp@northeastern.edu}

\author{David~E.~V.~Rose}
\address{Department of Mathematics, University of North Carolina, 
Phillips Hall, CB \#3250, UNC-CH, 
Chapel Hill, NC 27599-3250, USA
\href{https://davidev.web.unc.edu/}{davidev.web.unc.edu}}
\email{davidrose@unc.edu}

\author{Paul Wedrich}
\address{P.W.: Max Planck Institute for Mathematics,
Vivatsgasse 7, 53111 Bonn, Germany 
AND Mathematical Institute, University of Bonn,
Endenicher Allee 60, 53115 Bonn, Germany
\href{http://paul.wedrich.at}{paul.wedrich.at}}
\email{p.wedrich@gmail.com}

\title{Link splitting deformation of colored Khovanov--Rozansky homology}

\begin{abstract} 
	We introduce a multi-parameter deformation of the triply-graded Khovanov--Rozansky homology
	of links colored by one-column Young diagrams, generalizing the ``$y$-ified'' link homology 
	of Gorsky--Hogancamp and work of Cautis--Lauda--Sussan.  
	For each link component, the natural set of deformation
	parameters corresponds to interpolation coordinates on the Hilbert scheme of the plane.
	We extend our deformed link homology theory to braids by introducing a monoidal dg
	2-category of curved complexes of type A singular Soergel bimodules.
	Using this framework, we promote to the curved setting the categorical colored skein
	relation from \cite{HRW1} and also the notion of splitting map for the colored full twists on two strands.
	As applications, we compute
	the invariants of colored Hopf links in terms of ideals generated by Haiman
	determinants and use these results to establish general link splitting
	properties for our deformed, colored, triply-graded link homology. Informed
	by this, we formulate several conjectures that have implications for the
	relation between (colored) Khovanov--Rozansky homology and Hilbert schemes.
\end{abstract}

\maketitle

\setcounter{tocdepth}{1}

\tableofcontents

\section{Introduction}

\label{s:introtwo}

The last two decades have seen the introduction of powerful homological 
invariants of knots, links, braids, and tangles, 
which are connected to classical quantum invariants through a
decategorification relationship \cite{Kho, 0409593, KR, MR2421131}.
These invariants are best understood in the context of differential graded categories:
each tangle diagram $\DB$ is assigned a chain complex $\ZZ(\DB)$ over an additive category,
Reidemeister moves between such diagrams are assigned 
specific chain maps that are invertible up to homotopy, 
and movies between diagrams that encode certain braid/tangle cobordisms are assigned 
(generally non-invertible) chain maps that are natural, up to homotopy 
\cite{CMW, Cap, Bla, MR2721032, ETW}. 
In the case that $\DB$ is a knot or link diagram, 
the homology $H_\ZZ(\LB)$ of $\ZZ(\DB)$ is therefore an invariant of the link $\LB$ determined by $\DB$.
In fact, $\ZZ(\DB)$ can typically be equipped with additional structure (see below) and determines 
an invariant $\ZZ(\LB)$ of the corresponding link $\LB$ up to quasi-isomorphism. 
Instances of this higher structure are the subject of this paper.

\subsection{Local operators and monodromy}\label{ss:Mono}

In many cases, a choice of point $\point \in \DB$ equips $\ZZ(\DB)$ 
with an action of a graded-commutative algebra of \emph{local operators} $A_\ZZ$. 
%We will only be concerned with particular examples and do not give a general definition of this notion.
In prototypical examples, $A_\ZZ=\ZZ(\bigcirc)$ can be identified with 
the invariant of the unknot\footnote{More precisely, 
$\ZZ(\bigcirc)$ is typically a free $A_\ZZ$-module of rank 1, so
$A_\ZZ$ and $\ZZ(\bigcirc)$ are isomorphic up to grading shift.}, 
and the action of $A_\ZZ$ at $\point$ is induced by the saddle cobordism 
$\bigcirc\sqcup \DB \to \DB$ that merges a small unknot with $\DB$ 
near the point $\point$. 
We now mention several specific instances of this setup.
For the duration, we work over the rationals $\k$ (see \S \ref{ss:coeff}).

\begin{exa} 
Let $\ZZ(\DB) = C_{\KhR_N}(\DB)$ be the $\glN$ Khovanov--Rozansky complex, whose
homology is the $\glN$ Khovanov--Rozansky homology $H_{\KhR_N}(\LB)$ of the link
$\LB$ determined by $\DB$ \cite{KR}. A choice of $\point \in \DB$ equips
$C_{\KhR_N}(\DB)$ with an action of the graded algebra $A_{\KhR_N}:=H^*(\C
P^{N-1}) \cong \k[x]/(x^N)$. 
The unknot invariant $C_{\KhR_N}(\bigcirc)$ is the free $A_{\KhR_N}$-module
generated by a single element, in degree $1-N$, so we may identify $A_{\KhR_N}$
and $C_{\KhR_N}(\bigcirc)$ up to shift.  
Under this identification, the action of $C_{\KhR_N}(\bigcirc)$ at $\point \in \DB$ 
is induced by the map
\[
	C_{\KhR_N}(\bigcirc) \otimes C_{\KhR_N}(\DB) \xrightarrow{\cong} C_{\KhR_N}(\bigcirc \sqcup \DB) 
\to C_{\KhR_N}(\DB)
\] 
where the first part is due to the monoidality of $C_{\KhR_N}$ and the second
part is induced by the saddle cobordism.
%The action of the generator $x \in A_{\KhR_N}$ at $\point$ is denoted 
%by $x_{\point} \in \End(\KhR_N(\DB))$.
\end{exa}

\begin{remark}
The algebra $A_\ZZ$ is sometimes referred to as the \emph{sheet algebra} of the theory $\ZZ$.
It suggest that elements of $A_\ZZ$ should be visualized as the identity cobordism
of the trivial $(1,1)$-tangle, suitably decorated. 
See e.g. \cite[Corollary 2.4]{MorrisonNieh}, 
where this terminology appears to have originated.
\end{remark}

\begin{exa}\label{exa:ZKR} Let $\DB$ be a closed braid diagram of a link $\LB$,
and let $\ZZ(\DB) = C_{\KR}(\DB)$
be the triply-graded Khovanov--Rozansky complex, whose homology is the
HOMFLYPT homology $\HHH(\LB):=H_{\KhR}(\DB)$ \cite{MR2421131}. 
% In this case, the sheet algebra is $\k[x]$. 
There are two common choices for the sheet algebra in this case: the
\emph{underived sheet algebra} $\k[x]$, or the \emph{derived sheet algebra}
$\k[x]\otimes \largewedge[\eta]\cong \HH^\bullet(\k[x])$. Here the degrees of
the variables, written multiplicatively following Convention~\ref{conv:wt} below, 
are given by $\wt(x)=\qdeg^2$ and $\wt(\eta)=\adeg \qdeg^{-2}$. 

The action of this sheet algebra is best understood using Khovanov's formulation
of $\HHH(\LB)$ using the Hochschild homology of 
Soergel bimodules \cite{MR2339573}, given that this
homology theory is not functorial with respect to general link cobordisms.
We will refer to the $\adeg$-degree of the variable $\eta$ in the following 
as the \emph{Hochschild-degree}.
\end{exa}

\begin{exa}\label{ex:colored sheet alg} In this paper, we are primarily
interested in colored link homologies and, more specifically, the
$\largewedge$-colored extension of triply-graded Khovanov--Rozansky homology
\cite{MR3687104}.  
This homology theory defines invariants of framed oriented links $\LB$ in which
each component $\comp \in \pi_0(\LB)$ is labeled by a non-negative integer
$b(\comp)$, the \emph{color}, each of which defines a sheet algebra
$A_{\ZZ,b(\comp)}$.   
For $b \in \N$, fix an alphabet $\X^b=\{x_1,\ldots,x_b\}$, then the
$b$-colored sheet algebra is given by the ring of symmetric polynomials
$\Sym(\X^b):=\k[x_1,\ldots,x_b]^{\symg_b}$ and the derived sheet algebra by
$\HH^\bullet(\Sym(\X^b))$.
\end{exa}

In addition to the action of sheet algebras, $\ZZ(\DB)$ is typically also
equipped with higher structures stemming from the fact that choices of different
points $\point_1, \point_2 \in \DB$ that lie in the same
component\footnote{Here, and in the following, we slightly abuse terminology and
identify components $\comp \in \pi_0(\LB)$ of the link determined by $\DB$ with
the corresponding equivalence class of points in the diagram $\DB$. Thus, we can
talk about components of $\DB$.} $\comp \in \pi_0(\LB)$ should induce homotopic
actions of the relevant sheet algebra $A_{\ZZ}$. To be precise, let $\gamma \subset \DB$
be an oriented path from $\point_1$ to $\point_2$.  
For each $a\in A_{\ZZ}$, let $a(\point_i)$ denote the action of $a\in A_{\ZZ}$
at $\point_i\in \DB$. Then, the path $\gamma$ determines a homotopy
$\Psi_{\gamma}(a)$, with
\[
[\d_{\DB}, \Psi_{\gamma}(a)] = a(\point_2)-a(\point_1).
\]
Here $\d_\DB$ is the differential on the complex $\ZZ(\DB)$, 
so the super-commutator $[\d_{\DB},-]$ is the differential on the dg algebra $\End(\ZZ(\DB))$. 
Note, however, that $a(\point_1)$ and $a(\point_2)$ are homotopic in two \emph{different} ways.
We can choose two complementary paths 
$\gamma,\gamma' \colon \point_1 \to \point_2$ in $\DB$ so that traversing $\gamma$ followed by 
the reverse of $\gamma'$ yields a loop.
It follows that the difference
\[
\Psi_a := \Psi_\gamma(a) - \Psi_{\gamma'}(a)
\]
is a closed endomorphism of $\ZZ(\DB)$ of degree $\wt(\Psi_a) = \wt(a)\tdeg^{-1}$
%homological degree $-1$
, called the \emph{monodromy} of $a$ along $\DB$. These monodromy endomorphisms,
together with certain higher operations, (should) assemble to give an action of
the Hochschild homology 
% \footnote{The reader familiar with Khovanov's approach to
% triply-graded link homology will know that Hochschild homology is an ingredient
% in the definition of $\HHH(\LB)$. We strongly emphasize that the appearance of
% $\HH_\bullet$ here is distinct from its use as a trace-like functor on
% (singular) Soergel bimodules. In particular, the monodromy algebra
% $\HH_\bullet(A_\ZZ)$ should not be confused with the derived sheet algebra from
% Example \ref{exa:ZKR}.
$\HH_\bullet(A_\ZZ)$ (itself an algebra, since $A_\ZZ$ is graded-commutative) on
$\ZZ(\DB)$. This should not be confused with the passage to a derived sheet
algebra as in Example~\ref{exa:ZKR}. Instead, the guiding principle is the
following.

\begin{principle}\label{principle}
Each component $\comp \in \pi_0(\LB)$ determines an action
of $\HH_\bullet(A_{\ZZ})$ on $\ZZ(\LB)$, well-defined up to
quasi-isomorphism, in such a way that, for $a \in A_{\ZZ}$, the
K\"ahler differential $d(a) \in \HH_1(A_{\ZZ})$ acts as the monodromy
$\Psi_a$ along $\comp$. The actions along various components assemble to give an
action of $\bigotimes_{\comp \in \pi_0(\LB)}\HH_\bullet(A_{\ZZ})$.
\end{principle}

\begin{remark}
In the above statement, 
we have invoked the well-known fact that for a commutative $\k$-algebra $A$, 
$\HH_1(A)$ is isomorphic to the $A$-module of K\"ahler differentials on $A$, 
i.e.~the free $A$-module generated by symbols $d(a)$ with $a\in A$, 
modulo the relations $d(ab) = ad(b) + bd(a)$ and $d(a+b) = d(a)+d(b)$ for $a,b\in A$ 
and $d(s)=0$ for $s \in \k$.
\end{remark}

\begin{remark}
In the setting of colored link homologies, the statement of Principle
\ref{principle} should be modified accordingly to account for the fact that the
sheet algebras depend on a choice of color.
\end{remark}

We thus refer to $\HH_\bullet(A_\ZZ)$ as the \emph{monodromy algebra} of $A_\ZZ$.
For the duration, 
we restrict to the setting of (colored) triply-graded Khovanov--Rozansky homology for concreteness, 
since this will be the invariant we study in this work.

\begin{example}
Continuing Example~\ref{exa:ZKR}, for the (uncolored) triply-graded
Khovanov--Rozansky homology with sheet algebra $\k[x]$, the associated monodromy
algebra is $\k[x]\otimes \largewedge[\xi]$, where $\wt(x)=\qdeg^2$ and
$\wt(\xi)=\qdeg^2 \tdeg\inv$. Thus, one would expect an action of
$\k[x_1,\ldots,x_r]\otimes \largewedge[\xi_1,\ldots,\xi_r]$ on $C_{\KhR}(\DB)$
when $\DB$ has $r$ components. Such an action was constructed in \cite{GH} in
the form of ``$y$-ified'' Khovanov--Rozansky homology.
\end{example}

\begin{example}\label{ex:colored monodromy alg} For colored Khovanov--Rozansky
homology, the $b$-colored sheet algebra $\k[x_1,\ldots,x_b]^{\symg_b}$ can be
identified with either $\k[p_1,\ldots,p_b]$ or $\k[e_1,\ldots,e_b]$, where $p_i$
and $e_i$ are the power sum and elementary symmetric functions in the alphabet
$\X^b=\{x_1,\ldots,x_b\}$. Thus, the $b$-colored monodromy algebra can be
identified with either $\k[p_1,\ldots,p_b] \otimes
\largewedge[\Xi_1,\ldots,\Xi_b]$ or $\k[e_1,\ldots,e_b] \otimes
\largewedge[\Psi_1,\ldots,\Psi_b]$, where $\wt(p_i)=\wt(e_i)=\qdeg^{2i}$ and
$\wt(\Xi_i)=\wt(\Psi_i)=\qdeg^{2i} \tdeg\inv$.
\end{example}

\subsection{From monodromy to deformation}\label{ss:MonoToDef}

Now, we show how the action of the monodromy algebra $\HH_\bullet(A_\ZZ)$ on
$\ZZ(\DB)$ allows us to construct \emph{link splitting deformations}\footnote{In
this paper, the word \emph{deformation} will always refer to such link splitting
deformations, which are based on monodromy data. These are of an entirely
different nature than the deformations of finite-rank type A link homologies
based on deformations of underlying Frobenius algebras, that were studied by the
second- and third-named authors in \cite{RW}.} of $\ZZ(\DB)$. Again, we work
explicitly with triply-graded Khovanov--Rozansky homology, first in the uncolored
case (considered in \cite{GH}) and then its extension to the colored case which
is achieved in this paper.

Suppose that $\DB$ is a braid closure diagram for an $r$-component oriented link
$\LB$ and let $C_{\KR}(\DB)$ be the Khovanov--Rozansky complex associated to
$\DB$, which carries an action of the monodromy algebra
$\k[x_1,\ldots,x_r]\otimes \largewedge[\xi_1,\ldots,\xi_r]$ on $C_{\KR}(\DB)$.
The deformed (or ``$y$-ified'') complex $\YS C_{\KR}(\DB)$ is constructed from
this action using Koszul duality \cite{BGS} (see the earlier \cite{BGG} for the
specific case of polynomial/exterior algebras). Explicitly, introduce formal
parameters $y_1,\ldots,y_r$ with $\wt(y_i)=\qdeg^{-2} \tdeg^2$ and form the
complex
\begin{equation}\label{eq:GHy}
\YS C_{\KR}(\DB) := C_\KR(\DB)\otimes \k[y_1,\ldots,y_r] 
\, , \quad \d_{\KR} + \sum_{i=1}^r y_i \xi_i.
\end{equation}
In \cite{GH}, it is shown that the homology of $\YS C_{\KR}(\DB)$ 
is a well-defined invariant of the oriented link $\LB$, up to isomorphism.  

The main goal of the present paper is to investigate the colored version of this
invariant. For this, suppose that $\LB$ is a (framed, oriented) \emph{colored}
link, i.e. each component $\comp \in \pi_0(\LB)$ is assigned a color $b(\comp)
\geq 0$. Recall from Example \ref{ex:colored monodromy alg} that the monodromy
algebra associated to a $b$-labeled component is $\k[p_1,\ldots,p_b] \otimes
\largewedge[\Xi_1,\ldots, \Xi_b] \cong \Sym(\X^b) \otimes
\largewedge[\Xi_1,\ldots, \Xi_b]$. The following is a consequence of Lemma
\ref{lemma:alternate YHH}; see Remark \ref{rem:Monodromies}.

\begin{prop}\label{prop:intro}
Let $\DB$ be a diagram for a framed, oriented, colored link $\LB$, presented as a braid closure.
The colored Khovanov--Rozansky complex $C_{\KR}(\DB)$ admits an action of the algebra
\[
\bigotimes_{\comp \in \pi_0(\LB)} \Big(\k[p_1,\ldots,p_{b(\comp)}] \otimes \largewedge[\Xi_1,\ldots,\Xi_{b(\comp)}]\Big) 
\cong (\k[p_{\comp,i}] \otimes \largewedge[\Xi_{\comp,i}])_{\comp \in \pi_0(\LB), 1\leq i\leq b(\comp)}
\]
in which $\Xi_{\comp,k}$ is the monodromy of $\frac{1}{k}p_k$ along $\comp$.
\end{prop}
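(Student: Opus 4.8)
The plan is to reduce Proposition~\ref{prop:intro} to the structural result on the monodromy algebra cited as Lemma~\ref{lemma:alternate YHH}, and to track the identification between the power-sum and elementary-symmetric presentations of $\Sym(\X^b)$ through the monodromy construction. First I would recall from \S\ref{ss:Mono} that for each component $\comp \in \pi_0(\LB)$ with color $b = b(\comp)$, a chosen basepoint $\point_\comp \in \DB$ equips $C_{\KR}(\DB)$ with an action of the sheet algebra $A_{\ZZ,b} = \Sym(\X^b)$, and that the monodromy construction of \S\ref{ss:Mono} upgrades this to an action of the monodromy algebra $\HH_\bullet(\Sym(\X^b))$. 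By the Hochschild--Kostant--Rosenberg-type identification invoked in the excerpt (and made precise in Lemma~\ref{lemma:alternate YHH}), $\HH_\bullet(\Sym(\X^b)) \cong \Sym(\X^b) \otimes \largewedge[d(f_1),\ldots,d(f_b)]$ for any choice of polynomial generators $f_1,\ldots,f_b$ of $\Sym(\X^b)$, with $d(f_i) \in \HH_1$ the K\"ahler differential. Taking $f_i = \tfrac{1}{i}p_i$ (the normalized power sums) gives the first presentation, and the monodromy of $\tfrac1k p_k$ along $\comp$ is by definition the image of $d(\tfrac1k p_k)$; I would simply name this $\Xi_{\comp,k}$, which is exactly the assignment in the statement.

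Next I would assemble the actions over all components. Since distinct components admit disjoint basepoints and disjoint connecting paths $\gamma,\gamma'$ in $\DB$, the monodromy endomorphisms $\Psi_a$ attached to different components commute with one another up to the relevant (super)sign — this is the content behind the tensor-product assertion in Principle~\ref{principle}, and in the present algebraic setting it follows because the underlying local operators at disjoint points of $\DB$ already commute and the homotopies $\Psi_\gamma(a)$ are built from disjoint local data. Hence the componentwise actions of $\HH_\bullet(\Sym(\X^{b(\comp)}))$ assemble to an action of the tensor product $\bigotimes_{\comp} \HH_\bullet(\Sym(\X^{b(\comp)}))$, which under the HKR identification is the displayed algebra $\bigotimes_{\comp}\big(\k[p_1,\ldots,p_{b(\comp)}] \otimes \largewedge[\Xi_1,\ldots,\Xi_{b(\comp)}]\big)$. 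The isomorphism to $(\k[p_{\comp,i}] \otimes \largewedge[\Xi_{\comp,i}])_{\comp, i}$ on the right is just the bookkeeping convention of collecting all generators into one indexed family, with bidegrees $\wt(p_{\comp,i}) = \qdeg^{2i}$ and $\wt(\Xi_{\comp,i}) = \qdeg^{2i}\tdeg^{-1}$ as recorded in Example~\ref{ex:colored monodromy alg}.

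The main obstacle is verifying that the monodromy operators $\Xi_{\comp,k}$ genuinely satisfy the exterior-algebra relations (in particular $\Xi_{\comp,k}^2 \simeq 0$ and anticommutativity) \emph{on the nose} rather than merely up to homotopy, so that one obtains an honest algebra action and not just an $A_\infty$ or homotopy-coherent one — this is precisely where Lemma~\ref{lemma:alternate YHH} does the real work, presumably by choosing the homotopies $\Psi_\gamma(\tfrac1k p_k)$ compatibly (e.g.\ via an explicit Koszul model for $C_{\KR}$ of the colored strand) so that the squares and brackets vanish strictly. I would therefore spend the bulk of the argument citing and applying that Lemma componentwise, and then only check the cross-component compatibility by the disjointness argument above; the remaining degree computations are routine and follow from Convention~\ref{conv:wt} together with the weights $\wt(p_i) = \qdeg^{2i}$ and the fact that passing to a K\"ahler differential multiplies the weight by $\tdeg^{-1}$. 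Finally, I would note via Remark~\ref{rem:Monodromies} that the choice $f_i = \tfrac1i p_i$ is not canonical and that the alternative generating set $e_i$ yields the second presentation $\k[e_i] \otimes \largewedge[\Psi_i]$ of Example~\ref{ex:colored monodromy alg}, the change of generators being an isomorphism of monodromy algebras compatible with the action.
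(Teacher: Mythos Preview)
Your proposal has a genuine gap: you are treating Principle~\ref{principle} as if it were an established theorem that hands you an $\HH_\bullet(\Sym(\X^b))$-action, and then invoking an HKR identification plus Lemma~\ref{lemma:alternate YHH} to rewrite that action in power-sum coordinates. But Principle~\ref{principle} is stated only as a guiding heuristic, and Lemma~\ref{lemma:alternate YHH} does not construct any monodromy operators or verify exterior relations; it merely identifies two models of the already-deformed complex $\YHH_\bullet(\b_\brc)$ up to homotopy. So the step where you say ``Lemma~\ref{lemma:alternate YHH} does the real work, presumably by choosing the homotopies compatibly'' is exactly the missing content.

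The paper's argument is entirely concrete and bypasses any abstract $\HH_\bullet$-of-the-sheet-algebra statement. One first builds the curved Rickard complex $\YS C(\b_{\brc,\perm^{-1}})$ with $\Delta e$-curvature (Theorem~\ref{thm:braidinvariant}); this is a \emph{strict} $1$-morphism, meaning its Maurer--Cartan element is linear in the deformation parameters. Changing variables to $\Delta p$-curvature (Lemma~\ref{lemma:exists power sum MC}) and then bundling (Lemma~\ref{lemma:exists bundled power sum MC}) preserves strictness. Now Remark~\ref{rmk:strict} is the key: strictness forces the coefficients $\tilde\Xi_{[i],k}$ of the linear MC element to square to zero and pairwise anticommute \emph{on the nose}, simply because $(\d+\Delta)^2$ equals the curvature and the curvature has no cross-terms in the $\pv$-variables. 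These $\tilde\Xi_{[i],k}$ are the desired homotopies for $\frac{1}{k}(p_k(\leftX_{[i]})-p_k(\rightX_{[i]}))$. Applying $\HH_\bullet$ kills the curvature, so the $\Xi_{\comp,k}:=\HH_\bullet(\tilde\Xi_{[i],k})$ become closed endomorphisms that still square to zero and anticommute; together with the sheet-algebra action this gives the displayed algebra action (Remark~\ref{rem:Monodromies}). The cross-component compatibility you worry about is automatic here, since all the $\tilde\Xi$'s arise as coefficients of a single strict MC element.
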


Using this monodromy action (and Koszul duality), we can build the following deformed complex:
\begin{equation}\label{eq:firstYC}
\YS C_{\KR}(\DB) := C_{\KR}(\DB) \otimes \k[v_{\comp,k}]_{\comp \in \pi_0(\LB), 1\leq k\leq b(\comp)} 
\, , \quad \d_{\KR} + \sum_{\comp,k} v_{\comp,k} \Xi_{\comp,k}.
\end{equation}
In Theorem \ref{thm:YHHH}, we establish the following.
\begin{thm}\label{thm:intro}
The complex $\YS C_{\KR}(\DB)$ is a well-defined invariant of 
the framed, oriented, colored link $\LB$, up to quasi-isomorphism of modules 
over $\k[p_{\comp,i},v_{\comp,j}]_{\comp \in \pi_0(\LB), 1\leq i,j \leq b(\comp)}$.
Consequently, $\YS H_{\KR}(\LB)$ is an invariant of $\LB$
up to isomorphism of $\k[p_{\comp,i},v_{\comp,j}]$-modules.
\end{thm}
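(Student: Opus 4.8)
The plan is to reduce the statement to invariance under the Markov moves (since $\DB$ is a braid closure) and to show that each Markov equivalence of underlying complexes lifts to a quasi-isomorphism of the associated deformed complexes. I would carry this out inside the monoidal dg $2$-category of curved complexes of type A singular Soergel bimodules constructed earlier in the paper, in which $C_{\KR}(\DB)$ appears together with its monodromy endomorphisms $\Xi_{\comp,k}$ from Proposition~\ref{prop:intro} (via Lemma~\ref{lemma:alternate YHH}); this is the colored analogue of the framework of \cite{GH}, and the burden of the proof is to check that the curved refinements of the structural results of \cite{HRW1} suffice to run the argument.

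First I would verify that \eqref{eq:firstYC} is an honest curved complex, i.e.~that $(\d_{\KR} + \sum_{\comp,k} v_{\comp,k}\Xi_{\comp,k})^2 = 0$. Since $\d_\KR^2 = 0$ and the $v_{\comp,k}$ are even and central, expanding reduces this Maurer--Cartan equation to the requirement that each $\Xi_{\comp,k}$ be a closed endomorphism (of odd parity) and that the $\Xi_{\comp,k}$ pairwise anticommute and square to zero. Closedness is automatic because the $\Xi_{\comp,k}$ are monodromy operators; for the anticommutation I would choose strictly (super)commuting representatives for the monodromy operators of a given component and of distinct components, which the singular Soergel bimodule model makes available, exactly as in the uncolored case of \cite{GH}.

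Next, for each Markov move $\DB \leadsto \DB'$ there is a homotopy equivalence $f\colon C_\KR(\DB)\to C_\KR(\DB')$ of underlying complexes, which moreover intertwines the action of the polynomial generators $p_{\comp,i}$ (dots on strands). The key step is to promote $f$ to a morphism $\YS f\colon \YS C_\KR(\DB)\to\YS C_\KR(\DB')$ of curved complexes, necessarily of the form $\YS f = f + (\text{terms of positive degree in the } v_{\comp,k})$, where the higher terms are assembled from homotopies $h_{\comp,k}$ witnessing $f\,\Xi_{\comp,k}\simeq\Xi'_{\comp,k}\,f$ together with coherence homotopies among them. Constructing $\YS f$ is an obstruction-theoretic computation, solving $[\d', \YS f]=0$ order by order in the $v$'s: the obstruction at each stage is a closed endomorphism that is null-homotopic because of the homotopy data produced at the previous stage. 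For the braid-generator moves this is precisely where the curved colored skein relation and the curved splitting maps for the colored full twists on two strands --- the promotions of \HRWSkeinrel{} and \shiftedRickard{} to the curved setting --- supply the needed homotopies, and the (de)stabilization move is handled by the analogous one-strand computation. Finally, to see that $\YS f$ is a quasi-isomorphism of $\k[p_{\comp,i},v_{\comp,j}]$-modules, filter both sides by the $v$-degree; the associated graded morphism is $f\otimes\id_{\k[v]}$, which is a homotopy equivalence, and since in each fixed degree only finitely many $v$-monomials occur (as $\wt(v_{\comp,k})=\qdeg^{-2}\tdeg^2$ is nonzero and $C_\KR(\DB)$ is bounded), the spectral sequence of the filtration converges and upgrades this to a quasi-isomorphism. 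Linearity over the $v_{\comp,j}$ is automatic and linearity over the $p_{\comp,i}$ descends from that of $f$.

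The hard part will be the construction of the correction terms, i.e.~establishing that the monodromy operators on the two sides of a Markov move are intertwined not merely up to homotopy but in a sufficiently coherent way for the obstruction tower to collapse, while simultaneously controlling the interaction of the several monodromy operators on a single multi-strand component, the interaction across components, and the compatibility with the colored/singular structure. This is exactly what the curved refinements of the colored skein relation and of the two-strand splitting maps from \cite{HRW1} are designed to furnish, so the technical heart of the argument is verifying that these refinements are strong enough to kill the obstructions for every generating move.
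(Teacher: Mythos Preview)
Your overall strategy---reduce to Markov moves and lift the known homotopy equivalences to the deformed complexes---matches the paper's. However, the execution diverges substantially from what the paper actually does, and the tools you invoke are not the ones that carry the argument.

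The paper's proof (Theorem~\ref{thm:YHHH}) does \emph{not} run an obstruction tower to lift the Markov equivalences term by term in the $v$'s. Instead, it performs a change of variables to \emph{power sum} curvature $\sum_k \frac{1}{k}(p_k(\leftX)-p_k(\rightX))\pv_k$ (see \S\ref{ss:power sum} and Lemma~\ref{lemma:h and p curvatures}). The point is that power sums are additive in alphabets, so after bundling deformation parameters by link component (Lemmas~\ref{lemma:exists power sum MC}--\ref{lemma:alternate YHH}), the curvature is manifestly symmetric in the factors of a product $C(\b')\hComp C(\b'')$ and vanishes on the extra strand in a stabilization. Markov~I then follows directly from the trace-like property of Hochschild homology (Proposition~\ref{prop:HH tracelike 2}), and Markov~II reduces to the undeformed Markov~II computation (Proposition~\ref{prop:markov ii}) together with naturality. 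Uniqueness of curved lifts with bundled curvature (Lemma~\ref{lemma:bundled uniqueness}) replaces any need for explicit coherence homotopies. The module structure is handled separately via Proposition~\ref{prop:qiso}.

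By contrast, you invoke the curved colored skein relation and the splitting maps for colored full twists as the source of the homotopies killing your obstructions. This is a misidentification: those results (\S\ref{s:curved skein rel}--\ref{sec:splitting}) are developed for computing colored Hopf link homology and establishing link-splitting properties, not for Markov invariance. They play no role in the proof of Theorem~\ref{thm:YHHH}. An obstruction-theoretic approach along your lines is not obviously doomed, but you would need to supply the coherence data from elsewhere---essentially from the invertibility of Rickard complexes (cf.\ Lemma~\ref{lem:curvinginvertibles}) and the structure of the partial trace---and even then the power-sum trick is what makes the argument clean. As a minor point, your weight claim is off: $\wt(v_{\comp,k})=\qdeg^{-2k}\tdeg^2$, not $\qdeg^{-2}\tdeg^2$.
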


\begin{remark}
In this paper, we actually take a different approach to the definition of the deformed complex from \eqref{eq:firstYC}.
Indeed, rather than first constructing the monodromy morphisms for a link diagram $\DB$ and then using them to build $\YS C_{\KR}(\DB)$, 
we instead build the complex $\YS C_{\KR}(\DB)$ from local pieces that encode the homotopies associated with paths 
in $\DB$ that traverse a single crossing. These take the form of certain \emph{curved complexes}, that we discuss next.
Nonetheless, the two approaches are equivalent; see Remark \ref{rem:Monodromies}.
\end{remark}

\subsection{Curving Rickard complexes}\label{ss:curvRick}

The undeformed colored Khovanov--Rozansky complex can be studied at the level of braids 
(without closing up to obtain a link) using the framework of singular Soergel bimodules.  
It will be quintessential to extend our deformed theory to the level of braids as well.  
Similar to the considerations in \cite{GH}, 
the notion of curved complexes of singular Soergel bimodules appears naturally.

Recall the monoidal 2-category\footnote{Throughout, by $2$-category we always mean 
a weak $2$-category, also known as a bicategory.} 
of \emph{singular Soergel bimodules}, denoted $\SSBim$. 
Objects of this category are sequences $\aa=(a_1,\ldots,a_m)$ of positive integers, 
and a 1-morphism from $\bb=(b_1,\ldots,b_{m'})$ to $\aa=(a_1,\ldots, a_m)$ 
is a certain kind of graded bimodule over $(R^{\aa},R^{\bb})$. 
Here, $R^{\aa}$ denotes the ring of polynomials in $\sum_{i=1}^m a_i$ variables
that are invariant with respect to the action of $\symg_{a_1}\times\cdots\times \symg_{a_m}$.  
Specifically, 1-morphisms $\SSBim$ are generated
(with respect to direct sum and summands, grading shift, horizontal composition $\star$, and external tensor product $\boxtimes$) 
by induction and restriction bimodules relating the rings $R^{a,b}$ and $R^{a+b}$. 
The $2$-morphisms are maps of graded bimodules.
See \S \ref{ss:ssbim} for full details.

The $2$-category $\SSBim$ contains the \emph{singular Bott-Samelson bimodules}, 
those bimodules that are constructed from induction and restriction bimodules 
using only grading shift, $\star$, and $\boxtimes$.  
These bimodules can be depicted diagrammatically as certain trivalent graphs called \emph{webs}, 
e.g.:
\[
\begin{tikzpicture}[smallnodes,rotate=90,baseline=.1em,scale=1]
\draw[very thick] (0,.25) to [out=150,in=270] (-.25,1) node[left,xshift=2pt]{$c$};
\draw[very thick] (.5,.5) to (.5,1) node[left,xshift=2pt]{$d$};
\draw[very thick] (0,.25) to node[left,yshift=-2pt]{$d{-}k$}  (.5,.5);
\draw[very thick] (0,-.25) to (0,.25);
\draw[very thick] (.5,-.5) to [out=30,in=330] node[above=-2pt]{$k$} (.5,.5);
\draw[very thick] (0,-.25) to node[right,yshift=-2pt] {$b{-}k$}  (.5,-.5);
\draw[very thick] (.5,-1) node[right,xshift=-2pt]{$b$} to (.5,-.5);
\draw[very thick] (-.25,-1)node[right,xshift=-2pt]{$a$} to [out=90,in=210] (0,-.25);
\end{tikzpicture} \, .
\]
(Here, the unlabeled edge has label $a+b-k = c+d-k$). 
In diagrams such as these, a trivalent ``merge'' vertex (when read right-to-left) 
corresponds to a restriction bimodule ${}_{R^{a+b}}(R^{a,b})_{R^{a,b}}$
and a ``split'' vertex corresponds to an induction bimodule ${}_{R^{a,b}}(R^{a,b})_{R^{a+b}}$.

To each colored braid, one can associate a complex of singular Soergel bimodules using the notion of Rickard complexes. 
For instance, to the $(a,b)$-colored elementary crossing with $a\geq b$, one associates a complex of the form
\begin{equation}\label{eq:firstC}
C_{a,b} := 
\left\llbracket
	\begin{tikzpicture}[rotate=90,scale=.5,smallnodes,anchorbase]
		\draw[very thick,->] (1,-1) node[right,xshift=-2pt]{$b$} to [out=90,in=270] (0,1);
		\draw[line width=5pt,color=white] (0,-1) to [out=90,in=270] (1,1);
		\draw[very thick,->] (0,-1) node[right,xshift=-2pt]{$a$} to [out=90,in=270] (1,1);
	\end{tikzpicture}
	\right\rrbracket
:= 
\left( \begin{tikzpicture}[smallnodes,rotate=90,baseline=.1em,scale=.66]
\draw[very thick] (0,.25) to [out=150,in=270] (-.25,1) node[left,xshift=2pt]{$b$};
	\draw[very thick] (.5,.5) to (.5,1) node[left,xshift=2pt]{$a$};
\draw[very thick] (0,.25) to  (.5,.5);
\draw[very thick] (0,-.25) to (0,.25);
\draw[dotted] (.5,-.5) to [out=30,in=330] node[above=-2pt]{$0$} (.5,.5);
\draw[very thick] (0,-.25) to  (.5,-.5);
\draw[very thick] (.5,-1) node[right,xshift=-2pt]{$b$} to (.5,-.5);
\draw[very thick] (-.25,-1)node[right,xshift=-2pt]{$a$} to [out=90,in=210] (0,-.25);
\end{tikzpicture}
\xrightarrow{\d}
\qdeg^{-1} \tdeg
\begin{tikzpicture}[smallnodes,rotate=90,baseline=.1em,scale=.66]
\draw[very thick] (0,.25) to [out=150,in=270] (-.25,1) node[left,xshift=2pt]{$b$};
\draw[very thick] (.5,.5) to (.5,1) node[left,xshift=2pt]{$a$};
\draw[very thick] (0,.25) to  (.5,.5);
\draw[very thick] (0,-.25) to (0,.25);
\draw[very thick] (.5,-.5) to [out=30,in=330] node[above=-2pt]{$1$} (.5,.5);
\draw[very thick] (0,-.25) to  (.5,-.5);
\draw[very thick] (.5,-1) node[right,xshift=-2pt]{$b$} to (.5,-.5);
\draw[very thick] (-.25,-1)node[right,xshift=-2pt]{$a$} to [out=90,in=210] (0,-.25);
\end{tikzpicture}
\xrightarrow{\d}
\cdots \xrightarrow{\d}
\qdeg^{-b} \tdeg^b
\begin{tikzpicture}[smallnodes,rotate=90,baseline=.1em,scale=.66]
\draw[very thick] (0,.25) to [out=150,in=270] (-.25,1) node[left,xshift=2pt]{$b$};
\draw[very thick] (.5,.5) to (.5,1) node[left,xshift=2pt]{$a$};
\draw[very thick] (0,.25) to  (.5,.5);
\draw[very thick] (0,-.25) to (0,.25);
\draw[very thick] (.5,-.5) to [out=30,in=330] node[above=-2pt]{$b$} (.5,.5);
\draw[dotted] (0,-.25) to  (.5,-.5);
\draw[very thick] (.5,-1) node[right,xshift=-2pt]{$b$} to (.5,-.5);
\draw[very thick] (-.25,-1)node[right,xshift=-2pt]{$a$} to [out=90,in=210] (0,-.25);
\end{tikzpicture} \right).
\end{equation}
To each $b$-labeled edge appearing in such diagrams 
(either a colored braid diagram, a web depicting a singular Bott-Samelson, or a composition thereof) 
there is an action of the sheet algebra $\Sym(\X^b)$ from Example \ref{ex:colored sheet alg}
on the associated (complex of) 1-morphism(s).
We will denote the sheet algebra simply by $\Sym(\X_\point)$, 
if we wish to emphasize the point where the sheet algebra is acting (thus $|\X_\point| = b(\comp)$).

It is well-known that the actions on the four endpoints of the Rickard complex 
are homotopic along the strands, see e.g. \cite[Proposition 5.7]{RW}. 
For example, choosing points $\point_1, \point_2, \point'_1,\point'_2$ on $C_{a,b}$ as follows:
\begin{equation}\label{eq:points}
\left\llbracket
\begin{tikzpicture}[rotate=90,scale=.75,smallnodes,anchorbase]
	\draw[very thick] (1,-1) node{$\bullet$} node[right]{$\point'_2$} 
		to [out=90,in=270] (0,1) node{$\bullet$} node[left]{$\point_1$};
	\draw[line width=5pt,color=white] (0,-1) to [out=90,in=270] (1,1);
	\draw[very thick] (0,-1) node{$\bullet$} node[right]{$\point'_1$} 
		to [out=90,in=270] (1,1) node{$\bullet$} node[left]{$\point_2$};
\end{tikzpicture}
\right\rrbracket
\end{equation}
we find that the actions of $f(\X_{\point_2})$ and $f(\X_{\point'_1})$ 
and the actions of $g(\X_{\point_1})$ and $g(\X_{\point'_2})$ 
on $C_{a,b}$ are homotopic, for all $f \in \Sym(\X^a)$ and $g \in \Sym(\X^b)$. 
Hence, 
there exists homotopies $\Psi^{\mathsf{o}}(f)$ and $\Psi^{\mathsf{u}}(g)$ so that 
\[
[\d,\Psi^{\mathsf{o}}(f)] = f(\X_{\point_2}) - f(\X_{\point'_1}) \, , \quad
[\d,\Psi^{\mathsf{u}}(g)] = g(\X_{\point_1}) - g(\X_{\point'_2}) \, .
\]
Following the recipe from \S \ref{ss:MonoToDef}, 
we should incorporate the homotopies for a collection of generators of 
$\Sym(\X^a)$ into our differential, as in \eqref{eq:firstYC}.
One choice of generating set is the collection of \emph{elementary} symmetric functions 
$e_1,\ldots,e_a \in \Sym(\X^a)$. 
Considering the corresponding homotopies 
$\{\Psi^{\mathsf{o}}_{k}\}_{k=1}^a$ and $\{\Psi^{\mathsf{u}}_{k}\}_{k=1}^b$ 
(built in Lemma \ref{lemma:crossing Psi} below)
and extending scalars, we thus can consider $C_{a,b}$, 
equipped with the ``differential''
\begin{equation}\label{eq:totdiff}
\d^{\tot} := \d + \sum_{k=1}^a \Psi^{\mathsf{o}}_{k} u^{\mathsf{o}}_{k} 
	+ \sum_{k=1}^b \Psi^{\mathsf{u}}_{k} u^{\mathsf{u}}_{k}  \in \End(C_{a,b}) \otimes \k[\U]
\end{equation}
where $\U = \{u^{\mathsf{o}}_{k}\}_{1\leq k \leq a} \cup \{u^{\mathsf{u}}_{k}\}_{1\leq k \leq b}$. 
As we show, the homotopies $\Psi^{\mathsf{o}/\mathsf{u}}_{i}$ each square to zero and pairwise anti-commute,
so we find that
\begin{equation}\label{eq:Ecurvature}
(\d^{\tot})^2 %= \sum_{k=1}^a [\d , \Psi_{+,k} u_{+,k}] + \sum_{k=1}^b [\d , \Psi_{-,k} u_{-,k}]
= \sum_{k=1}^a \big(e_k(\X_{\point_2}) - e_k(\X_{\point'_1})\big) u^{\mathsf{o}}_{k} 
	+ \sum_{k=1}^b \big(e_k(\X_{\point_1}) - e_k(\X_{\point'_2})\big) u^{\mathsf{u}}_{k} \, .
\end{equation}
Hence, $\YS C_{a,b} := (C_{a,b}, \d^{\tot})$ is not a chain complex in the traditional sense, 
but rather a \emph{curved complex}. 
Recall that the latter is a generalization of the notion of chain complex, 
and consists of a pair $(X,\d^{\tot})$ where the \emph{curved differential} $\d^{\tot}$
squares to the action of a central element $F \in \End(X) \otimes S$ for some ring $S$. 
We will refer to the curvature in \eqref{eq:Ecurvature}, and its analogue for more general braids, 
as $\Delta e$-curvature.

More generally, using the operations $\hComp$ and $\boxtimes$ in $\SSBim$,
we can associate a Rickard complex $C(\b_\brc)$ to any colored braid $\b_\brc$ 
that is built from complexes $C_{a,b}$ and $C^\vee_{a,b}$ assigned to colored positive and 
negative crossings, as in \eqref{eq:firstC}.
In \S \ref{s:curved rickard etc}, 
we show that these complexes can be deformed in a similar manner to $C_{a,b}$.

\begin{thm}\label{thm:intro1} 
The Rickard complex $C(\b_\brc)$ associated to a colored braid $\b_\brc$ admits 
a deformation to a curved complex $\YS C(\b_\brc)$ with $\Delta e$-curvature.
Such a deformation is unique, up to homotopy equivalence.
\end{thm}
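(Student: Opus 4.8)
The plan is to build the deformation $\YS C(\b_\brc)$ one crossing at a time, exploiting the monoidal structure of $\SSBim$, and then to establish uniqueness by a standard obstruction-theoretic argument using the contractibility of a suitable complex of homotopies. First I would recall (from the discussion culminating in \eqref{eq:Ecurvature}) that for a single elementary crossing the curved complex $\YS C_{a,b} = (C_{a,b}, \d^{\tot})$ has already been constructed, with $\Delta e$-curvature, and likewise for the negative crossing $\YS C^\vee_{a,b}$. The key point to verify at this stage is \emph{compatibility of the boundary data}: the homotopies $\Psi^{\mathsf{o}}_k, \Psi^{\mathsf{u}}_k$ were constructed so that their curvature contributions are differences $e_k(\X_{\point_{\mathrm{out}}}) - e_k(\X_{\point_{\mathrm{in}}})$ of sheet-algebra generators acting at the \emph{endpoints} of the strands. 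When two crossings are composed via $\hComp$ (or placed side by side via $\boxtimes$), the output endpoint of one strand is identified with the input endpoint of the next, so one first picks auxiliary deformation parameters $u$ for \emph{each} strand-segment between consecutive crossings (equivalently, for each edge of the braid graph), and takes $\d^{\tot}$ on $C(\b_\brc) = C(\text{crossing}_1) \hComp \cdots \hComp C(\text{crossing}_N)$ to be the sum of the local $\d^{\tot}$'s, using the Leibniz rule for $\hComp$ and $\boxtimes$ from $\SSBim$.

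Next I would compute $(\d^{\tot})^2$ for this composite. Because the local squared differentials are $\Delta e$-curvatures supported at endpoints, and because $\hComp$ and $\boxtimes$ are dg-functorial, the cross-terms between different local pieces vanish (they involve $[\d_i, \Psi_j]$ for disjoint crossings, which is zero since $\Psi_j$ acts as the identity on the factor containing $\d_i$), and the endpoint contributions telescope: at every internal edge the term $e_k(\X_\point)$ appears once with a $+$ from the crossing below and once with a $-$ from the crossing above, so those cancel \emph{provided we assign the same parameter $u_k$ to the shared edge}. What survives is precisely a sum over the \emph{external} endpoints of the braid (the top and bottom boundary points) of terms $e_k(\X_{\point}) u_k$, i.e.\ $\Delta e$-curvature for $C(\b_\brc)$. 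This is the content of the existence half; it is essentially bookkeeping once the single-crossing case and the dg-structure of $\SSBim$ are in hand.

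For uniqueness up to homotopy equivalence, I would argue by induction on the number of crossings, or directly: suppose $(C(\b_\brc), \d^{\tot})$ and $(C(\b_\brc), \d^{\tot} + \epsilon)$ are two deformations with the same $\Delta e$-curvature, where $\epsilon$ is a correction term of positive total $u$-degree. Filtering by $u$-degree reduces the comparison to a sequence of problems of the form: given a closed degree-$(\wt \Psi)$ endomorphism $\phi$ of the undeformed complex $C(\b_\brc)$ which must be killed, find $h$ with $[\d,h] = \phi$. The obstruction lies in a cohomology group $H^{\bullet}(\End(C(\b_\brc)))$ in the relevant bidegree, and the crucial input is that this group vanishes in the degrees where the $u_k \xi_k$-type corrections live — this is the analogue, for curved Rickard complexes, of the rigidity statements used in \cite{GH}, and follows from the fact that $C(\b_\brc)$ is a minimal-up-to-homotopy complex of singular Soergel bimodules whose endomorphism dg-algebra is concentrated in non-negative homological degree with the sheet algebras in appropriate degrees. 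Assembling these, any two deformations are connected by a sequence of gauge transformations $\exp(h)$, hence are homotopy equivalent as curved complexes. \textbf{The main obstacle} I anticipate is precisely this vanishing/rigidity step: one must show the relevant $\Ext$-groups between the terms of $C(\b_\brc)$ (or rather, the cohomology of $\End(C(\b_\brc))$ in the degree of a would-be nontrivial deformation parameter) vanish, uniformly over all colored braids. I would expect this to reduce, via the diagrammatics of webs and the known structure of Hom-spaces between singular Bott--Samelson bimodules, to a computation that the only closed endomorphisms in the offending degree are the ones already accounted for by the sheet-algebra action at the endpoints — but making this uniform in the braid word, rather than checking it crossing-by-crossing and hoping compositions behave, is where the real work lies.
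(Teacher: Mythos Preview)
Your existence argument is essentially the paper's: build the curved lift crossing-by-crossing using the already-constructed $\YS C_{a,b}$ and $\YS C^\vee_{a,b}$, and check that the curvatures telescope under $\hComp$ and $\boxtimes$. This is exactly the content of Lemma~\ref{lem:hComp} and the explicit construction in the proof of Theorem~\ref{thm:braidinvariant}.

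For uniqueness, the paper takes a different and cleaner route than your direct obstruction theory, and it sidesteps precisely the obstacle you flag. The key observation is that the Rickard complex $C(\b_\brc)$ is \emph{invertible} in $\CS(\SSBim)$. Given two curved lifts $X = \tw_{\Delta}(C(\b_\brc))$ and $Y = \tw_{\Delta'}(C(\b_\brc))$, the paper picks a curved lift $X^\vee$ of the inverse Rickard complex $C(\b_\brc^{-1})$ and observes that $X^\vee \hComp Y$ is a curved lift of something homotopy equivalent to $\oone_\brc$. By homological perturbation (Proposition~\ref{prop:conservation}), this forces $X^\vee \hComp Y \simeq \tw_{\Delta'''}(\oone_\brc)$ for some twist $\Delta'''$; but $\End_{\CS(\SSBim)}(\oone_\brc)$ is concentrated in cohomological degree zero, so $\Delta'''$ vanishes for degree reasons. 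Hence $X^\vee \hComp Y \simeq \oone_\brc$, and the same argument gives $X^\vee \hComp X \simeq \oone_\brc$, so $X \simeq Y$ by uniqueness of two-sided inverses. This is Lemma~\ref{lem:curvinginvertibles}.

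Your obstruction-theoretic approach can be made to work (the paper alludes to this in Remark~\ref{rem:StrongUniqueness}, where it is noted that one actually gets an \emph{isomorphism}, not merely a homotopy equivalence), but your stated justification for the vanishing is not correct as written. Rickard complexes are not minimal, and the endomorphism dg algebra $\End_{\CS(\SSBim)}(C(\b_\brc))$ is certainly \emph{not} concentrated in non-negative cohomological degree as a complex. What is true is that its \emph{cohomology} is concentrated in degree zero, and this is again a consequence of invertibility: $\End_{\CS(\SSBim)}(C(\b_\brc)) \simeq \End_{\CS(\SSBim)}(\oone_\brc) = \Sym(\X_1|\cdots|\X_m)$. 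Once you have this, your filtration-by-$u$-degree argument goes through, since every closed $\phi$ in negative cohomological degree is then exact. So the missing ingredient in your sketch is not a hard $\Ext$ computation between Bott--Samelson bimodules, but simply the word \emph{invertible}.
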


\noindent(See Theorem \ref{thm:braidinvariant} and Lemma \ref{lem:curvinginvertibles} for 
the precise statements.) \smallskip

In fact, the construction of the \emph{curved Rickard complex} $\YS C(\b_\brc)$ 
closely parallels the construction of $C(\b_\brc)$ from the elementary 
pieces $C_{a,b}$ and $C^\vee_{a,b}$. 
Indeed, with the curved complexes $\YS C_{a,b}$ and $\YS C^\vee_{a,b}$ 
associated to positive and negative crossings in hand, 
one need only construct well-defined composition operations to build the 
curved complexes associated to arbitrary braids.
Hence, we establish the following.

\begin{thm}\label{thm:intro2} 
There exists a monoidal dg 2-category $\YS(\SSBim)$ wherein $1$-morphisms are
curved complexes of singular Soergel bimodules with $\Delta e$-curvature.
Appropriate horizontal compositions $\hComp$ and external tensor products $\boxtimes$ of the 
curved complexes $\YS C_{a,b}$ and $\YS C^\vee_{a,b}$ associated with positive and negative crossings
assign a $1$-morphism in $\YS(\SSBim)$ 
to any colored braid (word), which satisfies the braid relations up to canonical homotopy equivalence.
\end{thm}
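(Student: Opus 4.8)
The plan is to build the $2$-category $\YS(\SSBim)$ by mimicking the construction of $\SSBim$ itself, replacing chain complexes of singular Soergel bimodules by curved complexes with $\Delta e$-curvature, and then to verify that the curved Rickard complexes $\YS C_{a,b}$ and $\YS C^\vee_{a,b}$ from Theorem~\ref{thm:intro} (resp.\ its precise form, Theorem~\ref{thm:braidinvariant}) satisfy the braid relations inside this category. First I would fix the ambient structure: $1$-morphisms are pairs $(X, \d^{\tot})$ where $X$ is a (finite, bounded) complex of singular Bott--Samelson bimodules, $\d^{\tot}$ raises the homological degree by one, and $(\d^{\tot})^2$ equals multiplication by a central $\Delta e$-curvature element built from the differences $e_k(\X_{\point})-e_k(\X_{\point'})$ associated to the boundary points on each strand; $2$-morphisms are the usual (shifted) bimodule maps, with the curved differential $[\d^{\tot},-]$, and one must check this is a genuine differential on $\Homcat$-spaces, which holds precisely because the curvature terms are central and hence cancel in the supercommutator. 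Horizontal composition $\hComp$ and external tensor product $\boxtimes$ are induced from those on $\SSBim$: on underlying complexes they are the usual tensor-type operations, and on curved differentials one takes the graded (Leibniz) sum, $\d^{\tot}_{X\hComp Y} = \d^{\tot}_X \otimes 1 \pm 1\otimes \d^{\tot}_Y$, with the curvature of the composite being the sum of the two curvatures along matching boundary points — this is where one needs that the sheet-algebra actions along a shared strand agree on the nose (not just up to homotopy), so the ``interior'' curvature contributions telescope away and only the genuinely external $\Delta e$ terms remain. The associativity, unitality, and interchange coherences for $\hComp$ and $\boxtimes$ are then inherited from $\SSBim$, since forgetting the curved differential is a (lax, in fact strict on objects) $2$-functor and the coherence $2$-morphisms of $\SSBim$ are automatically compatible with the Leibniz-summed differentials.

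Next I would treat the crossing generators. By Theorem~\ref{thm:intro} the objects $\YS C_{a,b}$ and $\YS C^\vee_{a,b}$ are well-defined curved complexes with $\Delta e$-curvature, unique up to homotopy equivalence; assigning them to positive and negative elementary $(a,b)$-crossings and extending by $\hComp$ and $\boxtimes$ produces a $1$-morphism in $\YS(\SSBim)$ for every colored braid \emph{word}. To show this descends to braids I must produce canonical homotopy equivalences realizing each braid relation: the inverse relations $\YS C_{a,b}\hComp \YS C^\vee_{a,b}\simeq \mathbbm{1}$ (Lemma~\ref{lem:curvinginvertibles}), the distant-commutativity relation (which is immediate, as the two crossings act on disjoint strands and the curved differentials, hence the whole curved complexes, literally commute under $\boxtimes$), and the Reidemeister~III / Yang--Baxter relation for triples of colored strands. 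For the last one, the strategy is: start from the known homotopy equivalence of \emph{uncurved} Rickard complexes $C(\b\b'\b)\simeq C(\b'\b\b')$ (as in \cite{HRW1}, cf.\ \shiftedRickard), and promote it to the curved setting. Because both sides are deformations of the same uncurved complex with the same $\Delta e$-curvature (determined only by the colors of the three boundary strands), the uniqueness clause of Theorem~\ref{thm:intro1} applies: any two $\Delta e$-curved deformations of a given Rickard complex are homotopy equivalent, and one checks the equivalence can be chosen to restrict to the classical Reidemeister~III map modulo higher filtration terms, by a standard obstruction-theoretic/homological-perturbation argument filtering by the $\k[\U]$-adic (equivalently $\adeg$-)grading.

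The main obstacle I expect is precisely this last promotion step: controlling the homotopy equivalence for Reidemeister~III in the curved world, and in particular checking that the homotopies $\Psi^{\mathsf{o}/\mathsf{u}}_k$ built locally for each crossing (Lemma~\ref{lemma:crossing Psi}) glue coherently across the three crossings so that the total curved differentials on the two sides are intertwined. The clean way to organize this is to set up a \emph{curved homological perturbation lemma}: given the uncurved equivalence and the curvature term (which is a degree-$(1,1)$, $\adeg$-degree-raising perturbation of the differential), one transfers the equivalence order by order in the $\adeg$-filtration, with obstructions living in $\Ext$-groups of the underlying singular Soergel category that vanish for degree reasons — these are the same vanishing statements ($\Hom$s concentrated in non-positive homological degree, or the relevant $\Ext^{1}$ vanishing for Rickard complexes) that already underpin the uniqueness statement of Theorem~\ref{thm:intro1}, so no genuinely new input is needed, only a careful bookkeeping. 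Finally, I would note that canonicity (independence of the chosen homotopies up to a contractible space of choices) follows from the same uniqueness-up-to-homotopy principle, and that $\boxtimes$ makes the resulting assignment monoidal, completing the construction of $\YS(\SSBim)$ and the proof.
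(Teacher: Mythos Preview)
Your proposal is essentially correct and follows the paper's approach, but you are working harder than necessary in the final step. The paper's construction of $\YS(\SSBim)$ (Definition~\ref{def:Y}) matches your outline, with one technical wrinkle you omit: objects are pairs $(\aa,\sigma)$ of a color sequence and a \emph{permutation}, which records the strand-numbering needed to keep track of which deformation alphabet $\U_i$ belongs to which strand as braids are composed; without this bookkeeping the curvature telescoping in Lemma~\ref{lem:hComp} is ambiguous.

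For the braid relations, your case analysis (inverses, distant commutativity, R3 via curved perturbation and obstruction theory) is valid but superfluous. The paper handles \emph{all} braid relations in one stroke via the uniqueness principle you already cite: since uncurved Rickard complexes $C(\b_{\aa})$ are invertible and satisfy the braid relations up to homotopy (Proposition~\ref{prop:rickard invariance}), any two braid words $\b,\gamma$ for the same braid give homotopy-equivalent complexes; Proposition~\ref{prop:conservation} lifts this to an equivalence between $\YS C(\b_{\aa,\sigma})$ and \emph{some} curved lift of $C(\gamma_{\aa})$; and Lemma~\ref{lem:curvinginvertibles} says curved lifts of invertibles are unique up to homotopy, so this must be $\YS C(\gamma_{\aa,\sigma})$. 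There is no need to glue homotopies across crossings or run an order-by-order perturbation for R3 specifically---the invertibility of Rickard complexes and the resulting Hom-space control ($\End_{\CS(\SSBim)}(C(\b_{\aa}))\simeq \End_{\CS(\SSBim)}(\oone_{\aa})$ is concentrated in even degrees) kill all obstructions at once.
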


In our description of $\YS C_{a,b}$ above (and subsequent statements about $\YS C(\b_\brc)$), 
we chose the elementary symmetric functions as the generators of the sheet algebra $\Sym(\X^b)$ 
of a $b$-colored strand. Thus our homotopies $\Psi_k$ encode $\Delta e$-curvature. 
At times, we will find it beneficial to work with curved complexes built from other homotopies, 
which similarly identify sheet algebra actions at the ends of braid strands.
Specifically, let $\point$ and $\point'$ be points at the left and right ends of a $b$-colored braid strand, and
let $N(\X_\point,\X_{\point'}) \lhd \Sym(\X_\point | \X_{\point'}) \cong \Sym(\X_\point) \otimes \Sym(\X_{\point'})$ 
denote the \emph{diagonal ideal}, which is generated by all elements of the form $f(\X_\point) - f(\X_{\point'})$ 
with $f \in \Sym(\X^b)$. In addition to the set of generators
\[
\cal{N}_e := \{e_k(\X_\point) - e_k(\X_{\point'}) \mid 1 \leq k \leq b\}
\]
for $N(\X_\point,\X_{\point'})$, we can also work with the generating sets
\[
\cal{N}_h := \{h_k(\X_\point - \X_{\point'}) \mid 1 \leq k \leq b\} 
\, , \quad
\cal{N}_p := \{p_k(\X_\point) - p_k(\X_{\point'}) \mid 1 \leq k \leq b\} \, .
\]
(See \S \ref{sec:symfns} for details on symmetric functions.) 
By change of variables, it is possible to pass from curved complexes of 
singular Soergel bimodules with $\Delta e$-curvature 
(i.e. curvature modeled on $\cal{N}_e$) to curved complexes with 
$h \Delta$-curvature and $\Delta p$-curvature, modeled on $\cal{N}_h$ 
and $\cal{N}_p$, respectively. See \S \ref{ss:v-curved cxs} and \S \ref{ss:power sum}.

The choice of generators for $N(\X_\point,\X_{\point'})$ is conceptually immaterial, 
but each of the above leads to a notion of curved complex of singular Soergel bimodules 
that is useful in particular instances. 
For example, complexes with $h \Delta$-curvature appear most often ``in the wild,''
and a straightforward change of variables leads to complexes with $\Delta e$-curvature 
that are well-behaved $2$-categorically. 
Passing to $\Delta p$-curvature requires working over a field of characteristic zero, 
but such curvature is best adapted to establishing Markov invariance.
In particular, Proposition \ref{prop:intro} and Theorem \ref{thm:intro} proceed by 
passing from curved Rickard complexes in $\YS(\SSBim)$ to curved complexes 
with (appropriate) $\Delta p$-curvature, before taking braid closure to obtain 
(uncurved) complexes associated to the corresponding link diagram.

\begin{remark}[$y$-variables vs. $u$-variables vs. $v$-variables]
\label{rem:yuv}
In defining the curved differential $\d^{\tot}$ in \eqref{eq:totdiff}, we used
the variable name $u$ to distinguish from the $y$ variables appearing in the
uncolored case \eqref{eq:GHy}. Similarly, we will use the variable names $v$ and
$\pv$ in the setting of $h \Delta$- and $\Delta p$-curvature, respectively. In
each of these settings, these deformation parameters play the role of (but are
\textbf{not} literally equal to) a generating set of symmetric functions in the
uncolored $y$-variables. The relation between the uncolored $y$ deformation
parameters and the $v$ (and $u$) parameters is outlined below and discussed in
detail in \S \ref{ss:ytouv} and \S \ref{ss:interpolation}.
It is best understood in the context of interpolation theory and 
is related to the geometry of the Hilbert scheme $\Hilb_n(\C^2)$.
\end{remark}

\subsection{Relation to previous work}
A notion of curved Rickard complexes, and its application to link homology, 
has been studied by Cautis--Lauda--Sussan (CLS) in \cite{MR4178751}. 
Their construction starts at the level of a categorified quantum group 
$\cal{U}_Q(\slnn{m})$, where $m$ corresponds to the number of braid strands. 
As such, their construction does not see one alphabet $\X_i$ 
per (left) braid boundary point $1\leq i \leq m$, but only the formal difference alphabets $\X_i - \X_{i+1}$. 
Related to this, CLS consider only \emph{one} deformation parameter 
$u$ of weight $\qdeg^{-2}\tdeg^2$. 
On the other hand, our construction starts at the level of
complexes of singular Soergel bimodules 
and considers a \emph{family} of deformation parameters for each strand, 
whose size is given by the strand label. 
These parameters account for higher degree homotopies of Rickard complexes, 
as proposed in \cite[Section 1.3]{MR4178751}. 
We expect that our constructions can be lifted from singular Soergel bimodules 
to a suitable version of the categorified quantum group $\cal{U}_Q(\glnn{m})$.

For a basic comparison of the constructions in \cite{MR4178751} to our construction,
consider the curved Rickard complex associated to the positive crossing \eqref{eq:firstC}. 
Retaining the notation from \eqref{eq:points}, the curvature considered 
by CLS takes the form
\begin{equation}\label{eq:CLScurv}
\big(e_1(\X_{\point_2}) - e_1(\X_{\point'_1})\big) z^{\mathsf{o}} u  - 
\big(e_1(\X_{\point_1}) - e_1(\X_{\point'_2})\big) z^{\mathsf{u}} u
\end{equation}
This expression has only a single deformation parameter $u$, 
that is weighted by scalars $z^{\mathsf{o}}$ and $z^{\mathsf{u}}$, 
and curved complexes with this curvature encode homotopic actions of 
$e_1(\X_{\point_2}) - e_1(\X_{\point_1})$ and $e_1(\X_{\point'_1}) - e_1(\X_{\point'_2})$.
By contrast, our curvature \eqref{eq:Ecurvature} has a family of deformation 
parameters $\{u_k^\mathsf{o}\}_{k=1}^a$ associated to the over strand 
and a family $\{u_k^\mathsf{u}\}_{k=1}^b$ associated to the under strand. 
Curved complexes with this curvature encode the homotopic actions of 
\emph{all} $e_k(\X)$ along \emph{each} strand.
Note that by specializing 
$u_1^\mathsf{o}= z^\mathsf{o} u$, $u_1^\mathsf{u}= -z^\mathsf{u} u$ 
and $u_k^\mathsf{o}=0=u_k^\mathsf{u}$ for all $k>0$ in \eqref{eq:Ecurvature}, 
we recover \eqref{eq:CLScurv}. 
Hence, our link homologies encodes the CLS invariant as a special case, 
see \S \ref{ss:homology with coefficients} and \ref{ss:crossingchange}.

Our multi-parameter curved Rickard complexes give rise to a variety of
deformations of colored, triply-graded Khovanov--Rozansky link homology that
appear closely related to the geometry of the Hilbert scheme $\Hilb_m(\C^2)$,
and its isospectral analogue $X_m$. Indeed, recall that the aforementioned work
of Gorsky--Hogancamp \cite{GH} uses the $y$-ified (uncolored) triply-graded
homology to establish a precise relation between $\YS H_{\KR}(\LB)$ and the
\emph{isospectral Hilbert scheme} $X_m$. There, the variables
$\{x_1,\ldots,x_m,y_1,\ldots,y_m\}$ occurring in the curved Rickard complex
assigned to an $m$-strand braid correspond to local coordinates on an open
subset of $X_m$. In particular, in lowest Hochschild degree the $y$-ified
Khovanov--Rozansky homology of an uncolored unknot is
\[
\YS H_{\KR}^{\text{low}}(\bigcirc) \cong \k[x,y]
\]
which (after extending scalars) is the coordinate ring of $\C^2 = \Hilb_1(\C^2)$. 
In the colored case, we have
\begin{equation}\label{eq:bcoloredU}
\YS H_{\KR}^{\text{low}}(\bigcirc_b) \cong \Sym(\X^b)[v_1,\ldots,v_b]
=\k[e_1(\X^b),\ldots,e_b(\X^b),v_1,\ldots,v_b]
\end{equation}
after changing from the $u$ to $v$ variables, as in Remark \ref{rem:yuv}. As
explained in \S \ref{ss:interpolation}, these $v$-variables are
\emph{interpolation coordinates} for $y$-variables in terms of $x$-variables:
\begin{equation}\label{eq:firstytov}
y_i = \sum_{r=1}^b x_i^{r-1} v_r \, .
\end{equation}
Comparing to \cite[Proposition 3.6.3]{Haiman}, 
we see that the generators 
$\{e_1(\X^b),\ldots,e_b(\X^b),v_1,\ldots,v_b\}$
of $\YS H_{\KR}(\bigcirc_b)$
can be understood as coordinates 
on an open affine subset of $\Hilb_b(\C^2)$.
We will comment further on relations between colored $\YS H_{\KR}(\LB)$ 
and Hilbert schemes
in the following section.

\subsection{Curved colored skein relation, link splitting, and the Hopf link}
\label{sec:introlinksplit}
Important applications of deformed link homologies derive from their controlled
behavior under unlinking, i.e. their \emph{link splitting} properties, see e.g.
Batson--Seed~\cite{BS}. Closer to the present paper, the connection
between (uncolored) $\YS H_{\KR}(\LB)$ and Hilbert schemes mentioned above
relies on the \emph{link splitting map} from the full twist braid to the
identity braid. This map identifies the lowest Hochschild-degree summand of the
homology of the $(m,m)$-torus link with an ideal $I_m \lhd
\k[x_1,\ldots,x_m,y_1,\ldots,y_m] =: \k[\X,\Y]$, and $X_m$ is precisely the
blowup of $(\C^2)^m$ at $\C \otimes_\k I_m$, by work of Haiman \cite[Proposition
3.4.2]{Haiman}.

The (uncolored) link splitting map is determined by a deformation of the categorified 
HOMFLYPT skein relation. In order to understand link splitting behavior in the
colored, triply-graded context, we develop a curved version of the colored skein
relation that we introduced in the companion paper \cite{HRW1}. The culmination
of \S\ref{s:curved skein rel} is the following result. 
Herein, the brackets $\llbracket-\rrbracket_\YS$ denote curved lifts of the
Rickard complexes associated to shown tangled webs, 
and $\tw_{D}$ denotes a twist in the differential by a curved Maurer--Cartan $D$ 
(see \S \ref{ss:pert} for a review of this terminology). 

\begin{thm}[{Corollary \ref{cor:curvedMCCS}}]\label{thm:mainthm} 
Let $\Xi:=\{\xi_1,\dots,\xi_b\}$ be a set of exterior variables
with $\wt(\xi_i)=\tdeg\inv \qdeg^{2i}$, then there exists a homotopy equivalence
\begin{equation}\label{eq:conv}
\tw_{D_1}\left( \bigoplus_{s=0}^b \qdeg^{s(b-1)} \tdeg^s \left\llbracket
\begin{tikzpicture}[scale=.5,smallnodes,rotate=90,anchorbase]
	\draw[very thick] (1,-1) to [out=150,in=270] (0,0); %under strand
	\draw[line width=5pt,color=white] (0,-2) to [out=90,in=270] (.5,0) to [out=90,in=270] (0,2);
	\draw[very thick] (0,-2) node[right=-2pt]{$a$} to [out=90,in=270] (.5,0) 
		to [out=90,in=270] (0,2) node[left=-2pt]{$a$};
	\draw[very thick] (1,1) to (1,2) node[left=-2pt]{$b$};
	\draw[line width=5pt,color=white] (0,0) to [out=90,in=210] (1,1); 
	\draw[very thick] (0,0) to [out=90,in=210] (1,1); %over strand
	\draw[very thick] (1,-2) node[right=-2pt]{$b$} to (1,-1); 
	\draw[very thick] (1,-1) to [out=30,in=330] node[above=-2pt]{$s$} (1,1); 
\end{tikzpicture}
\right\rrbracket_\YS \right)
\simeq 
\tw_{D_2}\left(\qdeg^{b(a-b-1)}\tdeg^b \left\llbracket
\begin{tikzpicture}[scale=.4,smallnodes,anchorbase,rotate=270]
	\draw[very thick] (1,-1) to [out=150,in=270] (0,1) to (0,2) node[right=-2pt]{$b$}; %under strand
	\draw[line width=5pt,color=white] (0,-2) to (0,-1) to [out=90,in=210] (1,1);
	\draw[very thick] (0,-2) node[left=-2pt]{$b$} to (0,-1) to [out=90,in=210] (1,1);
	\draw[very thick] (1,1) to (1,2) node[right=-2pt]{$a$};
	\draw[very thick] (1,-2) node[left=-2pt]{$a$} to (1,-1); 
	\draw[very thick] (1,-1) to [out=30,in=330] node[below=-1pt]{$a{-}b$} (1,1); 
\end{tikzpicture}
\right\rrbracket_\YS \otimes \largewedge[\Xi]\right)
\end{equation}
of curved twisted complexes.
Further, the curved Maurer--Cartan element $D_1$ is one-sided with 
respect to the partial order by the index $s$, 
and the right-hand side is a certain curved Koszul complex.
\end{thm}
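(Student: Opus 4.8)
The plan is to deform the undeformed categorical colored skein relation \cite[\HRWSkeinrel]{HRW1} into the curved world, working inside the monoidal dg $2$-category $\YS(\SSBim)$ of Theorem~\ref{thm:intro2} and controlling the deformation with the uniqueness statement of Theorem~\ref{thm:intro1} together with the homological perturbation formalism recalled in \S\ref{ss:pert}. Write $\mathrm{web}_s$ for the web appearing as the $s$-th summand on the left-hand side and $\mathrm{web}_\infty$ for the one on the right. Recall that in \cite{HRW1} the skein relation is proved as an explicit homotopy equivalence between the convolution of the undeformed Rickard chain groups $\llbracket\mathrm{web}_s\rrbracket$, $0\leq s\leq b$, and $\llbracket\mathrm{web}_\infty\rrbracket$ tensored with an exterior algebra equipped with a Koszul differential, and that this equivalence comes with explicit contracting homotopies. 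The first step is to repackage that equivalence as a strong deformation retract between the two undeformed complexes.

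Second, I would write down the curved Maurer--Cartan element $D_1$ directly: on $\bigoplus_{s}\llbracket\mathrm{web}_s\rrbracket_\YS$ it is the sum of the Rickard-type connecting maps $\llbracket\mathrm{web}_s\rrbracket\to\llbracket\mathrm{web}_{s+1}\rrbracket$ together with the deformation terms built from the monodromy homotopies $\Psi^{\mathsf o}_k,\Psi^{\mathsf u}_k$ of \S\ref{s:curved rickard etc} scaled by the corresponding deformation parameters. That $\tw_{D_1}$ then yields a curved complex with $\Delta e$-curvature is precisely the computation behind \eqref{eq:Ecurvature}, carried out for the tangled web at hand in place of a single crossing, so that $\tw_{D_1}(\bigoplus_s\llbracket\mathrm{web}_s\rrbracket_\YS)$ is by construction a curved lift of the left-hand Rickard complex. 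One-sidedness of $D_1$ with respect to the order by $s$ follows from a $\tdeg$-degree count: the chain group $\llbracket\mathrm{web}_s\rrbracket$ sits in homological degree $s$, the connecting maps and the homotopies $\Psi_k$ each strictly raise this degree, and the deformation parameters are themselves positively $\tdeg$-graded, so no component of $D_1$ can lower $s$.

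Third, with both sides lifted into $\YS(\SSBim)$, I would transport the strong deformation retract of the first step along the perturbation given by the curving deformation terms, i.e. the difference between the curved and undeformed differentials on the left. This perturbation raises the degree in the adic filtration by the deformation parameters, so the perturbation lemma applies and produces a homotopy equivalence of curved twisted complexes whose target is $\llbracket\mathrm{web}_\infty\rrbracket_\YS\otimes\largewedge[\Xi]$ carrying a perturbed differential $D_2$. Each of the $b$ cancellations in the undeformed equivalence contributes one exterior generator, and propagating weights through the retract forces $\wt(\xi_i)=\tdeg\inv\qdeg^{2i}$ and reproduces the displayed grading shifts, whose numerical part is a Gaussian binomial identity. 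Reading off the components of $D_2$ from the retract data exhibits it as a curved Koszul differential whose part linear in the $\xi_i$ pairs wedging with $\xi_i$ against multiplication by a difference of sheet-algebra elements along the surviving strand, with the corresponding Koszul curvature, modeled on $\cal N_e$ after the change of variables of Remark~\ref{rem:yuv}; uniqueness of curved lifts (Theorem~\ref{thm:intro1}) then rigidifies the answer once the Koszul candidate has been checked to carry $\Delta e$-curvature.

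The hard part is the third step: one must verify that the higher-order corrections generated by the perturbation lemma actually close up into the asserted shape --- that $D_1$ stays one-sided once all corrections are incorporated, and that $D_2$ is genuinely a curved Koszul complex rather than merely a twisted complex with an opaque differential. This is where the precise algebra of the monodromy homotopies is used, in particular the fact (from \S\ref{s:curved rickard etc}) that the $\Psi_k$ square to zero and pairwise anticommute, together with the symmetric-function combinatorics relating $\cal N_e$, $\cal N_h$, and $\cal N_p$. A convenient way to organize this bookkeeping is to induct on the color $b$, peeling one strand off the $b$-labeled boundary at each stage so that exactly one new exterior generator $\xi_b$ is introduced, reducing to the case of smaller color.
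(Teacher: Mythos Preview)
Your overall strategy --- deform the uncurved skein relation and transport via homological perturbation --- is the right intuition, but you have the direction of the argument reversed relative to what actually works, and this creates two concrete gaps.

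\textbf{First gap: the construction of $D_1$.} You propose to build $D_1$ as ``the uncurved connecting maps $\llbracket\mathrm{web}_s\rrbracket\to\llbracket\mathrm{web}_{s+1}\rrbracket$ together with the deformation terms built from the monodromy homotopies.'' But the monodromy homotopies are already absorbed into the curved differential on each summand $\llbracket\mathrm{web}_s\rrbracket_\YS$; what remains is to check that the uncurved connecting maps are \emph{closed} for those curved differentials. They are not: there is a $\V$-dependent correction term $\Delta^c$ (see Proposition~\ref{prop:KMCSdiffs curved}) that must be added to the connecting differential, and this does not come for free from the single-crossing computation~\eqref{eq:Ecurvature}. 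Your one-sidedness argument is also flawed: each $\llbracket\mathrm{web}_s\rrbracket_\YS$ is a complex supported in a \emph{range} of $\tdeg$-degrees (the full twist $\FT_{a,b-s}$ spans degrees $0$ through $2(b-s)$), so a $\tdeg$-degree-one element can have components in both $s$-directions.

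\textbf{Second gap: the Koszul shape of $D_2$.} You acknowledge this is the hard part, but the inductive sketch is not enough. From your direction, $D_2$ arises as the output of a perturbation formula and there is no structural reason it should be Koszul rather than a general twist; the invocation of Theorem~\ref{thm:intro1} does not help, since $\MCS_{a,b}$ is not an invertible complex.

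The paper goes the \emph{opposite} direction and thereby makes both problems evaporate. It \emph{starts} from the right-hand side as a curved Koszul complex $\YK(\MCS_{a,b})$ --- so $D_2$ is Koszul by construction --- and then decomposes it explicitly via the $s$-filtration (Proposition~\ref{prop:KMCSdiffs curved}), exhibiting both $\d^c$ and its curved correction $\Delta^c$, each of which manifestly raises $s$-degree by one. The associated graded pieces $\YMCCSmin_{a,b}^s$ are then identified with the curved threaded digons $I^{(s)}(\VFT_{a,b-s})$ by a direct computation using the functor $I^{(s)}$ and $h$-reduction identities (Proposition~\ref{prop:MCCS topologically}). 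Only now does homological perturbation enter, transferring the twist $\d^c+\Delta^c$ along these summand-wise equivalences; since the homotopies are summand-wise in $s$, the perturbation formula forces the transferred twist $D_1$ to still raise $s$-degree. In short: start where the structure is visible (the Koszul side), decompose, then perturb to reach the side you do not yet understand.
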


For obvious reasons, we will sometimes refer to the left-hand side of 
\eqref{eq:conv} as the complex of \emph{threaded digons}, 
denoted by $\TD_b(a)$. As the notation suggests, it is useful to view this complex 
as a function of the threading $a$-colored strand. 
One consequence of Theorem \ref{thm:mainthm} is that $\TD_b(a) \simeq 0$ when $a<b$
(in that case, the webs on the right-hand side correspond to the zero bimodule).
	
\begin{rem}
An equivalent formulation of the curved colored skein relation \eqref{eq:conv}
that more closely resembles the usual HOMFLYPT skein relation 
(relating positive and negative crossings to their oriented resolution) is as follows:
\[
\tw_{D_1'}\left( \bigoplus_{s=0}^b \qdeg^{s(b-1)} \tdeg^s \left\llbracket
	\begin{tikzpicture}[scale=.5,smallnodes,rotate=90,anchorbase]
		\draw[very thick] (1,-1) to [out=150,in=210] (0,1); %under strand
		\draw[line width=5pt,color=white] (0,-2) to [out=90,in=270] (1,2);
		\draw[very thick] (0,-2) node[right=-2pt]{$a$}to [out=90,in=270] (1,2) node[left=-2pt]{$a$};
		\begin{scope} %over strand
		\clip (1,-.5) rectangle (0,.75);
		\draw[line width=5pt,color=white] (1,-1) to [out=30,in=330] (0,1); 
		\end{scope}
		\draw[very thick] (1,-1) to [out=30,in=330] node[above,yshift=-2pt]{$s$} (0,1); 
		\draw[very thick] (1,-2) node[right=-2pt]{$b$} to (1,-1); 
		\draw[very thick] (0,1) to (0,2) node[left=-2pt]{$b$};
	\end{tikzpicture}
\right\rrbracket_\YS \right)
\simeq 
\tw_{D_2'}\left(
	\qdeg^{-b}\tdeg^b \left\llbracket
	%		\begin{tikzpicture}[scale=.5,smallnodes,anchorbase,rotate=270]
	%			\draw[very thick] (0,-2) node[left]{$a$} to (0,2) node[right]{$b$};
	%			\draw[very thick] (1,-2) node[left]{$b$} to (1,2) node[right]{$a$};
	%			\draw[very thick] (0,-.5) to  node[right,yshift=-3pt,xshift=5pt]{$a-b$} (.5,0) to (1,.5); 
	%		\end{tikzpicture}
	\begin{tikzpicture}[smallnodes,rotate=90,anchorbase,scale=.75]
		\draw[very thick] (0,.25) to [out=150,in=270] (-.25,1) node[left,xshift=2pt]{$b$};
		\draw[very thick] (.5,.5) to (.5,1) node[left,xshift=2pt]{$a$};
		\draw[very thick] (0,.25) to node[right]{$a{-}b$} (.5,.5);
	%	\draw[very thick] (0,-.25) to (0,.25);
	%	\draw[very thick] (.5,-.5) to [out=30,in=330] node[above,yshift=-2pt]{$k$} (.5,.5);
	%	\draw[very thick] (0,-.25) to (.5,-.5);
		\draw[very thick] (.5,-1) node[right,xshift=-2pt]{$b$} to [out=90,in=330] (.5,.5);
		\draw[very thick] (-.25,-1)node[right,xshift=-2pt]{$a$} to [out=90,in=270] (0,.25);
	\end{tikzpicture}
\right\rrbracket_\YS \otimes \largewedge[\Xi]\right) \, .
\]
Note that the left-hand side involves complexes that interpolate between a positive and negative crossing.
The twists here have similar properties to those in Theorem \ref{thm:mainthm}.
\end{rem}

\begin{exa}
The Rickard complex for a crossing between a pair of 2-colored strands has the form
\[
C_{2,2}:=\left \llbracket
	\begin{tikzpicture}[rotate=90,scale=.5,smallnodes,anchorbase]
		\draw[very thick,->] (1,-1) node[right,xshift=-2pt]{$2$} to [out=90,in=270] (0,1);
		\draw[line width=5pt,color=white] (0,-1) to [out=90,in=270] (1,1);
		\draw[very thick,->] (0,-1) node[right,xshift=-2pt]{$2$} to [out=90,in=270] (1,1);
	\end{tikzpicture}
\right\rrbracket
=
\left(
\begin{tikzpicture}[smallnodes,rotate=90,anchorbase,scale=.75]
	\draw[very thick] (0,.25) to [out=150,in=270] (-.25,1) node[left,xshift=2pt]{$2$};
	\draw[very thick] (0,.25) to [out=30,in=270] (.5,1) node[left,xshift=2pt]{$2$};
	\draw[very thick] (0,-.25) to (0,.25);
	\draw[very thick] (.5,-1) node[right,xshift=-2pt]{$2$} to  [out=90,in=330] (0,-.25);
	\draw[very thick] (-.25,-1)node[right,xshift=-2pt]{$2$} to [out=90,in=210] (0,-.25);
\end{tikzpicture}
\to  \qdeg\inv  \tdeg
\begin{tikzpicture}[smallnodes,rotate=90,anchorbase,scale=.75]
	\draw[very thick] (0,.25) to [out=150,in=270] (-.25,1) node[left,xshift=2pt]{$2$};
	\draw[very thick] (.5,.5) to (.5,1) node[left,xshift=2pt]{$2$};
	\draw[very thick] (0,.25) to node[left,xshift=2pt,yshift=-1pt]{$1$} (.5,.5);
	\draw[very thick] (0,-.25) to (0,.25);
	\draw[very thick] (.5,-.5) to [out=30,in=330] (.5,.5);
	\draw[very thick] (0,-.25) to node[right,xshift=-2pt,yshift=-1pt]{$1$} (.5,-.5);
	\draw[very thick] (.5,-1) node[right,xshift=-2pt]{$2$} to (.5,-.5);
	\draw[very thick] (-.25,-1)node[right,xshift=-2pt]{$2$} to [out=90,in=210] (0,-.25);
\end{tikzpicture}
\to  \qdeg^{-2} \tdeg^2
\begin{tikzpicture}[smallnodes,rotate=90,anchorbase,scale=.75]
	\draw[very thick] (-.25,-1) node[right,xshift=-2pt]{$2$} to (-.25,1) node[left,xshift=2pt]{$2$};
	\draw[very thick] (.5,-1) node[right,xshift=-2pt]{$2$} to (.5,1) node[left,xshift=2pt]{$2$};
	\end{tikzpicture}
\right)
\]
We denote the webs appearing in this complex as $W_2$, $W_1$ and $W_0$ respectively. 
After basis change in the exterior algebras, 
the curved twisted complex on the right-hand side of \eqref{eq:conv} 
has the following schematic form: 
\begin{equation}
	\label{eq:exaYKMCS}
		\begin{tikzpicture}[anchorbase]
			\node[scale=1] at (5,-2.5){$C_{2,2} \otimes \wedge$
			};
			\draw[->] (5.5,-2.25) to (5.5,-1.25);
			\draw[->] (4.25,-2.5) to (3.25,-2.5);
			\node[scale=.75] at (-3.5,3.5){
				$\left\llbracket
	\begin{tikzpicture}[scale=.5,smallnodes,anchorbase,rotate=90]
			\draw[very thick] (1,0) to [out=90,in=270] (0,1.5)node[left,xshift=2pt]{$2$};
			\draw[line width=5pt,color=white] (1,-1.5) to [out=90,in=270] (0,0) 
			   to [out=90,in=270] (1,1.5);
			\draw[very thick] (1,-1.5)node[right,xshift=-2pt]{$2$}  to [out=90,in=270] (0,0) 
			   to [out=90,in=270] (1,1.5)node[left,xshift=2pt]{$2$};
			\draw[line width=5pt,color=white] (0,-1.5) to [out=90,in=270] (1,0);
			\draw[very thick] (0,-1.5)node[right,xshift=-2pt]{$2$}  to [out=90,in=270] (1,0);
	\end{tikzpicture} 
	\right\rrbracket$
			};
			\draw[->,gray] (-2,3.5) to (-1.5,3.5);
			\node[scale=.75, blue] at (0,3.5){
				$\left\llbracket
	\begin{tikzpicture}[scale=.375,smallnodes,rotate=90,anchorbase]
	\draw[very thick] (1,-1) to [out=150,in=270] (0,0); %under strand
	\draw[line width=5pt,color=white] (0,-2) to [out=90,in=270] (.5,0) to [out=90,in=270] (0,2);
	\draw[very thick] (0,-2) node[right=-2pt]{$2$}to [out=90,in=270] (.5,0) 
		to [out=90,in=270] (0,2) node[left=-2pt]{$2$};
	\draw[line width=5pt,color=white] (0,0) to [out=90,in=210] (1,1); 
	\draw[very thick] (0,0) to [out=90,in=210] (1,1); %over strand
	\draw[very thick] (1,-2) node[right=-2pt]{$2$} to (1,-1); 
	\draw[very thick] (1,-1) to [out=30,in=330] (1,1); 
	\draw[very thick] (1,1) to (1,2) node[left=-2pt]{$2$};
	\end{tikzpicture}
	\right\rrbracket$
			};
			\draw[->,gray] (1.5,3.5) to (2,3.5);
			\node[scale=.75, green] at (3.5,3.5){
				$\left\llbracket
	\begin{tikzpicture}[scale=.5,smallnodes,anchorbase,rotate=90]
		\draw[very thick] (0,-1.5) node[right,xshift=-2pt]{$2$} to (0,1.5)node[left,xshift=2pt]{$2$};
		\draw[very thick] (1,-1.5) node[right,xshift=-2pt]{$2$} to (1,1.5)node[left,xshift=2pt]{$2$};
	\end{tikzpicture} 
	\right\rrbracket$
			};
			\draw[dotted] (1.75,4) to (1.75,3) to [out=270,in=180] (4,1.75) to (5,1.75);
			\draw[dotted] (-1.75,4) to (-1.75,3) to [out=270,in=180] (2,-.75) to (5,-.75);
			\node[scale=1] at (0,0.5){
		\begin{tikzcd}[row sep=2em,column sep=-1.5em]
			%% first row
			& 
			W_{2}\otimes \zeta^{(2)}_1\zeta^{(2)}_2	
			\arrow[ddl]
			\arrow[dr ]
			\arrow[from=dr, shift left,dashed] 
			\arrow[rrr,gray,] & & & 
			\BLUE{W_{1}\otimes \zeta^{(1)}_1\zeta^{(1)}_2}
			\arrow[dr,blue] 
			\arrow[from=dr, shift left,dashed,blue] 
			\arrow[rrr,gray] & & & 
			\GREEN{W_{0}\otimes \zeta^{(0)}_1\zeta^{(0)}_2}	
			\arrow[from=dr,dashed,gray] & 
			\\
			%% second row
			& &
			W_{2}\otimes \zeta^{(2)}_2	
			\arrow[ldd]
			\arrow[from=ldd, shift left,dashed] 
			\arrow[dr,] 
			\arrow[rrr,gray] & & & 
			\BLUE{W_{1}\otimes \zeta^{(1)}_2}	
			\arrow[from=ldd,dashed,gray] 
			\arrow[dr,blue] 
			\arrow[rrr,blue]  & & & 
			\BLUE{W_{0}\otimes \zeta^{(0)}_2}	
			\arrow[from=ldd,dashed,gray]
			\\
			%% third row
			W_{2}\otimes \zeta^{(2)}_1	
			\arrow[dr]
			\arrow[from=dr, shift left,dashed] 
			\arrow[rrr,crossing over] 
			\arrow[uur,shift left,dashed] & & & 
			W_{1}\otimes \zeta^{(1)}_1	
			\arrow[dr]
			\arrow[from=dr, shift left,dashed] 
			\arrow[rrr,crossing over,gray] 
			\arrow[uur,crossing over,dashed,gray] & & & 
			\BLUE{W_{0}\otimes \zeta^{(0)}_1}	
			\arrow[from=dr,dashed,gray]
			\arrow[uur,crossing over,dashed,gray] & & 
			\\
			%% fourth row
			&
			 W_{2}\otimes 1	 \arrow[rrr]& & & 
			 W_{1}\otimes 1	 \arrow[rrr]& & & 
			 W_{0}\otimes 1	 &
		\end{tikzcd}
			};
		\end{tikzpicture}
\end{equation}
The subquotients with respect to the filtration indicated by the dotted
lines (and colored \textbf{black}, {\color{blue} \textbf{blue}}, and {\color{green} \textbf{green}})
are homotopy equivalent to the indicated complexes that appear 
on the left-hand side of \eqref{eq:conv}. 
(Compare to the corresponding figure in \cite[Section 1]{HRW1}, 
where the dashed components of the differential do not appear.)
Additional details can be found in Example~\ref{exa:KMCS}.
\end{exa}

In \S\ref{sec:splitting} we use the curved colored skein relation \eqref{eq:conv}
to study splitting properties of the deformed, colored, triply-graded link homology. 
In particular, we obtain an explicit model for the \emph{colored link splitting map} 
from the colored full twist on two strands to the identity braid in \S\ref{ss:explicit splitting map}. 
In Theorem~\ref{thm:skein split} we obtain a 
\emph{simultaneous splitting} of the complex of threaded digons.

In \S \ref{s:colored Hilb}, we begin the study of the colored full twist. 
Paralleling the uncolored case, we conjecture that the \emph{parity} 
property enjoyed by positive torus links 
\cite{MR3880028, mellit2017homology, hogancamp2019torus} 
holds in the colored setting as well; see Conjecture \ref{conj:FT is parity}. 
Under this assumption, we show that, in lowest Hochschild degree,
the deformed, colored homology of the $\brc$-colored $(m,m)$-torus link (the closure of the full twist) 
embeds as an ideal $J_{\brc}$ in the algebra
\[
E_{\brc} := \bigotimes_{i=1}^m \Sym(\X^{b_i})[v_{i,1},\ldots,v_{i,b_i}] \, .
\]
In the uncolored case, the ideal $J_{1^m}$ is precisely the ideal $I_m$ 
mentioned above, 
which has an explicit description as
\[
I_m := \k[\X,\Y] \cdot \left\{ f(\X,\Y) \mid f\in \k[\X,\Y] \text{ is antisymmetric for } \symg_{m} \right\} \, .
\]
Generalizing this, we posit the following in Conjecture \ref{conj:FT ideal}:

\begin{conj}\label{conj:FT}
Let $\brc=(b_1,\ldots,b_m)$ and $N=\sum_{i=1}^m b_i$.
Let $T(m,m;\brc)$ denote the $\brc$-colored $(m,m)$-torus link, 
then the colored splitting map identifies
the lowest Hochschild degree summand of 
$\YS H_{\KR}\big(T(m,m;\brc)\big)$ with the ideal
\[
I_{\brc}:= E_{\brc}\cdot \left\{\frac{f(\X,\Y)}{\Delta(\X^{b_1})\cdots \Delta(\X^{b_m})} 
\mid f\in \k[\X,\Y] \text{ is antisymmetric for } \symg_{N} \right\} \, .
\]
Here, $\X= \bigcup_{i=1}^m \X^{b_i} = \{x_1,\ldots,x_N \}$,
$\Y=\{y_1,\ldots,y_N \}$, 
$\Delta(\X^{b_i})$ denotes the Vandermonde determinant, 
and we include $\k[\X,\Y] \hookrightarrow E_{\brc}$ using the appropriate 
analogue of \eqref{eq:firstytov} (precisely recorded in \eqref{eq:y to v 2}).
\end{conj}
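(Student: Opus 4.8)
The plan is to deduce Conjecture~\ref{conj:FT} from the parity hypothesis of Conjecture~\ref{conj:FT is parity} by induction on the number $m$ of link components. Granting that hypothesis, what is established in \S\ref{s:colored Hilb} already identifies the lowest Hochschild degree summand of $\YS H_{\KR}(T(m,m;\brc))$, via the colored splitting map, with a genuine ideal $J_{\brc} \lhd E_{\brc}$; the remaining content of Conjecture~\ref{conj:FT} is therefore the single identity $J_{\brc} = I_{\brc}$. The strategy is to show that both ideals satisfy a common recursion in which the $m$-th strand is threaded once around the bundle formed by the remaining $m-1$ strands, and to match these recursions term by term. The base case $m \leq 2$ is the colored Hopf link, whose deformed homology is computed in \S\ref{s:colored Hilb} in terms of Haiman determinants and agrees with $I_{(b_1,b_2)}$; since the $\brc$-colored $(m,m)$-torus link is symmetric under permuting its components, the choice of which strand to thread is immaterial.

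For the topological half of the recursion, I would work in the dg $2$-category $\YS(\SSBim)$ and lift to a homotopy equivalence the classical braid identity presenting the $\brc$-colored full twist on $m$ strands as the composite of the $\brc'$-colored full twist on the first $m-1$ strands (with $\brc' = (b_1,\dots,b_{m-1})$), extended by the identity on the last strand, with the braid that threads the $b_m$-colored strand once around the remaining $m-1$; this lift is canonical up to homotopy equivalence by Theorems~\ref{thm:intro1} and~\ref{thm:intro2}. Merging the first $m-1$ strands into a single $\bigl(\sum_{i<m} b_i\bigr)$-colored bundle, I would resolve the threading using the curved colored skein relation of Theorem~\ref{thm:mainthm} together with the simultaneous splitting of threaded digons of Theorem~\ref{thm:skein split}, replacing it by a curved Koszul complex tensored with exterior algebras $\largewedge[\Xi]$. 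Invoking Conjecture~\ref{conj:FT is parity}, the accompanying filtration spectral sequence collapses in the lowest Hochschild degree, so that after passing to the braid closure one reads off there an explicit presentation of $J_{\brc}$ built from $J_{\brc'}$ and the coordinates $\{e_k(\X^{b_m}), v_{m,k}\}$ of the new strand, with the Koszul differential imposing the relations that record the linking of the new component with the old ones.

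For the algebraic half, I would verify that $I_{\brc}$ obeys the same recursion. After the interpolation substitution $y_i = \sum_{r} x_i^{r-1} v_r$ of \eqref{eq:firstytov}, recorded precisely in \eqref{eq:y to v 2}, the ideal $I_{\brc}$ is the image in $E_{\brc}$ of the space of $\symg_{N}$-antisymmetric polynomials in $\k[\X,\Y]$ divided by the product of block Vandermondes $\prod_i \Delta(\X^{b_i})$; I expect this to be the coordinate ring of an affine chart of a colored (equivalently, nested) refinement of Haiman's isospectral Hilbert scheme, for which the analogue of the blow-up description of $X_m$ from \cite{Haiman} yields the desired recursion relating $I_{\brc}$ to $I_{\brc'}$ and a new copy of $\Hilb_{b_m}(\C^2)$. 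For $\brc = 1^m$ everything specializes to the recursion for $I_m$ used by Gorsky--Hogancamp in \cite{GH}. Matching the two recursions on the nose then closes the induction.

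The main obstacle is the algebraic half: establishing the colored/nested analogue of Haiman's structural results --- in particular the flatness and the clean threading recursion for the ``polarized antisymmetric modulo block Vandermonde'' ideal $I_{\brc}$ --- is a substantial piece of commutative algebra (via Haiman's polygraph techniques, or a new geometric argument), made harder by the fact that the antisymmetrization defining $I_{\brc}$ is taken over the full group $\symg_{N}$ rather than the Young subgroup seen by the braid strands, so the recursion genuinely encodes a blow-up rather than a free extension. A secondary difficulty is the bookkeeping in the topological half: one must check that the higher (``dashed'') components of the differential produced by the curved skein relation (cf.~\eqref{eq:exaYKMCS}) contribute no relations beyond those already in $I_{\brc}$, and that Hochschild-degree truncation is compatible with the simultaneous splitting. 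Should a direct match of recursions prove intractable, a fallback is to compare trigraded Hilbert series --- that of $J_{\brc}$ computed from the parity property via torus-link cabling formulas, and that of $E_{\brc}/I_{\brc}$ from Haiman's character formulas --- together with the containment $I_{\brc} \subseteq J_{\brc}$ obtained by realizing the Haiman determinants as explicit cycles under the splitting map.
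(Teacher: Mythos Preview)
This statement is a \emph{conjecture} that the paper does not prove in general; only the two-strand case $m=2$ is established (Theorem~\ref{thm:Jab}), and the general statement is explicitly left open (it is restated as Conjecture~\ref{conj:FT ideal}). So there is no ``paper's proof'' for arbitrary $m$ to compare against, and your proposal is an attempt to go beyond what the paper achieves.

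For $m=2$ the paper's argument is an induction on the \emph{color} $b$ of the second strand, not on $m$: first $J_{a,b}\subset I_{a,b}$ is obtained by computing the images of the corner generators of $H(M_{a,b})$ explicitly as (reduced) Haiman determinants (Lemma~\ref{lem:keylemma}, Proposition~\ref{prop:JinI}); then $I_{a,b}\subset J_{a,b}$ is deduced from exactness of the complex $J_{a,(b-\bullet,\bullet)}$ built from the curved skein relation (Lemma~\ref{lemma:Js in Es}, Proposition~\ref{prop:Jexact}) together with the base case $I_{a,1}=J_{a,1}$ (Lemma~\ref{lem:Ja1=Ia1}), the latter proved using Haiman's description $I_N=\bigcap_{i<j}\langle x_i-x_j,y_i-y_j\rangle$.

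Your induction on $m$ is a natural extrapolation, but both halves have genuine gaps. On the topological side, Theorem~\ref{thm:mainthm} threads a single $a$-colored strand through a single $b$-colored digon; the version you need, threading through a bundle of $m-1$ independently colored strands, is precisely the content of Conjecture~\ref{conj:E}, which the paper leaves open. On the algebraic side you correctly flag the obstacle: a threading recursion for $I_{\brc}$ would amount to a nested/colored extension of Haiman's blow-up description of $X_N$, and no such result is available. Finally, your fallback containment is backwards: exhibiting specific Haiman determinants as images of the splitting map places those elements in $J_{\brc}$, hence yields $J_{\brc}\subset I_{\brc}$ once one checks they lie in $I_{\brc}$ --- this is exactly Proposition~\ref{prop:JinI}. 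It does \emph{not} give $I_{\brc}\subset J_{\brc}$ unless you already know those particular determinants generate $I_{\brc}$, and in the paper that fact (Corollary~\ref{cor:JHaiman}) is obtained only \emph{a posteriori}, after $I_{a,b}=J_{a,b}$ has been established.
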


In \S \ref{s:hopf link}, we prove this conjecture in the $2$-strand case, 
i.e. for the $(a,b)$-colored Hopf link. 
First, in Proposition \ref{prop:Hopfparityandgens} we confirm that the colored Hopf link 
is indeed parity. We then embark on a complicated inductive journey, 
using the simultaneous colored skein splitting referenced above, 
which culminates in Theorem \ref{thm:Jab} with
the verification of Conjecture \ref{conj:FT} when $m=2$. 
Along the way, we encounter specific \emph{Haiman determinants} 
(reviewed in \S \ref{ss:haiman dets}) and in Corollary \ref{cor:JHaiman} 
we give an explicit set of generators for $I_{a,b}$ using these elements.

Finally, in \S \ref{sec:linksplit}, we use Theorem \ref{thm:Jab} to extend our 
link splitting results from \S \ref{sec:splitting} to the case of arbitrary colored links.
For certain specializations, 
this generalizes results obtained in \cite{BS,GH} to our setting, 
and recovers the colored link splitting result from \cite{MR4178751}.
We also speculate on the interpretation of more-interesting specializations, 
making contact with Conjecture \ref{conj:E}, which is stated in the following section.

\subsection{Further conjectures}

To conclude this (extended) introduction, we collect two particularly enticing
conjectures that are motivated by the following $\sim100$ pages of this paper.

\subsubsection{Homology of cables}
We propose a precise relation between deformed, colored, triply-graded
homology and the deformed (uncolored) triply-graded homology of cables, 
focusing on the case of cabled knots, for ease of exposition. 
Recall the following result in the undeformed setting.

\begin{thm}[{\cite[Theorem 6.1 and Corollary 6.5]{2019arXiv190404481G}}]
	\label{thm:cables}
Let $\KB$ be a framed, oriented knot, and let $\KB^b$ denote the $b$-cable of $\KB$ (a $b$-component link).  
Both the triply-graded Khovanov--Rozansky complex $C_{\KR}(\KB^b)$ and the $\glN$
Khovanov--Rozansky complex $C_{\KR_N}(\KB^b)$ carry an action of the symmetric
group $\symg_b$, up to homotopy, induced by braiding components of the cable. In
the $\glN$ case, the isotypic component corresponding to the trivial
representation is equivalent to the $b$-colored $\glN$ Khovanov--Rozansky complex
$C_{\KR_N}(\KB;b)$.
\end{thm}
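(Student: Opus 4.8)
The plan is to build the $\symg_b$-action at the chain level and then, in the $\glN$ case, realize the colored complex as an isotypic summand. First I would present $\KB$ as the closure of a braid $\beta$ on $n$ strands, so that the $b$-cable $\KB^b$ is the closure of the \emph{cabled braid} $\beta^{[b]}\in\Br_{nb}$ obtained by replacing each strand of $\beta$ with $b$ parallel copies; correspondingly $C_{\KR}(\KB^b)$ (resp.\ $C_{\KR_N}(\KB^b)$) is assembled from Rickard complexes of crossings between copies of $V$ (resp.\ of $V=\C^N$). Fixing a basepoint on the cable of one strand of $\beta$, just before the closure arc, the insertion of the Rickard complex of a braid $\gamma\in\Br_b$ on the $b$ parallel strands there defines an endomorphism of $C_{\KR}(\KB^b)$, and since Rickard complexes satisfy the braid relations up to canonical homotopy, this produces a weak action of $\Br_b$. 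The coherence needed to make this an action in the homotopy category is handled by the naturality of the Rickard homotopies.

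The main obstacle is upgrading this $\Br_b$-action to a $\symg_b$-action, i.e.\ showing that the square of the Rickard complex of a single crossing between two adjacent cable strands becomes homotopy equivalent to the identity after insertion into $C_{\KR}(\KB^b)$. As an abstract endofunctor the full twist on two copies of $V$ is very far from the identity — its Hochschild homology computes the Hopf link rather than the unknot — so the global structure of the cable is essential here. I would argue that a full twist inserted on two parallel cable strands can be transported once around the closure of $\KB$ and then cancelled up to homotopy, using Markov conjugation together with the isotopy of $S^3$ that interchanges the two cable components through the knot exterior; equivalently, one defines the transposition $s_i\in\symg_b$ directly from this interchange isotopy (via functoriality of the theory) and verifies the Coxeter relations up to homotopy. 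This is the technical heart of the cited result, and is exactly the point at which annular/evaluation techniques for Khovanov--Rozansky homology enter.

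For the $\glN$ statement, recall Schur--Weyl duality: as a $\symg_b$-module with a commuting $\glN$-action
\[
V^{\otimes b}\;\cong\;\bigoplus_{\lambda\vdash b} S^\lambda(V)\otimes M^\lambda,
\]
and the summand colored by the one-column diagram $(1^b)$, namely $\wedge^b V$, is the isotypic piece attached to the one-dimensional representation (the sign representation, equivalently — after the grading/Koszul twist implicit in the braiding-induced action — the trivial representation). The weak $\symg_b$-action of the previous paragraph splits $C_{\KR_N}(\KB^b)$ into isotypic summands, so it remains to identify the relevant one-dimensional summand with $C_{\KR_N}(\KB;b)$. This step is essentially definitional: by construction the $b$-colored $\glN$ complex is obtained from the cable by inserting the categorified $\wedge^b$-projector on each cabled strand, and the isotypic projector produced by the $\symg_b$-action realizes precisely this categorified projector — uniqueness of categorified projectors (in the sense of Cautis, Rozansky, and others) then pins down the equivalence. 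In the triply-graded case there is no finite-dimensional Schur--Weyl decomposition, but the construction of the $\symg_b$-action above used nothing about $N$ and applies verbatim, yielding the stated action on $C_{\KR}(\KB^b)$ with no accompanying decomposition.
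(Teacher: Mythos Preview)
The paper does not prove this theorem: it is quoted verbatim as a citation of \cite[Theorem 6.1 and Corollary 6.5]{2019arXiv190404481G} and used only as motivation for Conjecture~\ref{conj:cables}. There is therefore no proof in the paper to compare your proposal against.

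That said, your sketch is in the right ballpark but is vague precisely at the crucial step. Building a $\Br_b$-action by inserting Rickard complexes on parallel cable strands is straightforward and correct. The hard part, as you acknowledge, is showing that $\sigma_i^2$ acts as the identity up to homotopy so that the action descends to $\symg_b$. Your proposed mechanism --- transporting a full twist once around the closure and cancelling it by an isotopy of $S^3$ --- is not how Gorsky--Wedrich actually do this, and it is not clear your version can be made to work: a full twist on two cable strands does not obviously unwind under Markov conjugation alone, since conjugation preserves the braid word up to cyclic permutation, and the ``interchange isotopy through the knot exterior'' you invoke is not a priori realized by a chain map without the annular machinery. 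The cited paper instead works in the annular setting, where the cable is the image of an annular object under evaluation along $\KB$, and the $\symg_b$-action is constructed on the annular side (where one has much better control, essentially via derived horizontal trace techniques) and then pushed forward. Your parenthetical ``this is exactly the point at which annular/evaluation techniques enter'' is correct, but it is doing all the work and you have not actually explained how.

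On the $\glN$ identification: your Schur--Weyl paragraph conflates the trivial and sign representations. In ordinary Schur--Weyl duality $\wedge^b V$ corresponds to the \emph{sign} representation of $\symg_b$, yet the theorem asserts the \emph{trivial} isotypic component gives the $b$-colored complex. This is not a mistake in the theorem; rather, the braiding-induced action on the categorified side differs from the na\"{\i}ve permutation action by a twist (coming from the grading shifts in Rickard complexes), and sorting this out carefully is part of the content of the cited result. Your parenthetical hedge about a Koszul twist gestures at this but does not resolve it.
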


It is natural to consider the extension of this result for deformed triply-graded link homology. 
Interestingly, the most na\"{i}ve extension of this result is false.

\begin{exa}
Let $\UB$ be the 0-framed unknot, and $\UB^b$ its $b$-cable. 
Then, 
\[
\YS C_{\KR}^{\text{low}}(\UB^b) = \k[x_1,\ldots,x_b,y_1,\ldots,y_b] 
\]
with the standard $\symg_b$-action. 
The $\symg_b$-invariant part of $\YS C_{\KR}^{\text{low}}(\UB^b)$ does not equal the $b$-colored invariant 
\[
\YS C_{\KR}(\UB;b)=\k[x_1,\ldots,x_b]^{\symg_b}\otimes \k[v_1,\ldots,v_b] 
\]
from \eqref{eq:bcoloredU}, since $\wt(v_k) = \qdeg^{-2k}\tdeg^2$ 
while $\wt(y_i)=\qdeg^{-2}\tdeg^2$ for all $i$.
Nonetheless, there is a natural algebra map
\[
\YS C_{\KR}(\UB^b)^{\symg_b} \rightarrow  \YS C_{\KR}(\UB;b) \, , \quad 
y_i\mapsto \sum_{k=1}^b x_i^{k-1} v_k.
\]
\end{exa}

Based on our investigations in the present paper, we formulate the following.

\begin{conj}\label{conj:cables} Let $\KB$ be a framed, oriented knot, and let
$\chi\KB$ denote $\KB$ with framing increased by one. For each integer $b\geq 0$
we consider a directed system of complexes
\begin{equation}\label{eq:intro directed system}
\YS C_{\KR}(\KB^b)^{\text{triv}} \rightarrow \YS C_{\KR}((\chi\KB)^b)^{\text{sgn}} 
\rightarrow \YS C_{\KR}((\chi^2\KB)^b)^{\text{triv}} \rightarrow \cdots 
\end{equation}
in which the maps are inherited from the ``bottom eigenmap'' for the 
curved Rickard complex $\YS C(\FT_b)$ associated to the $b$-strand full-twist $\FT_b$. 
The (homotopy) colimit of this directed system is quasi-isomorphic to the $b$-colored 
curved Rickard complex $\YS C_{\KR}(\KB,b)$, as complexes of modules over
\[
\k[x_1,\ldots,x_b]^{\symg_b}\otimes \k[v_1,\ldots,v_b] \cong 
\operatorname{colim}\left(\YS C_{\KR}(\UB^b)^{\text{triv}} 
\to \YS C_{\KR}((\chi\UB)^b)^{\text{sgn}} \to \cdots \right) \, .
\]
\end{conj}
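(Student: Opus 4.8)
The natural strategy is to import the \emph{categorical diagonalization} techniques used by Gorsky--Hogancamp~\cite{GH} and in \cite{2019arXiv190404481G} into the curved, colored framework developed in this paper. The heart of the matter is the curved Rickard complex $\YS C(\FT_b)$ of the $b$-strand full twist, viewed as a $1$-morphism in the monoidal dg $2$-category $\YS(\SSBim)$ of Theorem~\ref{thm:intro2}. One wants to produce its \emph{bottom eigenmap} --- a chain map from a shifted copy of the identity to $\YS C(\FT_b)$ whose cone is assembled from $1$-morphisms supported on strictly higher eigenvalues --- and to show that inserting the associated bottom eigenprojector into the $b$-cable $\KB^b$ recovers the deformed $b$-colored invariant $\YS C_{\KR}(\KB,b)$. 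Granting this, the directed system \eqref{eq:intro directed system} is exactly the mapping telescope of the repeated bottom eigenmap (taken after cabling and closing up, at which point the braid-level $\Delta p$-curvature telescopes to zero and one is left with an honest complex of $\k[x_1,\dots,x_b]^{\symg_b}\otimes\k[v_1,\dots,v_b]$-modules), and its homotopy colimit is the projection onto the bottom eigenspace, i.e.\ the deformed colored projector $\YS P_b$ applied to $\KB^b$.

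Concretely I would proceed as follows. (i) Use the curved colored skein relation of Theorem~\ref{thm:mainthm}, together with the threaded-digon and simultaneous-splitting results of \S\ref{sec:splitting} (in particular Theorem~\ref{thm:skein split}), to identify the deformed $b$-colored projector $\YS P_b$ --- a crossing-absorbing homotopy idempotent in $\YS(\SSBim)$ --- as the bottom summand of $\YS C(\FT_b)$, and to pin down the bottom eigenmap $\alpha\colon \qdeg^{\ast}\tdeg^{\ast}\,\one \to \YS C(\FT_b)$ explicitly. (ii) Check that inserting $\alpha$ into the $b$-cable and taking braid closure with $\Delta p$-curvature realizes the structure maps of \eqref{eq:intro directed system}; here the alternation of the superscripts $\mathrm{triv}$ and $\mathrm{sgn}$ is forced by carrying the sign representation of $\symg_b$ through each full-twist insertion together with the framing shift $\KB \to \chi\KB$, exactly as in the undeformed cable computations of \cite{2019arXiv190404481G}. (iii) Prove convergence of the telescope: show that the cones of the structure maps are built from shifted complexes whose $\qdeg$-degrees are bounded below by an increasing function of the stage, so that the telescope stabilizes in each fixed $(\qdeg,\tdeg,\adeg)$-multidegree, and identify the stable value with $\YS P_b(\KB^b) = \YS C_{\KR}(\KB,b)$ via the standard dictionary between colored link homology and categorified projectors (see \cite{MR3687104, HRW1}). (iv) Track the module structure over $\k[x_1,\dots,x_b]^{\symg_b} \otimes \k[v_1,\dots,v_b]$: the sheet and monodromy algebras supply this action through the interpolation change of variables \eqref{eq:firstytov}, and the unknot case \eqref{eq:bcoloredU}, computed directly, identifies the colimit of the unknot cables with this ring, serving as the base case on which the general argument is bootstrapped.

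The principal obstacle lies in step~(iii) and the input it requires, namely that $\YS C(\FT_b)$ is \emph{categorically diagonalizable} in the curved setting with $\YS P_b$ as its bottom eigenprojector. In the undeformed colored case the analogous statement depends on the \emph{parity} property of positive torus links, which in the present setting is still conjectural (Conjecture~\ref{conj:FT is parity}); even granting it, one must upgrade the diagonalization formalism to curved complexes of singular Soergel bimodules, verify that the higher eigenprojectors carry strictly positive $\qdeg$-shifts (so the telescope converges), and confirm that the $\Delta p$-curvature is compatible along every arrow of the directed system. A secondary difficulty is functoriality: since $\YS C_{\KR}$ is only a well-defined invariant up to quasi-isomorphism of $\k[p_{\comp,i},v_{\comp,j}]$-modules, one must ensure that the eigenmaps, the telescope, and its colimit are independent of all auxiliary choices up to that equivalence, which is where the uniqueness in Theorem~\ref{thm:intro1} and the $2$-categorical coherence of $\YS(\SSBim)$ (Theorem~\ref{thm:intro2}) do the work.
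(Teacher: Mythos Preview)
The statement you are attempting to prove is labeled a \emph{conjecture} in the paper, and the paper does not supply a proof. Immediately after stating it, the authors write only that ``Conjecture~\ref{conj:cables} is supported by our Theorem~\ref{thm:Jab}, i.e.\ our verification of Conjecture~\ref{conj:FT} in the case of colored Hopf links, since taking colimits of directed systems of the form \eqref{eq:intro directed system} is akin to inverting the Vandermonde $\Delta(\X)$. We save explorations along these lines for future work.'' There is therefore no proof in the paper against which to compare your proposal.

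Your outline is a reasonable sketch of how one might attack the conjecture, and you are candid about the gaps: the key inputs you need --- categorical diagonalizability of $\YS C(\FT_b)$ in the curved setting, the identification of a deformed colored projector $\YS P_b$ as the bottom eigenprojector, and the parity of colored positive torus links (Conjecture~\ref{conj:FT is parity}) --- are not established in this paper. In particular, step~(i) invokes Theorem~\ref{thm:mainthm} and the splitting results of \S\ref{sec:splitting}, but those concern the \emph{two-strand} full twist $\FT_{a,b}$, not the $b$-strand full twist $\FT_b$ on $1$-colored strands that is relevant here; bridging that gap is nontrivial and is part of what the authors defer. Your proposal should be read as a program rather than a proof, which is consistent with the paper's own stance.
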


Let us comment on this conjecture. 
First, note that the $b$-cable of $\chi \KB$ and the $b$-cable of $\KB$ differ by the insertion of a full twist braid $\FT_b$. 
Thus, the directed system \eqref{eq:intro directed system} is inherited from a directed system
\[
\oone_b \rightarrow \FT_b \rightarrow \FT_b^2 \rightarrow \cdots
\]
in the category $\YS\SBim_b$ defined in \cite{GH}. 
There are many choices one can make for the connecting maps. 
Indeed, results in \cite{GH} show that
\[
\Hom_{\YS\SBim_b}(\FT_b^k,\FT_b^{k+1}) \simeq \Hom_{\YS\SBim_b}(\oone_b,\FT_b) 
\simeq \left\langle \k[\X,\Y]^{\text{sgn}}\right\rangle \subset \k[\X,\Y] \, .
\]
Among all such morphisms, 
the one corresponding to the Vandermonde determinant $\Delta(\X)$ 
is distinguished as the generator of cohomological degree zero.  
The associated morphism $\oone_b \rightarrow \FT_b$ in $\YS\SBim_b$ 
(or in the undeformed category of complexes or Soergel bimodules) is referred to as the
``bottom eigenmap'', adopting terminology from \cite{EH}.

Conjecture \ref{conj:cables} is supported by our Theorem \ref{thm:Jab}, 
i.e. our verification of Conjecture \ref{conj:FT} in the case of colored Hopf links, 
since taking colimits of directed systems of the form \eqref{eq:intro directed system} 
is akin to inverting the Vandermonde $\Delta(\X)$. 
We save explorations along these lines for future work.

\subsubsection{Threaded digons and the Hilbert scheme}

Finally, in a different direction, we propose that a generalization of the complex $\TD_b(a)$ of 
threaded digons from Theorem \ref{thm:mainthm} constructs a family of complexes desired in 
the Gorsky--Negu\c{t}--Rasmussen (GNR) conjecture \cite{GNR}.

\begin{conj}\label{conj:E}
Let $a_1,\ldots,a_m,b \geq 0$ and set $\aa=(a_1,\ldots,a_m)$, 
then there exists a one-sided twisted \emph{complex of threaded digons}:
\[
\TD_b(\aa)=
\left( \
\left\llbracket
	\begin{tikzpicture}[scale=.4,smallnodes,rotate=90,anchorbase]
		\draw[very thick] (1,-2) node[right=-2pt]{$b$} to (1,-1) to [out=150,in=270] (-1.75,0); %under strand
		\draw[line width=5pt,color=white] (0,-2) to [out=90,in=270] (.5,0);
		\draw[very thick] (0,-2) node[right=-2pt]{$a_m$}to [out=90,in=270] (.5,0) 
			to [out=90,in=270] (0,2) node[left=-2pt]{$a_m$};
		\draw[line width=5pt,color=white] (-1.5,-2) to [out=90,in=270] (-1,0);
		\draw[very thick] (-1.5,-2) node[right=-2pt]{$a_1$}to [out=90,in=270] (-1,0) 
			to [out=90,in=270] (-1.5,2) node[left=-2pt]{$a_1$};
		\begin{scope}
			\clip (-1.75,.1) rectangle (.875,1.05);
			\draw[line width=5pt,color=white] (-1.75,0) to [out=90,in=210] (1,1); 
		\end{scope}
		\draw[very thick] (-1.75,0) to [out=90,in=210] (1,1) to (1,2) node[left=-2pt]{$b$}; %over strand
		\draw[dotted] (1,-1) to [out=30,in=330] node[above,yshift=-2pt]{$0$} (1,1); 
		\node at (-.5,0) {$\vdots$}; \node at (-.925,-1.5) {$\vdots$}; \node at (-.925,1.5) {$\vdots$};
	\end{tikzpicture}
\right\rrbracket_{\YS}
\to
\left\llbracket
	\begin{tikzpicture}[scale=.4,smallnodes,rotate=90,anchorbase]
		\draw[very thick] (1,-2) node[right=-2pt]{$b$} to (1,-1) to [out=150,in=270] (-1.75,0); %under strand
		\draw[line width=5pt,color=white] (0,-2) to [out=90,in=270] (.5,0);
		\draw[very thick] (0,-2) node[right=-2pt]{$a_m$}to [out=90,in=270] (.5,0) 
			to [out=90,in=270] (0,2) node[left=-2pt]{$a_m$};
		\draw[line width=5pt,color=white] (-1.5,-2) to [out=90,in=270] (-1,0);
		\draw[very thick] (-1.5,-2) node[right=-2pt]{$a_1$}to [out=90,in=270] (-1,0) 
			to [out=90,in=270] (-1.5,2) node[left=-2pt]{$a_1$};
		\begin{scope}
			\clip (-1.75,.1) rectangle (.875,1.05);
			\draw[line width=5pt,color=white] (-1.75,0) to [out=90,in=210] (1,1); 
		\end{scope}
		\draw[very thick] (-1.75,0) to [out=90,in=210] (1,1) to (1,2) node[left=-2pt]{$b$}; %over strand
		\draw[very thick] (1,-1) to [out=30,in=330] node[above,yshift=-2pt]{$1$} (1,1); 
		\node at (-.5,0) {$\vdots$}; \node at (-.925,-1.5) {$\vdots$}; \node at (-.925,1.5) {$\vdots$};
	\end{tikzpicture}
\right\rrbracket_{\YS}
\to \cdots \to
\left\llbracket
	\begin{tikzpicture}[scale=.4,smallnodes,rotate=90,anchorbase]
		\draw[dotted] (1,-1) to [out=150,in=270] (-1.75,0); %under strand
		\draw[very thick] (0,-2) node[right=-2pt]{$a_m$}to [out=90,in=270] (.5,0) 
			to [out=90,in=270] (0,2) node[left=-2pt]{$a_m$};
		\draw[very thick] (-1.5,-2) node[right=-2pt]{$a_1$}to [out=90,in=270] (-1,0) 
			to [out=90,in=270] (-1.5,2) node[left=-2pt]{$a_1$};
		\draw[dotted] (-1.75,0) to [out=90,in=210] (1,1); %over strand
		\draw[very thick] (1,-2) node[right=-2pt]{$b$} to (1,-1) to [out=30,in=330] node[above,yshift=-2pt]{$b$} (1,1)
			to (1,2) node[left=-2pt]{$b$}; 
		\node at (-.5,0) {$\vdots$}; \node at (-.925,-1.5) {$\vdots$}; \node at (-.925,1.5) {$\vdots$};
	\end{tikzpicture}
\right\rrbracket_{\YS} \
\right) 
\]
(potentially with longer arrows pointing to the right) such that:
\begin{enumerate}[(i)]
	\item $\TD_b(\aa)\simeq 0$ if $b>a_1+\cdots+a_m$, and \label{conj:E1}
	\item the partial trace $\Tr^{b}(\TD_b(1,\dots, 1) \otimes \k[v_1,\ldots,v_b])$ 
	categorifies the $b^{th}$ elementary symmetric function in the Jucys-Murphy braids on $m$ strands
	(here, $\{v_1,\ldots,v_b\}$ are deformation parameters for the $b$-labeled strand).
	Further, this is the complex $\cal{E}_b \in \YS(\SBim_m)$ corresponding to the $b^{th}$ exterior power of the 
	tautological bundle on the flag Hilbert scheme $\FHilb_m(\C^2)$ under the GNR conjecture. \label{conj:E2}
\end{enumerate}
\end{conj}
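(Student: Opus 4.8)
The plan is to prove the conjecture in three stages --- construct $\TD_b(\aa)$, establish part~\ref{conj:E1}, and identify the partial trace in part~\ref{conj:E2} --- with the final identification being where the essential difficulty lies. For the \textbf{construction}, I would realize $\TD_b(\aa)$ as an iterated horizontal composite inside the monoidal dg $2$-category $\YS(\SSBim)$ of Theorem~\ref{thm:intro2}, pulling the $b$-labeled threading strand successively past the $a_m$-, $a_{m-1}$-, \ldots, $a_1$-labeled strands. Each elementary step is governed by the curved colored skein relation of Corollary~\ref{cor:curvedMCCS} (the case $m=1$, i.e.\ Theorem~\ref{thm:mainthm}), which supplies a one-sided curved Maurer--Cartan twist with respect to the digon index. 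The point to verify is that these elementary twists assemble coherently into a \emph{single} one-sided twisted complex: the higher Maurer--Cartan corrections produced at each threading step must remain triangular for the partial order by the digon label, which should hold because threading past an additional strand can only raise that label.

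For the \textbf{vanishing} in part~\ref{conj:E1}, I would induct on $m$. The base case $m=1$ is the observation recorded after Theorem~\ref{thm:mainthm}, namely $\TD_b(a)\simeq 0$ whenever $a<b$. For the inductive step, apply the $m=1$ skein relation \eqref{eq:conv} to the portion of the threading that wraps the $a_m$-labeled strand; this rewrites $\TD_b(\aa)$ as a finite twisted complex whose associated graded pieces are shifts of $\TD_{b-s}(a_1,\dots,a_{m-1})$ tensored with exterior algebras, for $0\le s\le a_m$. If $b>a_1+\cdots+a_m$ then $b-s>a_1+\cdots+a_{m-1}$ for every such $s$, so by the inductive hypothesis every associated graded piece is contractible; a finite twisted complex built from contractible pieces is itself contractible, whence $\TD_b(\aa)\simeq 0$.

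For part~\ref{conj:E2} I would first prove the \textbf{decategorified statement}: that the class of $\Tr^{b}(\TD_b(1,\dots,1)\otimes\k[v_1,\dots,v_b])$ in the relevant Grothendieck group is the $b$-th elementary symmetric function in the Jucys--Murphy braids $\mathrm{JM}_1,\dots,\mathrm{JM}_m$ on $m$ strands. The skein relation \eqref{eq:conv} decategorifies to the HOMFLYPT skein relation, and combining this with the recursion used in the vanishing step yields precisely the classical identity $e_b(\mathrm{JM}_1,\dots,\mathrm{JM}_m)=e_b(\mathrm{JM}_1,\dots,\mathrm{JM}_{m-1})+\mathrm{JM}_m\,e_{b-1}(\mathrm{JM}_1,\dots,\mathrm{JM}_{m-1})$, which pins down the class by induction on $m$, the low-$m$ cases drawing on the colored Hopf-link computations behind Theorem~\ref{thm:Jab}. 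It then remains to upgrade this to an equivalence $\Tr^{b}(\TD_b(1,\dots,1)\otimes\k[v_1,\dots,v_b])\simeq\cal{E}_b$ in $\YS(\SBim_m)$ and to match $\cal{E}_b$ with $\bigwedge^b$ of the tautological bundle $\taut$ on $\FHilb_m(\C^2)$.

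I expect \textbf{this last step to be the main obstacle}. Since the GNR conjecture \cite{GNR} is itself open, the object ``corresponding to $\bigwedge^b\taut$'' is currently characterized only by a package of expected properties, so realistically one proves part~\ref{conj:E2} conditional on GNR, by verifying that $\Tr^{b}(\TD_b(1,\dots,1)\otimes\k[v_1,\dots,v_b])$ satisfies those properties: the correct multigrading, the decategorification computed above, compatibility with the categorified full-twist / Jucys--Murphy eigenstructure, and coherence in $b$ so that the $\cal{E}_b$ assemble into a categorified exterior algebra. A secondary genuine difficulty, already present in the proof of Theorem~\ref{thm:intro}, is controlling the curvature through the partial trace $\Tr^{b}$: one should pass to the $\Delta p$-model, over a field of characteristic zero, so that the curvature attached to the $b$-labeled strand cancels upon closing it, leaving an honest uncurved complex over $\k[v_1,\dots,v_b]$.
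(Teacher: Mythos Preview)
This statement is a \emph{conjecture} in the paper, not a theorem; the paper does not prove it and offers no proof to compare your proposal against. What the paper does establish is the $m=1$ case: Theorem~\ref{thm:mainthm} gives the construction of $\TD_b(a)$ and the vanishing $\TD_b(a)\simeq 0$ for $a<b$, and the paper notes that $\Tr^{1}(\TD_1(1)\otimes\k[v_1])$ is identified with the identity braid on one strand via the right-hand side of \eqref{eq:conv}. Beyond this, the paper only offers heuristic evidence: Morton's results \cite{Morton} handle the decategorified $b=1$ statement, the elementary symmetric function analogy supports part~\eqref{conj:E2}, and Conjecture~\ref{conj:cables} motivates the extension to general $b$. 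Section~\ref{ss:effective thickness} revisits the heuristics briefly but again defers the conjecture to future work.

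Your outline is a reasonable research strategy, and parts of it echo the paper's evidence (the $m=1$ base case, Morton's result for $b=1$). But a few points would need real work before this becomes a proof. First, the construction of $\TD_b(\aa)$ as an iterated application of the $m=1$ skein relation is not established in the paper; the conjecture asserts the \emph{existence} of such a one-sided twisted complex, and your proposed assembly of the elementary Maurer--Cartan twists into a single coherent one is exactly the content that needs proving. Second, your inductive decomposition in part~\eqref{conj:E1} --- ``associated graded pieces are shifts of $\TD_{b-s}(a_1,\dots,a_{m-1})$ for $0\le s\le a_m$'' --- does not obviously follow from the $m=1$ skein relation \eqref{eq:conv}: the right-hand side of \eqref{eq:conv} is a Koszul complex on $\MCS_{a_m,b}$ with $b$ exterior variables, not a filtration indexed by $0\le s\le a_m$, and when $a_m<b$ both sides of \eqref{eq:conv} are already zero, so peeling off $a_m$ does not directly yield smaller threaded-digon complexes in the remaining strands. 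You would need a different local model for the inductive step. Third, as you correctly identify, part~\eqref{conj:E2} is contingent on the open GNR conjecture, so at best one proves a conditional statement; the paper makes no claim to have done even this.
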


The $m=1$ case of Conjecture \ref{conj:E} follows from Theorem \ref{thm:mainthm}.
Indeed, we noted above that $\TD_b(a) \simeq 0$ when $a < b$, 
and $\Tr^{1}(\TD_1(1) \otimes \k[v_1])$ can be explicitly identified with the identity braid 
on one strand using the right-hand side of \eqref{eq:conv}.
As further evidence for Conjecture \ref{conj:E}, note that item \eqref{conj:E1} is a natural extension of the $m=1$ case.
For item \eqref{conj:E2}, observe that the elementary symmetric function $e_b(-)$ has similar behavior 
to the (conjectured) behavior of $\TD_b(-)$: it vanishes when the ``input'' is larger than $b$.
Less-heuristically, results of Morton \cite{Morton} show that the complex 
$\Tr^{1}(\TD_1(1,\dots, 1) \otimes \k[v_1])$ would categorify the sum 
(i.e. $1^{st}$ elementary symmetric function) of the Jucys-Murphy elements. 
The extension to general $b$ is suggested by this, and Conjecture \ref{conj:cables}.
Lastly, we mention that work of Elias \cite{EliasG} proposes a different approach to constructing $\cal{E}_1$.

\subsection{Coefficient conventions}\label{ss:coeff}

Throughout, we work over the field $\k$ of rational numbers for simplicity. 
All of our results remain true over an arbitrary field of characteristic zero. 
We believe that all results should hold over the integers; 
however, the proof we present for Markov invariance of $\YS H_{\KR}(\LB)$ uses
the power-sum symmetric functions $\frac{1}{k}p_k(\X)$, 
thus requires working over a field of characteristic zero.
Nonetheless, our $2$-category $\YS(\SSBim)$ is defined using $\Delta e$-curvature 
(equivalently, $h \Delta$-curvature), which allows for an integral version of this $2$-category.

\subsection*{Acknowledgements}
This project was conceived during the conference 
``Categorification and Higher Representation Theory'' at the Institute Mittag-Leffler, 
and began in earnest during the workshop ``Categorified Hecke algebras, link
homology, and Hilbert schemes'' at the American Institute for Mathematics. We
thank the organizers and hosts for a productive working atmosphere. We would
also thank Eugene Gorsky and Lev Rozansky for many useful discussions.

\subsection*{Funding}

M.H.  was supported by NSF grant DMS-2034516.
D.R. and P.W. were supported in part by the National Science Foundation under
Grant No. NSF PHY-1748958 during a visit to the program ``Quantum Knot
Invariants and Supersymmetric Gauge Theories'' at the Kavli Institute for
Theoretical Physics. 
D.R. was partially supported by Simons Collaboration Grant 523992: 
``Research on knot invariants, representation theory, and categorification.''
P.W. was partially supported by the Australian Research
Council grants `Braid groups and higher representation theory' DP140103821 and
`Low dimensional categories' DP160103479 while at the Australian National
University during early stages of this project. P.W. was also supported by the
National Science Foundation under Grant No. DMS-1440140, while in residence at
the Mathematical Sciences Research Institute in Berkeley, California, during the
Spring 2020 semester.
% \PW{Report publication here: https://www.kitp.ucsb.edu/report-publication.}

\section{Symmetric functions}
\label{sec:symfns}
In this section, we collect assorted background material on symmetric functions.
The reader can safely skip \S\ref{ss:symfctns}, \S\ref{ss:derivatives}, and \S\ref{ss:h reduction} and
return whenever results or formulas from here are used in the later parts of the
paper. We do, however, recommend a look at \S\ref{ss:haiman dets} for readers
unfamiliar with Haiman determinants.

\subsection{Symmetric functions}
\label{ss:symfctns}
Symmetric functions play an important role throughout this paper. A more
detailed exposition appears in~\cite[\HRWsym]{HRW1}. 

\emph{Alphabets} are finite or countably infinite sets that we denote by blackboard
letters, such as $\X$, $\Y$ etc. 
Given an alphabet $\X$, the $\k$-algebra of symmetric functions on
$\X$ will be denoted $\Sym(\X)$. Symmetric functions on a finite alphabet will
also be called symmetric polynomials.
For pairwise disjoint alphabets $\X_1, \dots, \X_r$, we write
\[
\Sym(\X_1|\cdots|\X_r) \cong \Sym(\X_1)\otimes \cdots \otimes \Sym(\X_r)
\]
for the ring of functions on $\X_1\cup\cdots\cup\X_r$ that are separately
symmetric in each of the alphabets $\X_i$. 

\begin{defi}\label{def:sym}
The \emph{elementary} symmetric functions $e_j(\X)$,
	\emph{complete} symmetric functions $h_j(\X)$, and \emph{power sum}
	symmetric functions $p_j(\X)$ are each defined via
	their generating functions as follows:
	\[
	\begin{aligned}
	E(\X,t) &:= \prod_{x\in \X} (1+x t) =: \sum_{j \geq 0} e_j(\X) t^j \\
	H(\X,t) &:= \prod_{x\in \X} (1-x t)\inv =: \sum_{j\geq 0} h_j(\X) t^j \\
	P(\X,t) & := \sum_{x \in \X} \frac{x t}{1-x t} =: \sum_{j\geq 1} p_j(\X) t^j
	\end{aligned}
	\]
	By convention $e_0(\X) = h_0(\X) = 1$ and $p_0(\X)$ is undefined. In the
	case of countably infinite alphabets, we sometimes drop the alphabet from
	the notation and write $E(t)$, $H(t)$, and $P(t)$, and
	$e_j$, $h_j$, and $p_j$, for the functions introduced above.
\end{defi} 
	
The elementary and complete symmetric functions are related by the identity
\begin{equation}\label{eqn:HE}
H(\X,t)E(\X,-t) = 1 \, , \quad \text{i.e.}\quad
\sum_{i+j=k} (-1)^j h_i(\X) e_j(\X) = 0 \quad \forall k \geq 1 \, ,
\end{equation}
and each are related to the power sum symmetric functions by the Newton identity:
%\begin{equation}\label{eq:Newton}
\[
\frac{t\frac{d}{dt}H(\X,t)}{H(\X,t)} = P(\X,t) \, , \quad \text{i.e.}\quad
H(\X,t) = \mathrm{exp}  \int P(\X,t) \frac{dt}{t} \, .
\]
%\end{equation}

We will work with the highly useful formalism of linear combinations of alphabets, 
see \cite[\HRWDefFLA]{HRW1}. 
In particular, for the generating functions in Definition \ref{def:sym}, we have
\begin{align*}
H(a_1\X_1+a_2\X_2,t) &= H(\X_1,t)^{a_1} H(\X_2,t)^{a_2} \\
E(a_1\X_1+a_2\X_2,t) &= E(\X_1,t)^{a_1} E(\X_2,t)^{a_2} \\
P(a_1\X_1+a_2\X_2,t) &= a_1 P(\X_1,t) +  a_2 P(\X_2,t)
\end{align*}
for any $a_1,a_2\in \k$.   
A good illustration of how this formalism works is given by the following computation:
\[
H(\leftX - \rightX,t) = H(\X,t)H(\rightX,t)\inv = H(\X,t) E(\rightX,-t) = E(\rightX,-t) E(\X,-t)\inv,
\]
from which we obtain the following
\begin{equation}\label{eq:HE2}
h_k(\leftX - \rightX) = \sum_{i+j=k} (-1)^j h_i(\X)e_j(\rightX)
\end{equation}
and its well-known consequence:
\begin{equation}\label{eq:wellknown}
\sum_{i+j=k} (-1)^j h_i(\X)e_j(\X) =
\begin{cases}
1 & \text{if } k=0 \\
0 & \text{else} \, .
\end{cases} 
\end{equation}

\subsection{Kernel of multiplication}
\label{ss:derivatives}

If $A$ is a commutative algebra, we let $N\lhd A\otimes A$ be the kernel of the
multiplication map $A\otimes A \rightarrow A$. Equivalently, $N$ is the ideal
inside $A\otimes A$ generated by differences $a\otimes 1 - 1\otimes a$. 

\begin{rem}
The Hochschild homology $\HH_\bullet(A)$ of a commutative algebra $A$ is itself a
graded-commutative algebra. We have $\HH_0(A)\cong A$ and $\HH_1(A)\cong N/ N^2$.
More generally, each $\HH_k(A)$ is a module over $\HH_0(A)\cong A$. 
\end{rem}

We will focus on the case $A=\Sym(\X)$ for a finite alphabet $\X$ and identify
$\Sym(\X)^{\otimes 2} = \Sym(\X|\rightX)$. Let $N(\X,\rightX)\lhd \Sym(\X|\rightX)$ be the
ideal generated by elements of the form $f(\X)-f(\rightX)$. In this section, we will
record various relationships between generating sets of $N(\X,\rightX)$.

\begin{prop} If $|\X|=|\rightX|=a$, then $N(\X,\rightX)\lhd \Sym(\X|\rightX)$ is generated
	by any of the following:
	\begin{gather*}
	%\{\Delta e_k\}_{1\leq k\leq a}
	\{e_k(\X)-e_k(\rightX)\}_{1\leq k\leq a}
	 \, ,\quad
	%\{\Delta h_k\}_{1\leq k\leq a}
	\{h_k(\X)-h_k(\rightX)\}_{1\leq k\leq a}
	 \, ,\quad
	\{p_k(\X)-p_k(\rightX)=p_k(\leftX - \rightX)\}_{1\leq k\leq a} \\
	%\{e_k\Delta\}_{1\leq k\leq a}
	\{h_k(\leftX - \rightX)\}_{1\leq k\leq a}
	\, ,\quad 
	%\{h_k\Delta\}_{1\leq k\leq a}.
	\{e_k(\leftX - \rightX)\}_{1\leq k\leq a}\, .
	\end{gather*}
	Furthermore, the element
	$h_k(\leftX - \rightX)$ equals $\frac{1}{k}p_k(\leftX - \rightX)$ modulo $N(\X,\rightX)^2$.
\end{prop}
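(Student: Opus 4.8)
The plan is to prove the two assertions of the proposition separately, then handle the final ``furthermore'' clause. For the generating-set claims, the key observation is that each of the five listed sets consists of $a$ elements, all lying in $N(\X, \rightX)$ (which is clear: $e_k(\X)-e_k(\rightX)$, $h_k(\X)-h_k(\rightX)$, and $p_k(\X)-p_k(\rightX)$ are manifestly differences $f(\X)-f(\rightX)$, while $h_k(\leftX-\rightX)$ and $e_k(\leftX-\rightX)$ lie in $N$ by formulas \eqref{eq:HE2} and its elementary analogue, since every term is a product involving some $e_j(\rightX)$ or $h_j(\rightX)$ with $j\geq 1$ paired against a symmetric function, and one checks they reduce to $0$ modulo $N$). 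First I would establish that $\{e_k(\X)-e_k(\rightX)\}_{1\leq k\leq a}$ generates $N(\X, \rightX)$: since $\Sym(\X)$ is the polynomial ring $\k[e_1(\X),\dots,e_a(\X)]$, the ideal $N(\X,\rightX)$ — the kernel of the multiplication $\Sym(\X|\rightX)\to\Sym(\X)$ — is the ideal of the diagonal in $\Spec\Sym(\X)\times\Spec\Sym(\X)\cong\A^a\times\A^a$, which is generated by the coordinate differences $e_k(\X)-e_k(\rightX)$. This is a standard fact about kernels of multiplication maps for polynomial rings. The same argument applies verbatim with $h_k$ or $p_k$ in place of $e_k$ (over a field of characteristic zero, or of characteristic $>a$, the power sums $p_1,\dots,p_a$ also freely generate $\Sym(\X)$; the proposition's ambient convention is $\k = \Q$, so this is fine).

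Next I would deduce that the ``difference alphabet'' sets $\{h_k(\leftX-\rightX)\}$ and $\{e_k(\leftX-\rightX)\}$ also generate $N(\X,\rightX)$. The cleanest route is a triangularity argument. Using \eqref{eq:HE2},
\[
h_k(\leftX - \rightX) = \sum_{i+j=k} (-1)^j h_i(\X)e_j(\rightX) = h_k(\X) - h_{k-1}(\X)e_1(\rightX) + \cdots + (-1)^k e_k(\rightX),
\]
and modulo $N(\X,\rightX)$ one may replace each $e_j(\rightX)$ by $e_j(\X)$; by \eqref{eq:wellknown} the resulting sum is $0$ for $k\geq 1$, confirming $h_k(\leftX-\rightX)\in N$. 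To see these elements \emph{generate}, note that in the associated graded (or working modulo the square, or just by an explicit unitriangular change of basis) the element $h_k(\leftX-\rightX)$ equals $h_k(\X)-h_k(\rightX)$ plus an $\Sym(\X|\rightX)$-linear combination of the $h_i(\X)-h_i(\rightX)$ with $i<k$ — concretely, rewrite each $e_j(\rightX)$ appearing as $e_j(\X) - (e_j(\X)-e_j(\rightX))$ and expand — so the transition matrix from $\{h_i(\X)-h_i(\rightX)\}$ to $\{h_k(\leftX-\rightX)\}$ is unitriangular with entries in $\Sym(\X|\rightX)$, hence invertible, and the two sets generate the same ideal. The argument for $\{e_k(\leftX-\rightX)\}$ is identical using the elementary-function analogue of \eqref{eq:HE2}, namely $e_k(\leftX-\rightX)=\sum_{i+j=k}(-1)^j e_i(\X)h_j(\rightX)$.

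Finally, for the ``furthermore'' statement that $h_k(\leftX-\rightX)\equiv \tfrac1k p_k(\leftX-\rightX)\pmod{N(\X,\rightX)^2}$: I would use the Newton identity in the form $H(\Z,t)=\exp\!\int P(\Z,t)\,\tfrac{dt}{t}$ applied to the formal difference alphabet $\Z = \leftX-\rightX$. Since $p_j(\leftX-\rightX)=p_j(\X)-p_j(\rightX)\in N(\X,\rightX)$ for all $j\geq 1$, every $p_j(\leftX-\rightX)$ is ``small'', and expanding the exponential,
\[
H(\leftX-\rightX,t)=\exp\!\Big(\sum_{j\geq 1}\tfrac1j p_j(\leftX-\rightX)\,t^j\Big)=1+\sum_{j\geq 1}\tfrac1j p_j(\leftX-\rightX)\,t^j + (\text{terms in } N^2),
\]
because every higher-order term in the exponential series is a product of at least two factors $p_j(\leftX-\rightX)$, hence lies in $N(\X,\rightX)^2$. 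Comparing the coefficient of $t^k$ gives $h_k(\leftX-\rightX)\equiv\tfrac1k p_k(\leftX-\rightX)\pmod{N^2}$, as desired.

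I expect the main obstacle to be making the triangularity/change-of-basis argument in the second paragraph fully rigorous with coefficients in $\Sym(\X|\rightX)$ rather than a field — one must be careful that the transition matrix is genuinely unitriangular (so invertible over the ring) and not merely invertible after passing to a quotient; spelling out the replacement $e_j(\rightX)\mapsto e_j(\X)-(e_j(\X)-e_j(\rightX))$ and tracking which terms land in the ideal generated by the lower $h_i(\X)-h_i(\rightX)$ is the one genuinely fiddly computation. Everything else is either standard commutative algebra (the diagonal ideal) or a direct generating-function manipulation.
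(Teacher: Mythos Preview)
Your proposal is correct and broadly parallel to the paper's argument, though organized differently. The paper packages all of the needed transition relations into a single auxiliary lemma (Lemma~\ref{lem:somerelations}), proved via generating-function identities such as $H(\X,t)H(\rightX,t)^{-1}-1=-H(\X,t)(E(\X,-t)-E(\rightX,-t))$; these give explicit triangular expressions (e.g.\ $h_k(\leftX-\rightX)=\sum_{j=1}^k(-1)^{j-1}h_{k-j}(\X)(e_j(\X)-e_j(\rightX))$ and its inverse) showing the various sets generate the same ideal, and then the ``furthermore'' clause drops out of the recursive Newton identity $h_k(\leftX-\rightX)=\tfrac1k\sum_{j} h_{k-j}(\leftX-\rightX)p_j(\leftX-\rightX)$ by noting all terms except $j=k$ lie in $N^2$. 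Your diagonal-ideal argument for the $e_k$, $h_k$, $p_k$ differences is more conceptual (and avoids invoking the lemma), and your exponential argument for the ``furthermore'' is arguably cleaner than the paper's recursive version; both buy you the result without writing down the explicit coefficients.

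One small slip: the substitution $e_j(\rightX)=e_j(\X)-(e_j(\X)-e_j(\rightX))$ that you propose yields, after the cancellation from \eqref{eq:wellknown}, exactly the identity $h_k(\leftX-\rightX)=\sum_{j=1}^k(-1)^{j-1}h_{k-j}(\X)\,(e_j(\X)-e_j(\rightX))$, which is a triangular relation to the \emph{$e$-differences} (with diagonal entries $\pm 1$), not to the $h$-differences as you wrote. This is harmless---you already showed the $e$-differences generate $N$, and this triangular matrix is invertible over $\Sym(\X|\rightX)$---so your fiddly step is in fact already done, just mislabeled.
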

\begin{proof} 
 Lemma~\ref{lem:somerelations} below shows that the families of elements
$e_k(\X)-e_k(\rightX)$, $h_k(\leftX - \rightX)$, $h_k(\X)-h_k(\rightX)$, and $e_k(\leftX - \rightX)$ for
$1\leq k\leq a$ generate the same ideal. Since any symmetric function $f$ can be
written as a polynomial in the $e_k$, it is straightforward to check that this ideal
is $N(\X,\rightX)$. Next, observe that $p_k(\X)-p_k(\rightX) = p_k(\leftX - \rightX)$, so each of
the above symmetric functions involves the virtual alphabet $\leftX - \rightX$. The
relations \eqref{eq:somerelations2} and \eqref{eq:somerelations3} then imply the
second and third statement. 
\end{proof}

\begin{lemma}\label{lem:somerelations} Let $\X,\rightX$ be alphabets. 
For $k\geq 1$, we have
	\begin{align}
	\label{eq:somerelations2}
		p_k(\leftX - \rightX) &= \sum_{j=1}^k (-1)^{k-j} j h_j(\leftX - \rightX) e_{k-j}(\leftX - \rightX)\, ,\\
				\label{eq:somerelations1}
		h_k(\leftX - \rightX) &= \sum_{j=1}^k (-1)^{j-1} h_{k-j}(\X) (e_j(\X)-e_j(\rightX))\, ,
%		\nonumber	&= \sum_{j=1}^k (-1)^{k-j} e_{k-j}(\rightX) (h_j(\X)-h_j(\rightX)) \\
\end{align}
and their inverses
\begin{align}
		\label{eq:somerelations0}
		e_k(\X)-e_k(\rightX) &= \sum_{j=1}^k (-1)^{j-1} e_{k-j}(\X) h_j(\leftX - \rightX)\, ,\\
		\label{eq:somerelations3}
		h_k(\leftX - \rightX) &= \frac{1}{k} \sum_{j=1}^k h_{k-j}(\leftX - \rightX)p_j(\leftX - \rightX)\, .
\end{align}
Moreover,
\begin{align}
\label{eq:somerelations4}
	(-1)^k e_k(\leftX - \rightX) = h_k(\rightX-\X) &= - \sum_{j=1}^{k} h_{k-j}(\rightX-\X) h_j(\leftX - \rightX)\, .
%	\nonumber &= - \sum_{j=1}^{k} \sum_{j'=j}^k (-1)^{k-j'}  e_{k-j'}(\X) h_{j'-j}(\rightX)  h_j(\leftX - \rightX).
\end{align}
\end{lemma}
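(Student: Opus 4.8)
All five identities are equalities between symmetric functions in the virtual alphabet $\leftX - \rightX$ (or, for \eqref{eq:somerelations1} and \eqref{eq:somerelations0}, equalities relating $e_k(\X)-e_k(\rightX)$ to $h_k(\leftX-\rightX)$), so the uniform strategy is to verify each one at the level of generating functions, using the multiplicativity relations $H(a_1\X_1+a_2\X_2,t) = H(\X_1,t)^{a_1}H(\X_2,t)^{a_2}$ and $E(a_1\X_1+a_2\X_2,t) = E(\X_1,t)^{a_1}E(\X_2,t)^{a_2}$ recorded above, together with \eqref{eqn:HE} and the Newton identity. I would first set up notation: write $H = H(\X,t)$, $H' = H(\rightX,t)$, so that $H(\leftX-\rightX,t) = H(H')^{-1}$, $E(\leftX-\rightX,t) = E(\X,t)E(\rightX,t)^{-1}$ via \eqref{eqn:HE} applied twice, and $P(\leftX-\rightX,t) = P(\X,t) - P(\rightX,t)$.

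For \eqref{eq:somerelations2}, I would start from the Newton identity in the form $t\frac{d}{dt}H(\leftX-\rightX,t) = P(\leftX-\rightX,t)\,H(\leftX-\rightX,t)$, multiply through by $E(\leftX-\rightX,-t) = H(\leftX-\rightX,t)^{-1}$ using \eqref{eqn:HE}, and extract the coefficient of $t^k$: the left side contributes $\sum_j j\,h_j(\leftX-\rightX)\,(-1)^{k-j}e_{k-j}(\leftX-\rightX)$ and the right side contributes $p_k(\leftX-\rightX)$. Identity \eqref{eq:somerelations3} is the pure Newton recursion $k\,h_k = \sum_{j=1}^k h_{k-j}p_j$ applied to the alphabet $\leftX-\rightX$, which is immediate from $t\frac{d}{dt}H(\leftX-\rightX,t) = P(\leftX-\rightX,t)H(\leftX-\rightX,t)$ by comparing coefficients of $t^k$. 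For \eqref{eq:somerelations1}, I would write $h_k(\leftX-\rightX)$ as the coefficient of $t^k$ in $H(\X,t)E(\rightX,-t)$ (using \eqref{eq:HE2}), then rewrite $E(\rightX,-t) = E(\X,-t) + (E(\rightX,-t)-E(\X,-t))$; the first term contributes $\sum_i (-1)^{k-i}h_i(\X)e_{k-i}(\X)$ which vanishes for $k\geq 1$ by \eqref{eq:wellknown}, and the second term, after noting $(-1)^j(e_j(\rightX)-e_j(\X)) = -(-1)^j(e_j(\X)-e_j(\rightX))$, rearranges to $\sum_{j=1}^k(-1)^{j-1}h_{k-j}(\X)(e_j(\X)-e_j(\rightX))$. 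Identity \eqref{eq:somerelations0} is the inverse relation: invert \eqref{eq:somerelations1} either by the standard triangular-system argument (the transition matrices between $\{e_k\}$ and $\{h_k\}$ are inverse to each other, with the relevant signs coming from \eqref{eqn:HE}), or directly by multiplying $H(\X,t)\inv \cdot H(\leftX-\rightX,t) = H(\rightX,t)\inv$ appropriately; concretely, multiply the generating-function version of \eqref{eq:somerelations1} by $E(\X,-t)$ and use \eqref{eqn:HE} again to solve for $e_k(\X)-e_k(\rightX)$.

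Finally, \eqref{eq:somerelations4} follows by applying \eqref{eqn:HE} to the virtual alphabet $\leftX-\rightX$: since $H(\leftX-\rightX,t)E(\leftX-\rightX,-t)=1$ and $E(\leftX-\rightX,-t) = E(\rightX-\leftX,t)\inv$ — equivalently $(-1)^k e_k(\leftX-\rightX) = h_k(\rightX-\leftX)$ is just \eqref{eqn:HE} read for the alphabet $\rightX - \leftX$ — the claimed recursion $h_k(\rightX-\X) = -\sum_{j=1}^k h_{k-j}(\rightX-\X)h_j(\leftX-\rightX)$ is the coefficient of $t^k$ in the identity $H(\rightX-\X,t)\,H(\leftX-\rightX,t) = 1$, which holds because $H(\rightX-\X,t) = H(\leftX-\rightX,t)\inv$ by the multiplicativity of $H$ in linear combinations of alphabets. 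I expect no genuine obstacle here; the only points requiring care are bookkeeping of signs (especially the interplay between $E(-,-t)$ and $H(-,t)\inv$ via \eqref{eqn:HE}) and making sure the extraction of $t^k$-coefficients from a product of generating functions is done with the correct range of summation so that the lower limit $j=1$ (rather than $j=0$) emerges from the vanishing statement \eqref{eq:wellknown}. The whole proof is a short sequence of generating-function manipulations, so I would present it as such, citing \eqref{eqn:HE}, the Newton identity, \eqref{eq:HE2}, and \eqref{eq:wellknown} at the appropriate moments.
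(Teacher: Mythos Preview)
Your proposal is correct and matches the paper's approach essentially line for line: both proofs verify each identity by a short generating-function manipulation using $H(\leftX-\rightX,t)E(\leftX-\rightX,-t)=1$, the Newton identity $t\frac{d}{dt}H = PH$, and the factorization $H(\leftX-\rightX,t)=H(\X,t)E(\rightX,-t)$. The paper presents this as a bare list of five generating-function equalities (one per identity), omitting the coefficient-extraction commentary you include, but the content is identical.
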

Note that we obtain another collection of identities by applying
the algebra involution $p_k\mapsto -p_k$, $h_k\leftrightarrow (-1)^k e_k$, or by
swapping the roles of $\X$ and $\rightX$.
\begin{proof}
These equations are efficiently proved by the following manipulations of
generating functions, i.e.
\begin{align*}
P(\leftX - \rightX,t) &= \big(t \frac{d}{dt} H(\leftX - \rightX,t)\big) E(\leftX - \rightX,-t)\\
H(\X,t) H(\rightX,t)^{-1} - 1 &= -H(\X,t) \big(E(\X,-t) - E(\rightX,-t)\big)
\end{align*}
and
\begin{align*}
\big(E(\X,-t) - E(\rightX,-t)\big)&= -E(\X,-t)(H(\X,t) H(\rightX,t)^{-1} - 1) \\
t\frac{d}{dt}H(\leftX - \rightX,t) &= H(\leftX - \rightX,t)P(\leftX - \rightX,t)
\end{align*}
establish \eqref{eq:somerelations2} -- \eqref{eq:somerelations3}.
Equation \eqref{eq:somerelations4} is proved by the computation
	\begin{align*}
		E(\leftX - \rightX,-t) - 1 = H(\rightX-\X,t) - 1 &= - H(\rightX-\X,t)(H(\leftX - \rightX,t) -1 )\, . \qedhere
	\end{align*}
\end{proof}

\subsection{Hook Schur functions and h-reduction}
\label{ss:h reduction}
Let $\X$ be an alphabet (or a formal linear combination of alphabets).  If $\Y$
is an alphabet with $|\Y|\leq c$, then complete symmetric functions $h_N(\X)$
can be expressed as $\Sym(\X+\Y)$-linear combinations of complete symmetric
functions $h_n(\X)$ for $n\leq c$. We refer to this process as
$h$\emph{-reduction} and describe it explicitly in Lemma~\ref{lemma:h
reduction}.

\begin{example}
Consider the following identity in the polynomial ring $\k[x_1,\ldots,x_a]$ (for
each $1\leq i\leq a$)
\[
0 = \prod_{j=1}^a (x_i-x_j) = \sum_{k=0}^a (-1)^{a-k} e_{a-k}(\X) x_i^k\, .
\]
This allows us to write $x_i^a$ as
a $\k[\X]^{\symg_a}$-linear combination of monomials $x_i^k$ with $0\leq k\leq
a-1$. 
\[
x_i^a = \sum_{k=0}^{a-1} (-1)^{a-k-1} e_{a-k}(\X) x_i^k\, .
\]
\end{example}

The general case of $h$-reduction requires Schur functions associated to hook shapes.

\begin{definition}\label{def:hookschur} For $i,j\geq 0$ we write
$\mathfrak{s}_{(i|j)}:=\mathfrak{s}_{(i+1,1^j)}$ for the hook Schur functions,
which can be described as the family of symmetric functions satisfying:
\[
\mathfrak{s}_{(i-1|0)}= h_i \, , \quad \mathfrak{s}_{(0|j-1)}= e_j \, , \quad  h_i e_j = \mathfrak{s}_{(i|j-1)}+\mathfrak{s}_{(i-1|j)}\, .
\]
By convention $\mathfrak{s}_{(i|j)}=0$ if $i<0$ or $j<0$.  We denote the
two-parameter generating function of the hook Schur functions in an alphabet
$\X$ by
\[
S(t,u):=\sum_{i,j\geq 0} \mathfrak{s}_{(i|j)}t^iu^j\, .
\]
\end{definition}

\begin{lemma}\label{lemma:hook gen fun}
The two-parameter generating function of the hook Schur functions satisfies
\[
S(t,u) = \frac{H(t)E(u)-1}{t+u}\, .
\]
\end{lemma}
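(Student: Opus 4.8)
The plan is to prove the generating function identity
\[
S(t,u) = \frac{H(t)E(u)-1}{t+u}
\]
by verifying that the right-hand side, expanded as a power series in $t$ and $u$, has coefficients that satisfy the three defining relations for the hook Schur functions from Definition~\ref{def:hookschur}. Concretely, write $R(t,u) := \frac{H(t)E(u)-1}{t+u}$; since $H(t)E(u)-1$ vanishes at $t=u=0$ (as $h_0 = e_0 = 1$), the quotient by $t+u$ is a genuine power series, so $R(t,u) = \sum_{i,j\geq 0} r_{i,j}\, t^i u^j$ for some symmetric functions $r_{i,j}$. It then suffices to check $r_{i-1,0} = h_i$, $r_{0,j-1} = e_j$, and $h_i e_j = r_{i,j-1} + r_{i-1,j}$, because these relations determine the $\mathfrak{s}_{(i|j)}$ uniquely by induction on $i+j$.

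The cleanest route is to clear denominators: the claimed identity is equivalent to
\[
(t+u)\, S(t,u) = H(t)E(u) - 1.
\]
First I would expand the left side as $\sum_{i,j} \mathfrak{s}_{(i|j)}(t^{i+1}u^j + t^i u^{j+1})$ and collect the coefficient of $t^a u^b$; for $a,b \geq 1$ this coefficient is $\mathfrak{s}_{(a-1|b)} + \mathfrak{s}_{(a|b-1)}$, which equals $h_a e_b$ by the third defining relation. For $b = 0$, $a \geq 1$ the coefficient is $\mathfrak{s}_{(a-1|0)} = h_a$ (using the convention $\mathfrak{s}_{(a|-1)} = 0$), and symmetrically for $a=0$, $b\geq 1$ it is $\mathfrak{s}_{(0|b-1)} = e_b$; the constant term is $0$. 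On the right side, the coefficient of $t^a u^b$ in $H(t)E(u) - 1$ is $h_a e_b$ for $(a,b)\neq(0,0)$ — noting $h_a e_0 = h_a$ and $h_0 e_b = e_b$ — and $0$ for $(a,b)=(0,0)$. Matching these term by term gives the identity.

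I anticipate the only real subtlety to be bookkeeping at the boundary of the $(i,j)$-grid: one must be careful that the conventions $\mathfrak{s}_{(i|j)} = 0$ for $i<0$ or $j<0$ and $h_0 = e_0 = 1$ are applied consistently so that the $a=0$, $b=0$, and interior cases all align. There is no deep obstacle here — the result is essentially a formal consequence of the recursion $h_i e_j = \mathfrak{s}_{(i|j-1)} + \mathfrak{s}_{(i-1|j)}$ together with the Cauchy-type product $H(t)E(u)$ — but presenting the edge cases without sign or index errors is where care is needed. As an alternative sanity check, one could also observe that $H(t)E(u) = \sum_{i,j} h_i e_j t^i u^j$ and that the telescoping structure $\sum_{i,j}(\mathfrak{s}_{(i|j-1)} + \mathfrak{s}_{(i-1|j)})t^i u^j = (t+u)S(t,u) + (\text{boundary terms})$ reproduces exactly $H(t)E(u) - 1$, which is the same computation organized slightly differently.
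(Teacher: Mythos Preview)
Your proof is correct and takes essentially the same approach as the paper: clear the denominator to obtain $(t+u)S(t,u) = H(t)E(u) - 1$, and observe that comparing coefficients of $t^a u^b$ recovers precisely the defining recursion $h_i e_j = \mathfrak{s}_{(i|j-1)} + \mathfrak{s}_{(i-1|j)}$ together with the boundary conventions. The paper's proof is the same argument stated in one sentence.
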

\begin{proof}
The rearrangement $H(t)E(u) = 1+ (t+u)S(t,u)$ is a generating function
restatement of the characterizing identity
\[
h_i e_j = \mathfrak{s}_{(i|j-1)}+\mathfrak{s}_{(i-1|j)} \, , \quad
(i,j) \in \Z_{\geq 0}\times \Z_{\geq 0}\smallsetminus \{(0,0)\} \, ,
\]
provided we interpret $\mathfrak{s}_{(i|-1)}$ and $\mathfrak{s}_{(-1|j)}$ as zero.
\end{proof}

In particular the algebra automorphism $\Sym(\X)\rightarrow \Sym(\X)$ sending 
$h_k\leftrightarrow e_k$ also sends $S(t,u)\mapsto S(u,t)$, hence $\Schur_{(i|j)}\mapsto \Schur_{(j|i)}$.
The following description of hook Schur functions is also useful.

\begin{lem}\label{lemma:hook rewrite}
For $i,j\geq 0$ we have
\begin{align*}
(-1)^j\Schur_{(i|j)}= \sum_{k+l=j} (-1)^l  h_{i+k+1} e_l= \sum_{k+l=i} (-1)^{l+j}h_{k}  e_{j+l+1}\, .
\end{align*}
\end{lem}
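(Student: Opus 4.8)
The plan is to derive both formulas for $(-1)^j \Schur_{(i|j)}$ directly from the generating function identity $S(t,u) = \frac{H(t)E(u)-1}{t+u}$ established in Lemma \ref{lemma:hook gen fun}, by extracting coefficients in a suitably manipulated form. First I would rewrite $S(t,u)(t+u) = H(t)E(u)-1$ and aim to isolate, say, the second claimed expression $\sum_{k+l=i}(-1)^{l+j} h_k e_{j+l+1}$. The key observation is that $\frac{1}{t+u}$ can be expanded as a geometric series in one of the variables: over the ring of formal power series, $\frac{1}{t+u} = \frac{1}{u}\cdot\frac{1}{1+t/u} = \sum_{m\geq 0} (-1)^m t^m u^{-m-1}$, so that
\[
S(t,u) = \big(H(t)E(u)-1\big)\sum_{m\geq 0} (-1)^m t^m u^{-m-1}.
\]
Then I would extract the coefficient of $t^i u^j$ on both sides. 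Since $H(t)E(u) = \sum_{p,q\geq 0} h_p e_q\, t^p u^q$, the coefficient of $t^i u^j$ in the product picks out pairs $(p,q)$ with $p+m = i$ and $q - m - 1 = j$, i.e. $m = i-p$ (forcing $0\leq p\leq i$) and $q = j+m+1 = j+i-p+1$. Writing $l := i-p = m$ so $k := p = i-l$, this yields exactly $\sum_{l=0}^{i} (-1)^l h_{i-l}\, e_{j+l+1} = \sum_{k+l=i} (-1)^l h_k e_{j+l+1}$, and after multiplying by $(-1)^j$ we get the second formula (the $-1$ term contributes nothing since $i,j\geq 0$ means $t^i u^j$ with the $u^{-m-1}$ tail never hits a pure constant when $j\ge 0$; one should just note the $-1$ only affects the coefficient of $t^i u^{-m-1}$, which is never $t^i u^j$ for $j\ge 0$). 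For the first formula $\sum_{k+l=j}(-1)^l h_{i+k+1} e_l$, I would run the symmetric argument, instead expanding $\frac{1}{t+u} = \frac{1}{t}\sum_{m\geq 0}(-1)^m u^m t^{-m-1}$ and extracting the coefficient of $t^i u^j$ again; now $m=j-q$, $p = i+m+1 = i+j-q+1$, and setting $l:=q$, $k:=j-q$ gives $\sum_{k+l=j}(-1)^{k} h_{i+k+1} e_l$ — here I need to double-check the sign bookkeeping, since the statement has $(-1)^l$ rather than $(-1)^k$; because $k+l=j$ we have $(-1)^k = (-1)^{j-l} = (-1)^j(-1)^l$, so after multiplying through by $(-1)^j$ the two conventions agree, confirming the first formula as stated.

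Alternatively, and perhaps more cleanly, I would prove the second equality by induction on $j$ using only the defining recursion $h_i e_j = \Schur_{(i|j-1)} + \Schur_{(i-1|j)}$ from Definition \ref{def:hookschur}, together with the base cases $\Schur_{(i-1|0)} = h_i$ and $\Schur_{(0|j-1)} = e_j$; then deduce the first equality from the second via the $h_k \leftrightarrow e_k$ automorphism noted just before the lemma, which sends $\Schur_{(i|j)} \mapsto \Schur_{(j|i)}$, so $(-1)^j \Schur_{(j|i)} = \sum_{k+l=j}(-1)^{l+i} e_k h_{i+l+1}$, and reindexing/adjusting signs recovers the first claimed formula. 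Either route is short.

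The main obstacle here is purely the sign and index bookkeeping: making sure the reindexations $k,l$ versus $p,q,m$ are done consistently, that the ranges are correct (in particular that negative-index terms vanish by the convention $\Schur_{(i|j)}=0$ for $i<0$ or $j<0$, matching $h_n = e_n = 0$ for $n<0$), and that the two stated formulas are reconciled via $(-1)^k = (-1)^j(-1)^l$ when $k+l=j$. There is no conceptual difficulty — the generating-function extraction is routine once the geometric expansion of $\frac{1}{t+u}$ is chosen in the right variable for each of the two identities. I would present the generating-function argument as the main proof and relegate the inductive cross-check to a parenthetical remark.
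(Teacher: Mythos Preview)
Your approach is correct and close in spirit to the paper's, which also extracts coefficients from $S(t,u) = \frac{H(t)E(u)-1}{t+u}$. The one notable difference is that the paper avoids Laurent series entirely: instead of expanding $\frac{1}{t+u}$ geometrically, it substitutes $u\mapsto -u$ and uses $H(u)E(-u)=1$ to rewrite
\[
S(t,-u) = \frac{H(t)E(-u)-1}{t-u} = \frac{H(t)-H(u)}{t-u}\, E(-u),
\]
then observes that $\frac{H(t)-H(u)}{t-u} = \sum_{i,j\geq 0} h_{i+j+1}\, t^i u^j$ is an honest power series. Multiplying by $E(-u)$ and reading off the coefficient of $t^i u^j$ gives the first identity; the second follows by the $h_k\leftrightarrow e_k$ involution, exactly as you suggest. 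Your geometric-series extraction is equally valid once one is careful about the ambient Laurent ring, but the paper's rewrite is a bit cleaner: it stays inside $\k[[t,u]]$ throughout, so there is no need to justify the mixed Laurent manipulations or to argue separately that the $-1$ term contributes nothing.
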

\begin{proof}
We will prove the first identity; the second follows by symmetry (apply the
involution $h_k\leftrightarrow e_k$).  First, note that we have the generating
function identity
\[
\frac{H(t)-H(u)}{t-u} = \sum_{k\geq 0} \frac{t^k-u^k}{t-u} h_{k} = \sum_{i,j\geq 0} h_{i+j+1} t^i u^j.
\]
Then we rewrite the hook Schur generating function as follows
\[
S(t,-u) = \frac{H(t)E(-u)-1}{t-u} = \frac{H(t)-H(u)}{t-u} E(-u) =
\sum_{i,j,k\geq 0} (-1)^k h_{i+j+1}e_k t^{i} u^{j+k},
\]
which gives rise to the identity in the statement.
\end{proof}

\begin{lemma}[$h$-reduction]\label{lemma:h reduction}
%formerly \label{lem:xi relation} 
If $\Y$ has cardinality $|\Y|\leq c$, then for any $\X$ and $r\geq 1$ we have
\begin{equation}
\label{eq:specialrel2}
h_{c+r}(\X) =  \sum_{0\leq i\leq c} (-1)^{c-i}\Schur_{(r-1|c-i)}(\X+\Y) h_i(\X)\, .
\end{equation}
\end{lemma}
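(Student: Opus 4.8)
The plan is to prove the $h$-reduction identity \eqref{eq:specialrel2} by a generating-function computation, converting the statement about individual $h_{c+r}(\X)$ into a single identity of power series in a variable $t$, and then extracting the coefficient of $t^{c+r}$. The key input is that $\Y$ has at most $c$ elements, so $E(\Y,t)$ is a polynomial in $t$ of degree $\le c$; equivalently $e_j(\Y) = 0$ for $j > c$. I will exploit this together with the hook-Schur generating function $S(t,u)$ from Definition~\ref{def:hookschur} and Lemma~\ref{lemma:hook gen fun}.

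First I would rewrite the proposed right-hand side using the two-parameter generating function. Multiply \eqref{eq:specialrel2} by $t^{c+r}$ and sum over $r \ge 1$; the left-hand side becomes $H(\X,t)$ minus its truncation in degrees $\le c$, i.e.\ $\sum_{r\ge 1} h_{c+r}(\X) t^{c+r} = H(\X,t) - \sum_{i=0}^{c} h_i(\X) t^i$. For the right-hand side, I would package $\sum_{r \ge 1} \Schur_{(r-1|c-i)}(\X+\Y)\, t^{c+r}$ in terms of $S(t,u)$ evaluated at the alphabet $\X+\Y$: since $\Schur_{(r-1|c-i)}$ is the coefficient of $t^{r-1} u^{c-i}$ in $S(t,u)$, the relevant sum is $t^{c+1}$ times the coefficient of $u^{c-i}$ in $S(\X+\Y; t,u)$. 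Then, using Lemma~\ref{lemma:hook gen fun}, $S(\X+\Y;t,u) = \frac{H(\X+\Y,t)E(\X+\Y,u) - 1}{t+u}$, and the multiplicativity of $H$ and $E$ under sums of alphabets gives $H(\X+\Y,t) = H(\X,t)H(\Y,t)$ and $E(\X+\Y,u) = E(\X,u)E(\Y,u)$.

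The heart of the computation is the alternating sum $\sum_{0\le i\le c}(-1)^{c-i} h_i(\X)\,[\,\cdot\,]$. I would recognize the weights $(-1)^{c-i}h_i(\X)$ as coefficients extracted from $H(\X,-t)^{-1} = E(\X,-t)\cdot(\text{something})$—more precisely, from the identity $\sum_{i+j=k}(-1)^j h_i(\X) e_j(\X) = 0$ for $k \ge 1$ in \eqref{eq:wellknown}. The cleanest route: observe that $\sum_{i=0}^{c}(-1)^{c-i} h_i(\X) u^{c-i}$ telescopes against $E(\X,u)$ up to terms of degree $> c$, because $H(\X,u)E(\X,-u) = 1$ truncates nicely. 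So after substituting everything and using that $e_j(\Y)=0$ for $j>c$ to kill the high-degree tail, the right-hand side collapses to exactly $H(\X,t) - \sum_{i=0}^c h_i(\X)t^i$. Matching coefficients of $t^{c+r}$ for each $r \ge 1$ then yields \eqref{eq:specialrel2}.

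The main obstacle I anticipate is bookkeeping the interaction between the $u$-variable (which tracks the hook's leg length and is eventually set by the coefficient extraction in $u^{c-i}$) and the truncation coming from $|\Y| \le c$: one must be careful that the terms discarded when passing from the full generating function to its degree-$\le c$ part in $u$ are precisely those multiplied by $e_j(\Y)$ with $j > c$, hence vanish, rather than contributing spurious terms. A secondary subtlety is the $t+u$ denominator in $S(t,u)$—I would clear it early (writing $H(\X+\Y,t)E(\X+\Y,u) - 1 = (t+u)S(t,u)$) so that all manipulations stay polynomial and no formal inversion is needed. Once the denominator is cleared, the remaining steps are routine manipulations with $H$, $E$, and \eqref{eq:wellknown}, so I expect the argument to be short modulo this careful truncation analysis. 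Alternatively, if the generating-function route proves fiddly, a direct induction on $r$ using the Pieri-type recursion $h_i e_j = \Schur_{(i|j-1)} + \Schur_{(i-1|j)}$ from Definition~\ref{def:hookschur} and the base case $r=1$ (which is essentially the determinant relation $\prod_{y\in\Y}(x - y)$ expanded via $e_j(\Y)$) would also work, but I expect the generating-function proof to be cleaner to write down.
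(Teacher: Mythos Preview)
Your generating-function plan is viable, but the paper takes a different and tidier route that sidesteps exactly the truncation bookkeeping you flag as the main obstacle. The paper argues in two steps. First it proves the case $\Y=\varnothing$ directly from Lemma~\ref{lemma:hook rewrite}: the identity $(-1)^j\Schur_{(r-1|j)}(\X)=\sum_{k+l=j}(-1)^l h_{r+k}(\X)e_l(\X)$ turns the right-hand side of \eqref{eq:specialrel2} into $\sum_{i+k+l=c}(-1)^l h_{r+k}(\X)e_l(\X)h_i(\X)$, and then \eqref{eq:wellknown} collapses the inner sum over $i+l$ to leave $h_{c+r}(\X)$. Second, for general $\Y$ it writes $h_{c+r}(\X)=h_{c+r}((\X+\Y)-\Y)=\sum_{i\ge 0}(-1)^i h_{c-i+r}(\X+\Y)e_i(\Y)$, uses $|\Y|\le c$ \emph{only} to truncate this sum at $i=c$, applies the already-proved $\Y=\varnothing$ case to each $h_{c-i+r}(\X+\Y)$, and then resums using $\sum_{i+j=m}(-1)^i h_j(\X+\Y)e_i(\Y)=h_m(\X)$. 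What this buys: the cardinality hypothesis on $\Y$ enters in exactly one transparent place (the $e_i(\Y)$ truncation), and no two-variable series or coefficient extraction in $u$ is ever needed. Your approach would work, but you would end up reproving Lemma~\ref{lemma:hook rewrite} implicitly inside the $S(t,u)$ manipulation; the paper's decomposition uses that lemma as a black box and keeps each step to a one-line symmetric-function identity.
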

Before proving this lemma, we note the following special cases.

\begin{cor}[Reducing monomials]\label{cor:monomial reduction}
For all $m\geq a \geq 1$ and $1\leq i \leq a$, we have
\[
x_i^{m} =  \sum_{1\leq j\leq a} (-1)^{a-j}\Schur_{(m-a|a-j)}(x_1,\ldots,x_a) x_i^{j-1}.
\]
\end{cor}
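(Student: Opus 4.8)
The plan is to derive Corollary~\ref{cor:monomial reduction} as the special case of Lemma~\ref{lemma:h reduction} in which $\X$ is a single-letter alphabet and $\Y$ absorbs the remaining variables. Concretely, fix $1 \leq i \leq a$ and set $\X = \{x_i\}$ (a one-letter alphabet) and $\Y = \{x_1,\ldots,x_a\} \smallsetminus \{x_i\}$, so that $|\Y| = a-1 =: c$ and $\X + \Y = \{x_1,\ldots,x_a\}$ as a genuine (not virtual) alphabet. For a one-letter alphabet we have $h_n(\X) = x_i^n$ for all $n \geq 0$, since $H(\X,t) = (1-x_i t)^{-1} = \sum_{n \geq 0} x_i^n t^n$.

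First I would substitute these choices into \eqref{eq:specialrel2}. With $c = a-1$ and writing $m = c + r = a - 1 + r$, i.e. $r = m - a + 1 \geq 1$ (which corresponds exactly to the hypothesis $m \geq a$), the left-hand side becomes $h_m(\X) = x_i^m$. On the right-hand side, the sum ranges over $0 \leq i' \leq c = a-1$; reindexing via $j = i' + 1$ so that $1 \leq j \leq a$, the term $h_{i'}(\X) = x_i^{i'} = x_i^{j-1}$, the sign $(-1)^{c - i'} = (-1)^{a - 1 - (j-1)} = (-1)^{a-j}$, and the hook Schur function $\Schur_{(r-1 \mid c - i')}(\X+\Y) = \Schur_{(m - a \mid a - j)}(x_1,\ldots,x_a)$. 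Assembling these pieces gives precisely
\[
x_i^m = \sum_{1 \leq j \leq a} (-1)^{a-j} \Schur_{(m-a \mid a-j)}(x_1,\ldots,x_a)\, x_i^{j-1},
\]
which is the claimed identity.

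The only point requiring a word of care is the case $m = a$ (equivalently $r = 1$): then $\Schur_{(m-a \mid a-j)} = \Schur_{(0 \mid a-j)} = e_{a-j}(x_1,\ldots,x_a)$ for $1 \leq j \leq a-1$ and $\Schur_{(0\mid 0)} = 1$ for $j = a$, so the identity reduces to $x_i^a = \sum_{k=0}^{a-1}(-1)^{a-k-1} e_{a-k}(\X) x_i^k$, exactly the elementary identity $\prod_{j=1}^a(x_i - x_j) = 0$ recorded in the Example preceding Definition~\ref{def:hookschur}; this is a reassuring consistency check rather than an obstacle. I do not anticipate any real difficulty here: the entire content is the specialization of Lemma~\ref{lemma:h reduction}, and the only thing to verify is that the reindexing $j = i'+1$ and the parameter matching $r = m-a+1$ are carried out correctly, together with the observation $h_n(\{x_i\}) = x_i^n$. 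If one wished to avoid even invoking Lemma~\ref{lemma:h reduction}, an alternative is a direct generating-function argument: multiply the identity $\prod_{j=1}^a(1 - x_j t) \cdot H(\{x_i\},t) = \prod_{j \neq i}(1-x_j t) = E(\Y, -t)/\text{(nothing)}$... but this is strictly more work, so the specialization route is preferable.
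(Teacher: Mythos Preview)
Your proposal is correct and follows exactly the same approach as the paper: specialize Lemma~\ref{lemma:h reduction} with $\X=\{x_i\}$, $\Y=\{x_1,\ldots,\widehat{x_i},\ldots,x_a\}$, and $c=a-1$. The paper records only this one-line specialization, whereas you have additionally spelled out the reindexing $j=i'+1$, the parameter match $r=m-a+1$, and the $m=a$ sanity check; all of this is accurate and none of it is needed beyond what you wrote.
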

\begin{proof}
Take $\X = \{x_i\}$, $\Y=\{x_1,\ldots,\widehat{x_i},\ldots,x_a\}$, and $c=a-1$ in Lemma \ref{lemma:h reduction}.
\end{proof}

\begin{cor}\label{cor:h of difference reduction}
If $\X$ and $\rightX$ are alphabets of cardinality $c$, then we have
\[
h_{c+r}(\leftX - \rightX) =  \sum_{1\leq i\leq c} (-1)^{c-i}\Schur_{(r-1|c-i)}(\X) h_i(\leftX - \rightX)\, .
\]
\end{cor}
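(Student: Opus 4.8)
The plan is to derive Corollary~\ref{cor:h of difference reduction} directly from the $h$-reduction Lemma~\ref{lemma:h reduction} by a single substitution of alphabets. First I would invoke Lemma~\ref{lemma:h reduction} with its auxiliary alphabet $\Y$ set equal to $\rightX$ --- which has cardinality exactly $c$, so the hypothesis $|\Y|\leq c$ holds --- and with its first argument set equal to the virtual alphabet $\X-\rightX$. This is legitimate because the entire discussion of \S\ref{ss:h reduction}, including Lemma~\ref{lemma:h reduction}, is phrased for formal linear combinations of alphabets. Since that lemma evaluates the hook Schur functions on the combination ``$\X+\Y$'', this substitution replaces it by $(\X-\rightX)+\rightX=\X$, and it produces
\[
h_{c+r}(\X-\rightX) = \sum_{0\leq i\leq c} (-1)^{c-i}\,\Schur_{(r-1|c-i)}(\X)\,h_i(\X-\rightX).
\]

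The remaining step is to see that the $i=0$ term may be dropped, so that the sum matches the statement. Since $h_0(\X-\rightX)=1$, this term is $(-1)^c\Schur_{(r-1|c)}(\X)$, and I would argue it vanishes: $\Schur_{(r-1|c)}=\Schur_{(r,1^c)}$ is a Schur function indexed by a partition with $c+1$ rows, hence evaluates to zero on the alphabet $\X$ of cardinality $c$. (Alternatively, the second identity in Lemma~\ref{lemma:hook rewrite} expresses $(-1)^c\Schur_{(r-1|c)}$ as a sum of terms each carrying a factor $e_{c+l+1}(\X)$ with $l\geq 0$, and $e_j(\X)=0$ for all $j>c=|\X|$.) Deleting this term from the displayed identity gives precisely the asserted formula.

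There is no real obstacle here; the content lies entirely in choosing the substitution $(\X,\Y)\mapsto(\X-\rightX,\rightX)$ and in the elementary vanishing of a hook Schur function on a too-small alphabet. The only point worth double-checking carefully is that Lemma~\ref{lemma:h reduction} is indeed being applied within its stated generality --- in particular that a virtual alphabet is permitted in its first slot --- which it is.
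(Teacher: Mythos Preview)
Your proof is correct and follows exactly the same approach as the paper: substitute $(\X,\Y)\mapsto(\X-\rightX,\rightX)$ in Lemma~\ref{lemma:h reduction}, then drop the $i=0$ term since $\Schur_{(r-1|c)}(\X)$ vanishes on an alphabet of cardinality $c$.
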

\begin{proof}
Take 
$\X\mapsto\leftX - \rightX$ and $\Y\mapsto \rightX$ in Lemma \ref{lemma:h reduction}.  
Note that the $i=0$ summand in \eqref{eq:specialrel2}
is zero in this case since the Young diagram for the hook $(r-1|c)$ has $c+1$ rows, 
which exceeds the cardinality of $\X$.
\end{proof}

\begin{proof}[Proof of Lemma \ref{lemma:h reduction}]
We begin by proving the $\Y= \varnothing$ case. 
Lemma \ref{lemma:hook rewrite} gives us the identity
\[
(-1)^i \Schur_{(r-1|i)}(\X) h_j(\X) = \sum_{k+l=i} (-1)^l h_{r+k}(\X) e_l(\X) h_j(\X)\, .
\]
Summing up over all indices $i,j\geq 0$ with $i+j=c$ yields
\[
\sum_{i+j=c} (-1)^i \Schur_{(r-1|i)}(\X) h_j(\X)  
= \sum_{j+k+l=c} (-1)^l h_{k+r}(\X) e_l(\X) h_j(\X) \stackrel{\eqref{eq:wellknown}}{=} h_{c+r}(\X)\, ,
\]
which is the $|\Y|=0$ case of the lemma.

Now, we deduce the general identity.
First, we compute
\[
h_{c+r}(\X) = h_{c+r}((\X+\Y) - \Y)
=  \sum_{i=0}^c (-1)^i h_{c-i+r}(\X+\Y)  e_i(\Y)\, .
\]
We have used the fact that $e_i(\Y)=0$ for $i > c \geq |\Y|$. 
Next, we apply the $\Y= \varnothing$ case of the lemma to rewrite $h_{c-i+r}(\X+\Y)$. 
The resulting identity is
\[
h_{c+r}(\X) = \sum_{i+j+k=c} (-1)^{k+i} \Schur_{(r-1|k)}(\X+\Y) h_j(\X+\Y) e_i(\Y)\, .
\]
Finally, we fix $k$ and sum over all indices $i,j$ with $i+j=c-k$ to obtain
\[
h_{c+r}(\X) = \sum_{0\leq k\leq c} (-1)^{k} \Schur_{(r-1|k)}(\X+\Y) h_{c-k}(\X)\, .\qedhere
\]
\end{proof}

\subsection{Haiman determinants}
\label{ss:haiman dets}
Our description of the deformed, colored homology of $(m,m)$-torus knots 
(conjectural, when $m>2$) in \S \ref{s:colored Hilb} 
and \S \ref{s:hopf link} relies on certain 
(anti)symmetric polynomials constructed using determinants.

Let $R$ be a commutative $\k$-algebra and consider a tuple $(f_1,\dots,f_N)$ of elements
$f_i\in R$.   Let $f_{i,j}$ denote the element of $R^{\otimes N}$ given by
$
f_{i,j} = 1\otimes \cdots \otimes f_i\otimes\cdots \otimes 1,
$
where $f_i$ occurs in the $j$-th position,
and set 
\begin{equation}\label{eq:Haiman}
\hdet(f_1,\dots,f_N):= \det(f_{i,j})_{1\leq i,j\leq N} :=
	\begin{vmatrix}
	f_{1,1} & \cdots & f_{1,N} \\ 
	\vdots & \ddots & \vdots \\  
	f_{N,1} & \cdots &  f_{N,N}
	\end{vmatrix}
	\in R^{\otimes N}
\end{equation}
Equivalently, $\hdet(f_1,\dots,f_N)$ is the anti-symmetrization of 
$f_1\otimes \cdots \otimes f_N$ inside $R^{\otimes N}$.

\begin{remark}
We will consider this construction in the special cases $R=\k[x]$ and $R=\k[x,y]$.
In such cases, we will identify $R^{\otimes N}$ with the polynomial ring 
$\k[\X]$ or $\k[\X,\Y]$, respectively, where
$\X=\{x_1,\ldots,x_N\}$ and $\Y=\{y_1,\ldots,y_N\}$. 
When the $f_i \in R$ are monic monomials, we refer to the elements constructed in 
\eqref{eq:Haiman} as \emph{Haiman determinants}, 
due to their appearance in \cite[Section 2.2]{Haiman}.
\end{remark}

It will be useful to introduce the following short-hand.

\begin{definition}\label{def:monomial list} 
Let $\l_1\geq \cdots \geq \l_N \geq 0$ be a weakly decreasing sequence of 
non-negative integers of length $N$, then we associate to it 
an $N$-tuple of monomials in $x$ as follows:
\[
\mathcal{M}_N(\l):=(x^{\l_1+N-1},\ldots,x^{\l_{N-1}+1},x^{\l_N}).
\]
\end{definition}

\begin{conv}
Given a positive integers $N \geq l$ and a partition 
$\lambda=(\lambda_1,\ldots,\lambda_l)$ with $l$ parts,  
we will sometimes view $\lambda$ as a weakly decreasing sequence of length $N$ 
by appending to it $N-l$ zeros.
\end{conv}

\begin{example}
We have $\mathcal{M}_N(\emptyset) = (x^{N-1},\ldots,x,1)$ and thus
$\hdet(\mathcal{M}_N(\emptyset)) = \Delta(\X)$, 
where $\Delta(\X) = \prod_{1\leq i<j\leq N} (x_i-x_j)$ is the usual 
\emph{Vandermonde determinant}.  
More generally, for a partition $\lambda=(\lambda_1,\dots, \lambda_l)$ 
with $l \leq N$, Jacobi's bialternant formula implies
\begin{equation}\label{eq:Schur}
\hdet(\mathcal{M}_N(\l)) =  \Delta(\X)	\Schur_{\lambda}(\X),
\end{equation}
where $\Schur_{\lambda}(\X)$ denotes the Schur polynomial associated to
$\lambda$ in the alphabet $\X$ of cardinality $N$.
\end{example}

\begin{definition}\label{def:shapes and dets} 
Let $S$ be a finite set of monic monomials in $R=\k[x,y]$ and set $N:=|S|$. 
We identify $R^{\otimes N}=\k[\X,\Y]$ where $|\X|=N=\Y$
and consider the associated \emph{Haiman determinant}
\[
\Delta_S(\X,\Y) %:= \Delta_ {\mathcal{M}(S)}(\X,\Y)
: = \hdet(S) \in \k[\X,\Y].
\]
Our convention is to order the monomials in $S$ by writing
$
S = S^0 \cup S^1 y \cup \cdots \cup S^r y^r
$
where each $S^k$ is a tuple of monomials in $x$, written in decreasing order.
\end{definition}

\begin{example}
For $S=\{x^2,x,1,y\}$, the Haiman determinant is
\[
	\Delta_{S}(\X,\Y) = \begin{vmatrix}
	x_1^2 & x_2^2 & x_3^2 & x_4^2 \\ 
	x_1 & x_2 & x_3 & x_4 \\ 
	1 & 1 & 1 & 1 \\ 
	y_1 & y_2 & y_3 & y_4
	\end{vmatrix} .
\]
\end{example}

\section{Categorical background}
In this section we review background on homological algebra and 
singular Soergel bimodules.

\subsection{Categories of coefficients}
\label{ss:notation and cats}
We will assume familiarity with the notion of a category \emph{enriched} in a
 symmetric monoidal category $\RS$. In particular, if $\BS$ is $\RS$-enriched,
 then $\Hom_{\BS}(X,Y)$ is an object of $\RS$ for all $X,Y\in \BS$, and the
 composition of morphisms in $\BS$ is given by morphisms in $\RS$
\[
\Hom_{\BS}(Y,Z)\otimes \Hom_\BS(X,Y)\rightarrow \Hom_\BS(X,Z)
\]
where $\otimes$ is the monoidal structure in $\RS$. 
We view $\RS$ as the \emph{category of coefficients} for $\BS$. 
In this section, we discuss the
various categories of coefficients that will appear in this paper. 

We let $\KS$ denote the category of
finite-dimensional $\k$-vector spaces and let $\overline{\KS}$ denote the
category of all $\k$-vector spaces. 
A category is \emph{$\k$-linear} if it is enriched in $\overline{\KS}$.  

Let $\Gamma$ be an abelian group and let $\KS[\Gamma]$ 
denote the category of finite-dimensional $\Gamma$-graded $\k$-vector spaces. 
An object of this category is a $\k$-vector space of the form 
$\bigoplus_{\gamma\in \Gamma} M_\gamma$ 
where each $M_\gamma$ is finite-dimensional, 
and $M_\gamma=0$ for all but finitely many $\gamma\in \Gamma$.  
Morphism spaces in this category are the $\Gamma$-graded $\k$-vector spaces
\[
\Hom_{\KS[\Gamma]}(M,N) = \bigoplus_{\gamma\in \Gamma} \Hom_{\KS[\Gamma]}^\gamma(M,N) 
\]
where
\[
\Hom_{\KS[\Gamma]}^\gamma(M,N) 
= \prod_{\gamma'\in \Gamma} \Hom_\KS(M_{\gamma'},N_{\gamma'+\gamma}) \, .
\]

Now, suppose $\Gamma$ is equipped with a symmetric bilinear form 
$\langle - , - \rangle \colon \Gamma\times \Gamma \rightarrow \Z/2\Z$.  
This determines a monoidal structure on $\KS[\Gamma]$, 
given on objects by
\[
(M\otimes N)_\gamma = \bigoplus_{\gamma_1+\gamma_2= \gamma} M_{\gamma_1}\otimes N_{\gamma_2}
\]
and morphisms by
\[
(f\otimes g)(m\otimes n) = (-1)^{\langle \deg(g), \deg(m)\rangle} f(m)\otimes g(n).
\]
The monoidal structure just defined is symmetric, with braiding given by
\[
M\otimes N \rightarrow N\otimes M 
\, , \quad 
m\otimes n \mapsto (-1)^{\langle \deg(m),\deg(n) \rangle} n\otimes m \, .
\]

At times, we will relax the finiteness conditions that define $\KS[\Gamma]$, 
and let $\overline{\KS}\llbracket \Gamma \rrbracket$ denote
the category of $\Gamma$-graded $\k$-vector spaces.
This category is again symmetric monoidal, via the same formulae.

Now, if $\Gamma$ is additionally equipped with an element 
$\diffdeg\in \Gamma$ satisfying $\langle\diffdeg,\diffdeg\rangle = 1\in \Z/2\Z$, 
then we can consider the category $\KS[\Gamma]_{\dg}$
of $\Gamma$-graded complexes with differentials of degree $\diffdeg$. 
(Note that we suppress $\langle - , - \rangle$ and $\diffdeg$ from this notation, 
as they will be clear in context.)
Objects in $\KS[\Gamma]_{\dg}$ are pairs $(X,\d)$ where $X\in \KS[\Gamma]$ and $\d\in
\End^{\diffdeg}_{\KS[\Gamma]}(X)$ satisfies $\d^2=0$, 
and morphism spaces are the complexes
\[
\Hom_{\KS[\Gamma]_{\dg}}\Big((X,\d_X),(Y,\d_Y)\Big) = \Hom_{\KS[\Gamma]}(X,Y)
\]
with differential $f\mapsto \d_Y\circ f - (-1)^{|f|}f\circ \d_X$. 
This category is the prototypical differential $\Gamma$-graded category. 
It comes equipped with a tensor product
\[
(X,\d_X)\otimes (Y,\d_Y) = \Big( X\otimes Y\:,\: \d_X\otimes \id_Y + \id_X\otimes \d_Y\Big) \, .
\]
The category $\overline{\KS}\llbracket \Gamma \rrbracket_{\dg}$ is defined similarly. 

\begin{definition}
A $\k$-linear category $\BS$ is \emph{$\Gamma$-graded} if it is enriched in
$\overline{\KS}\llbracket\Gamma\rrbracket$ and 
\emph{differential $\Gamma$-graded} if it is enriched in
$\overline{\KS}\llbracket\Gamma\rrbracket_{\dg}$.
\end{definition}

The most important $\Gamma$ for our uses 
are introduced in the following examples.

\begin{example}\label{ex:Zq}
Let $\Gamma=\Z$ with trivial bilinear form $\langle i,j\rangle =0$.  
We will identify the group algebra $\k[\Z]$ with the algebra of Laurent polynomials $\k[\Z]  = \k[\qdeg^{\pm}]$, 
and similarly indicate the choice of coordinate $\qdeg$ in $\Z$ by writing $\Z=\Z_\qdeg$.  
Paralleling this notation, we denote
\[
\KS[\qdeg^{\pm}]:= \KS[\Z_\qdeg]
\, , \quad
\overline{\KS}\llbracket \qdeg^{\pm} \rrbracket := \overline{\KS}\llbracket\Z_\qdeg\rrbracket.
\]
In both cases, we also let $\qdeg$ denote the grading shift functor, 
defined by $(\qdeg M)_i = M_{i-1}$. 
These category appear when we consider graded rings, modules, and bimodules.
Note that the dg categories $\KS[\Gamma]_{\dg}$ and
$\overline{\KS}\llbracket\Z_\qdeg\rrbracket_{\dg}$ are trivial for this choice of $\langle - , - \rangle$, 
since there is no element $\diffdeg\in \Z$ with $\langle \diffdeg ,\diffdeg \rangle=1$.
\end{example}

\begin{example}\label{ex:Zt}
Let $\Gamma=\Z_\tdeg$ with $\langle j,j'\rangle = jj'$ and $\diffdeg = 1$, 
then $\KS[\Gamma]_{\dg}$ is the usual category of bounded complexes
of finite-dimensional $\k$-vector spaces
(with the cohomological convention for complexes, i.e.~differentials have degree +1).
\end{example}

\begin{example}\label{ex:ZqxZt}
Combining Examples \ref{ex:Zq} and \ref{ex:Zt},
let $\Gamma = \Z_\qdeg\times \Z_\tdeg$ with $\langle (i,j),(i',j')\rangle = jj'$.
On the level of group algebras, we have 
$\k[\Z_\qdeg\times \Z_\tdeg] = \k[\qdeg^{\pm},\tdeg^{\pm}]$, 
and we denote the categories of graded $\k$-vector spaces similarly:
\[
\KS[\qdeg^\pm,\tdeg^\pm]:= \KS[\Z_\qdeg\times \Z_\tdeg]
\, , \quad 
\overline{\KS}\llbracket \qdeg^{\pm},\tdeg^\pm \rrbracket 
	:= \overline{\KS}\llbracket\Z_\qdeg\times \Z_\tdeg\rrbracket \, .
\] 
As above, we regard monomials $\qdeg^i \tdeg^j$ as a grading shift functors, 
via $(\qdeg^i \tdeg^j M)_{k,l} = M_{k-i,l-j}$.
Taking $\diffdeg = (0,1)$ gives the dg categories
\[
\KS[\qdeg^\pm,\tdeg^\pm]_{\dg} := \KS[\Z_\qdeg\times \Z_\tdeg]_{\dg}
\, , \quad 
\overline{\KS}\llbracket \qdeg^{\pm},\tdeg^\pm \rrbracket_{\dg} 
	:= \overline{\KS}\llbracket\Z_\qdeg\times \Z_\tdeg\rrbracket_{\dg}
\]
which appear when we consider complexes of graded modules or 
bimodules over $\Z_\qdeg$-graded rings.
\end{example}

Finally, triply-graded Khovanov--Rozansky homology takes values in the following
symmetric monoidal category of triply-graded $\k$-vector spaces.

\begin{example}\label{ex:ZaxZqxZt}
Let $\Gamma = \Z_\adeg\times \Z_\qdeg\times \Z_\tdeg$. 
As in Examples \ref{ex:Zt} and \ref{ex:ZqxZt}, 
the $\tdeg$-grading is cohomological, so we take $\diffdeg = (0,0,1)$. 
The $\adeg$-grading also has a cohomological flavor, but is \emph{independent} from $\tdeg$. 
This is reflected in our choice of symmetric bilinear form:
\[
\langle (i,j,k), (i',j',k')\rangle = ii' + kk' \, .
\]
The resulting categories of triply-graded $\k$-vector spaces will be denoted
$\KS[\adeg^\pm, \qdeg^\pm, \tdeg^\pm]$ and $\overline{\KS}\llbracket \adeg^{\pm},\qdeg^{\pm},\tdeg^\pm
\rrbracket$, and complexes therein by $\KS[\adeg^\pm, \qdeg^\pm, \tdeg^\pm]_{\dg}$ and
$\overline{\KS}\llbracket \adeg^{\pm},\qdeg^{\pm},\tdeg^\pm \rrbracket_{\dg}$.
These categories occur when we consider (co)homological functors, 
such as Hochschild (co)homology, applied to complexes of graded bimodules.
\end{example}

\begin{conv}\label{conv:wt}
In (differential) $\Gamma$-graded categories for $\Gamma$ as in 
Examples \ref{ex:Zq}, \ref{ex:ZqxZt}, and \ref{ex:ZaxZqxZt}, we will typically 
indicate the degree of a morphism multiplicatively, by indicating its \emph{weight}.
For example, $\wt(f) = \adeg^i \qdeg^j \tdeg^k$ means that the 
$\Z_\adeg\times \Z_\qdeg\times \Z_\tdeg$-degree of $f$ is $(i,j,k)$.
\end{conv}

\subsection{Complexes and curved complexes}
\label{ss:cat setup}

Retaining notation from the previous section, 
the most important differential $\Gamma$-graded categories 
are categories of (curved) complexes.
To begin, suppose $\AS$ is a $\k$-linear category.
Let $\AS[\tdeg^\pm]$ denote the category whose objects are sequences 
$(X_i)_{i\in \Z}$ with $X_i\in \AS$ and $X_i=0$ for all but finitely many $i$, 
and morphisms
\[
\Hom_{\AS[\tdeg^\pm]}(X,Y) := \bigoplus_{k\in \Z}\Hom_{\AS[\tdeg^\pm]}^k(X,Y)
\, , \quad
\Hom_{\AS[\tdeg^\pm]}^k(X,Y) := \prod_{i\in \Z} \Hom_{\AS}(X_i,Y_{i+k}) \, .
\]
In other words $\Hom_{\AS[\tdeg^\pm]}(X,Y)$ is the $\Z$-graded $\k$-vector space spanned
by homogeneous \emph{multimaps} $(f_i)_{i\in\Z}$ with $f_i \in \Hom_{\AS}(X_i,Y_{i+k})$.

The dg category of bounded chain complexes $\CS(\AS)$ can be built from
$\AS[\tdeg^\pm]$ in a standard way. Objects of $\CS(\AS)$ are complexes: pairs
$(X,\d)$ where $X\in \AS[\tdeg^\pm]$ and $\d\in \End_{\AS[\tdeg^\pm]}^{1}(X)$
with $\d^2=0$.  
The morphism spaces in $\CS(\AS)$ are the complexes
\[
\Hom_{\CS(\AS)}(X,Y)=\Hom_{\AS[\tdeg^\pm]}(X,Y)
\, , \quad 
d \colon f\mapsto \d_Y\circ f  - (-1)^{|f|} f\circ \d_X \, .
\]

\begin{remark}
If $\AS$ is $\Gamma$-graded, 
then $\AS[\tdeg^\pm]$ is $\Gamma\times\Z_\tdeg$ graded.
Here, we equip the latter with the $\Z/2\Z$-valued symmetric bilinear form
\[
\langle (\gamma,j),(\gamma',j')\rangle_{\Gamma\times \Z} 
= \langle \gamma,\gamma'\rangle + jj'
\]
and let $\diffdeg = (0,1)$.
In other words, when forming categories of complexes over $\Gamma$-graded categories, 
our differentials have degree $(0,1)\in \Gamma\times \Z_\tdeg$.
In this way, $\CS(\AS)$ is a differential $\Gamma\times \Z_\tdeg$-graded category.
\end{remark}

We will define the category of curved complexes in a similar fashion.  
Informally, the basic idea is to replace the equation $\d^2=0$ with the equation $\d^2=F$, 
where $F$ is an element of the center of $\AS[\tdeg^\pm]$.  
(We will see below that this does not work \emph{sensu stricto}, 
but that there is any easy fix.)

\begin{definition}
The \emph{center} of a $\Z_\tdeg$-graded category $\BS$ is the $\Z$-graded algebra $\cal{Z}
(\BS)$ of endonatural transformations of the identity functor of $\BS$.  
Precisely, a degree $k$ element of $\cal{Z}(\BS)$ is an assignment 
$X\mapsto F|_X\in \End^k_{\BS}(X)$ satisfying 
\emph{super-naturality}: $f\circ F|_X = (-1)^{k|f|} F|_Y\circ f$ for all 
$f\in \Hom_{\BS}(X,Y)$.
\end{definition}

\begin{lemma}
We have $\cal{Z}(\AS[\tdeg^\pm])=\cal{Z}(\AS)$.
\end{lemma}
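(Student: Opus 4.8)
The plan is to show that $\cal{Z}(\AS[\tdeg^\pm])$ is concentrated in degree $0$ and that its degree-$0$ component is exactly $\cal{Z}(\AS)$, comparing the two graded algebras degree by degree and exploiting that $\AS[\tdeg^\pm]$ is obtained from $\AS$ by formally adjoining $\Z$-indexed biproducts together with the homological shift. For $A\in\AS$ and $n\in\Z$ write $\iota_n(A)\in\AS[\tdeg^\pm]$ for the object with $A$ in homological degree $n$ and $0$ in every other degree, so that an arbitrary object $X=(X_i)_{i\in\Z}$ is the finite biproduct $\bigoplus_i \iota_i(X_i)$, with structure morphisms $\sigma_i\colon \iota_i(X_i)\to X$ and $\pi_i\colon X\to \iota_i(X_i)$ of degree $0$ satisfying $\pi_j\circ\sigma_i=\delta_{ij}\id$ and $\sum_i \sigma_i\circ\pi_i=\id_X$. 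From the definition of the $\tdeg$-graded hom-spaces one reads off that $\End^k_{\AS[\tdeg^\pm]}(\iota_n(A))$ equals $\End_{\AS}(A)$ for $k=0$ and vanishes for $k\neq 0$.

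First I would dispose of the nonzero degrees. Let $F\in\cal{Z}(\AS[\tdeg^\pm])$ be homogeneous of degree $k\neq 0$. Then $F|_{\iota_i(X_i)}\in \End^k_{\AS[\tdeg^\pm]}(\iota_i(X_i))=0$ for every $i$, so super-naturality applied to the degree-$0$ morphisms $\sigma_i$ gives $F|_X\circ\sigma_i=\sigma_i\circ F|_{\iota_i(X_i)}=0$; hence $F|_X=F|_X\circ\sum_i\sigma_i\circ\pi_i=0$. Thus $\cal{Z}(\AS[\tdeg^\pm])$ is concentrated in degree $0$.

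It then remains to identify $\cal{Z}^0(\AS[\tdeg^\pm])$ with $\cal{Z}(\AS)$ via mutually inverse unital algebra maps $\psi$ and $\phi$. Given $z\in\cal{Z}(\AS)$, i.e.\ a natural family $(z_A)_{A\in\AS}$ with $z_A\in\End_{\AS}(A)$, let $\psi(z)|_X$ be the degree-$0$ endomorphism of $X$ whose $i$-th component is $z_{X_i}$; its super-naturality (with trivial sign, since the degree is $0$) is precisely the naturality of $z$ applied to each component $f_i\colon X_i\to Y_{i+|f|}$ of a morphism $f\colon X\to Y$. Conversely, for $F\in\cal{Z}^0(\AS[\tdeg^\pm])$ one first checks that $(F|_{\iota_n(A)})_n\in\End_{\AS}(A)$ is independent of $n$: the degree-$(m-n)$ morphism $g\colon\iota_n(A)\to\iota_m(A)$ given by $\id_A$ satisfies $g\circ F|_{\iota_n(A)}=F|_{\iota_m(A)}\circ g$ by super-naturality, and comparing components yields $(F|_{\iota_n(A)})_n=(F|_{\iota_m(A)})_m$. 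Set $\phi(F)_A:=(F|_{\iota_0(A)})_0$; naturality of $\phi(F)$ follows by feeding $\iota_0(h)$, for $h\colon A\to B$ in $\AS$, into the super-naturality of $F$. Then $\phi\circ\psi=\id$ is immediate, while $\psi\circ\phi=\id$ follows since, for each $i$, super-naturality along $\sigma_i$ forces the $i$-th component of $F|_X$ to equal $(F|_{\iota_i(X_i)})_i=\phi(F)_{X_i}$, and a degree-$0$ endomorphism of $X$ is nothing but the tuple of these components. As $\psi$ and $\phi$ visibly preserve composition and identities, they exhibit the claimed identification of graded algebras.

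The argument is entirely formal; the one point needing attention is the interplay of the homological grading with the super-naturality sign. The main (mild) obstacle is therefore just to confirm that the shift-invariance step and the reconstruction step incur no sign — which they do not, since every structure morphism invoked ($\sigma_i$, $\pi_i$, $g$) is paired against a degree-$0$ element of the center.
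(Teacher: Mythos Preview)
Your proof is correct and follows essentially the same approach as the paper's: use the biproduct structure (equivalently, the idempotents $e_i=\sigma_i\circ\pi_i$) to force central elements into degree~$0$, then use the shift morphisms to identify the degree-$0$ part with $\cal{Z}(\AS)$. Your version is simply more explicit in spelling out the mutually inverse maps $\psi$ and $\phi$ and in tracking the (trivial) signs.
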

\begin{proof}
Each object $X\in \AS[\tdeg^\pm]$ comes equipped with a family of idempotent
endomorphisms $e_i$ which project on to the $i$-th object $X_i$. 
Any central element must commute with these $e_i$, hence must be degree zero.  
Now, a degree zero central element $F$ acting on $X$ is completely 
determined by how it acts on $X_0$, 
since such an $F$ must commute with the degree $k$ map relating 
$X$ and its shift $\tdeg^k X$.
\end{proof}

We now run into a slight snag: $\delta^2$ has degree two, yet the only degree two
element of $\cal{Z}(\AS)$ is zero.  In order to remedy this, the two standard
approaches are to collapse the grading from $\Z$ to $\Z/2\Z$ or formally extend
scalars from $\k$ to an appropriate graded ring.  We prefer to preserve the
$\Z$-grading, hence extend scalars as follows.

\begin{definition}%[Extending scalars]
	\label{def:Adjoin} 
If $\BS$ is $\Z_\tdeg$-graded category and $R$ is a $\Z_\tdeg$-graded ring, 
then we let $\BS\otimes R$ denote the $\Z_\tdeg$-graded category with 
the same objects as $\BS$, and morphisms
\[
\Hom_{\BS\otimes R}(X,Y):=\Hom_{\BS}(X,Y)\otimes R \, .
\]
\end{definition}

\begin{remark}
The center of $\BS\otimes R$ is isomorphic to $\cal{Z}(\BS)\otimes R$.
In particular, the center of $\AS[\tdeg^\pm]\otimes R$ is isomorphic to
$\cal{Z}(\AS)\otimes R$.
\end{remark}

\begin{definition}[Curved complexes]\label{def:curved cx} Let $\AS$ be a
$\k$-linear category, $R$ a $\Z_\tdeg$-graded ring, and $F$ a degree two element of
$\cal{Z}(\AS)\otimes R$.   Let $\CS_F(\AS;R)$ denote the dg category whose objects
are pairs $(X,\d)$ with $X\in \AS[\tdeg^\pm]$ and 
$\d\in \End_{\AS[\tdeg^\pm]\otimes R}^{1}(X)$ satisfying $\d^2 = F|_X$. 
Morphism spaces in $\CS_F(\AS;R)$ are the complexes
\[
\Hom_{\CS_F(\AS)}(X,Y) := \Hom_{\AS[\tdeg^\pm] \otimes R}(X,Y)
\, , \quad
d \colon f\mapsto \d_Y\circ f  - (-1)^{|f|} f\circ \d_X.
\]
\end{definition}
When the ring $R$ is clear from context, we will omit it from the notation and 
denote the category $\CS_F(\AS;R)$ simply by $\CS_F(\AS)$.

\begin{rem}
If $\AS$ is $\Gamma$-graded, then both $\CS(\AS) = \CS_{0}(\AS)$ and
$\CS_F(\AS)$ are dg $\Gamma \times \Z_\tdeg$-graded categories.
\end{rem}

\subsection{Frobenius extensions}
\label{sec:Frobenius}
Frobenius extensions between rings of partially symmetric polynomial rings will
play an important role in defining morphisms between singular Soergel bimodules.
\begin{definition}
	A \emph{Frobenius extension} is an inclusion of commutative rings
	$\iota\colon A\hookrightarrow B$ such that $B$ is free and finitely
	generated as an $A$-module, together with a non-degenerate $A$-linear map
	$\partial\colon B\to A$, called the \emph{trace}. 
	Here, \emph{non-degeneracy} asserts the existence of $A$-linear 
	\emph{dual bases} $\{x_\alpha\}$ and $\{x'_\alpha\}$ for $B$ such that 
	$\partial(x_\alpha x'_\beta)=\delta_{\alpha, \beta}$. 
	For a \emph{graded Frobenius extension} between graded rings, 
	we require $\iota$ to be grading preserving and
	$\partial$ and the dual bases to be homogeneous.
   \end{definition}

Fix $N>0$, and let $R:=\k[x_1,\ldots,x_N]$ be the polynomial ring,
$\Z_\qdeg$-graded by declaring $\deg_\qdeg(x_i)=2$. Given a parabolic
subgroup $\symg_{\aa} = \symg_{a_1}\times \cdots \times
\symg_{a_m}$ of the symmetric group $\symg_N$, we let
$R^{\aa} \subseteq R$ denote the ring of polynomials invariant under the
action of $\symg_{\aa}$. Note that $R^{\bb}\subset R^{\aa}$ if
and only if $\symg_{\bb} \supset \symg_{\aa}$.

\begin{lem} 
If $\symg_{\bb} \supset \symg_{\aa}$, then
$R^{\bb}\hookrightarrow R^{\aa}$ is a graded Frobenius extension
of rank $|\symg_{\bb}/\symg_{\aa}|$. To describe the
trace, let $w_{\bb}$ denote the longest element of
$\symg_{\bb}$ and $w_{\bb/\aa}=s_{i_1}\cdots
s_{i_k}$ a minimal length coset representative for $w_{\bb}
\symg_{\aa}$. Then we have:
\[
\partial\colon R^{\aa} \to R^{\bb} 
\, , \quad 
f\mapsto \partial_{i_1}\cdots \partial_{i_k}f
\, , \quad \text{where}\quad
\partial_{i}(g)=\frac{g-s_i(g)}{x_{i}-x_{i+1}} \, .
\] 
Here, the transposition $s_i$
acts by swapping variables $x_i$ and $x_{i+1}$ in $g$.
\end{lem}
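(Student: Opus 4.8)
The plan is to reduce to the well-known case of a single divided difference operator and then iterate. First I would recall the classical fact that for a simple transposition $s_i \in \symg_N$, the inclusion $R^{s_i} \hookrightarrow R$ of the ring invariant under $s_i$ into the full polynomial ring is a graded Frobenius extension of rank $2$, with trace $\partial_i(g) = (g - s_i(g))/(x_i - x_{i+1})$ and dual bases, say, $\{1, x_i\}$ and $\{-x_{i+1}, 1\}$ (up to normalization and grading shift); this is standard and underlies the theory of Soergel bimodules. The key structural input is that divided difference operators satisfy the braid relations and, more importantly for us, that $\partial_{i_1}\cdots\partial_{i_k}$ depends only on the element $w_{\bb/\aa} = s_{i_1}\cdots s_{i_k}$ when this is a \emph{reduced} expression, and is independent of the reduced word chosen.

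The core of the argument is then a transitivity statement for Frobenius extensions: if $C \hookrightarrow B$ and $B \hookrightarrow A$ are graded Frobenius extensions with traces $\partial_{B/C}$ and $\partial_{A/B}$, then $C \hookrightarrow A$ is a graded Frobenius extension with trace $\partial_{B/C}\circ\partial_{A/B}$, and dual bases obtained by multiplying the two pairs of dual bases together. This is a routine check: freeness and finite generation are transitive, the composite trace is $A$-linear over $C$, and non-degeneracy of the product bases follows from non-degeneracy of each factor. I would next observe that one reduces the general parabolic case to a chain of one-transposition-at-a-time extensions: given $\symg_{\bb} \supset \symg_{\aa}$, choose a reduced word $w_{\bb/\aa} = s_{i_1}\cdots s_{i_k}$ for the minimal length coset representative, and interpolate a chain of parabolic subgroups $\symg_{\aa} = W_0 \subset W_1 \subset \cdots$, each obtained from the previous by adjoining one simple reflection so that consecutive quotients are handled by a single $\partial_{i_j}$; applying transitivity along the chain produces the Frobenius structure on $R^{\bb} \hookrightarrow R^{\aa}$ with trace $\partial_{i_1}\cdots\partial_{i_k}$.

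The main obstacle, and the step deserving the most care, is verifying that the trace is well-defined and non-degenerate for a \emph{parabolic} quotient rather than just a chain of rank-two steps: one must check that $\partial_{i_1}\cdots\partial_{i_k}$ genuinely lands in $R^{\bb}$ (not merely in some intermediate invariant ring) and is independent of the choice of minimal coset representative and reduced word. For the landing claim, the cleanest route is to note that $\partial_{w_{\bb/\aa}}$ composed with symmetrization, or a direct check that the image is killed by all $\partial_i$ with $s_i \in \symg_{\bb}$, forces $\symg_{\bb}$-invariance; alternatively one invokes that the composite of divided differences along a reduced word for $w_{\bb}$ (the longest element of $\symg_\bb$), factored as $w_{\bb} = w_{\bb/\aa}\cdot w_{\aa}$ with lengths adding, recovers the full symmetrization operator onto $R^\bb$, and the rank count $|\symg_\bb/\symg_\aa|$ then matches $\deg(w_{\bb/\aa})$-many divided differences. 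For independence of the reduced word, I would cite the braid-relation invariance of divided difference operators. Once the trace is pinned down, the rank statement is immediate from $B$ being free of rank $|\symg_\bb/\symg_\aa|$ over $A$ on a Schubert-type basis, and the grading-preservation of $\iota$ together with homogeneity of $\partial$ (of degree $-2\ell(w_{\bb/\aa})$ in the $\qdeg$-grading) and of the dual bases is clear by construction. I expect the write-up to be short, leaning on the cited transitivity and braid-invariance facts rather than reproving them.
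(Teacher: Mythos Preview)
The paper does not give a self-contained argument here; it simply cites Williamson's thesis. Your sketch has the right ingredients, but your second paragraph contains a real gap: the chain of parabolic subgroups $\symg_\aa = W_0 \subset W_1 \subset \cdots \subset \symg_\bb$ with each $|W_j/W_{j-1}| = 2$ does not exist in general, so you cannot reduce to rank-$2$ Frobenius steps via transitivity. Already for $\symg_\aa = \{e\} \subset \symg_\bb = \symg_3$ this fails: the parabolics are $\{e\}$, $\langle s_1 \rangle$, $\langle s_2 \rangle$, $\symg_3$, and any chain between $\{e\}$ and $\symg_3$ has at most one intermediate term, whereas $\ell(w_0) = 3$ would require two; indeed $\symg_3$ has no subgroup of order $4$, so even non-parabolic intermediates cannot help.

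The ingredients in your third paragraph, however, assemble into a correct direct proof, and you should promote them from ``obstacles'' to the main argument. Namely: (i) $R^\aa$ is free over $R^\bb$ of rank $|\symg_\bb/\symg_\aa|$ by classical invariant theory, with a Schubert-type basis; (ii) factoring $w_\bb = w_{\bb/\aa} w_\aa$ with lengths adding gives $\partial_{w_\bb} = \partial_{w_{\bb/\aa}} \partial_{w_\aa}$, and since $\partial_{w_\aa}\colon R \to R^\aa$ is $R^\aa$-linear and surjective, any $f \in R^\aa$ is $\partial_{w_\aa}(g)$ for some $g$, whence $\partial_{w_{\bb/\aa}}(f) = \partial_{w_\bb}(g) \in R^\bb$; (iii) non-degeneracy follows by exhibiting dual Schubert bases. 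Independence of the reduced word for $w_{\bb/\aa}$ is, as you note, the braid relations for divided differences.
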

\begin{proof} See \cite[Theorem 3.1.1 and Corollary 2.14]{Williamson-thesis}.
\end{proof}

More specifically, the following two examples will be used throughout.

\begin{exa}\label{exa:Demazure}
	Let $\X=\{x_1,\dots, x_N\}$ be an alphabet with $\deg_\qdeg(x_i) = 2$.
Then $\Sym(\X)\hookrightarrow \k[\X]$ is a graded Frobenius extension of rank
$N!$ with non-degenerate trace given by:
\[
\k[\X] \ni f \mapsto 
(\partial_{1}\cdots \partial_{N-1}) \cdots (\partial_{1}\partial_{2})\partial_{1} f
= \frac{\mathrm{Alt}(f)}{\Delta(\X)} \in \Sym(\X) \, .
\] 
Here, $\mathrm{Alt}$ denotes the antisymmetrizer 
$\sum_{\sigma\in \symg_a} (-1)^\sigma \sigma \in \k[\symg_N]$ 
acting on $\k[\X]$ by the permutation of variables and 
$\Delta(\X)=\prod_{1\leq i<j\leq N}(x_i-x_j)$ denotes the Vandermonde determinant. 
A $\Sym(\X)$-linear basis of $\k[\X]$ is
given by the monomials $x_1^{n_1}x_2^{n_2}\cdots x_{N-1}^{n_{N-1}}$ where 
$0\leq n_i\leq N-i$.
We have 
\[
\partial(x_1^{n_1}x_2^{n_2}\cdots x_{N-1}^{n_{N-1}}) =
\begin{cases} 1 &\text{ if } n_i=N-i \text{ for } 1\leq i \leq N,\\
0 & \text{ otherwise.}
\end{cases}
\]
The basis dual to the monomial basis above has elements 
$\prod_{k=1}^{N-1} (-1)^{b_k}e_{b_k}(x_{N+1-k},\dots, x_{N})$ where
$b_k=k-n_{N-k}$ for $1\leq k\leq N-1$.
\end{exa}

\begin{exa}\label{exa:Sylvester}
Let $\X_1=\{x_1,\dots,x_a\}$ and $\X_2=\{x_{a+1},\dots, x_{a+b}\}$ 
be alphabets with $|\X_1|=a$ and $|\X_2|=b$. 
Then $\Sym(\X_1+\X_2)\hookrightarrow \Sym(\X_1|\X_2)$ is a graded Frobenius extension
of rank $ab$ with trace given by the \emph{Sylvester operator}:
\[
\partial_{a,b}\colon \Sym(\X_1|\X_2) \ni f \mapsto 
(\partial_{b}\cdots \partial_{1}) \cdots (\partial_{a+b-1} \cdots \partial_a) f \in \Sym(\X_1+\X_2) \, .
\]
A $\Sym(\X_1+\X_2)$-linear basis of $\Sym(\X_1|\X_2)$ is given by the Schur
functions $\Schur_{\lambda}(\X_1)$ indexed by partitions $\lambda$ with
$\lambda_1\leq b$ having at most $a$ parts 
(i.e. the Young diagram for $\lambda$ fits inside the $a\times b$ box). 
We denote the set of such partitions by $P(a,b)$. 
The dual basis is then given by the signed Schur functions
$(-1)^{|\hat{\lambda}|}\Schur_{\hat{\lambda}}(\X_2)$ where 
$\hat{\lambda}\in P(b,a)$ denotes the dual complementary partition.
\end{exa}

\subsection{Singular Soergel bimodules and webs} 
\label{ss:ssbim}
The $\k$-linear monoidal 2-category of singular Soergel bimodules can be
assembled from certain full subcategories of the categories of graded bimodules between
rings of partially symmetric polynomials over $\k$ in variables of degree two.
These subcategories are generated by certain induction and restriction bimodules under
bimodule composition $\hComp$, direct sum, taking direct summands, and grading
shifts. We now unpack this description, using the notation established in
\S\ref{sec:Frobenius}.

Consider the 2-category $\mathrm{Bim}_N$ wherein
\begin{itemize}
	\item Objects are tuples $\aa = (a_1,\ldots,a_m)$ 
		with $a_i \geq 1$ and $\sum_{i=1}^m a_i = N$.
	\item $1$-morphisms $\aa \to \bb$ are graded
		$(R^{\bb},R^{\aa})$-bimodules.
	\item $2$-morphisms are homomorphisms of graded bimodules.
\end{itemize}
Horizontal composition is given by tensor product over the rings
$R^{\aa}$, and will be denoted by $\hComp$. 
The composition of 2-morphisms is the usual composition of bimodule homomorphisms. 
We will write $\oone_{\aa} :=R^{\aa}$ for the identity bimodule, 
saving the notation $R^{\aa}$ for the rings themselves. 
We denote the number of entries in $\aa$ by $\#(\aa)$, 
i.e. for $\aa=(a_1,\ldots,a_m)$ we have $\#(\aa)=m$.
The collection $\Bim := \{\Bim_N\}_{N \geq 0}$ 
is a monoidal $2$-category, with (external) tensor product
\[
\boxtimes\colon  \Bim_{N_1} \times \Bim_{N_2} \to \Bim_{N_1+N_2}
\]
given on objects by concatenation of tuples:
$
(a_1,\ldots,a_{m_1})\boxtimes (b_1,\ldots,b_{m_2}) \ := \ (a_1,\ldots,a_{m_1}, b_1,\ldots,b_{m_2}).
$
and on $1$- and $2$-morphisms by tensor product over $\k$. 

A \emph{singular Bott-Samelson bimodule} is, 
by definition, 
any $(R^{\aa_0},R^{\aa_r})$-bimodule of the form
\[
B \ = \ R^{\aa_0}\otimes_{R^{\bb_1}} R^{\aa_1} \otimes_{R^{\bb_2}}
\cdots  \otimes_{R^{\bb_r}} R^{\aa_r}
\]
for some sequence of rings and subrings $R^{\aa_0}\supset R^{\bb_1}\subset
\cdots \supset R^{\bb_r} \subset R^{\aa_r}$, 
or a grading shift thereof.
In particular, we have the \emph{merge} and \emph{split bimodules} 
(terminology explained below) given by
\begin{equation}\label{eq:MergeSplit}
{}_{\bb}M_{\aa} := 
\qdeg^{\ell(\aa) - \ell(\bb)}
R^{\bb}  \otimes_{R^{\bb}} R^{\aa}
\, , \quad
{}_{\aa}S_{\bb} := R^{\aa} \otimes_{R^{\bb}} R^{\bb},
\end{equation}
whenever $R^{\bb}\subset R^{\aa}$ 
(equivalently $\symg_{\bb} \supset \symg_{\aa}$).  
Here, $\qdeg^k$ denotes a shift up in degree by $k$, 
and $\ell(\aa)$ denotes the length of the longest element in $\symg_{\aa}$.
Any singular Bott-Samelson bimodule is
isomorphic to a shift of the horizontal composition 
of some sequence of merges and splits.

\begin{defi}
The 2-category $\SSBim_N$ of singular Soergel bimodules is the smallest full
$2$-subcategory of $\mathrm{Bim}_N$ that contains 
the singular Bott-Samelson bimodules 
and is closed under taking shifts, direct sums, and direct summands.
The collection $\{\SSBim_N \}_{N \geq 0}$ assembles to form a full monoidal 2-subcategory of 
$\Bim$ that we denote $\SSBim$. 
We will denote the $1$-morphism category in $\SSBim$ from $\aa$ to $\bb$ 
by ${}_{\bb}\SSBim_{\aa}$.
\end{defi}

If $\aa = (a,b)$ and $\aa' = (a+b)$, 
then we represent the corresponding 
merge and split bimodules by diagrams:
\begin{equation}\label{eq:GenWeb}
_{\aa'}M_{\aa} = 
\begin{tikzpicture}[scale =.5, smallnodes, rotate=90,anchorbase]
	\draw[very thick] (0,0) node[right,xshift=-2pt]{$a$} to [out=90,in=210] (.5,.75);
	\draw[very thick] (1,0) node[right,xshift=-2pt]{$b$} to [out=90,in=330] (.5,.75);
	\draw[very thick] (.5,.75) to (.5,1.5) node[left,xshift=2pt]{$a{+}b$};
\end{tikzpicture}
\;\; \text{ and } \;\;
_{\aa}S_{\aa'} =
\begin{tikzpicture}[scale =.5, smallnodes,rotate=270,anchorbase]
	\draw[very thick] (0,0) node[left,xshift=2pt]{$b$} to [out=90,in=210] (.5,.75);
	\draw[very thick] (1,0) node[left,xshift=2pt]{$a$} to [out=90,in=330] (.5,.75);
	\draw[very thick] (.5,.75) to (.5,1.5) node[right,xshift=-2pt]{$a{+}b$};
\end{tikzpicture}.
\end{equation}
All other singular Bott-Samelson bimodules can be obtained from these 
merge and split bimodules using grading shift, 
horizontal composition $\hComp$, and tensor product $\boxtimes$.  
Graphically, $\hComp$ corresponds to the
horizontal glueing of diagrams along a common boundary, and $\boxtimes$
corresponds to vertical stacking of diagrams. We will refer to the graphs built
from the diagrams in \eqref{eq:GenWeb} via $\hComp$ and $\boxtimes$ as
\emph{webs}, which we always understand as mapping from the labels at their
right endpoints to those at their left.

All maps between singular Bott-Samelson bimodules can be built using $\hComp$ and
$\boxtimes$ from the following elemental maps\footnote{This follows e.g. from the results in \cite{WebSchur}, 
which combine with \cite{QR} to show that the $n \to \infty$ limit of 
the $\gln$ foam $2$-category defined in the latter is equivalent to the 
$2$-category of singular Bott-Samelson bimodules.} (which encode the Frobenius
extension structures discussed in \S\ref{sec:Frobenius}):
\begin{enumerate}
\item \emph{Decoration endomorphisms} 
\[
R^{\aa} =
\begin{tikzpicture}[scale=.35,smallnodes,rotate=90,anchorbase]
	\draw[very thick] (1,-1) node[right=-2pt]{$a$} to (1,1);
\end{tikzpicture}
\xrightarrow{
\begin{tikzpicture}[scale=.35,smallnodes,rotate=90,anchorbase]
	\draw[very thick] (1,-1) to node{$\bullet$} node[above=0pt]{$f$} (1,1);
\end{tikzpicture}}
\begin{tikzpicture}[scale=.35,smallnodes,rotate=90,anchorbase]
	\draw[very thick] (1,-1) node[right=-2pt]{$a$} to (1,1);
\end{tikzpicture}
= R^{\aa}
\, , \quad
1 \mapsto f
\]
for $f \in R^{\aa} = \Sym(\X)$.

\item \emph{Digon creation} morphisms
\[
R^{a+b} =
\begin{tikzpicture}[scale=.35,smallnodes,rotate=90,anchorbase]
	\draw[very thick] (1,-1) node[right=-2pt]{$a{+}b$} to (1,1);
\end{tikzpicture}
\xrightarrow{ \ \cre \ }
\begin{tikzpicture}[scale=.35,smallnodes,rotate=90,anchorbase]
	\draw[very thick] (1,1) to (1,2);
	\draw[very thick] (1,-2) to (1,-1); 
	\draw[very thick] (1,-1) to [out=150,in=210] node[below=-2pt]{$a$} (1,1);
	\draw[very thick] (1,-1) to [out=30,in=330] node[above=-2pt]{$b$} (1,1); 
\end{tikzpicture}
= R^{(a,b)}
\, , \quad
1 \mapsto 1
\]
of weight $\qdeg^{-ab}$.

\item \emph{Digon collapse} morphisms:
\[
R^{(a,b)} =
\begin{tikzpicture}[scale=.35,smallnodes,rotate=90,anchorbase]
	\draw[very thick] (1,1) to (1,2);
	\draw[very thick] (1,-2) to (1,-1); 
	\draw[very thick] (1,-1) to [out=150,in=210] node[below=-2pt]{$a$} (1,1);
	\draw[very thick] (1,-1) to [out=30,in=330] node[above=-2pt]{$b$} (1,1); 
\end{tikzpicture}
\xrightarrow{ \ \col \ }
\begin{tikzpicture}[scale=.35,smallnodes,rotate=90,anchorbase]
	\draw[very thick] (1,-1) node[right=-2pt]{$a{+}b$} to (1,1);
\end{tikzpicture}
= R^{a+b}
\, , \quad
f \mapsto \partial_{a,b}(f)
\]
of weight $\qdeg^{-ab}$.
Here $\partial_{a,b}$ is the Sylvester operator from
Example~\ref{exa:Sylvester}.

\item \emph{Zip} morphisms:
\[
R^{(a,b)}=
\begin{tikzpicture}[scale=.35,smallnodes,anchorbase,rotate=90]
	\draw[very thick] (0,-1.25) node[right=-2pt]{$a$} to (0,1.25);
	\draw[very thick] (1,-1.25) node[right=-2pt]{$b$} to (1,1.25);
\end{tikzpicture}
\xrightarrow{\zip}
\begin{tikzpicture}[scale=.35,smallnodes,anchorbase,rotate=90]
	\draw[very thick] (0,-1.5) node[right=-2pt]{$a$} to [out=90,in=210] (.5,-.5);
	\draw[very thick] (1,-1.5) node[right=-2pt]{$b$} to [out=90,in=330] (.5,-.5);
	\draw[very thick] (.5,-.5) to (.5,.5);
	\draw[very thick] (.5,.5) to [out=150,in=270] (0,1.5) node[left=-2pt]{$a$};
	\draw[very thick] (.5,.5) to [out=30,in=270] (1,1.5) node[left=-2pt]{$b$};
\end{tikzpicture}
\, , \quad
1 \mapsto \Schur_{b^a}(\leftX_1 - \rightX_2)
\]
of weight $\qdeg^{ab}$. Here, the latter is viewed as an element in 
$\Sym(\leftX_1 | \leftX_2) \otimes_{R^{a+b}} \Sym(\rightX_1 | \rightX_2)$.

\item \emph{Un-zip} morphisms:
\[
\begin{tikzpicture}[scale=.35,smallnodes,anchorbase,rotate=90]
	\draw[very thick] (0,-1.5) node[right=-2pt]{$a$} to [out=90,in=210] (.5,-.5);
	\draw[very thick] (1,-1.5) node[right=-2pt]{$b$} to [out=90,in=330] (.5,-.5);
	\draw[very thick] (.5,-.5) to (.5,.5);
	\draw[very thick] (.5,.5) to [out=150,in=270] (0,1.5) node[left=-2pt]{$a$};
	\draw[very thick] (.5,.5) to [out=30,in=270] (1,1.5) node[left=-2pt]{$b$};
\end{tikzpicture}
\xrightarrow{\un}
\begin{tikzpicture}[scale=.35,smallnodes,anchorbase,rotate=90]
	\draw[very thick] (0,-1.25) node[right=-2pt]{$a$} to (0,1.25);
	\draw[very thick] (1,-1.25) node[right=-2pt]{$b$} to (1,1.25);
\end{tikzpicture}
= R^{(a,b)}
\, , \quad
f \otimes g \mapsto fg
\]
of weight $\qdeg^{ab}$. 
\end{enumerate}
In the cases (2)-(5), the degree/weight of the morphism is determined by the
shift present in the definition of the merge bimodule in \eqref{eq:MergeSplit}.

The following webs will play an important role in the following:
\begin{equation}
	\label{eq:Rickardweb}
	\F^{(l)} \E^{(k)}\oone_{a,b}
:=
\begin{tikzpicture}[rotate=90,anchorbase,smallnodes]
	\draw[very thick] (0,.25) to [out=150,in=270] (-.25,1) 
		node[left,xshift=2pt]{$a{+}k{-}l$};
	\draw[very thick] (.5,.5) to (.5,1) node[left,xshift=2pt]{$b{-}k{+}l$};
	\draw[very thick] (0,.25) to node[left,yshift=-1pt]{$l$} (.5,.5);
	\draw[very thick] (0,-.25) to node[below,yshift=-1pt]{$a{+}k$} (0,.25);
	\draw[very thick] (.5,-.5) to [out=30,in=330] 
		node[above,yshift=-2pt]{$b{-}k$} (.5,.5);
	\draw[very thick] (0,-.25) to node[right,yshift=-1pt,xshift=-1pt]{$k$} (.5,-.5);
	\draw[very thick] (.5,-1) node[right,xshift=-2pt]{$b$} to (.5,-.5);
	\draw[very thick] (-.25,-1)node[right,xshift=-2pt]{$a$} 
		to [out=90,in=210] (0,-.25);
\end{tikzpicture}
=
\begin{tikzpicture}[rotate=90, anchorbase, smallnodes]
	\draw[very thick] (0,.25) to [out=150,in=270] (-.25,1) node[left=-2pt]{$\leftX_1$};
	\draw[very thick] (.5,.5) to (.5,1) node[left=-2pt]{$\leftX_2$};
	\draw[very thick] (0,.25) to node[left=-1pt,yshift=-2pt]{$\M$} (.5,.5);
	\draw[very thick] (0,-.25) to node[below,yshift=0pt]{$\Fr$} (0,.25);
	\draw[very thick] (.5,-.5) to [out=30,in=330] node[above,yshift=-2pt]{$\B$} (.5,.5);
	\draw[very thick] (0,-.25) to node[right,yshift=-2pt]{$\M'$} (.5,-.5);
	\draw[very thick] (.5,-1) node[right=-2pt]{$\rightX_2$} to (.5,-.5);
	\draw[very thick] (-.25,-1)node[right=-2pt]{$\rightX_1$} to [out=90,in=210] (0,-.25);
\end{tikzpicture}
\end{equation}
In the second diagram, we establish conventions for the alphabets associated with each edge; 
they have cardinalities as given by the corresponding labels in the first diagram.
This will aid in specifying decoration endomorphisms of the corresponding bimodules.
The notation $\F^{(l)} \E^{(k)}\oone_{a,b}$ is borrowed from the theory of the
categorified quantum group for $\slnn{2}$, whose \emph{extended graphical calculus}
\cite{KLMS} can also be used to encode 2-morphisms between certain singular
Bott--Samelson bimodules, see \cite[\HRWCQGSSBim]{HRW1}. 

For example, we will use morphisms:  
\begin{equation}\label{eq:chi plus}
	\chi_r^+ :=
	\CQGsgn{(-1)^{b-k}}
	\begin{tikzpicture}[baseline=0,smallnodes]
	\draw[CQG,ultra thick,<-] (0,-.5) node[below]{\scriptsize$l$}  to (0,.7);
	\draw[CQG,ultra thick,->] (.75,-.5) node[below]{\scriptsize$k$} to (.75,.7);
	\draw[CQG,thick, directed=.75] (.75,0) to [out=90,in=90] 
		node[black,yshift=-.5pt]{$\bullet$} node[below,black]{\scriptsize$r$} (0,0);
	\end{tikzpicture}
	\colon
	\begin{tikzpicture}[rotate=90,anchorbase,smallnodes]
		\draw[very thick] (0,.25) to [out=150,in=270] (-.25,1) 
			node[left,xshift=2pt]{$a{+}k{-}l$};
		\draw[very thick] (.5,.5) to (.5,1) node[left,xshift=2pt]{$b{-}k{+}l$};
		\draw[very thick] (0,.25) to node[left,yshift=-1pt]{$l$} (.5,.5);
		\draw[very thick] (0,-.25) to node[below,yshift=-1pt]{$a{+}k$} (0,.25);
		\draw[very thick] (.5,-.5) to [out=30,in=330] 
			node[above,yshift=-2pt]{$b{-}k$} (.5,.5);
		\draw[very thick] (0,-.25) to node[right,yshift=-1pt,xshift=-1pt]{$k$} (.5,-.5);
		\draw[very thick] (.5,-1) node[right,xshift=-2pt]{$b$} to (.5,-.5);
		\draw[very thick] (-.25,-1)node[right,xshift=-2pt]{$a$} 
			to [out=90,in=210] (0,-.25);
	\end{tikzpicture}
	\longrightarrow
	\begin{tikzpicture}[rotate=90,anchorbase,smallnodes]
		\draw[very thick] (0,.25) to [out=150,in=270] (-.25,1) 
			node[left,xshift=2pt]{$a{+}k{-}l$};
		\draw[very thick] (.5,.5) to (.5,1) node[left,xshift=2pt]{$b{-}k{+}l$};
		\draw[very thick] (0,.25) to node[left,yshift=-1pt]{$l{-}1$} (.5,.5);
		\draw[very thick] (0,-.25) to node[below,yshift=-3pt]{$a{+}k{-}1$} (0,.25);
		\draw[very thick] (.5,-.5) to [out=30,in=330] 
			node[above,yshift=-2pt]{$b{-}k{+}1$} (.5,.5);
		\draw[very thick] (0,-.25) to node[right,yshift=-1pt,xshift=-1pt]{$k{-}1$} (.5,-.5);
		\draw[very thick] (.5,-1) node[right,xshift=-2pt]{$b$} to (.5,-.5);
		\draw[very thick] (-.25,-1)node[right,xshift=-2pt]{$a$} 
			to [out=90,in=210] (0,-.25);
	\end{tikzpicture}
	\end{equation}
	\begin{equation}\label{eq:chi minus}
	\chi_r^- :=
	\CQGsgn{(-1)^{a+b+k+l-1}}
	\begin{tikzpicture}[baseline=0,smallnodes,yscale=-1]
	\draw[CQG,ultra thick,->] (0,-.5) to (0,.7) node[below]{\scriptsize$l$};
	\draw[CQG,ultra thick,<-] (.75,-.5) to (.75,.7) node[below]{\scriptsize$k$};
	\draw[CQG,thick, rdirected=.3] (.75,0) to [out=90,in=90] 
		node[black,yshift=-.5pt]{$\bullet$} node[below,black]{\scriptsize$r$} (0,0);
	\end{tikzpicture}
	\colon
	\begin{tikzpicture}[rotate=90,anchorbase,smallnodes]
		\draw[very thick] (0,.25) to [out=150,in=270] (-.25,1) 
			node[left,xshift=2pt]{$a{+}k{-}l$};
		\draw[very thick] (.5,.5) to (.5,1) node[left,xshift=2pt]{$b{-}k{+}l$};
		\draw[very thick] (0,.25) to node[left,yshift=-1pt]{$l$} (.5,.5);
		\draw[very thick] (0,-.25) to node[below,yshift=-1pt]{$a{+}k$} (0,.25);
		\draw[very thick] (.5,-.5) to [out=30,in=330] 
			node[above,yshift=-2pt]{$b{-}k$} (.5,.5);
		\draw[very thick] (0,-.25) to node[right,yshift=-1pt,xshift=-1pt]{$k$} (.5,-.5);
		\draw[very thick] (.5,-1) node[right,xshift=-2pt]{$b$} to (.5,-.5);
		\draw[very thick] (-.25,-1)node[right,xshift=-2pt]{$a$} 
			to [out=90,in=210] (0,-.25);
	\end{tikzpicture}
	\longrightarrow
	\begin{tikzpicture}[rotate=90,anchorbase,smallnodes]
		\draw[very thick] (0,.25) to [out=150,in=270] (-.25,1) 
			node[left,xshift=2pt]{$a{+}k{-}l$};
		\draw[very thick] (.5,.5) to (.5,1) node[left,xshift=2pt]{$b{-}k{+}l$};
		\draw[very thick] (0,.25) to node[left,yshift=-1pt]{$l{+}1$} (.5,.5);
		\draw[very thick] (0,-.25) to node[below,yshift=-3pt]{$a{+}k{+}1$} (0,.25);
		\draw[very thick] (.5,-.5) to [out=30,in=330] 
			node[above,yshift=-2pt]{$b{-}k{-}1$} (.5,.5);
		\draw[very thick] (0,-.25) to node[right,yshift=-1pt,xshift=-1pt]{$k{+}1$} (.5,-.5);
		\draw[very thick] (.5,-1) node[right,xshift=-2pt]{$b$} to (.5,-.5);
		\draw[very thick] (-.25,-1)node[right,xshift=-2pt]{$a$} 
			to [out=90,in=210] (0,-.25);
	\end{tikzpicture}
	\end{equation}
each of which has degree $a-b+k-l+1+2r$.
Here, the signed\footnote{The indicated signs are required to give a well-defined $2$-functor 
from the categorified quantum group in \cite{KLMS} to the 2-category of singular Soergel bimodules. 
We will always depict the signs in \CQGsgn{green} for signed diagrams that 
are sent to the ``na\"{i}ve'' (unsigned) movie of webs in the image.} 
extended graphical calculus diagrams can be interpreted as encoding 
a \emph{horizontal} (dashed) slice through a movie of webs that describes the morphism of singular Soergel bimodules:
\[
\begin{tikzpicture}[rotate=90,anchorbase,tinynodes]
	\draw[very thick] (0,.25) to [out=150,in=270] (-.25,.875);
	\draw[very thick] (.5,.5) to (.5,.875);
	\draw[very thick] (0,.25) to (.5,.5);
	\draw[very thick] (0,-.25) to (0,.25);
	\draw[very thick] (.5,-.5) to [out=30,in=330] (.5,.5);
	\draw[very thick] (0,-.25) to (.5,-.5);
	\draw[very thick] (.5,-.875) to (.5,-.5);
	\draw[very thick] (-.25,-.875) to [out=90,in=210] (0,-.25);
	\draw[densely dashed, CQG] (.25,-.875) to (.25,.875);
\end{tikzpicture}
\ \xleftrightarrow[\col \hComp \col]{\cre \hComp \cre} \
\begin{tikzpicture}[rotate=90,anchorbase,tinynodes]
	\draw[very thick] (0,.25) to [out=150,in=270] (-.25,.875);
	\draw[very thick] (.5,.5) to (.5,.875);
	\draw[very thick] (0,.25) to (.5,.5);
	\draw[very thick] (.125,.3125) to [out=300,in=225] (.375,.25) 
		node[above,yshift=-2pt]{$1$} to [out=45,in=300] (.375,.4375);
	\draw[very thick] (0,-.25) to (0,.25);
	\draw[very thick] (.5,-.5) to [out=30,in=330] (.5,.5);
	\draw[very thick] (.125,-.3125) to [out=60,in=135] (.375,-.25) 
		node[above,yshift=-2pt]{$1$} to [out=315,in=60] (.375,-.4375);	
	\draw[very thick] (0,-.25) to (.5,-.5);
	\draw[very thick] (.5,-.875) to (.5,-.5);
	\draw[very thick] (-.25,-.875) to [out=90,in=210] (0,-.25);
	\draw[densely dashed, CQG] (.25,-.875) to (.25,.875);
\end{tikzpicture}
\ \cong \
\begin{tikzpicture}[rotate=90,anchorbase,tinynodes]
	\draw[very thick] (0,.25) to [out=150,in=270] (-.25,.875);
	\draw[very thick] (.5,.5) to (.5,.875);
	\draw[very thick] (0,.25) to (.5,.5);	
	\draw[very thick] (0,.125) to node[above,yshift=-2pt,xshift=2pt]{$1$} (.625,.375);
	\draw[very thick] (0,-.25) to (0,.25);
	\draw[very thick] (.5,-.5) to [out=30,in=330] (.5,.5);
	\draw[very thick] (0,-.125) to node[above,yshift=-2pt,xshift=-2pt]{$1$} (.625,-.375);
	\draw[very thick] (0,-.25) to (.5,-.5);
	\draw[very thick] (.5,-.875) to (.5,-.5);
	\draw[very thick] (-.25,-.875) to [out=90,in=210] (0,-.25);
	\draw[densely dashed, CQG] (.25,-.875) to (.25,.875);
\end{tikzpicture}
\ \xleftrightarrow[\zip]{\un} \
\begin{tikzpicture}[rotate=90,anchorbase,tinynodes]
	\draw[very thick] (0,.25) to [out=150,in=270] (-.25,.875);
	\draw[very thick] (.5,.5) to (.5,.875);
	\draw[very thick] (0,.25) to (.5,.5);	
	\draw[very thick] (0,-.25) to (0,.25);
	\draw[very thick] (.5,-.5) to [out=30,in=330] (.5,.5);
	\draw[very thick] (.625,.375) to [out=210,in=90] 
		node[above]{$1$} (.125,0) to [out=270,in=150] (.625,-.375);
	\draw[very thick] (0,-.25) to (.5,-.5);
	\draw[very thick] (.5,-.875) to (.5,-.5);
	\draw[very thick] (-.25,-.875) to [out=90,in=210] (0,-.25);
	\draw[densely dashed, CQG] (.25,-.875) to (.25,.875);
\end{tikzpicture}
\ \xleftrightarrow[x^r]{x^r} \
\begin{tikzpicture}[rotate=90,anchorbase,tinynodes]
	\draw[very thick] (0,.25) to [out=150,in=270] (-.25,.875);
	\draw[very thick] (.5,.5) to (.5,.875);
	\draw[very thick] (0,.25) to (.5,.5);	
	\draw[very thick] (0,-.25) to (0,.25);
	\draw[very thick] (.5,-.5) to [out=30,in=330] (.5,.5);
	\draw[very thick] (.625,.375) to [out=210,in=90] 
		node[above]{$1$} (.125,0) to [out=270,in=150] (.625,-.375);
	\draw[very thick] (0,-.25) to (.5,-.5);
	\draw[very thick] (.5,-.875) to (.5,-.5);
	\draw[very thick] (-.25,-.875) to [out=90,in=210] (0,-.25);
	\draw[densely dashed, CQG] (.25,-.875) to (.25,.875);
\end{tikzpicture}
\ \xleftrightarrow[\;\cre\;]{\col} \
\begin{tikzpicture}[rotate=90,anchorbase,tinynodes]
	\draw[very thick] (0,.25) to [out=150,in=270] (-.25,.875);
	\draw[very thick] (.5,.5) to (.5,.875);
	\draw[very thick] (0,.25) to (.5,.5);
	\draw[very thick] (0,-.25) to (0,.25);
	\draw[very thick] (.5,-.5) to [out=30,in=330] (.5,.5);
	\draw[very thick] (0,-.25) to (.5,-.5);
	\draw[very thick] (.5,-.875) to (.5,-.5);
	\draw[very thick] (-.25,-.875) to [out=90,in=210] (0,-.25);
	\draw[densely dashed, CQG] (.25,-.875) to (.25,.875);
\end{tikzpicture}
\]
The morphism \eqref{eq:chi plus} is given by reading left-to-right with the top arrow labels 
and \eqref{eq:chi minus} is given by reading right-to-left with the bottom arrow labels.
The variable $x$ is associated with the $1$-labeled edge, and 
the extended graphical calculus diagram encodes the intersection with the dashed slice.

We will occasionally wish to encode such morphisms using 
\emph{perpendicular graphical calculus}, 
which corresponds instead to taking a \emph{vertical} slice, \eg 
\[
\begin{tikzpicture}[rotate=90,anchorbase,tinynodes]
	\draw[very thick] (0,.25) to [out=150,in=270] (-.25,.875);
	\draw[very thick] (.5,.5) to (.5,.875);
	\draw[very thick] (0,.25) to (.5,.5);
	\draw[very thick] (0,-.25) to (0,.25);
	\draw[very thick] (.5,-.5) to [out=30,in=330] (.5,.5);
	\draw[very thick] (0,-.25) to (.5,-.5);
	\draw[very thick] (.5,-.875) to (.5,-.5);
	\draw[very thick] (-.25,-.875) to [out=90,in=210] (0,-.25);
	\draw[densely dotted, FS] (-.125,0) to (.875,0);
\end{tikzpicture}
\ \xleftrightarrow[\col \hComp \col]{\cre \hComp \cre} \
\begin{tikzpicture}[rotate=90,anchorbase,tinynodes]
	\draw[very thick] (0,.25) to [out=150,in=270] (-.25,.875);
	\draw[very thick] (.5,.5) to (.5,.875);
	\draw[very thick] (0,.25) to (.5,.5);
	\draw[very thick] (.125,.3125) to [out=300,in=225] (.375,.25) 
		node[above,yshift=-2pt]{$1$} to [out=45,in=300] (.375,.4375);
	\draw[very thick] (0,-.25) to (0,.25);
	\draw[very thick] (.5,-.5) to [out=30,in=330] (.5,.5);
	\draw[very thick] (.125,-.3125) to [out=60,in=135] (.375,-.25) 
		node[above,yshift=-2pt]{$1$} to [out=315,in=60] (.375,-.4375);	
	\draw[very thick] (0,-.25) to (.5,-.5);
	\draw[very thick] (.5,-.875) to (.5,-.5);
	\draw[very thick] (-.25,-.875) to [out=90,in=210] (0,-.25);
	\draw[densely dotted, FS] (-.125,0) to (.875,0);
\end{tikzpicture}
\ \cong \
\begin{tikzpicture}[rotate=90,anchorbase,tinynodes]
	\draw[very thick] (0,.25) to [out=150,in=270] (-.25,.875);
	\draw[very thick] (.5,.5) to (.5,.875);
	\draw[very thick] (0,.25) to (.5,.5);	
	\draw[very thick] (0,.125) to node[above,yshift=-2pt,xshift=2pt]{$1$} (.625,.375);
	\draw[very thick] (0,-.25) to (0,.25);
	\draw[very thick] (.5,-.5) to [out=30,in=330] (.5,.5);
	\draw[very thick] (0,-.125) to node[above,yshift=-2pt,xshift=-2pt]{$1$} (.625,-.375);
	\draw[very thick] (0,-.25) to (.5,-.5);
	\draw[very thick] (.5,-.875) to (.5,-.5);
	\draw[very thick] (-.25,-.875) to [out=90,in=210] (0,-.25);
	\draw[densely dotted, FS] (-.125,0) to (.875,0);
\end{tikzpicture}
\ \xleftrightarrow[\zip]{\un} \
\begin{tikzpicture}[rotate=90,anchorbase,tinynodes]
	\draw[very thick] (0,.25) to [out=150,in=270] (-.25,.875);
	\draw[very thick] (.5,.5) to (.5,.875);
	\draw[very thick] (0,.25) to (.5,.5);	
	\draw[very thick] (0,-.25) to (0,.25);
	\draw[very thick] (.5,-.5) to [out=30,in=330] (.5,.5);
	\draw[very thick] (.625,.375) to [out=210,in=90] 
		node[above]{$1$} (.125,0) to [out=270,in=150] (.625,-.375);
	\draw[very thick] (0,-.25) to (.5,-.5);
	\draw[very thick] (.5,-.875) to (.5,-.5);
	\draw[very thick] (-.25,-.875) to [out=90,in=210] (0,-.25);
	\draw[densely dotted, FS] (-.125,0) to (.875,0);
\end{tikzpicture}
\ \xleftrightarrow[x^r]{x^r} \
\begin{tikzpicture}[rotate=90,anchorbase,tinynodes]
	\draw[very thick] (0,.25) to [out=150,in=270] (-.25,.875);
	\draw[very thick] (.5,.5) to (.5,.875);
	\draw[very thick] (0,.25) to (.5,.5);	
	\draw[very thick] (0,-.25) to (0,.25);
	\draw[very thick] (.5,-.5) to [out=30,in=330] (.5,.5);
	\draw[very thick] (.625,.375) to [out=210,in=90] 
		node[above]{$1$} (.125,0) to [out=270,in=150] (.625,-.375);
	\draw[very thick] (0,-.25) to (.5,-.5);
	\draw[very thick] (.5,-.875) to (.5,-.5);
	\draw[very thick] (-.25,-.875) to [out=90,in=210] (0,-.25);
	\draw[densely dotted, FS] (-.125,0) to (.875,0);
\end{tikzpicture}
\ \xleftrightarrow[\;\cre\;]{\col} \
\begin{tikzpicture}[rotate=90,anchorbase,tinynodes]
	\draw[very thick] (0,.25) to [out=150,in=270] (-.25,.875);
	\draw[very thick] (.5,.5) to (.5,.875);
	\draw[very thick] (0,.25) to (.5,.5);
	\draw[very thick] (0,-.25) to (0,.25);
	\draw[very thick] (.5,-.5) to [out=30,in=330] (.5,.5);
	\draw[very thick] (0,-.25) to (.5,-.5);
	\draw[very thick] (.5,-.875) to (.5,-.5);
	\draw[very thick] (-.25,-.875) to [out=90,in=210] (0,-.25);
	\draw[densely dotted, FS] (-.125,0) to (.875,0);
\end{tikzpicture}
\]
and in this calculus the morphisms \eqref{eq:chi plus} and \eqref{eq:chi minus}
are given by
\begin{equation}\label{eq:perpchi}
\chi_r^+ :=
\begin{tikzpicture}[anchorbase,smallnodes]
	\draw[FS,ultra thick,->] (0,-.5) node[below=-1pt]{$a{+}k$}  to (0,1);
	\draw[FS,ultra thick,->] (.75,-.5) node[below=-1pt]{$b{-}k$} to (.75,1);
	\draw[FS,thick, directed=.75] (0,0) to [out=90,in=270] (.75,.5);
	\node[FS] at (0.25,0) {$1$};
	\node at (.375,.25) {$\bullet$};
	\node at (.375,.45) {$r$};
\end{tikzpicture}
\quad \text{and} \quad
\chi_r^- :=
\begin{tikzpicture}[anchorbase,smallnodes]
	\draw[FS,ultra thick,->] (0,-.5) node[below=-1pt]{$a{+}k$}  to (0,1);
	\draw[FS,ultra thick,->] (.75,-.5) node[below=-2pt]{$b{-}k$} to (.75,1);
	\draw[FS,thick, directed=.75] (.75,0) to  [out=90,in=270]  (0,.5) ;
	\node[FS] at (0.5,0) {$1$};
	\node at (.375,.25) {$\bullet$};
	\node at (.375,.45) {$r$};
\end{tikzpicture} \, .
\end{equation}
Note that some of the web edges are not visible in this calculus. We will denote
decoration endomorphisms on such edges by drawing the endomorphism in an
appropriate region, e.g. using the conventions in \eqref{eq:Rickardweb} we have
\[
\begin{tikzpicture}[anchorbase,smallnodes]
	\draw[FS,ultra thick,->] (0,-.5) node[below=-1pt]{$a{+}k$}  to (0,1);
	\draw[FS,ultra thick,->] (1,-.5) node[below=-1pt]{$b{-}k$} to (1,1);
	\node at (.5,.25) {\tiny $\CQGbox{f(\leftM)}$};
\end{tikzpicture}
=
\begin{tikzpicture}[rotate=90,anchorbase,smallnodes]
	\draw[very thick] (0,.25) to [out=150,in=270] (-.25,1);
	\draw[very thick] (.5,.5) to (.5,1);
	\draw[very thick] (0,.25) to node{$\bullet$} node[right=-1pt,yshift=2pt]{$f$} (.5,.5);
	\draw[very thick] (0,-.25) to (0,.25);
	\draw[very thick] (.5,-.5) to [out=30,in=330] (.5,.5);
	\draw[very thick] (0,-.25) to (.5,-.5);
	\draw[very thick] (.5,-1) to (.5,-.5);
	\draw[very thick] (-.25,-1) to [out=90,in=210] (0,-.25);
\end{tikzpicture}
=
\begin{tikzpicture}[anchorbase,smallnodes]
	\draw[CQG,ultra thick,<-] (0,-.5) node[below]{\scriptsize$l$}  to node[black]{$\bullet$} node[black,right]{$f$} (0,.7);
	\draw[CQG,ultra thick,->] (.75,-.5) node[below]{\scriptsize$k$} to (.75,.7);
\end{tikzpicture}
\]

All of the relations used in the sequel between perpendicular graphical calculus
diagrams can be deduced either from the corresponding relations in extended
graphical calculus, or from relations in the $\gln$ foam 2-category defined in \cite{QR}. 
As mentioned above, the latter is known to describe the 2-category of singular
Bott-Samelson bimodules in the $n \to \infty$ limit 
(see e.g. \cite[Section 5.2]{QRS}, \cite[Proposition 3.4]{Wed3}, or \cite[Appendix A]{HRW1}).
See Figure \ref{diffslices} for a graphical depiction of such a foam, 
together with the slices giving \eqref{eq:chi plus} and \eqref{eq:perpchi}.

\begin{figure}[h]
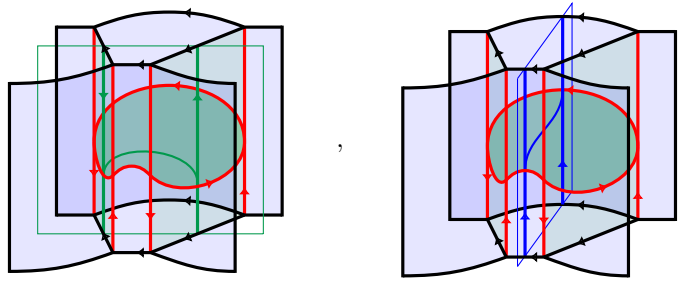

	\[\diffslices\]
\caption{The foam corresponding to $\chi_0^+$ and its slices that yield the
corresponding extended and perpendicular graphical calculus diagrams, 
respectively.}
\label{diffslices}
\end{figure}

There exist two $\hComp$- and $\circ$-contravariant duality functors on $\SSBim$:
\[
{}^{\vee}(-) \text{ and } (-)^\vee \colon {}_{\aa}\SSBim_{\bb} \to {}_{\bb}\SSBim_{\aa}
\] 
defined by
\[
{}^{\vee}X := \Hom_{R^{\aa}}(X,R^{\aa})
%\quad \text{and} \quad
\, , \quad
{X}^{\vee}:= \Hom_{R^{\bb}}(X,R^{\bb})
\]
which satisfy the adjunctions
\[
\Hom_{\SSBim}(X \hComp Y, Z) \cong \Hom_{\SSBim}(Y, {}^{\vee}X \hComp Z)
\, , \quad
\Hom_{\SSBim}(X \hComp Y, Z) \cong \Hom_{\SSBim}(X, Z \hComp Y^\vee) \, .
\]
Since the bimodules ${}_{a+b}M_{a,b}$ and ${}_{a,b}S_{a+b}$ generate $\SSBim$ as 
a monoidal 2-category, this duality can be succinctly recorded as follows:

\begin{prop}\label{prop:dualityonMS}
Let $a,b \geq 0$, then
\[
{}_{a+b}M_{a,b}^\vee \cong \qdeg^{ab} {}_{a,b}S_{a+b}
\, , \quad
{}^{\vee}_{a+b}M_{a,b} \cong \qdeg^{-ab} {}_{a,b}S_{a+b}
\]
and
\[
{}_{a,b}S_{a+b}^\vee \cong \qdeg^{-ab} {}_{a+b}M_{a,b}
\, , \quad
{}^\vee_{a,b}S_{a+b} \cong \qdeg^{ab} {}_{a+b}M_{a,b} \, .
\]
Further, the relevant (co)unit morphisms are given by the 
digon creation/collapse and (un)zip morphisms. \qed
\end{prop}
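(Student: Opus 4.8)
The plan is to read everything off from the definition of the merge and split bimodules in \eqref{eq:MergeSplit} together with the graded Frobenius extension $R^{a+b}\hookrightarrow R^{a,b}$ recorded in Example~\ref{exa:Sylvester}. Note first that, since $\ell(a,b)-\ell(a{+}b)=-ab$, the underlying bimodules are the concrete objects ${}_{a+b}M_{a,b}=\qdeg^{-ab}R^{a,b}$, an $(R^{a+b},R^{a,b})$-bimodule whose left action is restriction along $R^{a+b}\hookrightarrow R^{a,b}$, and ${}_{a,b}S_{a+b}=R^{a,b}$, an $(R^{a,b},R^{a+b})$-bimodule with no shift. Thus the statement reduces to four $\Hom$-computations plus an identification of (co)units; the cases $a=0$ or $b=0$ are a trivial base case, since then $R^{a,b}=R^{a+b}$ and all shifts vanish.

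Next I would dispose of the two duals taken over the larger ring $R^{a,b}$. As a right $R^{a,b}$-module, ${}_{a+b}M_{a,b}$ is free of rank one on a generator placed in degree $-ab$; hence $\Hom_{R^{a,b}}({}_{a+b}M_{a,b},R^{a,b})$ is free of rank one on the map sending that generator to $1$, which has degree $+ab$, and tracking the residual left $R^{a,b}$- and right $R^{a+b}$-actions identifies the result with $\qdeg^{ab}\,{}_{a,b}S_{a+b}$. This gives ${}_{a+b}M_{a,b}^{\vee}\cong\qdeg^{ab}\,{}_{a,b}S_{a+b}$, and dualizing the \emph{left} $R^{a,b}$-action of ${}_{a,b}S_{a+b}$ by the analogous argument gives ${}^{\vee}_{a,b}S_{a+b}\cong\qdeg^{ab}\,{}_{a+b}M_{a,b}$.

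The remaining two duals, over the smaller ring $R^{a+b}$, are where the Frobenius structure is essential. The Sylvester trace $\partial_{a,b}$ has $\qdeg$-degree $-2ab$, and $f\mapsto\partial_{a,b}(f\cdot-)$ defines a degree-$(-2ab)$ isomorphism $R^{a,b}\xrightarrow{\ \sim\ }\Hom_{R^{a+b}}(R^{a,b},R^{a+b})$ of $(R^{a,b},R^{a+b})$-bimodules, equivalently $\Hom_{R^{a+b}}(R^{a,b},R^{a+b})\cong\qdeg^{-2ab}R^{a,b}$, with inverse written through the dual bases $\{\Schur_{\lambda}(\X_1)\}$ and $\{(-1)^{|\hat{\lambda}|}\Schur_{\hat{\lambda}}(\X_2)\}_{\lambda\in P(a,b)}$ of Example~\ref{exa:Sylvester}. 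Feeding in the shift $\qdeg^{-ab}$ carried by ${}_{a+b}M_{a,b}$ gives ${}^{\vee}{}_{a+b}M_{a,b}=\Hom_{R^{a+b}}(\qdeg^{-ab}R^{a,b},R^{a+b})\cong\qdeg^{ab-2ab}R^{a,b}=\qdeg^{-ab}\,{}_{a,b}S_{a+b}$, and symmetrically ${}_{a,b}S_{a+b}^{\vee}\cong\qdeg^{-ab}\,{}_{a+b}M_{a,b}$.

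Finally, for the (co)unit assertion I would use that under each adjunction isomorphism the unit and counit are the images of identity maps, hence the canonical bimodule morphisms determined by the Frobenius data; matching these against \eqref{eq:GenWeb} and the list of elemental maps one finds --- up to the evident grading shifts --- that the counit $\,{}_{a+b}M_{a,b}\hComp{}_{a,b}S_{a+b}\to\oone_{a+b}$ is induced by $\partial_{a,b}$ (digon collapse), the unit $\oone_{a+b}\to{}_{a+b}M_{a,b}\hComp{}_{a,b}S_{a+b}$ is the structural inclusion $1\mapsto1$ (digon creation), the counit $\,{}_{a,b}S_{a+b}\hComp{}_{a+b}M_{a,b}\to\oone_{a,b}$ is multiplication $f\otimes g\mapsto fg$ (un-zip), and the unit $\oone_{a,b}\to{}_{a,b}S_{a+b}\hComp{}_{a+b}M_{a,b}$ is $1\mapsto\Schur_{b^a}(\leftX_1-\rightX_2)$ (zip). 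Three of these four are immediate by inspection; the fourth is the one genuinely non-formal point, as it requires recognizing $\Schur_{b^a}(\leftX_1-\rightX_2)$ as the relative Casimir $\sum_{\lambda\in P(a,b)}\Schur_{\lambda}(\X_1)\otimes(-1)^{|\hat{\lambda}|}\Schur_{\hat{\lambda}}(\X_2)$ of the extension (a dual-Cauchy / Lascoux expansion of a rectangular Schur function in a difference of alphabets), after which the two triangle identities collapse to the defining dual-basis relation $\partial_{a,b}\bigl(\Schur_{\lambda}(\X_1)(-1)^{|\hat{\mu}|}\Schur_{\hat{\mu}}(\X_2)\bigr)=\delta_{\lambda\mu}$. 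I expect the real work to be organizational rather than conceptual: keeping the shifts $\qdeg^{\pm ab}$, the degree $-2ab$ of $\partial_{a,b}$, and the stated weights of the elemental morphisms consistent throughout, together with the Casimir identification in the zip case.
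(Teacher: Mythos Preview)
Your proposal is correct. The paper gives no proof at all for this proposition (note the \qed immediately following the statement), treating the result as a standard fact about Frobenius extensions; your argument is exactly the expected unpacking of that standard fact, including the one substantive identification of the zip morphism with the Casimir element via the dual-Cauchy expansion of $\Schur_{b^a}(\leftX_1-\rightX_2)$.
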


We will be interested in complexes of singular Soergel bimodules. The natural
setting for their study is the dg 2-category of singular Soergel bimodules, which is
obtained by taking the dg category of complexes is each $\Hom$-category of
$\SSBim$.

\begin{defi}
Let $\CS(\SSBim)$ be the monoidal dg 2-category with the same objects as $\SSBim$, 
and wherein the 1-morphism category $\aa\rightarrow \bb$ 
equals $\CS({}_{\bb}\SSBim {}_{\aa})$.
\end{defi}

In other words, 1-morphisms in $\CS(\SSBim)$ are complexes of singular Soergel bimodules
and 2-morphism spaces in $\CS(\SSBim)$ are $\Hom$-complexes of bimodule maps.
Horizontal composition and external tensor product of $1$-morphisms is defined as usual, 
e.g.
\begin{equation}\label{eq:HCompC}
(X\hComp Y)^k = \bigoplus_{i+j=k} X^i\hComp Y^j
\, , \quad
\d_{X\hComp Y} = \d_{X}\hComp \Id_Y + \Id_X\hComp \d_Y \, .
\end{equation}
The components of the horizontal composition and external tensor product 
of $2$-morphisms are defined using the Koszul sign rule. 
For example, if $f\in \Hom_{\CS(\SSBim)}(X,X')$ and 
$g\in \Hom_{\CS(\SSBim)}(Y,Y')$ are given, 
then $f\hComp g$ is defined component-wise by:
\[
(f\hComp g)|_{X^i\hComp Y^j} = (-1)^{i |g|} f|_{X^i}\hComp g|_{Y^j} \, .
\]
Note that the (graded) middle interchange law:
\begin{equation}\label{eq:MidInt}
(f_1\hComp g_1) \circ (f_2\hComp g_2) 
= (-1)^{|g_1| |f_2|}(f_1 \circ f_2) \hComp (g_1 \circ g_2)
\end{equation}
holds in $\CS(\SSBim)$.

\begin{conv}
Since the 1-morphism categories of $\SSBim$ are $\Z_\qdeg$-graded,
the 1-morphism category $\CS({}_{\bb}\SSBim {}_{\aa})$ is enriched in 
$\overline{\KS}\llbracket \qdeg^{\pm},\tdeg^\pm \rrbracket_{\dg}$.
%$\Z_\qdeg\times\Z_\tdeg$-graded complexes of $\k$-vector spaces.  
We will use the convention that $\deg(f)=(i,j)$ means $f$ has $\qdeg$-degree 
(or ``Soergel degree'') $i$ and cohomological degree $j$. 
Further, the singly-indexed $\Hom$-space 
$\Hom_{\CS(\SSBim)}^k(X,Y)$ always refers to cohomological degree, 
while the doubly-indexed $\Hom_{\CS(\SSBim)}^{i,j}(X,Y)$ 
consists of $f$ with $\deg(f)=(i,j)$.
For example, if $X$ is a $1$-morphism in $\CS(\SSBim)$, 
then its differential satisfies 
\[
\d_X \in \End_{\CS(\SSBim)}^{0,1}(X) 
:= \Hom_{\CS(\SSBim)}^{0,1}(X,X) \subseteq \Hom_{\CS(\SSBim)}^{1}(X,X) 
=: \End_{\CS(\SSBim)}^{1}(X) \, .
\]
As in Convention \ref{conv:wt}, 
we will typically indicate these degrees multiplicatively by writing $\wt(f) = \qdeg^i \tdeg^j$, 
and will also use the variables $\qdeg,\tdeg$ to denote the corresponding shift functors. 
Thus, for example, $\wt(\d_X) = \qdeg^0 \tdeg^1 = \tdeg$.
\end{conv}

\subsection{Colored braids and Rickard complexes}
\label{sec:colbraid}
We next recall the complexes of singular Soergel bimodules assigned to colored
braids. In this paper, the set $S$ of colors we will be $\Z_{\geq 1}$.
Let $\Br_m$ denote the $m$-strand braid group, which acts on $S^m$ by permuting
coordinates (this action factors through the symmetric group $\symg_m$).

\begin{defi}\label{def:CBG}
The \emph{$S$-colored braid groupoid} $\mathfrak{Br}(S)$ is the category wherein
\begin{itemize}
\item objects are sequences $(a_1,\ldots,a_m)$ with $a_i\in S$, $m\geq 1$, and
\item morphisms are given by
\[
\Hom_{\mathfrak{Br}(S)}(\aa,\bb)=\left\{\b \in \Br_m\:|\: a_i=b_{\b(i)} 
\text{ for } 1 \leq i \leq m \right\}
\]
with $\aa=(a_1,\ldots,a_m)$ and $\bb=(b_1,\ldots,b_m)$.
\end{itemize}
\end{defi}

Morphisms in $\mathfrak{Br}(S)$ are called \emph{$S$-colored braids}
and elements in $\Hom_{\mathfrak{Br}}(\aa,\bb)$ 
will be denoted by ${}_{\bb} \b_{\aa}$, 
or occasionally by ${}_{\bb} \b$ or $ \b_{\aa}$ since the domain/codomain
determine one another.
We will write $\Br_m(S)$ for the full subcategory of $\mathfrak{Br}(S)$ with objects 
having exactly $m$ entries. 
%Note that the (uncolored) braid group $\Br_m$ can be identified 
%with the group of automorphisms of the object $1^m=(1,\ldots,1)$ in $\Br_m(S)$.

The colored braid groupoid is generated by the colored Artin generators
\[
\agen_i \colon (a_1,\ldots,a_i,a_{i+1},\ldots,a_m) \rightarrow 
(a_1,\ldots,a_{i+1},a_i,\ldots,a_m)
\]
which, when composable, satisfy relations analogous to the usual 
(type $A$) braid relations.
A \emph{colored braid word} is a sequence of colored Artin generators 
and their inverses. 
We say that a colored braid word $(\underline{\b})_{\aa}$ \emph{represents} 
the corresponding product of colored Artin generators in $\mathfrak{Br}(S)$.   

We now use the colored Artin generators to associate complexes
$C({}_{\bb}\b_{\aa})$ in $\SSBim$ to $\Z_{\geq 1}$-colored braid words ${}_{\bb}
{\b}_{\aa}$. Here, it is convenient to abuse notation by writing:
\[
C({}_{\bb} \b_{\aa}) = \oone_{\bb}  C(\b)  \oone_{\aa} 
= \oone_{\bb}  C(\b) = C(\b)  \oone_{\aa}
\]
(Note that $C(\b)$ alone does not denote a well-defined complex.)

\begin{defi}\label{def:Rickardcx}
	Let $a,b \geq 0$. 
	The \emph{$2$-strand Rickard complex} $C_{a,b}$ is the (bounded) complex
	\[
	\begin{aligned}
	C_{a,b} :=& 
	\left\llbracket
	\begin{tikzpicture}[rotate=90,scale=.5,smallnodes,anchorbase]
		\draw[very thick] (1,-1) node[right,xshift=-2pt]{$b$} to [out=90,in=270] (0,1);
		\draw[line width=5pt,color=white] (0,-1) to [out=90,in=270] (1,1);
		\draw[very thick] (0,-1) node[right,xshift=-2pt]{$a$} to [out=90,in=270] (1,1);
	\end{tikzpicture}
	\right\rrbracket :=
	\cdots 
	\xrightarrow{\;\; \chi_0^+ \;}
	\qdeg^{-k} \tdeg^k
	\begin{tikzpicture}[smallnodes,rotate=90,anchorbase,scale=.75]
		\draw[very thick] (0,.25) to [out=150,in=270] (-.25,1) node[left,xshift=2pt]{$b$};
		\draw[very thick] (.5,.5) to (.5,1) node[left,xshift=2pt]{$a$};
		\draw[very thick] (0,.25) to (.5,.5);
		\draw[very thick] (0,-.25) to (0,.25);
		\draw[very thick] (.5,-.5) to [out=30,in=330] node[above,yshift=-2pt]{$k$} (.5,.5);
		\draw[very thick] (0,-.25) to (.5,-.5);
		\draw[very thick] (.5,-1) node[right,xshift=-2pt]{$b$} to (.5,-.5);
		\draw[very thick] (-.25,-1)node[right,xshift=-2pt]{$a$} to [out=90,in=210] (0,-.25);
	\end{tikzpicture}
	\xrightarrow{\;\; \chi_0^+ \;}
	\qdeg^{-k-1}\tdeg^{k+1}
	\begin{tikzpicture}[smallnodes,rotate=90,anchorbase,scale=.75]
		\draw[very thick] (0,.25) to [out=150,in=270] (-.25,1) node[left,xshift=2pt]{$b$};
		\draw[very thick] (.5,.5) to (.5,1) node[left,xshift=2pt]{$a$};
		\draw[very thick] (0,.25) to (.5,.5);
		\draw[very thick] (0,-.25) to (0,.25);
		\draw[very thick] (.5,-.5) to [out=30,in=330] 
			node[above,yshift=-2pt]{$k{+}1$} (.5,.5);
		\draw[very thick] (0,-.25) to (.5,-.5);
		\draw[very thick] (.5,-1) node[right,xshift=-2pt]{$b$} to (.5,-.5);
		\draw[very thick] (-.25,-1)node[right,xshift=-2pt]{$a$} to [out=90,in=210] (0,-.25);
	\end{tikzpicture}
	\xrightarrow{\;\; \chi_0^+ \;}
	\cdots
	\end{aligned}
	\]
	of singular Soergel bimodules. The rightmost non-zero term is either
	$\qdeg^{-b}\tdeg^b \F^{(a-b)}\oone_{a,b}$ or $\qdeg^{-a}\tdeg^a \E^{(b-a)}\oone_{a,b}$
	depending on whether $a\geq b$ or $a\leq b$, respectively. Analogously, we
	also have:
	\[
	\begin{aligned}
	C^\vee_{a,b} :=& 
	\left\llbracket
	\begin{tikzpicture}[rotate=90,yscale=.5,xscale=-.5,smallnodes,anchorbase]
		\draw[very thick] (1,-1) node[right,xshift=-2pt]{$a$} to [out=90,in=270] (0,1);
		\draw[line width=5pt,color=white] (0,-1) to [out=90,in=270] (1,1);
		\draw[very thick] (0,-1) node[right,xshift=-2pt]{$b$} to [out=90,in=270] (1,1);
	\end{tikzpicture}
	\right\rrbracket :=
	\cdots 
	\xrightarrow{\;\; \chi_0^- \;}
	\qdeg^{k+1} \tdeg^{-k-1}
	\begin{tikzpicture}[smallnodes,rotate=90,anchorbase,scale=.75]
		\draw[very thick] (0,.25) to [out=150,in=270] (-.25,1) node[left,xshift=2pt]{$b$};
		\draw[very thick] (.5,.5) to (.5,1) node[left,xshift=2pt]{$a$};
		\draw[very thick] (0,.25) to (.5,.5);
		\draw[very thick] (0,-.25) to (0,.25);
		\draw[very thick] (.5,-.5) to [out=30,in=330] node[above,yshift=-2pt]{$k{+}1$} (.5,.5);
		\draw[very thick] (0,-.25) to (.5,-.5);
		\draw[very thick] (.5,-1) node[right,xshift=-2pt]{$b$} to (.5,-.5);
		\draw[very thick] (-.25,-1)node[right,xshift=-2pt]{$a$} to [out=90,in=210] (0,-.25);
	\end{tikzpicture}
	\xrightarrow{\;\; \chi_0^- \;}
	\qdeg^{k}\tdeg^{-k}
	\begin{tikzpicture}[smallnodes,rotate=90,anchorbase,scale=.75]
		\draw[very thick] (0,.25) to [out=150,in=270] (-.25,1) node[left,xshift=2pt]{$b$};
		\draw[very thick] (.5,.5) to (.5,1) node[left,xshift=2pt]{$a$};
		\draw[very thick] (0,.25) to (.5,.5);
		\draw[very thick] (0,-.25) to (0,.25);
		\draw[very thick] (.5,-.5) to [out=30,in=330] 
			node[above,yshift=-2pt]{$k$} (.5,.5);
		\draw[very thick] (0,-.25) to (.5,-.5);
		\draw[very thick] (.5,-1) node[right,xshift=-2pt]{$b$} to (.5,-.5);
		\draw[very thick] (-.25,-1)node[right,xshift=-2pt]{$a$} to [out=90,in=210] (0,-.25);
	\end{tikzpicture}
	\xrightarrow{\;\; \chi_0^- \;}
	\cdots
	\end{aligned}
	\]
\end{defi}
As graded objects, we identify 
\[
C_{a,b}=\bigoplus_{k=0}^{\min(a,b)} \qdeg^{-k} \tdeg^k C_{a,b}^k
\, , \quad
C^\vee_{a,b}=\bigoplus_{k=0}^{\min(a,b)} \qdeg^{k} \tdeg^{-k} C_{a,b}^k
\]
where $C_{a,b}^k := \F^{(a-k)}\E^{(b-k)}\oone_{a,b}$. 
As the notation suggests, $(C_{a,b})^\vee = C_{b,a}^\vee$.

\begin{defi}\label{def:RickardGeneral}
For the Artin generator $\agen_i$ of the braid group $\Br_m$ and
$\aa=(a_1,\ldots,a_m)$, we set:
\[
\begin{aligned}
C(\agen_i) \oone_{\aa} &:= \oone_{(a_1,\ldots,a_{i-1})}\boxtimes 
C_{a_i,a_{i+1}}\boxtimes \oone_{(a_{i+2} \ldots,a_m)} \\
C(\agen_i\inv) \oone_{\aa} &:= \oone_{(a_1,\ldots,a_{i-1})}\boxtimes 
C^\vee_{a_{i},a_{i+1}}\boxtimes \oone_{(a_{i+2},\ldots,a_m)} \, .
\end{aligned}
\]
This assignment extends to arbitrary colored braid words using horizontal
composition. Given a braid word $\b = \agen_{i_r}^{\e_r}\cdots
\agen_{i_1}^{\e_1}$, we call 
\begin{equation}\label{eq:Rick-hComp}
	C(\beta) \oone_{\aa} = C(\agen_{i_r}^{\e_r}\cdots \agen_{i_1}^{\e_1})\oone_{\aa}
:= C(\agen_{i_r}^{\e_r}) \hComp \cdots \hComp C(\agen_{i_1}^{\e_1})\oone_{\aa}
\end{equation}
the \emph{Rickard complex} assigned to the colored braid $\b_{\aa}$.
\end{defi}

This terminology is justified by the following proposition.

\begin{prop}[{\cite[\HRWPropRick]{HRW1}}]
	\label{prop:rickard invariance}
The complexes $C(\agen_{i_1}^{\e_1}\cdots \agen_{i_r}^{\e_r})\oone_{\aa}$
satisfy the (colored) braid relations, up to canonical homotopy equivalence. \qed
\end{prop}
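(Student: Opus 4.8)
The plan is to verify the three families of colored braid relations --- far-commutativity, invertibility (Reidemeister~II), and the braid move (Reidemeister~III) --- for the complexes $C(\b)\oone_{\aa}$ of Definition~\ref{def:RickardGeneral}, in each case producing an explicit homotopy equivalence that is \emph{canonical}, i.e.\ independent of the presentations and intermediate choices. Far-commutativity is essentially formal: if $|i-j|\geq 2$ then $C(\agen_i^{\pm})\oone_{\aa}$ and $C(\agen_j^{\pm})\oone_{\aa}$ are built by $\boxtimes$ from crossing complexes supported on disjoint consecutive pairs of strands, so the graded middle interchange law \eqref{eq:MidInt}, together with the evident reassociation of $\boxtimes$ and $\hComp$, produces a canonical isomorphism $C(\agen_i^\pm)\hComp C(\agen_j^\pm)\oone_{\aa} \cong C(\agen_j^\pm)\hComp C(\agen_i^\pm)\oone_{\aa}$ on the nose. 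This reduces the problem to the $2$- and $3$-strand statements.

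For invertibility, I would compute the two-strand complexes $C_{a,b}\hComp C^\vee_{b,a}$ and $C^\vee_{a,b}\hComp C_{b,a}$ directly. Using the composition rules for merge and split bimodules together with the extended graphical calculus of \cite{KLMS} --- available in $\SSBim$ by the discussion following \eqref{eq:Rickardweb} --- each term $C_{a,b}^i \hComp (C^\vee)^j$ decomposes as a direct sum of singular Bott--Samelson bimodules via the $\E\F$/$\F\E$ decompositions, and the components of the differential assembled from the $\chi_r^\pm$ (hence from digon creation/collapse and (un)zip morphisms, cf.\ Proposition~\ref{prop:dualityonMS}) restrict to isomorphisms on all but one summand. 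Iterated Gaussian elimination of these contractible pieces leaves $\oone_{(b,a)}$, and the canonical identification is the composite of the elimination isomorphisms; the grading shifts in Definition~\ref{def:Rickardcx} are exactly what make them cancel.

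The Reidemeister~III relation on three strands is the essential difficulty. Here I would exploit that every bimodule $\F^{(l)}\E^{(k)}\oone_{a,b}$ occurring in $C_{a,b}$, and every structure morphism $\chi_r^\pm$ of \eqref{eq:chi plus}--\eqref{eq:chi minus}, lies in the image of a monoidal $2$-functor from the divided-power (thick) $\slnn{2}$ calculus of \cite{KLMS} into $\SSBim$, so that each crossing complex $C_{a,b}$ is the image of a \emph{universal} complex of divided powers. The braid relation for these universal complexes is precisely the assertion that a categorical $\slnn{2}$-action produces a braid-group action via Rickard complexes; applying the $2$-functor and horizontal composition transports the resulting canonical homotopy equivalence into $\CS(\SSBim)$ and gives $C(\agen_i\agen_{i+1}\agen_i)\oone_{\aa}\simeq C(\agen_{i+1}\agen_i\agen_{i+1})\oone_{\aa}$. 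A more hands-on alternative is to write both three-strand complexes as iterated mapping cones and match them by an explicit zig-zag of Gaussian eliminations using the $\slnn{2}$ relations of extended calculus; this avoids the categorical $\slnn{2}$ machinery, but the sign- and grading-bookkeeping over the many terms indexed by triples of divided-power labels is exactly where the real work, and the main risk of error, lies --- this is the step I expect to be the chief obstacle. In either route, canonicity follows because the connecting morphisms and the homotopies are all assembled from the generating $2$-morphisms, which are themselves canonical, and the resulting equivalences satisfy the coherence (pentagon/hexagon-type) identities needed to make ``up to canonical homotopy equivalence'' well posed.
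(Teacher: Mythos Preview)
The paper does not prove this statement here; it is quoted from the companion paper \cite{HRW1} and marked \qed\ on citation. Your outline for far-commutativity and Reidemeister~II is standard and correct.

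There is, however, a genuine gap in your Reidemeister~III argument. You write that each crossing complex lies in the image of a $2$-functor from the thick $\slnn{2}$ calculus, and that ``the braid relation for these universal complexes is precisely the assertion that a categorical $\slnn{2}$-action produces a braid-group action via Rickard complexes.'' This is not right: a categorical $\slnn{2}$-action only yields invertibility of a single Rickard complex (i.e.\ an action of $\Br_2\cong\Z$). The relation $\agen_i\agen_{i+1}\agen_i\simeq\agen_{i+1}\agen_i\agen_{i+1}$ involves two \emph{adjacent} $\slnn{2}$'s, and its proof requires the Serre-type interaction between them --- in other words, a categorical $\mf{sl}_3$ (or more generally $\slnn{m}$/$\glnn{m}$) action on singular Soergel bimodules, after which one invokes the general braid-group-action theorems of Chuang--Rouquier and Cautis--Kamnitzer(--Licata). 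Your ``more hands-on alternative'' of matching both sides by explicit Gaussian elimination is a legitimate route, but as you acknowledge, the bookkeeping over triples of divided-power labels is substantial; this is the actual content, and simply flagging it as ``the chief obstacle'' does not constitute a proof. To close the gap you should either cite the categorical braid-group-action literature and verify that singular Soergel bimodules carry the requisite categorical $\glnn{m}$-action (which is what underlies the citation to \cite{HRW1}), or carry out the elimination in full.
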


Rickard complexes of colored braids extend to invariants
of braided webs (using horizontal composition and external tensor product),
since they satisfy the following \emph{fork-slide} and
\emph{twist-zipper} relations.

\begin{prop}[{\cite[\HRWPropWeb]{HRW1}}] \label{prop:forkslide} We have homotopy
equivalences
\begin{equation}\label{eq:forkslide}
\left\llbracket
\begin{tikzpicture}[rotate=90,scale=.5,smallnodes,anchorbase]
	\draw[very thick] (.5,-1) node[right,xshift=-2pt]{$c$} to [out=90,in=270] 
		(-1,2) node[left,xshift=2pt]{$c$};
	\draw[line width=5pt,color=white] (-.5,-1) to [out=90,in=270] (.5,1);
	\draw[very thick] (-.5,-1) node[right,xshift=-2pt]{$a{+}b$} to [out=90,in=270] (.5,1);
	\draw[very thick] (.5,1) to [out=30,in=270] (1,2) node[left,xshift=2pt]{$b$};
	\draw[very thick] (.5,1) to [out=150,in=270] (0,2) node[left,xshift=2pt]{$a$};
\end{tikzpicture}
\right\rrbracket	
\simeq
\left\llbracket
\begin{tikzpicture}[rotate=90,scale=.5,smallnodes,anchorbase]
	\draw[very thick] (.5,-1) node[right,xshift=-2pt]{$c$} to [out=90,in=270] 
		(-1,2) node[left,xshift=2pt]{$c$};
	\draw[line width=5pt,color=white] (-.5,-.25) to [out=30,in=270] (1,2);
	\draw[line width=5pt,color=white] (-.5,-.25) to [out=150,in=270] (0,2);
	\draw[very thick] (-.5,-1) node[right,xshift=-2pt]{$a{+}b$} to (-.5,-.25);
	\draw[very thick] (-.5,-.25) to [out=30,in=270] (1,2) node[left,xshift=2pt]{$b$};
	\draw[very thick] (-.5,-.25) to [out=150,in=270] (0,2) node[left,xshift=2pt]{$a$};
\end{tikzpicture}
\right\rrbracket
\, , \quad
\left\llbracket
\begin{tikzpicture}[rotate=90,scale=.5,smallnodes,anchorbase,yscale=-1]
	\draw[very thick] (-.5,-1) node[left,xshift=2pt]{$b{+}c$} to [out=90,in=270] (.5,1);
	\draw[very thick] (.5,1) to [out=30,in=270] (1,2) node[right,xshift=-2pt]{$c$};
	\draw[very thick] (.5,1) to [out=150,in=270] (0,2) node[right,xshift=-2pt]{$b$};
	\draw[line width=5pt,color=white] (.5,-1) to [out=90,in=270] (-1,2);
	\draw[very thick] (.5,-1) node[left,xshift=2pt]{$a$} to [out=90,in=270] 
		(-1,2) node[right,xshift=-2pt]{$a$};
\end{tikzpicture}
\right\rrbracket	
\simeq
\left\llbracket
\begin{tikzpicture}[rotate=90,scale=.5,smallnodes,anchorbase,yscale=-1]
	\draw[very thick] (-.5,-1) node[left,xshift=2pt]{$b{+}c$} to (-.5,-.25);
	\draw[very thick] (-.5,-.25) to [out=30,in=270] (1,2) node[right,xshift=-2pt]{$c$};
	\draw[very thick] (-.5,-.25) to [out=150,in=270] (0,2) node[right,xshift=-2pt]{$b$};
	\draw[line width=5pt,color=white] (.5,-1) to [out=90,in=270] (-1,2);
	\draw[very thick] (.5,-1) node[left,xshift=2pt]{$a$} to [out=90,in=270] 
		(-1,2) node[right,xshift=-2pt]{$a$};
\end{tikzpicture}
\right\rrbracket,
\end{equation}
\begin{equation}\label{eq:twistzipper}
	\left\llbracket
\begin{tikzpicture}[rotate=90,scale=.5,smallnodes,anchorbase]
	\draw[very thick] (1,-1) node[right,xshift=-2pt]{$b$} to [out=90,in=270] (0,1)
		to [out=90,in=210] (.5,2);
	\draw[line width=5pt,color=white] (0,-1) to [out=90,in=270] (1,1);
	\draw[very thick] (0,-1) node[right,xshift=-2pt]{$a$} to [out=90,in=270] (1,1)
		to [out=90,in=330] (.5,2);
	\draw[very thick] (.5,2) to (.5,2.75);
\end{tikzpicture}
\right\rrbracket		
\simeq
\qdeg^{a b}
\left\llbracket
\begin{tikzpicture}[rotate=90,scale=.5,smallnodes,anchorbase]
	\draw[very thick] (0,1) node[right,xshift=-2pt]{$b$} to [out=90,in=210] (.5,2);
	\draw[very thick] (1,1) node[right,xshift=-2pt]{$a$} to [out=90,in=330] (.5,2);
	\draw[very thick] (.5,2) to (.5,2.75);
\end{tikzpicture}
\right\rrbracket	
\end{equation}
as well as reflections thereof. \qed
\end{prop}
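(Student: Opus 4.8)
The plan is to reduce both families of relations to statements about the two-strand Rickard complexes $C_{a,b},C^{\vee}_{a,b}$ and then to verify those by explicit manipulation of singular Bott--Samelson bimodules. Since $\hComp$ is a dg bifunctor on $\CS(\SSBim)$ and any $\boxtimes$-factors supported away from the region of the move are carried along unchanged, it suffices to treat the single-crossing cases. For fork-slide this amounts to homotopy equivalences of the shape
\[
\bigl(\oone_c\boxtimes {}_{a,b}S_{a+b}\bigr)\hComp C_{a+b,c}
\;\simeq\;
\bigl(C_{a,c}\boxtimes\oone_b\bigr)\hComp\bigl(\oone_a\boxtimes C_{b,c}\bigr)\hComp\bigl({}_{a,b}S_{a+b}\boxtimes\oone_c\bigr),
\]
for all colors $a,b,c$, while for twist-zipper it amounts to $M\hComp C_{a,b}\simeq\qdeg^{ab}\,M$ with $M$ the relevant merge $1$-morphism. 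The remaining orientations of both moves (different over/under choices, reflections) then follow by applying the duality functors ${}^{\vee}(-)$ and $(-)^{\vee}$ of Proposition~\ref{prop:dualityonMS}, which interchange merges and splits and send $C_{a,b}$ to $C^{\vee}_{b,a}$, together with Proposition~\ref{prop:rickard invariance}.

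The twist-zipper equivalence I would prove by Gaussian elimination. Expanding $C_{a,b}=\bigoplus_{k=0}^{\min(a,b)}\qdeg^{-k}\tdeg^{k}\,\F^{(a-k)}\E^{(b-k)}\oone_{a,b}$ and composing termwise with $M$, each web $\F^{(a-k)}\E^{(b-k)}\oone_{a,b}\hComp M$ simplifies, via the digon-collapse and (un)zip relations of \S\ref{ss:ssbim}, to a Bott--Samelson bimodule over $(R^{a+b},R^{(a,b)})$, and the components of the differential built from $\chi_0^+$ become split maps for all $k\geq 1$; successively cancelling these acyclic subquotients leaves a single term, which is isomorphic to $M$ up to the grading shift $\qdeg^{ab}$ accumulated from the $\qdeg$-shifts in the definitions of $C_{a,b}$ and of the merge. (Uniformly: after composing with $M$ the whole complex becomes the Koszul-type complex computing the Frobenius trace of $\Sym(\X_1+\X_2)\hookrightarrow\Sym(\X_1|\X_2)$, whose homotopy type is a single copy of $M$.)

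The fork-slide is the main obstacle. The hands-on route expands the left-hand side using associativity of split vertices together with the explicit terms $C_{a+b,c}^{k}$, reorganizes the resulting bicomplex along the web relations, and performs Gaussian elimination to match it with the iterated composite on the right; every step is an instance of a relation among the elemental $2$-morphisms of \S\ref{ss:ssbim}, but the bookkeeping of grading shifts, Koszul signs, and which summands cancel is delicate. The cleaner route is a uniqueness argument: both sides are bounded complexes of indecomposable singular Bott--Samelson bimodules presenting the same braided web, the iterated composite on the right is independent of the order in which the $c$-strand is pushed past the $a$- and $b$-strands by Proposition~\ref{prop:rickard invariance}, and a minimality statement for Rickard complexes then forces a homotopy equivalence. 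Alternatively, one transports the identity from the categorified quantum group $\cal{U}_Q(\slnn{2})$ via the $2$-functor recalled in \S\ref{ss:ssbim}, under which fork-slide becomes naturality of the Rickard (Chuang--Rouquier) complex with respect to the $2$-morphism encoding ${}_{a,b}S_{a+b}$, and twist-zipper reflects the relation underlying the invertibility of that complex; see \cite[\HRWPropWeb]{HRW1} for complete arguments.
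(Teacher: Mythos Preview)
The paper does not prove this proposition at all; it simply cites \cite[\HRWPropWeb]{HRW1} and places a \qed. So you are offering strictly more than the paper does, and the comparison is between your sketch and the argument in the companion paper.

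Your twist-zipper argument is the standard one and is essentially correct: after composing with the merge, the chain bimodules $M\hComp C_{a,b}^k$ simplify via digon removal to shifts of $M$, and the resulting two-term subquotients are acyclic, leaving a single copy of $M$ with the shift $\qdeg^{ab}$.

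For fork-slide, your ``uniqueness argument'' has a real gap. The assertion that ``a minimality statement for Rickard complexes then forces a homotopy equivalence'' is not justified: two bounded complexes built from the same multiset of indecomposable singular Soergel bimodules need not be homotopy equivalent, and no characterization of Rickard complexes (e.g.\ as minimal or perverse objects) is established or cited in this paper that would pin them down up to homotopy. Your third route---transporting the statement from the categorified quantum group via the $2$-functor to singular Soergel bimodules, where fork-slide becomes naturality of the Chuang--Rouquier complex with respect to the $\E,\F$ $2$-morphisms---is the one that actually works, and is in the spirit of the proof in \cite{HRW1}. The explicit route (your first option) also works if carried out, but as you note the bookkeeping is nontrivial; it is not a substitute for an argument without the details.
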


\section{Curved Rickard complexes and interpolation coordinates}
\label{s:curved rickard etc}
In this section, we introduce a dg 2-category of curved complexes of singular
Soergel bimodules, and define \emph{curved Rickard complexes} 
as certain special 1-morphisms.

\subsection{Perturbation theory for (curved) complexes}\label{ss:pert}
In the following, we will need the notion of a twist.  
Suppose that $X\in \AS[\tdeg^\pm]$ comes equipped with two endomorphisms 
$\d,\a\in \End_{\AS[\tdeg^\pm]}(X)\otimes R$ such that
\[
\d^2 = F_1\quad \text{and} \quad (\d+\a)^2 = F_1+F_2
\]
for $F_1,F_2 \in \cal{Z}(\AS)\otimes R$ of (cohomological) degree two.
(As a special case, we could have $F_1 =0=F_2$.)
It follows that $(X,\d)$ is an object of $\CS_{F_1}(\AS)$, and the object
$(X,\d+\a)\in \CS_{F_1+F_2}(\AS)$ is said to be a \emph{twist} of $(X,\d)$. 
We set
\[
\tw_\a((X,\d)) := (X,\d+\a)
\]
and will often simply write the former as 
$\tw_{\alpha}(X)$ when the differential $\d$ on $X$ is understood.
Note that the element $\a$
satisfies the \emph{Maurer--Cartan equation} with curvature:
\[
[\d,\a] + \a^2 = F_2 \, .
\]
We will refer to $\a$ as a \emph{(curved) Maurer--Cartan element}, 
or, by abuse of terminology, as a \emph{twist}.

In various places, we will need to promote a homotopy equivalence $X\simeq Y$ to
a homotopy equivalence between twists $\tw_\a(X)\simeq \tw_\b(Y)$. This is the
subject of homological perturbation theory. For our purposes, the following
result suffices; see \eg \cite{Markl,Hog3} for a more-thorough discussion.

\begin{prop}\label{prop:HPT} Let $X,Y \in \CS_{F_1}(\AS)$ and let $f \in
\Hom^0_{\CS_{F_1}(\AS)}(X,Y)$ and $g \in \Hom^0_{\CS_{F_1}(\AS)}(Y,X)$ determine
a homotopy equivalence $X \simeq Y$ with associated homotopies $k_X \in
\End^{-1}_{\CS_{F_1}(\AS)}(X)$ and $k_Y \in \End^{-1}_{\CS_{F_1}(\AS)}(Y)$.
Suppose that $\alpha \in \End^{1}_{\CS_{F_1}(\AS)}(X)$ is a Maurer--Cartan
element with curvature $F_2$ such that $\Id_X + \alpha \circ k_X \in
\End^{0}_{\CS(\AS)}(X)$ is invertible, 
then
\[
\begin{aligned}
\tilde{f} &:= f \circ (\Id_X + \alpha \circ k_X)\inv  \in 
\Hom^0_{\CS_{F_1+F_2}(\AS)}(\tw_{\alpha}(X),\tw_{\beta}(Y)) \\
\tilde{g} &:= (\Id_X + k_X \circ \alpha)\inv  \circ g \in 
\Hom^0_{\CS_{F_1+F_2}(\AS)}(\tw_{\beta}(Y),\tw_{\alpha}(X))
\end{aligned}
\]
determine a homotopy equivalence $\tw_{\alpha}(X) \simeq \tw_{\beta}(Y)$  in 
$\CS_{F_1+F_2}(\AS)$,
where $\beta := f \circ \alpha \circ \tilde{g}$. \qed
\end{prop}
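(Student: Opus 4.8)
The plan is to verify directly that the stated formulas define mutually inverse chain maps (up to homotopy) between the twisted objects, using the homological perturbation lemma philosophy but carried out by hand since the setting is simple: we have a genuine homotopy equivalence rather than merely a deformation retract, and the perturbation $\alpha$ is a (curved) Maurer--Cartan element. First I would record the data: we are given $f,g,k_X,k_Y$ with $\d_Y f = f \d_X$, $\d_X g = g \d_Y$, and $\Id_X - gf = [\d_X,k_X]$, $\Id_Y - fg = [\d_Y,k_Y]$ (after possibly absorbing signs per the cohomological conventions of \S\ref{ss:cat setup}). The element $\alpha$ satisfies $[\d_X,\alpha]+\alpha^2 = F_2$, so that $(X,\d_X+\alpha)$ lies in $\CS_{F_1+F_2}(\AS)$; one checks $(\d_X+\alpha)^2 = \d_X^2 + [\d_X,\alpha] + \alpha^2 = F_1 + F_2$. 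Set $u := \Id_X + \alpha\circ k_X$, invertible by hypothesis, and $\beta := f\circ\alpha\circ\tilde g$ where $\tilde g = (\Id_X + k_X\circ\alpha)^{-1}\circ g$. One should first check $\beta$ is itself a Maurer--Cartan element on $Y$ with curvature $F_2$, so that $\tw_\beta(Y) \in \CS_{F_1+F_2}(\AS)$ makes sense; this follows formally once $\tilde f,\tilde g$ are shown to be chain maps, since curvature is transported along chain maps, but it is cleaner to verify it in parallel.

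The core computation is to show $\tilde f := f\circ u^{-1}$ intertwines the twisted differentials: $(\d_Y+\beta)\circ\tilde f = \tilde f\circ(\d_X+\alpha)$. The standard trick is to avoid inverting $u$ directly and instead prove the equivalent identity $(\d_Y+\beta)\circ f = \tilde f\circ(\d_X+\alpha)\circ u = \tilde f \circ\big((\d_X+\alpha)(\Id_X+\alpha k_X)\big)$, then expand both sides. On the left, $\d_Y f = f\d_X$ and the new term is $\beta f = f\alpha\tilde g f$; on the right one gets $\tilde f\,\d_X + \tilde f\,\alpha + \tilde f\,\d_X\alpha k_X + \tilde f\,\alpha^2 k_X$. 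The Maurer--Cartan equation $[\d_X,\alpha]+\alpha^2 = F_2$ is used to convert $\d_X\alpha k_X + \alpha^2 k_X$, and the homotopy identity $\Id_X - \tilde g f = $ (a $\d_X$-boundary involving $k_X$) — more precisely, rewriting $\Id_X + k_X\alpha$ against $\Id_X - gf = [\d_X,k_X]$ — is what makes the discrepancy between $f\alpha\tilde g f$ and $f\alpha$ collapse. I would organize this as: (i) show $u\circ(\d_X+\alpha) = (\d_X+\alpha')\circ u$ for a suitable $\alpha'$ on $X$ (a conjugation/gauge-change identity, purely about $k_X$ and $\alpha$ using $[\d_X,k_X] = \Id_X - gf$, $F_2$-terms being central and thus passing through); (ii) show $f$ then intertwines $\d_X+\alpha'$ with $\d_Y+\beta$ using $\d_Y f = f\d_X$ and the definition of $\beta$; (iii) compose. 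The analogous computation for $\tilde g$ uses $k_Y$ and $\Id_Y - fg = [\d_Y,k_Y]$ symmetrically. Finally, $\tilde f\tilde g$ and $\tilde g\tilde f$ are shown to be homotopic to the respective identities: the homotopies are perturbations of $k_X, k_Y$, e.g. $\tilde k_X := (\Id_X + k_X\alpha)^{-1}\circ k_X$ (or $k_X\circ u^{-1}$, depending on which side), and one checks $\Id - \tilde g\tilde f = [\d_X+\alpha, \tilde k_X]$ by again expanding and invoking the Maurer--Cartan equation together with the original homotopy identities; the invertibility of $\Id_X + \alpha k_X$ (equivalently $\Id_X + k_X\alpha$, same spectrum) is exactly what is needed for these geometric-series-style inverses to exist.

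The main obstacle I anticipate is bookkeeping the Koszul signs from the differential $\Gamma\times\Z_\tdeg$-graded conventions of \S\ref{ss:cat setup}: $k_X$ has cohomological degree $-1$, $\alpha$ has degree $+1$, and the commutators $[\d,-]$ are super-commutators, so every rearrangement of $\d_X,\alpha,k_X,f,g$ picks up a sign that must be tracked consistently, and the "invertibility of $\Id_X+\alpha\circ k_X$" must be interpreted in the even/degree-zero part $\End^0_{\CS(\AS)}(X)$ where signs are trivial. A secondary subtlety is that $\alpha,\beta$ live in $\End^1_{\CS_{F_1}(\AS)}(X)$ and $\End^1_{\CS_{F_1}(\AS)}(Y)$, i.e.\ they have curvature, so one must be careful that all the intermediate "chain maps" are morphisms in the correct curved categories $\CS_{F_1}$ before twisting and $\CS_{F_1+F_2}$ after; the centrality of $F_1,F_2$ (Lemma on $\cal Z(\AS[\tdeg^\pm]\otimes R) = \cal Z(\AS)\otimes R$) is what guarantees these extra terms commute past everything and cause no trouble. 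Once the signs are pinned down, the identities are all formal consequences of the four given relations plus the Maurer--Cartan equation, so I expect no genuinely hard step beyond careful expansion — this is why the statement is presented without proof in the paper.
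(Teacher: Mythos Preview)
The paper does not prove this proposition: it is stated with a \qed\ symbol and the surrounding text refers the reader to \cite{Markl,Hog3} for details. Your outline is essentially the standard direct verification of the homological perturbation lemma that one finds in those references, so there is nothing to compare against in the paper itself; your plan is correct in spirit and would constitute a proof, with the caveat that the sign and curvature bookkeeping you flag is indeed the only place where care is needed.
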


\begin{remark}\label{rem:onesided}
In all of our applications of Proposition
\ref{prop:HPT}, invertibility of $\Id_X + \alpha \circ k_X$ will follow since
$\alpha \circ k_X$ is nilpotent. For example, this holds when $k_X$ acts
summand-wise on a \emph{finite one-sided twisted complex} $\tw_{\alpha}(X)$.
Recall that the latter means that
\[
(X,\delta) = \bigoplus_{i \in \Z} (X_i , \d_i)
\]
with $X_i=0$ for all but finitely many $i \in \Z$, 
and the components $\alpha_{i,j} \colon X_j \to X_i$ of the twist $\alpha$ 
satisfy $\alpha_{i,j} = 0$ for $i \leq j$.

Further, invertibility of $\Id_X + \alpha \circ
k_X$ implies that $\Id_X + k_X \circ \alpha$ is also invertible, so no further
assumptions are necessary to define $\tilde{g}$. Although the homotopy $k_Y$
does not appear in the definition of $\tilde{f}$ or $\tilde{g}$, it would appear
in the formula for the perturbed homotopy $\tilde{k_Y} \in
\End^{-1}_{\CS_{F_1+F_2}(\AS)}(Y)$.
\end{remark}

\subsection{A preliminary discussion on curvature}
\label{ss:curvature discussion}
Our curved complexes of singular Soergel bimodules will have curvatures modeled
on \emph{strand-wise curvature}\footnote{Informally, $\leftX$ and $\rightX$
should be thought of as the alphabets on the left and right of an $a$-colored
strand in a braid.} of the form
\begin{equation}\label{eq:h(X-X)}
\sum_{k=1}^a h_k(\leftX-\rightX) v_k \, ,
\end{equation}
which we refer to as $h\Delta$\emph{-curvature}. Here $\leftX,\rightX$ are
alphabets of cardinality $a$ and $v_1,\ldots,v_a$ are deformation parameters
with $\wt(v_k) = \qdeg^{-2k}\tdeg^2$. However, in order to define horizontal
composition in our 2-category of curved complexes of singular Soergel bimodules,
it will be auspicious to work with a different curvature that is modeled on
strand-wise $\Delta e$\emph{-curvature}, which is of the form
\begin{equation}\label{eq:e(X)-e(X)}
\sum_{k=1}^a  (e_k(\leftX)-e_k(\rightX)) u_k \, .
\end{equation}
As it turns out, we can regard these curvatures as equivalent after an
appropriate change of variables.

\begin{definition}\label{def:U and V} Let $\X$ be an alphabet of cardinality $a$ 
and consider collections of deformation parameters 
$\U=\{u_1,\ldots,u_a\}$ and $\V=\{v_1,\ldots,v_a\}$ 
with $\wt(v_k) = \qdeg^{-2k}\tdeg^2 = \wt(u_k)$. 
Let us identify the algebras $\k[\X,\U]$ and $\k[\X,\V]$ by declaring
\begin{equation}\label{eq:VvsU}
v_k = (-1)^{k-1}\sum_{k\leq l\leq a} e_{l-k}(\X) u_l
\, , \quad 
u_k  =(-1)^{k-1}\sum_{k\leq l\leq a} h_{l-k}(\X) v_l \, .
\end{equation}
\end{definition}

It is an easy exercise using \eqref{eq:HE2}
to verify that the formulae in \eqref{eq:VvsU} are mutually inverse.  

\begin{lemma}\label{lemma:sliding v} 
Identify $\k[\X,\U]\otimes_{\k[\U]} \k[\X,\U] \cong \k[\leftX,\rightX,\U]$ 
via the $\k[\U]$-linear map sending
$f(\X)\otimes g(\X)\mapsto f(\leftX)g(\rightX)$. 
Then, inside $\k[\X,\U]\otimes_{\k[\U]} \k[\X,\U]$, we have
\[
1\otimes v_k = \sum_{k \leq l \leq a} h_{l-k}(\leftX-\rightX) \cdot (v_l\otimes 1)
\]
for all $1\leq k\leq a$.
\end{lemma}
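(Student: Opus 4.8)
The plan is to unwind the two identifications in play and reduce the claim to the generating-function identity underlying the change of variables in Definition \ref{def:U and V}. First I would recall that the isomorphism $\k[\X,\U]\otimes_{\k[\U]}\k[\X,\U]\cong\k[\leftX,\rightX,\U]$ sends $1\otimes v_k$ to the element obtained by writing $v_k$ in terms of the $u_l$ and the $\rightX$-alphabet, i.e. $1\otimes v_k = (-1)^{k-1}\sum_{k\leq l\leq a} e_{l-k}(\rightX)\,u_l$, using \eqref{eq:VvsU}. Similarly $v_l\otimes 1 = (-1)^{l-1}\sum_{l\leq m\leq a} e_{m-l}(\leftX)\,u_m$. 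So the asserted identity becomes a claim about expanding one side in the $u_m$-basis and matching coefficients.

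The key computational step is then purely about symmetric functions: I would substitute the expression for $v_l\otimes 1$ into the right-hand side $\sum_{k\leq l\leq a} h_{l-k}(\leftX-\rightX)(v_l\otimes 1)$, collect the coefficient of $u_m$, and check it equals $(-1)^{k-1}e_{m-k}(\rightX)$. Concretely, the coefficient of $u_m$ on the right is $\sum_{k\leq l\leq m}(-1)^{l-1}h_{l-k}(\leftX-\rightX)\,e_{m-l}(\leftX)$; reindexing with $j=l-k$ this is $(-1)^{k-1}\sum_{j=0}^{m-k}(-1)^j h_j(\leftX-\rightX)\,e_{m-k-j}(\leftX)$. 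Now I would invoke the generating-function manipulation from \S\ref{ss:symfctns}: since $H(\leftX-\rightX,t)=E(\rightX,-t)E(\leftX,-t)\inv$, we get $H(\leftX-\rightX,-t)E(\leftX,t)=E(\rightX,t)$, which is exactly the statement that $\sum_{j}(-1)^j h_j(\leftX-\rightX)e_{n-j}(\leftX)=e_n(\rightX)$ for all $n\geq 0$ (this is \eqref{eq:HE2} with the roles arranged appropriately, or a one-line consequence of it). Applying this with $n=m-k$ yields coefficient $(-1)^{k-1}e_{m-k}(\rightX)$, matching the left-hand side $1\otimes v_k$.

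I would present this cleanly by phrasing the whole argument at the level of generating functions: set $V(\X,t):=\sum_{k\geq 1} v_k t^{k}$ (or an indexing-shifted version) and likewise for $U$, observe that \eqref{eq:VvsU} reads $V(\X,t)=E(\X,-t)^{\pm}U(\X,t)$ up to the sign/shift bookkeeping, and then the claimed identity is the statement $1\cdot V(\rightX,t) = H(\leftX-\rightX,t)\cdot(V(\leftX,t))$ evaluated coefficientwise, which collapses to $H(\leftX-\rightX,t)E(\leftX,-t)=E(\rightX,-t)$ after cancelling the common $U$-factor. This makes the one-line proof essentially immediate once the dictionary is set up.

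The main obstacle is not any deep content but bookkeeping: getting the signs, the shift in summation index, and the direction of the two mutually-inverse substitutions in \eqref{eq:VvsU} exactly right, and being careful that the tensor identification $f(\X)\otimes g(\X)\mapsto f(\leftX)g(\rightX)$ is applied on the correct factor (so $1\otimes v_k$ expands using $\rightX$, while $v_l\otimes 1$ expands using $\leftX$). I would therefore write out the coefficient extraction explicitly rather than only at the generating-function level, so that the reader can verify the signs, and cite \eqref{eq:HE2} (equivalently the identity $\sum_{i+j=k}(-1)^j h_i(\leftX-\rightX)e_j(\leftX)=e_k(\rightX)$, obtained from $H(\leftX-\rightX,t)E(\leftX,-t)=E(\rightX,-t)$) as the single symmetric-function input.
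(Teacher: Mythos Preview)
Your proposal is correct and takes essentially the same approach as the paper: both proofs unwind $v_k$ in terms of the $u_l$ via \eqref{eq:VvsU} and then collapse the resulting double sum using the identity encoded in \eqref{eq:HE2}. The only cosmetic difference is organizational---the paper starts from $1\otimes v_k$, passes through the $u_l$'s, re-expands as $\sum h_{m-l}(\leftX)\,e_{l-k}(\rightX)\,(v_m\otimes 1)$ and invokes \eqref{eq:HE2} directly, whereas you compare both sides in the $u_m$-basis and invoke the equivalent identity $H(\leftX-\rightX,-t)E(\leftX,t)=E(\rightX,t)$---but the computation and its single symmetric-function input are the same.
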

\begin{proof}
We compute
\begin{align*}
1\otimes v_k
&=(-1)^{k-1}\sum_{k \leq l \leq a} 1\otimes e_{l-k}(\X)u_l
=(-1)^{k-1}\sum_{k \leq l \leq a} u_l\otimes e_{l-k}(\X)\\
&=(-1)^{k-1}\sum_{k \leq l \leq m \leq a} (-1)^{l-1} h_{m-l}(\X) v_m\otimes e_{l-k}(\X)
=\sum_{k \leq m \leq a} h_{m-k}(\leftX-\rightX) \cdot (v_m\otimes 1)\, . \qedhere
\end{align*}
\end{proof}

\begin{cor}\label{cor:curvature description aux}
Under the identification of $\k[\leftX,\rightX,\U] \cong \k[\leftX,\rightX,\V]$ 
given via \eqref{eq:VvsU}, we have
\[
\sum_{k< l\leq a} (e_{l-k}(\leftX)-e_{l-k}(\rightX))u_l = \sum_{k<l\leq a} h_{l-k}(\leftX-\rightX)v_l
\]
for all $1\leq k\leq a$.
\end{cor}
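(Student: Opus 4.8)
The plan is to derive this corollary directly from Lemma~\ref{lemma:sliding v}, since Corollary~\ref{cor:curvature description aux} is essentially a restatement of the content of that lemma after one takes a difference. First I would recall that under the identification $\k[\X,\U] \cong \k[\X,\V]$ from \eqref{eq:VvsU}, combined with the tensor identification $\k[\X,\U]\otimes_{\k[\U]}\k[\X,\U]\cong \k[\leftX,\rightX,\U]$ sending $f(\X)\otimes g(\X) \mapsto f(\leftX)g(\rightX)$, Lemma~\ref{lemma:sliding v} gives the relation
\[
v_k(\rightX) = \sum_{k\leq l\leq a} h_{l-k}(\leftX-\rightX)\, v_l(\leftX)
\]
inside $\k[\leftX,\rightX,\V]$, where I write $v_k(\leftX)$ and $v_k(\rightX)$ for $v_k\otimes 1$ and $1\otimes v_k$ respectively. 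The point is that each $v_k$, being a polynomial combination of the $u_l$ with $\X$-dependent coefficients via \eqref{eq:VvsU}, is sensitive to whether it is expressed using $\leftX$ or $\rightX$.

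Next I would isolate the $l=k$ term in that identity: since $h_0(\leftX-\rightX)=1$, it reads $v_k(\rightX) = v_k(\leftX) + \sum_{k<l\leq a} h_{l-k}(\leftX-\rightX)\,v_l(\leftX)$, hence
\[
v_k(\leftX)-v_k(\rightX) = -\sum_{k<l\leq a} h_{l-k}(\leftX-\rightX)\,v_l(\leftX).
\]
On the other hand, applying the first formula in \eqref{eq:VvsU} once with alphabet $\leftX$ and once with alphabet $\rightX$, and using that the $u_l$ are the same on both sides (they live in $\k[\U]$ and the tensor product is over $\k[\U]$), I get
\[
v_k(\leftX)-v_k(\rightX) = (-1)^{k-1}\sum_{k\leq l\leq a}\big(e_{l-k}(\leftX)-e_{l-k}(\rightX)\big)u_l = (-1)^{k-1}\sum_{k<l\leq a}\big(e_{l-k}(\leftX)-e_{l-k}(\rightX)\big)u_l,
\]
where the $l=k$ term drops because $e_0(\leftX)-e_0(\rightX)=0$. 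Comparing the two expressions for $v_k(\leftX)-v_k(\rightX)$ would then yield $\sum_{k<l\leq a}(e_{l-k}(\leftX)-e_{l-k}(\rightX))u_l = -\sum_{k<l\leq a} h_{l-k}(\leftX-\rightX)\,v_l(\leftX)$, which is off by a sign from the claimed identity, so I would need to be careful.

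The cleaner route, and the one I would actually carry out, avoids the sign bookkeeping: expand the right-hand side $\sum_{k<l\leq a} h_{l-k}(\leftX-\rightX)v_l$ by substituting $v_l = (-1)^{l-1}\sum_{l\leq m\leq a} e_{m-l}(\leftX)u_m$ (note $v_l$ here means $v_l(\leftX)$ since it multiplies $h(\leftX-\rightX)$ and the whole expression should land in terms of $\leftX$ and $\rightX$), then interchange the order of summation over $l$ and $m$, and use a complete-symmetric/elementary-symmetric cancellation of the form \eqref{eq:HE2} or \eqref{eq:wellknown} applied to the virtual alphabet $\leftX - \rightX$ together with $\leftX$, collapsing $\sum_l (-1)^{l-1} h_{l-k}(\leftX-\rightX) e_{m-l}(\leftX)$ down to something expressible via $e_{m-k}$ evaluated appropriately. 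Concretely, I expect the inner sum to telescope to $e_{m-k}(\leftX) - e_{m-k}(\rightX)$ (up to the sign $(-1)^{k-1}$ that then matches), using the generating-function identity $E(\leftX,-t) = E(\rightX,-t)\,H(\leftX-\rightX,t)^{-1}$ rearranged as in \S\ref{ss:derivatives}. The main obstacle is purely organizational: getting the range of summation and the signs exactly right when passing between the $h_k(\leftX-\rightX)$ and the $e_k$ differences, since the lemma and \eqref{eq:VvsU} each carry a $(-1)^{k-1}$ and the generating function manipulation introduces further alternating signs; I would pin this down by a short generating-function computation, writing $\sum_k (1\otimes v_k) t^k$ and $\sum_k (v_k\otimes 1) t^k$ as rational expressions in $H(\leftX,t)$, $H(\rightX,t)$, $E(\leftX,t)$, $E(\rightX,t)$ and the generating series of the $u_l$, then subtracting. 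Everything else is a routine consequence of Lemma~\ref{lemma:sliding v} and Definition~\ref{def:U and V}.
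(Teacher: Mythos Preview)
Your first approach---computing $v_k(\leftX)-v_k(\rightX)$ in two ways, once from Lemma~\ref{lemma:sliding v} and once directly from \eqref{eq:VvsU}---is exactly the paper's proof. The paper phrases it as computing $1\otimes v_k - v_k\otimes 1$ (identifying $v_k$ with $v_k\otimes 1$, i.e.\ with $v_k(\leftX)$), but it is the same two-line argument.

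The sign discrepancy you flagged is real, not an error on your part. When you equate your two expressions you should get
\[
\sum_{k<l\leq a}\big(e_{l-k}(\leftX)-e_{l-k}(\rightX)\big)u_l \;=\; (-1)^k\sum_{k<l\leq a} h_{l-k}(\leftX-\rightX)\,v_l,
\]
and a direct check at $a=2$, $k=1$ (where $v_2=-u_2$ and $h_1(\leftX-\rightX)=e_1(\leftX)-e_1(\rightX)$) confirms the factor $(-1)^k$. The paper's displayed computation silently drops the $(-1)^{k-1}$ coming from \eqref{eq:VvsU}, so its proof carries the same slip. This is harmless downstream: the only use of the corollary is the ``$k=0$ case'' recorded as Corollary~\ref{cor:curvature description}, where $(-1)^0=1$. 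Your own write-up has a small slip at the same spot: when you divide through by $(-1)^{k-1}$ you wrote a bare minus sign rather than $(-1)^k$.

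Your proposed ``cleaner route''---substituting $v_l=(-1)^{l-1}\sum_{m\geq l} e_{m-l}(\leftX)u_m$ into the right-hand side, swapping sums, and collapsing $\sum_{l'}(-1)^{l'-1}h_{l'}(\leftX-\rightX)e_{(m-k)-l'}(\leftX)$ via the generating-function identity $H(\leftX-\rightX,-t)E(\leftX,t)=E(\rightX,t)$---works perfectly and produces the same $(-1)^k$ factor. So there is no need to be tentative about it; it is a straightforward and self-contained alternative to the paper's two-way comparison.
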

\begin{proof}
Identify $\k[\leftX,\rightX,\U] \cong \k[\X,\U]\otimes_{\k[\U]} \k[\X,\U]$, 
which thus identifies $v_k$ with $v_k \otimes 1$.
We then compute
\[
1\otimes v_k - v_k\otimes 1 
= \sum_{k\leq l\leq a} \Big(e_{l-k}(\X)u_l\otimes 1 - 1\otimes e_{l-k}(\X)u_l\Big) 
= \sum_{k\leq l\leq a}(e_{l-k}(\leftX)-e_{l-k}(\rightX))u_l \, .
\]
Note that the $k=l$ term in this sum is zero. 
On the other hand, Lemma \ref{lemma:sliding v} gives
\[
1\otimes v_k - v_k\otimes 1 
= \left( \sum_{k \leq l \leq a} h_{l-k}(\leftX-\rightX) \cdot (v_l\otimes 1) \right) - v_k\otimes 1 
=  \sum_{k< l\leq a} h_{l-k}(\leftX-\rightX)v_l \, .\qedhere
\]
\end{proof}

Note that Lemma \ref{lemma:sliding v} and 
Corollary \ref{cor:curvature description aux} remain true (with the same proof) 
if we extend $\U$ and $\V$ to include deformation parameters $u_0,v_0$, 
with weights $\qdeg^0\tdeg^2$,
which we assume are related via the obvious extension of \eqref{eq:VvsU}
to the case $k=0$. 
Thus we also have the following identity, which is the 
``$k=0$ case'' of Corollary \ref{cor:curvature description aux}.
(Alternatively, this could be established by a straightforward computation.)

\begin{cor}\label{cor:curvature description}
We have
\[
\sum_{1\leq l\leq a} (e_{l}(\leftX)-e_{l}(\rightX))u_l 
	= \sum_{1\leq l\leq a} h_{l}(\leftX-\rightX)v_l\, . \qedhere
\]
\end{cor}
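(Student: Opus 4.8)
The plan is to derive this identity directly from the generating‑function formalism established in \S\ref{ss:symfctns}, rather than as a formal consequence of Corollary \ref{cor:curvature description aux} (though that route works too, as indicated in the text). First I would unwind the change of variables \eqref{eq:VvsU}. Substituting $v_l = (-1)^{l-1}\sum_{l\leq m\leq a} e_{m-l}(\X)u_m$ into the right‑hand side $\sum_{1\leq l\leq a} h_l(\leftX-\rightX)v_l$ and collecting the coefficient of a fixed $u_m$, one is reduced to checking, for each $1\leq m\leq a$, the scalar identity
\[
\sum_{1\leq l\leq m} (-1)^{l-1} h_l(\leftX-\rightX)\, e_{m-l}(\X) \;=\; e_m(\leftX)-e_m(\rightX),
\]
where the unspecified alphabet $\X$ on the left should be read as $\leftX$ (this is the point where one must be careful: in \eqref{eq:VvsU} the alphabet appearing in the $e$'s is the same one that carries the sheet‑algebra action, which on the left end of the strand is $\leftX$). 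But this is precisely equation \eqref{eq:somerelations0} of Lemma \ref{lem:somerelations}, namely $e_m(\leftX)-e_m(\rightX) = \sum_{j=1}^m (-1)^{j-1} e_{m-j}(\leftX) h_j(\leftX-\rightX)$, after relabeling $j=l$. Summing over $m$ against $u_m$ then gives the claim.

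Alternatively — and perhaps cleaner to present — I would package everything into generating functions. Set $U(t):=\sum_{k\geq 1} u_k t^k$ and $V(t):=\sum_{k\geq 1} v_k t^k$, viewed as formal series with coefficients in the deformation‑parameter ring. The relation \eqref{eq:VvsU} is the statement that $V(t) = -\,t^{-1}\bigl(E(\X,-t)-1\bigr)\cdot$(something); more precisely one checks that \eqref{eq:VvsU} is equivalent to the generating‑function identity $\sum_k v_k t^k = \sum_k u_k\, t^k E(\X,-t)^{-1}\cdot(\text{sign bookkeeping})$, which upon using $E(\X,-t)^{-1}=H(\X,t)$ converts the two sides of the desired identity into the single equality
\[
\bigl(E(\leftX,-t)-E(\rightX,-t)\bigr)\,H(\leftX,t) \;=\; H(\leftX-\rightX,t)-1,
\]
which is exactly the first of the generating‑function manipulations recorded in the proof of Lemma \ref{lem:somerelations} (the line establishing \eqref{eq:somerelations1}/\eqref{eq:somerelations0}), since $H(\leftX-\rightX,t)=H(\leftX,t)E(\rightX,-t)$. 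Extracting the coefficient of $t^m$ recovers the corollary.

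The only real obstacle here is bookkeeping of signs and of \emph{which} alphabet appears where: \eqref{eq:VvsU} is written with a single alphabet $\X$, but in the curvature context that alphabet gets instantiated as $\leftX$ (the left end of the strand), while $\rightX$ enters only through the curvature term $e_k(\leftX)-e_k(\rightX)$; getting this consistent is what makes the identity nontrivial rather than tautological. Once that is pinned down, every step is a direct appeal to Lemma \ref{lem:somerelations} and Definition \ref{def:U and V}, so no genuinely new computation is required. I would present the generating‑function version as the main argument and remark that it is the ``$k=0$ case'' of Corollary \ref{cor:curvature description aux}, obtained by adjoining the degree‑$(\qdeg^0\tdeg^2)$ parameters $u_0,v_0$ as described in the text.
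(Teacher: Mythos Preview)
Your first approach---substitute \eqref{eq:VvsU} with $\X=\leftX$ into the right-hand side, collect the coefficient of $u_m$, and invoke \eqref{eq:somerelations0}---is correct and is exactly the ``straightforward computation'' the paper alludes to parenthetically. The paper's primary route is the one you mention at the end: it observes that Lemma~\ref{lemma:sliding v} and Corollary~\ref{cor:curvature description aux} extend verbatim to allow indices $k=0$ (with auxiliary parameters $u_0,v_0$), and then reads off Corollary~\ref{cor:curvature description} as the $k=0$ instance of Corollary~\ref{cor:curvature description aux}. Your direct substitution is more self-contained and avoids introducing the dummy $u_0,v_0$; the paper's route has the advantage of making the structural relationship to Corollary~\ref{cor:curvature description aux} manifest. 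Either is fine.

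One small correction in your generating-function sketch: the identity you write as $\bigl(E(\leftX,-t)-E(\rightX,-t)\bigr)H(\leftX,t)=H(\leftX-\rightX,t)-1$ has a sign error; since $E(\leftX,-t)H(\leftX,t)=1$ and $E(\rightX,-t)H(\leftX,t)=H(\leftX-\rightX,t)$, the left-hand side equals $1-H(\leftX-\rightX,t)$. This is harmless for your main argument, which does not rely on this line, but you should fix it if you present the generating-function version.
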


\subsection{Curved complexes over \texorpdfstring{$\SSBim$}{SSBim}}
\label{ss:curved cxs}

We now introduce the monoidal dg 2-category of curved complexes of singular
Soergel bimodules. Informally, this 2-category is formed via the following
procedure. First, we consider a $2$-subcategory consisting of $1$-morphism
$X\colon \aa \to \bb$ in $\CS(\SSBim)$ where $\bb$ is obtained by permuting the
indices of $\aa$. In particular, all Rickard complexes give $1$-morphisms in
this 2-category. Next, in each $\Hom$-category we adjoin $\#(\aa)$ alphabets
$\{\U_i\}_{i=1}^{\#(\aa)}$ of formal variables via Definition \ref{def:Adjoin}.
Finally, we pass to a certain category of curved complexes in each
$\Hom$-category, for a choice of curvature that \eg encodes the homotopies that
``slide'' the action of symmetric polynomials on the left boundary along a strand of 
a Rickard complex to the right.

We now make this informal description precise. 

\begin{defi}\label{def:Y} Fix an integer $m\geq 0$ and let
$\YS(\SSBim,m)$ be the 2-category wherein:
\begin{itemize}
\item objects are pairs $(\aa,\sigma)$ where $\aa$ is an object in $\SSBim$ with
$\#(\aa)=m$ and $\sigma \in \symg_{m}$,
\item
the $\Hom$-category ${}_{\bb,\tau}\YS(\SSBim)_{\aa,\sigma}$ from 
$(\aa,\sigma)$ to $(\bb,\tau)$
is empty unless $a_{\sigma(i)} = b_{\tau(i)}$ for all $1 \leq i \leq m$. In this case,
introduce an alphabet $\U_i = \{\u_{i,r}\}_{r=1}^{a_{\sigma(i)}}$ for each $1\leq
i\leq m$ with $\wt(\u_{i,r}) = \qdeg^{-2r} \tdeg^2$ and set
\[
{}_{\bb,\tau}\YS(\SSBim)_{\aa,\sigma} :=
\CS_F\big( {}_{\bb}\SSBim_{\aa} ; \k[\U_1,\ldots,\U_{m}]\big)
\]
where the curvature element is
\begin{equation}\label{eq:Ucurv}
F= \sum_{i=1}^{m} \sum_{r=1}^{a_{\sigma(i)}} 
\big(e_r(\leftX_{\tau(i)}) - e_r(\rightX_{\sigma(i)}) \big) \u_{i,r}\, .
\end{equation}
Here, $\leftX_{\tau(i)}$ and $\rightX_{\sigma(i)}$ are the relevant 
alphabets acting on the left and right (boundary), respectively.
In such expressions, we will sometimes suppress the summation 
limits depending on colors $a_{\sigma(i)}$ by summing over 
all $r \geq 1$ and declaring $\u_{i,r}:=0$ for $r>a_{\sigma(i)}$.
\end{itemize}

We let $\YS(\SSBim) := \bigsqcup_{m\geq 0} \YS(\SSBim,m)$.
The horizontal composition of 1-morphisms in $\YS(\SSBim)$ is defined by the usual rule
\begin{equation}\label{eq:HCompDef0}
(X,\d^{\tot}_X)\hComp (Y, \d^{\tot}_Y) 
:= (X\hComp Y\, , \ \d^{\tot}_X\hComp \id_Y + \id_X\hComp \d^{\tot}_Y)\, .
\end{equation}
and horizontal and vertical composition of 2-morphisms is inherited from 
the $2$-morphism composition in $\CS(\SSBim)$.
\end{defi}

\begin{rem}
The appearance of permutations in the objects of $\YS(\SSBim)$ may appear surprising. 
Below, when we assign $1$-morphisms in $\YS(\SSBim)$ to braids, 
the permutations in the (co)domain objects will encode a numbering of the strands in the braid. 
E.g. if the domain object is $(\aa,\sigma)$, then $\sigma(i) = j$ tells us that 
the strand meeting the $j^{th}$ boundary point on the right of the braid is the $i^{th}$ strand 
in this numbering.
\end{rem}

When $m$ is understood in Definition \ref{def:Y}, 
we write $\U = \U_1\cup \cdots \cup \U_m$, hence \eg $\k[\U]=\k[\U_1,\ldots,\U_m]$.
We have written the differential on a 1-morphism $(X,\d^{\tot}_X)$ in $\YS(\SSBim)$ 
using the superscript ``tot'' in order to emphasize the fact that $\d^{\tot}_X$ 
decomposes canonically (and uniquely) into a sum of terms:
\[
\d^{\tot}_X = \d_X+\Delta_X
\]
where $\d_X$ lives in $\End_{\CS(\SSBim)}(X)$ and $\Delta_X$ lives in 
the ideal $\End_{\CS(\SSBim)}(X)\otimes \k[\U]_{>0}$ generated by polynomials in $\U$ 
with zero constant term.  
Hence, $(X,\d_X)$ defines a complex of singular Soergel bimodules, and
$(X,\d_X+\Delta_X) = \tw_{\Delta_X}((X,\d_X))$.  It will frequently be useful to
decompose 2-morphisms in $\YS(\SSBim)$ according to their $\U$-degree zero parts
and their ``strictly positive $\U$-degree'' parts, according to the following
definition.

\begin{definition}
A 2-morphism $f$ in $\YS(\SSBim)$ is \emph{$\U$-irrelevant} if $f$ is zero after
setting all $\U$-variables equal to zero.  In other words, $f$ is
$\U$-irrelevant if it is an element of $\Hom_{\CS(\SSBim)}(X,Y)\otimes\k[\U]_{>0}$
for appropriate $X,Y$.
\end{definition}

\begin{conv}\label{conv:curved twists} Henceforth, we will write
1-morphisms in $\YS(\SSBim)$ in the form $\tw_{\Delta_X}(X)$ where $X$ is a
1-morphism in $\CS(\SSBim)$ and $\Delta_X$ is a curved Maurer--Cartan element in
$\End_{\CS(\SSBim)}(X)\otimes \k[\U]_{>0}$ 
(that is to say, $\Delta$ is $\U$-irrelevant).  
In this language, the composition of $1$-morphisms takes the form
\begin{equation}\label{eq:HCompDef}
\tw_{\Delta_X}(X) \hComp \tw_{\Delta_Y}(Y) := 
\tw_{\Delta_{X\hComp Y}}(X\hComp Y)
\, , \quad
\Delta_{X\hComp Y} :=\Delta_X\hComp \Id_Y + \Id_X\hComp \Delta_Y.
\end{equation}
Further, we will sometime embellish this notation as
$\tw_{\Delta_X}({}_{\bb,\tau}X_{\aa,\sigma})$ or $\tw_{\Delta_X}({}_{\bb}X_{\aa})$ 
when we wish to emphasize the data specifying the objects. 
\end{conv}

\begin{rem}\label{rmk:strict} 
If $\tw_{\Delta_X}({}_{\bb,\tau}X_{\aa,\sigma})$ is a 1-morphism in $\YS(\SSBim)$,
then the linear part of $\Delta_X$ is a sum of terms of the form 
$\Psi_{i,r} \uvar_{i,r}$, 
where $\Psi_{i,r}\in \End^{2r,-1}(X)$ satisfies
\[
[\d_X, \Psi_{i,r}] =   e_r(\leftX_{\tau(i)}) - \ e_r(\rightX_{\sigma(i)})\, .
\]
If $\Delta_X$ is linear in the variables $\uvar_{i,r}$, 
then $\tw_{\Delta_X}(X)$ is called a \emph{strict 1-morphism} in $\YS(\SSBim)$. 
This implies that the endomorphisms $\Psi_{i,r}$ 
necessarily square to zero and pairwise anti-commute, so
\[
[\d_X+\Delta_X, \Psi_{i,r}] =  e_r(\leftX_{\tau(i)}) - \ e_r(\rightX_{\sigma(i)}) \, .
\]
Hence, $e_r(\leftX_{\tau(i)}) - \ e_r(\rightX_{\sigma(i)})$ 
is null-homotopic on $\tw_{\Delta_X}(X)$.
This conclusion holds for non-strict morphisms as well, 
as can be seen by differentiating the equation 
$(\d_X+\Delta_X)^2=F$ with respect to the variable $u_{i,r}$.
\end{rem}

Note that a typical 2-morphism 
$f\in \Hom_{\YS(\SSBim)}(X,Y)$ 
is a formal sum
\begin{equation}\label{eq:f components}
f = \sum_{\ii,\rr,\kk} f_{\ii,\rr,\kk} \otimes \uvar_{\ii,\rr}^{\kk}
\end{equation}
over finitely many triples $(\ii,\rr,\kk)$
where
$
\u_{\ii,\rr}^{\kk} :=
\u_{i_{1}, r_{1}}^{k_{1}} \cdots 
\u_{i_{\ell}, r_{\ell}}^{k_{\ell}}
$
and $f_{\ii,\rr,\kk} \in \Hom_{\CS(\SSBim)}(X,Y)$.
If $f$ is homogeneous of weight $\qdeg^{l_1} \tdeg^{l_2} $, 
then we have
\begin{equation}\label{eq:ComponentWt}
\wt(f_{\ii,\rr,\kk}) = \qdeg^{l_1+2\sum_j k_j r_j} \tdeg^{l_2-2\sum_j k_j},
\end{equation}
since $\u_{i,r}^k$ has weight $\qdeg^{-2kr} \tdeg^{2k}$.
In particular, since the curved complexes $\tw_{\Delta_X}(X)$ 
in $\YS(\SSBim)$ are bounded, 
this implies that the curved Maurer--Cartan element $\Delta_X$ 
is nilpotent.

We now elaborate on the $2$-categorical structure on $\YS(\SSBim)$. 
First, we check the following.

\begin{lem}\label{lem:hComp}
Horizontal composition is well-defined on $\YS(\SSBim)$.
\end{lem}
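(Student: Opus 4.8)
The claim is that horizontal composition, as defined in \eqref{eq:HCompDef0} (equivalently \eqref{eq:HCompDef}), lands in $\YS(\SSBim)$ — i.e.\ that if $\tw_{\Delta_X}({}_{\bb,\tau}X_{\aa,\sigma})$ and $\tw_{\Delta_Y}({}_{\bb',\tau'}Y_{\aa',\sigma'})$ are $1$-morphisms in $\YS(\SSBim)$ with $(\aa,\sigma) = (\bb',\tau')$ so that the composite is defined at the level of $\CS(\SSBim)$, then $(X\hComp Y,\d^{\tot}_X\hComp\id_Y + \id_X\hComp\d^{\tot}_Y)$ is again an object of the relevant curved-complex category $\CS_{F}({}_{\bb}\SSBim_{\aa'};\k[\U])$ for the correct curvature $F$. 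The plan is: (i) fix the bookkeeping of alphabets and permutations so that the $\U$-variables of the composite are identified correctly; (ii) compute $(\d^{\tot}_{X\hComp Y})^2$ and check it equals multiplication by the prescribed central curvature element; (iii) note that $\Delta_{X\hComp Y}$ is $\U$-irrelevant, so the decomposition of Convention \ref{conv:curved twists} is preserved.

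First I would set up notation. Write $\#(\aa)=\#(\bb)=\#(\aa')=\#(\bb')=m$ (horizontal composition in $\SSBim$ requires matching the number of entries), and let $\U^X_i$, $\U^Y_i$ ($1\le i\le m$) be the two families of formal alphabets. Since the codomain of $Y$ equals the domain of $X$, the middle object $(\aa,\sigma)=(\bb',\tau')$ carries one set of alphabets $\rightX_{\sigma(i)}^{(X)} = \leftX_{\tau'(i)}^{(Y)}$; the composite $1$-morphism has domain $(\aa',\sigma')$ and codomain $(\bb,\tau)$, and its associated alphabet family is $\U^{X\hComp Y}_i := \U^X_i$ relabeled against $\sigma$ together with $\U^Y_i$ relabeled against $\sigma'$ — this is a disjoint union of $2m$-many formal alphabets, which is exactly what Definition \ref{def:Y} attaches to a $1$-morphism whose strands are numbered by the composite permutation. (One subtlety: the indexing set in Definition \ref{def:Y} uses a single family $\{\U_i\}_{i=1}^m$; for horizontal composition one should think of $X\hComp Y$ as living over the base ring $\k[\U^X]\otimes\k[\U^Y]$, which the ground ring $\k[\U_1,\dots,\U_m]$ of the composite accommodates after the relabeling. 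I would make this identification explicit, referencing Definition \ref{def:Adjoin} and the remark that the center of $\BS\otimes R$ is $\cal Z(\BS)\otimes R$.)

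The computational heart is step (ii). Using $(f\hComp g)\circ(f'\hComp g') = (-1)^{|g||f'|}(f\circ f')\hComp(g\circ g')$ from \eqref{eq:MidInt} with $f=f'=\d^{\tot}_X$ (degree $1$) and $g=g'=\id_Y$, and likewise for the cross terms, one gets
\[
(\d^{\tot}_X\hComp\id_Y + \id_X\hComp\d^{\tot}_Y)^2
= (\d^{\tot}_X)^2\hComp\id_Y + \id_X\hComp(\d^{\tot}_Y)^2 ,
\]
the cross terms cancelling by the sign rule exactly as in the uncurved case \eqref{eq:HCompC}. Now $(\d^{\tot}_X)^2 = F_X|_X$ and $(\d^{\tot}_Y)^2 = F_Y|_Y$ where $F_X$, $F_Y$ are the curvature elements \eqref{eq:Ucurv} for $X$ and $Y$. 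The middle-object contributions telescope: $F_X$ contains a term $-\,e_r(\rightX^{(X)}_{\sigma(i)})\,\u^X_{i,r}$ and $F_Y$ contains $+\,e_r(\leftX^{(Y)}_{\tau'(i)})\,\u^Y_{i,r}$, but these are honestly different central elements (they involve different $\U$-variables), so they do \emph{not} cancel — rather, $F_{X\hComp Y} = F_X + F_Y$ as an element of $\cal Z({}_{\bb}\SSBim_{\aa'})\otimes\k[\U^{X\hComp Y}]$, and this is precisely the curvature element \eqref{eq:Ucurv} prescribed for the composite $1$-morphism over its $2m$-alphabet ground ring (the "left" alphabets of $F_{X\hComp Y}$ are the $\leftX_{\tau(i)}$ from $X$, the "right" ones the $\rightX_{\sigma'(i)}$ from $Y$, and the middle alphabets appear with opposite signs attached to distinct formal variables). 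So $(\d^{\tot}_{X\hComp Y})^2 = F_{X\hComp Y}|_{X\hComp Y}$, as required. Finally, step (iii): $\Delta_{X\hComp Y} = \Delta_X\hComp\id_Y + \id_X\hComp\Delta_Y$ is $\U$-irrelevant because each of $\Delta_X,\Delta_Y$ is (by Convention \ref{conv:curved twists}), and $(X\hComp Y,\d_X\hComp\id_Y+\id_X\hComp\d_Y)$ is a genuine complex in $\CS(\SSBim)$ by \eqref{eq:HCompC}; hence the composite has the form $\tw_{\Delta_{X\hComp Y}}(X\hComp Y)$ with $X\hComp Y\in\CS(\SSBim)$, which is an object of $\YS(\SSBim)$.

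The step I expect to be the main obstacle is not any cancellation but the \textbf{bookkeeping of alphabets and permutations} — making precise how the $m$ formal alphabets of $X$ and the $m$ of $Y$ combine into the correct indexed family for $X\hComp Y$, how the permutations $\sigma,\sigma',\tau,\tau'$ compose to label the strands of the composite, and checking that the curvature element \eqref{eq:Ucurv} written for the composite object genuinely matches $F_X+F_Y$ under this identification. Once the indexing conventions are pinned down, the rest is the sign computation above plus the trivial observation about $\U$-irrelevance. I would also remark (perhaps in a follow-up lemma, as the text seems to intend) that $\hComp$ is associative and unital on $\YS(\SSBim)$, which follows from the corresponding facts in $\CS(\SSBim)$ together with the additivity $F_{(X\hComp Y)\hComp Z} = F_X + F_Y + F_Z = F_{X\hComp(Y\hComp Z)}$.
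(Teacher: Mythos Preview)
Your proposal contains a genuine misunderstanding of Definition \ref{def:Y} that causes the curvature computation to come out wrong. The $\U$-variables are attached to \emph{strands} (indexed by $1\le i\le m$), not to individual $1$-morphisms: if $\tw_{\Delta_X}({}_{\aa,\sigma}X_{\aa',\sigma'})$ and $\tw_{\Delta_Y}({}_{\aa',\sigma'}Y_{\aa'',\sigma''})$ are composable, then both $X$ and $Y$ live over the \emph{same} ring $\k[\U_1,\dots,\U_m]$, and the compatibility condition $a_{\sigma(i)}=a'_{\sigma'(i)}=a''_{\sigma''(i)}$ ensures the alphabets $\U_i$ have the same cardinality throughout. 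There is no separate $\U^X$ versus $\U^Y$, and the composite $X\hComp Y$ has $m$ alphabets, not $2m$. (Think of it physically: horizontally composing braids with $m$ strands each still gives a braid with $m$ strands, and each strand carries one deformation alphabet.)

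Consequently, in your step (ii) the middle-alphabet contributions \emph{do} cancel: writing $\leftX,\rightX,\rightrightX$ for the alphabets at the left, middle, and right, one has
\[
(\d^{\tot}_X)^2\hComp\id_Y + \id_X\hComp(\d^{\tot}_Y)^2
= \sum_{i,r}\big(e_r(\leftX_{\sigma(i)})-e_r(\rightX_{\sigma'(i)})\big)u_{i,r}
+ \sum_{i,r}\big(e_r(\rightX_{\sigma'(i)})-e_r(\rightrightX_{\sigma''(i)})\big)u_{i,r},
\]
and since the \emph{same} $u_{i,r}$ appears in both sums, the $\rightX_{\sigma'(i)}$ terms telescope, leaving exactly the curvature \eqref{eq:Ucurv} for the composite. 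This telescoping is the entire point of using $\Delta e$-curvature (rather than, say, $h\Delta$-curvature) when building the $2$-category; compare the discussion at the start of \S\ref{ss:v-curved cxs}. Your version, with $F_{X\hComp Y}=F_X+F_Y$ over a doubled variable set, would not match the prescribed curvature on ${}_{\aa,\sigma}\YS(\SSBim)_{\aa'',\sigma''}$, so the composite would not be a $1$-morphism there. Once you correct this bookkeeping, the rest of your argument (the cross-term vanishing via \eqref{eq:MidInt}, and the $\U$-irrelevance of $\Delta_{X\hComp Y}$) goes through and matches the paper's proof exactly.
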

\begin{proof}
It suffices to show that 
given composable $1$-morphisms 
$\tw_{\Delta_X}({}_{\aa,\sigma}X_{\aa',\sigma'})$ 
and $\tw_{\Delta_X}({}_{\aa',\sigma'}X_{\aa'',\sigma''})$,
the twist
\[
\Delta_{X\hComp Y} = \Delta_X\hComp \Id_Y + \Id_X\hComp \Delta_Y
\]
satisfies the conditions in Definition \ref{def:Y}. 
The only non-immediate check is to compute its curvature.
Let $m = \#(\aa) = \#(\aa') = \#(\aa'')$, and note that 
$a_{\sigma(i)} = a'_{\sigma'(i)} = a''_{\sigma''(i)}$ for all $1 \leq i \leq m$.
We then have
\[
(\d_X+\Delta_X)^2 = 
\sum_{i=1}^{m} \sum_{r=1}^{a'_{\sigma'(i)}} 
 \big( e_r(\leftX_{\sigma(i)}) - e_r(\rightX_{\sigma'(i)}) \big) \u_{i,r}
\]
and
\[
(\d_Y+\Delta_Y)^2 = 
\sum_{i=1}^{m} \sum_{r=1}^{a''_{\sigma''(i)}} 
\big( e_r(\rightX_{\sigma'(i)}) - e_r(\rightrightX_{\sigma''(i)}) \big) \u_{i,r}\, ,
\]
thus
\[
\begin{aligned}
\Big( (\d_X+\Delta_X)\hComp \Id_Y + \Id_X \hComp(\d_Y+\Delta_Y) \Big)^2 
&= (\d_X+\Delta_X)^2 \hComp \Id_Y + \Id_X \hComp (\d_Y+\Delta_Y)^2 \\
	& \qquad \qquad + [(\d_X+\Delta_X)\hComp \Id_Y , \Id_X \hComp(\d_Y+\Delta_Y)] \\
&= \sum_{i=1}^{m} \sum_{r=1}^{a''_{\sigma''(i)}} 
	\big( e_r(\leftX_{\sigma(i)}) - e_r(\rightX_{\sigma'(i)}) + 
		e_r(\rightX_{\sigma'(i)}) - e_r(\rightrightX_{\sigma''(i)}) \big) \u_{i,r} \\
&= \sum_{i=1}^{m} \sum_{r=1}^{a''_{\sigma''(i)}} 
	 \big( e_r(\leftX_{\sigma(i)}) - e_r(\rightrightX_{\sigma''(i)}) \big) \u_{i,r}
\end{aligned}
\]
as desired. Here, we use that 
$[(\d_X+\Delta_X)\hComp \Id_Y , \Id_X \hComp(\d_Y+\Delta_Y)]=0$ by \eqref{eq:MidInt}.
\end{proof}

Next, we note that the external tensor product $\boxtimes$ can be 
extended from $\CS(\SSBim)$ to $\YS(\SSBim)$ in a 
straightforward manner.  
On the level of objects, we have
\[
(\aa_1,\sigma_1) \boxtimes (\aa_2,\sigma_2) = 
(\aa_1 \boxtimes \aa_2, \sigma_1 \boxtimes \sigma_2),
\]
where $\sigma_1 \boxtimes \sigma_2 \in \symg_{\#(\aa_1)+\#(\aa_2)}$ 
is defined by
\[
\sigma_1 \boxtimes \sigma_2 \colon i \mapsto 
\begin{cases} 
\sigma_1(i) & \text{if } 1\leq i\leq \#(\aa_1) \\ 
\#(\aa_1)+\sigma_2(i-\#(\aa_1)) & \text{if } \#(\aa_1)+1\leq i\leq \#(\aa_1)+\#(\aa_2) \, ,
\end{cases}
\]
i.e. using the standard inclusion 
$\symg_{\#(\aa_1)} \times \symg_{\#(\aa_2)} 
\hookrightarrow \symg_{\#(\aa_1)+\#(\aa_2)}$.

On the level of $1$-morphisms, if $\tw_{\Delta_i}(X_i)$ are 1-morphisms 
$(\aa_i,\sigma_i) \to (\bb_i,\tau_i)$ for $i=1,2$,
then
\begin{equation}\label{eq:ETenDef}
\tw_{\Delta_1}(X_1) \boxtimes \tw_{\Delta_2}(X_2) 
:= \tw_{\Delta_1\boxtimes \Id_{X_2} + \Id_{X_1}\boxtimes \Delta_2}
(X_1\boxtimes X_2)
\end{equation}
where, analogous to \eqref{eq:HCompC}, we have 
$X_1\boxtimes X_2 = \bigoplus_{i,j} X_1^i \boxtimes X_2^j$.
The verification that the twist $\Delta_1\boxtimes \Id_{X_2} + \Id_{X_1}\boxtimes \Delta_2$ 
satisfies the curvature condition in Definition \ref{def:Y} is similar to the 
verification in Lemma \ref{lem:hComp}, thus we omit it.

Finally, the external tensor product of 2-morphisms in $\YS(\SSBim)$ is defined
via the $\k[\U]$-linear extension of the external tensor product in
$\CS(\SSBim)$. Explicitly, for homogeneous $f\colon X_1 \to Y_1$ and $g\colon
X_2 \to Y_2$ it is given by
\[
(f\boxtimes g)|_{X_1^i\boxtimes X_2^j} = (-1)^{i |g|} f|_{X_1^i}\boxtimes g|_{X_2^j}.
\]
Thus defined, $\boxtimes$ endows $\YS(\SSBim)$ with the structure of a monoidal dg 2-category.

\begin{rem} \label{rem:curvedlift}
There is a monoidal dg 2-functor $\YS(\SSBim)\rightarrow \CS(\SSBim)$ 
that forgets the permutations, sets all variables $\uvar_{i,r}$ equal to zero, 
and sends $\tw_{\Delta_X}(X) \mapsto X$. 
We refer to $\tw_{\Delta_X}(X)$ as a \emph{curved lift} or 
\emph{curved deformation} of the complex $X$, 
and similarly for $2$-morphisms in $\YS(\SSBim)$.
\end{rem}

Our next result allows us to upgrade homotopy equivalences between 
$1$-morphisms in $\CS(\SSBim)$ to equivalences in $\YS(\SSBim)$, 
provided we are given a curved lift of one of the $1$-morphisms.

\begin{prop}\label{prop:conservation} 
Suppose that $\tw_{\Delta_X}(X)$ is a 1-morphism in $\YS(\SSBim)$, 
and $f\colon X \to Y$ is a homotopy equivalence in $\CS(\SSBim)$, 
then there exists a curved lift $\tw_{\Delta_Y}(Y)$ of $Y$ 
and an induced homotopy equivalence 
$\tilde{f}\colon \tw_{\Delta_X}(X) \to \tw_{\Delta_Y}(Y)$.
\end{prop}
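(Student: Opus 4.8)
The plan is to apply homological perturbation theory, i.e. Proposition~\ref{prop:HPT}, taking as input the homotopy equivalence $f\colon X\to Y$ in $\CS(\SSBim)$ together with the curved Maurer--Cartan element $\Delta_X$, and then to verify that the output is actually a $1$-morphism in $\YS(\SSBim)$, i.e. that the perturbed twist $\Delta_Y$ has the correct curvature prescribed by \eqref{eq:Ucurv}. First I would record the data of the homotopy equivalence: a homotopy inverse $g\colon Y\to X$ and homotopies $k_X\in\End^{-1}_{\CS(\SSBim)}(X)$, $k_Y\in\End^{-1}_{\CS(\SSBim)}(Y)$ with $gf-\Id_X=[\d_X,k_X]$ and $fg-\Id_Y=[\d_Y,k_Y]$. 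Extending scalars to $\k[\U]$, these become a homotopy equivalence in $\CS_0(\SSBim;\k[\U])$. Since $\Delta_X$ is $\U$-irrelevant and the complexes in question are bounded, \eqref{eq:ComponentWt} shows $\Delta_X$ is nilpotent, hence $\Delta_X\circ k_X$ is nilpotent and $\Id_X+\Delta_X\circ k_X$ is invertible; this is exactly the hypothesis needed for Proposition~\ref{prop:HPT} (cf.\ Remark~\ref{rem:onesided}). Applying it yields $\tilde f$, $\tilde g$, and $\beta:=f\circ\Delta_X\circ\tilde g\in\End^1_{\CS(\SSBim)}(Y)\otimes\k[\U]$, a curved Maurer--Cartan element on $Y$ with $[\d_Y,\beta]+\beta^2 = F_2$, where $F_2 := (\d_X+\Delta_X)^2 = F$ is the curvature of $\Delta_X$.

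Next I would check that $\tw_{\beta}(Y)$ genuinely lies in $\YS(\SSBim)$, which requires two things. Writing $\beta = \Delta_Y + (\text{possible }\U\text{-degree-}0\text{ part})$, the $\U$-degree-zero component of $\beta$ is $f\circ(\Delta_X)_0\circ g_0 = 0$ since $\Delta_X$ is $\U$-irrelevant and $\tilde g$ reduces to $g$ modulo $\U_{>0}$; hence $\beta=\Delta_Y$ is itself $\U$-irrelevant, as required by Convention~\ref{conv:curved twists}. Second, the curvature of $\d_Y+\Delta_Y$ is by construction $F$, and since $F = \sum_{i,r}\big(e_r(\leftX_{\tau(i)})-e_r(\rightX_{\sigma(i)})\big)\u_{i,r}$ is given purely in terms of the boundary alphabets—which $f$, $g$, and all homotopies respect, because morphisms and homotopies in $\CS(\SSBim)$ are bimodule maps, hence $R^{\bb}$--$R^{\aa}$-linear—the element $F$ acts centrally and identically on both $X$ and $Y$. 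Thus $\Delta_Y$ satisfies the curvature condition of Definition~\ref{def:Y} with the same data $(\bb,\tau)$, $(\aa,\sigma)$, and $\tw_{\Delta_Y}(Y)$ is a $1$-morphism of $\YS(\SSBim)$. Finally, Proposition~\ref{prop:HPT} directly provides that $\tilde f\colon\tw_{\Delta_X}(X)\to\tw_{\Delta_Y}(Y)$ and $\tilde g$ are mutually inverse up to homotopy in $\CS_F(\SSBim;\k[\U])$, i.e.\ a homotopy equivalence in $\YS(\SSBim)$.

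The main obstacle I anticipate is the bookkeeping needed to confirm that $\beta$ has no spurious $\U$-degree-zero part and that its curvature is exactly $F$ rather than some twisted variant; this amounts to carefully tracking how the geometric-series inverse $(\Id_X+\Delta_X\circ k_X)^{-1} = \sum_{n\geq 0}(-\Delta_X\circ k_X)^n$ interacts with the $\U$-grading, using that $\Delta_X\in\End_{\CS(\SSBim)}(X)\otimes\k[\U]_{>0}$ raises $\U$-degree and $k_X$ preserves it. Everything else is a formal consequence of Proposition~\ref{prop:HPT} and the observation that the prescribed curvature $F$ depends only on boundary-alphabet actions, which are preserved under the ($\k[\U]$-linear extensions of) bimodule maps $f$, $g$ and the homotopies. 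One should also note that the resulting $\Delta_Y$ need not be \emph{strict} even if $\Delta_X$ is, as higher-order terms in the perturbation expansion can appear; but strictness is not part of the claim.
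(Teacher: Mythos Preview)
Your proposal is correct and follows essentially the same route as the paper: apply Proposition~\ref{prop:HPT} with $\alpha=\Delta_X$, using that $\Delta_X\circ k_X$ is nilpotent because $\Delta_X$ is $\U$-irrelevant and $X$ is bounded (via \eqref{eq:ComponentWt}). One small logical slip: the implication ``$\Delta_X$ is nilpotent, hence $\Delta_X\circ k_X$ is nilpotent'' is not valid in general; you should argue directly that $\Delta_X\circ k_X$ raises $\U$-degree (since $k_X$ is $\k[\U]$-linear and $\Delta_X\in\End(X)\otimes\k[\U]_{>0}$) and then invoke boundedness, exactly as the paper does. Your additional checks that $\beta$ is $\U$-irrelevant and carries the prescribed curvature $F$ are correct and make explicit what the paper leaves implicit.
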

\begin{proof}
We use Proposition \ref{prop:HPT}, and its notation. 
The present result is an immediate consequence, 
applied to $X \in \CS(\SSBim)$ with $\alpha=\Delta_X$, 
once we have confirmed that 
$\Delta_X \circ k_X \in \End_{\CS(\SSBim)[\U]}(X)$ is nilpotent.
To see the latter, 
note that since $\Delta_X = 0 \mod \langle u_{i,r} \rangle$, 
the same holds for $\Delta_X \circ k_X$. 
This implies that $(\Delta_X \circ k_X)^\ell = 0 \mod \langle u_{i,r} \rangle^\ell$.
Writing $(\Delta_X \circ k_X)^\ell$ in components as in \eqref{eq:f components}, 
this in turn implies that there exists $\ell \geq 0$ so that $(\Delta_X \circ k_X)^\ell = 0$ 
by equation \eqref{eq:ComponentWt}, since $X$ is bounded.
\end{proof}

\begin{lem}\label{lem:curvinginvertibles}
Suppose that $(\aa,\sigma)$ and $(\bb,\tau)$ are objects in $\YS(\SSBim)$ 
such that $\#(\aa) = \#(\bb)$ and $a_{\sigma(i)} = b_{\tau(i)}$ for all $1 \leq i \leq \#(\aa)$.
Let $\oone_{\bb} \hComp L \hComp \oone_{\aa}$ be an invertible $1$-morphism in $\CS(\SSBim)$
such that $f(\leftX_{\tau(i)}) \hComp \Id_L \simeq \Id_L\hComp f(\rightX_{\sigma(i)})$ 
for all $1 \leq i \leq \#(\aa)$ and all symmetric functions $f$,
then there exists a curved lift 
\[
\tw_{\Delta_L}({}_{\bb,\tau}L_{\aa,\sigma})
\]
of $L$ in $\YS(\SSBim)$ which is unique up to homotopy equivalence.
\end{lem}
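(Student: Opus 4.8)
The statement is an existence-and-uniqueness result for a curved lift of an invertible $1$-morphism $L$, so the natural strategy is to produce the curved Maurer--Cartan element $\Delta_L$ by hand from the given homotopies, and then invoke the perturbation-theoretic conservation result to obtain uniqueness. First I would fix, for each $1 \leq i \leq \#(\aa)$ and each $1 \leq r \leq a_{\sigma(i)}$, a homotopy $\Psi_{i,r} \in \End^{2r,-1}_{\CS(\SSBim)}(L)$ witnessing the hypothesis $[\d_L, \Psi_{i,r}] = e_r(\leftX_{\tau(i)}) - e_r(\rightX_{\sigma(i)})$, which exists since $L$ is invertible and the actions of the symmetric functions $e_r$ on the two sides are homotopic (the hypothesis gives this for all symmetric functions $f$, in particular for $f = e_r$). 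Then I would set $\Delta_L := \sum_{i,r} \Psi_{i,r}\, \uvar_{i,r}$, a $\U$-irrelevant (indeed linear, hence strict) element of $\End_{\CS(\SSBim)}(L) \otimes \k[\U]_{>0}$, and check that $\tw_{\Delta_L}(L)$ satisfies the curvature condition of Definition \ref{def:Y}, i.e. $(\d_L + \Delta_L)^2 = F|_L$ with $F$ as in \eqref{eq:Ucurv}.

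The curvature computation is where the main work lies, but it is the same computation as in Remark \ref{rmk:strict}: $(\d_L + \Delta_L)^2 = \d_L^2 + [\d_L, \Delta_L] + \Delta_L^2 = \sum_{i,r} [\d_L,\Psi_{i,r}]\uvar_{i,r} + \sum_{i,r;j,s} \Psi_{i,r}\Psi_{j,s}\, \uvar_{i,r}\uvar_{j,s}$. The first sum is exactly $F|_L$ by construction. For the second sum to vanish I need the $\Psi_{i,r}$ to pairwise anti-commute and square to zero; this is forced. Indeed, since $\uvar_{i,r}\uvar_{j,s}$ is central and (as variables) the $\uvar$'s commute, the coefficient of $\uvar_{i,r}\uvar_{j,s}$ in $\Delta_L^2$ is $\Psi_{i,r}\Psi_{j,s} + \Psi_{j,s}\Psi_{i,r}$ (for $(i,r)\neq(j,s)$) resp. $\Psi_{i,r}^2$ (for the diagonal term). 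But $[\d_L, \Psi_{i,r}\Psi_{j,s} + \Psi_{j,s}\Psi_{i,r}]$ computes to a multiplication operator by a symmetric function, and since $L$ is invertible such operators that are null-homotopic in degree $(2r+2s, -2)$ must actually vanish — more precisely, $\Hom^{2r+2s,-2}_{\CS(\SSBim)}(L,L) \cong \Hom^{2r+2s,-2}_{\CS(\SSBim)}(\oone, \oone)$ is concentrated in cohomological degree $0$ after taking homology, and an exact-degree-$(-2)$ chain cannot be a boundary there unless it is already a boundary of something degree $-1$ — hmm, this needs the standard argument that on invertible complexes closed negative-degree endomorphisms are null-homotopic, and a degree $-2$ null-homotopic endomorphism whose differential is a \emph{closed} degree $-1$ thing... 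The cleaner route: differentiate the desired equation. If I merely want \emph{existence}, I can argue by obstruction theory — build $\Delta_L$ order by order in $\U$-degree, at each stage the obstruction is a closed element of negative cohomological degree in $\End(L)$, hence null-homotopic since $L$ is invertible, giving the next term. This sidesteps checking that the linear $\Psi_{i,r}$ already work and is the robust argument.

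For \textbf{uniqueness up to homotopy equivalence}, I would apply Proposition \ref{prop:conservation}: given two curved lifts $\tw_{\Delta_L}(L)$ and $\tw_{\Delta'_L}(L)$, the identity $\Id_L \colon L \to L$ is a homotopy equivalence in $\CS(\SSBim)$, so Proposition \ref{prop:conservation} (starting from $\tw_{\Delta_L}(L)$ with $\alpha = \Delta_L$) produces \emph{some} curved lift $\tw_{\Delta''_L}(L)$ homotopy equivalent to it; it remains to see that $\Delta''_L$ can be matched with $\Delta'_L$, or more directly to run the gauge-equivalence argument: the difference $\Delta_L - \Delta'_L$ is a $\U$-irrelevant Maurer--Cartan-type perturbation, and one shows any two solutions of $(\d_L + \Delta)^2 = F|_L$ with $\Delta$ $\U$-irrelevant are gauge equivalent by an invertible degree-$(0,0)$ element of $\End(L)\otimes\k[\U]$ of the form $\Id + (\text{nilpotent})$, again built order-by-order in $\U$-degree using that closed negative-degree endomorphisms of the invertible complex $L$ are null-homotopic. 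The nilpotency needed at each stage follows exactly as in the proof of Proposition \ref{prop:conservation}, from boundedness of $L$ together with the weight bookkeeping \eqref{eq:ComponentWt}.

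\textbf{Main obstacle.} The delicate point is the homological input that makes both the obstruction-theory step and the uniqueness step go through: on an \emph{invertible} $1$-morphism $L$ in $\CS(\SSBim)$, every closed endomorphism of strictly negative cohomological degree is null-homotopic (equivalently, $\End_{\CS(\SSBim)}(L)$ has homology concentrated in cohomological degree $0$), and moreover the relevant null-homotopies can be chosen compatibly as one descends the $\U$-filtration. This is presumably standard in the Soergel-bimodule literature and follows from $L \hComp L^{-1} \simeq \oone$ together with the computation of $\End(\oone)$, but assembling it carefully — and making sure the order-by-order construction terminates, which it does by the weight constraint \eqref{eq:ComponentWt} since $L$ is bounded — is the technical heart of the argument. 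Everything else (the shape of $\Delta_L$, the curvature bookkeeping, nilpotency) is routine given the machinery of \S\ref{ss:pert} and \S\ref{ss:curved cxs} already in place.
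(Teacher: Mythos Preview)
Your existence argument via obstruction theory is essentially what the paper does (it cites the analogous argument in \cite{GH} and notes that the key input is $\End_{\CS(\SSBim)}(L)\simeq \End_{\CS(\SSBim)}(\oone_{\aa})$, exactly as you identify). Your initial attempt to take $\Delta_L$ linear in the $\Psi_{i,r}$ does not work without further argument, as you correctly observe, and the pivot to order-by-order construction is the right fix.

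For uniqueness, you take a genuinely different route from the paper. You propose a direct gauge-equivalence argument, building an intertwiner $\phi=\Id+\phi_{>0}$ order by order in $\U$-degree; this is correct, and in fact yields the stronger conclusion that two curved lifts are \emph{isomorphic} (not merely homotopy equivalent) --- the paper records this strengthening separately in Remark~\ref{rem:StrongUniqueness}. The paper's own proof is slicker: pick a curved lift $X^\vee=\tw_{\Delta''}(L^\vee)$ of a homotopy inverse of $L$; then for any two curved lifts $X,Y$ of $L$, both $X^\vee\hComp X$ and $X^\vee\hComp Y$ are curved lifts of complexes homotopy equivalent to $\oone_{\aa}$, and Proposition~\ref{prop:conservation} transfers each to some $\tw_{\Delta'''}(\oone_{\aa})$. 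But $\End^k_{\CS(\SSBim)}(\oone_{\aa})=0$ for $k<0$, so by the weight bookkeeping \eqref{eq:ComponentWt} any such $\Delta'''$ must vanish. Hence $X$ and $Y$ are both two-sided homotopy inverses of $X^\vee$, and uniqueness of inverses gives $X\simeq Y$. This avoids the inductive construction entirely, at the cost of only producing a homotopy equivalence rather than an isomorphism.
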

\begin{proof}
The existence statement follows via obstruction-theoretic 
arguments analogous to those in \cite[Section 2.10]{GH}. 
The crucial point is that the homotopy equivalence
\[
\End_{\CS(\SSBim)}(L) \simeq \End_{\CS(\SSBim)}(\oone_{\aa})
\]
implies that there are no obstructions to constructing the twist $\Delta_L$ 
using the given homotopies.

To show uniqueness of the lift up to homotopy equivalence, let $X =
\tw_{\Delta}(L)$ and $Y = \tw_{\Delta'}(L)$ be two curved lifts of $L$ and
$X^\vee = \tw_{\Delta''}(L^\vee)$ a curved lift of an (up to homotopy) inverse
of $L$. It follows that $X^{\vee}\hComp Y$ is a curved lift of a complex that is
homotopy equivalent to $\oone_{\aa}$ in $\CS(\SSBim)$. Hence,
Proposition~\ref{prop:conservation} implies the existence of a homotopy
equivalence
\[
X^{\vee}\hComp Y \simeq \tw_{\Delta'''}(\oone_{\aa})
\]
for some twist $\Delta'''$. 
However, since $\End_{\CS(\SSBim)}^k(\oone_{\aa})=0$ for $k < 0$, 
\eqref{eq:ComponentWt} implies that $\Delta'''=0$.
Thus $X^{\vee}\hComp Y \simeq \oone_{\aa,\sigma}$, 
which shows that $Y$ is a two-sided inverse to $X^\vee$ up to homotopy.  
The same argument shows that $X$ is a two-sided inverse to $X^\vee$ up 
to homotopy.  Thus $X\simeq Y$ by uniqueness of two-sided inverses.
\end{proof}

\begin{rem}\label{rem:StrongUniqueness}
Using obstruction-theoretic arguments, 
it is possible to strengthen the uniqueness statement in Lemma \ref{lem:curvinginvertibles} as follows.
Suppose, that $X \in \CS(\SSBim)$ is invertible and that $\tw_{\Delta}(X)$ and $\tw_{\Delta'}(X)$ are 
two curved lifts of $X$ in $\YS(\SSBim)$. 
Then, in fact, there is a (closed) \emph{isomorphism} $\phi \colon \tw_{\Delta}(X) \to \tw_{\Delta'}(X)$ 
of curved complexes of singular Soergel bimodules
such that $\phi = \id_X + \phi_{>0}$ with $\phi_{>0} \in \End_{\CS(\SSBim)}(X)\otimes\k[\U]_{>0}$.
\end{rem}

\subsection{Curved Rickard complexes}
\label{ss:curved rickard}

Our first goal is to define a lift of the two-strand Rickard complex $C_{a,b} \in \CS(\SSBim)$
from Definition~\ref{def:Rickardcx} to a \emph{curved Rickard complex}
$\YS C_{a,b} \in \YS(\SSBim)$. More precisely, writing $\aa:=(a,b)$, $\aa':=(b,a)$, 
and $\trans$ for the transposition in $\symg_2$, we define lifts of $C_{a,b}$ to
1-morphisms in 
${}_{\bb,\trans \circ \sigma}\YS(\SSBim)_{\aa,\sigma}$
for both possible permutations $\sigma\in \symg_2$. 
Pictorially, we denote the $2$-strand Rickard complex and its curved analogue by
\[
C_{a,b} = \left\llbracket
\begin{tikzpicture}[rotate=90,scale=.5,smallnodes,anchorbase]
	\draw[very thick,->] (1,-1) node[right,xshift=-2pt]{$b$} to [out=90,in=270] (0,1);
	\draw[line width=5pt,color=white] (0,-1) to [out=90,in=270] (1,1);
	\draw[very thick,->] (0,-1) node[right,xshift=-2pt]{$a$} to [out=90,in=270] (1,1);
\end{tikzpicture}
\right\rrbracket
\, , \quad
\YS C_{a,b} = \left\llbracket
\begin{tikzpicture}[rotate=90,scale=.5,smallnodes,anchorbase]
	\draw[very thick,->] (1,-1) node[right,xshift=-2pt]{$b$} to [out=90,in=270] (0,1);
	\draw[line width=5pt,color=white] (0,-1) to [out=90,in=270] (1,1);
	\draw[very thick,->] (0,-1) node[right,xshift=-2pt]{$a$} to [out=90,in=270] (1,1);
\end{tikzpicture}
\right\rrbracket_\YS
\]
respectively.

As in Definition \ref{def:Rickardcx}, we will abbreviate by writing
$C^k_{a,b}:=\F^{(a-k)}\E^{(b-k)}\oone_{a,b}$. 
This is the singular Soergel bimodule corresponding to the web
\[
C^k_{a,b} = 
\begin{tikzpicture}[rotate=90, anchorbase, smallnodes]
	\draw[very thick] (0,.25) to [out=150,in=270] (-.25,1) node[left,xshift=2pt]{$\leftX_1$};
	\draw[very thick] (.5,.5) to (.5,1) node[left,xshift=2pt]{$\leftX_2$};
	\draw[very thick] (0,.25) to node[left,yshift=-2pt,xshift=1pt]{$\leftM$} (.5,.5);
	\draw[very thick] (0,-.25) to node[below,yshift=0pt]{$\Fr$} (0,.25);
	\draw[very thick] (.5,-.5) to [out=30,in=330] node[above,yshift=-2pt]{$\B$} (.5,.5);
	\draw[very thick] (0,-.25) to node[right,yshift=-2pt,xshift=0pt]{$\rightM$} (.5,-.5);
	\draw[very thick] (.5,-1) node[right,xshift=-2pt]{$\rightX_2$} to (.5,-.5);
	\draw[very thick] (-.25,-1)node[right,xshift=-2pt]{$\rightX_1$} to [out=90,in=210] (0,-.25);
\end{tikzpicture}
\] 
where the labels on the edges are given by the sizes of the alphabets
$|\X_1|=|\rightX_2|=b$, $|\X_2|=|\rightX_1|=a$, $|\M|=a-k$, and $|\M'|=b-k$. 
This implies $|\B|=k$ and $|\Fr|=a+b-k$. 
Recall the morphisms $\chi_r^{\pm}\colon C^k_{a,b}\rightarrow C^{k\pm 1}_{a,b}$ 
from equations \eqref{eq:chi plus} and \eqref{eq:chi minus}.

\begin{prop}\label{prop:curved crossing}
Let $a,b\geq 0$, $\sigma\in \symg_2$, and
define $\aa:=(a_1,a_2):=(a,b)$, $\aa':=(b,a)$. 
The following diagram define a $1$-morphism $\YS C_{a,b}\in \YS(\SSBim)$ 
from $(\aa,\sigma)$ to $(\aa', \trans \circ\sigma)$, 
which is a curved lift of the Rickard complex $C_{a,b}$:
\begin{equation}\label{eq:curved crossing}
\begin{tikzpicture}[anchorbase]
\tikzstyle{every node}=[font=\small]
\node (a) at (0,0) {$\cdots $};
\node (b) at (4,0) {$\qdeg^{-k}\tdeg^k\begin{tikzpicture}[smallnodes,rotate=90,anchorbase,scale=.75]
	\draw[very thick] (0,.25) to [out=150,in=270] (-.25,1) node[left,xshift=2pt]{$b$};
	\draw[very thick] (.5,.5) to (.5,1) node[left,xshift=2pt]{$a$};
	\draw[very thick] (0,.25) to (.5,.5);
	\draw[very thick] (0,-.25) to (0,.25);
	\draw[very thick] (.5,-.5) to [out=30,in=330] node[above,yshift=-2pt]{$k$} (.5,.5);
	\draw[very thick] (0,-.25) to (.5,-.5);
	\draw[very thick] (.5,-1) node[right,xshift=-2pt]{$b$} to (.5,-.5);
	\draw[very thick] (-.25,-1)node[right,xshift=-2pt]{$a$} to [out=90,in=210] (0,-.25);
\end{tikzpicture}$};
\node (c) at (9,0) {$\qdeg^{-k-1}\tdeg^{k+1}\begin{tikzpicture}[smallnodes,rotate=90,anchorbase,scale=.75]
	\draw[very thick] (0,.25) to [out=150,in=270] (-.25,1) node[left,xshift=2pt]{$b$};
	\draw[very thick] (.5,.5) to (.5,1) node[left,xshift=2pt]{$a$};
	\draw[very thick] (0,.25) to (.5,.5);
	\draw[very thick] (0,-.25) to (0,.25);
	\draw[very thick] (.5,-.5) to [out=30,in=330] node[above,yshift=-2pt]{$k{+}1$} (.5,.5);
	\draw[very thick] (0,-.25) to (.5,-.5);
	\draw[very thick] (.5,-1) node[right,xshift=-2pt]{$b$} to (.5,-.5);
	\draw[very thick] (-.25,-1)node[right,xshift=-2pt]{$a$} to [out=90,in=210] (0,-.25);
\end{tikzpicture}$};
\node (d) at (13,0) {$\cdots$};
\path[->,>=stealth,shorten >=1pt,auto,node distance=1.8cm,
  thick]
([yshift=3pt] a.east) edge node {$\d$}	([yshift=3pt] b.west)
([yshift=-3pt] b.west) edge node {$\Delta$}	([yshift=-3pt] a.east)
([yshift=3pt] b.east) edge node {$\d$}	([yshift=3pt] c.west)
([yshift=-3pt] c.west) edge node {$\Delta$} ([yshift=-3pt] b.east)
([yshift=3pt] c.east) edge node {$\d$}	([yshift=3pt] d.west)
([yshift=-3pt] d.west) edge node {$\Delta$}	([yshift=-3pt] c.east);
\end{tikzpicture} \, .
\end{equation}
Here $\d,\Delta$ are given componentwise by $\d=\chi_0^+$ and
\[
\Delta|_{C_{a,b}^k} = 
\sum_{1\leq r\leq m<\infty}(-1)^{r+k-1}
\Big(e_{m-r}(\leftX_2)\u_{\sigma(1),m} - e_{m-r}(\rightX_2)\u_{\sigma(2),m}\Big) \chi_{r-1}^-
\]
(following Definition \ref{def:Y}, $\u_{\sigma(i),m}=0$ for $m>a_{i}$). 
An analogous construction defines a 1-morphism 
$\YS C^\vee_{a,b}\in \YS(\SSBim)$ from $(\aa,\sigma)$ to $(\aa', \trans \circ\sigma)$, 
which is a curved lift of the inverse Rickard complex $C^\vee_{a,b}$.
\end{prop}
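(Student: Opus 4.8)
The plan is to verify directly that the diagram \eqref{eq:curved crossing} defines a curved complex of the required type, i.e.\ that the total differential $\d^{\tot} = \d + \Delta$ with $\d = \chi_0^+$ and $\Delta$ as specified squares to the curvature element $F$ of \eqref{eq:Ucurv}, and then check that it is a genuine lift of $C_{a,b}$. First I would recall that the underlying uncurved complex $(C_{a,b},\d)$ with $\d=\chi_0^+$ is exactly the Rickard complex of Definition~\ref{def:Rickardcx}, so $\d^2 = 0$; this establishes that the ``$\U$-degree zero part'' is correct and that $\tw_{\Delta}(C_{a,b})$ is a curved lift in the sense of Remark~\ref{rem:curvedlift} once the curvature is confirmed. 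It then remains to compute $(\d^{\tot})^2 = \d^2 + [\d,\Delta] + \Delta^2 = [\d,\Delta] + \Delta^2$ and show it equals
\[
F|_{C_{a,b}} = \sum_{r\geq 1}\Big(e_r(\leftX_2) - e_r(\rightX_2)\Big)\u_{\sigma(1),r} + \sum_{r\geq 1}\Big(e_r(\leftX_1) - e_r(\rightX_1)\Big)\u_{\sigma(2),r},
\]
noting that $a_{\sigma(1)}$ and $a_{\sigma(2)}$ reorder the two strand-alphabets according to $\sigma$, and that on $C_{a,b}$ the left/right boundary alphabets satisfy $\leftX_1 = \rightX_2$ (size $b$) and $\leftX_2 = \rightX_1$ (size $a$) only up to the crossing, so the monodromy is genuinely nontrivial.

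The heart of the computation is a relation in the extended graphical calculus of \cite{KLMS} among the morphisms $\chi_r^\pm$. Specifically, I would establish that for each $r$ the composite $\chi_r^+ \chi_{r-1}^- + \chi_{r-1}^-\chi_r^+$ (suitably interpreted as a map $C_{a,b}^k \to C_{a,b}^k$) is given by multiplication by a dot $x^{r}$ on the appropriate edge (``bubble slides''), and similarly that $\chi_r^-\chi_s^-$ terms either vanish or telescope. Feeding this into $[\d,\Delta]$, where $\d = \chi_0^+$, one gets a sum over $r,m$ of terms $(-1)^{r+k-1}(e_{m-r}(\leftX_2)\u_{\sigma(1),m} - e_{m-r}(\rightX_2)\u_{\sigma(2),m})\cdot(\chi_0^+\chi_{r-1}^- + \chi_{r-1}^-\chi_0^+)$, and the graphical relation converts the alternating inner sum in $\Delta$ together with $[\d,\chi^-_{r-1}]$ into the difference of power-sum-type, hence elementary-symmetric, expressions in the boundary alphabets. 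The combinatorial identity that makes this collapse is precisely the generating-function manipulation $H(t)E(-t) = 1$, i.e.\ \eqref{eq:wellknown}: the factor $\sum_m (-1)^{m-r} e_{m-r}(\X) (\,\cdot\,x^{\text{something}})$ is an $h$-to-$e$ conversion that telescopes the intermediate terms, leaving only $e_r(\leftX)\u - e_r(\rightX)\u$ on the two sides. I would also need to check $\Delta^2 = 0$ on the nose (not merely contributing to curvature), which follows because $\Delta$ is built from the pairwise anti-commuting, square-zero family $\{\chi^-_r\}$ with scalar (symmetric-function) coefficients that $\hComp$-commute with them, so the relevant signs work out; this is the ``strict $1$-morphism'' phenomenon noted in Remark~\ref{rmk:strict}.

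Having verified the positive crossing, the claim about $\YS C^\vee_{a,b}$ is handled by the analogous construction: one takes the uncurved inverse Rickard complex $C^\vee_{a,b}$ of Definition~\ref{def:Rickardcx} with $\d = \chi_0^-$, and the twist obtained from \eqref{eq:curved crossing} by applying the duality $(-)^\vee$ (equivalently, the symmetry exchanging $\E$ and $\F$ and reversing cohomological degree), using Proposition~\ref{prop:dualityonMS} to identify the dualized morphisms. The same extended-graphical-calculus relations and the same symmetric-function telescoping \eqref{eq:wellknown} then show that the dual total differential squares to the curvature element of the appropriate $\Hom$-category, with the alphabets attached to $(\aa,\sigma) \to (\aa',\trans\circ\sigma)$ exactly as in the positive case. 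Alternatively, one can observe that $\YS C_{a,b} \hComp (\text{something})$ should be homotopy-invertible and invoke Lemma~\ref{lem:curvinginvertibles} to produce and characterize $\YS C^\vee_{a,b}$ abstractly, but the explicit description is what is needed downstream, so I would spell it out.

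The main obstacle I anticipate is the bookkeeping in the graphical-calculus computation of $\chi^+_0 \chi^-_{r-1} + \chi^-_{r-1}\chi^+_0$ and, more subtly, getting all the signs in $\Delta$ and in the Koszul rule for $[\d, -]$ to conspire so that the $h$-versus-$e$ conversion produces exactly $e_r(\leftX) - e_r(\rightX)$ with no stray terms. The sign $(-1)^{r+k-1}$ in the definition of $\Delta|_{C^k_{a,b}}$ is clearly calibrated for this, but checking it honestly requires tracking the signs $\CQGsgn{(-1)^{b-k}}$ and $\CQGsgn{(-1)^{a+b+k+l-1}}$ attached to $\chi^+_r$ and $\chi^-_r$ in \eqref{eq:chi plus}--\eqref{eq:chi minus} through the composition, together with the Koszul signs from the bidegrees recorded after those equations. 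I expect this is the step where most of the real work lies; the rest is formal once the key bubble-slide identity is in hand.
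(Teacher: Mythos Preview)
Your approach is essentially the same as the paper's: decompose $(\d^{\tot})^2 = [\d,\Delta] + \Delta^2$, show $\Delta^2=0$ from anticommutation of the $\chi^-_r$ in thick calculus, and compute $[\d,\Delta]$ via a commutator identity plus an $h$-to-$e$ conversion. The paper packages this more cleanly by first introducing $\Theta_r := (-1)^k\chi^-_{r-1}$ and proving (via the thick calculus, as you anticipate) that $[\d,\Theta_r] = h_r(\leftX_2-\rightX_1)$, then setting $\Psi_m := \sum_r (-1)^{r-1} e_{m-r}(\leftX_2)\Theta_r$ and $\Psi'_m := \sum_r (-1)^{r-1} e_{m-r}(\rightX_2)\Theta_r$ so that $[\d,\Psi_m] = e_m(\leftX_2)-e_m(\rightX_1)$ and $[\d,\Psi'_m] = e_m(\rightX_2)-e_m(\leftX_1)$ via identity \eqref{eq:somerelations0}; the proof is then a one-line check that $\Delta = \sum_m(\Psi_m u_{\sigma(1),m} - \Psi'_m u_{\sigma(2),m})$.

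Two corrections to your sketch. First, the commutator $[\chi_0^+,\chi_{r-1}^-]$ (with the sign $(-1)^k$) does \emph{not} give ``a dot $x^r$'' but rather $h_r(\leftX_2-\rightX_1)$; this is exactly what makes the telescoping via $H(t)E(-t)=1$ produce $e_m(\leftX_2)-e_m(\rightX_1)$ rather than something in a single variable. Second, your displayed curvature $F$ pairs the wrong alphabets: the strands connect $\leftX_2$ to $\rightX_1$ (size $a$) and $\leftX_1$ to $\rightX_2$ (size $b$), so the correct curvature is $\sum_r(e_r(\leftX_2)-e_r(\rightX_1))u_{\sigma(1),r} + \sum_r(e_r(\leftX_1)-e_r(\rightX_2))u_{\sigma(2),r}$, not what you wrote. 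Your parenthetical ``$\leftX_1=\rightX_2$ \ldots\ only up to the crossing'' has this backwards: these are the alphabets on the two ends of the \emph{same} strand, and the curvature records precisely the failure of $e_r$ to agree there.
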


The proof of this proposition is found below, 
but first we make some observations.
The rightward differential $\d$ is the usual differential on the Rickard complex
$C_{a,b}$.  Meanwhile the leftward differential $\Delta$ is linear in the
variables $\u_{\sigma(1),m},\u_{\sigma(2),m}$.  
Thus, \eqref{eq:curved crossing} defines 
a \emph{strict} curved deformation of $C_{a,b}$, 
in the sense of Remarks~\ref{rmk:strict} and \ref{rem:curvedlift}.

\begin{definition}\label{def:crossing Theta} 
For each integer $r\geq 1$, 
let $\Theta_r$ denote the degree $\qdeg^{2r} \tdeg\inv$ endomorphism of $C_{a,b}$
which is given component-wise on $C_{a,b}^k$ by the morphisms
$(-1)^{k}\chi^-_{r-1}$, where $\chi_{r-1}^-$ is the morphism from \eqref{eq:chi minus}.
\end{definition}

\begin{lem}\label{lem:crossing Theta} 
The endomorphisms $\Theta_r\in
\End_{\CS(\SSBim)}(C_{a,b})$ satisfy
\[
\Theta_i^2 =0 \, , \quad 
\Theta_i \Theta_j+\Theta_j \Theta_i=0 \, , \quad
[\d,\Theta_r] = h_r(\X_2-\X_1') \, .
\]
\end{lem}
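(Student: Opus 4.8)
\textbf{Proof plan for Lemma \ref{lem:crossing Theta}.}

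The plan is to reduce all three identities to known relations in the extended graphical calculus of the categorified quantum group $\cal{U}_Q(\slnn{2})$ (equivalently, relations in the $\gln$ foam $2$-category), since the morphisms $\chi_r^{\pm}$ are, by construction, images of the corresponding generators under the $2$-functor discussed in \S\ref{ss:ssbim}. First I would record that, componentwise on $C_{a,b}^k$, the endomorphism $\Theta_r$ is (up to the sign $(-1)^k$) the composite $C_{a,b}^k \xrightarrow{\chi_{r-1}^-} C_{a,b}^{k+1}$, which in perpendicular graphical calculus is the ``counterclockwise cap'' carrying a dot decorated by $x^{r-1}$ — see \eqref{eq:perpchi}. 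Thus $\Theta_i \Theta_j$ is, on each component, a concatenation of two such capped morphisms $C_{a,b}^k \to C_{a,b}^{k+2}$, and the signs $(-1)^k(-1)^{k+1} = -1$ combine so that the anticommutation relations $\Theta_i^2 = 0$ and $\Theta_i\Theta_j + \Theta_j\Theta_i = 0$ become the assertion that these two-fold cap composites are \emph{symmetric} in the dot labels $i-1 \leftrightarrow j-1$. That symmetry is exactly the relation allowing adjacent dots on parallel $\slnn{2}$-strands (here on the single $1$-labeled edge threaded through consecutive Rickard terms) to be freely permuted; I would cite \cite{KLMS} (or the foam relations of \cite{QR}, which \cite[Appendix A]{HRW1} identifies with singular Bott--Samelson bimodules in the $n\to\infty$ limit) for this. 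In particular the $i=j$ case gives $\Theta_i^2=0$ because $x^{i-1}x^{i-1}$ appears with opposite signs.

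Next I would turn to the commutator identity $[\d,\Theta_r] = h_r(\X_2 - \X_1')$. Since $\d = \chi_0^+$ componentwise (increasing $k$) and $\Theta_r$ is $(-1)^k\chi_{r-1}^-$ (also increasing $k$, but landing one step up), the supercommutator $[\d,\Theta_r] = \d\Theta_r + \Theta_r\d$ is an endomorphism of $C_{a,b}$ of degree $\qdeg^{2r}\tdeg^0$ that is given componentwise on $C_{a,b}^k$ by $(-1)^k(\chi_0^+ \chi_{r-1}^- ) + (-1)^{k+1}(\chi_{r-1}^- \chi_0^+) \colon C_{a,b}^k \to C_{a,b}^k$ — note the relative sign is $-1$ because of the $(-1)^k$ versus $(-1)^{k+1}$, so this is a genuine \emph{difference} $\chi_0^+\chi_{r-1}^- - \chi_{r-1}^-\chi_0^+$. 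In the $\slnn{2}$-calculus, $\chi_0^+$ and $\chi_{r-1}^-$ are the two zig-zag (cup then cap, resp. cap then cup) morphisms on the $1$-labeled edge, with one carrying $x^{r-1}$, so their difference is governed precisely by the \emph{$\mathfrak{sl}_2$ curl/bubble relations} expressing a curl in terms of dotted identity morphisms: a single $\slnn{2}$-curl on an edge separating regions labeled $k$ and $k+1$ (i.e. with the $\leftX_2$-alphabet of size $a$ on one side and the $\rightX_1'$-alphabet of size $a$ on the other, through the web $C_{a,b}^k$) evaluates to a symmetric-function decoration. Keeping careful track of which alphabets sit on which side of the relevant edge in the web $C_{a,b}^k$ of \eqref{eq:Rickardweb}, the curl relation produces exactly $h_r$ of the formal difference of those two alphabets; after matching conventions one sees the edge in question separates (the alphabet on the over-strand) from $\X_1'$, and the computation collapses to $h_r(\X_2 - \X_1')$, independently of $k$ (the $k$-dependent signs having cancelled in the commutator). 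This is the same kind of computation underlying \cite[Proposition 5.7]{RW} and the homotopies of \HRWDotslide{} in \cite{HRW1}, so I would invoke those as the model.

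The main obstacle will be bookkeeping: getting the signs, the dot-label shifts ($r$ versus $r-1$), and — most delicately — the identification of \emph{which} alphabets appear in the curl evaluation exactly right, so that the output is $h_r(\X_2 - \X_1')$ rather than, say, $h_r(\X_1' - \X_2)$ or an $e$- rather than $h$-polynomial. The relevant curl relation naturally outputs a complete-homogeneous symmetric function in a difference of the two alphabets flanking the dotted edge (cf. the generating-function identities \eqref{eq:HE2} and Lemma \ref{lem:somerelations}), and one must check that the orientation of the cap in $\chi_r^-$ versus the cup in $\chi_0^+$ makes $\X_2$ the ``positive'' alphabet and $\X_1'$ the ``negative'' one. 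Once the $r=1$ case is pinned down by direct inspection of the perpendicular-calculus picture (where $h_1 = p_1 = e_1$, so there is no ambiguity of symmetric-function type), the general $r$ case follows from the extended-calculus relation for a dotted curl, which is a standard input. Everything else — nilpotency and anticommutativity — then follows formally from the dot-slide symmetry as described above.
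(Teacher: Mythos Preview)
Your approach matches the paper's: both reduce the first two relations to a computation in the $\USd(\glnn{2})$ thick calculus of \cite{KLMS}, and both point to \cite[Proposition 5.7]{RW} (and its generalization \cite[\HRWDotslide]{HRW1}) for the commutator identity. The paper's own proof is itself only a sketch along these lines.

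Two bookkeeping points deserve correction. First, $\chi_{r-1}^-$ maps $C_{a,b}^k \to C_{a,b}^{k-1}$, not $C_{a,b}^{k+1}$ (see \eqref{eq:chi minus}); this is why $\wt(\Theta_r)=\qdeg^{2r}\tdeg^{-1}$ and why $[\d,\Theta_r]$ has $\tdeg$-degree zero, so your phrasing ``also increasing $k$'' is internally inconsistent with your own (correct) degree claim. Second, after the signs $(-1)^k(-1)^{k\mp 1}=-1$ are absorbed, the relation $\Theta_i\Theta_j+\Theta_j\Theta_i=0$ reduces to \emph{anti}symmetry of the composites $\chi_{i-1}^-\chi_{j-1}^-$ in the dot labels, not symmetry; this is not the ``dots on parallel strands freely permute'' relation but rather the thick-calculus fact that two stacked thin caps factor through a thickness-$2$ cap via the nilHecke idempotent, which antisymmetrizes. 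These are precisely the sign pitfalls you anticipated, and once corrected the plan goes through as the paper indicates.
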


\begin{proof} 
The first two relations follow from the definition of $\Theta_r$ and an easy
computation in the $\USd(\glnn{2})$ thick calculus \cite{KLMS}. The proof of the
third relation is a computation analogous to the one appearing in the proof of
\cite[Proposition 5.7]{RW}. We omit the details here (they appear in
\cite[\HRWDotslide]{HRW1}, where a more-general result is established).
\end{proof}

We now pass from the homotopies $\Theta_r$, which are related to
$h\Delta$-curvature \eqref{eq:h(X-X)}, to homotopies related to the 
$\Delta e$-curvature \eqref{eq:e(X)-e(X)}.

\begin{lem}\label{lemma:crossing Psi} 
For each $m\geq 1$, let $\Psi_m,\Psi_m'\in \End_{\CS(\SSBim)}(C_{a,b})$ 
be given by
\[
\Psi_m := \sum_{r=1}^m (-1)^{r-1}e_{m-r}(\leftX_2)\Theta_r
\, , \quad
\Psi_m' := \sum_{r=1}^m (-1)^{r-1}e_{m-r}(\rightX_2)\Theta_r \, .
\]
These endomorphisms satisfy
\begin{gather*}
\Psi_i^2 =0={\Psi_i'}^2 
\, , \quad 
\Psi_i \Psi_j+\Psi_j \Psi_i=0
\, , \quad 
\Psi_i \Psi_j'+\Psi_j' \Psi_i=0
\, , \quad 
\Psi_i' \Psi_j'+\Psi_j' \Psi_i'=0 \, , \\
[\d,\Psi_m] = e_m(\leftX_2)-e_m(\rightX_1)
\, , \quad 
[\d,\Psi_m'] = e_m(\rightX_2)-e_m(\leftX_1) \, .
\end{gather*}
\end{lem}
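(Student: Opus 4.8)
The plan is to derive the six identities for $\Psi_m, \Psi_m'$ purely formally from the corresponding identities for the $\Theta_r$ established in Lemma~\ref{lem:crossing Theta}, together with the Newton-type relations among symmetric functions from \S\ref{ss:symfctns}. First I would observe that, since $e_{m-r}(\leftX_2)$ and $e_{m-r}(\rightX_2)$ are (central) scalars — they act by decoration endomorphisms on the boundary alphabets and commute with everything in $\End_{\CS(\SSBim)}(C_{a,b})$ — the quadratic relations are immediate: for instance
\[
\Psi_i \Psi_j = \sum_{r=1}^i\sum_{s=1}^j (-1)^{r+s-2} e_{i-r}(\leftX_2) e_{j-s}(\leftX_2)\, \Theta_r \Theta_s,
\]
and swapping $(r,s)$ and using $\Theta_r\Theta_s = -\Theta_s\Theta_r$ (and $\Theta_r^2=0$) shows $\Psi_i\Psi_j + \Psi_j\Psi_i = 0$; the identical manipulation with one or both $e$'s replaced by $e(\rightX_2)$ gives $\Psi_i^2 = {\Psi_i'}^2 = 0$, $\Psi_i\Psi_j' + \Psi_j'\Psi_i = 0$, and $\Psi_i'\Psi_j' + \Psi_j'\Psi_i' = 0$. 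The only subtlety is making sure the scalars genuinely commute past the $\Theta$'s with no Koszul sign, which holds because they are even (Soergel-degree $\qdeg^{2(m-r)}$, cohomological degree $0$); I would state this explicitly.

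The substantive point is the homotopy relation $[\d,\Psi_m] = e_m(\leftX_2) - e_m(\rightX_1)$. Since $\d$ commutes with the scalar coefficients, linearity of the (super)commutator gives
\[
[\d,\Psi_m] = \sum_{r=1}^m (-1)^{r-1} e_{m-r}(\leftX_2)\, [\d,\Theta_r] = \sum_{r=1}^m (-1)^{r-1} e_{m-r}(\leftX_2)\, h_r(\leftX_2 - \rightX_1),
\]
using the third identity of Lemma~\ref{lem:crossing Theta}. Now I would invoke the symmetric-function identity \eqref{eq:somerelations0} of Lemma~\ref{lem:somerelations}, namely $e_k(\X) - e_k(\rightX) = \sum_{j=1}^k (-1)^{j-1} e_{k-j}(\X) h_j(\leftX - \rightX)$, applied with $\X \mapsto \leftX_2$, $\rightX \mapsto \rightX_1$ (note $|\leftX_2| = |\rightX_1| = a$, so this is legitimate), and $k = m$: the right-hand side of my commutator computation is exactly $e_m(\leftX_2) - e_m(\rightX_1)$. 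The computation for $\Psi_m'$ is the mirror image: $[\d,\Psi_m'] = \sum_{r=1}^m (-1)^{r-1} e_{m-r}(\rightX_2) h_r(\leftX_2 - \rightX_1)$, and I would rewrite $h_r(\leftX_2 - \rightX_1) = h_r(\rightX_2 - \rightX_1) + \big(h_r(\leftX_2-\rightX_1) - h_r(\rightX_2-\rightX_1)\big)$... actually the cleaner route is to apply the alphabet-swap symmetry noted right after Lemma~\ref{lem:somerelations}: the algebra involution swapping the roles of the "over" and "under" strands (equivalently $\leftX_2 \leftrightarrow \rightX_2$, $\leftX_1 \leftrightarrow \rightX_1$) carries $\Theta_r$ to the analogous homotopy for $h_r(\rightX_2 - \leftX_1)$, so the same formal computation yields $[\d,\Psi_m'] = e_m(\rightX_2) - e_m(\leftX_1)$ directly from \eqref{eq:somerelations0} with $\X \mapsto \rightX_2$, $\rightX \mapsto \leftX_1$.

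I expect the main obstacle to be purely bookkeeping: keeping the four alphabets $\leftX_1, \leftX_2, \rightX_1, \rightX_2$ straight (in particular which strand "over/under" each $e_{m-r}$ and each $h_r$ refers to, and that the pairs appearing in \eqref{eq:somerelations0} have equal cardinality), and confirming there are no hidden signs in how the scalar decoration endomorphisms interact with the differential and with the $\chi^-$ morphisms inside $\Theta_r$. Since Lemma~\ref{lem:crossing Theta} already packages the one genuinely hard input — the "dot slide" computation $[\d,\Theta_r] = h_r(\leftX_2 - \rightX_1)$ — the proof of the present lemma is essentially the two-line linearity argument plus one citation of \eqref{eq:somerelations0}, and I would write it in roughly that form.
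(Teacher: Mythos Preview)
Your argument for the four quadratic relations and for $[\d,\Psi_m]$ is correct and is exactly what the paper does: linearity plus Lemma~\ref{lem:crossing Theta}, then one application of \eqref{eq:somerelations0}.

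The gap is in your handling of $[\d,\Psi_m']$. The ``alphabet-swap symmetry'' you invoke is not a well-defined operation on $\End_{\CS(\SSBim)}(C_{a,b})$ that does what you want: there is no involution of $C_{a,b}$ that swaps $\leftX_2 \leftrightarrow \rightX_2$ and carries $\Theta_r$ to some mirror homotopy. More to the point, $\Psi_m'$ is built from the \emph{same} $\Theta_r$ as $\Psi_m$, so $[\d,\Psi_m'] = \sum_r (-1)^{r-1} e_{m-r}(\rightX_2)\, h_r(\leftX_2 - \rightX_1)$ with the \emph{same} $h_r(\leftX_2 - \rightX_1)$ as before. Applying \eqref{eq:somerelations0} with $\X \mapsto \rightX_2$, $\rightX \mapsto \leftX_1$ gives $e_m(\rightX_2) - e_m(\leftX_1) = \sum_r (-1)^{r-1} e_{m-r}(\rightX_2)\, h_r(\rightX_2 - \leftX_1)$, which has the wrong virtual alphabet inside $h_r$.

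The missing ingredient, and the one the paper uses, is the bimodule constraint: since $C_{a,b}$ lies in ${}_{b,a}\SSBim_{a,b}$, symmetric functions in the total alphabet $\leftX_1 + \leftX_2$ act the same as symmetric functions in $\rightX_1 + \rightX_2$, so $\leftX_2 - \rightX_1 = \rightX_2 - \leftX_1$ as virtual alphabets acting on $C_{a,b}$. Hence $h_r(\leftX_2 - \rightX_1) = h_r(\rightX_2 - \leftX_1)$ as endomorphisms, and now the two displayed sums match. This is a one-line fix, but it is the actual content of the $\Psi_m'$ computation and your symmetry shortcut does not supply it.
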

\begin{proof}
The first four relations are immediate from Lemma \ref{lem:crossing Theta}.
For the penultimate relation, we compute
\[
[\d,\Psi_m] = \sum_{r=1}^m (-1)^{r-1}e_{m-r}(\leftX_2)[\d,\Theta_r] \\
	= \sum_{r=1}^m (-1)^{r-1}e_{m-r}(\X_2)h_r(\leftX_2-\rightX_1) 
	\stackrel{\eqref{eq:somerelations0}}{=} e_m(\leftX_2)-e_m(\rightX_1) \, .
\]
For the last relation, 
we first note that
\[
e_m(\X_2')-e_m(\leftX_1) 
\stackrel{\eqref{eq:somerelations0}}{=} \sum_{r=1}^m (-1)^{r-1} e_{m-r}(\rightX_2) h_r(\rightX_2-\leftX_1) 
= \sum_{r=1}^m (-1)^{r-1} e_{m-r}(\rightX_2) h_r(\leftX_2-\rightX_1) \, .
\]
Here, we use that $\rightX_2 - \leftX_1 = \leftX_2 - \rightX_1$ 
when acting on ${}_{b,a}\SSBim_{a,b}$.
We then have
\[
[\d,\Psi_m'] = \sum_{r=1}^m (-1)^{r-1}e_{m-r}(\rightX_2)[\d,\Theta_r]
= \sum_{r=1}^m (-1)^{r-1}e_{m-r}(\rightX_2)h_r(\leftX_2-\rightX_1)
= e_m(\rightX_2)-e_m(\leftX_1) \, . \qedhere
\]
\end{proof}

\begin{rem}\label{rem:Psiou}
The homotopies $\Psi_k^{\mathsf{o}}$ and $\Psi_k^{\mathsf{u}}$ appearing 
%in \eqref{eq:totdiff} 
in \S \ref{ss:curvRick} are $\Psi_k$ and $-\Psi'_k$ from 
Lemma \ref{lemma:crossing Psi}, respectively.
\end{rem}

Lemma \ref{lemma:crossing Psi} implies that $\Psi_m$ is closed for $m>a$, since
$\leftX_2$ and $\rightX_1$ have cardinality $a$. Similarly, $\Psi_m'$ is closed
for $m>b$. The following says that these closed endomorphisms are in fact zero.
Analogously, it shows that $\Theta_m$ is ``redundant'' for $m>a$ or $m>b$.

\begin{proposition}\label{prop:Redundancy}
For $r>0$, we have
\[
\Psi_{a+r} = 0 \, ,\quad \Psi_{b+r} = 0
\]
and
\[
\Theta_{a+r} = \sum_{k=1}^{a} (-1)^{a-k} \mathfrak{s}_{(r-1|{a-k})}(\leftX_2) \Theta_k
\, , \quad
\Theta_{b+r} = \sum_{k=1}^{b} (-1)^{b-k} \mathfrak{s}_{(r-1|{b-k})}(\rightX_2) \Theta_k \, .
\]
\end{proposition}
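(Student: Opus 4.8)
The key observation is that both assertions follow from the corresponding statements about the symmetric-function identities proved in \S\ref{sec:symfns} combined with the relation $[\d,\Theta_r] = h_r(\leftX_2 - \rightX_1)$ from Lemma~\ref{lem:crossing Theta}. First I would establish the claim $\Psi_{a+r}=0$. By definition $\Psi_{a+r} = \sum_{s=1}^{a+r}(-1)^{s-1}e_{a+r-s}(\leftX_2)\Theta_s$. The only obstruction to this vanishing is that the sum runs up to $s=a+r$, whereas I want to express it in terms of $\Theta_1,\dots,\Theta_a$. The tool to do this is $h$-reduction: since $\leftX_2$ and $\rightX_1$ both have cardinality $a$, Corollary~\ref{cor:h of difference reduction} expresses $h_{a+r}(\leftX_2 - \rightX_1)$ (for $r\geq 1$) as a $\Sym(\leftX_2)$-linear combination of $h_i(\leftX_2-\rightX_1)$ with $1\leq i\leq a$; applying $[\d,-]$ and comparing with Lemma~\ref{lemma:crossing Psi}, one sees that the natural candidate for $\Theta_{a+r}$ is exactly the stated formula. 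Concretely, I would \emph{define} $\Theta_{a+r}$ for $r\geq 1$ inductively by the formula in the statement and check, using Lemma~\ref{lemma:hook rewrite} and \eqref{eq:wellknown}, that $[\d,\Theta_{a+r}] = h_{a+r}(\leftX_2-\rightX_1)$ continues to hold; this is just the reduction identity of Corollary~\ref{cor:h of difference reduction} with the bracket applied.

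Granting the $\Theta$-formula, the vanishing of $\Psi_{a+r}$ is then a purely combinatorial identity in symmetric functions: substituting $\Theta_{a+j} = \sum_{k=1}^a (-1)^{a-k}\mathfrak{s}_{(j-1\mid a-k)}(\leftX_2)\Theta_k$ into $\Psi_{a+r} = \sum_{s=1}^{a+r}(-1)^{s-1}e_{a+r-s}(\leftX_2)\Theta_s$ and collecting the coefficient of each $\Theta_k$ ($1\leq k\leq a$), one needs
\[
(-1)^{k-1}e_{a+r-k}(\leftX_2) + \sum_{j=1}^{r}(-1)^{a+j-1}e_{r-j}(\leftX_2)(-1)^{a-k}\mathfrak{s}_{(j-1\mid a-k)}(\leftX_2) = 0.
\]
This is exactly the $|\Y|=0$ case of Lemma~\ref{lemma:h reduction} (i.e.\ the identity $h_{c+r}(\X)=\sum_{i+j=c}(-1)^i\mathfrak{s}_{(r-1\mid i)}(\X)h_j(\X)$ after applying the involution $h_k\leftrightarrow e_k$, $\mathfrak{s}_{(i\mid j)}\mapsto\mathfrak{s}_{(j\mid i)}$), so it holds. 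The identity $\Psi_{b+r}'=0$ is proved by the same argument with $\leftX_2$ replaced by $\rightX_2$ and $a$ by $b$, using that $\rightX_2$ and $\leftX_1$ have cardinality $b$ and invoking the second reduction formula for $\Theta_{b+r}$.

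There is one genuine subtlety to dispatch: I have been cavalier about whether the two formulas for $\Theta_{a+r}$ and $\Theta_{b+r}$ are \emph{consistent} when both $r>0$ with $a+r$ and $b+r'$ coincide, or more precisely whether $\Theta_m$ is well-defined for all $m$ by the recursions. In fact $\Theta_m = (-1)^k\chi_{m-1}^-$ on $C_{a,b}^k$ is \emph{already defined} for all $m\geq 1$ by Definition~\ref{def:crossing Theta} --- the content of Proposition~\ref{prop:Redundancy} is that this pre-existing endomorphism \emph{satisfies} the stated reduction identities, not that we are free to define it. So the correct logical order is: (i) recall $\Theta_m$ is defined for all $m\geq 1$; (ii) prove the reduction identities for $\Theta_{a+r}$ and $\Theta_{b+r}$ directly, by reducing to an identity among the morphisms $\chi^-_{r-1}$ in the thick $\USd(\glnn 2)$-calculus of \cite{KLMS} --- these morphisms themselves satisfy hook-Schur reduction relations, which is the $2$-categorical shadow of $h$-reduction; alternatively, since $\Theta_m$ is determined up to homotopy by $[\d,\Theta_m]=h_m(\leftX_2-\rightX_1)$ together with a degree count, and since the right-hand side of the claimed identity has the same image under $[\d,-]$ by Corollary~\ref{cor:h of difference reduction}, and since $\End^{<0}_{\CS(\SSBim)}(C_{a,b})$ vanishes in the relevant degree, the difference is forced to be zero; (iii) deduce $\Psi_{a+r}=0=\Psi_{b+r}'$ from the symmetric-function identity above. \textbf{The main obstacle} I anticipate is step (ii): verifying that the explicit morphisms $\chi^-_{r-1}$ satisfy the hook-Schur reduction, which requires either a direct computation in the extended graphical calculus of \cite{KLMS} (potentially lengthy) or a clean uniqueness/degree argument to bypass it; I would pursue the latter, since the needed vanishing of negative-degree endomorphisms of $C_{a,b}$ should follow from the formality/purity properties of Rickard complexes already used elsewhere in \cite{HRW1}.
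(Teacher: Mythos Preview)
Your approach reverses the paper's logical order: you aim to prove the $\Theta$-reduction identities first and then deduce $\Psi_{a+r}=0$ by substitution plus a symmetric-function identity. That combinatorial step is fine --- the identity you need is exactly the $|\Y|=0$ case of Lemma~\ref{lemma:h reduction} under the involution $h_k\leftrightarrow e_k$, $\Schur_{(i|j)}\leftrightarrow\Schur_{(j|i)}$, taken with $c=r-1$. And you are right that Corollary~\ref{cor:h of difference reduction} shows both sides of the $\Theta$-identity have the same image under $[\d,-]$.

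The genuine gap is in your uniqueness argument for the $\Theta$-reduction. The chain group $\End^{-1}_{\CS(\SSBim)}(C_{a,b})$ is not zero --- the $\Theta_m$ themselves live there --- and $\End^{-2}$ contains elements such as $\Theta_i\Theta_j$ whose coboundaries $h_i(\leftX_2-\rightX_1)\Theta_j-h_j(\leftX_2-\rightX_1)\Theta_i$ are nonzero. What invertibility of $C_{a,b}$ buys you is that the \emph{cohomology} of $\End(C_{a,b})$ vanishes in negative $\tdeg$-degree, so the difference $\Theta_{a+r}-\sum_k(-1)^{a-k}\Schur_{(r-1|a-k)}(\leftX_2)\Theta_k$ is null-homotopic; but the proposition asserts literal equality, and there are genuinely nonzero homotopies in the relevant bidegree, so no formality or purity argument upgrades ``null-homotopic'' to ``zero'' here.

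The paper avoids this by working componentwise with the one-element alphabet $\D=\{x\}$ on the cup of $\chi_0^-$: since $\chi_{m-1}^-=h_{m-1}(\D)\cdot\chi_0^-$, one has $\Theta_m|_{C_{a,b}^k}=(-1)^k h_{m-1}(\D)\chi_0^-$, and a telescoping sum gives $\Psi_m|_{C_{a,b}^k}=(-1)^k e_{m-1}(\leftX_2-\D)\chi_0^-$, which vanishes on the nose for $m>a$ because $|\leftX_2-\D|=a-1$. The $\Theta$-reduction then drops out of Lemma~\ref{lemma:h reduction} applied to $h_{m-1}(\D)$ with $\Y=\leftX_2-\D$ (respectively $\Y=\rightX_2-\D$). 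This \emph{is} the ``direct computation'' you flagged as an alternative; the key simplification you were missing is the alphabet $\D$, which collapses what looked like a lengthy thick-calculus check into two one-line symmetric-function identities.
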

\begin{proof}
Let us distinguish the alphabets $\B^{(k)}$ and $\B^{(k+1)}$ 
(of cardinality $k$ and $k+1$), living on webs $C_{a,b}^k$ and $C_{a,b}^{k+1}$.  
Note that we may regard $\Hom_{\SSBim}(C_{a,b}^{k+1},C_{a,b}^k)$ 
as a module over $\Sym(\leftX_2|\rightX_2|\B^{(k)}|\B^{(k+1)})$.
We also have actions of symmetric functions in the alphabets
\[
\leftM^{(k)}:=\leftX_2-\B^{(k)}
\, , \quad \rightM^{(k)}:=\rightX_2-\B^{(k)}
\, , \quad \leftM^{(k+1)}=\leftX_2-\B^{(k+1)}
\, , \quad \rightM^{(k+1)}:=\rightX_2-\B^{(k+1)} \, .
\]
Let $M \subset \Hom_{\SSBim}(C_{a,b}^{k+1},C_{a,b}^k)$ be the
$\Sym(\leftX_2|\rightX_2|\B^{(k)}|\B^{(k+1)})$-submodule generated by the
elements $\{\chi_r^-\}_{r\geq 0}$ and let $\D=\{x\}$ denote the alphabet on the
cup of $\chi^-_0$. Since $\chi^-_{r-1}=x^{r-1}\cdot \chi^-_{0} =
h_{r-1}(\D)\cdot \chi^-_{0}$, we may regard $M$ as a module over
$\Sym(\leftX_2|\rightX_2|\B^{(k)}|\B^{(k+1)}|\D)$. It follows (e.g. from the
``movie'' following \eqref{eq:chi minus}, or the corresponding foam) that
\[
\D = \leftM^{(k)} - \leftM^{(k+1)} = \B^{(k+1)} - \B^{(k)} =  {\rightM}^{(k)} - {\rightM}^{(k+1)}
\]
when acting on $\chi^-_0$. 
Observe that $\Theta_m|_{C_{a,b}^k} = (-1)^{k} h_{m-1}(\D)\cdot \chi^-_{0}$, so
\begin{align*}
\Psi_m|_{C_{a,b}^k} 
= \sum_{r=1}^m(-1)^{r-1} e_{m-r}(\leftX_2)\Theta_r
&=(-1)^{k} \sum_{r=1}^m(-1)^{r-1} e_{m-r}(\leftX_2)h_{r-1}(\D)\cdot \chi^-_{0}\\
&=(-1)^{k} e_{m-1}(\leftX_2-\D)\cdot \chi^-_{0}
\end{align*}
Now, $\leftX_2-\D = \M^{(k+1)} + \B^{(k)}$ has cardinality $a-1$, 
which implies that $\Psi_m=0$ for $m>a$. 
A similar argument shows that
$\Psi_m'|_{C_{a,b}^k} = (-1)^{a+b-k} e_{m-1}(\rightX_2-\D)\cdot \chi^-_{0}$
which is zero for $m>b$ since $\rightX_2-\D$ has cardinality $b-1$.
Finally, the identities for $\Theta_{a+r}$ and $\Theta_{b+r}$ follow from $h$-reduction,
i.e. Lemma \ref{lemma:h reduction}. 
For instance:
\begin{align*}
\Theta_{a+r}|_{C_{a,b}^k} = (-1)^{k} h_{a+r-1}(\D)\cdot \chi^-_{0}
&= \sum_{i=1}^{a} (-1)^{a-i} \mathfrak{s}_{(r-1|{a-i})}(\leftX_2) (-1)^{k} h_{i-1}(\D)\cdot \chi^-_{0}\\
&= \sum_{i=1}^{a} (-1)^{a-i} \mathfrak{s}_{(r-1|{a-i})}(\leftX_2) \Theta_{i}|_{C_{a,b}^k}
\end{align*}
where we have used \eqref{eq:specialrel2} with $\X=\D$, $\Y=\X_2-\D$, and $c=a-1$
on the first line. 
The relation for $\Theta_{b+r}$ is proven similarly.
\end{proof}

\begin{remark}
Proposition \ref{prop:Redundancy} shows that, in $\End_{\CS(\SSBim)}(C_{a,b})$, 
the elements $\Theta_r$ and $\Psi_r$ can all be written as 
$\Sym(\leftX_2|\rightX_2)$-linear combinations of $\Theta_1,\ldots,\Theta_{\min\{a,b\}}$.
\end{remark}

\begin{proof}[Proof of Proposition \ref{prop:curved crossing}]
The fact that $\d\circ \d=0$ is well-known (the Rickard complex is a complex).
We write 
$\Delta = \sum_{m\geq 1}(\Psi_m u_{\sigma(1),m} - \Psi'_m u_{\sigma(2),m})$, 
where we again set $u_{\sigma(i),m}=0$ for $m>a_{i}$. Lemma
\ref{lemma:crossing Psi} implies that $\Delta\circ \Delta = 0$ and that
\[
[\d,\Delta] = \sum_{m\geq 1}u_{\sigma(1),m}\Big(e_m(\leftX_2)-e_m(\rightX_1)\Big)
	-\sum_{m\geq 1} u_{\sigma(2),m}\Big(e_m(\rightX_2)-e_m(\leftX_1)\Big) \, ,
\]
which is as desired.
\end{proof}

We now use Proposition \ref{prop:curved crossing} to assign a curved 
Rickard complex to any colored braid.
The input for this construction is a $\Z_{\geq 1}$-colored braid word $\beta_{\aa}$ 
(in the sense of \S\ref{sec:colbraid}) and a numbering of the 
strands\footnote{Thus, one can think of such braids as being 
$\Z_{\geq 1}\times \Z_{\geq 1}$-colored.} in the braid. 
If $\beta \in \Br_m$, then we require that $\beta_{\aa}$ 
has strands numbered from $1$ to $m$. 
Reading the sequence of strand numbers on the incoming and outgoing boundaries 
(respectively) defines two permutations $\sigma,\tau\in \symg_m$ 
such that $\tau=\beta\circ \sigma$. 
(Here, and in the following, we abuse notation and write $\beta$ for the permutation induced
by the braid $\beta$ under the canonical homomorphism $\Br_k \to \symg_k$.)
We will denote the data of a $\Z_{\geq 1}$-colored braid
$\beta_{\aa}={}_{\bb}\beta_{\aa}$ with numbered strands by
$_{\bb,\tau}\beta_{\aa,\sigma}$ or by $\beta_{\aa,\sigma}$ (since $\tau = \beta \circ \sigma$). 
Further, we will occasionally omit the numbering from our notation 
when it is not locally relevant.

\begin{thm}
	\label{thm:braidinvariant} 
For each braid $\b\in \Br_m$, each $\aa\in \Z_{\geq 1}^m$, and each 
$\sigma\in \symg_m$, 
the Rickard complex ${C}(\b_{\aa})\in \CS(\SSBim)$ has a curved lift to a 1-morphism 
$\YS C(\b_{\aa,\sigma})$ from $(\aa,\sigma)$ to $(\b(\aa),\b\circ\sigma)$ in $\YS(\SSBim)$
that is unique up to homotopy equivalence.
In particular, the assignment $\b_{\aa,\sigma}\mapsto \YS C (\b_{\aa,\sigma})$ 
satisfies the (colored) braid relations up to homotopy.
\end{thm}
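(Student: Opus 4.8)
The strategy is to build the curved lift $\YS C(\b_{\aa,\sigma})$ locally, from the curved lifts $\YS C_{a,b}$ and $\YS C^\vee_{a,b}$ of the elementary crossings constructed in Proposition~\ref{prop:curved crossing}, and then to promote the structure theorems already available for $\CS(\SSBim)$ to $\YS(\SSBim)$ using the perturbation-theoretic tools of \S\ref{ss:pert}. Concretely, given a colored braid word $\b = \agen_{i_r}^{\e_r}\cdots \agen_{i_1}^{\e_1}$ with numbered strands, one defines
\[
\YS C(\b_{\aa,\sigma}) := \YS C(\agen_{i_r}^{\e_r}) \hComp \cdots \hComp \YS C(\agen_{i_1}^{\e_1}) \oone_{\aa,\sigma},
\]
where each $\YS C(\agen_i^{\pm})\oone_{\aa'}$ is obtained from $\YS C_{a,b}$ (resp. $\YS C^\vee_{a,b}$) by taking the external tensor product $\boxtimes$ with the appropriate identity $1$-morphisms in $\YS(\SSBim)$, exactly mirroring Definition~\ref{def:RickardGeneral}. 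Since $\boxtimes$ and $\hComp$ are well-defined on $\YS(\SSBim)$ (this is the content of Lemma~\ref{lem:hComp} and the discussion around \eqref{eq:ETenDef}), this assignment does produce a $1$-morphism from $(\aa,\sigma)$ to $(\b(\aa),\b\circ\sigma)$, and the forgetful $2$-functor of Remark~\ref{rem:curvedlift} sends it to the ordinary Rickard complex $C(\b_{\aa})$; so it is a curved lift.

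\textbf{Key steps.} First I would verify well-definedness of the assignment on braid words, i.e. that $\YS C(\b_{\aa,\sigma})$ really is a $1$-morphism in $\YS(\SSBim)$ with the correct (co)domain objects and curvature; this follows by iterating the curvature computation in Lemma~\ref{lem:hComp}, the point being that the strand-wise $\Delta e$-curvatures telescope along each strand of the braid. Second, and this is the heart of the matter, I would establish that the braid relations hold up to homotopy equivalence in $\YS(\SSBim)$. The undeformed braid relations hold in $\CS(\SSBim)$ by Proposition~\ref{prop:rickard invariance}, i.e. for each braid relation $\b \simeq \b'$ there is a homotopy equivalence $f\colon C(\b_{\aa}) \to C(\b'_{\aa})$ in $\CS(\SSBim)$. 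Now $\YS C(\b_{\aa,\sigma})$ is a curved lift of $C(\b_{\aa})$, so Proposition~\ref{prop:conservation} produces a curved lift $\tw_{\Delta'}(C(\b'_{\aa}))$ of $C(\b'_{\aa})$ together with an induced homotopy equivalence $\tilde f\colon \YS C(\b_{\aa,\sigma}) \to \tw_{\Delta'}(C(\b'_{\aa}))$ in $\YS(\SSBim)$. It remains to identify $\tw_{\Delta'}(C(\b'_{\aa}))$ with $\YS C(\b'_{\aa,\sigma})$ up to homotopy equivalence, which is a uniqueness statement. Third, therefore, I would prove uniqueness of curved lifts of Rickard complexes up to homotopy equivalence: any two curved lifts $\tw_{\Delta}(C(\b_{\aa}))$ and $\tw_{\Delta'}(C(\b_{\aa}))$ are homotopy equivalent in $\YS(\SSBim)$. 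The argument is the familiar obstruction-theoretic one, paralleling Lemma~\ref{lem:curvinginvertibles} and \cite[Section 2.10]{GH}: since Rickard complexes are invertible $1$-morphisms in $\CS(\SSBim)$ (they correspond to braids), one has $\End_{\CS(\SSBim)}(C(\b_{\aa})) \simeq \End_{\CS(\SSBim)}(\oone_{\aa})$, and the vanishing $\End^{k}_{\CS(\SSBim)}(\oone_{\aa}) = 0$ for $k<0$ combined with \eqref{eq:ComponentWt} shows there is no room for a nontrivial discrepancy between two lifts; in fact one can upgrade this to a strict isomorphism as in Remark~\ref{rem:StrongUniqueness}. Feeding this uniqueness back into the previous step gives $\tw_{\Delta'}(C(\b'_{\aa})) \simeq \YS C(\b'_{\aa,\sigma})$, hence $\YS C(\b_{\aa,\sigma}) \simeq \YS C(\b'_{\aa,\sigma})$, which is the braid relation in $\YS(\SSBim)$. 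The existence and uniqueness of the lift of an arbitrary $C(\b_{\aa})$ also follows from this circle of ideas (existence via the local construction above, uniqueness via the obstruction argument), giving the full statement.

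\textbf{Main obstacle.} The genuinely delicate point is the uniqueness of curved lifts, i.e. the obstruction-theoretic argument controlling how much a curved Maurer--Cartan element $\Delta$ can be modified by a gauge transformation (or equivalently, showing that the space of lifts is connected). The subtlety is that $\Delta$ is $\U$-irrelevant but not necessarily linear in the $\U$-variables (non-strict lifts can occur after applying Proposition~\ref{prop:conservation}), so one must argue degree-by-degree in the $\U$-filtration, at each stage using the homotopy equivalence $\End_{\CS(\SSBim)}(C(\b_{\aa})) \simeq \End_{\CS(\SSBim)}(\oone_{\aa})$ and the concentration of $\End_{\CS(\SSBim)}(\oone_{\aa})$ in non-negative cohomological degrees (together with the weight bookkeeping \eqref{eq:ComponentWt}, which forces higher $\U$-degree corrections into strictly negative cohomological degree and hence to vanish). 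Once this is in place, everything else — well-definedness, the braid relations, naturality of the homotopy equivalences — is formal, following the template already used for Lemma~\ref{lem:curvinginvertibles}. I would also remark that canonicity of the homotopy equivalences implementing the braid relations follows from the corresponding canonicity in $\CS(\SSBim)$ (Proposition~\ref{prop:rickard invariance}) transported via Proposition~\ref{prop:HPT}.
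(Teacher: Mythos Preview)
Your proposal is correct and follows essentially the same approach as the paper: construct $\YS C(\b_{\aa,\sigma})$ explicitly from the elementary curved crossings via $\boxtimes$ and $\hComp$, then deduce uniqueness and the braid relations by combining Proposition~\ref{prop:conservation} (to transport a homotopy equivalence $C(\b_{\aa})\simeq C(\b'_{\aa})$ to the curved setting) with Lemma~\ref{lem:curvinginvertibles} (which already packages the obstruction-theoretic uniqueness argument you outline, since Rickard complexes are invertible). Your ``main obstacle'' is precisely the content of Lemma~\ref{lem:curvinginvertibles}, so in practice the proof is shorter than your sketch suggests---the paper simply invokes that lemma directly rather than re-running the degree-by-degree argument.
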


Before the proof, we remark that the curvature equation for 
$\YS C(\b_{\aa,\sigma})$ is
\[
(\d+\Delta)^2=\sum_{i=1}^m \sum_{r=1}^{\infty}  (e_r(\leftX_{\tau(i)})-e_r(\rightX_{\sigma(i)})) \uvar_{i, r}
\]
for $\tau = \beta \circ \sigma$.

\begin{proof}
Since ${C}(\b_{\aa})\in \CS(\SSBim)$ is an invertible 1-morphism and the Rickard
complexes assigned to colored braids satisfy the (colored) braid relations up to
homotopy, this is an immediate consequence of
Lemma~\ref{lem:curvinginvertibles}. However, since we only sketched the
existence portion of that proof, we give an explicit construction here.

First, we prove the theorem for the identity braid $\oone_{\aa}$ 
and any chosen $\sigma\in \symg_m$. 
In this case,
\[
\End_{\CS(\SSBim)[\U]}(\oone_{\aa}) = \Sym(\X_1|\cdots|\X_m)\otimes \k[\U]
\]
is supported in even cohomological degrees, 
so any (degree $1$) differential on $\oone_{\aa}$ must be zero for degree reasons. 
In particular, $\d_{\oone_{\aa}} = 0$, so the curvature equation for $\oone_{\aa}$ becomes
\[
(\Delta_{\oone_{\aa}})^2 =
\sum_{i=1}^m \sum_{r=1}^{\infty} (e_r(\X_{\sigma(i)})-e_r(\X_{\sigma(i)}')) u_{i, r} = 0 \, .
\]
Here, we use that $f(\X_{\sigma(i)})-f(\X_{\sigma(i)}')$ acts by zero on the
identity bimodule $\oone_{\aa}$ for any symmetric function $f$. 
This equation has the unique solution $\Delta_{\oone_{\aa}}=0$, 
so the curved Rickard complex associated to the identity braid is just $\oone_{\aa}$ 
with $\d^{\tot} = \d + \Delta = 0$, regardless of our choice of $\sigma$.

We next give a constructive proof of the existence of the curved Rickard complex
associated to a non-trivial braid (word).
For the Artin generator $\agen_i$ and the identity permutation 
$\id \in \symg_m$, we define
\[
\YS C((\agen_i)_{\aa,\id}) :=\oone_{(a_1,\ldots a_{i-1}),\id} 
\boxtimes \YS C_{(a_i,a_{i+1}),\id}\boxtimes \oone_{(a_{i+2},\ldots a_{m}),\id}  
\]
where the middle tensor factor is the $2$-strand curved Rickard complex from 
Proposition \ref{prop:curved crossing}.
For any other $\sigma\in \symg_m$, 
we obtain $\YS C((\agen_i)_{\aa,\sigma})$ from $\YS C((\agen_i)_{\aa,\id})$ 
by the substitution $u_{i,r} \mapsto u_{\sigma^{-1}(i),r}$. 
Analogously, we define $\YS C((\agen_i\inv)_{\aa,\sigma})$ 
using $\YS C^\vee_{(a_i,a_{i+1}),\id}$.
For a general braid word $\beta = \agen_{i_r}^{\e_r} \cdots \agen_{i_1}^{\e_1}$, 
we define $\YS C(\b_{\aa,\sigma})$ by taking the horizontal composition of
the curved Rickard complexes assigned to its constituent Artin generators 
$\YS C((\agen_{i_j}^{\e_j})_{\tau_j(\aa),\tau_j\circ \sigma})$, 
in analogy to \eqref{eq:Rick-hComp}.
Here, $\tau_j$ is the permutation associated with the braid 
$\agen_{i_{j-1}}^{\e_{j-1}} \cdots \agen_{i_r}^{\e_r}$.
Lemma \ref{lem:hComp} guarantees that $\YS C(\b_{\aa,\sigma})$ is a
well-defined $1$-morphism in $\YS(\SSBim)$.
It is the unique curved lift of $C(\b_{\aa})$ associated with the permutation $\sigma$ 
by Lemma~\ref{lem:curvinginvertibles}, since Proposition~\ref{prop:rickard invariance}
gives that $C(\b_{\aa}) \in \CS(\SSBim)$ is an invertible $1$-morphism.

Finally, we show that the assignment 
$\b_{\aa,\sigma}\mapsto \YS C(\b_{\aa,\sigma})$ satisfies the braid relations, 
up to homotopy equivalence. 
Suppose that $\b_{\aa,\sigma}$ and $\gamma_{\aa,\sigma}$ 
are two braid words representing the same colored braid. 
By Proposition~\ref{prop:rickard invariance}, we have a homotopy equivalence
$C(\b_{\aa,\sigma})\simeq C(\gamma_{\aa,\sigma})$.
Proposition~\ref{prop:conservation} provides a lift to a 
homotopy equivalence between $\YS C(\b_{\aa,\sigma})$ and some curved lift of
$C(\gamma_{\aa,\sigma})$. However, by Lemma~\ref{lem:curvinginvertibles}, 
such lifts are unique up to homotopy equivalence, and so
$\YS C(\b_{\aa,\sigma}) \simeq \YS C(\gamma_{\aa,\sigma})$.
\end{proof}

We pause to record a useful observation, which will help to establish 
a well-defined module structure on our deformed colored link homology.
By construction, the complex $\YS C(\b_{\aa,\sigma})$ built in 
Theorem \ref{thm:braidinvariant} is a strict $1$-morphism, 
in the sense of Remark \ref{rmk:strict}. 
As such, the linear part of the curved Maurer--Cartan element gives homotopies 
$\Psi_{i,r} \in \End_{\YS(\SSBim)}\big( \YS C(\b_{\aa,\sigma}) \big)$ 
for $1 \leq i \leq m$ and $1 \leq r \leq a_{\sigma(i)}$
that square to zero, pairwise anti-commute, and satisfy
\[
[\d_{\YS C(\b)} + \Delta_{\YS C(\b)} , \Psi_{i,r}] 
= e_r(\leftX_{\tau(i)})-e_r(\rightX_{\sigma(i)}) \, .
\]
It follows that, for each $1 \leq i \leq m$, 
$\YS C(\b_{\aa,\sigma})$ is a well-defined dg module over the ($i^{th}$) 
\emph{two point} dg algebra
\begin{equation}\label{eq:manytwopointalg}
A^i_{L,R} :=
\Sym(\leftX_{\tau(i)} | \rightX_{\sigma(i)}) 
	\otimes \largewedge(\Psi_{i,1},\ldots,\Psi_{i,a_{\sigma(i)}})
\, , \quad d(\Psi_{i,r}) = e_r(\leftX_{\tau(i)})-e_r(\rightX_{\sigma(i)}) \, ,
\end{equation}
and further that these actions assemble to give a dg 
$(\bigotimes_i A^i_{L,R})$-module structure on $\YS C(\b_{\aa,\sigma})$.
Let $\W_i$ be an alphabet with $|\W_i| = |\leftX_{\tau(i)}| = |\rightX_{\sigma(i)}|$.
By restricting the action of \eqref{eq:manytwopointalg} to the left and right alphabets,
we obtain two (possibly) distinct actions of the algebra $\Sym(\W_i)$
using the identifications
\[
\Sym(\W_i) \cong \Sym(\leftX_{\tau(i)}) \, , \quad 
	\Sym(\W_i) \cong \Sym(\rightX_{\sigma(i)}) \, .
\]
Our next result shows that these actions on $\YS C(\b_{\aa,\sigma})$
give quasi-isomorphic dg $\Sym(\W_i)$-modules.
We state the result in slightly greater generality.

\begin{prop}\label{prop:qiso}
Let $X$ be a dg $(\bigotimes_{i=1^m} A^i_{L,R})$-module 
(e.g. a strict $1$-morphism in $\YS(\SSBim)$),
then the induced left and right actions of $\Sym(\W_i)$ give
quasi-isomorphic dg $\Sym(\W_i)$-modules
(thus quasi-isomorphic dg $(\bigotimes_i \Sym(\W_i))$-modules).
\end{prop}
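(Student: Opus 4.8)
The plan is to exploit the exterior-variable structure of each two-point dg algebra $A^i_{L,R}$, which is a Koszul-type resolution of its degree-zero part. Recall that $A^i_{L,R} = \Sym(\leftX_{\tau(i)}|\rightX_{\sigma(i)}) \otimes \largewedge(\Psi_{i,1},\ldots,\Psi_{i,a_{\sigma(i)}})$ with $d(\Psi_{i,r}) = e_r(\leftX_{\tau(i)}) - e_r(\rightX_{\sigma(i)})$. By the Proposition in \S\ref{ss:derivatives}, the elements $e_r(\leftX_{\tau(i)}) - e_r(\rightX_{\sigma(i)})$ for $1 \leq r \leq a_{\sigma(i)}$ generate the diagonal ideal $N(\leftX_{\tau(i)},\rightX_{\sigma(i)})$, i.e.\ the kernel of the multiplication map $\Sym(\leftX_{\tau(i)}|\rightX_{\sigma(i)}) \to \Sym(\W_i)$, and moreover this is a regular sequence (the extension $\Sym(\W_i) \hookrightarrow \Sym(\leftX_{\tau(i)}|\rightX_{\sigma(i)})$ is a Frobenius extension, hence free, by Example~\ref{exa:Sylvester}, and the quotient is $\Sym(\W_i)$). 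Therefore the Koszul complex $A^i_{L,R}$ is a free resolution of $\Sym(\W_i)$ as a module over $\Sym(\leftX_{\tau(i)}|\rightX_{\sigma(i)})$; equivalently, the quasi-isomorphism $A^i_{L,R} \xrightarrow{\simeq} \Sym(\W_i)$ (the augmentation killing all $\Psi_{i,r}$) realizes $\Sym(\W_i)$ as quasi-isomorphic, as a dg $\Sym(\leftX_{\tau(i)}|\rightX_{\sigma(i)})$-module, to $A^i_{L,R}$ itself.

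First I would reduce to the case $m=1$: since $X$ is a dg module over $\bigotimes_{i=1}^m A^i_{L,R}$ and the various $A^i_{L,R}$ commute, one may handle the alphabets one at a time, and the general statement follows by iterating (all these actions are over disjoint sets of variables, so there is no interference). So fix $i$ and write $A = A^i_{L,R}$, $L = \Sym(\leftX_{\tau(i)})$, $R = \Sym(\rightX_{\sigma(i)})$, $S = \Sym(\leftX_{\tau(i)}|\rightX_{\sigma(i)}) = L \otimes R$, and $\bar S = \Sym(\W_i)$. The key point is the following: because $A$ is a semifree (indeed finite free) resolution of $\bar S$ over $S$, for \emph{any} dg $A$-module $X$ the derived tensor products $X \otimes^{\mathbf L}_{L} \bar S$ (via $L \to \bar S$, $\leftX_{\tau(i)} \mapsto \W_i$) and $X \otimes^{\mathbf L}_{R} \bar S$ (via $R \to \bar S$, $\rightX_{\sigma(i)} \mapsto \W_i$) both compute $X \otimes^{\mathbf L}_{S} \bar S$, because $\bar S$ is resolved over $S$ by $A$ and $X$ is already an $A$-module. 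Concretely: $X \simeq X \otimes_A A$, and $A \otimes_L \bar S \simeq \bar S$ as well as $A \otimes_R \bar S \simeq \bar S$ (both are the Koszul complex with one boundary set of generators specialized, which remains a resolution of $\bar S$ over $\bar S$, hence is quasi-isomorphic to $\bar S$). Tracking the module structures, the left $\bar S$-module $X/\langle \leftX_{\tau(i)} - \W_i\rangle$-type construction and the right analogue are each quasi-isomorphic to $X \otimes_A \bar S$, where now $\bar S$ is the $A$-module on which \emph{both} $\leftX_{\tau(i)}$ and $\rightX_{\sigma(i)}$ act through $\W_i$ and the $\Psi_{i,r}$ act by zero.

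The cleanest way to make this precise is to observe that $X$, as a dg $A$-module, admits a canonical map to (and from) the ``specialized'' module obtained by base change along the quasi-isomorphism $A \xrightarrow{\simeq} \bar S$: the derived restriction/extension of scalars along a quasi-isomorphism of dg algebras is an equivalence on derived categories, but more elementarily, $X \otimes^{\mathbf L}_A \bar S$ receives compatible comparison maps from both the left-specialization and the right-specialization of $X$, and each comparison map is a quasi-isomorphism by the regular-sequence property (Koszul acyclicity). I would then chase the $\Sym(\W_i)$-module structures through these maps: the left action of $\Sym(\W_i)$ on the left-specialization corresponds to the action of $L$ on $X \otimes^{\mathbf L}_A \bar S$ through $L \to \bar S$, and the right action corresponds to the action of $R$ through $R \to \bar S$; but on $\bar S$ these two maps have the same image and, crucially, $L$ and $R$ act on the $A$-module $\bar S$ in the \emph{same} way after the identification $L \cong \bar S \cong R$ (this is exactly the statement $d(\Psi_{i,r}) = e_r(\leftX_{\tau(i)}) - e_r(\rightX_{\sigma(i)})$, which forces $\leftX_{\tau(i)}$ and $\rightX_{\sigma(i)}$ to act homotopically, and on homology of $\bar S$ identically). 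Hence both specializations are quasi-isomorphic, as dg $\Sym(\W_i)$-modules, to a common object, and iterating over $i$ gives the $\bigotimes_i \Sym(\W_i)$-module statement.

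\textbf{Main obstacle.} The genuine content is the Koszul acyclicity / regular-sequence claim for the sequence $\{e_r(\leftX_{\tau(i)}) - e_r(\rightX_{\sigma(i)})\}_{r=1}^{a_{\sigma(i)}}$ inside $\Sym(\leftX_{\tau(i)}|\rightX_{\sigma(i)})$ — i.e.\ that $A^i_{L,R}$ really is a resolution of $\Sym(\W_i)$ and not just an abstract dg algebra — together with the bookkeeping needed to keep the left- and right-$\Sym(\W_i)$-module structures straight through the comparison maps, since the symmetry $\leftX \leftrightarrow \rightX$ is broken by which generators we specialize. The acyclicity itself is standard: it follows from the Frobenius extension $\Sym(\W_i) \hookrightarrow \Sym(\leftX_{\tau(i)}|\rightX_{\sigma(i)})$ of Example~\ref{exa:Sylvester} together with the fact that the diagonal ideal is generated by this many elements in a ring of the right dimension. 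I do not expect the homological-perturbation-style argument to be needed here; a direct appeal to the Koszul resolution and to the fact that extension of scalars along a quasi-isomorphism of (commutative) dg algebras preserves quasi-isomorphism of modules should suffice.
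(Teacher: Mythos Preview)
Your core idea is correct and gives a valid proof: the augmentation $\epsilon\colon A \to \bar{S}$ (with $\bar{S} = \Sym(\W_i)$) is a quasi-isomorphism of dg algebras, hence restriction $\epsilon^*\colon D(\bar{S}) \to D(A)$ is an equivalence, so any dg $A$-module $X$ is quasi-isomorphic in $D(A)$ to the restriction of some $\bar{S}$-module $Y$, and then both the $L$- and $R$-restrictions of $X$ are quasi-isomorphic to $Y$ as $\bar{S}$-modules. However, your first attempt at making this concrete contains an error: the claim that $X \otimes^{\mathbf L}_L \bar{S}$ and $X \otimes^{\mathbf L}_R \bar{S}$ both compute $X \otimes^{\mathbf L}_S \bar{S}$ is false. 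Take $X = \bar{S}$ itself (with all $\Psi_{i,r}$ acting by zero): then $X \otimes^{\mathbf L}_L \bar{S} \cong \bar{S}$, but $X \otimes^{\mathbf L}_S \bar{S} \simeq \bar{S} \otimes \largewedge[\Psi_1,\ldots,\Psi_a]$ with zero differential, since the Koszul boundaries $e_r^L - e_r^R$ vanish on $\bar{S}$. The correct common target is $X \otimes^{\mathbf L}_A \bar{S}$, which you do switch to in your ``cleanest way'' paragraph; that paragraph carries the argument. (Minor point: the comparison $X \simeq \epsilon^*\epsilon_! X$ is then general nonsense about quasi-isomorphisms of dg algebras --- Koszul acyclicity is the input showing $\epsilon$ is a quasi-isomorphism, not a separate ingredient at the comparison step.)

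The paper's proof takes a rather different, more hands-on route. After reducing (like you) to a single strand, it reduces further to a \emph{single variable}, and then for $A = \k[x_L,x_R] \otimes \largewedge(\xi)$ with $d\xi = x_L - x_R$ it constructs an explicit ``three-point'' dg $A$-algebra $\cal{T} = \k[z_L,z_R,z_M,T] \otimes \largewedge(\xi_{LR},\xi_{RM},\xi_{LM})$ with $d\xi_{LM} = z_L - z_M$, $d\xi_{RM} = z_R - z_M$, and $dT = \xi_{RM} - \xi_{LM} + \xi_{LR}$. Setting $\cal{M} = C \otimes_A \cal{T}$ and letting $\k[x]$ act via the middle variable $z_M$, the paper writes down explicit $\k[x]$-linear deformation retracts $\cal{M} \to C_L$ and $\cal{M} \to C_R$, producing the zigzag $C_L \xleftarrow{\simeq} \cal{M} \xrightarrow{\simeq} C_R$ by hand. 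Your derived-category approach is cleaner and treats all $a$ variables at once; the paper's approach is more elementary and yields explicit comparison maps, at the cost of the auxiliary construction $\cal{T}$.
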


\begin{proof}
It suffices to show that, given a dg module over
\[
\k[e^L_1,\ldots,e^L_a,e^R_1,\ldots,e^R_a]
	\otimes \largewedge(\Psi_1 ,\ldots,\Psi_a) \, , \quad
d(\Psi_i) = e^L_i - e^R_i \, ,
\]
the corresponding left and right $\k[e_1,\ldots,e_a]$-modules are quasi-isomorphic. 
Correspondingly, this immediately reduces to the $1$-variable case.

Thus, let $C$ be a dg module over
\[
A=\k[x_L,x_R] \otimes \largewedge(\xi) \, , \quad d(\xi) = x_L - x_R
\]
and consider $\cal{M} := C \otimes_A  \cal{T}$ where 
$\cal{T} := \k[z_L,z_R,z_M,T] \otimes \largewedge(\xi_{LR},\xi_{RM},\xi_{LM})$
is a dg $A$-algebra via the inclusion $A \hookrightarrow \cal{T}$ sending
$x_L \mapsto z_L $, $x_R \mapsto z_R$, and $\xi \mapsto \xi_{LR}$.
The remaining differentials on $\cal{T}$ are given by
\[
d(\xi_{RM}) = z_R - z_M \, , \quad d(\xi_{LM}) = z_L - z_M \, , \quad d(T) = \xi_{RM} - \xi_{LM} + \xi_{LR} \, .
\]
Now, view $\cal{M}$ as a dg $\k[x]$-module, where $x$ acts via $z_M$, 
and let $C_i$ denote the dg $\k[x]$-module $C$ wherein $x$ acts via $x_i$, for $i=L,R$.
The deformation retract $\cal{M} \to C_L$ determined by 
\[
z_M \mapsto x_L \, , \quad \xi_{LM} \mapsto 0 \, , \quad \xi_{RM} \mapsto -\xi \, , \quad T \mapsto 0
\]
is $\k[x]$-linear; however, its homotopy inverse is not. 
Thus it (only) gives a quasi-isomorphism of $\k[x]$-modules $\cal{M} \qiso C_L$. 
Similarly, there is a deformation retract $\cal{M} \to C_R$ determined by 
\[
z_M \mapsto x_R \, , \quad \xi_{LM} \mapsto \xi \, , \quad \xi_{RM} \mapsto 0 \, , \quad T \mapsto 0
\]
that gives a quasi-isomorphism of $\k[x]$-modules $\cal{M} \qiso C_R$. 
We thus have $C_L \qiso \cal{M}\qiso C_R$.
\end{proof}

\subsection{Alphabet soup I: from \texorpdfstring{$u$'s to $v$'s}{u's to v's}}
\label{ss:v-curved cxs}
As discussed in \S \ref{ss:curvature discussion} (and implicitly used in Lemma
\ref{lemma:crossing Psi}), we will find it convenient to translate
back-and-forth between $\Delta e$- and $h\Delta$-curvatures, which are modeled
on $\sum (e_k(\leftX) - e_k(\rightX))u_k$ and $\sum h_k(\leftX-\rightX)v_k$
respectively. Indeed, equation \eqref{eq:HCompDef0} and Lemma \ref{lem:hComp}
show that complexes with $\Delta e$-curvature possess a straightforward
horizontal composition, while Lemma \ref{lem:crossing Theta} suggests that
complexes with $h \Delta$-curvature appear more regularly ``in the wild.''

\begin{defi}\label{def:Vs}
Let $\aa=(a_1,\ldots,a_m)$ and $\bb=(b_1,\ldots,b_m)$ be objects in $\SSBim$,
and let $\sigma,\tau \in \symg_m$ be such that $a_{\sigma(i)} = b_{\tau(i)}$.
For each $1\leq i\leq m$, introduce deformation parameters
$\V_i = \{v_{i,r}\}_{r=1}^{a_{\sigma(i)}}$ with $\wt(v_{i,r}) = \qdeg^{-2r} \tdeg^2$.
Let
\[
{}_{\bb,\tau}\VS(\SSBim)_{\aa,\sigma}
:=
\CS_Z\big( {}_{\bb}\SSBim_{\aa} ; \k[\V_1,\ldots,\V_{m}]\big)
\]
where the curvature element is
\begin{equation}\label{eq:Vcurv}
Z = \sum_{i=1}^{m} \sum_{r=1}^{a_{\sigma(i)}} 
h_r(\leftX_{\tau(i)} - \rightX_{\sigma(i)}) v_{i,r} \, .
\end{equation}
\end{defi}

We will abbreviate by writing
$\V = \V_1 \cup \cdots \cup \V_m$ when $m$ is understood, 
thus $\k[\V] = \k[\V_1 , \ldots ,\V_m]$. 
Similarly, we will write
\[
\VS(\SSBim) := \bigsqcup_{(\bb,\tau),(\aa,\sigma)} 
{}_{\bb,\tau}\VS(\SSBim)_{\aa,\sigma}
\]
which we understand simply as a disjoint union of categories.

\begin{prop}\label{prop:YSSBimToVSSBim}
Retaining the setup from Definition \ref{def:Vs}, 
there is an isomorphism
\[
{}_{\bb,\tau}\VS(\SSBim)_{\aa,\sigma} \cong
{}_{\bb,\tau}\YS(\SSBim)_{\aa,\sigma}
\]
of dg categories determined by the mutually inverse assignments
\begin{equation}\label{eq:generalVtoU}
v_{i,k} \mapsto
(-1)^{k-1}\sum_{l=k}^{a_{\sigma(i)}} e_{l-k}(\X_{\tau(i)}) u_{i,l}
\, , \quad
u_{i,k} \mapsto
(-1)^{k-1}\sum_{l=k}^{a_{\sigma(i)}} h_{l-k}(\X_{\tau(i)}) v_{i,l} \, ,
\end{equation}
(cf. Definition \ref{def:U and V}).
\end{prop}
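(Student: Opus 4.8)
The plan is to exhibit the asserted isomorphism directly as the change of variables \eqref{eq:generalVtoU}, performed alphabet by alphabet, and to check that it intertwines the curvature elements $F$ from \eqref{eq:Ucurv} and $Z$ from \eqref{eq:Vcurv} via Corollary \ref{cor:curvature description}. Fix objects $(\aa,\sigma)$ and $(\bb,\tau)$ with $a_{\sigma(i)} = b_{\tau(i)}$ for all $i$, and write $n_i := a_{\sigma(i)}$, so that ${}_{\bb,\tau}\YS(\SSBim)_{\aa,\sigma} = \CS_F({}_{\bb}\SSBim_{\aa};\k[\U])$ and ${}_{\bb,\tau}\VS(\SSBim)_{\aa,\sigma} = \CS_Z({}_{\bb}\SSBim_{\aa};\k[\V])$. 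First I would note that for $X,Y\in {}_{\bb}\SSBim_{\aa}$ the left-alphabet actions of $\Sym(\leftX_{\tau(i)})$ on $\Hom_{\CS(\SSBim)}(X,Y)$ are endonatural transformations of the identity of ${}_{\bb}\SSBim_{\aa}$ — in particular central, grading-preserving, and mutually commuting. Hence the assignment $u_{i,k} \mapsto (-1)^{k-1}\sum_{l=k}^{n_i} h_{l-k}(\leftX_{\tau(i)})\, v_{i,l}$ extends uniquely to a dg functor
\[
\Phi\colon \CS_F({}_{\bb}\SSBim_{\aa};\k[\U]) \longrightarrow \CS_Z({}_{\bb}\SSBim_{\aa};\k[\V])
\]
that is the identity on underlying graded objects and sends a differential $\d$ on $X$ to the corresponding endomorphism over $\k[\V]$; it is compatible with composition precisely because of the centrality just noted. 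By Definition \ref{def:U and V}, whose two formulae are mutually inverse by \eqref{eq:HE2}, applying the inverse substitution $v_{i,k}\mapsto(-1)^{k-1}\sum_{l=k}^{n_i}e_{l-k}(\leftX_{\tau(i)})\,u_{i,l}$ in each alphabet gives a two-sided inverse $\Phi^{-1}$; note that these two substitutions are exactly the mutually inverse assignments of \eqref{eq:generalVtoU}.

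Next I would verify that $\Phi$ takes $F$-curved complexes to $Z$-curved complexes. If $(X,\d)$ satisfies $\d^2 = F|_X$, then, $\Phi$ being a morphism of $\k$-algebras on $\End_{{}_{\bb}\SSBim_{\aa}[\tdeg^\pm]}(X)\otimes\k[\U]$, we get $\Phi(\d)^2 = \Phi(\d^2) = \Phi(F|_X)$. Applying $\Phi$ to $F$ one block at a time, the $i$-th summand $\sum_{r=1}^{n_i}\big(e_r(\leftX_{\tau(i)}) - e_r(\rightX_{\sigma(i)})\big) u_{i,r}$ is carried to $\sum_{r=1}^{n_i} h_r(\leftX_{\tau(i)}-\rightX_{\sigma(i)})\, v_{i,r}$ by Corollary \ref{cor:curvature description} (applied with $\leftX = \leftX_{\tau(i)}$, $\rightX = \rightX_{\sigma(i)}$, $a = n_i$, after restricting to symmetric functions in these two alphabets). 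Summing over $i$ yields $\Phi(F|_X) = Z|_X$, hence $(X,\Phi(\d))$ is an object of $\CS_Z({}_{\bb}\SSBim_{\aa};\k[\V])$; the same computation applied to $\Phi^{-1}$ shows it carries $Z$-curved complexes back to $F$-curved complexes. Thus $\Phi$ restricts to a bijection on objects, and since it is additive, grading-preserving, and commutes with the Hom-complex differentials $f\mapsto \d_Y\circ f - (-1)^{|f|} f\circ\d_X$ (again because it is an algebra map fixing the $\CS(\SSBim)$-data), it is an isomorphism of dg categories. Assembling over all $(\aa,\sigma)$, $(\bb,\tau)$ gives the statement.

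I expect the only genuine subtlety to be the first step: making precise that the substitution \eqref{eq:generalVtoU}, whose coefficients are symmetric functions in the alphabets acting on the bimodules rather than honest scalars, really does define a dg functor — i.e. that it respects horizontal composition and the bimodule structure. This reduces to the observation that the left- and right-alphabet actions are central in each ${}_{\bb}\SSBim_{\aa}$, so the order in which one multiplies by such functions and composes morphisms is immaterial. Once that is in hand, the curvature-matching is an immediate consequence of Corollary \ref{cor:curvature description}, and everything else is formal.
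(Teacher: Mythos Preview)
Your proposal is correct and follows essentially the same approach as the paper: define mutually inverse functors via the substitution \eqref{eq:generalVtoU}, appeal to Corollary \ref{cor:curvature description} to see that the curvature elements $F$ and $Z$ are interchanged, and invoke \eqref{eq:HE2} for mutual invertibility of the substitutions. You are slightly more explicit than the paper in identifying the one genuine point to check --- that the coefficients $e_{l-k}(\leftX_{\tau(i)})$ and $h_{l-k}(\leftX_{\tau(i)})$ act centrally on the $\Hom$-complexes --- which the paper leaves implicit; your aside about ``horizontal composition'' in the final paragraph is a small misstatement (the proposition concerns a single $\Hom$-category, not the monoidal structure), but it does not affect the argument.
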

\begin{proof}
The functor ${}_{\bb,\tau}\VS(\SSBim)_{\aa,\sigma} \to
{}_{\bb,\tau}\YS(\SSBim)_{\aa,\sigma}$
is defined on curved complexes 
$\tw_{\Delta_X}(X) = (X,\d_X + \Delta_X)$ by sending $X$ to itself, 
and defined on morphisms
\[
\Hom_{\SSBim[\tdeg^\pm]}(X,Y) \otimes \k[\V] := \Hom_{\VS(\SSBim)}(X,Y)
\to
\Hom_{\YS(\SSBim)}(X,Y) =: \Hom_{\SSBim[\tdeg^\pm]}(X,Y) \otimes \k[\U]
\]
using the first substitution rule in \eqref{eq:generalVtoU}. In particular, this
determines the image of the curved Maurer--Cartan element $\Delta_X$. This is
well-defined since Corollary \ref{cor:curvature description} implies that it
takes curved complexes with $h\Delta$-curvature \eqref{eq:Vcurv} to those with
$\Delta e$-curvature \eqref{eq:Ucurv}. The functor in the other direction is
defined analogously using the second substitution rule in \eqref{eq:generalVtoU}. As for
\eqref{eq:VvsU}, a computation using \eqref{eq:HE2} shows that the substitutions
\eqref{eq:generalVtoU}, and thus the associated functors, are mutually inverse.
\end{proof}

\begin{remark}\label{rem:subtleV}
By pulling back structure from $\YS(\SSBim)$, the categories 
${}_{\bb,\tau} \VS(\SSBim)_{\aa,\sigma}$ assemble to give a monoidal dg 2-category.  
We will not record the precise formulae for the various operations, 
but do wish to point out a subtlety regarding the action of the deformation parameters $\V$. 
Since $\End_{\VS(\SSBim)}(\oone_{\bb}) \cong \Sym(\leftX_1 | \cdots | \leftX_m)[\V]$ 
is a module over $\k[\V]$, any curved complex ${}_{\bb}X \in \VS(\SSBim)$ inherits an action of the latter 
by acting \textbf{on the left}. Specifically, given $g \in \k[\V]$, we let $g \cdot \id_X$ denote the endomorphism
\[
X \cong \oone_{\bb} \hComp X \xrightarrow{g \hComp \id_X} \oone_{\bb} \hComp X \cong X \, .
\]
However, we could also consider the action of such $g$ act on the right, via
\[
X \cong X \hComp \oone_{\aa} \xrightarrow{\id_X \hComp g} X \hComp \oone_{\aa} \cong X \, .
\]
These actions \textbf{do not} necessarily agree. 
Rather, they are related by an appropriate analogue of Lemma \ref{lemma:sliding v}. 
Specifically, on ${}_{\bb,\tau}X_{\aa,\sigma}$ we have that 
\[
\id_X \hComp v_{i,r} = 
\sum_{r \leq l \leq a_{\sigma(i)}} h_{l-r}(\leftX_{\tau(i)} - \rightX_{\sigma(i)}) \cdot (v_{i,l} \hComp \id_X) \, .
\]
However, the action of $h_{l-r}(\leftX_{\tau(i)} - \rightX_{\sigma(i)})$ 
on any $1$-morphism in $\VS(\SSBim)$ is null-homotopic (see e.g. Remark \ref{rmk:strict}), 
thus the left and right actions of $\V$ are always homotopic.
\end{remark}

\begin{conv}
Henceforth, we will not distinguish between the dg categories 
${}_{\bb,\tau} \VS(\SSBim)_{\aa,\sigma}$ and ${}_{\bb,\tau}\YS(\SSBim)_{\aa,\sigma}$, 
and in each instance will use the notation coinciding with the relevant deformation 
parameters $\U$ or $\V$.
\end{conv}

In \S \ref{s:curved skein rel} -- \S \ref{s:hopf link}, 
we will be particularly interested in the $\sigma=\id=\tau$ case of Definition \ref{def:Vs}.
To simplify notation, we will use the shorthand
\begin{equation}\label{eq:Vnotation}
\VS_{\aa} := {}_{\aa,\id}\VS(\SSBim)_{\aa,\id}
= \CS_Z\big( {}_{\aa}\SSBim_{\aa} ; \k[\V_1,\ldots,\V_{m}]\big)
\end{equation}
for the category of curved complexes of singular Soergel bimodules
with curvature
\[
Z = \sum_{i=1}^{m} \sum_{r=1}^{a_i} h_r(\leftX_i - \rightX_i) v_{i,r}\, .
\]

\subsection{Alphabet soup II: from \texorpdfstring{$y$'s to $u$'s and $v$'s}{y's to u's and v's}}
\label{ss:ytouv}

In the special (uncolored) case of $\aa = 1^m = \bb$, the categories
${}_{1^{m},\tau}\YS(\SSBim)_{1^{m},\sigma}$ recover (a version of) the category
$\YS(\SBim)$ of curved complexes of Soergel bimodules from \cite{GH}. Setting
$\leftX = \{x_1,\ldots,x_m\}$ and $\rightX = \{x'_1,\ldots,x'_m\}$, this
incarnation of $\YS(\SBim)$ is the category of curved complexes of
$(\k[\leftX],\k[\rightX])$-bimodules (equivalently,
$\k[\leftX,\rightX]$-modules) with ``thin'' curvature
\begin{equation}\label{eq:x-x}
\sum_{i=1}^m (x_{\tau(i)} - x_{\sigma(i)}) y_i
\end{equation}
for deformation parameters $\Y = \{y_1,\ldots,y_m \}$.
These parameters can be understood as equaling either the $u$'s from 
Definition \ref{def:Y} or the $v$'s from Definition \ref{def:Vs}
(since $h_1(x_i - x'_i) = x_i-x'_i = e_1(x_i) - e_1(x'_i)$).
Note that $\k[\leftX,\rightX]$-modules can be viewed as 
$\Sym(\leftX | \rightX)$-modules, by restricting the left and right actions.
In terms of $\SSBim$, this corresponds to horizontally composing Soergel bimodules $X$ 
by appropriate merge and split bimodules which, on each side of $X$, 
merge the boundaries to a single $m$-colored strand, e.g. 
\[
\begin{tikzpicture}[scale=.5,smallnodes,anchorbase]
	\draw[very thick] (-1.5, .75) node[left=-2pt]{$1$} to (-.75, .75);
	\draw[very thick] (-1.5, 0) node[left=-2pt]{$1$} to (-.75,0);
	\draw[very thick] (-1.5, -.75) node[left=-2pt]{$1$} to (-.75, -.75);
	\node[yshift=-2pt] at (0,0) {\normalsize$X$};
	\draw[very thick] (1.5, .75) node[right=-2pt]{$1$} to (.75, .75);
	\draw[very thick] (1.5, 0) node[right=-2pt]{$1$} to (.75,0);
	\draw[very thick] (1.5, -.75) node[right=-2pt]{$1$} to (.75, -.75);	
		\draw[very thick] (.75,1) rectangle (-.75,-1);
\end{tikzpicture}
\longmapsto
\begin{tikzpicture}[scale=.5,smallnodes,anchorbase,yscale=-1]
	\draw[very thick] (-3,0) node[left=-2pt]{$3$} to (-2.25,0);
	\draw[very thick] (-2.25,0) to [out=60,in=180] (-1.5, .375);
	\draw[very thick] (-2.25,0) to [out=300,in=180] (-.75, -.75);
	\draw[very thick] (-1.5, .375) to [out=60,in=180] (-.75, .75);
	\draw[very thick] (-1.5, .375) to [out=300,in=180] (-.75,0);
	\node[yshift=-2pt] at (0,0) {\normalsize$X$};
	\draw[very thick] (3,0) node[right=-2pt]{$3$} to (2.25,0);
	\draw[very thick] (2.25,0) to [out=120,in=0] (1.5, .375);
	\draw[very thick] (2.25,0) to [out=240,in=0] (.75, -.75);
	\draw[very thick] (1.5, .375) to [out=120,in=0] (.75, .75);
	\draw[very thick] (1.5, .375) to [out=240,in=0] (.75,0);
	\draw[very thick] (.75,1) rectangle (-.75,-1);
\end{tikzpicture} \, .
\]
Since colored links can be interpreted as the ``(anti)symmetric part'' of
appropriate cables of links in a similar manner 
(recall Theorem \ref{thm:cables} and Conjecture \ref{conj:cables}), 
we will find it fortuitous to relate the thin
curvature in \eqref{eq:x-x} to the $h \Delta$-curvature from \eqref{eq:h(X-X)}
and the $\Delta e$-curvature from \eqref{eq:e(X)-e(X)}. Indeed, certain
endomorphisms that appear naturally in this story (see \S \ref{ss:interpolation}
below) are crucial in our investigation of the colored link splitting map in \S
\ref{sec:splitting}.

To this end, let $a \geq 1$ and fix alphabets
\[
\leftX = \{x_1,\ldots, x_a\}
\, , \quad
\rightX= \{x'_1,\ldots,x'_a\}
\, , \quad
\Y=\{y_1,\ldots,y_a\}
\, , \quad
\U=\{u_1,\ldots,u_a\}
\, , \quad
\V=\{v_1,\ldots,v_a\} \, .
\]
Let $\AS$ denote the category of $\k[\leftX,\rightX]$-modules
and consider the following categories of curved complexes
\begin{gather*}
\VS\AS:=\CS_{\sum_{k=1}^a h_k(\leftX - \rightX) v_k}(\AS,\k[\V])
\, , \quad
\YS\AS:=\CS_{\sum_{k=1}^a (e_k(\leftX) - e_k(\rightX))u_k}(\AS,\k[\U]) \\
\mathsf{Y} \! \AS:= \CS_{\sum_{i=1}^a(x_i-x_i')y_i}(\AS, \k[\Y]) \, .
\end{gather*}
Writing $h_k(\leftX - \rightX)$ and $e_k(\leftX) - e_k(\rightX)$ 
as $\k[\leftX,\rightX]$-linear combinations of the elements $x_i-x'_i$
produces dg functors from $\mathsf{Y} \! \AS$ to $\VS\AS$ and $\YS\AS$, respectively.
Precisely, we have the following.

\begin{proposition}\label{prop:y to u and v}
The dg categories $\VS\AS$ and $\YS\AS$ are isomorphic via the mutually inverse
substitutions
\[
v_k \mapsto (-1)^{k-1}\sum_{k \leq l \leq a} e_{l-k}(\X)u_l
\, , \quad
u_k \mapsto (-1)^{k-1}\sum_{k \leq l \leq a} h_{l-k}(\X)v_l \, .
\]
Moreover, the substitutions
\begin{equation}\label{eq:y to v}
y_i \mapsto \sum_{l=1}^a h_{l-1} \big( \{x_i,x_{i+1},\ldots,x_a\} - \{x'_{i+1},\ldots,x'_a\} \big) v_l
\end{equation}
and
\begin{equation}\label{eq:y to u}
y_i\mapsto \sum_{l=1}^a e_{l-1}(x_1,\ldots,x_{i-1},x'_{i+1},\ldots,x'_a) u_l
\end{equation}
determine dg functors 
$\mathsf{Y} \! \AS \to \VS\AS$ and $\mathsf{Y} \! \AS \to \YS\AS$
that are compatible with the isomorphism $\VS\AS\cong \YS\AS$.
\end{proposition}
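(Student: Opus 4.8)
The plan is to establish the three claims in order: first the isomorphism $\VS\AS \cong \YS\AS$, then that \eqref{eq:y to v} and \eqref{eq:y to u} define dg functors, and finally their compatibility. For the isomorphism, I would simply invoke Proposition \ref{prop:YSSBimToVSSBim} (or rather its affine/un-singular analogue), noting that $\AS$ here is the category of $\k[\leftX,\rightX]$-modules with $|\leftX|=|\rightX|=a$, which is exactly the $m=1$, $\sigma=\id=\tau$ setting of that proposition once one remembers that $\k[\leftX,\rightX]$-modules can be restricted to $\Sym(\leftX|\rightX)$-modules via horizontal composition with merge/split bimodules, as explained in \S\ref{ss:ytouv}. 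The substitutions quoted are literally \eqref{eq:generalVtoU} with a single alphabet, and mutual invertibility follows from the computation with \eqref{eq:HE2} referenced there, exactly as in the verification that \eqref{eq:VvsU} is involutive. So this part is essentially a citation.

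For the functors $\mathsf{Y}\!\AS \to \VS\AS$ and $\mathsf{Y}\!\AS \to \YS\AS$, the content is that the substitution rules \eqref{eq:y to v} and \eqref{eq:y to u} carry the thin curvature $\sum_i (x_i - x_i') y_i$ to the $h\Delta$-curvature $\sum_k h_k(\leftX-\rightX)v_k$ and the $\Delta e$-curvature $\sum_k (e_k(\leftX)-e_k(\rightX))u_k$, respectively. First I would verify the $\YS$ case: substituting \eqref{eq:y to u} into $\sum_{i=1}^a (x_i - x_i') y_i$ gives $\sum_{i=1}^a (x_i - x_i') \sum_{l=1}^a e_{l-1}(x_1,\ldots,x_{i-1},x'_{i+1},\ldots,x'_a) u_l$, and I would reorganize the sum over $i$ for fixed $l$, recognizing that $\sum_{i=1}^a (x_i - x_i') e_{l-1}(x_1,\ldots,x_{i-1},x'_{i+1},\ldots,x'_a)$ telescopes to $e_l(\leftX) - e_l(\rightX)$. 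This is the standard identity expressing $e_l(x_1,\dots,x_a) - e_l(x_1',\dots,x_a')$ as a sum of one-step differences; it is a short induction on $a$ (or a generating-function argument using $E(\leftX,t) - E(\rightX,t) = \sum_i t(x_i - x_i') \prod_{j<i}(1+x_j t)\prod_{j>i}(1+x_j' t)$). The $\VS$ case is analogous: substituting \eqref{eq:y to v} and summing over $i$ for fixed $l$, the coefficient $\sum_{i=1}^a (x_i - x_i') h_{l-1}(\{x_i,\ldots,x_a\} - \{x'_{i+1},\ldots,x'_a\})$ should collapse to $h_l(\leftX - \rightX)$, again by a telescoping/generating-function identity for the virtual alphabet $\leftX - \rightX$, using $H(\leftX - \rightX, t) = \prod_i \frac{1 - x_i' t}{1 - x_i t}$ and its logarithmic derivative. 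In both cases, that the curvature is sent to the correct element is exactly what it means for the substitution to define a dg functor (objects go to objects since the defining equation $\d^2 = F$ is preserved, morphism complexes are extended $\k$-linearly).

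Finally, compatibility with the isomorphism $\VS\AS \cong \YS\AS$ means the triangle of functors commutes, i.e.\ composing $\mathsf{Y}\!\AS \to \VS\AS$ with $\VS\AS \to \YS\AS$ agrees with $\mathsf{Y}\!\AS \to \YS\AS$. On the level of deformation parameters this is the identity
\[
\sum_{l=1}^a e_{l-1}(x_1,\ldots,x_{i-1},x'_{i+1},\ldots,x'_a)\, u_l
=
\sum_{l=1}^a h_{l-1}\big(\{x_i,\ldots,x_a\} - \{x'_{i+1},\ldots,x'_a\}\big)\, v_l
\]
after substituting $v_l = (-1)^{l-1}\sum_{k\geq l} e_{k-l}(\leftX) u_k$, which should follow from the relation \eqref{eq:HE2} between complete and elementary symmetric functions together with the splitting $\leftX = \{x_1,\ldots,x_{i-1}\} \cup \{x_i,\ldots,x_a\}$. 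I expect the main obstacle to be bookkeeping the alphabet-splitting identities cleanly — keeping track of which variables appear primed versus unprimed in the one-step-difference expansions and matching indices correctly — rather than anything conceptually deep; the generating-function formalism of \S\ref{ss:symfctns} (in particular $E(a_1\X_1 + a_2\X_2,t) = E(\X_1,t)^{a_1}E(\X_2,t)^{a_2}$ and the manipulations in Lemma \ref{lem:somerelations}) should make each step routine, and I would organize the whole proof around generating functions to minimize the combinatorics.
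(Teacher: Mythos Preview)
Your approach is correct and essentially matches the paper's. One organizational point worth noting: the paper verifies only the $\YS$ case directly (using exactly the telescoping identity $e_k(\leftX)-e_k(\rightX) = \sum_i (x_i-x'_i)\, e_{k-1}(x_1,\ldots,x_{i-1},x'_{i+1},\ldots,x'_a)$ you describe), and then checks compatibility by composing \eqref{eq:y to u} with the substitution $u_k \mapsto (-1)^{k-1}\sum_{l\geq k} h_{l-k}(\leftX)v_l$ and simplifying to recover \eqref{eq:y to v}. This makes your separate verification of the $\VS$ case redundant---once compatibility is established, the fact that \eqref{eq:y to v} preserves curvature follows automatically from the $\YS$ case plus the isomorphism $\VS\AS\cong\YS\AS$. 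So you can drop your middle step and save yourself the second telescoping computation.
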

\begin{proof}
The isomorphism between $\VS\AS$ and $\US\AS$ follows from 
the discussion in \S \ref{ss:curvature discussion}, 
which is the $1$-strand case of Proposition \ref{prop:YSSBimToVSSBim}.

Similarly, the substitutions \eqref{eq:y to v} and \eqref{eq:y to u} define algebra homomorphisms
$\k[\leftX,\rightX,\Y] \to \k[\leftX,\rightX,\V]$ and $\k[\leftX,\rightX,\Y] \to \k[\leftX,\rightX,\U]$
that determine dg functors which are the identity on objects
and are given on morphism complexes by extension of scalars 
$\k[\leftX,\rightX,\V]\otimes_{\k[\leftX,\rightX,\Y]}(-)$ and 
$\k[\leftX,\rightX,\U]\otimes_{\k[\leftX,\rightX,\Y]}(-)$.

To confirm that these functors are indeed well-defined, 
it suffices to show that the algebra maps preserve curvature. 
We first confirm this for the functor $\mathsf{Y} \! \AS \to \YS\AS$. 
To begin, we explicitly write $e_k(\leftX)-e_k(\rightX)$ as an element of 
the ideal generated by $x_i-x'_i \in \k[\leftX,\rightX]$. 
Consider the difference of monomials:
\[
x_{i_1}\cdots x_{i_k} - x'_{i_1} \cdots x'_{i_k} = 
\sum_{j=1}^k x_{i_1}\cdots x_{i_{j-1}} (x_{i_j}-x'_{i_j}) x'_{i_{j+1}} \cdots x'_{i_k} \, .
\]
Summing over all sequences with $1\leq i_1< \cdots < i_k\leq a$ gives the identity
\[
e_k(\leftX)-e_k(\rightX) = 
\sum_{i=1}^a e_{k-1}(x_1,\ldots,x_{i-1},x'_{i+1},\ldots,x'_a) \cdot (x_i-x'_i) \, .
\]
Hence, we compute
\begin{align*}
\sum_{k=1}^a \big( e_k(\leftX) - e_k(\rightX) \big) u_k 
&= \sum_{k=1}^a \sum_{i=1}^a e_{k-1}(x_1,\ldots,x_{i-1},x'_{i+1},\ldots,x'_a) \cdot (x_i-x'_i) \cdot u_k \\
&= \sum_{i=1}^a (x_i-x'_i) \left( \sum_{k=1}^a  e_{k-1}(x_1,\ldots,x_{i-1},x'_{i+1},\ldots,x'_a) u_k \right) \, ,
\end{align*}
which proves that the algebra map $\k[\leftX,\rightX,\Y]\rightarrow \k[\leftX,\rightX,\U]$
defined by \eqref{eq:y to u} preserves curvature.

Since $\YS\AS \cong \VS\AS$, the proof is completed by showing that
the algebra maps $\k[\leftX,\rightX,\Y] \to \k[\leftX,\rightX,\U]$ 
and $\k[\leftX,\rightX,\Y] \to \k[\leftX,\rightX,\V]$ 
are intertwined by the isomorphism $\k[\leftX,\rightX,\U] \cong \k[\leftX,\rightX,\V]$.
Indeed:
\begin{align*}
y_i 
&\mapsto \sum_{k=1}^a  e_{k-1}(x_1,\ldots,x_{i-1},x'_{i+1},\ldots,x'_a) u_k \\
&\mapsto \sum_{1 \leq k \leq l \leq a} (-1)^{k-1} e_{k-1}(x_1,\ldots,x_{i-1},x'_{i+1},\ldots,x'_a) h_{l-k}(x_1,\ldots,x_a) v_l \\
&= \sum_{l=1}^a h_{l-1}\big( \{x_1,\ldots,x_a \} - \{x_1,\ldots,x_{i-1},x'_{i+1},\ldots,x'_a \}\big) v_l \\
&= \sum_{l=1}^a h_{l-1}\big( \{x_i,\ldots,x_a \} - \{x'_{i+1},\ldots,x'_a \}\big) v_l \, . \qedhere
\end{align*}
\end{proof}

\subsection{Alphabet soup III: interpolation coordinates}
\label{ss:interpolation}
Retain the notation from \S \ref{ss:ytouv} and consider the identity bimodule
$\oone_{(1,\ldots,1)} \in \mathsf{Y} \! \AS$. In $\End_{\mathsf{Y} \!
\AS}(\oone_{(1,\ldots,1)})$, we have that $x_i = x'_i$, thus we find that the dg
functor $\mathsf{Y} \! \AS \to \VS\AS$ induces a map
$
\k[\leftX,\Y]\cong \End_{\mathsf{Y} \! \AS}(\oone_{(1,\ldots,1)})
 \to \End_{\VS\AS}(\oone_{(1,\ldots,1)}) \cong \k[\leftX,\V]$ sending
\[y_i \mapsto \sum_{r=1}^a h_{r-1}(x_i) v_r \, .\]
This motivates the following.
\begin{defi}\label{def:ic} 
Set
\begin{equation}\label{eq:y and v}
y_i := \sum_{r=1}^a x_i^{r-1} v_r \in \k[\X,\V] \, .
\end{equation}
In other words, $y_i$ is defined to be the polynomial of degree $|\X|-1$ in
$x_i$ with coefficients $v_r$. We call these coefficients \emph{interpolation
coordinates} and highlight that they are independent of $i$.
\end{defi}
Using \eqref{eq:y and v}, we can view $\k[\X,\Y]$ as a subalgebra of
$\k[\X,\V]$. Note, however, that this is a special case of \eqref{eq:y to v},
which is only compatible with Proposition \ref{prop:y to u and v} when $\leftX =
\rightX$. Nevertheless, symmetric functions in the alphabet $\Y$ give
well-defined elements of $\Sym(\X)[\V] \cong \End_{\VS(\SSBim)}(\oone_{(a)})$,
thus we can consider them as operators acting (on the left or right) on suitable
1-morphisms in $\VS(\SSBim)$. For the duration of the paper, the variables $y_i$
will always be understood in this context. (See Remark \ref{rem:subtleV} above,
which addresses a subtle point concerning these actions.)

Our terminology in Definition \ref{def:ic} is chosen since we 
can express the $v_i$ in terms of $x_i,y_i$ by formulae familiar from interpolation theory.
We now make this precise,
and establish further identities involving the interpolation coordinates.
We will make use of identities from \S \ref{ss:h reduction} 
and the Haiman determinants from \S \ref{ss:haiman dets}.

\begin{example}\label{ex:2var}
When $a=2$, the elements $y_i\in \k[x_1,x_2,v_1,v_2]$ are given by 
$y_i =v_1 + x_i v_2$ and
\[
v_2= \frac{y_1-y_2}{x_1-x_2} =  \frac{\hdet(y,1)}{x_1-x_2} 
	= -\frac{\Delta_{\mathcal{M}_1}(\X,\Y)}{\Delta(\X)} \, , \quad 
v_1 = \frac{x_1y_2-x_2y_1}{x_1-x_2} 
	= \frac{\hdet(x,y)}{x_1-x_2} = \frac{\Delta_{\mathcal{M}_2}(\X,\Y)}{\Delta(\X)}
\]
for $\mathcal{M}_1=\{1,y\}$ and $\mathcal{M}_2=\{x,y\}$.
\end{example}

Generalizing Example \ref{ex:2var},
the following gives the general rule to recover the interpolation coordinates
from the $x_i$ and $y_i$.
(See also Lemma \ref{lemma:v from y2} below for another formulation.)

\begin{lemma}\label{lemma:v from y}
Let $\X=\{x_1,\ldots,x_a\}$ and $\Y=\{y_1,\ldots,y_a\}$, then we have
\[
v_{a-k+1} = \frac{\hdet(x^{a-1},\dots,x^{a-k+1},y,x^{a-k-1},\dots x^0)}{\hdet(x^{a-1},\dots, x^0)} 
	=  (-1)^{a-k}\frac{\Delta_{\mathcal{M}_k}(\X,\Y)}{\Delta(\X)}
\]
where $\mathcal{M}_k=\{x^{a-1},\ldots,\widehat{x^{a-k}},\ldots,1, y\}$.
\end{lemma}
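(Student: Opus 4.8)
The plan is to prove the statement by setting up the right linear system and applying Cramer's rule. Recall from Definition \ref{def:ic} that $y_i = \sum_{r=1}^a x_i^{r-1} v_r$ for $1 \leq i \leq a$. This is a system of $a$ linear equations in the $a$ unknowns $v_1,\ldots,v_a$, whose coefficient matrix is the transpose of the Vandermonde matrix $(x_i^{j-1})_{1 \leq i,j \leq a}$. Since $\hdet(x^{a-1},\ldots,x^0) = \det(x_j^{a-i})_{1 \leq i,j \leq a} = \Delta(\X)$ (by the discussion following Definition \ref{def:monomial list}, using $\mathcal{M}_a(\emptyset)$), this matrix is invertible over the fraction field, and Cramer's rule expresses each $v_r$ as a ratio of determinants.

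First I would write the system in matrix form. Ordering the monomial exponents in decreasing order $a-1, a-2, \ldots, 1, 0$ to match the convention in Definition \ref{def:monomial list}, the equation $y_i = \sum_{r=1}^a x_i^{r-1} v_r$ becomes $y_i = \sum_{s=1}^a x_i^{a-s} v_{a-s+1}$. So if we set $w_s := v_{a-s+1}$, the system reads $y_i = \sum_{s=1}^a x_i^{a-s} w_s$, i.e.\ the matrix $(x_i^{a-s})_{i,s}$ times the column $(w_s)_s$ equals $(y_i)_i$. The determinant of $(x_i^{a-s})_{i,s}$ is $\hdet(x^{a-1},\ldots,x^0) = \Delta(\X)$. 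By Cramer's rule, $w_k = v_{a-k+1}$ equals the determinant of the matrix obtained by replacing the $k$-th column (the column of $x_i^{a-k}$'s) with the column $(y_i)_i$, all divided by $\Delta(\X)$. That replaced determinant is precisely $\hdet(x^{a-1},\ldots,\widehat{x^{a-k}},\ldots,x^0,y)$ once we account for column reordering — note the moved column $y$ ends up in position $k$, but $\hdet$ as defined in \eqref{eq:Haiman} takes an ordered tuple, so I would move $y$ to the end, picking up a sign $(-1)^{a-k}$ from the transposition of columns $k$ through $a$.

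Then I would match this with the Haiman-determinant notation. By Definition \ref{def:shapes and dets} and the convention there, $\Delta_{\mathcal{M}_k}(\X,\Y) = \hdet(\mathcal{M}_k)$ where $\mathcal{M}_k = \{x^{a-1},\ldots,\widehat{x^{a-k}},\ldots,1,y\}$, with the monomials in $x$ written in decreasing order followed by $y$. Comparing, $\hdet(x^{a-1},\ldots,x^{a-k+1},y,x^{a-k-1},\ldots,x^0)$ differs from $\hdet(\mathcal{M}_k) = \hdet(x^{a-1},\ldots,x^{a-k+1},x^{a-k-1},\ldots,x^0,y)$ by moving the row/column $y$ past the $a-k$ later entries, which contributes the factor $(-1)^{a-k}$. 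This gives the second equality $v_{a-k+1} = (-1)^{a-k}\Delta_{\mathcal{M}_k}(\X,\Y)/\Delta(\X)$. Finally I would sanity-check against Example \ref{ex:2var}: for $a=2$, $k=1$ gives $v_2 = \hdet(y,x^0)/\Delta(\X) = (y_1-y_2)/(x_1-x_2)$ and $(-1)^{2-1}\Delta_{\mathcal{M}_1}/\Delta(\X) = -\Delta_{\{1,y\}}/\Delta(\X)$, matching; $k=2$ gives $v_1 = \hdet(x^1,y)/\Delta(\X)$ and $(-1)^0\Delta_{\mathcal{M}_2}/\Delta(\X) = \Delta_{\{x,y\}}/\Delta(\X)$, matching.

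The main obstacle here is purely bookkeeping: getting the sign $(-1)^{a-k}$ right, which requires care about (a) whether we index monomials in increasing or decreasing exponent order, (b) where Cramer's rule inserts the $y$-column, and (c) the ordering convention baked into the definition of $\hdet$ and of $\mathcal{M}_k$. There is no real mathematical content beyond Cramer's rule and the Vandermonde determinant evaluation; the proof should be only a few lines once the conventions are pinned down. I would present it by stating the linear system, invoking Cramer's rule, and then doing the sign/ordering comparison explicitly, perhaps in a single displayed computation.
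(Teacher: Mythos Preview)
Your proposal is correct and takes essentially the same approach as the paper. The paper's proof is a one-liner: it expands the $k$-th row of the numerator determinant (the $y$-row) via $y_j=\sum_r v_r x_j^{r-1}$ and observes that only the $r=a-k+1$ term survives, which is precisely the multilinearity step underlying Cramer's rule; your handling of the sign $(-1)^{a-k}$ for the second equality is also fine and is left implicit in the paper.
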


\begin{proof}
Consider the matrix defining the determinant 
$\hdet(x^{a-1},\dots,x^{a-k+1},y,x^{a-k-1},\dots x^0)$ 
and rewrite its $k$th row as:
\[
\sqmatrix{y_1&  \cdots  & y_a} 
= \sum_{r=1}^a v_r\sqmatrix{x_1^{r-1} & \cdots & x_a^{r-1}} . \qedhere
\]
\end{proof}

In Definition~\ref{def:ic} we have defined the variables $y_i$ in terms of $x_i$
and a family of interpolation coordinates $v_r$ which depends on the cardinality
of the alphabet $\X$. Sometimes, however, it is useful to take the opposite
viewpoint and start with alphabets $\X$ and $\Y$ and the assumption that the
variables $y_i$ can be interpolated by polynomials in $x_i$ (with coefficients
then determined by Lemma~\ref{lemma:v from y}). 
For example, in later parts of the paper, we will need to understand the behavior of
the interpolation coordinates $v_r$ under inclusions of 
the alphabets $\X$ and $\Y$.

\begin{proposition}\label{prop:va and vb} 
Consider integers $1 \leq c \leq d$ and alphabets
\[
\X^{(c)} := \{x_1,\ldots,x_c\} \subset \X^{(d)} := \{x_1,\ldots,x_d\}
\, , \quad 
\Y^{(c)} := \{y_1,\ldots,y_c\} \subset \Y^{(d)} := \{y_1,\ldots,y_d\}
\]
with associated collections of interpolation coordinates 
$\V^{(c)} = \{v_1^{(c)},\ldots,v_c^{(c)}\}$ and $\V^{(d)}=\{v_1^{(d)},\ldots,v_d^{(d)}\}$.
The standard inclusion $\k[\X^{(c)},\Y^{(c)}] \hookrightarrow \k[\X^{(d)},\Y^{(d)}]$ 
extends uniquely to an algebra map 
\[
\k[\X^{(c)},\V^{(c)}] \hookrightarrow \k[\X^{(d)},\V^{(d)}]
\]
sending
\begin{equation}\label{eq:va and vb}
v_k^{(c)} \mapsto  v_k^{(d)} + (-1)^{c-k}\sum_{l=c+1}^d \Schur_{(l-c-1|c-k)}(\X^{(c)}) v_l^{(d)}.
\end{equation}
\end{proposition}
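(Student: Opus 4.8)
The plan is to verify the claimed formula \eqref{eq:va and vb} by a direct computation using the defining relation \eqref{eq:y and v} of interpolation coordinates, together with the $h$-reduction results of \S\ref{ss:h reduction}. First I would record precisely what the two sides of the asserted algebra map have to satisfy: the map must fix $x_1,\dots,x_c$ and send $y_i \mapsto y_i$ for $1\leq i\leq c$; since $y_i = \sum_{r=1}^c x_i^{r-1} v_r^{(c)}$ in $\k[\X^{(c)},\V^{(c)}]$ while $y_i = \sum_{r=1}^d x_i^{r-1} v_r^{(d)}$ in $\k[\X^{(d)},\V^{(d)}]$, the only possible extension is forced (uniqueness), and existence amounts to checking that the proposed images of the $v_k^{(c)}$ are compatible with these two expansions. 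Concretely, I would substitute the right-hand side of \eqref{eq:va and vb} into $\sum_{k=1}^c x_i^{k-1} v_k^{(c)}$ and check it equals $\sum_{r=1}^d x_i^{r-1} v_r^{(d)}$ for $1\leq i\leq c$.

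The key step is then the following identity: for $1\leq i\leq c$ and $c+1\leq l\leq d$,
\begin{equation*}
x_i^{l-1} = \sum_{k=1}^{c} (-1)^{c-k}\Schur_{(l-c-1|c-k)}(\X^{(c)})\, x_i^{k-1}.
\end{equation*}
This is exactly Corollary~\ref{cor:monomial reduction} (Reducing monomials), applied with $m = l-1 \geq c$, $a = c$, and the index shift matching the hook parameters $\Schur_{(m-a|a-k)} = \Schur_{(l-1-c|c-k)}$. Granting this, I would compute
\begin{align*}
\sum_{k=1}^{c} x_i^{k-1} \left( v_k^{(d)} + (-1)^{c-k}\sum_{l=c+1}^d \Schur_{(l-c-1|c-k)}(\X^{(c)}) v_l^{(d)} \right)
&= \sum_{k=1}^c x_i^{k-1} v_k^{(d)} + \sum_{l=c+1}^d \left( \sum_{k=1}^c (-1)^{c-k}\Schur_{(l-c-1|c-k)}(\X^{(c)}) x_i^{k-1} \right) v_l^{(d)} \\
&= \sum_{k=1}^c x_i^{k-1} v_k^{(d)} + \sum_{l=c+1}^d x_i^{l-1} v_l^{(d)} = \sum_{r=1}^d x_i^{r-1} v_r^{(d)} = y_i,
\end{align*}
which is precisely $y_i$ as computed in $\k[\X^{(c)},\V^{(c)}]$. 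Since the $y_i$ for $1\leq i\leq c$ together with $x_1,\dots,x_c$ generate the subalgebra $\k[\X^{(c)},\Y^{(c)}]$, and $\{x_1,\dots,x_c, v_1^{(c)},\dots,v_c^{(c)}\}$ is an algebraically independent generating set of $\k[\X^{(c)},\V^{(c)}]$ (equivalently $\{x_1,\dots,x_c,y_1,\dots,y_c\}$ is), this shows the assignment extends the standard inclusion and is the unique such extension.

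I do not expect any serious obstacle here; the only thing requiring care is matching indices and hook-shape conventions between \eqref{eq:va and vb} and Corollary~\ref{cor:monomial reduction} (in particular that $\Schur_{(i|j)} = 0$ when $i<0$ or $j<0$, so that the reduction terminates correctly and the formula degenerates appropriately when $l = c+1$, where $\Schur_{(0|c-k)} = e_{c-k}$). A secondary point worth a sentence is well-definedness: one should note that $\k[\X^{(c)},\V^{(c)}] \to \k[\X^{(d)},\V^{(d)}]$ is being defined on the polynomial generators $x_j$ (sent to $x_j$) and $v_k^{(c)}$ (sent via \eqref{eq:va and vb}), so there is nothing to check for it to be an algebra homomorphism; the content is only that it restricts to the standard inclusion on the subalgebra $\k[\X^{(c)},\Y^{(c)}]$, which is the computation above, and uniqueness follows since $\k[\X^{(c)},\Y^{(c)}]$ and $\k[\X^{(c)},\V^{(c)}]$ have the same fraction field (indeed $\k[\X^{(c)},\V^{(c)}] = \k[\X^{(c)}][\,y_1,\dots,y_c\,]$ by Lemma~\ref{lemma:v from y}). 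Alternatively, one can phrase the whole argument as an identity of the interpolating polynomials: the degree-$(d-1)$ polynomial $P(t) = \sum_{r=1}^d v_r^{(d)} t^{r-1}$ and the degree-$(c-1)$ polynomial $Q(t) = \sum_{k=1}^c v_k^{(c)} t^{k-1}$ agree at the $c$ points $t = x_1,\dots,x_c$, and \eqref{eq:va and vb} is the explicit expression of the coefficients of $Q$ in terms of those of $P$ obtained by reducing $t^{l-1} \bmod \prod_{j=1}^c(t-x_j)$ — which is again Corollary~\ref{cor:monomial reduction}.
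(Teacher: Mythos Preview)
Your proposal is correct and follows essentially the same argument as the paper: define the map on generators by \eqref{eq:va and vb}, then verify $\phi(y_i)=y_i$ for $1\le i\le c$ using Corollary~\ref{cor:monomial reduction}, and deduce uniqueness from the fact that $\Delta(\X^{(c)})\,v_k^{(c)}\in\k[\X^{(c)},\Y^{(c)}]$ (equivalently, same fraction field). One small slip: your parenthetical ``indeed $\k[\X^{(c)},\V^{(c)}] = \k[\X^{(c)}][y_1,\dots,y_c]$'' is not literally true (the Vandermonde change of variables is only invertible after localizing), though the fraction-field statement you actually use is correct.
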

\begin{proof}
We first prove uniqueness.  
Suppose $\phi,\psi\colon \k[\X^{(c)},\V^{(c)}] \hookrightarrow \k[\X^{(d)},\V^{(d)}]$ 
are two algebra maps extending the inclusion
$\k[\X^{(c)},\Y^{(c)}] \hookrightarrow \k[\X^{(d)},\Y^{(d)}]$.
Since $\Delta(\X^{(c)}) v_k^{(c)} \in \k[\X^{(c)} , \Y^{(c)}]$, 
it follows that $\phi(v_k^{(c)}) - \psi(v_k^{(c)})$ is an element of $\k[\X^{(d)},\Y^{(d)}]$
that is annihilated by $\Delta(\X^{(c)})$, hence zero. 

Now, define $\phi\colon \k[\X^{(c)},\V^{(c)}] \hookrightarrow \k[\X^{(d)},\V^{(d)}]$
to be the algebra map defined by $\phi(x_i) = x_i$ 
and the substitution rule \eqref{eq:va and vb}.  
It suffices to verify that $\phi(y_i)=y_i$ for $1 \leq i \leq c$.
Indeed:
\begin{align*}
\phi(y_i)
&= \sum_{k=1}^c x_i^{k-1} \phi(v_k^{(c)})
= \sum_{k=1}^c x_i^{k-1} \left(  v_k^{(d)}  
	+ \sum_{l=c+1}^d (-1)^{c-k}\Schur_{(l-c-1|c-k)}(\X^{(c)}) v_l^{(d)}\right)\\
&= \sum_{k=1}^c x_i^{k-1} v_k^{(d)}  + \sum_{l=c+1}^d v_l^{(d)} 
	\sum_{k=1}^c (-1)^{c-k}\Schur_{(l-c-1|c-k)}(\X^{(c)}) x_i^{k-1} \\
&\stackrel{\text{Cor. } \ref{cor:monomial reduction}}{=} 
	\sum_{k=1}^c x_i^{k-1} v_k^{(d)}  + \sum_{l=c+1}^d x_i^{l-1} v_l^{(b)}
= \sum_{k=1}^d x_i^{k-1} v_k^{(d)}
=y_i . \qedhere
\end{align*}
\end{proof}

There is an interesting generalization of Proposition \ref{prop:va and vb} that
we will use (in various forms) throughout this paper.

\begin{lemma}\label{lemma:curvature stability}
\label{lem:OverV}
Let $\leftX^{(c)} = \{x_1,\ldots,x_c\}$ and $\rightX^{(c)} = \{x_1',\ldots,x_c'\}$.
For each subset $S\subset \{1,\ldots,c\}$, 
consider the element of $\k[\leftX^{(c)},\rightX^{(c)},\V^{(c)}]$ defined by
\[
 Z_S^{(c)} := \sum_{k=1}^c h_k(\leftX_S - \rightX_S) v_k^{(c)},
\]
where $\leftX_S \subset \X^{(c)}$ and $\rightX_S \subset \rightX^{(c)}$ 
denote the corresponding subalphabets.
For $c\leq d$, the algebra map
\begin{equation}\label{eq:stabilitymap}
\phi\colon \k[\leftX^{(c)},\rightX^{(c)},\V^{(c)}] \hookrightarrow \k[\leftX^{(d)},\rightX^{(d)},\V^{(d)}]
\end{equation}
determined by $x_i\mapsto x_i$, $x_i'\mapsto x_i'$, and the rule \eqref{eq:va and vb}
sends $Z_S^{(c)}\mapsto Z_S^{(d)} := \sum_{k=1}^d h_k(\leftX_S - \rightX_S) v_k^{(d)}$.
\end{lemma}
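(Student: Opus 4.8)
The plan is to reduce the statement to the single-alphabet case already treated in Proposition~\ref{prop:va and vb}. The key observation is that $Z_S^{(c)}$ only involves the subalphabets $\leftX_S$ and $\rightX_S$, so we should first record how the difference $\leftX_S - \rightX_S$ interacts with the map $\phi$. Since $\phi$ fixes every $x_i$ and every $x_i'$, it sends symmetric functions in $\leftX_S - \rightX_S$ to symmetric functions in $\leftX_S - \rightX_S$ (viewed now inside the larger polynomial ring); in particular $\phi(h_k(\leftX_S - \rightX_S)) = h_k(\leftX_S - \rightX_S)$ for all $k$. Thus the only genuine content is tracking the image of the interpolation coordinates $v_k^{(c)}$, which is dictated by \eqref{eq:va and vb}.

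The main computation is then to show
\[
\phi\!\left(\sum_{k=1}^c h_k(\leftX_S - \rightX_S) v_k^{(c)}\right)
= \sum_{k=1}^c h_k(\leftX_S - \rightX_S)\!\left(v_k^{(d)} + (-1)^{c-k}\sum_{l=c+1}^d \Schur_{(l-c-1|c-k)}(\X^{(c)}) v_l^{(d)}\right)
\]
equals $\sum_{l=1}^d h_l(\leftX_S - \rightX_S) v_l^{(d)}$. The terms with $l \leq c$ match immediately, so it remains to see that for each fixed $l$ with $c+1 \leq l \leq d$ one has
\[
\sum_{k=1}^c (-1)^{c-k} h_k(\leftX_S - \rightX_S)\, \Schur_{(l-c-1|c-k)}(\X^{(c)}) = h_l(\leftX_S - \rightX_S).
\]
This is exactly an instance of $h$-reduction: one applies Lemma~\ref{lemma:h reduction} (or, more precisely, Corollary~\ref{cor:h of difference reduction}) with the virtual alphabet $\leftX_S - \rightX_S$ of cardinality at most $c$ (since $|S| \leq c$, both $\leftX_S$ and $\rightX_S$ have cardinality $|S| \leq c$, so $h_j(\leftX_S - \rightX_S)$ for $j > c$ reduces), noting that the hook Schur functions there are evaluated on $\X^{(c)}$. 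I would double-check the index bookkeeping: writing $l = c + r$ with $r \geq 1$, the desired identity reads $h_{c+r}(\leftX_S-\rightX_S) = \sum_{k=1}^c (-1)^{c-k}\Schur_{(r-1|c-k)}(\X^{(c)}) h_k(\leftX_S-\rightX_S)$, which is precisely \eqref{eq:specialrel2} with $\X \mapsto \leftX_S - \rightX_S$, $\Y \mapsto \rightX_S \cup (\X^{(c)} \setminus \leftX_S)$ (an alphabet of cardinality $\leq c$), after observing that $\X^{(c)} = (\leftX_S - \rightX_S) + \Y$ as virtual alphabets so that $\Schur_{(r-1|c-k)}(\X^{(c)})$ is the correct coefficient.

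The one subtlety — and the step most likely to need care — is making sure the hypothesis $|\Y| \leq c$ of Lemma~\ref{lemma:h reduction} is genuinely satisfied in the needed application, and that the ``$i=0$ term'' vanishes as in the proof of Corollary~\ref{cor:h of difference reduction} so that the sum indeed starts at $k=1$ rather than $k=0$. Concretely: $\leftX_S - \rightX_S$ has virtual cardinality $0$, but its ``effective'' cardinality for the purposes of $h$-reduction is $|S| \leq c$, and the relevant complementary alphabet $\Y$ has cardinality $c - |S| + |S| = c$ when one writes $\X^{(c)} = (\leftX_S - \rightX_S) + \Y$; the top hook $(r-1 \mid c)$ has $c+1$ rows and so its Schur function vanishes on an alphabet that is a difference reducing to cardinality $\leq c$, exactly as in Corollary~\ref{cor:h of difference reduction}. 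Once this is confirmed, the lemma follows by linearity over $\k[\leftX^{(d)},\rightX^{(d)}]$, summing the matched contributions. I would present this as: (1) reduce to showing $\phi$ preserves each coefficient and uses \eqref{eq:va and vb}; (2) split the image sum into $l \leq c$ and $l > c$ parts; (3) apply $h$-reduction to collapse the $l > c$ part; (4) reassemble to get $Z_S^{(d)}$.
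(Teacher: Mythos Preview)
Your proposal is correct and follows essentially the same approach as the paper: substitute \eqref{eq:va and vb}, split into the $l\leq c$ and $l>c$ parts, and collapse the latter via Lemma~\ref{lemma:h reduction} with $\X\mapsto \leftX_S-\rightX_S$ and $\Y\mapsto (\X^{(c)}-\leftX_S)+\rightX_S$ (an honest alphabet of cardinality $c$). Your explicit discussion of why the $k=0$ term vanishes (the hook $(r-1\mid c)$ has $c+1$ rows, exceeding $|\X^{(c)}|=c$) is a point the paper leaves implicit, so it is worth keeping.
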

\begin{proof}
We compute:
\begin{align*}
\phi(Z_S^{(c)})
&= \sum_{k=1}^c h_k(\X_S-\X_S') \phi(v_k^{(c)})
= \sum_{k=1}^c h_k(\X_S-\X_S') \left( v_k^{(d)}  
	+ \sum_{l=c+1}^d  (-1)^{c-k}\Schur_{(l-c-1|c-k)}(\X^{(c)}) v_l^{(d)}\right)\\
&= \sum_{k=1}^c h_k(\X_S-\X_S')  v_k^{(d)}  + \sum_{l=c+1}^d v_l^{(d)} 
	\sum_{k=1}^c (-1)^{c-k}\Schur_{(l-c-1|c-k)}(\X^{(c)}) h_k(\X_S-\X_S')  \\
&= \sum_{k=1}^c h_k(\X_S-\X_S') v_k^{(d)}  + \sum_{l=c+1}^d h_l(\X_S-\X_S')  v_l^{(d)}
= \sum_{k=1}^d h_k(\X_S-\X_S') v_k^{(d)} = Z_S^{(d)} .
\end{align*}
In passing to the last line, 
we have used Lemma \ref{lemma:h reduction} 
with $\X = \X_S-\X_S'$ and $\Y = (\X^{(c)} - \X_S) + \X_S'$
(which has cardinality $c$).
\end{proof}

\begin{remark}
Lemma \ref{lemma:curvature stability} establishes a crucial ``stability'' property for 
the expressions $Z_S^{(c)}$ under the inclusion \eqref{eq:stabilitymap}.
The fact that $y_i\mapsto y_i$ in Proposition \ref{prop:va and vb} is essentially the
special case of Lemma \ref{lemma:curvature stability} corresponding to $S=\{i\}$
and $\rightX^{(c)}=0$ (hence $Z_S = x_iy_i$).
\end{remark}

In this section, we have discussed the variables $y_i$ associated with a single
$a$-colored strand. Indeed, as mentioned above, any symmetric polynomial in the
alphabet $\Y$ gives a well-defined element in $\Sym(\X)[\V] =
\End_{\VS(\SSBim)}(\oone_{(a)})$. Paralleling the passage from \S
\ref{ss:curvature discussion} to \S \ref{ss:v-curved cxs}, it is possible to
pass from the one-strand case to the general case, since curvature in
$\YS(\SSBim)$ is modeled on the strand-wise $h \Delta$- and $\Delta
e$-curvatures. %\eqref{eq:h(X-X)} and \eqref{eq:e(X)-e(X)}.
See \S \ref{ss:two strand V cats} and \S \ref{ss:FT setup}
for aspects of the (pure) $2$-strand and $m$-strand cases, 
respectively.

\begin{remark}
The reader familiar with the Hilbert scheme $\Hilb_a(\C^2)$ should note that interpolation 
coordinates arise naturally in its study.
Compare the generators $\{e_1(\X),\ldots,e_a(\X),v_1,\ldots,v_a\}$ 
of $\Sym(\X)[\V] = \End_{\VS(\SSBim)}(\oone_{(a)})$ to the coordinates on 
the open affine set $U_x \subset \Hilb_a(\C^2)$ in \cite[Proposition 3.6.3]{Haiman}.
\end{remark}

\subsection{Alphabet soup IV: the two-strand categories}\label{ss:two strand V cats}

In this section we set up some framework and notation for working
with the categories $\VS_{a,b}$, i.e. the $m=2$ case of \eqref{eq:Vnotation}. 
This section can be skipped on first reading, and referred to as needed.

First, we will use the abbreviation
\[
\CS_{a,b} := \CS({}_{a,b}\SSBim_{a,b})
\]
for the relevant dg category of (uncurved) complexes of singular Soergel bimodules.
We fix alphabets as follows:
\begin{gather*}
\leftX = \{x_1,\ldots,x_{a+b}\} \, , \quad \leftX_1=\{x_1,\ldots,x_{a}\} \, , \quad \leftX_2=\{x_{a+1},\ldots,x_{a+b}\} \\
\rightX = \{x'_1,\ldots,x'_{a+b}\} \, ,\quad \rightX_1 =\{x'_1,\ldots,x'_{a}\} \, , \quad \rightX_2=\{x'_{a+1},\ldots,x'_{a+b}\} \\
\Y = \{y_1,\ldots,y_{a+b}\} \, ,\quad \Y_1=\{y_1,\ldots,y_{a}\} \, ,\quad \Y_2=\{y_{a+1},\ldots,y_{a+b}\}.
\end{gather*}
We denote\footnote{Our notation here indicates that $\V_L^{(a)}$ and $\V_R^{(b)}$ should be viewed as associated 
to the ``left'' $a$-colored and ``right'' $b$-colored strands of a $2$-strand pure braid, when drawn vertically. 
They would be called $\V_1$ and $\V_2$ in the language of \S \ref{ss:v-curved cxs}.} 
the deformation parameters associated with the first and second entries of $\aa = (a,b)$ by
\begin{equation}\label{eq:2strandVvars}
\V_L^{(a)} := \{v_{L,1}^{(a)},\ldots,v_{L,a}^{(a)}\}
\, , \quad
\V_R^{(b)} := \{v_{R,1}^{(b)},\ldots,v_{R,b}^{(b)}\}
\end{equation}
respectively, and let $\V:=\V_L^{(a)}\cup \V_R^{(b)}$.  
We regard $\k[\X,\Y]$ as a subalgebra of $\k[\X,\V]$ by the
identification
\begin{equation}
	\label{eq:def-yi-2strand}
y_i = \begin{cases} \sum_{k=1}^a x_i^{k-1} v_{L,k}^{(a)} & \text{if } 1\leq i\leq a\\
 \sum_{k=1}^b x_i^{k-1} v_{R,k}^{(b)} & \text{if } a< i\leq a+b \, ,
\end{cases}
\end{equation}
which is the (pure) $2$-strand analogue of \eqref{eq:y and v}.

We now observe that it is possible to specify objects in $\VS_{a,b}$ using a
reduced collection of deformation parameters.
\begin{definition}\label{def:V prime}
Let $\VSred_{a,b} := \CS_{\bar{Z}}({}_{a,b}\SSBim_{a,b}, \k[\bar{v}_1,\ldots,\bar{v}_b])$
be the category of curved complexes
where $\wt(\bar{v}_r) = \qdeg^{-2r} \tdeg^2$ and with curvature element
\[
\bar{Z} = \sum_{r=1}^b h_r(\leftX_2 - \rightX_2) \bar{v}_r\, .
\]
\end{definition}

In other words, $\VSred_{a,b}$ is the category of curved complexes over 
${}_{a,b}\SSBim_{a,b}$ where we have 
only deformed ``on the $b$-labeled strand.''

We now aim to introduce functors $\VSred_{a,b}\leftrightarrow \VS_{a,b}$ which
lift the identity on $\CS_{a,b}$. 
The first functor $\VS_{a,b}\rightarrow \VSred_{a,b}$ is just the specialization 
$v^{(a)}_{L,i}=0$ and $v^{(b)}_{R,j}=\bar{v}_j$ for all $i,j$.  
The functor in the other direction is more interesting. 
The starting point for its constructing is the observation that since
\[
f(\leftX_1 + \leftX_2- \rightX_1 - \rightX_2)|_X = 0
\]
for any $X \in {}_{a,b}\SSBim_{a,b}$ and any (positive degree) symmetric function $f$, 
we have that
\[
h_r(\leftX_1 - \rightX_1)  
= h_r\big( (\leftX_1 + \leftX_2  - \rightX_1 - \rightX_2) + (\rightX_2 - \leftX_2) \big)
= h_r(\rightX_2 - \leftX_2) \, .
\]
When $a \geq b$, an application of Lemma \ref{lemma:curvature stability} shows that if we set
\begin{equation}\label{eq:VLb}
v_{L,j}^{(b)} := 
v_{L,j}^{(a)}+(-1)^{b-j} \sum_{i=1}^{a-b} \Schur_{(i-1|b-j)}(\rightX_2) v_{L,b+i}^{(a)}
\end{equation}
for $1 \leq j \leq b$, then
\[
\sum_{k=1}^a h_k(\leftX_1 - \rightX_1) v_{L,k}^{(a)} = \sum_{k=1}^a h_k(\X_2'-\X_2) v_{L,k}^{(a)} 
= \sum_{k=1}^b h_k(\X_2'-\X_2) v_{L,k}^{(b)} \, .
\]
Further, this remains true (trivially) when $a<b$, 
provided we let $v_{L,j}^{(a)} = 0$ for $j>0$, 
since in this case \eqref{eq:VLb} gives
\[
v_{L,j}^{(b)} := 
\begin{cases}
v_{L,j}^{(a)} & \text{if } 1 \leq j \leq a \\
0 & \text{if } a < j \leq b \, .
\end{cases}
\]
It follows that
\begin{equation}\label{eq:Zred}
\begin{aligned}
Z &= \sum_{1\leq i\leq a} h_i(\leftX_1 - \rightX_1) v_{L,i}^{(a)} 
	+ \sum_{1\leq j\leq b} h_j(\leftX_2 - \rightX_2) v_{R,j}^{(b)} \\
&= \sum_{1\leq k\leq b} h_k(\rightX_2 - \leftX_2) v_{L,k}^{(b)} 
	+ \sum_{1\leq j\leq b} h_j(\leftX_2 - \rightX_2) v_{R,j}^{(b)} \\
&\stackrel{\eqref{eq:somerelations4}}{=}  -\sum_{1\leq j\leq k\leq b} 
	h_j(\leftX_2 - \rightX_2) h_{k-j}(\rightX_2-\leftX_2)v_{L,k}^{(b)} 
+  \sum_{1\leq j\leq b} h_j(\leftX_2 - \rightX_2) v_{R,j}^{(b)}\\
&=\sum_{1\leq j\leq b} h_j(\leftX_2 - \rightX_2) 
	\left(v_{R,j}^{(b)} - \sum_{j\leq k\leq b} h_{k-j}(\rightX_2 - \leftX_2) v_{L,k}^{(b)}\right) \, .
\end{aligned}
\end{equation}
This suggests the following.

\begin{prop}\label{prop:red functor} There is a functor $\VSred_{a,b}\rightarrow
\VS_{a,b}$  which is the identity on objects and morphisms in $\CS_{a,b}$, and
which sends
\begin{equation}\label{eq:vred}
\bar{v}_{j} \mapsto v_{R,j}^{(b)} 
	- \sum_{k=j}^b h_{k-j}(\rightX_2 - \leftX_2) v_{L,k}^{(b)} \, .
\end{equation}
\end{prop}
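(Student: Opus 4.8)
The plan is to build $\Phi\colon \VSred_{a,b}\to\VS_{a,b}$ as the identity on the underlying dg category $\CS_{a,b}$, together with a base change of polynomial coefficients along a curvature-preserving algebra homomorphism, following the pattern of the proofs of Propositions~\ref{prop:YSSBimToVSSBim} and~\ref{prop:y to u and v}. First I would note that for any $X\in{}_{a,b}\SSBim_{a,b}$ and any symmetric function $f$, left multiplication by $f(\leftX_2)$ and right multiplication by $f(\rightX_2)$ are central, so $h_{k-j}(\rightX_2-\leftX_2)$ and $\Schur_{(i-1|b-j)}(\rightX_2)$ define elements of $\cal{Z}({}_{a,b}\SSBim_{a,b})$ in cohomological degree $0$. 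Reading \eqref{eq:VLb} as defining $v_{L,k}^{(b)}\in\cal{Z}({}_{a,b}\SSBim_{a,b})\otimes\k[\V_L^{(a)}]$ (with $v_{L,j}^{(a)}:=0$ for $j>a$, which also covers $a<b$), the assignment \eqref{eq:vred} extends uniquely to an algebra homomorphism
\[
\psi\colon \k[\bar{v}_1,\ldots,\bar{v}_b]\longrightarrow \cal{Z}({}_{a,b}\SSBim_{a,b})\otimes\k[\V]
\, , \quad
\bar{v}_j\longmapsto v_{R,j}^{(b)}-\sum_{k=j}^b h_{k-j}(\rightX_2-\leftX_2)\,v_{L,k}^{(b)} \, ,
\]
and $\Phi$ acts on a morphism $\sum_i f_i\,p_i(\bar{v})$ of $\VSred_{a,b}$ (with $f_i\in\Hom_{\CS_{a,b}}$ and $p_i\in\k[\bar{v}_1,\ldots,\bar{v}_b]$) by $\sum_i f_i\,\psi(p_i)$; since the image of $\psi$ is central, this lies in $\Hom_{\CS_{a,b}}\otimes\k[\V]$, a morphism complex of $\VS_{a,b}$.

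The only substantive point is that $\Phi$ sends curved complexes to curved complexes, i.e. that $\psi$ carries the reduced curvature to the full one: $\psi(\bar{Z})|_X=Z|_X$ for all $X$. But this is exactly the chain of equalities in \eqref{eq:Zred}: using $h_i(\leftX_1-\rightX_1)|_X=h_i(\rightX_2-\leftX_2)|_X$ (from $f(\leftX_1+\leftX_2-\rightX_1-\rightX_2)|_X=0$), then the stability identity $\sum_{i=1}^a h_i(\leftX_1-\rightX_1)v_{L,i}^{(a)}=\sum_{k=1}^b h_k(\rightX_2-\leftX_2)v_{L,k}^{(b)}$ that follows from Lemma~\ref{lemma:curvature stability} applied to \eqref{eq:VLb}, then \eqref{eq:somerelations4}, and finally regrouping the double sum, one obtains $Z=\sum_{j=1}^b h_j(\leftX_2-\rightX_2)\bigl(v_{R,j}^{(b)}-\sum_{k\geq j}h_{k-j}(\rightX_2-\leftX_2)v_{L,k}^{(b)}\bigr)=\psi(\bar{Z})$ as operators on $X$. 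Hence for $(X,\d)\in\VSred_{a,b}$ (so $\d^2=\bar{Z}|_X$) we get $\Phi(\d)^2=\Phi(\d^2)=\Phi(\bar{Z}|_X)=\psi(\bar{Z})|_X=Z|_X$, so $(X,\Phi(\d))\in\VS_{a,b}$.

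It then remains to check that $\Phi$ is a dg functor lifting the identity, which is routine: multiplicativity of $\psi$ gives compatibility with composition, and $\Phi$ commutes with the differentials on $\Hom$-complexes since those are built solely from pre- and post-composition with the total differentials. Finally, $\psi(\bar{v}_j)$ has zero constant term, so composing $\Phi$ with the coefficient-forgetting functor $\VS_{a,b}\to\CS_{a,b}$ recovers the coefficient-forgetting functor $\VSred_{a,b}\to\CS_{a,b}$, so $\Phi$ lifts the identity on $\CS_{a,b}$. The main obstacle is essentially bookkeeping — checking that \eqref{eq:vred} together with \eqref{eq:VLb} genuinely assembles into a well-defined algebra map with the stated source and target, and that the $a<b$ degeneration is subsumed — since the real curvature computation is already \eqref{eq:Zred}.
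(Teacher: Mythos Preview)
Your proposal is correct and follows the same approach as the paper: the paper's proof consists of the single sentence that one need only check the substitution \eqref{eq:vred} carries $\bar{Z}$ to $Z$, and that this is precisely the computation \eqref{eq:Zred}. You have simply unpacked the surrounding formalities (that $\psi$ lands in the center tensored with $\k[\V]$, that this makes the action on morphism complexes well-defined, and that the dg functor axioms hold), all of which the paper leaves implicit.
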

\begin{proof}
It need only be checked that \eqref{eq:vred} is compatible with curvatures, i.e. that
\begin{equation}\label{eq:2Zto1Z}
\bar{Z} = \sum_{r=1}^b h_r(\X_2-\X_2')\bar{v}_{r}  
\mapsto \sum_{r=1}^a h_r(\X_1-\X_1') v_{L,r}^{(a)} 
	+ \sum_{r=1}^b h_r(\X_2-\X_2') v_{R,r}^{(b)} = Z \, .
\end{equation}
This is true by the computation \eqref{eq:Zred} preceding this proposition.
\end{proof}

\begin{definition}
	\label{def:reduction}
Let $\pi\colon \VS_{a,b}\rightarrow \VS_{a,b}$ denote the functor which is the
identity on objects and morphisms in $\CS_{a,b}$, and which sends
$v_{L,i}^{(a)} \mapsto 0$ and sends $v_{R,i}^{(b)}$ to the expression on the right-hand side of
\eqref{eq:vred}.  We call $\pi$ the \emph{reduction functor}. We say that an
object $X\in \VS_{a,b}$ is \emph{reduced} if $\pi(X)=X$, and that a morphism
$f\colon X \to Y$ between reduced objects is reduced if $\pi(f) = f$.
\end{definition}

\begin{remark}\label{rem:Vprime} It is clear that $\pi^2=\pi$ is idempotent, and
has image equal (or canonically isomorphic to) $\VSred_{a,b}$.  Thus, we may
identify $\VSred_{a,b}$ as a (non-full) subcategory of $\VS_{a,b}$ consisting of
reduced objects and morphisms in $\VS_{a,b}$. The notion of reduction saves us
some work when constructing curved lifts of complexes in
$\CS_{a,b}$. Indeed, it suffices to construct a curved lift in
$\VSred_{a,b}$, which then immediately gives a curved lift in $\VS_{a,b}$.
\end{remark}

We conclude this section by establishing further notation for working with the
reduced categories $\VSred_{a,b}$. 
At times, we will need to consider the relation between such categories as we allow $b$ to vary. 
For each $\ell\geq 0$, introduce interpolation coordinates 
$\overline{\V}^{(\ell)} = \{\bar{v}^{(\ell)}_1,\ldots,\bar{v}^{(\ell)}_\ell\}$ and elements
\begin{equation}\label{eq:ybar and vbar}
\yred_i := \sum_{k=1}^\ell x_i^{k-1} \bar{v}^{(\ell)}_k
\end{equation}
for all $a+1\leq i\leq a+\ell$. 
A priori, the definition of $\yred_i$ depends on $\ell$.
However, if we let 
\begin{equation}\label{eq:Xinterval}
\leftX_{[1,\ldots,c]} = \{x_1,\ldots,x_c\} 
\, , \quad
\rightX_{[1,\ldots,c]} = \{x'_1,\ldots,x'_c\} 
\end{equation}
then for each pair of integers $\ell\leq b$ we have an inclusion of algebras
\begin{equation}\label{eq:ellbigger}
\k[\leftX_{[1,a+\ell]},\rightX_{[1,a+\ell]},\overline{\V}^{(\ell)}] \hookrightarrow 
\k[\leftX_{[1,a+b]},\rightX_{[1,a+b]},\overline{\V}^{(b)}] \defeq \k[\leftX,\rightX,\overline{\V}^{(b)}] 
\end{equation}
sending $x_i\mapsto x_i$, $x_i'\mapsto x_i'$, and 
\begin{equation}
\label{eq:vbar l and vbar b}
\bar{v}_k^{(\ell)} \mapsto  \bar{v}_k^{(b)} + (-1)^{\ell-k}\sum_{r=\ell+1}^b 
	\Schur_{(r-\ell-1|\ell-k)}(\leftX_{[a+1,a+\ell]}) \bar{v}_r^{(b)} \, .
\end{equation}
Crucially, Lemma \ref{prop:va and vb} implies that this sends $\yred_i \mapsto \yred_i$.
Furthermore, Lemma \ref{lemma:curvature stability} implies that 
the inclusion \eqref{eq:ellbigger} is compatible with curvature, in the sense that
\begin{equation}\label{eq:reduced curvature stability}
\sum_{i=1}^\ell h_i(\leftX_{[a+1,a+\ell]}-\rightX_{[a+1,a+\ell]}) \bar{v}_i^{(\ell)} 
\mapsto
\sum_{i=1}^b h_i(\leftX_{[a+1,a+b]}-\rightX_{[a+1,a+b]}) \bar{v}_i^{(b)} \, .
\end{equation}

\section{Deformed colored link homology}

In this section, we use curved complexes of singular Soergel bimodules to
construct our deformed, colored, triply-graded link homology. We begin by
recalling the construction of colored, triply-graded Khovanov--Rozansky
homology, which is an invariant of framed, oriented, colored links taking values
in the symmetric monoidal category $\overline{\KS}\llbracket \adeg^\pm,
\qdeg^\pm,\tdeg^\pm \rrbracket$ from \S\ref{ss:notation and cats}. 
Khovanov--Rozansky homology is defined by applying the 
Hochschild homology functor to the Rickard complex of a
braid representative of a link $\LB$, and our deformation is defined by
replacing the Rickard complex with its curved analogue from 
\S \ref{ss:curved rickard}.
In this section (and those following) we will typically denote our (co)domain 
objects in $\SSBim$ by $\brc$ and $\brcc$, 
since our notation for the object $\aa$ is easily confused with the 
Hochschild degree $\adeg$.

\subsection{Hochschild (co)homology}\label{sec:HH}

We begin by recalling the basics of Hochschild homology and cohomology.
Suppose that $R$ and $S$ are $\Z_\qdeg$-graded algebras and let 
${}_R\BS_{S}$ denote the category of $\Z_\qdeg$-graded $(R,S)$-bimodules.
When $R=S$, a graded $(R,R)$-bimodule $M \in {}_{R}\BS_{R}$ may be viewed as a module over 
the enveloping algebra $R^e=R\otimes R^\op$, 
and the \emph{$i^{th}$ Hochschild homology and cohomology} of $M$ are defined as:
\begin{equation}\label{eq:defHH}
\HH_i(M) := \Tor_i^{R^e}(R,M)
\, , \quad
\HH^i(M) := \Ext^i_{R^e}(R,M)
\end{equation}
for $i \geq 0$, respectively. 
Note that both inherit a $\Z_\qdeg$-grading from ${}_{R}\BS_{R}$, 
thus are objects in $\overline{\KS}\llbracket \qdeg^\pm\rrbracket$.
The \emph{total Hochschild (co)homology functors} are defined by
\begin{equation}\label{eq:totalHH}
\HH_\bullet(M) := \bigoplus_{i\geq 0} \adeg^{-i} \HH_i(M)
\, , \quad
\HH^\bullet(M) := \bigoplus_{i\geq 0} \adeg^i \HH^i(M)
\end{equation}
which we therefore view as functors 
${}_{R}\BS_{R} \to \overline{\KS}\llbracket \adeg^\pm,\qdeg^\pm\rrbracket$ 
(see \S\ref{ss:notation and cats}).

\begin{remark}
We view Hochschild homology as supported in negative $\adeg$-degree, while
Hochschild cohomology is supported in positive $\adeg$-degrees.
\end{remark}

The following is a standard fact about Hochschild homology; 
see e.g. \cite{Rou2Braid}.
\begin{proposition}\label{prop:HH tracelike} 
Let $R$ and $S$ be $\Z_\qdeg$-graded algebras. Suppose that
${}_{R}M_{S} \in {}_{R}\BS_{S}$ and ${}_{S}N_{R} \in {}_{S}\BS_{R}$ are $\Z_\qdeg$-graded bimodules
that are projective as $R$-modules and $S$-modules, 
then there is an isomorphism of $\Z_\adeg \times \Z_\qdeg$-graded $\k$-vector spaces
\[
\HH_\bullet(M\otimes_S N) \cong \HH_\bullet(N\otimes_R M)
\]
that is natural in $M$ and $N$. \qed
\end{proposition}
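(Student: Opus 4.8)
The plan is to reduce the statement to the classical trace-like property of Hochschild homology, which at the level of underlying modules is nothing more than the observation that $\Tor^{R^e}_\bullet(R, M\otimes_S N)$ and $\Tor^{S^e}_\bullet(S, N\otimes_R M)$ are both computed by (quasi-isomorphic) two-sided bar-type resolutions; the naturality and the grading bookkeeping are the only things requiring care. First I would recall the standard cyclic-invariance isomorphism: writing $\HH_\bullet(R; P) = \Tor^{R^e}_\bullet(R,P)$ for an $(R,R)$-bimodule $P$, there is a natural isomorphism $\HH_\bullet(R; M\otimes_S N) \cong \HH_\bullet(S; N\otimes_R M)$ whenever $M$ is an $(R,S)$-bimodule and $N$ an $(S,R)$-bimodule. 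The cleanest route is via the derived tensor product: $\HH_\bullet(R; M\otimes_S N) = R \otimes^{\mathbb{L}}_{R^e} (M\otimes_S N)$, and since $M$ and $N$ are projective over $R$ and over $S$ on the appropriate sides, the relevant tensor products are already derived, so one may manipulate freely. A symmetric monoidal / trace argument (see \cite{Rou2Braid}) then identifies this with $S\otimes^{\mathbb{L}}_{S^e}(N\otimes_R M)$, which is $\HH_\bullet(S; N\otimes_R M)$.

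Next I would address the gradings. The rings $R$ and $S$ are $\Z_\qdeg$-graded, and all the bimodules, resolutions, and maps in the above are $\Z_\qdeg$-graded, so the cyclic-invariance isomorphism is an isomorphism of $\Z_\qdeg$-graded vector spaces in each homological degree $i$. The $\adeg$-grading is introduced purely formally in \eqref{eq:totalHH} by placing $\HH_i$ in $\adeg$-degree $-i$; since the isomorphism preserves homological degree $i$ (it is constructed degree-wise, or all at once on the bar complexes, which are graded by length), assembling over all $i$ upgrades it to an isomorphism of $\Z_\adeg\times\Z_\qdeg$-graded $\k$-vector spaces, which is exactly the target category $\overline{\KS}\llbracket \adeg^\pm,\qdeg^\pm\rrbracket$ from \S\ref{ss:notation and cats}. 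Naturality in $M$ and $N$ is inherited from the naturality of the bar resolution and the functoriality of $\otimes$, so no extra argument is needed there beyond remarking that all constructed maps are natural transformations.

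I do not expect a genuine obstacle here, since this is a well-known statement; the only mild subtlety is that one must use the hypothesis that ${}_RM_S$ is projective as a left $R$-module and as a right $S$-module (and symmetrically for $N$) to ensure that the ordinary tensor products $M\otimes_S N$ and $N\otimes_R M$ compute the derived ones and that no $\Tor$-correction terms appear when identifying $R\otimes^{\mathbb{L}}_{R^e}(-)$ with the bar complex. I would therefore structure the proof as: (1) recall the bar resolution $\mathrm{Bar}_\bullet(R)$ of $R$ over $R^e$ and note $\HH_\bullet(R;P) = H_\bullet(\mathrm{Bar}_\bullet(R)\otimes_{R^e} P)$; (2) exhibit the explicit chain map (and its inverse up to homotopy) between $\mathrm{Bar}_\bullet(R)\otimes_{R^e}(M\otimes_S N)$ and $\mathrm{Bar}_\bullet(S)\otimes_{S^e}(N\otimes_R M)$, using the projectivity hypotheses to see these compute the respective Hochschild homologies; (3) observe the map is $\Z_\qdeg$-graded and length-graded, hence assembles to the claimed $\Z_\adeg\times\Z_\qdeg$-graded isomorphism; (4) note naturality. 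The main "work" — the explicit quasi-isomorphism in step (2) — is entirely standard, so I would cite \cite{Rou2Braid} for the details and only sketch the map, in keeping with the fact that the excerpt itself labels this "a standard fact."
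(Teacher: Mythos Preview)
Your proposal is correct and in fact goes further than the paper does: the paper treats this as a standard fact, stating it with a \qed\ and simply citing \cite{Rou2Braid} without argument. Your sketch of the bar-resolution/derived-tensor-product proof, together with the grading and naturality bookkeeping, is exactly the standard justification one would supply if pressed, and is consistent with the paper's intent.
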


The total Hochschild (co)homology functors can be extended from bimodules 
to complexes of bimodules term-wise. 
Explicitly, if $X \in \CS({}_{R}\BS_{R})$ is a complex of graded $(R,R)$-bimodules, 
then we may write $X = \tw_\d (\bigoplus_k \tdeg^k X^k)$ 
for graded $(R,R)$-bimodules $X^k \in {}_{R}\BS_{R}$.
Define the complex
\begin{equation}\label{eq:HHcomplex}
\HH_\bullet(X) := \tw_{\HH_\bullet(\d)} \big( \bigoplus_k \tdeg^k \HH_{\bullet}(X^k) \big)
\end{equation}
which is an object in 
$\overline{\KS}\llbracket \adeg^{\pm},\qdeg^{\pm},\tdeg^\pm \rrbracket_{\dg}$.
In other words, $\HH_\bullet(X)$ equals the $\Z_\adeg\times \Z_\qdeg\times \Z_\tdeg$-graded
$\k$-vector space $\bigoplus_{i,k}\adeg^{-i}\tdeg^k \HH_i(X^k)$, with appropriate differential.
Proposition \ref{prop:HH tracelike} extends to this setting as follows.

\begin{proposition}\label{prop:HH tracelike 2} 
Let $R$ and $S$ be $\Z_\qdeg$-graded algebras.
Suppose that ${}_R X_S \in \CS({}_{R}\BS_{S})$ and ${}_S Y_R \in \CS({}_{S}\BS_{R})$ 
are complexes of $\Z_\qdeg$-graded bimodules, 
that are projective as $R$-modules or $S$-modules, 
then there is an isomorphism of
$\Z_\adeg\times \Z_\qdeg\times \Z_\tdeg$-graded complexes
\[
\HH_\bullet(X\otimes_S Y) \cong \HH_\bullet(Y\otimes_R X),
\]
that is natural in $X,Y$ (in the dg sense).
\end{proposition}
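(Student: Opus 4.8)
The plan is to deduce Proposition~\ref{prop:HH tracelike 2} from its non-derived, single-bimodule counterpart Proposition~\ref{prop:HH tracelike} by a standard ``spreading out over the complex'' argument, taking care that the isomorphism be compatible with the three gradings and with the differentials in the dg sense. First I would write $X = \tw_{\d_X}\big( \bigoplus_i \tdeg^i X^i \big)$ and $Y = \tw_{\d_Y}\big( \bigoplus_j \tdeg^j Y^j \big)$ for $\Z_\qdeg$-graded bimodules $X^i \in {}_R\BS_S$, $Y^j \in {}_S\BS_R$, each of which is projective as a one-sided module by hypothesis. By \eqref{eq:HCompC} (applied with $\otimes_S$ in place of $\hComp$) we have $X\otimes_S Y = \tw_{\d_X\otimes\Id + \Id\otimes\d_Y}\big( \bigoplus_k \tdeg^k \bigoplus_{i+j=k} X^i\otimes_S Y^j \big)$, and similarly for $Y\otimes_R X$. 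Then \eqref{eq:HHcomplex} expresses $\HH_\bullet(X\otimes_S Y)$ as the twist by $\HH_\bullet(\d_{X\otimes Y})$ of $\bigoplus_k \tdeg^k \HH_\bullet\big(\bigoplus_{i+j=k} X^i\otimes_S Y^j\big) = \bigoplus_{i,j} \tdeg^{i+j} \HH_\bullet(X^i\otimes_S Y^j)$, and likewise $\HH_\bullet(Y\otimes_R X)$ is the twist of $\bigoplus_{i,j}\tdeg^{i+j}\HH_\bullet(Y^j\otimes_R X^i)$.

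Next I would assemble the termwise isomorphisms. For each pair $(i,j)$, Proposition~\ref{prop:HH tracelike} supplies a natural isomorphism of $\Z_\adeg\times\Z_\qdeg$-graded $\k$-vector spaces $\theta_{i,j}\colon \HH_\bullet(X^i\otimes_S Y^j) \xrightarrow{\ \cong\ } \HH_\bullet(Y^j\otimes_R X^i)$ (note the roles of $M,N$ are played by $X^i$ and $Y^j$, and the projectivity hypotheses are exactly what that proposition requires). Taking the direct sum $\theta := \bigoplus_{i,j}\tdeg^{i+j}\theta_{i,j}$ gives an isomorphism of the underlying $\Z_\adeg\times\Z_\qdeg\times\Z_\tdeg$-graded vector spaces. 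It remains to check that $\theta$ intertwines the two twisted differentials, i.e.\ that $\theta\circ \HH_\bullet(\d_X\otimes\Id + \Id\otimes\d_Y) = \HH_\bullet(\d_Y\otimes\Id + \Id\otimes\d_X)\circ\theta$ as maps between the twisted complexes; this is precisely the statement that the isomorphism is natural ``in the dg sense.'' This follows from the naturality clause of Proposition~\ref{prop:HH tracelike}: each component of $\d_X$ is a bimodule map $X^i \to X^{i+1}$ (tensored with $\Id_{Y^j}$ it is a map $X^i\otimes_S Y^j \to X^{i+1}\otimes_S Y^j$), and naturality of $\theta_{\bullet,\bullet}$ with respect to this map, together with the analogous statement for the components of $\d_Y$, gives exactly the required commutation of squares. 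One must track the Koszul signs produced by the $\tdeg$-grading in passing differentials past the isomorphisms, but since $\theta_{i,j}$ has $\tdeg$-degree $0$ (it only shifts nothing in the $\tdeg$-direction) no signs are introduced beyond those already built into \eqref{eq:HHcomplex}, and the differentials on both sides carry the same sign conventions.

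The main obstacle, and the point deserving the most care, is the compatibility with differentials rather than the existence of the underlying vector-space isomorphism: one must verify that the ``trace-like'' isomorphisms $\theta_{i,j}$ can be chosen coherently so that they are simultaneously natural in $X^i$ and in $Y^j$ as $(i,j)$ varies, which is what lets them be bundled into a chain map between the twisted totalizations. This is guaranteed by the fact that the isomorphism in Proposition~\ref{prop:HH tracelike} is natural in \emph{both} arguments $M$ and $N$ (as stated there), so no independent choices enter; the bookkeeping then reduces to the observation that a map of twisted complexes is determined by a compatible family of maps on the summands intertwining the twist data. I would present this as a short paragraph invoking Proposition~\ref{prop:HH tracelike}, \eqref{eq:HHcomplex}, and \eqref{eq:HCompC}, and leave the sign/naturality verification as routine. \qedhere
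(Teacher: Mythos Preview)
Your overall strategy matches the paper's: apply Proposition~\ref{prop:HH tracelike} termwise to the chain groups and check that the resulting family of isomorphisms assembles into a chain map. However, there is a genuine gap: you define $\theta = \bigoplus_{i,j}\tdeg^{i+j}\theta_{i,j}$ with no sign, and then argue that ``since $\theta_{i,j}$ has $\tdeg$-degree $0$ \ldots\ no signs are introduced.'' This is incorrect. The paper instead defines the isomorphism with components $(-1)^{ij}\theta_{i,j}$, and this sign is essential.

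The reason is that the Koszul sign conventions on $X\otimes_S Y$ and $Y\otimes_R X$ do \emph{not} agree after swapping factors. On $X^i\otimes_S Y^j$, the $\d_X$-component of the differential carries no sign while the $\d_Y$-component carries $(-1)^i$; on $Y^j\otimes_R X^i$, it is the other way around: the $\d_Y$-component carries no sign and the $\d_X$-component carries $(-1)^j$. Naturality of $\theta_{i,j}$ in each argument separately (from Proposition~\ref{prop:HH tracelike}) gives $\theta_{i+1,j}\circ\HH_\bullet(\d_X\otimes\Id) = \HH_\bullet(\Id\otimes\d_X)\circ\theta_{i,j}$, but the target differential is $(-1)^j\,\Id\otimes\d_X$, so you are off by $(-1)^j$. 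Inserting the sign $(-1)^{ij}$ into the isomorphism repairs this: the sign difference between $(-1)^{(i+1)j}$ and $(-1)^{ij}$ is exactly $(-1)^j$, and the analogous check for $\d_Y$ works out as well. The paper carries out this sign computation explicitly (and in fact does it for general morphisms $f,g$ rather than just the differentials, obtaining the signed naturality relation $\tau\circ\HH_\bullet(f\otimes g) = (-1)^{|f||g|}\HH_\bullet(g\otimes f)\circ\tau$). So your sketch should not ``leave the sign/naturality verification as routine''; the sign is the entire content of the step.
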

\begin{proof}
Denote the natural isomorphism from Proposition \ref{prop:HH tracelike} by
\[
\tau_{M,N}\colon \HH_\bullet(M \otimes_S N) \to \HH_\bullet(N \otimes_R M) \, .
\]
This induces an isomorphism on the
level of $\Z_\adeg\times \Z_\qdeg\times \Z_\tdeg$-graded $\k$-vector spaces:
\[
\HH_\bullet(X\otimes_S Y) \cong \bigoplus_{k,l\in \Z} \tdeg^{k+l} \HH_\bullet(X^k\otimes_S Y^l) 
\xrightarrow{\bigoplus_{k,l}(-1)^{kl}\tau_{X^k, Y^l}}  
\bigoplus_{k,l\in \Z} \tdeg^{k+l} \HH_\bullet(Y^l \otimes_R X^k) \cong \HH_\bullet(Y\otimes_R X)
\]
that we denote by $\tau_{X,Y}$. 

We now observe that $\tau_{X,Y}$ is natural.
Let $f\in \Hom_{\CS({}_{R}\BS_{S})}(X_1,X_2)$ and $g\in\Hom_{\CS({}_{S}\BS_{R})}(Y_1,Y_2)$
be morphisms of complexes of bimodules. 
Let $z \in \HH_\bullet(X_1^k\otimes_S Y_1^l)\subset \HH_\bullet(X_1 \otimes_S Y_1)$, 
then the Koszul sign rule tells us that
\[
\HH_{\bullet}(f\otimes g) (z) = (-1)^{|g|k} \HH_\bullet(f|_{X_1^k}\otimes g|_{Y_1^l})(z) \, .
\]
After applying $\tau_{X_2,Y_2}$, we obtain
\[
(-1)^{(k+|f|)(l+|g|)+|g|k} \HH_\bullet(g|_{Y_1^l}\otimes f|_{X_1^k})(\tau_{X_1^k,Y_1^l}(z))
\]
using naturality of the isomorphism from Proposition \ref{prop:HH tracelike}.
The sign here is equal to $(-1)^{|f||g|+ kl+|f|l}$. 
On the other hand, we have
\[
\left(\HH_\bullet(g\otimes f) \circ \tau_{X_1,Y_1}\right) (z) =
(-1)^{|f|l+kl} \HH_\bullet(g|_{Y_1^l} \otimes f|_{X_1^k})(\tau_{X_1^k,Y_1^l}(z)) \, ,
\]
thus
\[
\tau_{X_2,Y_2}\circ \HH_\bullet(f\otimes g) = (-1)^{|f||g|}\HH_\bullet(g\otimes f)\circ \tau_{X_1,Y_1}
\]
as desired.

Indeed, this is the appropriate (signed) version of naturality in this context.
For example, it guarantees that the isomorphism 
$\tau_{X,Y}:\HH_\bullet(X\otimes_S Y)\cong \HH_\bullet(Y\otimes_R X)$ 
intertwines the differentials
$\HH_\bullet(\d_X\otimes \id_Y+\id_X\otimes \d_Y)$ 
and $\HH_\bullet(\d_Y\otimes \id_X+\id_Y\otimes \d_X)$.
\end{proof}

\begin{rem}
The hypotheses of Propositions \ref{prop:HH tracelike} and \ref{prop:HH tracelike 2} 
hold for (complexes of) singular Soergel bimodules. Indeed, any 
%singular Bott-Samelson bimodule 
$X \in {}_{\brc}\SSBim_{\brcc}$ is free as either a left $R^{\brc}$-module 
or right $R^{\brcc}$-module.
\end{rem}

From this point forward, we focus on the case when $R$ is a polynomial ring,
since this is the relevant setting for singular Soergel bimodules. 
In this case, the Koszul resolution provides a direct means for computing 
Hochschild (co)homology and gives an explicit relation between these two invariants. 
Let\footnote{Here, we call our variables $\{z_i\}$, since in general they will not be 
the variables $\X=\{x_i\}$, but rather symmetric functions in subalphabets of the alphabet $\X$.} 
$R=\k[z_1,\ldots,z_N]$ with $\deg_{\qdeg}(z_i)=d_i$ for $1\leq i\leq N$. 
The Koszul resolution of $R$ is the complex of graded $(R,R)$-bimodules given by
\begin{equation}\label{eq:Koszul}
\mathbf{K} := {\bigotimes_{i=1}^N}%_{R\otimes R} 
\left( \adeg\inv \qdeg^{d_i}  R\otimes R 
\xrightarrow{z_i\otimes 1 - 1 \otimes z_i} R\otimes R \right) \, .
\end{equation}
It follows from \eqref{eq:defHH} that $\HH_\bullet(M)$ is the homology of $\mathbf{K}\otimes_{R\otimes R} M$,
while $\HH^\bullet(M)$ is the homology of $\Hom_{R\otimes R}(\mathbf{K}, M)$.
This implies that $\HH_i(M) \cong \qdeg^{d_1+\cdots+d_N} \HH^{N-i}(M)$, 
hence
\begin{equation}\label{eq:HHvHH tot}
\HH_\bullet(M) \cong \adeg^{-N} \qdeg^{d_1+\cdots+d_N} \HH^{\bullet}(M)
\end{equation}
by \eqref{eq:totalHH}.
Moreover,
\[
\HH_\bullet(R) \cong \largewedge[\eta_1,\ldots,\eta_N]
\, , \quad \wt(\eta_i) = \adeg\inv \qdeg^{d_i}
\]
and
\[
\HH^\bullet(R) \cong \largewedge[\eta_1^\ast,\ldots,\eta_N^\ast]
\, , \quad \wt(\eta_i^\ast) = \adeg \qdeg^{-d_i} \, .
\]
The latter acts on the former by identifying $\eta_i^\ast$ with 
the derivation sending $\eta_i\mapsto 1$ and $\eta_j\mapsto 0$ for $j\neq i$.

\subsection{Partial traces}
\label{sec:pHHH}

We now recall the partial Hochschild (co)homology functors from \cite{RTub2},
which generalize the (uncolored) partial trace functors first introduced in
\cite{Hog2}. These functors refine the Hochschild (co)homology functors from
\S\ref{sec:HH}, and allow them to be applied to the complex ${C}(\b_{\brc})$
``one strand at a time.'' As such, they are useful in proving the invariance of
colored, triply-graded link homology (and its deformation defined in 
\S \ref{sec:deformed} below) under the second Markov move. We will refer to these
functors collectively as the \emph{colored partial trace} functors.

Recall that $\SSBim$ is a full $2$-subcategory 
of the $2$-category $\Bim$ from \S \ref{ss:ssbim},
and that in these $2$-categories the $\Hom$-categories are enriched in the
symmetric monoidal category $\overline{\KS}\llbracket \qdeg^{\pm} \rrbracket$ of
$\Z_\qdeg$-graded $\k$-vector spaces. We now consider the bounded derived category of
$\Bim$, denoted $\DS(\Bim)$, which is enriched in the category
$\overline{\KS}\llbracket \adeg^{\pm},\qdeg^{\pm} \rrbracket$ of $\Z_\adeg\times
\Z_\qdeg$-graded $\k$-vector spaces. This $2$-category $\DS(\Bim)$ is the natural
setting for Hochschild (co)homology of singular Soergel bimodules, via the
inclusion
\begin{equation}\label{eq:SSBimIncl}
\SSBim \hookrightarrow \Bim \hookrightarrow \DS(\Bim) \, .
\end{equation}
We emphasize to the reader that the cohomological grading in $\DS(\Bim)$ is the 
$\adeg$-degree, which is independent from the 
cohomological grading used in the dg category of (curved) complexes $\CS(\SSBim)$, 
which is the $\tdeg$-degree.

\begin{definition}
Let $\DS(\Bim)$ be the monoidal 2-category wherein:
\begin{itemize}
\item objects of $\DS(\Bim)$ are the same as in $\Bim$.
\item the $\Hom$-category ${}_{\brc}\DS(\Bim)_{\brcc}$ from $\brcc$ to $\brc$ 
is the bounded derived category
$D^b({}_{\brc}\Bim_{\brcc})$ of graded $(R^{\brc},R^{\brcc})$-bimodules
(equivalently, this is the bounded derived category of graded $R^{\brc}\otimes R^{\brcc}$-modules).
Horizontal composition of 1-morphisms is given by derived tensor product over 
the intermediate rings $R^{\brc}$. 
\item The external tensor product $\boxtimes$ is defined as in \S\ref{ss:ssbim}.
\end{itemize}
\end{definition}

Since singular Soergel bimodules in ${}_{\brc}\SSBim_{\brcc}$ are free as either left 
$R^{\brc}$-modules or right $R^{\brcc}$-modules, horizontal composition of such bimodules is exact.
Thus, the inclusion in \eqref{eq:SSBimIncl} is indeed a $2$-functor 
(i.e. derived tensor product over the rings $R^{\brc}$ equals the usual tensor product for such bimodules).
Since $\boxtimes$ is given in both settings by tensor product over $\k$, 
\eqref{eq:SSBimIncl} is a monoidal $2$-functor.

For $X,Y\in {}_\brc\SSBim_{\brcc}$, we have
\[
\Hom_{\DS(\Bim)} (X,Y) = \Ext_{R^\brc \otimes R^\brcc}(X,Y) \supset \Hom_{\SSBim}(X,Y) \, .
\]
In particular, for $X \in {}_\brc\SSBim_{\brc}$ this gives
\[
\Hom_{\DS(\Bim)} (R^{\brc},X) = \HH^\bullet(X) \, .
\]

\begin{definition}[Partial trace]\label{def:PT}
Let $X\in {}_{\brc}\DS(\Bim)_{\brcc}$ be a $1$-morphism between objects 
of the form $\brc=\brc'\boxtimes c$ and $\brcc=\brcc'\boxtimes c$,
then the \emph{partial Hochschild homology} of $X$ is:
\[
\Tr_c(X)  := \tw_\d\Big(X\otimes\largewedge[\eta_1,\ldots,\eta_c]\Big)
\, , \quad \wt(\eta_i) = \adeg\inv \qdeg^{2i}
\]
with twist $\d = \sum_{1\leq i\leq c}(e_i(\leftX)-e_i(\rightX))\otimes \eta_i^\ast$.
Here, $\X$ and $\rightX$ denote the alphabets corresponding to the last ($c$-labeled) 
boundary points.
The complex $\Tr_c(X)$ is regarded as an object in the 1-morphism category 
${}_{\brc'}\DS(\Bim)_{\brcc'}$; 
in particular, its cohomological degree is the $\adeg$-degree.
Paralleling the relation in \eqref{eq:HHvHH tot}, 
we also define the \emph{partial Hochschild cohomology} for such $X$ to be:
\[
\Tr^c(X) := \adeg^c \qdeg^{-c(c+1)} \Tr_c(X)
\]
\end{definition}

\begin{remark}[Relation to Hochschild (co)homology]\label{rem:HHvHH}
Given a 1-morphism $X\in {}_{\brc}\DS(\Bim)_{\brc}$ with $\brc=(\bre_1,\dots,\bre_m)$, 
it is immediate from \eqref{eq:Koszul} and Definition \ref{def:PT} that:
\[
(\Tr_ {\bre_1} \circ \cdots \circ \Tr_ {\bre_m})(X) \cong \HH_\bullet(X)
\, , \quad
(\Tr^ {\bre_1} \circ \cdots \circ \Tr^ {\bre_m})(X) \cong \HH^\bullet(X) \, .
\]
\end{remark}

We now recall various properties of the colored partial trace functors. 
Further details are provided in \cite[Section 4.C]{RTub2} 
and \cite[Sections 3.2 and 3.3]{Hog2}; 
hence, our treatment is concise.

We first record an adjunction that will be used to compute various
$\Hom$-spaces.

\begin{prop}\label{prop:TrAdj}
Let $\brc=\brc'\boxtimes c$ and $\brcc=\brcc'\boxtimes c$ be objects in $\Bim$, 
then partial Hochschild cohomology gives a functor
\[
\Tr^c \colon {}_{\brc}\DS(\Bim)_{\brcc} \to {}_{\brc'}\DS(\Bim)_{\brcc'}
\]
that is right adjoint to the functor 
${}_{\brc'}\DS(\Bim)_{\brcc'} \to {}_{\brc}\DS(\Bim)_{\brcc}$
 sending
$Y \mapsto Y \boxtimes \oone_c$.\qed
\end{prop}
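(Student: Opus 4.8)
The plan is to exhibit the adjunction directly from the definition of $\Tr^c$ as a twisted complex built from the Koszul complex, reducing everything to the standard tensor–hom adjunction for graded modules over polynomial rings. First I would unpack the claim: given $1$-morphisms $Y \in {}_{\brc'}\DS(\Bim)_{\brcc'}$ and $X \in {}_{\brc}\DS(\Bim)_{\brcc}$ with $\brc = \brc' \boxtimes c$ and $\brcc = \brcc' \boxtimes c$, I must produce a natural isomorphism
\[
\Hom_{{}_{\brc}\DS(\Bim)_{\brcc}}(Y \boxtimes \oone_c, X) \cong \Hom_{{}_{\brc'}\DS(\Bim)_{\brcc'}}(Y, \Tr^c(X)) \, .
\]
Both sides are $\Z_\adeg \times \Z_\qdeg$-graded $\k$-vector spaces (derived $\Hom$'s), so it suffices to work on the level of underived $\Hom$-complexes after choosing suitable (e.g. Koszul) resolutions, and then pass to cohomology.

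The key computation is to recognize $\Tr^c(X)$ (equivalently $\Tr_c(X)$ up to the grading shift $\adeg^c\qdeg^{-c(c+1)}$) as $\mathbf{K}_c \otimes_{R^{c} \otimes R^{c}} X$, where $\mathbf{K}_c$ is the Koszul resolution of $R^{(c)} = \Sym(\X^c)$ over $R^{(c)} \otimes R^{(c)}$ built from the generators $e_i(\leftX) - e_i(\rightX)$, $1 \le i \le c$, as in \eqref{eq:Koszul} — this is exactly the content of Remark~\ref{rem:HHvHH} applied one strand at a time, and it is how Definition~\ref{def:PT} is set up (the exterior variables $\eta_i$ and the twist $\d = \sum_i (e_i(\leftX) - e_i(\rightX))\eta_i^\ast$ encode precisely this Koszul complex). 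Concretely, horizontal composition in $\DS(\Bim)$ along the last $c$-labeled boundary is derived tensor over $R^{(c)}$, so contracting that boundary of $X$ against the bimodule $R^{(c)}$ yields $\HH_\bullet$ in the $c$-direction, i.e. $\Tr_c(X)$. Thus
\[
\Hom_{\DS(\Bim)}(Y \boxtimes \oone_c, X) \cong \Hom(Y \otimes_{\k} \mathbf{K}_c, X)
\]
where on the right the $\mathbf{K}_c$-factor is tensored against the last boundary of $X$; by the usual tensor–hom adjunction for $R^{(c)} \otimes R^{(c)}$-modules this is $\Hom(Y, \operatorname{RHom}_{R^{(c)}\otimes R^{(c)}}(\mathbf{K}_c, X))$, and $\operatorname{RHom}_{R^{(c)}\otimes R^{(c)}}(\mathbf{K}_c, X) = \HH^\bullet_c(X)$ in the $c$-direction, which is $\Tr^c(X)$ by the comparison $\HH^i \cong \qdeg^{\sum d_j}\HH_{N-i}$ recorded in \eqref{eq:HHvHH tot} (here $N = c$ and the degree shift is exactly $\adeg^c\qdeg^{-c(c+1)}$ since $\deg_\qdeg e_i = 2i$ and $\sum_{i=1}^c 2i = c(c+1)$). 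Naturality in both variables is inherited from naturality of tensor–hom and from the fact that the whole construction is functorial in $X$ and $Y$ (the twist in Definition~\ref{def:PT} depends only on the $c$-colored boundary data, which is untouched by morphisms of $Y$).

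The main obstacle is bookkeeping rather than conceptual: I must be careful that (i) the grading shifts $\adeg^c\qdeg^{-c(c+1)}$ in the passage from $\Tr_c$ to $\Tr^c$ match the shift in \eqref{eq:HHvHH tot} exactly (this forces the adjunction to be with $\Tr^c$, not $\Tr_c$ — the cohomological variant is self-dual under tensor–hom whereas the homological one would introduce an extra twist), and (ii) the Koszul signs in the twist differential $\d = \sum_i (e_i(\leftX)-e_i(\rightX))\eta_i^\ast$ are transported correctly through the adjunction isomorphism, so that it is a chain map and hence descends to cohomology. Both are routine once one writes out $\mathbf{K}_c$ explicitly, so I would state the isomorphism, verify it is well-defined and a morphism of complexes on the nose using the self-duality of the Koszul complex $\mathbf{K}_c^\vee \cong \mathbf{K}_c$ up to shift, and invoke standard homological algebra (or simply cite \cite[Section 4.C]{RTub2} and \cite[Sections 3.2--3.3]{Hog2}, where the uncolored and colored cases are treated in detail) for naturality.
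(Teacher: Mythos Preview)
Your argument is correct. The paper itself offers no proof of this proposition at all: the statement is marked with a \qed\ symbol, deferring entirely to the references \cite[Section~4.C]{RTub2} and \cite[Sections~3.2--3.3]{Hog2} cited just before it. Your sketch---reducing the adjunction to tensor--hom for $(R^{(c)}\otimes R^{(c)})$-modules together with the self-duality of the Koszul resolution $\mathbf{K}_c$ up to the shift $\adeg^{c}\qdeg^{-c(c+1)}$---is precisely the standard argument those references contain, and you correctly identify that this shift is what forces the right adjoint to be $\Tr^{c}$ rather than $\Tr_{c}$. Since you already note at the end that one could simply cite those sources, you are in complete agreement with the paper's treatment; your additional detail is a welcome expansion of what the paper leaves implicit.
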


Using the Koszul resolution of $\Sym(\X)$ as a bimodule over itself, it is easy
to see the following.
\begin{prop}\label{prop:trace of one}
Let $X \in \DS(\Bim)$ and $c \geq 1$, then
\[
\Tr_c(X\boxtimes \oone_c) \cong 
X\otimes \HH_\bullet(\Sym(\X)) \cong X\otimes \k[e_1(\X),\ldots,e_c(\X)]\otimes \largewedge[\eta_1,\ldots,\eta_c]
\]
where $|\X|=c$, $\wt(e_i(\X))=\qdeg^{2i}$, and $\wt(\eta_i) = \adeg\inv \qdeg^{2i}$. \qed
\end{prop}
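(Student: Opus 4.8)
The plan is to observe that the twisting differential in the definition of the partial trace $\Tr_c$ degenerates completely on a bimodule of the form $X\boxtimes\oone_c$, and then to identify the remaining (untwisted) tensor factor with $\HH_\bullet(\Sym(\X))$ via the Koszul resolution \eqref{eq:Koszul}.

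First I would unwind Definition \ref{def:PT}: we have $\Tr_c(X\boxtimes\oone_c) = \tw_\d\big( (X\boxtimes\oone_c)\otimes\largewedge[\eta_1,\ldots,\eta_c]\big)$ with $\d = \sum_{1\le i\le c}(e_i(\leftX)-e_i(\rightX))\otimes\eta_i^\ast$, where $\leftX,\rightX$ denote the alphabets on the last ($c$-labeled) boundary points. The key point is that, by the definition of $\boxtimes$ (\S\ref{ss:ssbim}), these last $c$ boundary points of $X\boxtimes\oone_c$ are attached entirely to the $\oone_c$ factor, and $\oone_c = R^{(c)} = \Sym(\X)$ is the regular bimodule over itself; hence the left and right actions of any symmetric function in $\X$ on $X\boxtimes\oone_c$ coincide. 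In particular $e_i(\leftX)-e_i(\rightX)$ acts as zero for every $i$, so $\d = 0$ and
\[
\Tr_c(X\boxtimes\oone_c) \cong (X\boxtimes\oone_c)\otimes\largewedge[\eta_1,\ldots,\eta_c] \cong X\otimes\big(\Sym(\X)\otimes\largewedge[\eta_1,\ldots,\eta_c]\big),
\]
using $X\boxtimes\oone_c \cong X\otimes_\k\Sym(\X)$ and regarding the right-hand side as the $1$-morphism $X$ tensored over $\k$ with a graded vector space.

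It then remains to match the tensor factor with Hochschild homology. By the fundamental theorem of symmetric functions, $\Sym(\X) = \k[e_1(\X),\ldots,e_c(\X)]$ is a polynomial ring whose generators have $\deg_\qdeg e_i(\X) = 2i$, so applying the Koszul resolution \eqref{eq:Koszul} with $R=\Sym(\X)$ and $(d_1,\ldots,d_c) = (2,4,\ldots,2c)$ gives $\HH_\bullet(\Sym(\X)) \cong \Sym(\X)\otimes\largewedge[\eta_1,\ldots,\eta_c]$ with $\wt(\eta_i) = \adeg^{-1}\qdeg^{2i}$, exactly the weights prescribed for the $\eta_i$ in Definition \ref{def:PT}. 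Combining with the previous display yields both claimed isomorphisms. As there is essentially no computation here, the only thing needing care --- and the closest thing to an obstacle --- is the bookkeeping: confirming that the ``last $c$ boundary points'' of $X\boxtimes\oone_c$ genuinely see only the $\oone_c$ tensor factor, so that the vanishing $e_i(\leftX) = e_i(\rightX)$ is legitimate, and that the $\qdeg$- and $\adeg$-conventions of Definition \ref{def:PT} line up with those produced by \eqref{eq:Koszul}. Both are routine once the conventions of \S\ref{ss:ssbim} and \S\ref{sec:HH} are kept straight.
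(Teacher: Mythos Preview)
Your proof is correct and matches the paper's approach exactly: the paper simply states that the result ``is easy to see'' using the Koszul resolution of $\Sym(\X)$ as a bimodule over itself and gives no further details, so your unwinding of Definition~\ref{def:PT} and the observation that $e_i(\leftX)=e_i(\rightX)$ on the $\oone_c$ factor is precisely the intended argument.
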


The next result establishes a certain ``bilinearity'' of the colored partial trace.

\begin{prop}\label{prop:TrBi}
Let $\brc=\brc'\boxtimes c$ and $\brcc=\brcc'\boxtimes c$ be objects in $\Bim$, 
and let $Y_1,Y_2\in {}_{\brc'}\DS(\Bim)_{\brcc'}$ and $X\in {}_{\brc}\DS(\Bim)_{\brcc}$ 
be $1$-morphisms. There is an isomorphism
\begin{equation}\label{eq:trace linearity}
\Tr^c \big( (Y_1\boxtimes \oone_c) \hComp X \hComp  (Y_2\boxtimes \oone_c) \big)
\cong
Y_1 \hComp \Tr^c(X)\hComp Y_2
\end{equation}
in ${}_{\brc'}\DS(\Bim)_{\brcc'}$ that is natural in $X, Y_1$, and $Y_2$.
In particular, if $Z_1 \boxtimes Z_2 \in {}_{\brc}\DS(\Bim)_{\brcc}$, then
\[
\Tr^c(Z_1 \boxtimes Z_2) \cong Z_1 \boxtimes \Tr^c(Z_2)
\]
naturally in $Z_1$ and $Z_2$. \qed
\end{prop}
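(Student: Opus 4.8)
The plan is to unwind Definition~\ref{def:PT} and exploit the fact that forming $\Tr_c$ (equivalently $\Tr^c$, since these differ only by the shift $\adeg^c\qdeg^{-c(c+1)}$) involves \emph{only} the rings and alphabets attached to the last $c$-labeled boundary points, whereas horizontally composing with a $1$-morphism of the form $Y\boxtimes\oone_c$ leaves those rings and alphabets untouched. Write $R^{\brc}=R^{\brc'}\otimes R^{(c)}$ and $R^{\brcc}=R^{\brcc'}\otimes R^{(c)}$ with $R^{(c)}=\Sym(\leftX)$, $|\leftX|=c$, so that $Y_1\boxtimes\oone_c=Y_1\otimes_\k R^{(c)}$ and $Y_2\boxtimes\oone_c=Y_2\otimes_\k R^{(c)}$. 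The first step is to record the natural isomorphism of underlying complexes
\[
(Y_1\boxtimes\oone_c)\hComp X\hComp(Y_2\boxtimes\oone_c)\ \cong\ Y_1\hComp X\hComp Y_2,
\]
where on the right the $R^{\brc'}$- and $R^{\brcc'}$-actions of $X$ are absorbed into the horizontal compositions while $X$ retains its residual $\Sym(\leftX)\otimes\Sym(\rightX)$-action on the traced strand. This is a flat base change argument: since $R^{(c)}$ is free over $\k$, a K-flat resolution of $Y_i$ over $R^{\brc'}$ (resp.\ $R^{\brcc'}$) tensored with $R^{(c)}$ over $\k$ resolves $Y_i\boxtimes\oone_c$ over $R^{\brc}$ (resp.\ $R^{\brcc}$), and the intermediate copies of $R^{(c)}$ cancel, so the derived tensor products over $R^{\brc},R^{\brcc}$ reduce to derived tensor products over $R^{\brc'},R^{\brcc'}$ with $R^{(c)}$ carried along.

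Next I would apply $\Tr_c$ to the left-hand side. By Definition~\ref{def:PT} this tensors with $\largewedge[\eta_1,\dots,\eta_c]$ over $\k$ and twists the differential by $\sum_{i=1}^c(e_i(\leftX)-e_i(\rightX))\otimes\eta_i^\ast$, the alphabets $\leftX,\rightX$ being those on the last $c$ boundary points. Under the identification above these are precisely the alphabets of $X$, so the twist factors as $\Id_{Y_1}\hComp\,\d_X\,\hComp\Id_{Y_2}$, where $\d_X$ is the twist defining $\Tr_c(X)$. Since tensoring with $\largewedge[\eta_\bullet]$ over $\k$ and twisting by an endomorphism supported on the middle factor both commute with $\hComp$ by $Y_1$ and $Y_2$, this gives
\[
\Tr_c\big((Y_1\boxtimes\oone_c)\hComp X\hComp(Y_2\boxtimes\oone_c)\big)\ \cong\ Y_1\hComp\Tr_c(X)\hComp Y_2,
\]
and sliding the shift $\adeg^c\qdeg^{-c(c+1)}$ across $\hComp$ yields \eqref{eq:trace linearity}. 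For the ``in particular'' statement I would take $Y_1=Z_1$, $Y_2=\oone$, and $X=\oone_{\brc'}\boxtimes Z_2$ in \eqref{eq:trace linearity}: the interchange law identifies $(Z_1\boxtimes\oone_c)\hComp(\oone_{\brc'}\boxtimes Z_2)$ with $Z_1\boxtimes Z_2$, and $\Tr^c(\oone_{\brc'}\boxtimes Z_2)\cong\oone_{\brc'}\boxtimes\Tr^c(Z_2)$ is immediate from Definition~\ref{def:PT} since $\boxtimes$ is $\otimes_\k$ and the $\largewedge[\eta_\bullet]$-twist only touches the $Z_2$-strand.

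Finally, naturality: each isomorphism above is assembled from the associativity, unitality, and interchange isomorphisms of $\hComp$ and $\boxtimes$ in $\DS(\Bim)$ together with identity maps on $\largewedge[\eta_\bullet]$, all of which are natural; as in the proof of Proposition~\ref{prop:HH tracelike 2}, one must simply track the Koszul signs incurred when a $2$-morphism is pushed past the $\eta_i$'s and past $\boxtimes$-factors, but these are exactly the signs built into the conventions for $\hComp$ and $\boxtimes$ on morphisms (cf.\ \eqref{eq:HCompC}, \eqref{eq:MidInt}), so naturality holds on the nose. The main obstacle I anticipate is not conceptual but organizational: verifying that the twist genuinely ``factors through the middle'' requires checking that composing with $\oone_c$ matches up the Koszul data $(\eta_i,\eta_i^\ast)$, the alphabets $\leftX,\rightX$, the derived tensor products, and the grading shifts all at once; alternatively, these compatibilities are recorded in \cite[Section 4.C]{RTub2} and \cite[Sections 3.2--3.3]{Hog2}, which could be cited directly.
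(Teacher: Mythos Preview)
Your proposal is correct. The paper does not actually prove this proposition: it is stated with a \qed and the surrounding text defers to \cite[Section 4.C]{RTub2} and \cite[Sections 3.2--3.3]{Hog2} for details, which you yourself note at the end as an alternative. Your direct argument---identifying $(Y_1\boxtimes\oone_c)\hComp X\hComp(Y_2\boxtimes\oone_c)$ via flat base change and then observing that the Koszul twist defining $\Tr_c$ touches only the $c$-labeled alphabets and hence factors through the middle---is exactly the kind of unwinding those references carry out, so you have essentially supplied what the paper omits.
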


Our next result describes the behavior of Rickard complexes under the
second Markov move. In order to state it, we first need the following.

\begin{definition}\label{def:CDBim} Let $\CS(\DS(\Bim))$ denote the monoidal
2-category with the same objects as $\DS(\Bim)$ (and $\Bim$), 
and 1-morphism categories 
\[
{}_\brc\CS(\DS(\Bim))_\brcc:=\CS({}_\brc\DS(\Bim)_\brcc) \, .
\]  
\end{definition}
Note that $\CS(\DS(\Bim))$ is enriched in 
$\overline{\KS}\llbracket \adeg^\pm,\qdeg^\pm,\tdeg^\pm\rrbracket_{\dg}$, 
and that the functors $\Tr_c$ and $\Tr^c$ induce functors
${}_{\brc}\CS(\DS(\Bim))_{\brcc} \to {}_{\brc'}\CS(\DS(\Bim))_{\brcc'}$
by applying $\Tr_c$ and $\Tr^c$ to complexes term-wise as in \eqref{eq:HHcomplex}. 
Further, the inclusion \eqref{eq:SSBimIncl} induces an 
inclusion $\CS(\SSBim) \hookrightarrow \CS(\DS(\Bim))$.

\begin{lem}\label{lem:Markov2} 
Let $b \geq 1$, then we have homotopy equivalences
\[
\Tr_{b}(\oone_{\brcc}\boxtimes C_{b,b}) \simeq  \adeg^{-b} \qdeg^{b^2}  \tdeg^b \oone_{\brcc}\boxtimes \oone_b
\quad \text{and} \quad
\Tr_{b}(\oone_{\brcc}\boxtimes  C_{b,b}^\vee) \simeq  \qdeg^{-b^2} \oone_{\brcc}\boxtimes \oone_b
\]
inside ${}_{\brcc\boxtimes b}\CS(\DS(\Bim))_{\brcc\boxtimes b}$.
\end{lem}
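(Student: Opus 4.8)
The statement is a Markov-move-type computation: we must compute the partial Hochschild homology of a single Rickard crossing $C_{b,b}$ (or $C_{b,b}^\vee$) in which the two outgoing strands of one crossing are traced together. The plan is to reduce this to the explicit term-by-term structure of the Rickard complex from Definition~\ref{def:Rickardcx}, together with the Koszul-resolution description of Hochschild homology from \S\ref{sec:HH} and the bilinearity of the partial trace from Proposition~\ref{prop:TrBi}. First I would use Proposition~\ref{prop:TrBi} to strip off the $\oone_{\brcc}$ factor, reducing to computing $\Tr_b(C_{b,b})$ as a $1$-morphism from $(b)$ to $(b)$ in $\CS(\DS(\Bim))$; the claim then becomes $\Tr_b(C_{b,b}) \simeq \adeg^{-b}\qdeg^{b^2}\tdeg^b \oone_b$.

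\textbf{Key steps.} Recall $C_{b,b} = \bigoplus_{k=0}^{b} \qdeg^{-k}\tdeg^k C_{b,b}^k$ with $C_{b,b}^k = \F^{(b-k)}\E^{(b-k)}\oone_{b,b}$, and the differential is built from the morphisms $\chi_0^+$. Applying $\Tr_b$ term-wise (as in \eqref{eq:HHcomplex}) gives a complex whose $k$-th term is $\Tr_b(C_{b,b}^k)\otimes \largewedge[\eta_1,\ldots,\eta_b]$ with the twist $\sum_i (e_i(\leftX) - e_i(\rightX))\eta_i^\ast$. The crucial input is the computation of $\HH_\bullet$ of each singular Bott--Samelson term $C_{b,b}^k$ together with the induced maps; this is a standard (but nontrivial) calculation using the Koszul resolution \eqref{eq:Koszul} and the Frobenius extension structure from \S\ref{sec:Frobenius}. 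The expectation is that, after tracing, the complex $\Tr_b(C_{b,b})$ deformation-retracts onto its top term $k=b$, where $C_{b,b}^b = \oone_{b,b}$ (the identity bimodule on two $b$-colored strands) and $\Tr_b(\oone_{b,b}) \cong \oone_b \otimes \HH_\bullet(\Sym(\X))$ by Proposition~\ref{prop:trace of one}; the twist by $\sum_i(e_i(\leftX)-e_i(\rightX))\eta_i^\ast$ then cancels the polynomial generators $e_i(\X)$ against the exterior generators $\eta_i$ via a Koszul-type acyclicity, leaving precisely $\oone_b$ up to the grading shift $\adeg^{-b}\qdeg^{b^2}\tdeg^b$. (Here the $\adeg^{-b}$ records the $b$ surviving exterior generators that are \emph{not} cancelled in the retraction, the $\tdeg^b$ is the cohomological position of the top term, and the $\qdeg^{b^2}$ is bookkeeping from the shifts in $C_{b,b}^b$ and in $\Tr_b$.) Concretely, I would show that all lower terms $k<b$ cancel in pairs: the component $\Tr_b(\chi_0^+)\colon \Tr_b(C_{b,b}^k) \to \Tr_b(C_{b,b}^{k+1})$ together with a piece of the exterior twist defines a contractible subquotient, using Gaussian elimination on the resulting multicomplex. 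The argument for $C_{b,b}^\vee$ is identical with the roles reversed: its top term in cohomological degree $0$ is again $\oone_{b,b}$, now carrying the shift $\qdeg^{-b^2}$ and $\adeg^0$, since the surviving part of $\HH_\bullet$ after cancellation is concentrated in Hochschild degree $0$.

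\textbf{Main obstacle.} The hard part will be controlling the induced differential on $\Tr_b(C_{b,b})$ precisely enough to run the Gaussian elimination/Koszul cancellation cleanly. The maps $\chi_0^+$ are given in the extended graphical calculus \eqref{eq:chi plus}, and after applying $\HH_\bullet$ and combining with the Hochschild twist $\sum_i(e_i(\leftX)-e_i(\rightX))\eta_i^\ast$ one gets a genuine total complex with two anticommuting differentials; identifying which summands cancel requires knowing the action of $\chi_0^+$ and of multiplication by $e_i(\leftX) - e_i(\rightX)$ on the Hochschild homology of each $C_{b,b}^k$ as a module over $\Sym(\X_1|\X_2)$. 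I would handle this by invoking the description of $\HH_\bullet(C_{b,b}^k)$ in terms of the Frobenius extensions of Examples~\ref{exa:Demazure} and~\ref{exa:Sylvester} — the bimodule $C_{b,b}^k$ is a composite of merges and splits, so its Hochschild homology is computed by a sequence of Koszul complexes — and then observe that the trace identification from Proposition~\ref{prop:HH tracelike} makes the two incoming alphabets $\leftX$ and $\rightX$ agree, collapsing the curvature-like twist to the Koszul differential on $\Sym(\X)\otimes\largewedge[\eta]$ on the nose. Once that identification is in place, the cancellation is formal, and the grading shifts can be read off by a careful but routine count of the shifts appearing in \eqref{eq:MergeSplit}, in the definition of $C_{b,b}$, and in Definition~\ref{def:PT}.
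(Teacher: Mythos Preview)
The paper's proof is simply a citation to \cite[Lemma 4.12]{RTub2} (Rose--Tubbenhauer), stated there for partial Hochschild \emph{cohomology}, followed by the grading conversion $\Tr_b = \adeg^{-b}\qdeg^{b(b+1)}\Tr^b$ from Definition~\ref{def:PT}. So your proposal is a genuinely different route: a direct term-by-term computation rather than an appeal to the literature.

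That said, there is a concrete gap in your proposed mechanism. You claim that the complex deformation-retracts onto the top chain group $\Tr_b(C_{b,b}^b) = \Tr_b(\oone_{b,b})$, and that the Koszul twist $\sum_i(e_i(\leftX)-e_i(\rightX))\eta_i^\ast$ then ``cancels the polynomial generators $e_i(\X)$ against the exterior generators $\eta_i$'' on that term. But on the identity bimodule $\oone_{b,b}$ the alphabets $\leftX$ and $\rightX$ attached to the last strand act identically, so this twist is \emph{zero}. Hence $\Tr_b(\oone_{b,b}) \cong \oone_b\otimes\Sym(\X)\otimes\largewedge[\eta_1,\ldots,\eta_b]$ with trivial internal differential (exactly Proposition~\ref{prop:trace of one}), and no Koszul-type cancellation occurs there. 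The same confusion appears in your ``Main obstacle'' paragraph, where you say identifying $\leftX=\rightX$ ``collapses the curvature-like twist to the Koszul differential on $\Sym(\X)\otimes\largewedge[\eta]$'': it collapses it to zero.

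Consequently the cancellation cannot go as you describe (lower terms cancelling in pairs, top term surviving and then self-cancelling). The polynomial and lower-exterior-degree pieces of $\Tr_b(\oone_{b,b})$ must instead be killed by the \emph{Rickard} differential, i.e.\ by $\Tr_b(\chi_0^+)$ mapping from $\Tr_b(C_{b,b}^{b-1})$, and this feeds recursively down the complex. Making this precise requires computing each $\Tr_b(C_{b,b}^k)$ together with the induced maps---which is the actual content of \cite[Lemma 4.12]{RTub2}---and is substantially more involved than the Gaussian elimination you sketch. If you want to carry out the direct argument, a cleaner route is to use the twist-zipper relation \eqref{eq:twistzipper} and trace-bilinearity (Proposition~\ref{prop:TrBi}) to rewrite $\Tr_b(C_{b,b})$ in terms of merges and splits before unpacking the Koszul complex, rather than working term-by-term in the Rickard filtration.
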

\begin{proof}
This appears in \cite[Lemma 4.12]{RTub2} in terms of partial
Hochschild cohomology, namely:
\begin{equation}\label{eq:R1HochCo}
\Tr^{b}(\oone_{\brcc}\boxtimes C_{b,b}) \simeq  \qdeg^{-b}  \tdeg^b \oone_{\brcc}\boxtimes \oone_b
\quad \text{and} \quad
\Tr^{b}(\oone_{\brcc}\boxtimes C_{b,b}^\vee) \simeq  \adeg^b \qdeg^{- 2b^2-b} \oone_{\brcc}\boxtimes \oone_k .
\end{equation}
After shifting the grading by $\adeg^{b}\qdeg^{-b(b+1)}$ according to
Remark~\ref{rem:HHvHH}, we recover the shown expressions.
\end{proof}

\begin{proposition}\label{prop:markov ii}
For any 1-morphism $X$ in ${}_{\brcc\boxtimes b}\CS(\SSBim)_{\brcc\boxtimes b}$, 
we have equivalences
\[
\HH_\bullet\Big((X\boxtimes \oone_b)\hComp (\oone_{\brcc}\boxtimes C_{b,b})\Big) 
\simeq \adeg^{-b} \qdeg^{b^2}  \tdeg^b \HH_\bullet(X)
\, , \quad 
\HH_\bullet\Big((X\boxtimes \oone_b)\hComp (\oone_{\brcc}\boxtimes C_{b,b}^\vee)\Big) 
\simeq  \qdeg^{-b^2}  \HH_\bullet(X)
\]
that are natural in $X$.
\end{proposition}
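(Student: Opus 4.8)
The plan is to derive Proposition \ref{prop:markov ii} from the partial-trace statement Lemma \ref{lem:Markov2} together with the bilinearity property of partial Hochschild homology in Proposition \ref{prop:TrBi}, and the iterated-trace description of $\HH_\bullet$ from Remark \ref{rem:HHvHH}. First I would observe that, writing $\brcc = (b_1,\dots,b_{m})$, Remark \ref{rem:HHvHH} lets us compute $\HH_\bullet$ by applying $\Tr_{b_1}\circ\cdots\circ\Tr_{b_m}\circ\Tr_b$ (one extra $\Tr_b$ for the final $b$-labeled strand). So it suffices to understand the \emph{last} partial trace $\Tr_b$ applied to $(X\boxtimes \oone_b)\hComp(\oone_{\brcc}\boxtimes C_{b,b})$, and then apply the remaining traces to whatever comes out and recognize the result as $\HH_\bullet$ of a grading shift of $X$.

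The key step is the following identification: set $Y_1 = X$ (a $1$-morphism in ${}_{\brcc\boxtimes b}\DS(\Bim)_{\brcc\boxtimes b}$ built from $X$, via the inclusion \eqref{eq:SSBimIncl}), but actually the cleaner bookkeeping uses Proposition \ref{prop:TrBi} in the form $\Tr^b\big((Y_1\boxtimes\oone_b)\hComp \tilde X\hComp (Y_2\boxtimes\oone_b)\big)\cong Y_1\hComp \Tr^b(\tilde X)\hComp Y_2$. Here I would take $\tilde X = \oone_{\brcc}\boxtimes C_{b,b}$, $Y_1 = X$ (viewed in ${}_{\brcc\boxtimes b}\DS(\Bim)_{\brcc\boxtimes b}$, an object on $\brcc$ together with the $b$-labeled strand traced over — wait, the strand structure must match), and $Y_2 = \oone_{\brcc\boxtimes b}$. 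Concretely: $(X\boxtimes\oone_b)\hComp(\oone_\brcc\boxtimes C_{b,b})$ has the shape $(Y_1\boxtimes \oone_b)\hComp \tilde X$ with $Y_1 = X$ acting on the $\brcc$-part, once we note $X\boxtimes \oone_b = (X)\boxtimes \oone_b$ trivially and $\oone_\brcc\boxtimes C_{b,b} = \tilde X$. Applying $\Tr^b$ and using Proposition \ref{prop:TrBi} gives $X \hComp \Tr^b(\oone_\brcc\boxtimes C_{b,b})$; but $\Tr^b$ acts only on the last strand, so $\Tr^b(\oone_\brcc\boxtimes C_{b,b}) = \oone_\brcc\boxtimes \Tr^b(C_{b,b})$ by the last sentence of Proposition \ref{prop:TrBi}. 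Now Lemma \ref{lem:Markov2} (in its $\Tr^b$ form \eqref{eq:R1HochCo}, or directly the $\Tr_b$ form after the grading shift from Remark \ref{rem:HHvHH}) gives $\Tr_b(\oone_\brcc\boxtimes C_{b,b})\simeq \adeg^{-b}\qdeg^{b^2}\tdeg^b\,\oone_\brcc\boxtimes\oone_b$. Hence $\Tr_b\big((X\boxtimes\oone_b)\hComp(\oone_\brcc\boxtimes C_{b,b})\big)\simeq \adeg^{-b}\qdeg^{b^2}\tdeg^b\, X$. Applying the remaining traces $\Tr_{b_1}\circ\cdots\circ\Tr_{b_m}$ and using Remark \ref{rem:HHvHH} again yields $\HH_\bullet\big((X\boxtimes\oone_b)\hComp(\oone_\brcc\boxtimes C_{b,b})\big)\simeq \adeg^{-b}\qdeg^{b^2}\tdeg^b\,\HH_\bullet(X)$. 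The negative-crossing case is identical, using the second equivalences in Lemma \ref{lem:Markov2}.

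A couple of points need care. Naturality in $X$: since Propositions \ref{prop:TrBi} and \ref{prop:HH tracelike 2} assert naturality (in the dg sense), and the composite of natural isomorphisms is natural, the resulting equivalence is natural in $X$. One must also confirm that the homotopy equivalence in Lemma \ref{lem:Markov2} is one of $1$-morphisms in $\CS(\DS(\Bim))$ and that $\Tr_b$ (applied termwise as in \eqref{eq:HHcomplex}) carries it through — this is the content of the statement that $\Tr_b$, $\Tr^b$ induce functors on the categories of complexes, noted just before Lemma \ref{lem:Markov2}. Finally, the passage "apply $\Tr_{b_1}\circ\cdots\circ\Tr_{b_m}$ to both sides" uses that these functors preserve homotopy equivalences and commute with grading shifts, both of which are formal.

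The main obstacle is bookkeeping rather than conceptual: one must be scrupulous that the horizontal composition $(X\boxtimes\oone_b)\hComp(\oone_\brcc\boxtimes C_{b,b})$ really is of the form needed to apply Proposition \ref{prop:TrBi} — i.e. that $X$ genuinely factors through the strands \emph{not} being traced, and that $C_{b,b}$ lives on the single $b$-labeled strand we are closing up — and that the identity $1$-morphism padding ($Y_2 = \oone$, or equivalently no right factor) is inserted correctly so the cited isomorphism applies verbatim. Given the excerpt develops exactly this $\boxtimes$/$\hComp$ formalism and $\Tr^c$ is defined to act on the last $c$-labeled boundary points, this should go through cleanly; I would simply be explicit about which strand indices each functor touches.
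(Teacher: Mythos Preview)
Your proposal is correct and follows exactly the paper's approach: the paper's proof is the single sentence ``This is an immediate consequence of Remark \ref{rem:HHvHH}, Proposition \ref{prop:TrBi}, and Lemma \ref{lem:Markov2},'' which is precisely the chain of reductions you spell out. Your more detailed bookkeeping (tracking which strand indices each partial trace touches, and noting that $Y_2=\oone$ so that Proposition \ref{prop:TrBi} applies verbatim) is exactly the unpacking the paper leaves implicit.
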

\begin{proof}
This is an immediate consequence of Remark \ref{rem:HHvHH}, Proposition \ref{prop:TrBi}, 
and Lemma \ref{lem:Markov2}.
\end{proof}

\subsection{Colored, triply-graded homology}
\label{sec:homology}
In \cite{MR2339573}, 
Khovanov showed that triply-graded Khovanov--Rozansky homology \cite{MR2421131}
can be reformulated using the Hochschild homology of Soergel bimodules.
We now recall the extension of this result to colored, triply-graded link homology via 
singular Soergel bimodules, as described in \cite{MR2746676,Wed3,Cautis3}.
See also \cite{MR3687104} for a geometric analogue of this construction.
We say that a $\Z_{\geq 1}$-colored braid ${}_{\brc} \b_{\brcc}$ is \emph{balanced}
if it is an endomorphism in $\mathfrak{Br}(\Z_{\geq 1})$, i.e. if $\brcc=\brc$. In
this case, the (standard) braid closure $\widehat{\b_{\brc}}$ represents a
colored link. The classical Alexander theorem implies that every colored link in
$S^3$ arises in this way, and Markov's theorem classifies the redundancy in
presenting links by braid closures.

Following the references above, we will define the colored, triply-graded link
homology as the homology of a properly normalized version of the Hochschild
homology of Rickard complexes. The result will be an invariant of colored,
framed, oriented links. First, we need some setup.

\begin{definition}\label{def:cycles} If $\perm \in \symg_m$ is a permutation then we put
an equivalence relation on $\{1,\ldots,m\}$ by declaring $i\sim j$ if $i=\perm^k(j)$ 
for some $k>0$.
The set of equivalence classes $\Omega(\perm):=\{1,\ldots,m\}/\sim$ is called the set of
\emph{cycles} of $\perm$.  We denote by $[i]$ the cycle containing 
$i\in \{1,\ldots, m\}$.  If $\brc=(\bre_1,\ldots,\bre_m)$ satisfies 
$\perm(\brc)=\brc$, then $\bre_{i}$ depends
only on the cycle containing $i$, so we may write $\bre_{[i]}:= \bre_i$.
\end{definition}

Note that if $\perm$ is the permutation represented by a braid $\b$, 
then $\Omega(\perm)$ is in canonical bijection with the set of link components of $\hat{\b}$. 
We now introduce some numerical invariants of our colored braid $\b_{\brc}$.

\begin{definition}\label{def:braid numerical invts} Suppose $\b$ is an
$m$-strand braid and $\perm$ is the permutation represented by $\b$, and
let $\brc=(\bre_1,\ldots,\bre_m)$ be a set of colors.  The \emph{colored writhe}
$\e(\beta_\brc) \in \Z$ 
of a colored braid $\beta_\brc \in \mathfrak{Br}(\Z_{\geq 1})$ is defined by
\[
	\e(\beta_\brc):=\sum_{\cross} s(\cross) a(\cross) b(\cross)
\]
where $\cross$ ranges over all crossings in the braid $\beta$, 
$\{a(\cross) , b(\cross)\}$ is the multiset of colors meeting $\cross$, 
and $s(\cross)\in \{\pm 1\}$ is the sign of $\cross$. 
Furthermore, when $\beta_\brc$ is balanced we set:
\[
	n(\b_\brc) := \sum_{[i]\in \Omega(\perm)} \bre_{[i]} \, , \quad
	N(\b_\brc):= \sum_{1\leq i\leq m} \bre_i \, , \quad 
	Q(\b_\brc):= \sum_{1\leq i\leq m} \bre_i(\bre_i+1) \, .
\]
These quantities give the sum of the colors of the components 
of the closure of $\b_\brc$, the colors on the strands of $\b_\brc$, 
and the sum of the $\qdeg$-degrees of all the generators
of the partially symmetric polynomial ring $R^{\brc}$.
\end{definition}

\begin{definition}\label{def:HHH} Let $\LB$ be a colored link, 
presented as the closure of a balanced, colored braid $\b_{\brc}$.  
The \emph{colored, triply-graded Khovanov--Rozansky complex} of $\b_{\brc}$ is
\[
C_{\KR}(\b_{\brc}) 
:= (\adeg\tdeg\inv)^{\frac{1}{2}(\e(\beta_\brc)+N(\b_\brc)-n(\b_\brc))} 
\qdeg^{-\e(\beta_\brc)} \HH_{\bullet}(C(\b_\brc))
\]
This is an object\footnote{In fact, the homogeneous components 
of $C_{\KR}(\b_{\brc})$ are finite-dimensional, 
the $\adeg$- and $\tdeg$-grading are bounded 
and the $\qdeg$-grading is bounded from below. 
The same holds for $H_{\KR}(\b_{\brc})$.} 
of $\overline{\KS}\llbracket \adeg^\pm,
\qdeg^\pm,\tdeg^\pm\rrbracket_{\dg}$ (see \S\ref{ss:notation and cats}). 
The \emph{colored, triply-graded Khovanov--Rozansky homology} of $\b_{\brc}$ 
is defined by $H_{\KR}(\b_{\brc}) := H(C_{\KR}(\b_{\brc}))$, 
which is an object of 
$\overline{\KS}\llbracket \adeg^\pm,\qdeg^\pm,\tdeg^\pm\rrbracket$.
\end{definition}

\begin{remark}
It can be seen that $\e(\b_\brc)+N(\b_\brc) - n(\b_\brc)$ is even, so the shift
$(\adeg\tdeg\inv)^{\frac{1}{2}(\e(\beta_\brc)+N(\b_\brc)-n(\b_\brc))}$ is well-defined.  
We leave verification of this as an exercise.
	\end{remark}
	
\begin{remark}\label{rem:HHc}
One can also express $C_{\KR}(\b_{\brc})$ in terms of Hochschild cohomology:
\[
C_{\KR}(\b_{\brc}) 
= \adeg^{\frac{1}{2}(\e(\b_\brc)-N(\b_\brc)-n(\b_\brc))} 
\qdeg^{Q(\b_\brc)-\e(\b_\brc)} \tdeg^{\frac{1}{2}(-\e(\b_\brc)-N(\b_\brc)+n(\b_\brc))}
\HH^{\bullet}(C(\b_\brc))
\]
via \eqref{eq:HHvHH tot}.
	\end{remark}

\begin{thm} \label{thm:HHHinvariance} A change of braid representative
$\b_{\brc}$ for a framed, oriented, colored link $\LB$ induces a homotopy equivalence
between the corresponding complexes $C_{\KR}(\b_{\brc})$. 
Consequently, 
%the $\Z_\adeg\times \Z_\qdeg \times
%\Z_\tdeg$-graded $\k$-vector space 
$H_{\KR}(\LB):= H_{\KR}(\b_{\brc}) \in 
	\overline{\KS}\llbracket \adeg^\pm,\qdeg^\pm,\tdeg^\pm\rrbracket$ 
is a well-defined
invariant of the framed, oriented, colored link $\LB$, up to isomorphism.
Furthermore, both $C_{\KR}(\b_{\brc})$ and $H_{\KR}(\LB)$ are monoidal under split
(disjoint) union $\sqcup$:
\[
C_{\KR}(\b_{\brc} \sqcup \b'_{\brc'}) \cong C_{\KR}(\b_{\brc})\otimes_{\k} C_{\KR}(\b'_{\brc'})
\, , \quad
H_{\KR}(\LB\sqcup \LB') \cong H_{\KR}(\LB)\otimes_{\k} H_{\KR}(\LB') \, .
\]
Changing framing by $\pm1$ on a $b$-labeled component shifts $H_{\KR}(\LB)$ by
$(\adeg\tdeg\inv)^{\pm\frac{1}{2}b(b-1)}$. 

A choice of a point $\point$ on a $b$-labeled component 
equips $C_{\KR}(\b_{\brc})$ and $H_{\KR}(\LB)$ with an action of
the symmetric polynomial ring $\Sym(\X_\point)$ with $|\X_\point|=b$. 
Different choices of points on a component give quasi-isomorphic 
$\Sym(\X_\point)$-module structures on 
$C_{\KR}(\b_{\brc})$ and equal actions on $H_{\KR}(\LB)$.
\end{thm}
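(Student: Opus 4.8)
The plan is to reduce everything to the invariance of the underlying Rickard complexes together with the functoriality of Hochschild homology, and then treat the ``normalization'' and ``sheet algebra'' assertions separately.

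\smallskip

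\noindent\textbf{Step 1: Markov invariance of $C_{\KR}$.} First I would recall that two braid representatives $\b_{\brc}$ and $\b'_{\brc'}$ of the same framed, oriented, colored link are related by a finite sequence of (colored) braid relations and framed Markov II (stabilization) moves. For the braid relations, Proposition~\ref{prop:rickard invariance} gives a homotopy equivalence $C(\b_{\brc}) \simeq C(\b'_{\brc})$ in $\CS(\SSBim)$; applying the Hochschild homology functor term-wise as in \eqref{eq:HHcomplex} yields a homotopy equivalence of the associated complexes in $\overline{\KS}\llbracket \adeg^\pm,\qdeg^\pm,\tdeg^\pm\rrbracket_{\dg}$, and the numerical invariants $\e$, $n$, $N$ in Definition~\ref{def:braid numerical invts} are manifestly unchanged. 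For a stabilization move one writes the stabilized braid as $(\b \boxtimes \oone_b) \hComp (\oone_{\brcc}\boxtimes C_{b,b}^{\pm 1})$ and invokes Proposition~\ref{prop:markov ii}, which gives $\HH_\bullet$ of the stabilized complex as a grading shift of $\HH_\bullet(C(\b_{\brc}))$; the key point is then a bookkeeping check that the grading shifts $\adeg^{-b}\qdeg^{b^2}\tdeg^b$ (resp.\ $\qdeg^{-b^2}$) are exactly cancelled by the change in the prefactor $(\adeg\tdeg^{-1})^{\frac12(\e + N - n)}\qdeg^{-\e}$ coming from the change in writhe and strand colors. The framing-dependence statement is a variant of the same computation, using that a positive (resp.\ negative) curl on a $b$-colored strand contributes a full twist $\FT_b$ whose Hochschild homology differs from the identity by $(\adeg\tdeg^{-1})^{\pm\frac12 b(b-1)}$.

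\smallskip

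\noindent\textbf{Step 2: monoidality and the homotopy category.} The monoidality under $\sqcup$ is immediate: a split union of braid closures corresponds to $\boxtimes$ of Rickard complexes, $C(\b_{\brc}\sqcup \b'_{\brc'}) \cong C(\b_{\brc})\boxtimes C(\b'_{\brc'})$, and $\HH_\bullet$ of a $\boxtimes$ is the tensor product over $\k$ of the Hochschild complexes (this is Proposition~\ref{prop:TrBi} applied iteratively, or directly from the Koszul resolution \eqref{eq:Koszul} being multiplicative). The numerical prefactors are additive in the relevant quantities, so they multiply correctly. Passing to homology $H_{\KR}$ then gives the stated invariance up to isomorphism rather than quasi-isomorphism.

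\smallskip

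\noindent\textbf{Step 3: the sheet algebra action.} A choice of point $\point$ on a $b$-colored component singles out a $b$-colored edge of the braid diagram, hence a polynomial ring $R^{\dots}$ one of whose symmetric subalphabets is $\X_\point$; multiplication by elements of $\Sym(\X_\point)$ gives a chain endomorphism of $C(\b_{\brc})$, hence (via functoriality of $\HH_\bullet$) of $C_{\KR}(\b_{\brc})$ and of $H_{\KR}(\LB)$. To see that two choices $\point_1,\point_2$ on the same component give quasi-isomorphic module structures on $C_{\KR}(\b_{\brc})$: it suffices to treat the case where $\point_1,\point_2$ lie on the same strand of $\b_{\brc}$ (at the left and right ends), since a component traverses a cyclically ordered sequence of strands and Hochschild homology ``closes up'' consecutive strands. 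On a single strand this is precisely Proposition~\ref{prop:qiso} (the left and right $\Sym(\W_i)$-actions on a strict $1$-morphism in $\YS(\SSBim)$, hence \emph{a fortiori} on the undeformed $C(\b_{\brc})$, are quasi-isomorphic), combined with the trace-like naturality of Proposition~\ref{prop:HH tracelike 2} to propagate this through the braid closure. On the level of homology $H_{\KR}(\LB)$ the ambiguity disappears entirely: the difference of the two actions of $f\in\Sym(\X_\point)$ is null-homotopic by Proposition~\ref{prop:qiso} (equivalently Remark~\ref{rmk:strict}), so it acts as zero on homology, giving equal module structures.

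\smallskip

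\noindent\textbf{Main obstacle.} The genuinely delicate part is Step~1, specifically the sign/grading bookkeeping in the stabilization move: one must verify that the shift $(\adeg\tdeg^{-1})^{\frac12(\e+N-n)}\qdeg^{-\e}$ is well-defined (the exponent $\frac12(\e+N-n)$ is an integer, the ``exercise'' flagged in the remark) and that it transforms under stabilization so as to cancel the shifts in Proposition~\ref{prop:markov ii} uniformly for both positive and negative stabilization. Getting the half-integer exponents to match requires carefully tracking how $\e$, $N$, and $n$ each change when a $b$-colored strand is stabilized (namely $N \mapsto N+b$, $n$ unchanged if the new strand joins an existing component, $\e \mapsto \e \pm b^2$), and confirming the parities align; this is where I would spend the most care, as everything else follows formally from results already in hand.
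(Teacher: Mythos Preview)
Your proposal has a genuine gap in Step~1: you omit the \emph{Markov~I (conjugation) move}. The claim that two braid representatives of the same framed, oriented, colored link are related by braid relations and Markov~II moves alone is false --- one also needs conjugation $\b \mapsto \gamma \b \gamma^{-1}$. This is not a braid relation, and Proposition~\ref{prop:rickard invariance} says nothing about it. The paper handles this step via the trace-like property of Hochschild homology, Proposition~\ref{prop:HH tracelike 2}, which gives $\HH_\bullet(X\otimes_S Y) \cong \HH_\bullet(Y\otimes_R X)$ naturally. You invoke that proposition in Step~3 for the sheet algebra action, but it is in Step~1 that it is essential.

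A secondary issue: your description of the framing change as arising from ``a full twist $\FT_b$'' is not accurate. A $\pm 1$ framing change on a $b$-colored strand is realized at the braid level by positive versus negative stabilization (a single colored crossing $C_{b,b}^{\pm 1}$), not by inserting $\FT_b$. The paper simply reads the framing shift off of Lemma~\ref{lem:Markov2}: the two stabilizations give shifts $\adeg^{-b}\qdeg^{b^2}\tdeg^{b}$ and $\qdeg^{-b^2}$ respectively, and one checks that after the normalization in Definition~\ref{def:HHH} these become $(\adeg\tdeg^{-1})^{\pm\frac12 b(b-1)}$, as you correctly anticipated in your ``main obstacle'' paragraph. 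The rest of your outline (monoidality in Step~2, the sheet algebra action via Proposition~\ref{prop:qiso} in Step~3) matches the paper's proof.
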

\begin{proof} 
	Following the classical Markov theorem, the invariance statement is a
consequence of Proposition \ref{prop:rickard invariance}, conjugacy invariance
of Hochschild homology (Proposition~\ref{prop:HH tracelike 2}), and the
behavior of Hochschild homology of Rickard complexes under the second Markov
move (Lemma \ref{lem:Markov2}). See also \cite[Theorem 1.1]{MR3687104} or
\cite[Theorem 4.1]{Cautis3}. The monoidality follows since split links can be
represented by split braids, the fact that the Hochschild homology of Rickard
complexes is monoidal under the external tensor product $\boxtimes$, and the
observation that the numerical invariants $\e(-)$, $N(-)$, and $n(-)$ are
additive under $\boxtimes$. The framing behavior follows from
Lemma~\ref{lem:Markov2}. 
The module structure on $C_{\KR}(\b_{\brc})$ is inherited from the
module structure of singular Soergel bimodules. 
Homotopies relating the module structures specified by points on a 
single strand on two sides of a crossing were studied in \S\ref{ss:curved rickard}, 
and the last statement therefore follows from Proposition \ref{prop:qiso}.
\end{proof}

\begin{example}
	\label{exa:unknot}
Let $\UB(b)$ be the 0-framed $b$-colored unknot, presented as the closure of a
single $b$-labeled strand. 
In this case, no differentials or grading shifts enter into Definition~\ref{def:HHH}. 
To compute the unknot invariant, let $\X$ be the size
$b$ alphabet associated to the $b$-labeled strand. 
Then, we have:
\[
H_{\KR}(\UB(b)) = C_{\KR}(\UB(b)) 
= \HH_{\bullet}(\Sym(\X)) 
\cong \k[e_1(\X),\ldots,e_b(\X)]\otimes \largewedge[\eta_1,\ldots,\eta_b]
\]
where $\wt(e_i(\X))=\qdeg^{2i}$ and $\wt(\eta_i) = \adeg\inv \qdeg^{2i}$. 
The Poincar\'e series of this homology
is:
\[
\left(\frac{1+\adeg\inv \qdeg^{2}}{1-\qdeg^2}\right)\cdots 
\left(\frac{1+\adeg\inv \qdeg^{2b}}{1-\qdeg^{2b}}\right) \, .
\]

As is typical in link homology theory, what is usually called ``the unknot
invariant'' actually serves two distinct roles: first literally as the invariant
of the unknot, and second as the algebra that acts on the invariant of any link
upon specifying a chosen point. 
The latter is the \emph{derived sheet algebra} from Example \ref{ex:colored sheet alg},
so-named because it computes endomorphisms of a single strand in $\DS(\Bim)$. 
In the colored, triply-graded Khovanov--Rozansky theory, the derived sheet algebra for a
$b$-labeled strand is given by the dg algebra $\HH^{\bullet}(\Sym(\X))$, whose
Poincar\'e series is:
\[
\left(\frac{1+ \adeg \qdeg^{-2}}{1-\qdeg^2}\right)\cdots 
\left(\frac{1+ \adeg \qdeg^{-2b}}{1-\qdeg^{2b}}\right) \, .
\]
The action of the derived sheet algebra on the unknot invariant coincides with the
classical action of Hochschild cohomology on Hochschild homology. 
By \eqref{eq:HHvHH tot}, 
the underlying triply-graded vector spaces of the unknot invariant and the derived
sheet algebra only differ by a grading shift.
\end{example}

\begin{rem}
	\label{rem:dgmodule}
	We expect that Theorem~\ref{thm:HHHinvariance} can be strengthened as
	follows. If the labels on the components of $\LB$ are $\bre_1,\dots,\bre_r$,
	then we expect that $C_{\KR}(\LB)$ is well-defined up to quasi-isomorphism as a dg-module
	over the dg algebra given as the tensor product of the \emph{derived} sheet algebras of all
	$\bre_i$. We will not need this stronger statement in the present paper.
\end{rem}

\begin{rem}
	It is sometimes desirable to use a renormalized version of
	Definition~\ref{def:HHH} that favors Hochschild cohomology over Hochschild
	homology, and in which the unknot invariant is identified with the derived
	sheet algebra. For this one sets: 
\begin{align*}
C'_{\KR}(\LB)
&=
\adeg^{\frac{1}{2}(\e(\beta_\brc)+N(\b_\brc)+ n(\b_\brc))}
\tdeg^{\frac{1}{2}(-\e(\beta_\brc)-N(\b_\brc)+ n(\b_\brc))} \qdeg^{-\e(\beta_\brc) -n_2(\b_\brc)} 
\HH_{\bullet}(C(\b_\brc)) \\
&=
\adeg^{\frac{1}{2}(\e(\b_\brc)-N(\b_\brc)+n(\b_\brc))}
\tdeg^{\frac{1}{2}(-\e(\b_\brc)-N(\b_\brc)+n(\b_\brc))}
\qdeg^{-\e(\b_\brc)-n_2(\b_\brc)+Q(\b_\brc)}
\HH^{\bullet}(C(\b_\brc)),
\end{align*}
where $n_2(\b_\brc):= \sum_{[i]\in \Omega(\perm)} \bre_{[i]}(\bre_{[i]}+1)$.
\end{rem}

\begin{rem}
The invariant $H_{\KR}(\LB)$ decategorifies to a version of the HOMFLYPT
invariant $P(\LB)$ of links colored by one-column Young diagrams by specializing
the three-variable Poincar\'e series at $\tdeg=-1$. More specifically, the
\emph{uncolored} part of the invariant (i.e. where all colors are single box
Young diagrams) is determined by the unknot invariant that we read off
Example~\ref{exa:unknot} and the following skein relation:
\[
\qdeg P\left( 
	\begin{tikzpicture}[rotate=90,scale=.5,smallnodes,anchorbase]
		\draw[very thick,->] (1,-1) node[right,xshift=-2pt]{$1$} to [out=90,in=270] (0,1);
		\draw[line width=5pt,color=white] (0,-1) to [out=90,in=270] (1,1);
		\draw[very thick,->] (0,-1) node[right,xshift=-2pt]{$1$} to [out=90,in=270] (1,1);
	\end{tikzpicture}
\right)	+\adeg \qdeg\inv 
P\left( 
	\begin{tikzpicture}[rotate=90,scale=.5,smallnodes,anchorbase]
		\draw[very thick,->] (0,-1) node[right,xshift=-2pt]{$1$} to [out=90,in=270] (1,1);
		\draw[line width=5pt,color=white] (1,-1) to [out=90,in=270] (0,1);
		\draw[very thick,->] (1,-1) node[right,xshift=-2pt]{$1$} to [out=90,in=270] (0,1);
	\end{tikzpicture}
\right)
 = (-\adeg)^r (\qdeg-\qdeg\inv) 
 P\left( 
	\begin{tikzpicture}[rotate=90,scale=.5,smallnodes,anchorbase]
		\draw[very thick,->] (0,-1) node[right,xshift=-2pt]{$1$} to [out=90,in=270] (0,1);
		\draw[very thick,->] (1,-1) node[right,xshift=-2pt]{$1$} to [out=90,in=270] (1,1);
	\end{tikzpicture}
\right)
\]
where $r=0$ if all shown strands on the left-hand side belong to the same link
component, and $r=1$ otherwise.
\end{rem}

\subsection{The deformed, colored, triply-graded homology}
\label{sec:deformed}
We now define our deformed, colored, triply-graded homology. 
This proceeds in parallel to \S\ref{sec:homology} by replacing the 
Rickard complex $C(\b_\brc)$ with an appropriate curved analogue 
constructed from $\YS C(\b_{\brc,\perm\inv})$ and the set $\Omega(\perm)$ 
of cycles of $\beta$.
We will use the $\V$-variables description of the $\Hom$-categories in $\YS(\SSBim)$, 
as it is more convenient for our present considerations.

Suppose we are given a 1-morphism 
$\tw_\Delta(X) \in {}_{\brc,\id}\YS(\SSBim)_{\brc,\perm\inv}$, 
where $\brc=(\bre_1,\ldots,\bre_m)$ and $\perm \in \symg_m$.  
If $\perm=\id$, then the $h\Delta$-curvature on $\tw_{\Delta}(X)$ 
(given by \eqref{eq:Vcurv}) takes the form 
$\sum_{i,k} h_k(\leftX_i-\rightX_i)v_{i,k}$, 
which becomes zero after applying $\HH_\bullet$.
Thus,
\begin{equation}\label{eq:YHH1}
\tw_{\HH_\bullet(\Delta)}\Big(\HH_\bullet(X)\otimes \k[\V]\Big)
\end{equation}
is a well-defined complex (with zero curvature), 
and we obtain our sought after deformation of 
\eqref{def:HHH} by replacing $\HH_\bullet$ with \eqref{eq:YHH1}.
If $\perm \neq \id$, 
then we will need to ``prepare'' $\tw_{\Delta}(X)$ before taking Hochschild homology.

Hence, we introduce $\X$-alphabets parametrized by
elements of $\Omega(\perm)$ (i.e. the cycles of $\perm$)
by setting
\[
\leftX_{[i]} := \sum_{j\in[i]} \leftX_j 
\, , \quad 
\rightX_{[i]}:= \sum_{j\in[i]} \rightX_j,
\]
and we introduce deformation parameters indexed by $\Omega(\perm)$, 
denoted
\begin{equation}\label{eq:VOmega}
\V_{[i]} := \{v_{[i],1},\ldots,  v_{[i],b_i}\}
\, ,\quad 
\V^{\Omega(\perm)} := \bigcup_{[i]\in \Omega(\perm)} \V_{[i]}.
\end{equation}
We now change variables from $\V$ to $\V^{\Omega(\perm)}$ 
so that the resulting complex has curvature
\begin{equation}\label{eq:bundled curvature}
\sum_{[i]\in \Omega(\perm)}\sum_{k=1}^{b_i} h_k(\leftX_{[i]}-\rightX_{[i]}) v_{[i],k}.
\end{equation}
Note that if $\tw_{\Delta}(X) = \YS C(\beta_{\brc})$ 
is the curved complex associated to a braid, 
then \eqref{eq:bundled curvature} has one alphabet 
$\V_{[i]} = \{v_{[i],1},\ldots,  v_{[i],b_i}\}$ of deformation parameters 
for each link component in $\widehat{\beta_{\brc}}$. 
We refer to curvature of the form \eqref{eq:bundled curvature} as \emph{bundled curvature}, 
since the deformation parameters on various braid strands are bundled according to 
the corresponding link components.

The following gives a method for obtaining bundled curvature.

\begin{definition}\label{def:bundling v's}
Define a surjective algebra map 
$\k[\leftX,\rightX,\V] \to \k[\leftX,\rightX,\V^{\Omega(\perm)}]$ 
by declaring
\begin{equation}\label{eq:bundled vs}
v_{i,k} \mapsto 
\sum_{k\leq l\leq b_i} h_{l-k}\left(\sum_{\substack{j<i\\ j\sim i}}(\leftX_j - \rightX_{\perm\inv(j)})\right) v_{[i],l} \, .
\end{equation}
\end{definition}

\begin{lemma}\label{lemma:bundled curvature}
Suppose $\tw_\Delta(X) \in {}_{\brc,1}\YS(\SSBim)_{\brc,\perm\inv}$.
The substitution \eqref{eq:bundled vs} yields a complex 
\[
\tw_{\bDelta}\left(X\otimes_{\k[\leftX,\rightX,\V]} \k[\leftX,\rightX,\V^{\Omega(\perm)}]\right)
\]
with curvature \eqref{eq:bundled curvature}.
\end{lemma}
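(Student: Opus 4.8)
The plan is to show two things: first, that the substitution \eqref{eq:bundled vs} is compatible with the curvature, i.e.\ that it sends the strand-wise $h\Delta$-curvature $Z = \sum_{i,k} h_k(\leftX_i - \rightX_i) v_{i,k}$ to the bundled curvature \eqref{eq:bundled curvature}; and second, that the resulting pair $(X \otimes_{\k[\leftX,\rightX,\V]} \k[\leftX,\rightX,\V^{\Omega(\perm)}], \d + \bDelta)$ is a legitimate object of the relevant category of curved complexes, where $\bDelta$ is the image of $\Delta$ under (the $\k[\leftX,\rightX]$-linear extension of) the substitution. The second point is essentially formal once the first is established: extension of scalars along the algebra map $\k[\leftX,\rightX,\V] \to \k[\leftX,\rightX,\V^{\Omega(\perm)}]$ is a dg functor on the relevant categories of curved complexes (it is the identity on the underlying bimodules, tensors the Hom-complexes up along the ring map, and hence preserves the Maurer--Cartan/curvature equation $(\d+\Delta)^2 = Z|_X$, sending it to $(\d + \bDelta)^2 = \bar{Z}|_X$ where $\bar Z$ is the image of $Z$). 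So the real content is the curvature computation.

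For the curvature computation, I would argue as follows. Recall that on a $1$-morphism ${}_{\brc,\id}X_{\brc,\perm\inv}$ the left alphabet attached to the $i$-th boundary point is $\leftX_i$ and the right alphabet is $\rightX_{\perm\inv(i)}$ (since the permutation data records how strands are matched across the braid); thus the curvature \eqref{eq:Vcurv} reads $Z = \sum_i \sum_k h_k(\leftX_i - \rightX_{\perm\inv(i)}) v_{i,k}$. Fix a cycle $[i] \in \Omega(\perm)$ and enumerate its elements. For each strand $j \sim i$ apply the substitution \eqref{eq:bundled vs}, which writes $v_{j,k}$ as an $h$-combination of the $v_{[i],l}$ with coefficients involving the partial sums $\sum_{\substack{j'<j \\ j'\sim j}}(\leftX_{j'} - \rightX_{\perm\inv(j')})$. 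Summing $\sum_{j\sim i}\sum_k h_k(\leftX_j - \rightX_{\perm\inv(j)}) v_{j,k}$ over the cycle and expanding each $v_{j,k}$ via \eqref{eq:bundled vs}, one collects, for each fixed $l$, the coefficient of $v_{[i],l}$; using the additivity $h_k(\A + \B,t)$-type identities for the complete symmetric functions of formal differences of alphabets (the generating-function formalism recalled in \S\ref{ss:symfctns}, in particular $H(a_1\X_1 + a_2\X_2, t) = H(\X_1,t)^{a_1}H(\X_2,t)^{a_2}$) this telescoping collapses to $h_l\!\left(\sum_{j\sim i}(\leftX_j - \rightX_{\perm\inv(j)})\right) v_{[i],l} = h_l(\leftX_{[i]} - \rightX_{[i]}) v_{[i],l}$, since $\sum_{j\in[i]} \rightX_{\perm\inv(j)} = \sum_{j\in[i]} \rightX_j = \rightX_{[i]}$ as $\perm\inv$ permutes the cycle $[i]$ among itself. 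Summing over all cycles yields precisely \eqref{eq:bundled curvature}.

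Concretely, the telescoping step is an instance of the convolution identity $\sum_{k+l'=l} h_{l'}(\cal A_1)\, h_k(\cal A_2 - \cal A_1 + \cal A_1') \cdots$, but it is cleanest to package it via generating functions: the substitution \eqref{eq:bundled vs} is exactly the statement that, writing $V_i(t) := \sum_k v_{i,k} t^k$ and $V_{[i]}(t) := \sum_l v_{[i],l} t^l$, one has $V_j(t) = H\!\big(\sum_{j'<j,\,j'\sim j}(\leftX_{j'} - \rightX_{\perm\inv(j')}), t\big) \cdot (\text{appropriate truncation of } V_{[i]}(t))$, and then $\sum_{j\sim i} H(\leftX_j - \rightX_{\perm\inv(j)}, t) V_j(t)$ telescopes because $H(\leftX_j - \rightX_{\perm\inv(j)}, t) \cdot H\big(\sum_{j'<j,\,j'\sim j}(\leftX_{j'} - \rightX_{\perm\inv(j')}), t\big) = H\big(\sum_{j'\le j,\,j'\sim j}(\leftX_{j'} - \rightX_{\perm\inv(j')}), t\big)$. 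I would therefore structure the proof around this generating-function identity, noting that the only subtlety (and the main potential obstacle) is bookkeeping the truncations: the $v_{[i],l}$ only range up to $l = b_i$, so one must check that the higher-degree terms that would appear from naive multiplication of generating functions are killed because $h_{l-k}$ with $l > b_i$ never multiplies a surviving $v_{[i],l}$ — this is automatic since the substitution \eqref{eq:bundled vs} explicitly only sums $l$ from $k$ to $b_i$, and the curvature $Z$ only involves $v_{i,k}$ for $k \le b_i$. Once this truncation bookkeeping is dispatched, the remaining verification that $\bDelta$ satisfies the Maurer--Cartan equation with the bundled curvature is immediate from functoriality of extension of scalars, completing the proof.
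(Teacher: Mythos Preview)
Your proposal is correct and follows essentially the same approach as the paper: reduce to a single cycle, substitute \eqref{eq:bundled vs} into the strand-wise curvature, and collapse the resulting double sum using the convolution identity $h_l(\Z_1+\Z_2) = \sum_{k=0}^l h_{l-k}(\Z_1)h_k(\Z_2)$ to obtain the bundled curvature. One small caveat: your generating-function repackaging as stated does not literally telescope, since $\sum_{j\sim i} H(\mathcal{B}_j,t)\,V_j(t) = \sum_{j\sim i} H(\mathcal{A}_{\le j},t)\,V_{[i]}(t)$ is just a sum; the telescope appears only after you strip off the $k=0$ term of each $H(\mathcal{B}_j,t)$ (as the curvature has no $k=0$ contribution), giving $\sum_{j\sim i}\big[H(\mathcal{A}_{\le j},t)-H(\mathcal{A}_{<j},t)\big]V_{[i]}(t) = \big[H(\mathcal{A}_{[i]},t)-1\big]V_{[i]}(t)$ --- but your direct coefficient argument already handles this correctly.
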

\begin{proof}
For simplicity, we consider the case when $\perm$ has only a single cycle
$[1]=[2]=\cdots=[m]$, 
so that $\leftX_{[1]}=\X_1+\cdots+\X_m$ and similarly for $\rightX_{[1]}$.
Observe that
\begin{align*}
h_l(\leftX_{[1]}-\rightX_{[1]})
&= h_l \Big((\leftX_1-\rightX_{\perm\inv(1)}) + \cdots+(\leftX_m-\rightX_{\perm\inv(m)})\Big) \\
&=\sum_{i=1}^m \sum_{k=1}^l h_k(\leftX_i-\rightX_{\perm\inv(i)}) 
	h_{l-k}(\leftX_1+\cdots+\leftX_{i-1}-\rightX_{\perm\inv(1)}-\cdots-\rightX_{\perm\inv(i-1)}) \, ,
\end{align*}
where the second line is obtained by iterating identities of the form
\[
h_l(\Z_1+\Z_2) = h_l(\Z_1) + \sum_{k=1}^l h_{l-k}(\Z_1)h_k(\Z_2).
\]
%an example:
%\begin{align*}
%h_l(\leftX_1+\leftX_2 &+\leftX_3 - \rightX_{\perm\inv(1)} - \rightX_{\perm\inv(2)} - \rightX_{\perm\inv(3)}) \\
%&= h_l(\leftX_1 - \rightX_{\perm\inv(1)}) + \sum_{k=1}^l h_{l-k}(\leftX_1 - \rightX_{\perm\inv(1)}) 
%	h_k(\leftX_2+\leftX_3 - \rightX_{\perm\inv(2)} - \rightX_{\perm\inv(3)}) \\
%&= h_l(\leftX_1 - \rightX_{\perm\inv(1)}) + \sum_{k=1}^l h_{l-k}(\leftX_1 - \rightX_{\perm\inv(1)}) 
%	h_k(\leftX_2 - \rightX_{\perm\inv(2)}) \\
%	&\quad+ %\sum_{k=1}^l \sum_{m=1}^k 
%	%\sum_{1\leq m \leq k \leq l} 
%	\sum_{m=1}^l \sum_{k=m}^l
%	h_{l-k}(\leftX_1 - \rightX_{\perm\inv(1)}) h_{k-m}(\leftX_2 - \rightX_{\perm\inv(2)}) h_m (\leftX_3 - \rightX_{\perm\inv(3)}) \\
%&= h_l(\leftX_1 - \rightX_{\perm\inv(1)}) + \sum_{k=1}^l h_{l-k}(\leftX_1 - \rightX_{\perm\inv(1)}) 
%	h_k(\leftX_2 - \rightX_{\perm\inv(2)}) \\
%	&\quad+ %\sum_{k=1}^l \sum_{m=1}^k 
%	%\sum_{1\leq m \leq k \leq l} 
%	\sum_{m=1}^l
%	h_{l-m}(\leftX_1 + \leftX_2 - \rightX_{\perm\inv(1)} - \rightX_{\perm\inv(2)}) h_m (\leftX_3 - \rightX_{\perm\inv(3)}) \\
%\end{align*}
It thus follows that
\[
\sum_{l=1}^b h_l(\leftX_1+\cdots+\leftX_m - \rightX_1-\cdots- \rightX_m)v_{[1],l} 
= \sum_{i=1}^m \sum_{k=1}^l h_k(\X_i-\X_{\perm\inv(i)}') v_{i,k}
\]
under the substitution
$v_{i,k} = \sum_{l=k}^b h_{l-k}(\leftX_1+\cdots+\leftX_{i-1}-\rightX_{\perm\inv(1)}-\cdots-\rightX_{\perm\inv(i-1)}) v_{[1],l} $.  
This completes the proof when $\perm$ has one cycle. 
The proof for general $\perm$ is accomplished by applying the above computation 
to each cycle of $\perm$. It differs only in more-tedious bookkeeping, 
so we omit the details.
\end{proof}

There may be other changes of variables that obtain the curvature \eqref{eq:bundled curvature} 
from the curvature $\sum_{i,k}h_k(\X_{i}-\X_{\perm\inv(i)}') v_{i,k}$. 
The following says that, for curved Rickard complexes $\YS C(\b_{\brc})$,
any two choices are equivalent.

\begin{lemma}\label{lemma:bundled uniqueness}
Let $\b_{\brc} \in \Br_m(\Z_{\geq 1})$ be balanced
and let $\perm \in \symg_m$ be the permutation represented by $\b$.
If $\bDelta,\bDelta' \in \End_{\CS(\SSBim)}(C(\beta_{\brc})) \otimes \k[\V^{\Omega(\perm)}]$ 
are two Maurer--Cartan elements with the same curvature \eqref{eq:bundled curvature}, 
then $\tw_{\bDelta}(C(\beta_{\brc})) \simeq \tw_{\bDelta'}(C(\beta_{\brc}))$.
\end{lemma}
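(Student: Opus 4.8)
The plan is to reduce the statement to an obstruction-theoretic uniqueness argument of exactly the same flavor as Lemma~\ref{lem:curvinginvertibles}, but now internal to a single $\Hom$-category with bundled curvature. First I would set up the ambient dg category: fix $\b_\brc$ and $\perm$, and let $\ZZ := \sum_{[i]\in\Omega(\perm)}\sum_{k=1}^{b_i} h_k(\leftX_{[i]}-\rightX_{[i]}) v_{[i],k}$ be the bundled curvature, viewed as a degree-two central element of $\CS({}_{\brc}\SSBim_{\brc})\otimes\k[\V^{\Omega(\perm)}]$. Both $\tw_{\bDelta}(C(\b_\brc))$ and $\tw_{\bDelta'}(C(\b_\brc))$ are then objects of $\CS_{\ZZ}({}_{\brc}\SSBim_{\brc};\k[\V^{\Omega(\perm)}])$ with underlying complex $C(\b_\brc)$ and with $\bDelta,\bDelta'$ both $\V$-irrelevant (they vanish mod $\langle v_{[i],k}\rangle$, since setting all deformation parameters to zero recovers the honest differential on $C(\b_\brc)$).

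The key input is invertibility of $C(\b_\brc)$ in $\CS(\SSBim)$ (Proposition~\ref{prop:rickard invariance}), which gives $\End_{\CS(\SSBim)}^\bullet(C(\b_\brc)) \simeq \End_{\CS(\SSBim)}^\bullet(\oone_\brc) \cong \Sym(\leftX_1|\cdots|\leftX_m)$, concentrated in cohomological degrees $\leq 0$ and in fact in degree $0$ after passing to homology. Now I would run the standard homotopy-perturbation / obstruction argument: write $\bDelta' = \bDelta + \gamma$ and seek a closed isomorphism $\phi = \id + \phi_{>0}$, with $\phi_{>0}\in\End_{\CS(\SSBim)}(C(\b_\brc))\otimes\k[\V^{\Omega(\perm)}]_{>0}$, conjugating $\d+\bDelta$ to $\d+\bDelta'$. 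Order the $\V^{\Omega(\perm)}$-adic filtration and solve for $\phi$ degree by degree; at each stage the obstruction to extending lives in $H^1$ of the $\U$- (here $\V$-) deformed endomorphism complex, which vanishes because $H^k(\End_{\CS(\SSBim)}(\oone_\brc)) = 0$ for $k\neq 0$, exactly as in the proof of Theorem~\ref{thm:braidinvariant} and the uniqueness half of Lemma~\ref{lem:curvinginvertibles}. Nilpotency of $\phi_{>0}$ (needed so that $\phi$ is genuinely invertible) follows from boundedness of $C(\b_\brc)$ together with the weight bookkeeping in \eqref{eq:ComponentWt}: any endomorphism supported in strictly positive $\V$-degree is nilpotent on a bounded complex. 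Alternatively, one may phrase this via Proposition~\ref{prop:conservation} applied to the identity map $C(\b_\brc)\to C(\b_\brc)$: it lifts $\tw_{\bDelta}(C(\b_\brc))$ to \emph{some} curved lift, and then Remark~\ref{rem:StrongUniqueness} (or its proof) upgrades the homotopy equivalence between two curved lifts with the same curvature to a genuine isomorphism.

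Concretely, the cleanest route is: (1) observe that $\tw_{\bDelta}(C(\b_\brc))$ and $\tw_{\bDelta'}(C(\b_\brc))$ both have underlying complex the invertible $C(\b_\brc)$, and both have the same bundled curvature $\ZZ$; (2) apply the argument of Lemma~\ref{lem:curvinginvertibles} verbatim, replacing the algebra $\k[\U]$ by $\k[\V^{\Omega(\perm)}]$ and the curvature \eqref{eq:Ucurv} by \eqref{eq:bundled curvature} --- the only properties used there are invertibility of the underlying complex, $\End^k_{\CS(\SSBim)}(\oone_\brc)=0$ for $k<0$, and \eqref{eq:ComponentWt}, all of which hold unchanged in the bundled setting since the bundled alphabets $\leftX_{[i]},\rightX_{[i]}$ still act by homotopic operators on $C(\b_\brc)$ (being sums of the per-strand alphabets, each of which does). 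This yields $\tw_{\bDelta}(C(\b_\brc))\simeq\tw_{\bDelta'}(C(\b_\brc))$ as claimed.

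The main obstacle --- really the only point requiring care --- is verifying that the homotopy-perturbation machinery is legitimately available in this bundled context, i.e.\ that the relevant $\V$-irrelevant endomorphisms act nilpotently so that the inductive solution of the obstruction equations actually converges (terminates). This is where boundedness of the Rickard complex and the weight formula \eqref{eq:ComponentWt} are essential; without the bound on cohomological degree the conjugating isomorphism $\phi$ need not be well-defined. Everything else is a transcription of arguments already carried out for $\YS(\SSBim)$ in \S\ref{ss:curved cxs}, so I would keep the write-up short and refer back to the proofs of Lemma~\ref{lem:curvinginvertibles} and Theorem~\ref{thm:braidinvariant} rather than repeating them.
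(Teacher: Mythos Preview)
Your proposal is correct and follows exactly the route the paper takes: the paper's proof consists of the single line ``Similar to Lemma~\ref{lem:curvinginvertibles},'' and you have spelled out precisely how that transcription goes (invertibility of $C(\b_\brc)$, vanishing of $\End^k_{\CS(\SSBim)}(\oone_\brc)$ in negative degrees, and the nilpotence argument via \eqref{eq:ComponentWt}). Your care about convergence of the perturbation is well-placed but already handled by the cited lemma, so a one-line reference suffices.
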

\begin{proof}
Similar to Lemma \ref{lem:curvinginvertibles}.
\end{proof}

Note that the curvature \eqref{eq:bundled curvature} vanishes upon identifying 
$\leftX_{[i]}$ and $\rightX_{[i]}$. Since Hochschild homology factors through the 
quotient $\leftX_i = \rightX_i$ (which implies $\leftX_{[i]} = \rightX_{[i]}$), 
we now arrive at the definition of our link invariant.

\begin{definition}
	\label{def:YHHH}
Let $\LB$ be a colored link which is presented as the closure of a balanced
colored $m$-strand braid $\b_{\brc}$ and let $\perm \in \symg_m$ 
be the permutation represented by $\b$. 
Let
\[
\bDelta \in \End_{\CS(\SSBim)}(C(\b_{\brc}))\otimes \k[\V^{\Omega(\perm)}]
\]
be the curved Maurer--Cartan element with curvature \eqref{eq:bundled curvature}
from Lemma \ref{lemma:bundled curvature}.
Let
\begin{equation}\label{eq:YHH}
\YHH_\bullet(\b_{\brc}) := 
\tw_{\HH_\bullet(\bDelta)}\Big(\HH_\bullet(C(\b))\otimes \k[\V^{\Omega(\perm)}] \Big)
\end{equation}
then the \emph{deformed, colored, triply-graded link homology} 
$\YH_{\KR}(\LB)$ is the homology of the chain complex
\begin{equation}\label{eq:YC(b)}
\YS C_{\KR}(\b_{\brc}) := 
(\adeg\tdeg\inv)^{\frac{1}{2}(\e(\beta_\brc)+N(\b_\brc)-n(\b_\brc))} \qdeg^{-\e(\beta_\brc)} 
\YHH_{\bullet}(\b_{\brc}) \, .
\end{equation}
In other words,
\[
\YH_{\KR}(\LB) := H(\YS C_{\KR}(\b_{\brc})) \, .
\]
\end{definition}

\begin{remark}
Although we have used the specific Maurer--Cartan element from Lemma \ref{lemma:bundled curvature} 
to define $\YH_{\KR}(\LB)$, Lemma \ref{lemma:bundled uniqueness} shows that we could have used 
any Maurer--Cartan element with bundled curvature \eqref{eq:bundled curvature}.
\end{remark}

\begin{remark}\label{rem:YC(L)}
We will sometimes abuse notation by writing $\YS C_{\KR}(\LB)$ instead of $\YS C_{\KR}(\b_{\brc})$. 
This is justified by Theorem \ref{thm:YHHH} below. 
As noted above, the set of cycles $\Omega(\perm)$ of the permutation $\perm$ determined by $\b$ 
can be identified with the set $\pi_0(\LB)$ of components of the link $\LB$.
We will thus also denote $\V^{\pi_0(\LB)} := \V^{\Omega(\perm)}$ in this context,
and further write $\V^\comp:= \V^{[i]}$ and $v_{\comp,r} := v_{[i],r}$ 
when $\comp \in \pi_0(\LB)$ corresponds to the cycle $[i] \in \Omega(\perm)$.
\end{remark}

As defined, $\YH_{\KR}(\LB)$ is an object of 
$\overline{\KS}\llbracket \adeg^\pm, \qdeg^\pm,\tdeg^\pm\rrbracket$, 
and (as with $H_{\KR}(\LB)$)
the homogeneous components are finite-dimensional, 
the $\adeg$- and $\tdeg$-grading are bounded, 
and the $\qdeg$-grading is bounded from below.
In fact, the module structure on $\YS C_{\KR}(\b_{\brc})$ 
allows us to endow $\YH_{\KR}(\LB)$ with additional structure, 
that we now describe.

Let $\LB$ be the closure of a colored braid $\b_\brc$.
For each component $\comp \in \pi_0(\LB)$ of color $b(\comp)$, 
introduce an alphabet $\X_{\comp}$ of cardinality $b(\comp)$ and set
\[
A_{\LB} := \bigotimes_{\comp \in \pi_0(\LB)} \Sym(\X_{\comp}) \otimes \k[\V^\comp] \, .
\]
Given a point $\point \in \b_\brc$ (away from a crossing), the $2$-categorical
structure of $\YS(\SSBim)$ endows $\YS C_{\KR}(\b_{\brc})$ with the structure of
a dg module over $\Sym(\X_\comp)$, where $\comp$ is the component of $\LB$
containing $\point$.
If we choose one such point for each component of $\LB$, this endows $\YS
C_{\KR}(\b_{\brc})$ with a dg $A_{\LB}$-module structure. 
We call such a
choice of points a \emph{pointing} of $\LB$.

We can now state precisely in what sense $\YH_{\KR}(\LB)$ is a colored link
invariant.

\begin{thm}\label{thm:YHHH} 
Choose a pointing of $\hat{\b_{\brc}}$, then 
the renormalized complex $\YS C_{\KR}(\b_{\brc})$ from \eqref{eq:YC(b)} 
depends only on the framed, oriented, colored link $\LB:=\hat{\b_{\brc}}$, 
up to quasi-isomorphism of $A_{\LB}$-modules.
Consequently, $\YS H_{\KR}(\LB)$ is a well-defined $A_{\LB}$-module, 
up to isomorphism.
\end{thm}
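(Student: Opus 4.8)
The plan is to prove invariance under the Markov moves, mirroring the proof of Theorem~\ref{thm:HHHinvariance} but now tracking the $A_{\LB}$-module structure throughout. The key inputs are: (i) Theorem~\ref{thm:braidinvariant}, which says that $\YS C(\b_{\brc,\perm\inv})$ is a well-defined $1$-morphism in $\YS(\SSBim)$ up to homotopy equivalence, independent of the braid word; (ii) Lemma~\ref{lemma:bundled uniqueness}, which says that the bundled curved Maurer--Cartan element is unique up to homotopy equivalence; (iii) Propositions~\ref{prop:HH tracelike} and~\ref{prop:HH tracelike 2} (conjugation invariance of Hochschild homology), which must be applied in the curved setting; and (iv) Proposition~\ref{prop:markov ii} and Lemma~\ref{lem:Markov2}, upgraded to handle the extra deformation parameters and the bundling of strands into components. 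The renormalization factors $\e(-)$, $N(-)$, $n(-)$ are purely numerical and behave exactly as in the undeformed case, so they play no new role.

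\textbf{Markov I (conjugation).} First I would show that if $\b_{\brc}$ and $\gamma \b_{\brc} \gamma\inv$ are conjugate balanced colored braids, then the associated complexes $\YS C_{\KR}$ are quasi-isomorphic as $A_{\LB}$-modules. Writing $C(\gamma\b\gamma\inv) \cong C(\gamma) \hComp C(\b) \hComp C(\gamma)\inv$ (up to homotopy, by Proposition~\ref{prop:rickard invariance}), we apply the curved lift from Theorem~\ref{thm:braidinvariant} and then invoke the curved analogue of Proposition~\ref{prop:HH tracelike 2}: Hochschild homology of $\YS C(\gamma) \hComp \YS C(\b) \hComp \YS C(\gamma)\inv$ is naturally isomorphic to that of $\YS C(\gamma)\inv \hComp \YS C(\gamma) \hComp \YS C(\b) \simeq \YS C(\b)$, where the cyclic symmetry permutes the alphabets and deformation parameters on strands in the same way that $\gamma$ permutes strands. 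Crucially, the set of cycles $\Omega(\perm)$ is preserved (conjugation acts on cycles by relabeling), so after applying the bundling substitution \eqref{eq:bundled vs} the parameter alphabets $\V^{\comp}$ on both sides are canonically identified, and Lemma~\ref{lemma:bundled uniqueness} guarantees the two bundled Maurer--Cartan elements give homotopy equivalent complexes. Since the sheet-algebra action at a point on component $\comp$ is by construction carried along by the natural isomorphism $\tau$, the resulting quasi-isomorphism is $A_{\LB}$-linear.

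\textbf{Markov II (stabilization) and pointing-independence.} For the stabilization move, one passes from $\b_{\brc}$ on $m$ strands to $\b_{\brc}\boxtimes \oone_b$ composed with $C_{b,b}^{\pm}$ on strands $m, m+1$, where the new strand closes up into the same component $\comp$ as strand $m$. Here I would establish a curved refinement of Lemma~\ref{lem:Markov2}: the partial trace $\Tr_b$ of the curved complex $(\YS C(\b)\boxtimes \oone_b)\hComp(\oone_{\brcc}\boxtimes \YS C_{b,b})$ is homotopy equivalent to $\YS C(\b)$ with the appropriate grading shift, compatibly with deformation parameters, after which the bundling substitution \eqref{eq:bundled vs} identifies the two strands' $\V$-alphabets with the single $\V^{\comp}$. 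The mechanism is the same obstruction-theoretic argument as Lemma~\ref{lem:curvinginvertibles}: the undeformed equivalence of Lemma~\ref{lem:Markov2} is promoted via Proposition~\ref{prop:conservation} (homological perturbation), and uniqueness of the lift follows because $\End^{<0}$ vanishes on the relevant identity complexes. Pointing-independence --- that moving the chosen point around a component, including across a crossing, yields quasi-isomorphic $A_{\LB}$-module structures --- follows from Proposition~\ref{prop:qiso} applied to the strict $1$-morphism $\YS C(\b_{\brc,\perm\inv})$, exactly as in the proof of Theorem~\ref{thm:HHHinvariance}, since the left and right $\Sym(\W_i)$-actions on a strand are quasi-isomorphic and these assemble, after bundling, into the single $\Sym(\X_\comp)$-action.

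\textbf{Main obstacle.} The hard part will be the curved stabilization step: one must check that the homotopy equivalence of Lemma~\ref{lem:Markov2} survives the deformation \emph{and} correctly merges the deformation parameter alphabet of the stabilized strand into that of the component it joins. The subtlety is that before stabilization the new strand carries its own alphabet $\V_{m+1}$, and one must verify that the curvature contribution $\sum_k h_k(\leftX_{m+1}-\rightX_{m+1})v_{m+1,k}$ is absorbed correctly under $\Tr_b$ and the bundling map --- i.e. that the curved Koszul complex computing $\Tr_b$ interacts with \eqref{eq:bundled vs} so that no spurious curvature or extra generators remain. This requires combining Lemma~\ref{lemma:curvature stability} (stability of the $Z_S$ expressions under alphabet inclusion) with the perturbation-theoretic lift, and carefully matching grading shifts; everything else is bookkeeping parallel to the undeformed case. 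Once this is in place, monoidality under $\sqcup$ and the framing-shift behavior follow verbatim from the undeformed arguments, since $\boxtimes$ and the numerical invariants are additive and the deformation parameters of disjoint links are disjoint.
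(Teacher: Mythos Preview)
Your overall strategy is correct and matches the paper's: verify Markov I and II, then handle the $A_{\LB}$-module structure via Proposition~\ref{prop:qiso}. However, you are missing the paper's key technical device, and as a result your Markov II argument has a genuine gap.

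The paper does \emph{not} carry out Markov invariance in $h\Delta$- or $\Delta e$-coordinates. Instead, it first passes (via Lemma~\ref{lemma:h and p curvatures} and Lemma~\ref{lemma:alternate YHH}) to \emph{power-sum curvature} $\sum_k \tfrac{1}{k}(p_k(\leftX)-p_k(\rightX))\pv_k$, packaged through the bimodule algebras $(S_\perm)_\brc$ of \eqref{eq:defofS}. The reason is that $p_k$ is additive over disjoint alphabets, $p_k(\X+\X') = p_k(\X)+p_k(\X')$, which is precisely what makes the bundled curvature telescope under stabilization. In the paper's Markov II argument one splits the bundled curvature for the stabilized braid into a left--middle piece and a middle--right piece; the $i=m$ term of the first vanishes since $\X_m=\rightX_m$, and the second piece vanishes because $\rightX_{m-1}+\rightX_m = \rightrightX_{m-1}+\rightrightX_m$ on the crossing and $p_k$ is additive. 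Hence the bundled curvature for $\b_\brc$ literally equals that for $\b'_\brcc$, so one may take the Maurer--Cartan element for $\b_\brc$ to be $([\Delta']\boxtimes\id)\hComp\id$, after which the undeformed Proposition~\ref{prop:markov ii} applies by naturality. For Markov I, the $(S_\perm)$-bimodule structure likewise makes the cyclic rearrangement of Maurer--Cartan pieces transparent; see \eqref{eq:power sum 1comp 1}--\eqref{eq:Markov1}.

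Your proposed route for Markov II --- lift the equivalence of Lemma~\ref{lem:Markov2} via homological perturbation and then invoke uniqueness --- does not close this gap. The uniqueness statement (Lemma~\ref{lemma:bundled uniqueness}) applies at the level of $C(\b_\brc)$ \emph{before} Hochschild homology, where it requires the two Maurer--Cartan elements to have the \emph{same} curvature; but in $h\Delta$-coordinates the bundled curvatures for $\b_\brc$ and $\b'_\brcc$ are not the same expression (since $h_k$ is not additive). After $\HH_\bullet$ both curvatures vanish, but there is no uniqueness statement available at that level to conclude. Lemma~\ref{lemma:curvature stability}, which you invoke, addresses stability of $h\Delta$-curvature under enlarging the deformation-parameter alphabet on a \emph{single} strand; it does not handle merging two strands into one cycle, which is what stabilization requires.
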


The proof of Theorem \ref{thm:YHHH} (i.e. the Markov invariance of $\YS C_{\KR}(\b_{\brc})$) 
is established in the following section. 
There, we show that $\YS C_{\KR}(\b_{\brc})$ can 
equivalently be described using curved Rickard complexes with 
strand-wise curvature $\sum_{k=1}^a \frac{1}{k}(p_k(\leftX) - p_k(\rightX)) \pv_k$; 
this curvature is the most-straightforwardly adapted to the Markov moves.

Before doing so, however, we first establish some easy consequences of Definition \ref{def:YHHH}.

\begin{example}
\label{exa:Yunknot}
Let $\UB(b)$ be the 0-framed $b$-colored unknot. 
Since $\UB(b)$ can be presented as the closure of
a single $b$-labeled strand, no differentials and grading shifts enter
into Definition~\ref{def:YHHH}. 
To compute the unknot invariant, let $\X$ and $\V$ be the size $b$ alphabets 
associated to the $b$-labeled strand.
Then, we have:
\[
\YS H_{\KR}(\UB(b)) = \YS C_{\KR}(\UB(b)) = \HH_{\bullet}(\Sym(\X))\otimes \k[\V] 
\cong \k[e_1(\X),\ldots,e_b(\X)]\otimes \largewedge[\eta_1,\ldots,\eta_b] \otimes \k[v_1,\dots,v_b]
\]
where $\wt(e_i(\X))=\qdeg^{2i}$, $\wt(\eta_i) = \adeg\inv \qdeg^{2i}$, $\wt(v_i) = \qdeg^{-2i} \tdeg^2$. 
The Poincar\'e series of the unknot homology is thus:
\[
\frac{(1+\adeg\inv \qdeg^{2})}{(1-\qdeg^2)(1-\qdeg^{-2}\tdeg^2)}
	\cdots \frac{(1+\adeg\inv \qdeg^{2b})}{(1-\qdeg^{2b})(1-\qdeg^{-2b}\tdeg^2) }
\]
while the corresponding derived sheet algebra is 
$\HH^{\bullet}(\Sym(\X))\otimes \k[\V]$ with Poincar\'e series:
\[
\frac{(1+\adeg \qdeg^{-2})}{(1-\qdeg^2)(1-\qdeg^{-2}\tdeg^2)}
	\cdots \frac{(1+\adeg \qdeg^{-2b})}{(1-\qdeg^{2b})(1-\qdeg^{-2b}\tdeg^2) } \, .
\]
\end{example}
 
 \begin{rem}
As for the undeformed invariant, we expect that
Theorem~\ref{thm:YHHH} can be strengthened to exhibit $\YS
C_{\KR}(\LB)$ as a dg-module over the tensor product of derived sheet algebras,
well defined up to quasi-isomorphism; see Remark~\ref{rem:dgmodule}.
\end{rem}

Note that $\YS H_{\KR}(\UB(b))$ is a free module over $\k[v_1,\ldots,v_b]$. 
In fact, this behavior persists for all $b$-colored \emph{knots}.

\begin{prop}
\label{prop:knots are boring}
Consider a framed oriented $b$-colored knot $\KB$. Then we have 
a $\Sym(\X^b) \otimes \k[v_1,\ldots,v_b]$-linear homotopy equivalence
\[
\YS C_{\KR}(\KB) \simeq C_{\KR}(\KB) \otimes \k[v_1,\ldots,v_b]
\]
and therefore an isomorphism
\[
\YS H_{\KR}(\KB) \cong H_{\KR}(\KB) \otimes \k[v_1,\ldots,v_b]
\]
of triply-graded $\Sym(\X^b) \otimes \k[v_1,\ldots,v_b]$-modules.
\end{prop}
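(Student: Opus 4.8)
The plan is to exploit the fact that a knot has only one component, so its underlying permutation $\perm$ is an $m$-cycle, and there is exactly one alphabet $\V_{[1]}=\{v_1,\ldots,v_b\}$ of deformation parameters after bundling. I would start with a braid word $\b_{\brc}$ whose closure is $\KB$ and consider the curved Rickard complex $\YS C(\b_{\brc,\perm\inv})\in {}_{\brc,\id}\YS(\SSBim)_{\brc,\perm\inv}$. The strictness of $\YS C(\b_{\brc,\perm\inv})$ (guaranteed by the construction in Theorem~\ref{thm:braidinvariant}) provides homotopies $\Psi_{i,r}$ with $[\d+\Delta,\Psi_{i,r}] = e_r(\leftX_{\perm(i)}) - e_r(\rightX_{\perm^{-1}\circ\perm(i)})$, and since all strands belong to the single cycle these assemble, after the change of variables of Lemma~\ref{lemma:bundled curvature}, into a dg module structure over a ``two point'' dg algebra $\Sym(\leftX_{[1]}|\rightX_{[1]}) \otimes \largewedge(\Psi_1,\ldots,\Psi_b)$ with $d(\Psi_r) = h_r(\leftX_{[1]} - \rightX_{[1]})$ (equivalently $e_r$-type differentials after the $u \leftrightarrow v$ change of variables of \S\ref{ss:curvature discussion}).

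Next I would apply Proposition~\ref{prop:qiso} (in its $m=1$ incarnation, with the single alphabet being $\W = \leftX_{[1]} \cong \rightX_{[1]}$). That proposition tells us precisely that a dg module over such a two-point dg algebra has quasi-isomorphic left and right $\Sym(\W)$-module structures, with the quasi-isomorphism realized by an explicit deformation retract. The key point is that after taking Hochschild homology (which sets $\leftX_i = \rightX_i$ and hence kills the curvature $\sum_r h_r(\leftX_{[1]} - \rightX_{[1]})v_r$), the twist $\HH_\bullet(\bDelta)$ that defines $\YHH_\bullet(\b_{\brc})$ in \eqref{eq:YHH} becomes homotopic to zero: its linear part is built from the $\Psi_r$'s which, upon identifying the alphabets, become genuine (closed degree $-1$) endomorphisms that can be absorbed by a change of basis, and its higher-order part is $\V$-irrelevant and nilpotent, so homological perturbation (Proposition~\ref{prop:HPT}, cf. Proposition~\ref{prop:conservation}) applies to trivialize it. Concretely I would argue: $\YS C_{\KR}(\b_{\brc}) = \tw_{\HH_\bullet(\bDelta)}(\HH_\bullet(C(\b))\otimes\k[v_1,\ldots,v_b])$ up to grading shift, and since the ambient curvature vanishes and $\HH_\bullet(\bDelta)$ is a nilpotent Maurer--Cartan element whose linear part involves the null-homotopic operators $e_r(\leftX_{[1]}) - e_r(\rightX_{[1]})$ (null-homotopic because the component is a knot, so there is a single boundary alphabet), one obtains a $\k[v_1,\ldots,v_b]$-linear homotopy equivalence $\tw_{\HH_\bullet(\bDelta)}(\HH_\bullet(C(\b))\otimes\k[v]) \simeq \HH_\bullet(C(\b))\otimes\k[v]$. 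Carrying along the normalization shifts in \eqref{eq:YC(b)}, this reads $\YS C_{\KR}(\KB) \simeq C_{\KR}(\KB)\otimes\k[v_1,\ldots,v_b]$, and taking homology gives the stated isomorphism.

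The $\Sym(\X^b)$-linearity requires a little care: the module action comes from a chosen pointing, i.e. from the left (or right) action of $\Sym(\W)$ on the strand through the marked point, and one must check the homotopy equivalence respects this action. Here is where Proposition~\ref{prop:qiso} does the real work — it says the two natural actions are quasi-isomorphic as $\Sym(\W)$-modules, and since both actions factor through $\Sym(\X^b) = \Sym(\W)$ via the same identification after passing to Hochschild homology, the retract in that proposition is simultaneously $\k[v_1,\ldots,v_b]$-linear (the deformation parameters are untouched) and a quasi-isomorphism of $\Sym(\X^b)$-modules. I expect the main obstacle to be bookkeeping: verifying that the bundled Maurer--Cartan element $\HH_\bullet(\bDelta)$ genuinely has the form needed to invoke Proposition~\ref{prop:qiso}/Proposition~\ref{prop:HPT}, and that the resulting trivialization is compatible simultaneously with the $\k[v]$-structure and with the $\Sym(\X^b)$-structure determined by the pointing. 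In particular one should confirm that the freeness claim ($\YS H_{\KR}(\KB)$ free over $\k[v_1,\ldots,v_b]$) is immediate once the equivalence is $\k[v]$-linear, since $C_{\KR}(\KB)\otimes\k[v_1,\ldots,v_b]$ is visibly free over $\k[v_1,\ldots,v_b]$.
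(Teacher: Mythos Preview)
Your proposal overcomplicates the argument and contains a genuine gap at the crucial step. You correctly observe that for a knot the permutation is an $m$-cycle and there is a single bundled alphabet $\V_{[1]}=\{v_1,\ldots,v_b\}$. But the paper's proof then finishes in one line: since every singular Soergel bimodule in ${}_{\brc}\SSBim_{\brc}$ satisfies $f(\leftX_1+\cdots+\leftX_m)=f(\rightX_1+\cdots+\rightX_m)$ for any symmetric function $f$, the bundled curvature element $\sum_k h_k(\leftX_{[1]}-\rightX_{[1]})\,v_{[1],k}$ is \emph{identically zero on the Rickard complex itself}, not merely after applying $\HH_\bullet$. Hence both $\bDelta$ and $0$ are Maurer--Cartan elements with the same (zero) bundled curvature, and Lemma~\ref{lemma:bundled uniqueness} immediately gives $\tw_{\bDelta}(C(\b_{\brc}))\simeq C(\b_{\brc})\otimes\k[v_1,\ldots,v_b]$. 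Applying $\HH_\bullet$ and the normalization shift yields the statement.

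Your route instead passes to $\HH_\bullet$ first and then attempts to trivialize the twist there. The problematic step is the assertion that the $\HH_\bullet(\Psi_r)$, being closed degree~$-1$ endomorphisms, ``can be absorbed by a change of basis''. This is false in general: a closed nilpotent degree~$-1$ endomorphism $\Psi$ can produce $\tw_{\Psi v}(X\otimes\k[v])\not\simeq X\otimes\k[v]$ (think of $X$ acyclic-but-nonzero with $\Psi$ the contracting homotopy). Neither Proposition~\ref{prop:qiso} (which compares left and right $\Sym(\W)$-module structures, not Maurer--Cartan elements) nor Propositions~\ref{prop:HPT}/\ref{prop:conservation} (which transfer twists along a given equivalence, rather than produce one) supplies the missing ingredient. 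What actually makes the trivialization work is the \emph{invertibility} of $C(\b_{\brc})$, which forces $\End(C(\b_{\brc}))\simeq R^{\brc}$ to have no negative-degree cohomology; this is exactly the content of Lemma~\ref{lemma:bundled uniqueness}. Once you cite that lemma, the rest of your scaffolding (the two-point dg algebra, Proposition~\ref{prop:qiso}, etc.) becomes unnecessary.
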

\begin{proof}
Let $\b_{b^m}$ be a braid representative of $\KB$.  Since $\KB$ is a knot, the associated 
permutation $\perm\in \symg_m$ is an $m$-cycle, so $v_{[1],k}=v_{[2],k}=\dots=v_{[m],k}$ 
for all $1\leq k\leq b$ and
the bundled curvature element \eqref{eq:bundled curvature} equals
\[
\sum_{k=1}^m h_k(\X_1+\cdots + \X_m- (\X_1'+\cdots +\X_m'))v_{[1],k} = 0 \, .
\]
The uniqueness of curved lifts with bundled curvature 
(i.e. Lemma \ref{lemma:bundled uniqueness}) implies that 
we may take $\bar{\Delta}=0$ when forming $\YS C_{\KR}(\KB)$. 
This implies the first statement (after identifying $v_k=v_{[1],k}$), 
and taking homology gives the second.
\end{proof}

\subsection{Alphabet soup V: power sums}
\label{ss:power sum}

In our considerations thus far, we have worked with strand-wise curvature 
modeled on $h \Delta$-curvature (or, equivalently, $\Delta e$-curvature).
In order to most easily establish invariance of the complex $\YS C_{\KR}(\b_{\brc})$ 
under the Markov moves, we find it beneficial to also consider strand-wise curvature 
modeled on $\Delta p$-curvature:
\[
\sum_{k=1}^a \frac{1}{k}(p_k(\leftX)-p_k(\rightX))\pv_k \, .
\]
The following lemma will allow us to translate between such curvature and those 
previously considered.

\begin{lemma}\label{lemma:h and p curvatures}
We have 
$\sum_{k=1}^a \frac{1}{k}(p_k(\leftX) - p_k(\rightX)) \pv_k
=\sum_{k=1}^a h_k(\leftX-\rightX) v_k$ 
under the following mutually inverse substitutions
\begin{equation}\label{eq:vvdot}
\pv_k = \sum_{l=k}^a \frac{k}{l} h_{l-k}(\leftX-\rightX) v_l
\, , \quad 
v_k = \sum_{l=k}^a \frac{k}{l} (-1)^{l-k}e_{l-k}(\leftX-\rightX)\pv_l \, .
\end{equation}
\end{lemma}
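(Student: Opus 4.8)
The assertion is purely an identity of generating functions, so the plan is to translate both the claimed equality of curvatures and the substitution formulas into statements about the generating series $H(\leftX-\rightX,t)$ and $P(\leftX-\rightX,t)$, using the Newton identity recalled in \S\ref{ss:symfctns}. First I would introduce, for a formal variable $t$, the (truncated, but one may work in the completed power series ring) generating functions $\sum_k h_k(\leftX-\rightX)t^k = H(\leftX-\rightX,t)$ and $\sum_k \tfrac1k p_k(\leftX-\rightX)t^k$. The Newton identity $\frac{t\frac{d}{dt}H(\leftX-\rightX,t)}{H(\leftX-\rightX,t)} = P(\leftX-\rightX,t)$ rearranges to $t\frac{d}{dt}\log H(\leftX-\rightX,t) = P(\leftX-\rightX,t)$, i.e. $\sum_{k\geq 1}\tfrac1k p_k(\leftX-\rightX)t^k = \log H(\leftX-\rightX,t)$.

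\textbf{Key steps.} Package the deformation parameters into generating functions as well: set $V(t) := \sum_{k=1}^a v_k t^{-k}$ and $\dot V(t) := \sum_{k=1}^a \pv_k t^{-k}$ (the $t^{-k}$ matching $\wt(v_k)$, though the bookkeeping of weights is not needed for the identity itself). Then the claimed curvature equality $\sum_k \tfrac1k(p_k(\leftX)-p_k(\rightX))\pv_k = \sum_k h_k(\leftX-\rightX)v_k$ is the ``constant term'' (coefficient extraction pairing $t^k$ against $t^{-k}$) of a single relation between $V$ and $\dot V$. Concretely, I would first verify the substitution $\pv_k = \sum_{l=k}^a \frac{k}{l}h_{l-k}(\leftX-\rightX)v_l$ is equivalent to the generating-function statement
\[
\sum_{k\geq 1}\pv_k \frac{s^k}{k} \;=\; \Big(\sum_{l\geq 1} v_l \frac{s^l}{l}\Big)\Big(\text{something}\Big),
\]
but the cleanest route is: multiply the $h\Delta$-curvature term-by-term and recognize that $h_{l-k}(\leftX-\rightX)$ appearing with the factor $k/l$ is exactly the coefficient produced by differentiating $H$. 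Precisely, from $\log H(\leftX-\rightX,t) = \sum \tfrac1k p_k(\leftX-\rightX)t^k$ one gets, upon multiplying by a series $\sum v_l t^{l}$ and comparing with $H(\leftX-\rightX,t)\sum \text{(coefficients)}$, the two displayed substitutions. I would then check directly that the two substitution formulas in \eqref{eq:vvdot} are mutually inverse by composing them and using \eqref{eq:HE2} (equivalently $H(\leftX-\rightX,t)E(\rightX-\leftX,t)=1$, i.e. $\sum_{i+j=m}(-1)^j h_i(\leftX-\rightX)e_j(\leftX-\rightX)=\delta_{m,0}$ after adjusting signs via $e_j(\leftX-\rightX)=(-1)^j h_j(\rightX-\leftX)$); this is the same style of computation used to verify that \eqref{eq:VvsU} are mutually inverse, and can be cited as such.

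\textbf{Main obstacle.} The substantive point, and the one I would treat carefully rather than by hand-wave, is matching the rational coefficients $k/l$: these arise precisely because $\tfrac1k p_k$ is paired with $\pv_k$ on one side while $h_k$ is paired with $v_k$ (no $\tfrac1k$) on the other, so the change of variables must absorb the discrepancy. The clean way to see this is to note that both curvatures equal $\sum_{k}\big(\text{coeff}_{t^k}\log H(\leftX-\rightX,t)\big)\cdot(k\pv_k)$ versus $\sum_k \big(\text{coeff}_{t^k}H(\leftX-\rightX,t)\big)\cdot v_k$ is \emph{not} quite it — rather, one shows $\sum_l h_l(\leftX-\rightX)v_l = \sum_k \tfrac1k p_k(\leftX-\rightX)\pv_k$ holds iff $\dot V$ and $V$ are related by the formal transform whose kernel is dictated by writing $h_m(\leftX-\rightX)$ as a polynomial in $\{p_k(\leftX-\rightX)\}$ via \eqref{eq:somerelations3}, namely $h_m(\leftX-\rightX) = \tfrac1m\sum_{j=1}^m h_{m-j}(\leftX-\rightX)p_j(\leftX-\rightX)$. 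Expanding $\sum_m h_m(\leftX-\rightX)v_m$ using this recursion, collecting the coefficient of each $\tfrac1j p_j(\leftX-\rightX)$, and setting it equal to $\pv_j$ yields exactly $\pv_j = \sum_{l\geq j}\tfrac{j}{l}h_{l-j}(\leftX-\rightX)v_l$; the inverse substitution then follows from \eqref{eq:somerelations2} (expressing $p_k$ in terms of $h$'s and $e$'s) together with \eqref{eq:HE2}. Since all the needed generating-function manipulations are already assembled in Lemma~\ref{lem:somerelations} and \S\ref{ss:symfctns}, the proof is short, and I would present it as a two- or three-line generating-function computation invoking those results, flagging only the coefficient-matching as the place where the factor $k/l$ must be tracked.
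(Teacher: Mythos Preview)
Your approach is correct and at its core uses exactly the same identity as the paper, namely \eqref{eq:somerelations3}: the paper simply substitutes $\pv_k = \sum_{l\geq k}\tfrac{k}{l}h_{l-k}(\leftX-\rightX)v_l$ into $\sum_k \tfrac{1}{k}p_k(\leftX-\rightX)\pv_k$, observes that $\tfrac{1}{k}\cdot\tfrac{k}{l}=\tfrac{1}{l}$, and applies \eqref{eq:somerelations3} to collapse the double sum to $\sum_l h_l(\leftX-\rightX)v_l$ --- a two-line computation with no need for the generating-function packaging or the ``constant-term pairing'' framing. The mutual-inverse check (which the paper leaves implicit) is exactly your \eqref{eqn:HE}-based verification, so you can drop the detour through \eqref{eq:somerelations2} and present the whole thing as the direct substitution.
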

\begin{proof}
This is a straightforward application of Newton's identity relating the power
sum and complete symmetric functions, as manifest in \eqref{eq:somerelations2}
and \eqref{eq:somerelations3}. For example, after the stated change of
variables, we have
\[
\sum_{1\leq k\leq a} \frac{1}{k}p_k(\leftX-\rightX) \pv_k = \sum_{1\leq k\leq l\leq a} 
\frac{1}{k}p_k(\leftX-\rightX)  \frac{k}{l} h_{l-k}(\leftX-\rightX) v_l 
\stackrel{\eqref{eq:somerelations3}}{=} \sum_{1\leq l\leq a} h_{l}(\leftX-\rightX) v_l \, .
\]
Since $p_k(\leftX-\rightX) = p_k(\leftX) - p_k(\rightX)$ we obtain the identity in the statement.
\end{proof}

\begin{remark}\label{rmk:pv equiv v} The above substitutions send 
$\pv_k \leftrightarrow v_k$ modulo $N(\leftX,\rightX)$.
Ultimately, this will justify our temporary transition to the curvature modeled on power
sums.
\end{remark}

For each $\brc=(b_1,\ldots,b_m)$ of $\SSBim$, 
introduce alphabets of deformation parameters $\pV_1,\ldots,\pV_m$ 
where $\pV_i=\{\pv_{i,1},\ldots,\pv_{i,b_i}\}$ and $\wt(\pv_{i,r}) = \qdeg^{-2r} \tdeg^2$.  
Set $\pV_\brc :=\pV_1\cup \cdots \cup \pV_m$. 
Given $\brc$ and a permutation $\perm\in \symg_m$, let
\begin{equation}\label{eq:defofS}
(S_\perm)_\brc :=  
\Big(\k[\pV_{\perm(\brc)}]\otimes \k[\pV_\brc]\Big) \Big/ 
\langle 1\otimes \pv_{i,k} - \pv_{\perm(i),k}\otimes 1 \mid 1\leq i\leq m, 1\leq k\leq b_i\rangle \, .
\end{equation}
We regard $(S_\perm)_\brc$ as both 
a $(\k[\pV_{\perm(\brc)}], \k[\pV_\brc])$-bimodule and a commutative algebra with multiplication
\[
(f_1\otimes g_1)\cdot (f_2\otimes g_2) = f_1f_2\otimes g_1g_2 \, .
\]
It will help us encode the action of the deformation parameters $\pv_{i,r}$ 
on the left and right ends of a strand.
Paralleling our notation for the alphabets $\leftX_i$ and $\rightX_i$, 
we will write $\pv_{i,k}:=\pv_{i,k}\otimes 1$ and $\pv_{i,k}':=1\otimes v_{i,k}$.
We will sometimes denote $(S_\perm)_\brc$ by either 
${}_{\perm(\brc)}(S_\perm)_\brc$ or ${}_{\perm(\brc)}(S_\perm)$.
If we let $\hComp=\otimes_{\k[\pV]}$ (in this context), then
\[
(S_{\perm_1})_{\perm_2(\brc)}\hComp (S_{\perm_2})_\brc \cong (S_{\perm_1\perm_2})_\brc \, .
\]

Finally, if $S$ is any $\k$-algebra and $B$ is an $(S,S)$-bimodule, 
then we will let $[B]$ denote the \emph{$S$-coinvariants}, i.e.
\[
[B] := B \big/  \big( \k \cdot \{sb-bs \mid b\in B, s\in S \} \big)
\]
Note that $[B]=\HH_0(B)$, 
but we wish to not confuse the reader with this occurrence of Hochschild homology 
and the functor $\HH_\bullet$, 
which (in this paper) we apply exclusively to singular Soergel bimodules. 
Observe that
\begin{equation}\label{eq:VdotOmega}
[(S_\perm)_\brc] \cong  \k[\pV_1,\ldots,\pV_m] / (\pv_{i,k}\sim \pv_{\perm(i),k}) \cong 
\k[\pv_{[i],k}]_{[i]\in \Omega(\perm), k\in \{1,\ldots,b_i\}} 
=: \k[\dot{\V}^{\Omega(\perm)}] \, .
\end{equation}

\begin{lemma}\label{lemma:exists power sum MC}
Let $\b_\brc$ be a balanced, colored braid, 
and let $\perm\in \symg_m$ be the permutation represented by $\b$.
There exists a Maurer--Cartan element\footnote{Since $\b_\brc$ is balanced, 
$\perm\inv(\brc) = \brc$, but we wish to emphasize the formal similarity to the complex $\YS C(\b_{\brc,\perm\inv})$.} 
$\Delta\in \End_{\CS(\SSBim)} \big( C(\b_{\brc}) \big) \otimes \big({}_\brc(S_\perm)_{\perm\inv(\brc)}\big)$
with curvature
\begin{equation}\label{eq:curvature with S}
\sum_{i=1}^m \sum_{k=1}^{b_i} \frac{1}{k} 
\Big((p_k(\leftX_i)\otimes \pv_{i,k}) - (p_k(\rightX_i)\otimes 
\pv_{i,k}')\Big).
\end{equation}
\end{lemma}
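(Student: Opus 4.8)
The strategy is to transport the Maurer--Cartan element $\bDelta$ with $h\Delta$-curvature that was already constructed (via Lemma~\ref{lemma:bundled curvature}, applied here \emph{without bundling}, i.e.\ in the single-cycle-per-strand sense of Proposition~\ref{prop:curved crossing} assembled by Theorem~\ref{thm:braidinvariant}) through the change of variables of Lemma~\ref{lemma:h and p curvatures}. Concretely, Theorem~\ref{thm:braidinvariant} produces a strict curved lift $\YS C(\b_{\brc, \perm\inv}) = \tw_{\Delta}(C(\b_{\brc}))$ in ${}_{\brc,\id}\YS(\SSBim)_{\brc,\perm\inv}$ with curvature $\sum_{i} \sum_{k} h_k(\leftX_{\tau(i)} - \rightX_{\sigma(i)}) v_{i,k}$, where in our situation $\sigma = \perm\inv$ and $\tau = \perm \circ \perm\inv = \id$, so the curvature reads $\sum_{i=1}^m \sum_{k=1}^{b_i} h_k(\leftX_i - \rightX_{\perm\inv(i)}) v_{i,k}$ after relabeling the strand index. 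I would then substitute, on the $i^{th}$ strand, using the inverse rule from \eqref{eq:vvdot},
\[
v_{i,k} \mapsto \sum_{l=k}^{b_i} \frac{k}{l} (-1)^{l-k} e_{l-k}(\leftX_i - \rightX_{\perm\inv(i)}) \pv_{i,l}\,,
\]
and record that, by Lemma~\ref{lemma:h and p curvatures} applied strand-by-strand (with $\leftX = \leftX_i$ and $\rightX = \rightX_{\perm\inv(i)}$), the curvature becomes $\sum_{i=1}^m \sum_{k=1}^{b_i} \frac{1}{k}\big(p_k(\leftX_i) - p_k(\rightX_{\perm\inv(i)})\big) \pv_{i,k}$.

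The second step is to reinterpret this expression in the bimodule $(S_\perm)_{\perm\inv(\brc)}$ of \eqref{eq:defofS}. Under the identification there, $\pv_{i,k} \otimes 1 = \pv_{i,k}$ and $1 \otimes \pv_{i,k} = \pv_{i,k}'$ are related by $\pv_{\perm(i),k} \otimes 1 \sim 1 \otimes \pv_{i,k}$; equivalently the ``right'' copy of the parameter on strand $\perm\inv(i)$ matches the ``left'' copy on strand $i$. Tracking the indices carefully, the term $\frac1k(p_k(\leftX_i) - p_k(\rightX_{\perm\inv(i)}))\pv_{i,k}$ with the single deformation parameter $\pv_{i,k}$ becomes, after tensoring $C(\b_{\brc})$ up along $\End_{\CS(\SSBim)}(C(\b_{\brc})) \otimes \k[\pV_{\perm\inv(\brc)}] \to \End_{\CS(\SSBim)}(C(\b_{\brc})) \otimes ({}_\brc(S_\perm)_{\perm\inv(\brc)})$, exactly the term $\frac1k\big((p_k(\leftX_i)\otimes \pv_{i,k}) - (p_k(\rightX_i)\otimes \pv_{i,k}')\big)$ of \eqref{eq:curvature with S}, once one renames so that the left alphabet $\leftX_i$ on strand $i$ is paired with the parameter $\pv_{i,k}$ and the right alphabet with $\pv_{i,k}'$. (The permutation bookkeeping is precisely analogous to the passage $\YS C(\b_{\brc,\perm\inv})$ uses; the point is that $\perm$ identifies the outgoing boundary numbering with the incoming one, so the ``two-point'' data of \eqref{eq:manytwopointalg} on each strand assembles coherently into $(S_\perm)_{\perm\inv(\brc)}$.) That $\Delta^2$ equals the asserted curvature follows because the substitution \eqref{eq:vvdot} is an isomorphism of the relevant polynomial coefficient rings, so squaring commutes with it, and Lemma~\ref{lemma:h and p curvatures} is an identity.

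Finally, I would note that $\Delta$ so obtained is genuinely an element of $\End_{\CS(\SSBim)}(C(\b_{\brc})) \otimes ({}_\brc(S_\perm)_{\perm\inv(\brc)})$ of cohomological degree $1$ satisfying the (curved) Maurer--Cartan equation $[\d, \Delta] + \Delta^2 = \eqref{eq:curvature with S}$, which is what the statement requires; existence (as opposed to uniqueness) is all that is claimed. Alternatively — and this is the cleaner route to write up — one can bypass the explicit substitution and invoke Lemma~\ref{lem:curvinginvertibles}: since $C(\b_{\brc}) \in \CS(\SSBim)$ is invertible (Proposition~\ref{prop:rickard invariance}) and the power-sum symmetric functions $p_k(\leftX_i)$, $p_k(\rightX_i)$ act homotopically on the two ends of each strand of the Rickard complex (this is the content of Lemma~\ref{lem:crossing Theta} and Lemma~\ref{lemma:crossing Psi} after the elementary-to-power-sum change of basis, valid since we work over $\k = \Q$), the obstruction-theoretic argument produces a curved lift with curvature built from any chosen generating set of the diagonal ideal $N(\leftX_i, \rightX_i)$ — in particular the power sums $\frac1k p_k$. \textbf{The main obstacle} is purely the index/permutation bookkeeping: making sure that the ``left'' and ``right'' power-sum parameters in \eqref{eq:curvature with S}, which live in $(S_\perm)_{\perm\inv(\brc)}$ rather than in a single polynomial ring, are matched to the correct strand alphabets under $\perm$, and that the change of variables \eqref{eq:vvdot} — which is a priori strand-local and uses the \emph{virtual} alphabet $\leftX_i - \rightX_{\perm\inv(i)}$ — is compatible with this bimodule structure. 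Once the dictionary is fixed, the verification is a formal consequence of Lemma~\ref{lemma:h and p curvatures} and needs no new input; I would present it by first doing the single-strand, single-cycle case in detail (as in the proof of Lemma~\ref{lemma:bundled curvature}) and then remarking that the general case differs only in notation.
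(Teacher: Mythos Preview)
Your proposal is correct and follows essentially the same approach as the paper: start from the curved lift $\YS C(\b_{\brc,\perm\inv})$ with $h\Delta$-curvature $\sum_{i,k} h_k(\leftX_i-\rightX_{\perm\inv(i)}) v_{i,k}$ (Theorem~\ref{thm:braidinvariant}), apply the change of variables from Lemma~\ref{lemma:h and p curvatures} strand-by-strand, and then use the defining relations of $(S_\perm)_{\perm\inv(\brc)}$ to rewrite the resulting $\Delta p$-curvature in the form \eqref{eq:curvature with S}. The paper's own proof is just a terse two-sentence version of exactly this; your more careful discussion of the permutation bookkeeping and the alternative via Lemma~\ref{lem:curvinginvertibles} are sound but not needed for the existence statement.
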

This twist is unique up to homotopy equivalence in the sense of Lemma \ref{lemma:bundled uniqueness}.
\begin{proof}
By \eqref{eq:defofS}, the curvature element \eqref{eq:curvature with S} can also be written as
\[
\sum_{i=1}^m \sum_{k=1}^{b_i} \frac{1}{k} 
\Big(p_k(\leftX_i)-p_k(\rightX_{\perm\inv(i)})\Big)\otimes \pv_{i,k}.
\]
Thus a Maurer--Cartan element with curvature \eqref{eq:curvature with S}
can be constructed from the Maurer--Cartan element on the curved complex $\YS C(\b_{\brc,\perm\inv})$, 
which has curvature $\sum_{i=1}^m \sum_{k=1}^{b_i} h_k(\leftX_i-\rightX_{\perm\inv(i)}) v_{i,k}$,
using Lemma \ref{lemma:h and p curvatures}.
\end{proof}

We next show how to obtain bundled $\Delta p$-curvature, 
establishing the analogue of Lemma \ref{lemma:bundled curvature} in this context.
In the following two results, the Maurer--Cartan element $\Delta$ is understood to be
the one constructed in the proof of Lemma \ref{lemma:exists power sum MC}.

\begin{lemma}\label{lemma:exists bundled power sum MC} 
Let $\b_\brc$ be a balanced, colored braid, 
and let $\perm\in \symg_m$ be the permutation represented by $\b$.
There exists a Maurer--Cartan element 
$[\Delta]\in \End_{\CS(\SSBim)} \big( C(\b_\brc) \big) \otimes [{}_\brc (S_\perm)_{\perm\inv(\brc)}]$ 
with curvature
\begin{equation}\label{eq:bundled curvature with S}
\sum_{[i]\in \Omega(\perm)} \sum_{k=1}^{b_i} \frac{1}{k}
\Big(p_k(\leftX_{[i]})-p_k(\rightX_{[i]})\Big)\otimes \pv_{[i],k} \, .
\end{equation}
\end{lemma}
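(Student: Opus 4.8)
The statement is the ``bundled'' analogue of Lemma~\ref{lemma:exists power sum MC}, obtained by pushing the Maurer--Cartan element $\Delta$ from that lemma through the quotient that identifies deformation parameters on strands belonging to the same cycle of $\perm$. The plan is to first observe that, since $\beta_\brc$ is balanced, $\perm\inv(\brc) = \brc$, and the bimodule ${}_\brc(S_\perm)_{\perm\inv(\brc)}$ is the algebra defined in \eqref{eq:defofS}. Taking $S$-coinvariants in the sense recalled just before this lemma gives
\[
[{}_\brc(S_\perm)_{\perm\inv(\brc)}] \cong \k[\dot{\V}^{\Omega(\perm)}]
\]
by \eqref{eq:VdotOmega}. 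The quotient map $q\colon {}_\brc(S_\perm)_{\perm\inv(\brc)} \to [{}_\brc(S_\perm)_{\perm\inv(\brc)}]$ sends $\pv_{i,k} \mapsto \pv_{[i],k}$ and $\pv_{i,k}' \mapsto \pv_{[i],k}$, since $\pv_{i,k}' = \pv_{\perm\inv(i),k}$ in $(S_\perm)_\brc$ and $[i] = [\perm\inv(i)]$.

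Next I would set $[\Delta] := (\id \otimes q)(\Delta)$, where $\Delta$ is the Maurer--Cartan element from Lemma~\ref{lemma:exists power sum MC}, and check that this is again a Maurer--Cartan element with the asserted curvature. Applying $\id \otimes q$ to the curvature \eqref{eq:curvature with S} (equivalently, to the rewritten form $\sum_{i,k} \frac{1}{k}(p_k(\leftX_i) - p_k(\rightX_{\perm\inv(i)})) \otimes \pv_{i,k}$ appearing in the proof of Lemma~\ref{lemma:exists power sum MC}) gives
\[
\sum_{i=1}^m \sum_{k=1}^{b_i} \frac{1}{k}\Big(p_k(\leftX_i) - p_k(\rightX_{\perm\inv(i)})\Big)\otimes \pv_{[i],k} \, .
\]
Now reorganize this sum by cycles: group the index $i$ according to its class $[i] \in \Omega(\perm)$, and within each cycle telescope the sum $\sum_{j \in [i]} \big(p_k(\leftX_j) - p_k(\rightX_{\perm\inv(j)})\big)$. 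Since $j \mapsto \perm\inv(j)$ permutes the cycle $[i]$, and $p_k$ is additive on formal sums of alphabets (so $p_k(\leftX_{[i]}) = \sum_{j \in [i]} p_k(\leftX_j)$ and similarly for $\rightX_{[i]}$), this telescoping yields exactly $p_k(\leftX_{[i]}) - p_k(\rightX_{[i]})$. Hence the curvature becomes \eqref{eq:bundled curvature with S}, as desired. That $(\id\otimes q)(\Delta)$ still squares correctly is automatic because $\id\otimes q$ is an algebra map, so it carries the identity $(\d + \Delta)^2 = (\text{curvature})$ to the corresponding identity after applying $\id\otimes q$ throughout.

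The only genuinely delicate point is bookkeeping in the telescoping step — matching $\sum_{j \in [i]} p_k(\rightX_{\perm\inv(j)})$ with $p_k(\rightX_{[i]})$ requires that $\perm\inv$ restricts to a bijection of $[i]$, which holds by definition of a cycle, and the additivity of power sums over alphabet sums from \S\ref{ss:symfctns}. This is the same computation as in Lemma~\ref{lemma:bundled curvature}, only cleaner because $p_k$ is \emph{linear} in alphabets (unlike $h_k$, which required the iterated identity $h_l(\Z_1+\Z_2) = h_l(\Z_1) + \sum_{k}h_{l-k}(\Z_1)h_k(\Z_2)$). Uniqueness of $[\Delta]$ up to homotopy equivalence follows from the same argument as Lemma~\ref{lemma:bundled uniqueness}, i.e.\ from invertibility of $C(\beta_\brc)$ in $\CS(\SSBim)$ together with obstruction-theoretic vanishing as in Lemma~\ref{lem:curvinginvertibles}; I would simply cite those. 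So I expect no real obstacle here — the content is entirely in correctly translating the single-strand power-sum curvature into its cycle-bundled form.
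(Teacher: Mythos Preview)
Your proposal is correct and follows essentially the same approach as the paper: define $[\Delta]$ as the image of the Maurer--Cartan element $\Delta$ from Lemma~\ref{lemma:exists power sum MC} under the algebra map induced by the quotient ${}_\brc(S_\perm)_{\perm\inv(\brc)} \twoheadrightarrow [{}_\brc(S_\perm)_{\perm\inv(\brc)}]$. The paper's proof is terser and simply asserts that the curvature becomes \eqref{eq:bundled curvature with S}; your explicit reindexing argument (that $\perm\inv$ bijects each cycle, so $\sum_{j\in[i]} p_k(\rightX_{\perm\inv(j)}) = p_k(\rightX_{[i]})$ by additivity of power sums) fills in exactly what the paper leaves implicit.
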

\begin{proof}
The quotient map 
${}_\brc (S_\perm)_{\perm\inv(\brc)} \twoheadrightarrow [{}_\brc (S_\perm)_{\perm\inv(\brc)}]$ 
gives us an algebra map
\[
\End_{\CS(\SSBim)} \big( C(\b_{\brc}) \big) \otimes \big({}_\brc(S_\perm)_{\perm\inv(\brc)}\big)
\to \End_{\CS(\SSBim)} \big( C(\b_\brc) \big) \otimes [{}_\brc (S_\perm)_{\perm\inv(\brc)}] \, .
\]
Taking $[\Delta]$ to be the image of $\Delta$ under this algebra map produces a
Maurer--Cartan element with curvature \eqref{eq:bundled curvature with S}.
\end{proof}

The twist in Lemma \ref{lemma:exists bundled power sum MC} is unique, 
in the sense of Lemma \ref{lemma:bundled uniqueness}.
Using this $[\Delta]$, we can recover the complex $\YHH_\bullet(\b_\brc)$ 
from \eqref{eq:YHH}, up to homotopy equivalence.

\begin{lemma}\label{lemma:alternate YHH}
Let $\b_\brc$ be a balanced, colored braid, 
and let $\perm\in \symg_m$ be the permutation represented by $\b$.
After identifying $\pv_{[i],k}=v_{[i],k}$, 
we have
\begin{equation}\label{eq:alternate YHH}
\tw_{\HH_\bullet([\Delta])}\Big(\HH_\bullet \big( C(\b_\brc) \big) 
	\otimes [{}_\brc (S_\perm)_{\perm\inv(\brc)}]\Big) 
\simeq \YHH_\bullet(\b_{\brc})
\end{equation}
where $\HH_\bullet([\Delta])$ denotes the image of $[\Delta]$ from 
Lemma \ref{lemma:exists bundled power sum MC} under the algebra map
\[
\End_{\CS(\SSBim)} \big( C(\b_\brc ) \big) \otimes [{}_\brc (S_\perm)_{\perm\inv(\brc)}]
\rightarrow \End_{\k}\big( \HH_\bullet (C(\b_\brc)) \big) \otimes [{}_\brc (S_\perm)_{\perm\inv(\brc)}] \, .
\]
\end{lemma}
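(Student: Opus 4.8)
The statement is essentially a tautology once one unwinds the definitions, and the plan is to trace through the two constructions and check they agree. Recall that $\YHH_\bullet(\b_{\brc})$ from \eqref{eq:YHH} is built from $C(\b_{\brc})$ by: (i) taking the curved Rickard complex $\YS C(\b_{\brc,\perm\inv})$ with strand-wise $h\Delta$-curvature $\sum_{i,k} h_k(\leftX_i - \rightX_{\perm\inv(i)}) v_{i,k}$; (ii) applying the substitution \eqref{eq:bundled vs} from Lemma \ref{lemma:bundled curvature} to bundle the curvature into the form \eqref{eq:bundled curvature}; (iii) applying $\HH_\bullet$, which kills the (now bundled) curvature since it factors through $\leftX_i = \rightX_i$. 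On the other hand, the complex on the left-hand side of \eqref{eq:alternate YHH} is built by: (i') taking the Maurer--Cartan element $\Delta$ with $\Delta p$-curvature \eqref{eq:curvature with S}, which by the proof of Lemma \ref{lemma:exists power sum MC} is obtained from the $h\Delta$-curved Rickard complex via the change of variables \eqref{eq:vvdot} of Lemma \ref{lemma:h and p curvatures}; (ii') passing to $S$-coinvariants via Lemma \ref{lemma:exists bundled power sum MC}, which identifies $\pv_{i,k} \sim \pv_{\perm(i),k}$ and hence produces the bundled $\Delta p$-curvature \eqref{eq:bundled curvature with S}; (iii') applying $\HH_\bullet$, again killing the curvature.

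First I would observe that, after applying $\HH_\bullet$, both sides are twisted complexes of the form $\tw_{\HH_\bullet(-)}(\HH_\bullet(C(\b_\brc)) \otimes R)$ where $R = \k[\V^{\Omega(\perm)}]$ on the right-hand side of \eqref{eq:YHH} and $R = [{}_\brc(S_\perm)_{\perm\inv(\brc)}] \cong \k[\dot{\V}^{\Omega(\perm)}]$ on the left-hand side, by \eqref{eq:VdotOmega}. Under the identification $\pv_{[i],k} = v_{[i],k}$ stipulated in the statement, these coefficient rings are literally equal. So the content is that the two twists agree up to homotopy equivalence. Both twists arise from curved Maurer--Cartan elements on the \emph{same} underlying complex $C(\b_\brc)$, with the \emph{same} bundled curvature after the substitution $\pv_{[i],k} = v_{[i],k}$ — indeed, Newton's identity (Lemma \ref{lemma:h and p curvatures}) shows $\frac{1}{k}p_k(\leftX_{[i]} - \rightX_{[i]}) \pv_{[i],k}$ summed over $k$ equals $\sum_k h_k(\leftX_{[i]} - \rightX_{[i]}) v_{[i],k}$ under \eqref{eq:vvdot}, which is exactly \eqref{eq:bundled curvature}. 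Then I would invoke Lemma \ref{lemma:bundled uniqueness}: any two Maurer--Cartan elements in $\End_{\CS(\SSBim)}(C(\b_\brc)) \otimes \k[\V^{\Omega(\perm)}]$ with the same bundled curvature give homotopy-equivalent twisted complexes. Since the hypothesis of that lemma only concerns the curvature of the twist (not the twist itself), the two twists $\bDelta$ (from Lemma \ref{lemma:bundled curvature}) and the image of $[\Delta]$ (from Lemma \ref{lemma:exists bundled power sum MC}, after $\pv \mapsto v$) both satisfy it, hence $\tw_{\bDelta}(C(\b_\brc) \otimes \k[\V^{\Omega(\perm)}]) \simeq \tw_{[\Delta]}(C(\b_\brc) \otimes [{}_\brc(S_\perm)_{\perm\inv(\brc)}])$ as curved complexes.

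It then remains to check that applying $\HH_\bullet$ is compatible with this equivalence: a homotopy equivalence of curved complexes with curvature that dies under $\HH_\bullet$ induces a homotopy equivalence of the resulting (genuine) twisted complexes after applying $\HH_\bullet$ term-wise as in \eqref{eq:HHcomplex}. This is because $\HH_\bullet$ is an additive dg functor on $\CS(\SSBim)$ (functoriality of Hochschild homology applied term-wise), so it sends homotopy equivalences to homotopy equivalences, sends twists to twists, and commutes with the formation of twisted complexes; the only subtlety is that the equivalence upstairs is only a $\k[\V^{\Omega(\perm)}]$-linear (not bimodule-linear) statement, but that is all that is needed since we are working over the coefficient ring $\k[\V^{\Omega(\perm)}]$ throughout. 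Applying $\HH_\bullet$ to both sides of the equivalence from the previous paragraph yields exactly \eqref{eq:alternate YHH}.

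The main obstacle, such as it is, is purely bookkeeping: one must verify carefully that the composite change of variables — first \eqref{eq:vvdot} translating $\Delta p$- to $h\Delta$-curvature strand-by-strand, then the bundling \eqref{eq:bundled vs}, and on the other side the bundling \eqref{eq:bundled vs}-analogue built into the $S$-coinvariants — really does produce the \emph{same} bundled curvature element \eqref{eq:bundled curvature} $=$ \eqref{eq:bundled curvature with S} (under $\pv_{[i],k} = v_{[i],k}$), so that Lemma \ref{lemma:bundled uniqueness} applies. Once this identification of curvatures is nailed down, everything else is formal. I expect no genuine difficulty, and would present the argument concisely, deferring the routine Newton-identity manipulation to Lemma \ref{lemma:h and p curvatures} and the uniqueness to Lemma \ref{lemma:bundled uniqueness}.
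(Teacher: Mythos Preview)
Your proposal is correct and follows essentially the same route as the paper: convert between the bundled $\Delta p$- and $h\Delta$-curvatures via Lemma~\ref{lemma:h and p curvatures}, observe that after $\HH_\bullet$ identifies $\leftX_{[i]}=\rightX_{[i]}$ this substitution reduces to $\pv_{[i],k}=v_{[i],k}$ (Remark~\ref{rmk:pv equiv v}), and invoke Lemma~\ref{lemma:bundled uniqueness}. The paper's proof is considerably terser, leaving the functoriality-of-$\HH_\bullet$ step and the unwinding of definitions implicit, but the content is the same.
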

\begin{proof}
Note that a substitution as in Lemma \ref{lemma:h and p curvatures} will 
convert the curvature \eqref{eq:bundled curvature with S} into \eqref{eq:bundled 
curvature}.  
Further, since we work with bundled curvature and Hochschild homology identifies 
the alphabets $\leftX_{[i]}$ and $\rightX_{[i]}$, 
Remark \ref{rmk:pv equiv v} shows that the relevant substitution simply sets 
$\pv_{[i],k}=v_{[i],k}$.
Our uniqueness statement (Lemma \ref{lemma:bundled uniqueness}) 
then establishes the lemma.
\end{proof}

\begin{rem}\label{rem:Monodromies}
The Maurer--Cartan element $\Delta$ from Lemma \ref{lemma:exists power sum MC} 
is strict, in the sense of Remark \ref{rmk:strict}, thus its linear part determines 
null-homotopies $\tilde{\Xi}_{[i],k}$ for the action of 
$\frac{1}{k} \Big(p_k(\leftX_{[i]})-p_k(\rightX_{[i]})\Big)$. 
Applying $\HH_{\bullet}$ then produces the monodromy maps $\Xi_{\comp,k}$ 
from Proposition \ref{prop:intro}.
This pairs with the discussion preceding Theorem \ref{thm:YHHH} to establish 
Proposition \ref{prop:intro}.
\end{rem}

Using this ``power sum model'' for $\YHH_\bullet(\b_{\brc})$ established in 
Lemma \ref{lemma:alternate YHH},
we now prove Markov invariance of $\YS C_{\KR}(\b_{\brc})$.

\begin{proof}[Proof of Theorem \ref{thm:YHHH}]
It suffices to show that $\YS C_{\KR}(\b_{\brc})$ is invariant under the 
Markov moves, up to quasi-isomorphism of $A_{\LB}$-modules.
We will establish the Markov moves in turn, 
and then observe that all maps used are 
$A_{\LB}$-module quasi-isomorphisms. \\

\noindent \textbf{Markov I:}
Let $\b_\brc = \b'_\brcc \b''_\brc$ be balanced, where  
$\b'$ and $\b''$ are $m$-strand braids 
with corresponding permutations $\perm'$ and $\perm''$.
Hence, $\brc=\perm'(\brcc)$ and $\brcc=\perm''(\brc)$, 
so ${}_{\brcc}\b'' \b'_{\brcc}$ is a balanced, colored braid as well.
Let 
$\Delta' \in \End_{\CS(\SSBim)} \big( C({}_\brc \b'_\brcc ) \big) 
	\otimes \big({}_\brc(S_{\perm'})_\brcc \big)$ 
and 
$\Delta'' \in \End_{\CS(\SSBim)} \big( C({}_\brcc \b''_\brc ) \big) 
	\otimes \big({}_\brcc(S_{\perm''})_\brc \big)$ 
be Maurer--Cartan elements as in Lemma \ref{lemma:exists power sum MC}.  

A straightforward computation shows that the Maurer--Cartan elements
\begin{equation} \label{eq:power sum 1comp 1}
\Delta' \hComp \id + \id \hComp \Delta'' \in 
\End_{\CS(\SSBim)}\big(C(\b'_\brcc) \hComp C(\b''_\brc)\big)
	\otimes \big({}_\brc(S_{\perm'})_\brcc \hComp {}_\brcc (S_{\perm''})_\brc \big)
\end{equation}
and
\begin{equation}\label{eq:power sum 1comp 2}
\Delta'' \hComp \id + \id \hComp \Delta' \in 
\End_{\CS(\SSBim)}\big(C(\b''_\brc)\hComp C(\b'_\brcc)\big)
	\otimes \big({}_\brcc(S_{\perm''})_\brc \hComp {}_\brc (S_{\perm'})_\brcc \big)
\end{equation}
have curvature as in Lemma \ref{lemma:exists power sum MC}.  
Now, using Proposition \ref{prop:HH tracelike 2}, 
we have an isomorphism of dg algebras
\[
\End_{\k}\Big( \HH_\bullet \big(C(\b'_\brcc) \hComp C(\b''_\brc)\big) \Big)\otimes 
\big[ \big({}_\brc(S_{\perm'}) \hComp (S_{\perm''})_\brc \big) \big]
\cong
\End_{\k}\Big( \HH_\bullet \big(C(\b''_\brc)\hComp C(\b'_\brcc)\big) \Big)\otimes 
\big[ \big({}_\brcc(S_{\perm''}) \hComp (S_{\perm'})_\brcc \big) \big]
\]
which exchanges the Maurer--Cartan elements induced 
by \eqref{eq:power sum 1comp 1} and \eqref{eq:power sum 1comp 2}.
Since $C(\b' \b''_{\brc}) = C(\b'_\brcc) \hComp C(\b''_\brc)$ and 
$C(\b'' \b'_{\brcc}) = C(\b''_\brc) \hComp C(\b'_\brcc)$, 
this implies that 
\begin{multline}\label{eq:Markov1}
\tw_{\HH_\bullet([\Delta' \hComp \id + \id \hComp \Delta''])}
\Big(\HH_\bullet \big( C(\b' \b''_{\brc}) \big) 
	\otimes [{}_\brc (S_{\perm' \perm''})_\brc] \Big) \\
\cong
\tw_{\HH_\bullet([\Delta'' \hComp \id + \id \hComp \Delta'])}
\Big(\HH_\bullet \big( C(\b'' \b'_{\brcc}) \big) 
	\otimes [{}_\brcc (S_{\perm'' \perm'})_\brcc] \Big) \, .
\end{multline}
Thus, by Lemma \ref{lemma:alternate YHH}, 
$\YHH_\bullet(\b' \b''_{\brc}) \simeq \YHH_\bullet(\b'' \b'_{\brcc})$.
Since the numerical invariants $\e(-)$, $n(-)$, and $N(-)$ 
from Definition \ref{def:braid numerical invts} agree for 
agree for $\b' \b''_{\brc}$ and $\b'' \b'_{\brcc}$, 
this establishes invariance of $\YS C_{\KR}(-)$ under the first Markov move. \\

\noindent \textbf{Markov II:}
Let $\brc = \brcc\boxtimes b$ and suppose that $\b_\brc$ is a balanced, colored $m$-strand braid 
that can be written as ${}_\brc \b = {}_\brc(\b'\boxtimes \oone_{b})\agen_{m-1}$.
(Recall that $\agen_{m-1}$ denotes the $(m-1)^{st}$ Artin generator.)
Let $\perm\in \symg_m$ be the permutation represented by $\b$ 
and let $\perm'\in \symg_{m-1}$ be the permutation represented by $\b'$.
Observe that $\b'_\brcc$ is necessarily also balanced.

For $1\leq i\leq m-1$, 
the cycles $[i]_{\perm'}$ and $[i]_{\perm}$ are related by
\[
\begin{cases}
[i]_{\perm'} = [i]_{\perm} \smallsetminus \{m\} & \text{if } [i]_{\perm}= [m]_{\perm} \\
[i]_{\perm'} = [i]_{\perm} & \text{otherwise}
\end{cases}
\]
Thus, there is a canonical bijection between the cycles of $\perm$ and $\perm'$
given by sending $[i]_{\perm'}\mapsto [i]_\perm$ for $1\leq i\leq m-1$. 
(Note that $[m]_\perm = [m-1]_\perm$ by our hypotheses on the braid $\b$.) 
Henceforth we will identify the algebras
\[
[{}_\brc (S_\perm)_\brc ] = [ {}_\brcc (S_{\perm'})_\brcc] 
\]
and we will use the notation $\pv_{[i],k}$ without specifying whether $[i]$ is regarded 
as a cycle of $\perm$ or $\perm'$.

Introduce alphabets $\leftX_i, \rightX_i, \rightrightX_i$
which act by left-, middle-, and right-multiplication (respectively) on
\[
C({}_\brc\b) = \oone_{\brc}\hComp C(\b'\boxtimes \oone_b) 
	\hComp \oone_{\brc} \hComp  C(\agen_{m-1}) \hComp \oone_{\brc} \, .
\]
In particular we have 
$\rightX_i = \rightrightX_i$ for $i=1,\ldots,m-2$ and $\X_m=\rightX_m$ 
when acting on $C({}_\brc\b)$.
The bundled curvature \eqref{eq:bundled curvature with S} equals
\[
\sum_{i=1}^m \sum_{k=1}^{b_i} \frac{1}{k} \Big(p_k(\leftX_i) - p_k(\rightrightX_i)\Big)\pv_{[i],k} 
= \sum_{i=1}^m \sum_{k=1}^{b_i} \frac{1}{k} \Big(p_k(\leftX_i) - p_k(\rightX_i)\Big)\pv_{[i],k} 
+ \sum_{i=1}^m \sum_{k=1}^{b_i}  \frac{1}{k} \Big(p_k(\rightX_i) - p_k(\rightrightX_i)\Big)\pv_{[i],k} \, .
\]
Since $\X_m=\rightX_m$, 
the $i=m$ term of the first summation on the right is zero for all $k$. 
Furthermore, since $\rightX_i = \rightrightX_i$ for $i=1,\ldots,m-2$, 
we have
\[
\sum_{i=1}^m \sum_{k=1}^{b_i} \frac{1}{k} \Big(p_k(\X_i') - p_k(\X_i'')\Big)\pv_{[i],k} 
= \sum_{k=1}^{b_i} \frac{1}{k} 
\Big(p_k(\X_{m-1})+p_k(\X_m)-p_k(\X_{m-1}')-p_k(\X_m')\Big)\pv_{[m]} = 0 \, .
\]
Here, we have also used the fact that 
$\rightX_{m-1}+\rightX_m = \rightrightX_{m-1}+\rightrightX_m$ 
when acting on $C({}_\brc \agen_{m-1})$. 
Therefore, the curvature element \eqref{eq:bundled curvature with S} 
in the present setting reduces to
\[
\sum_{i=1}^m \sum_{k=1}^{b_i} \frac{1}{k} \Big(p_k(\X_i) - p_k(\X_i'')\Big)\pv_{[i],k} 
= \sum_{i=1}^{m-1} \sum_{k=1}^{b_i} \frac{1}{k} \Big(p_k(\X_i) - p_k(\X_i')\Big)\pv_{[i],k},
\]
which coincides with the bundled curvature 
\eqref{eq:bundled curvature with S} for $\b'$.  

Thus, if 
$[\Delta']\in \End_{\CS(\SSBim)} \big( C(\b'_\brcc) \big) \otimes [{}_{\brcc} (S_{\perm'})_\brcc]$ 
satisfies the conditions of Lemma \ref{lemma:exists power sum MC} for $\b'_\brcc$, 
then 
\[
([\Delta']\boxtimes \id)\hComp \id 
\in \End_{\CS(\SSBim)} \big( C(\b_\brc) \big) \otimes [{}_{\brc} (S_{\perm})_\brc]
\]
satisfies the conditions of Lemma \ref{lemma:exists power sum MC} for $\b_\brc$.  
Lemma \ref{lemma:alternate YHH} now gives that
\[
\tw_{\HH_\bullet(([\Delta']\boxtimes \id)\hComp \id)} 
\Big(\HH_\bullet\big(C({}_\brc (\b'\boxtimes \oone_b)\agen_{m-1})\big)\Big) 
\simeq \YHH_\bullet(\b_{\brc}) \, .
\]
Recall that Proposition \ref{prop:markov ii} gives a 
homotopy equivalence
\[
\HH_\bullet\Big(C({}_\brc (\b'\boxtimes \oone_b)\agen_{m-1})\Big) \simeq \adeg^{-b} 
\qdeg^{b^2}  \tdeg^b \HH_\bullet(\b'_\brcc)
\]
of undeformed Rickard complexes. 
The naturality of this equivalence with respect to the action of 
$\End_{\CS(\SSBim)}(C(\b'_\brcc))$ implies that the induced map
\[
\HH_\bullet\Big(C({}_\brc (\b'\boxtimes \oone_b)\agen_{m-1})\Big) 
	\otimes [{}_\brc (S_\perm)_\brc] 
\to
\adeg^{-b} \qdeg^{b^2}  \tdeg^b 
\HH_\bullet(\b'_\brcc)\otimes [{}_\brcc (S_{\perm'})_\brcc]
\]
intertwines the actions of Maurer--Cartan elements $\HH_\bullet(([\Delta']
\boxtimes \id)\hComp \id)$ and $\HH_\bullet([\Delta'])$.
Hence, Lemma \ref{lemma:alternate YHH} gives that
\begin{equation}\label{eq:Markov2}
\YHH_\bullet({}_\brc(\b'\boxtimes \oone_b)\agen_{m-1}) \simeq 
\adeg^{-b} \qdeg^{b^2}  \tdeg^b\, \YHH_\bullet(\b'_\brcc) \, ,
\end{equation}
and a similar argument gives that
\[
\YHH_\bullet({}_\brc(\b'\boxtimes \oone_b)\agen_{m-1}\inv) \simeq 
\qdeg^{-b^2}  \YHH_\bullet(\b'_\brcc) \, .
\]
Comparing the shifts in \eqref{eq:YC(b)}, 
this establishes the requisite behavior of $\YS C_{\KR}(-)$ under the second Markov move. \\

\noindent \textbf{Module structure:}
First, note that all homotopy equivalences used above are $\k[\V^{\pi_0(\LB)}]$-linear, 
so it suffices to show that they are quasi-isomorphisms of 
$\bigotimes_{\comp \in \pi_0(\LB)} \Sym(\X_\comp)$-modules.
This follows from Proposition \ref{prop:qiso}. 
Indeed, therein it is shown that, up to quasi-isomorphism, we can assume that 
the $\Sym(\X_\comp)$ action is given at any point $\point$ on the corresponding component. 
In particular, all homotopy equivalences following from our uniqueness results are 
quasi-isomorphisms, since we can assume $\Sym(\X_\comp)$ is acting via alphabets on the left, 
and these homotopy equivalences are equivalences in categories of curved complexes of bimodules. 
This similarly shows that the maps establishing Markov II invariant are quasi-isomorphisms: we can 
assume that the $\Sym(\X_\comp)$ action is given on the left, and does not act via $\leftX_m$. 
It remains to show that \eqref{eq:Markov1} is a quasi-isomorphism. For this, 
we can use Proposition \ref{prop:qiso} to assume that the $\Sym(\X_\comp)$ actions occur 
in the ``middle'' of the left-hand side (i.e. via the action of $\End_{\SSBim}(\oone_\brcc)$ on 
$C(\b'_{\brcc}) \hComp \oone_\brcc \hComp  C(\b''_{\brc})$), and on the left on the right-hand side. 
This map is a $\Sym(\X_\comp)$-linear isomorphism for these actions, thus a quasi-isomorphism.
\end{proof}

\subsection{Coefficients and spectral sequences}
\label{ss:homology with coefficients}
\label{sec:ss}

We now collect some straightforward results on homology with coefficients 
and spectral sequences that we need for our link splitting results in 
\S \ref{s:colored Hilb} -- \ref{sec:linksplit} below.
First, we will use the following  
common generalization of Definitions~\ref{def:YHHH} and~\ref{def:HHH}. 

\begin{definition}\label{def:YC(b,M)}
Let $\b_\brc$ be a balanced, colored braid. 
We consider two types of homology with coefficients. 
If $M$ is a $\k[\V^{\pi_0(\LB)}]$-module, we define 
\begin{equation}\label{eq:YLM1}
\YS C_{\KR}(\b_\brc,M) := \YS C_{\KR}(\b_\brc) \otimes_{\k[\V^{\pi_0(\LB)}]} M \, .
\end{equation}
If instead $M'$ is an $A_{\LB}$-module, we define
\begin{equation}\label{eq:YLM2}
\YS C_{\KR}(\b_\brc,M') := \YS C_{\KR}(\b_\brc) \stackrel{L}{\otimes}_{A_{\LB}} M' \, .
\end{equation}
If either case,  if $\LB$ is the colored link obtained as the closure of $\b_\brc$, then 
the \emph{deformed, colored, triply-graded Khovanov--Rozansky homology} 
of $\LB$ \emph{with coefficients} in $M$ is defined by
\[
\YS H_{\KR}(\LB,M) :=  H(\YS C_{\KR}(\b_\brc,M)) \, .
\] 
\end{definition}

\begin{remark}
Since $\YS C_{\KR}(\b_\brc)$ is free when regarded as a module over $\k[\V^{\pi_0(\LB)}]$, 
the tensor product in \eqref{eq:YLM1} coincides with the derived tensor product 
$\stackrel{L}{\otimes}_{\k[\V^{\pi_0(\LB)}]} M$.   
So, \eqref{eq:YLM1} can (and often will) be thought of as a special case of \eqref{eq:YLM2}, 
with $M'=A_{\LB} \otimes_{\k[\V^{\pi_0(\LB)}]} M$. 
Moreover, we expect that $\YS C_{\KR}(\b_\brc)$ is free as an $A_{\LB}$-module, 
so that the derived tensor product in \eqref{eq:YLM2} 
may be replaced with the ordinary tensor product. 
This amounts to showing that Hochschild homology of any singular Soergel bimodule 
$\oone_{\brc}B\oone_{\brc}$ is free as a module over the appropriate symmetric 
polynomial ring $\End_{\SSBim}(\oone_{\brc})$.
(Note that this holds in the uncolored case.)
\end{remark}

\begin{remark}
As in Remark \ref{rem:YC(L)}, we will sometimes write $\YS C_{\KR}(\LB,M)$
instead of $\YS C_{\KR}(\b_\brc,M)$. Strictly speaking, this complex depends on
a choice of braid representative of $\LB$, but the resulting complex depends
only on the framed, oriented, colored link $\LB$ up to quasi-isomorphism of $A_{\LB}$-modules. 
Thus,  the homology with coefficients $\YS H_{\KR}(\LB,M)$ is a well-defined module 
over $A_{\LB}/\mathrm{Ann}(M)$, up to isomorphism 
(here $\mathrm{Ann}(M)\subset A_{\LB}$ denotes the annihilator of $M$).
\end{remark}

\begin{remark}
We do not require that $M$ be a doubly graded module over $A_{\LB}$,
so tensoring with $M$ may involve a collapse of gradings.  
\end{remark}

\begin{exa}
\label{exa:coefficients} We consider the following examples:
\begin{enumerate}
\item 
For $M = \k[\V^{\pi_0(\LB)}]$, we have $\YS
C_{\KR}(\LB,M)=\YS C_{\KR}(\LB)$.

\item For the trivial module $M=\k$ 
(on which all variables $v_{[i],k}$ act by zero), 
we have $\YS C_{\KR}(\b_{\brc},M)=C_{\KR}(\b_{\brc})$.

\item \label{exa:coefficients3} Fix a scalar $z_\comp \in \k$ for each link
component $\comp \in \pi_0(\LB)$ and let $u$ be a formal variable of weight
$\wt(u)=\qdeg^{-2}\tdeg^2$. Consider the $\Z_\qdeg\times \Z_\tdeg$-graded
$\k[\V^{\pi_0(\LB)}]$-module $M_{\underline{z}} := \k[u]$ 
on which $v_{\comp,1} \in \V^{\pi_0(\LB)}$ acts as multiplication
by $z_\comp u$, and $v_{\comp,r}$ acts by zero when $r>1$. In this case, $\YS
H_{\KR}(\LB,M_{\underline{z}})$ recovers the deformed triply-graded homology of
Cautis--Lauda--Sussan \cite[Theorem 6.3]{MR4178751}, which satisfies splitting
properties between components labeled by distinct scalars $z_\comp$ after inverting $u$.
\end{enumerate}
\end{exa}

\begin{rem}
\label{rem:glN}
The bigraded Khovanov--Rozansky $\glN$ link homologies (as well as certain
deformations thereof) can be computed from the Rickard complexes of colored
braids by applying a functor that is trace-like up to homotopy, and which
induces homotopy equivalences for the second Markov move similar as in
Lemma~\ref{lem:Markov2}; see \cite[Theorem 3.21]{Wed3} or \cite[Section 6]{QR2}.
This implies the
existence of link splitting deformations of bigraded colored Khovanov--Rozansky
$\glN$ link homologies. Specifically, for coefficients in $M_{\underline{z}}$ as
in Example~\ref{exa:coefficients} \eqref{exa:coefficients3}, one obtains
bigraded colored homologies that satisfy link splitting properties and agree
with the invariants from \cite[Theorem 5.4]{MR4178751} (modulo conventions).
\end{rem}

Finally, we collect various spectral sequences associated with $\YS C_{\KR}(\LB)$. 
Let $\LB$ be a colored link. 
For each $\comp \in \pi_0(\LB)$, introduce an alphabet of (odd) variables 
$\Xi_\comp = \{\xi_{\comp,r}\}_{r=1}^{b(\comp)}$ with $\wt (\xi_{\comp,r}) = \qdeg^{2r}\tdeg^{-1}$ 
and where $b(\comp)$ denotes the color of the component $\comp$. 
Set $\Xi^{\pi_0(\LB)}:= \bigcup_{\comp \in \pi_0(\LB)} \Xi_\comp$.

\begin{proposition}\label{prop:YCvC}
We have
\[
\YS C_{\KR}(\LB) \cong \tw_{D}(C_{\KR}(\LB)\otimes \k[\V^{\pi_0(\LB)}])
\]
where the twist $D$ is $\k[\Xi^{\pi_0(\LB)}]$-linear and
$\V^{\pi_0(\LB)}$-irrelevant. We also have
\[
C_{\KR}(\LB) \simeq \tw_{D'}(\YS C_{\KR}(\LB) \otimes \largewedge[\Xi^{\pi_0(\LB)}])
\]
where the twist $D'$ equals 
$\sum_{\comp \in \pi_0(\LB)} \sum_{r=1}^{b(\comp)} v_{\comp,r} \xi_{\comp,r}^\ast$.
\end{proposition}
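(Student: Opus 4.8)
\textbf{Proof strategy for Proposition \ref{prop:YCvC}.}
The plan is to build both isomorphisms out of Koszul duality applied to the $\Delta p$-model for $\YHH_\bullet(\b_\brc)$ provided by Lemma \ref{lemma:alternate YHH}, combined with the standard observation that adjoining an alphabet of odd variables $\Xi$ to a curved complex with $\Delta p$-curvature trivializes that curvature up to homotopy. First I would fix a balanced braid representative $\b_\brc$ of $\LB$ and recall from Lemma \ref{lemma:exists bundled power sum MC} that $\YHH_\bullet(\b_\brc)$ is homotopy equivalent to $\tw_{\HH_\bullet([\Delta])}\big(\HH_\bullet(C(\b_\brc))\otimes \k[\dot{\V}^{\Omega(\perm)}]\big)$, where the linear part of $[\Delta]$ consists of the monodromy homotopies $\Xi_{\comp,r}$ (in the sense of Remark \ref{rem:Monodromies}), each satisfying $[\d,\Xi_{\comp,r}] = \tfrac{1}{r}(p_r(\leftX_{[i]}) - p_r(\rightX_{[i]}))$, which vanishes after applying $\HH_\bullet$ (Hochschild homology identifies $\leftX_{[i]}=\rightX_{[i]}$). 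Thus $\HH_\bullet(\Xi_{\comp,r})$ is a genuine closed endomorphism of $\HH_\bullet(C(\b_\brc))$ of weight $\qdeg^{2r}\tdeg^{-1}$, and $\HH_\bullet([\Delta]) = \sum_{\comp,r} \HH_\bullet(\Xi_{\comp,r})\, v_{\comp,r} + (\text{higher }\V\text{-order})$.

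For the first isomorphism, I would simply unwind the definition: after the grading shifts in \eqref{eq:YC(b)} (which are identical for $\YS C_{\KR}$ and $C_{\KR}$), we have $\YS C_{\KR}(\LB) = \tw_{D}\big(C_{\KR}(\LB)\otimes \k[\V^{\pi_0(\LB)}]\big)$ where $D$ is the (suitably shifted) image of $\HH_\bullet([\Delta])$ minus its constant part — i.e.\ $D$ is $\V^{\pi_0(\LB)}$-irrelevant by construction. To see that $D$ is $\k[\Xi^{\pi_0(\LB)}]$-linear, I would note that each homogeneous component of $D$ has a fixed weight, and \eqref{eq:ComponentWt} together with $\wt(v_{\comp,r}) = \qdeg^{-2r}\tdeg^2$ forces the bimodule-map part of each term to live in cohomological ($\tdeg$-)degree $\leq -1$ times the number of $\V$-factors; since $\HH_\bullet(C(\b_\brc))$ is supported in cohomological degrees bounded below (by Theorem \ref{thm:HHHinvariance}), only finitely many such terms occur, and the degree-$(-1)$ generators of these are precisely captured by the odd variables $\xi_{\comp,r}$ of weight $\qdeg^{2r}\tdeg^{-1}$. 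More precisely, $D$ acts through the subalgebra of $\End(C_{\KR}(\LB))$ generated by the $\HH_\bullet(\Xi_{\comp,r})$, and since these pairwise anti-commute and square to zero (strictness, Remark \ref{rmk:strict}), this subalgebra is a quotient of $\largewedge[\Xi^{\pi_0(\LB)}]$; writing $D$ in terms of these generators exhibits its $\k[\Xi^{\pi_0(\LB)}]$-linearity.

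For the second isomorphism, I would invoke Koszul duality in the form already used to define the deformation: starting from the two-point dg algebra picture (the analogue of \eqref{eq:manytwopointalg} bundled over components), $C_{\KR}(\LB)$ is recovered from $\YS C_{\KR}(\LB)$ by forming the Koszul complex $\YS C_{\KR}(\LB)\otimes \largewedge[\Xi^{\pi_0(\LB)}]$ with differential twisted by $D' = \sum_{\comp,r} v_{\comp,r}\,\xi_{\comp,r}^\ast$, where $\xi_{\comp,r}^\ast$ is the odd derivation dual to $\xi_{\comp,r}$. Concretely, this is the statement that for a polynomial ring $P=\k[v_1,\dots,v_n]$ and a dg $P$-module $N$, one has $N\otimes_{P}^{L}\k \simeq \tw_{\sum v_i\xi_i^\ast}(N\otimes\largewedge[\xi_1,\dots,\xi_n])$, applied component by component; setting all $v_{\comp,r}=0$ on $\YS C_{\KR}(\LB)$ recovers $C_{\KR}(\LB)$ by Example \ref{exa:coefficients}(2). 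The main obstacle, and the step I would spend the most care on, is verifying that this Koszul-dual twist is well-defined and that the higher (nonlinear in $\V$) terms of $D$ do not obstruct the equivalence — i.e.\ checking that $D'$ as written squares to zero and that the resulting twisted complex is genuinely homotopy equivalent to $C_{\KR}(\LB)$ rather than merely to its associated graded. This is handled by homological perturbation (Proposition \ref{prop:HPT}) applied to the finite one-sided twisted complex structure with respect to the $\Xi$-degree filtration, using that $D'$ is $\V$-irrelevant after the change of variables and that nilpotency of the relevant endomorphisms follows from boundedness of $\YS C_{\KR}(\LB)$ together with \eqref{eq:ComponentWt}, exactly as in the proof of Proposition \ref{prop:conservation}.
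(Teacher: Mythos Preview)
Your overall strategy matches the paper's (one-line) proof, but both halves are over-complicated, and in the first half you have been misled by what is almost certainly a typo in the statement.

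For the first isomorphism, the twist $D$ should read $\k[\V^{\pi_0(\LB)}]$-linear, not $\k[\Xi^{\pi_0(\LB)}]$-linear (compare Lemma~\ref{lemma:simplifying YC} immediately afterwards, whose twist $D''$ is explicitly called $\k[\V^{\pi_0(\LB)}]$-linear; note also that the $\Xi$-variables are not even present in the complex $C_{\KR}(\LB)\otimes\k[\V^{\pi_0(\LB)}]$). With that correction the claim is literally Definition~\ref{def:YHHH}: equation~\eqref{eq:YHH} writes $\YHH_\bullet(\b_\brc) = \tw_{\HH_\bullet(\bDelta)}\big(\HH_\bullet(C(\b_\brc))\otimes\k[\V^{\Omega(\perm)}]\big)$, and after the shift~\eqref{eq:YC(b)} one takes $D=\HH_\bullet(\bDelta)$, which lies in $\End(-)\otimes\k[\V]_{>0}$ and is therefore both $\k[\V]$-linear and $\V$-irrelevant. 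Your detour through the subalgebra generated by monodromy endomorphisms is an attempt to make sense of a statement that was not intended; in any case the argument you sketch (``$D$ lies in a subalgebra that is a quotient of $\largewedge[\Xi]$, hence is $\k[\Xi]$-linear'') does not establish linearity over anything.

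For the second isomorphism you correctly identify Koszul duality, and your parenthetical statement (for a polynomial ring $P$ and dg $P$-module $N$, one has $N\otimes_P^L\k \simeq \tw_{\sum v_i\xi_i^\ast}(N\otimes\largewedge[\xi_i])$) is already the complete argument. The point is that $\YS C_{\KR}(\LB)$ is \emph{free} as a graded $\k[\V]$-module (its underlying module is $C_{\KR}(\LB)\otimes\k[\V]$ by the first part), so the derived tensor product is the ordinary one, which is $C_{\KR}(\LB)$ by Example~\ref{exa:coefficients}(2). No homological perturbation step is needed: $D'$ is applied to $\YS C_{\KR}(\LB)\otimes\largewedge[\Xi]$ with its full differential $\d+D$ already in place, not to an associated graded; that $(D')^2=0$ is immediate (the $v$'s commute, the $\xi^\ast$'s anticommute), and $[\d+D,D']=0$ since $\d+D$ is $\k[\V]$-linear and acts trivially on the exterior factor. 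The paper's proof is simply ``an easy exercise in Koszul duality.''
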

\begin{proof}
The first statement is true by construction, and the second is an easy exercise in Koszul 
duality (relating complexes of modules over polynomial and exterior algebras).
\end{proof}

The twist $D$ in Proposition \ref{prop:YCvC} strictly increases $\V$-degree, 
and $D'$ strictly decreases $\Xi$-degree. 
Taking the spectral sequence associated to these filtered complexes 
thus yields the following.

\begin{cor}
There are  spectral sequences
\[
\pushQED{\qed}
C_{\KR}(\LB)\otimes \k[\V^{\pi_0(\LB)}] \implies \YS C_{\KR}(\LB)
\, , \quad
\YS C_{\KR}(\LB) \otimes \largewedge[\Xi^{\pi_0(\LB)}] \implies C_{\KR}(\LB)\, . \qedhere
\popQED
\]
\end{cor}

\begin{remark}
The filtration by $\Xi$-degree on the complex 
$\tw_{D'}(\YS C_{\KR}(\LB) \otimes \largewedge[\Xi^{\pi_0(\LB)}])$ has finitely many steps, 
so the associated spectral sequence converges after finitely many steps.  
On the other hand, the filtration by $\V$-degree on $\tw_{\d}(C_{\KR}(\LB)\otimes \k[\V^{\pi_0(\LB)}])$ is infinite.  
Nonetheless, the associated spectral sequence is a first quadrant spectral sequence 
(bounded below in both cohomological degree and $\V$-degree), 
hence is reasonably well-behaved.
\end{remark}

The relation between $C_{\KR}(\LB)$ and $\YS C_{\KR}(\LB)$ 
can be reformulated in terms of homological perturbation theory as follows.

\begin{lemma}\label{lemma:simplifying YC}
There is a homotopy equivalence
\[
\YS C_{\KR}(\LB) \simeq \tw_{D''}(H_{\KR}(\LB) \otimes \k[\V^{\pi_0(\LB)}])
\]
of differential $\Z_\adeg\times \Z_\qdeg\times \Z_\tdeg$-graded 
$\k[\V^{\pi_0(\LB)}]$-modules, for some $\k[\V^{\pi_0(\LB)}]$-linear twist $D''$.  
Here, $H_{\KR}(\LB) \otimes \k[\V^{\pi_0(\LB)}]$ is viewed as a complex with zero differential.
\end{lemma}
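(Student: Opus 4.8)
The plan is to apply homological perturbation theory (Proposition~\ref{prop:HPT}) to the decomposition of $\YS C_{\KR}(\LB)$ provided by Proposition~\ref{prop:YCvC}. Recall from that proposition that
\[
\YS C_{\KR}(\LB) \cong \tw_{D}\big(C_{\KR}(\LB)\otimes \k[\V^{\pi_0(\LB)}]\big),
\]
where $D$ is $\V^{\pi_0(\LB)}$-irrelevant, i.e. $D \in \End(C_{\KR}(\LB))\otimes \k[\V^{\pi_0(\LB)}]_{>0}$. First I would invoke the fact that $C_{\KR}(\LB)$, as a bounded complex of (triply-graded) $\k$-vector spaces over a field, is formal: there is a homotopy equivalence $f\colon C_{\KR}(\LB) \to H_{\KR}(\LB)$ (with $H_{\KR}(\LB)$ viewed as a complex with zero differential), with homotopy inverse $g$ and a homotopy $k_{C}$ on $C_{\KR}(\LB)$ so that $\id - gf = [\d,k_C]$, and $k_{H}=0$ on $H_{\KR}(\LB)$ since the latter has zero differential. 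This is the standard deformation-retract data for the homology of a complex over a field.

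Next I would extend scalars to $\k[\V^{\pi_0(\LB)}]$: the maps $f,g,k_C$ become $\k[\V^{\pi_0(\LB)}]$-linear maps between $C_{\KR}(\LB)\otimes \k[\V^{\pi_0(\LB)}]$ and $H_{\KR}(\LB)\otimes \k[\V^{\pi_0(\LB)}]$, still furnishing a homotopy equivalence in $\CS$. Now I would apply Proposition~\ref{prop:HPT} with $X = C_{\KR}(\LB)\otimes \k[\V^{\pi_0(\LB)}]$, $Y = H_{\KR}(\LB)\otimes \k[\V^{\pi_0(\LB)}]$, $F_1 = 0$, and $\alpha = D$, which is a Maurer--Cartan element with zero curvature $F_2=0$ (since $\tw_D(X)$ is an honest complex). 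The invertibility hypothesis on $\Id_X + D\circ k_C$ follows exactly as in Remark~\ref{rem:onesided} and in the proof of Proposition~\ref{prop:conservation}: since $D$ is $\V^{\pi_0(\LB)}$-irrelevant, $(D\circ k_C)^\ell = 0 \bmod \langle v_{\comp,r}\rangle^\ell$, and by the weight bookkeeping of \eqref{eq:ComponentWt} together with boundedness of $C_{\KR}(\LB)$ this forces $(D\circ k_C)^\ell = 0$ for $\ell \gg 0$, so the geometric series inverse exists. Proposition~\ref{prop:HPT} then produces a homotopy equivalence $\tw_{D}(X) \simeq \tw_{D''}(Y)$ where $D'' := f\circ D\circ \tilde g$ is the transferred Maurer--Cartan element, which is automatically $\k[\V^{\pi_0(\LB)}]$-linear (as $f,D,\tilde g$ all are) and $\V^{\pi_0(\LB)}$-irrelevant. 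Composing with the isomorphism of Proposition~\ref{prop:YCvC} gives $\YS C_{\KR}(\LB) \simeq \tw_{D''}(H_{\KR}(\LB)\otimes \k[\V^{\pi_0(\LB)}])$, which is the claim.

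The only genuinely delicate point is making sure the perturbation lemma is being applied in the right ambient category, so that the resulting equivalence is one of \emph{differential $\Z_\adeg\times\Z_\qdeg\times\Z_\tdeg$-graded $\k[\V^{\pi_0(\LB)}]$-modules} rather than merely of $\k$-complexes; this is handled by noting that $f,g,k_C$ and hence $\tilde f,\tilde g$ are all homogeneous for all three gradings and $\k[\V^{\pi_0(\LB)}]$-linear, and that $D''$ inherits degree $(0,0,1)$ from $D$. I expect the main obstacle to be purely expository: verifying that the formality of $C_{\KR}(\LB)$ can be chosen with homogeneous multi-graded data (so that no grading is collapsed), and that the $\k[\V^{\pi_0(\LB)}]$-linear structure is preserved throughout the transfer --- both routine but requiring care. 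No new ideas beyond Propositions~\ref{prop:HPT} and~\ref{prop:YCvC} and the formality of complexes over a field are needed.
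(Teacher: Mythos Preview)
Your proposal is correct and follows essentially the same approach as the paper: use formality of $C_{\KR}(\LB)$ over a field, extend scalars to $\k[\V^{\pi_0(\LB)}]$, and transfer the $\V$-irrelevant twist $D$ from Proposition~\ref{prop:YCvC} via homological perturbation (Proposition~\ref{prop:HPT}), invoking Remark~\ref{rem:onesided} and the weight analysis of \eqref{eq:ComponentWt} for the required nilpotence. The paper's proof is more terse but structurally identical; your additional remarks on homogeneity and $\k[\V^{\pi_0(\LB)}]$-linearity are correct elaborations that the paper leaves implicit.
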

\begin{proof}
Since we are working with field coefficients, we can choose a 
homotopy equivalence in $\overline{\KS}\llbracket
\adeg^{\pm},\qdeg^{\pm},\tdeg^\pm \rrbracket_{\dg}$ relating $C_{\KR}(\LB)$ and
its homology:
\[
C_{\KR}(\LB)\simeq H_{\KR}(\LB).
\]
Homological perturbation (Proposition \ref{prop:HPT}) now gives us a homotopy equivalence
\[
\YS C_{\KR}(\LB) \cong \tw_{D}(C_{\KR}(\LB) \otimes \k[\V^{\pi_0(\LB)}])
\simeq \tw_{D''}(H_{\KR}(\LB) \otimes \k[\V^{\pi_0(\LB)}])
\]
for some twist $D''$. 
Here, we use Remark \ref{rem:onesided} 
(and the analogue of \eqref{eq:ComponentWt} to see the necessary nilpotence).
\end{proof}

Lemma \ref{lemma:simplifying YC} is particularly useful in the following context.

\begin{defi}\label{def:parity} 
A colored link $\LB$ is \emph{parity} if its (undeformed) triply-graded link homology 
$H_{\KR}(\LB)$ is supported in purely even (or purely odd) cohomological degrees.
\end{defi}

\begin{thm}
\label{thm:parityfree}
Suppose that $\LB$ is parity, then 
\[
\YS C_{\KR}(\LB) \simeq H_{\KR}(\LB) \otimes \k[\V^{\pi_0(\LB)}]
\]
as differential $\Z_\adeg\times \Z_\qdeg\times \Z_\tdeg$-graded $\k[\V^{\pi_0(\LB)}]$-modules. 
In particular, parity implies that 
$\YS H_{\KR}(\LB)$ is a free $\k[\V^{\pi_0(\LB)}]$-module.
\end{thm}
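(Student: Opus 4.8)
The plan is to deduce Theorem~\ref{thm:parityfree} directly from Lemma~\ref{lemma:simplifying YC} by observing that the twist $D''$ produced there must vanish for parity reasons. First I would invoke Lemma~\ref{lemma:simplifying YC} to obtain a homotopy equivalence
\[
\YS C_{\KR}(\LB) \simeq \tw_{D''}(H_{\KR}(\LB) \otimes \k[\V^{\pi_0(\LB)}])
\]
of differential $\Z_\adeg\times \Z_\qdeg\times \Z_\tdeg$-graded $\k[\V^{\pi_0(\LB)}]$-modules, where $H_{\KR}(\LB) \otimes \k[\V^{\pi_0(\LB)}]$ carries zero internal differential, so that the entire differential on the right-hand side is the $\k[\V^{\pi_0(\LB)}]$-linear twist $D''$.

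The key step is then to analyze the degrees. By hypothesis $H_{\KR}(\LB)$ is supported in a single parity of $\tdeg$-degree, say all even (the odd case being identical). The twist $D''$ is a differential, hence has $\tdeg$-degree $+1$; but it is also $\k[\V^{\pi_0(\LB)}]$-linear, and the variables $v_{\comp,r}$ have $\wt(v_{\comp,r}) = \qdeg^{-2r}\tdeg^2$, so multiplication by any monomial in the $v$'s changes the $\tdeg$-degree by an even amount. Writing $D'' = \sum_{\kk} D''_{\kk}\otimes v^{\kk}$ with each $D''_{\kk} \in \End_\k(H_{\KR}(\LB))$, each component $D''_{\kk}$ must raise $\tdeg$-degree by an odd number; but $D''_{\kk}$ is an endomorphism of $H_{\KR}(\LB)$, which is concentrated in one $\tdeg$-parity, so $D''_{\kk}$ must shift parity, forcing $D''_{\kk} = 0$. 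Hence $D'' = 0$, and the equivalence becomes
\[
\YS C_{\KR}(\LB) \simeq H_{\KR}(\LB) \otimes \k[\V^{\pi_0(\LB)}]
\]
with zero differential on the right. Taking homology gives $\YS H_{\KR}(\LB) \cong H_{\KR}(\LB) \otimes \k[\V^{\pi_0(\LB)}]$, manifestly a free $\k[\V^{\pi_0(\LB)}]$-module (with basis any $\k$-basis of $H_{\KR}(\LB)$), which is the claim.

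The one point requiring a little care — and the place I expect the only real friction — is making rigorous the degree bookkeeping: one must check that the homotopy equivalence in Lemma~\ref{lemma:simplifying YC} can be, and is, taken to be \emph{homogeneous} for the full $\Z_\adeg\times\Z_\qdeg\times\Z_\tdeg$-grading, so that $D''$ is genuinely a sum of homogeneous pieces of the asserted weights; this is immediate from the fact that all constructions (the Koszul-dual twist $D$ of Proposition~\ref{prop:YCvC}, the homotopy equivalence $C_{\KR}(\LB)\simeq H_{\KR}(\LB)$ over the field $\k$, and the homological perturbation of Proposition~\ref{prop:HPT}) are degree-preserving, but it deserves an explicit sentence. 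Everything else is the elementary parity argument above.
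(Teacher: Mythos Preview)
Your proposal is correct and follows essentially the same approach as the paper's own proof: invoke Lemma~\ref{lemma:simplifying YC} and observe that the twist $D''$ vanishes because it has odd $\tdeg$-degree while $H_{\KR}(\LB)\otimes\k[\V^{\pi_0(\LB)}]$ is supported in a single $\tdeg$-parity (the $v$-variables having even $\tdeg$-degree). Your decomposition of $D''$ by $v$-monomials just makes this parity count explicit, and your remark about homogeneity is a reasonable bit of hygiene that the paper leaves implicit.
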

\begin{proof}
The twist $D''$ from Lemma \ref{lemma:simplifying YC} is necessarily zero for degree
reasons: $\deg_{\tdeg}(D'')=1$ and 
$H_{\KR}(\LB) \otimes \k[\V^{\pi_0(\LB)}]$ 
is supported in exclusively even (or exclusively odd) cohomological degrees.
\end{proof}

\section{The curved colored skein relation}
\label{s:curved skein rel}

In \cite{HRW1}, we proved the \emph{colored skein relation} for 
singular Soergel bimodules, which takes the form of
a homotopy equivalence:
\begin{equation} \label{eq:convrecall0}
\tw_{D_1}\!\!\left( \bigoplus_{s=0}^b 
\qdeg^{s(b-1)} \tdeg^s \left\llbracket
\begin{tikzpicture}[scale=.4,smallnodes,rotate=90,anchorbase]
   \draw[very thick] (1,-1) to [out=150,in=270] (0,0); %under strand
   \draw[line width=5pt,color=white] (0,-2) to [out=90,in=270] (.5,0) to [out=90,in=270] (0,2);
   \draw[very thick] (0,-2) node[right=-2pt]{$a$} to [out=90,in=270] (.5,0) 
   	to [out=90,in=270] (0,2) node[left=-2pt]{$a$};
   \draw[very thick] (1,1) to (1,2) node[left=-2pt]{$b$};
   \draw[line width=5pt,color=white] (0,0) to [out=90,in=210] (1,1); 
   \draw[very thick] (0,0) to [out=90,in=210] (1,1); %over strand
   \draw[very thick] (1,-2) node[right=-2pt]{$b$} to (1,-1); 
   \draw[very thick] (1,-1) to [out=30,in=330] node[above=-2pt]{$s$} (1,1); 
   \end{tikzpicture}
		\right\rrbracket
	\right) 
	\simeq 
	\qdeg^{b(a-b-1)}\tdeg^b
	K \!\!\left(
		 \left\llbracket
\begin{tikzpicture}[scale=.4,smallnodes,anchorbase,rotate=270]
\draw[very thick] (1,-1) to [out=150,in=270] (0,1) to (0,2) node[right=-2pt]{$b$}; %under strand
\draw[line width=5pt,color=white] (0,-2) to (0,-1) to [out=90,in=210] (1,1);
\draw[very thick] (0,-2) node[left=-2pt]{$b$} to (0,-1) to [out=90,in=210] (1,1);
\draw[very thick] (1,1) to (1,2) node[right=-2pt]{$a$};
\draw[very thick] (1,-2) node[left=-2pt]{$a$} to (1,-1); 
\draw[very thick] (1,-1) to [out=30,in=330] node[below=-1pt]{$a{-}b$} (1,1); 
\end{tikzpicture}
		\right\rrbracket \right),
\end{equation}
where $K(-)$ denotes the Koszul complex associated to the action of
$h_i(\X_2-\X_2')$ for $1\leq i\leq b$.  This is a homotopy equivalence of
filtered complexes, where the filtration on the left-hand side is given by the
index of summation $s$, and the filtration on the right-hand side requires some
preparation to describe.   We will use the shorthand $\qdeg^{s(b-1)} \tdeg^s
\MCCS^s_{a,b}$ for the summands on the left-hand side, because, reading
left-to-right, the associated diagram is the horizontal composition of
``Merge-Crossing-Crossing-Split.'' We denote the term on the right-hand side
(without the shift) by $\KMCS_{a,b}$, since it is a Koszul complex built on the
complex
\begin{equation}
	\label{eq:defMCS}
\MCS_{a,b} :=  
\left\llbracket
\begin{tikzpicture}[scale=.4,smallnodes,anchorbase,rotate=270]
\draw[very thick] (1,-1) to [out=150,in=270] (0,1) to (0,2) node[right=-2pt]{$b$}; %under strand
\draw[line width=5pt,color=white] (0,-2) to (0,-1) to [out=90,in=210] (1,1);
\draw[very thick] (0,-2) node[left=-2pt]{$b$} to (0,-1) to [out=90,in=210] (1,1);
\draw[very thick] (1,1) to (1,2) node[right=-2pt]{$a$};
\draw[very thick] (1,-2) node[left=-2pt]{$a$} to (1,-1); 
\draw[very thick] (1,-1) to [out=30,in=330] node[below=-1pt]{$a{-}b$} (1,1); 
\end{tikzpicture}
\right\rrbracket
\end{equation}
whose notation is again suggestive, namely to be read left-to-right as
``Merge-Crossing-Split.''

The purpose of this section is to lift the colored skein relation, 
which is a homotopy equivalence in $\CS_{a,b}$, to the curved setting.  
We begin in \S \ref{ss:dv} with a quick discussion of (curved) Koszul complexes, 
and then in \S \ref{ss:usual skein rel} use this language to describe 
the homotopy equivalence \eqref{eq:convrecall}. 
In \S \ref{ss:curved RHS} and \ref{ss:curved skein}, 
we curve both sides of the skein relation and promote \eqref{eq:convrecall}
to a homotopy equivalence in $\VS_{a,b}$.

\subsection{Koszul complexes with curvature}
\label{ss:dv}

We will use the following construction of (curved) Koszul complexes.

\begin{definition} \label{def:dv} 
Fix integers $a,b\geq 0$ and let 
$\xi_1,\ldots,\xi_b$ be formal odd variables with $\wt(\xi_i)=\qdeg^{2i}\tdeg\inv$.  
For $X\in \CS_{a,b}$, let $K(X)\in \CS_{a,b}$ denote the twisted complex
\[
K(X):=\tw_{\d}(X\otimes \largewedge[\xi_1,\ldots,\xi_b]) 
\, , \quad \d=\sum_{1\leq i\leq b} h_i(\leftX_2-\rightX_2)\otimes \xi_i^\ast .
\]
Let $\YK(X)\in \VSred_{a,b} \subset \VS_{a,b}$ denote the curved twisted complex
\[
\YK(X) := \tw_{\Delta}(K(X)) = \tw_{\d+\Delta}(X\otimes \largewedge[\xi_1,\ldots,\xi_b]) 
%\, , \quad \d=\sum_{1\leq i\leq b} h_i(\X_2-\X_2')\otimes \xi_i^\ast
\, , \quad \Delta = \sum_{1\leq i\leq b} \bar{v}_i\otimes \xi_i.
\]
\end{definition}

\begin{remark}\label{rmk:K and chain groups} We can write $K(X)$ and $\YK(X)$ as
one-sided twisted complexes constructed from $K(X^k)$ and $\YK(X^k)$, 
where $X = ( \bigoplus_k X^k, \d_X)$.
%$X^k$ denote the chain groups of $X$.  
More precisely:
\[
K(X) = \Big(\cdots \xrightarrow{K(\d_X)} \tdeg^k K(X^k) \xrightarrow{K(\d_X)} \tdeg^{k+1} K(X^{k+1}) 
	\xrightarrow{K(\d_X)} \cdots\Big),
\]
and similarly for $\YK(X)$.
\end{remark}

\begin{prop}
$K$ and $\YK(-)$ extend to dg functors $\CS_{a,b}\rightarrow \CS_{a,b}$ and
$\CS_{a,b}\rightarrow \VSred_{a,b}\subset \VS_{a,b}$, respectively.
\end{prop}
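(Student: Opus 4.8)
The claim is that $K(-)$ and $\YK(-)$ are dg functors from $\CS_{a,b}$ to $\CS_{a,b}$ and to $\VSred_{a,b}$ respectively. Since $\YK(X) = \tw_{\Delta}(K(X))$ with $\Delta$ the $\V$-irrelevant twist $\sum_i \bar v_i \otimes \xi_i$, and since adjoining the twist $\Delta$ is functorial (it extends scalars to $\k[\bar v_1,\ldots,\bar v_b]$ and adds a fixed term to the differential that commutes with all morphisms of the form $f \otimes \id_{\largewedge}$), it suffices to treat $K(-)$ and then observe that the $\V$-linear extension of $K(f)$ is automatically a chain map for the twisted differential $\d + \Delta$. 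So the plan is to first carefully define $K$ on objects and morphisms, check it is well-defined, and then deduce the $\YK$ statement formally.

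\emph{First}, I would verify $K$ is well-defined on objects: for $X \in \CS_{a,b}$, the endomorphism $\d_K := \d_X \otimes \id + \sum_{i=1}^b h_i(\leftX_2 - \rightX_2) \otimes \xi_i^\ast$ of $X \otimes \largewedge[\xi_1,\ldots,\xi_b]$ squares to zero. This is the standard Koszul computation: $\d_X^2 = 0$, the $\xi_i^\ast$ pairwise anticommute (being odd contraction operators) and square to zero, each $h_i(\leftX_2 - \rightX_2)$ is central (it is a symmetric function acting by bimodule multiplication, hence in $\ZZ(\SSBim_{a,b})$ in the relevant sense), and $[\d_X, h_i(\leftX_2-\rightX_2)] = 0$ since $\d_X$ is a bimodule map. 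The only subtlety is keeping track of Koszul signs in the cross-terms $\d_X \otimes \xi_i^\ast$ against $\d_X \otimes \xi_j^\ast$ and against $h_i \otimes \xi_i^\ast$ — these vanish because $\xi_i^\ast \xi_j^\ast + \xi_j^\ast \xi_i^\ast = 0$ and because $h_i$ commutes with $\d_X$. I would also note that $K(X)$ lands in $\CS_{a,b}$ (it is a bounded complex of singular Soergel bimodules, being a finite iterated cone of shifts of $X$), as recorded in Remark~\ref{rmk:K and chain groups}.

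\emph{Second}, I would define $K$ on morphisms by $K(f) := f \otimes \id_{\largewedge[\xi_1,\ldots,\xi_b]}$ for $f \in \Hom_{\CS_{a,b}}(X,Y)$, and check that this is a morphism of dg categories: it preserves degrees (clear, since $\id_{\largewedge}$ has degree zero), it is compatible with composition (since $(f \otimes \id)(g \otimes \id) = fg \otimes \id$ up to a Koszul sign which is trivial because $\id_{\largewedge}$ is even), and — the key dg-functoriality point — it intertwines the differentials on $\Hom$-complexes, i.e. $K(df) = d(Kf)$ where the left $d$ is the differential $f \mapsto \d_Y f - (-1)^{|f|} f \d_X$ in $\CS_{a,b}$ and the right $d$ is the analogous differential with $\d_K$. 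This follows once one checks that $f \otimes \id$ commutes (up to the expected sign) with the new summand $\sum_i h_i(\leftX_2 - \rightX_2) \otimes \xi_i^\ast$ of the differential; this in turn holds because $f$, being a morphism of $(R^{a,b},R^{a,b})$-bimodules, commutes with multiplication by the central symmetric functions $h_i(\leftX_2-\rightX_2)$, and $\id$ trivially commutes with $\xi_i^\ast$.

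\emph{Third}, for $\YK$: extend $K(f)$ $\k[\bar v_1,\ldots,\bar v_b]$-linearly to $\YK(f) := f \otimes \id_{\largewedge}$ viewed as a $2$-morphism in $\VSred_{a,b}$. One must check $\YK(f)$ commutes with the extra twist term $\Delta = \sum_i \bar v_i \otimes \xi_i$; this is immediate since $f$ does not involve the $\bar v_i$ and $\id_{\largewedge}$ commutes with left-multiplication by $\xi_i$, up to the standard sign already built into the Koszul conventions of \S\ref{ss:cat setup}. Then $\YK$ preserves the curvature element $\bar Z = \sum_i h_i(\leftX_2 - \rightX_2)\bar v_i$ of $\VSred_{a,b}$: one computes $\d_{\YK(X)}^2 = (\d_K + \Delta)^2 = \d_K^2 + [\d_K, \Delta] + \Delta^2 = 0 + \sum_i [h_i(\leftX_2-\rightX_2)\otimes \xi_i^\ast\,,\,\bar v_i \otimes \xi_i] + 0 = \sum_i h_i(\leftX_2 - \rightX_2)\bar v_i = \bar Z$, using $[\xi_i^\ast, \xi_j] = \delta_{ij}$ and that the $\bar v_i$ are central. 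Hence $\YK(X) \in \VSred_{a,b}$, and since $\VSred_{a,b} \subset \VS_{a,b}$ (Remark~\ref{rem:Vprime}), also $\YK(X) \in \VS_{a,b}$. Functoriality and dg-compatibility of $\YK$ then follow from those of $K$ by $\k[\bar v]$-linearity.

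\emph{Expected main obstacle.} There is no deep obstacle here — this is a bookkeeping verification. The only place requiring genuine care is the sign management: the differential $\d_K$ mixes the cohomological ($\tdeg$) grading coming from $X$, the cohomological contribution of the $\xi_i^\ast$ (each of $\tdeg$-degree $+1$), and the Koszul sign rule of \S\ref{ss:cat setup} for tensor products of graded objects; one must fix a convention for $X \otimes \largewedge[\xi_1,\ldots,\xi_b]$ and consistently apply it so that $\d_K^2 = 0$ and $K(df) = d(Kf)$ hold on the nose. I would handle this by computing on homogeneous elements $x \otimes \xi_I$ (for $I \subseteq \{1,\ldots,b\}$) once and for all, and then all subsequent identities reduce to the elementary facts that the $\xi_i$ generate an exterior algebra, the $\xi_i^\ast$ act as the corresponding (signed) contraction derivations, and the symmetric functions $h_i(\leftX_2 - \rightX_2)$ are central and $\d_X$-closed.
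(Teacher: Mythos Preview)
Your proof is correct and rests on the same underlying fact as the paper's: the elements $h_i(\leftX_2-\rightX_2)$ act centrally on ${}_{a,b}\SSBim_{a,b}$, so tensoring with the Koszul complex built from them is compatible with all bimodule maps. The paper packages this more concisely by observing that
\[
K(X) \cong X \otimes_{\Sym(\leftX_2|\rightX_2)} \tw_{\d}\big(\Sym(\leftX_2|\rightX_2)\otimes \largewedge[\xi_1,\ldots,\xi_b]\big),
\]
and similarly for $\YK$, which makes functoriality automatic (it is tensoring over $\Sym(\leftX_2|\rightX_2)$ with a fixed object). Your explicit verification that $K(f)=f\otimes\id_{\largewedge}$ commutes with the Koszul part of the differential, and that $\YK(f)$ further commutes with the twist $\Delta$, is exactly what this tensor-product description encodes; the two arguments are the same in content, with the paper's formulation trading your sign bookkeeping for a structural one-liner.
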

\begin{proof}
This follows since we may describe
\[
K(X) \cong X \otimes_{\Sym(\leftX_2 | \rightX_2)} 
	\tw_{\d} \big( \Sym(\leftX_2 | \rightX_2) \otimes \largewedge[\xi_1,\ldots,\xi_b] \big)
\]
and
\[
\YK(X) \cong X \otimes_{\Sym(\leftX_2 | \rightX_2)} 
	\tw_{\d+\Delta} \big( \Sym(\leftX_2 | \rightX_2) \otimes \largewedge[\xi_1,\ldots,\xi_b] \big)\, .
\qedhere\]
\end{proof}

The formation of curved Koszul complexes such as $\YK(X)$ is one of the most
basic methods for constructing objects of $\VSred_{a,b}$.

\subsection{The uncurved skein relation}
\label{ss:usual skein rel}
We now recall the uncurved version of the skein relation, which was
established in the companion paper \cite{HRW1}.  

Consider the singular Soergel bimodules $W_k \in {}_{a,b}\SSBim_{a,b}$ indicated by the
following webs (with relevant alphabets specified in the second diagram):
\begin{equation}\label{eq:Wk}
	W_k:=
	\begin{tikzpicture}[rotate=90, anchorbase, smallnodes]
		\draw[very thick] (0,.25) to [out=150,in=270] (-.25,1) node[left,xshift=2pt]{$a$};
		\draw[very thick] (.5,.5) to (.5,1) node[left,xshift=2pt]{$b$};
		\draw[very thick] (0,.25) to node[left,yshift=-2pt,xshift=1pt]{$k$} (.5,.5);
		\draw[very thick] (0,-.25) to node[below,yshift=2pt]{} (0,.25);
		\draw[very thick] (.5,-.5) to [out=30,in=330] node[above,yshift=-2pt]{} (.5,.5);
		\draw[very thick] (0,-.25) to node[right,yshift=-2pt,xshift=-1pt]{$k$} (.5,-.5);
		\draw[very thick] (.5,-1) node[right,xshift=-2pt]{$b$} to (.5,-.5);
		\draw[very thick] (-.25,-1)node[right,xshift=-2pt]{$a$} to [out=90,in=210] (0,-.25);
	\end{tikzpicture}
	=
	\begin{tikzpicture}[rotate=90, anchorbase, smallnodes]
		\draw[very thick] (0,.25) to [out=150,in=270] (-.25,1) node[left,xshift=2pt]{$\leftX_1$};
		\draw[very thick] (.5,.5) to (.5,1) node[left,xshift=2pt]{$\leftX_2$};
		\draw[very thick] (0,.25) to node[left,yshift=-2pt,xshift=1pt]{$\leftM^{(k)}$} (.5,.5);
		\draw[very thick] (0,-.25) to node[below,yshift=2pt]{} (0,.25);
		\draw[very thick] (.5,-.5) to [out=30,in=330] node[above,yshift=-2pt]{} (.5,.5);
		\draw[very thick] (0,-.25) to node[right,yshift=-2pt,xshift=-1pt]{$\rightM^{(k)}$} (.5,-.5);
		\draw[very thick] (.5,-1) node[right,xshift=-2pt]{$\rightX_2$} to (.5,-.5);
		\draw[very thick] (-.25,-1)node[right,xshift=-2pt]{$\rightX_1$} to [out=90,in=210] (0,-.25);
	\end{tikzpicture}
\end{equation}
We identify the alphabets with subalphabets of
$\leftX=\{x_1,\ldots,x_{a+b}\}$ and $\rightX=\{x'_1,\ldots,x'_{a+b}\}$
as follows:
\begin{equation}\label{eq:Walphabets}
\begin{gathered}
\leftX_1 = \{x_1,\ldots,x_a\} \, , \quad \leftX_2 = \{x_{a+1},\ldots,x_{a+b}\} 
\, , \quad \leftM^{(k)} = \{x_{a+1},\ldots,x_{a+k}\} \\
\rightX_1 = \{x_1',\ldots,x_a'\}\, , \quad \rightX_2 = \{x_{a+1}',\ldots,x_{a+b}'\},
\, , \quad \rightM^{(k)} = \{x_{a+1}',\ldots,x_{a+k}'\}.
\end{gathered}
\end{equation}

\begin{definition}\label{def:MCS and KMCS} Recall the complex $\MCS_{a,b}$ from
\eqref{eq:defMCS}.  By convention, $\MCS_{a,b}=0$ if $a<b$.  Also define
\[
\MCSmin_{a,b}:=\left(W_b \xrightarrow{\d^H}  \qdeg^{-(a-b+1)} \tdeg W_{b-1} 
\xrightarrow{\d^H}  \cdots  \xrightarrow{\d^H} \qdeg^{-b(a-b+1)} \tdeg^b W_0\right) 
, \;\;
\d^H\big|_{W_k} := \chi_0^+|_{W_k} 
%=
%\CQGsgn{(-1)^{b-k}} \!\!
%\begin{tikzpicture}[baseline=0em,smallnodes]
%	\draw[CQG,ultra thick,<-] (0,-.5) node[below]{$k$} to (0,.7) node[above=-2pt]{$k{-}1$};
%	\draw[CQG,ultra thick,->] (.75,-.5) node[below]{$k$} to (.75,.7) node[above=-2pt]{$k{-}1$};
%	\draw[CQG,thick, directed=.75] (.75,0) to [out=90,in=90]  (0,0);
%\end{tikzpicture}
\]
Denote the corresponding Koszul complexes by $\KMCS_{a,b}:=K(\MCS_{a,b})$ and $\KMCSmin_{a,b}:=K(\MCSmin_{a,b})$. 
\end{definition}

\begin{prop}[{\cite[Equation {\MCSMCSmin}]{HRW1}}]\label{prop:MCS}
We have 
\[
\pushQED{\qed}
\MCSmin_{a,b} \simeq \MCS_{a,b} \quad \text{and} \quad
K(\MCSmin_{a,b}) \simeq K(\MCS_{a,b}). \qedhere
\popQED
\]
\end{prop}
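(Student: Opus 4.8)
The statement to prove is Proposition~\ref{prop:MCS}: the homotopy equivalences $\MCSmin_{a,b} \simeq \MCS_{a,b}$ and $K(\MCSmin_{a,b}) \simeq K(\MCS_{a,b})$. Since the paper attributes the first equivalence to \cite[Equation {\MCSMCSmin}]{HRW1} (the companion paper), the new content here is really the bookkeeping that passes from the equivalence $\MCSmin_{a,b} \simeq \MCS_{a,b}$ to the equivalence of the associated Koszul complexes $K(\MCSmin_{a,b}) \simeq K(\MCS_{a,b})$. The plan is therefore to recall the first equivalence (or to indicate how it is obtained by Gaussian elimination on the two-step Rickard-type complex defining $\MCS_{a,b}$ versus the ``minimal'' model $\MCSmin_{a,b}$), and then to observe that $K(-)$ is a dg functor (established in \S\ref{ss:dv}) which, being additive and compatible with homotopies, preserves homotopy equivalences.

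First I would unwind the definitions: by Definition~\ref{def:MCS and KMCS}, $\MCSmin_{a,b}$ is built from the bimodules $W_k$ and the differentials $\chi_0^+$, and $\MCS_{a,b}$ is the Rickard-type complex $\llbracket -\rrbracket$ for the indicated web. The equivalence $\MCSmin_{a,b}\simeq \MCS_{a,b}$ in $\CS_{a,b}$ is exactly \cite[Equation {\MCSMCSmin}]{HRW1}, so I would simply cite it. The key structural point to isolate is that $K\colon \CS_{a,b}\to \CS_{a,b}$ is a dg functor: as proved in \S\ref{ss:dv}, $K(X) \cong X \otimes_{\Sym(\leftX_2|\rightX_2)} \tw_{\d}\big(\Sym(\leftX_2|\rightX_2)\otimes\largewedge[\xi_1,\ldots,\xi_b]\big)$, i.e. $K$ is tensoring (over the polynomial ring acting on the $b$-labeled strand) with a fixed complex of bimodules. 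Tensoring with a fixed complex is a dg functor, hence sends chain maps to chain maps, homotopies to homotopies, and composites to composites.

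The core step is then the general fact that a dg functor $F$ between dg categories of complexes takes a homotopy equivalence to a homotopy equivalence. Concretely: if $f\colon \MCSmin_{a,b}\to \MCS_{a,b}$ and $g$ in the other direction satisfy $g\circ f \simeq \id$ and $f\circ g \simeq \id$ via homotopies $h_1, h_2$, then $F(f), F(g)$ satisfy $F(g)\circ F(f) = F(g\circ f)$, and $F$ of the homotopy relation $g\circ f - \id = [\d, h_1]$ gives $F(g\circ f) - \id = [\d, F(h_1)]$ since $F$ is a dg functor (it commutes with the differential and with composition, up to the usual Koszul signs, which are accounted for by $F$ being a functor of dg categories). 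Applying this to $F = K$ yields $K(\MCSmin_{a,b}) \simeq K(\MCS_{a,b})$, which is the second claimed equivalence. (One can equivalently phrase the argument using Remark~\ref{rmk:K and chain groups}, which presents $K(X)$ as a one-sided twisted complex built from $K(X^k)$ with twist $K(\d_X)$, so that a homotopy equivalence $X\simeq Y$ induces one $K(X)\simeq K(Y)$ by functoriality of $K$ applied termwise together with compatibility with the twist.)

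I do not expect a serious obstacle here: the only subtlety is making sure the sign conventions in the dg-functoriality of $K$ (the Koszul rule for $\d = \sum h_i(\leftX_2 - \rightX_2)\otimes \xi_i^\ast$ interacting with the $\tdeg$-grading of morphisms in $X$) are consistent with those used in \cite{HRW1} for the homotopy $f$, $g$, $h_1$, $h_2$ — but this is exactly what the displayed tensor-product description of $K$ in \S\ref{ss:dv} guarantees, since it expresses $K$ as an honest dg functor with the standard sign conventions of $\CS(\SSBim)$. The ``hard part'', such as it is, is purely expository: clearly separating the part that is cited from \cite{HRW1} (the equivalence $\MCSmin_{a,b}\simeq\MCS_{a,b}$) from the new part (applying the dg functor $K$), and perhaps remarking that the same argument applies verbatim to $\YK(-)$ in place of $K(-)$, which will be needed later in \S\ref{ss:curved RHS}.
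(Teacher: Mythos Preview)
Your proposal is correct. The paper gives no proof whatsoever: the \qed is embedded directly in the proposition statement, signaling that the first equivalence is simply the cited equation from \cite{HRW1} and the second is immediate from the fact (established just above in \S\ref{ss:dv}) that $K$ is a dg functor and hence preserves homotopy equivalences --- precisely the argument you spell out.
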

The relevant mnemonic is that $\MCSmin_{a,b}$ is the result of certain Gaussian
eliminations on $\MCS_{a,b}$, i.e. a ``slimmer'' version thereof. We can write
$\KMCSmin_{a,b}$ as a one-sided twisted complex as in Remark \ref{rmk:K and
chain groups}:
\begin{equation}
K(\MCSmin_{a,b}) = \Big(K(W_b)\xrightarrow{\d^H} \qdeg^{a-b+1}\tdeg K(W_{b-1})\xrightarrow{\d^H} \cdots \xrightarrow{\d^H} \qdeg^{b(a-b+1)}\tdeg^b K(W_0)\Big),
\end{equation}
where $\d^H = K(\chi^+_0) \colon K(W_k)\rightarrow K(W_{k-1})$.  
Next, we change basis within each $K(W_k)$ by declaring
\begin{equation}\label{eq:xi and zeta}
\zeta_j^{(k)} := \sum_{i=1}^j (-1)^{i-1} e_{j-i}(\leftM^{(k)}) \otimes \xi_i \, , \quad
\xi_i = \sum_{j=1}^i (-1)^{j-1}h_{i-j}(\leftM^{(k)}) \otimes \zeta_j^{(k)}.
\end{equation}
The effect of this change of basis is captured by the following.

\begin{prop}[{\cite[\Zetadiffs]{HRW1}}]\label{prop:diffKMCSexplicit}
We have
\[
K(W_k) \cong \tw_{\d}(W_k\otimes \largewedge[\zeta^{(k)}_1,\ldots,\zeta^{(k)}_b])
\, , \quad \d = \sum_{i=1}^k (e_{i_j}(\leftM^{(k)})-e_{i_j}({\rightM}^{(k)}))\otimes (\zeta_i^{(k)})^\ast.
\]
With respect to this isomorphism, the differential $\d^H:K(W_k)\rightarrow K(W_{k-1})$ 
has a nonzero component
\[
W_k \otimes \zeta^{(k)}_{i_1}\cdots \zeta^{(k)}_{i_r}
\xrightarrow{\d^H}
W_{k-1}\otimes \zeta^{(k-1)}_{j_1}\cdots \zeta^{(k-1)}_{j_r}
\]
if and only if $i_p - j_p\in \{0,1\}$ for all $1\leq p\leq r$,
in which case it equals $\chi_m^+$ (see \eqref{eq:chi plus})
where $m=\sum_{p=1}^r (i_p-j_p)$. \qed
\end{prop}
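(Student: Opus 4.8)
\textbf{Proof strategy for Proposition \ref{prop:diffKMCSexplicit}.}
The plan is to prove the two assertions separately: first the internal structure of each chain group $K(W_k)$ after the change of basis \eqref{eq:xi and zeta}, and then the description of the components of the horizontal differential $\d^H$. For the first part, I would start from Definition \ref{def:dv}, which presents $K(W_k)$ as $\tw_{\d}(W_k \otimes \largewedge[\xi_1,\ldots,\xi_b])$ with internal differential $\sum_i h_i(\leftX_2 - \rightX_2) \otimes \xi_i^\ast$. The key algebraic input is that, as operators on $W_k$, we have $\leftX_2 = \leftM^{(k)} + (\leftX_2 - \leftM^{(k)})$ and that $f(\leftX_2 - \rightX_2)$ acts by zero on $W_k$ for symmetric $f$ of the full alphabet (since both sides of the web carry the same merged strand), so in particular $h_i(\leftX_2 - \rightX_2) = h_i(\leftM^{(k)} - \rightM^{(k)})$ when acting on $W_k$ — this is the same reduction used repeatedly in \S\ref{ss:two strand V cats}. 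Then I would substitute $\xi_i = \sum_j (-1)^{j-1} h_{i-j}(\leftM^{(k)}) \zeta_j^{(k)}$ into the differential and apply the standard generating-function identity \eqref{eq:HE2} (equivalently \eqref{eq:somerelations0}) to collapse the sum, yielding $\d = \sum_i (e_i(\leftM^{(k)}) - e_i(\rightM^{(k)})) \otimes (\zeta_i^{(k)})^\ast$. The dual-basis bookkeeping — verifying that $(\zeta_i^{(k)})^\ast$ is the derivation read off from \eqref{eq:xi and zeta} — is the one routine computation here; it follows because the change of basis \eqref{eq:xi and zeta} is unipotent and triangular.

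For the second part, I would compute $\d^H = K(\chi_0^+)$ on $K(W_k) \to K(W_{k-1})$ in the $\zeta$-basis. By definition $K(\chi_0^+)$ acts as $\chi_0^+ \otimes \id$ on the exterior factor, but the exterior factors at levels $k$ and $k-1$ are built on $\leftM^{(k)}$ and $\leftM^{(k-1)} = \leftM^{(k)} \setminus \{x_{a+k}\}$ respectively, so expressing $\zeta_i^{(k)}$ in terms of the $\zeta_j^{(k-1)}$ (via first passing back through the $\xi_i$) produces the mixing. The relevant identity is $h_m(\leftM^{(k)}) = \sum_{p} h_p(\{x_{a+k}\}) h_{m-p}(\leftM^{(k-1)}) = \sum_p x_{a+k}^p\, h_{m-p}(\leftM^{(k-1)})$, and the variable $x_{a+k}$ on the cap of $\chi_0^+$ is precisely the ``extra'' variable, so that $x_{a+k}^p \cdot \chi_0^+ = \chi_p^+$ by the very definition of the $\chi_r^+$ as dotted versions of $\chi_0^+$ (the ``movie'' after \eqref{eq:chi plus}, or the foam in Figure \ref{diffslices}). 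Feeding this through and matching wedge-degrees term by term gives: the component $W_k \otimes \zeta^{(k)}_{i_1}\cdots\zeta^{(k)}_{i_r} \to W_{k-1}\otimes \zeta^{(k-1)}_{j_1}\cdots\zeta^{(k-1)}_{j_r}$ is nonzero exactly when each $\zeta^{(k)}_{i_p}$ contributes either its $\zeta^{(k-1)}_{i_p}$ term (shift $0$) or — after absorbing one power of $x_{a+k}$ — its $\zeta^{(k-1)}_{i_p - 1}$ term (shift $1$), and the total accumulated power of $x_{a+k}$ is $m = \sum_p (i_p - j_p)$, which converts $\chi_0^+$ into $\chi_m^+$.

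The main obstacle I anticipate is not any single identity but the careful sign and ordering bookkeeping in the exterior algebras: the change of basis \eqref{eq:xi and zeta} must be tracked through the Koszul differential $\xi_i^\ast$, through the functor $K(\chi_0^+)$, and through the reindexing of wedge monomials, all while respecting the Koszul sign rule that governs $2$-morphisms in $\CS(\SSBim)$ (see \eqref{eq:MidInt}). I would organize this by working one wedge-generator at a time — reducing to the rank-one statement $K(W_k) \to K(W_{k-1})$ restricted to the span of $\{1, \zeta^{(k)}_i\}$ — and only afterward assembling the general monomial case by multiplicativity, since the functor $K(-)$ and the differentials are compatible with the wedge product. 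Granting that reduction, the computation becomes a finite check using \eqref{eq:HE2} and the defining relation $x^r \cdot \chi_0^+ = \chi_r^+$, and the stated ``if and only if'' on the index shifts $i_p - j_p \in \{0,1\}$ falls out directly.
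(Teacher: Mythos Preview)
The paper does not prove this proposition; it is imported wholesale from the companion paper \cite{HRW1} (hence the terminal \qed\ with no argument), so there is no in-paper proof to compare against. Your outline is the natural one and would succeed, but two of your intermediate justifications are misstated and should be repaired before the computation goes through.

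First, your reason for $h_i(\leftX_2-\rightX_2)=h_i(\leftM^{(k)}-\rightM^{(k)})$ on $W_k$ is wrong as written: you say ``$f(\leftX_2-\rightX_2)$ acts by zero on $W_k$ for symmetric $f$ of the full alphabet,'' which taken literally would kill the Koszul differential entirely. The actual reason is structural and simpler: on the web $W_k$ the rung carries a single alphabet $\B^{(k)}$ of size $b-k$, with $\leftX_2=\leftM^{(k)}+\B^{(k)}$ and $\rightX_2=\rightM^{(k)}+\B^{(k)}$ (same $\B^{(k)}$ on both sides), so $\leftX_2-\rightX_2=\leftM^{(k)}-\rightM^{(k)}$ as virtual alphabets. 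Second, for $\d^H$ the identity you want is not the $h$-expansion of $\leftM^{(k)}$ you wrote, but the collapse that arises from going $\zeta^{(k)}\to\xi\to\zeta^{(k-1)}$ as you propose: the $(j,l)$-entry picks up $\sum_i(-1)^{i}e_{j-i}(\leftM^{(k)})h_{i-l}(\leftM^{(k-1)})$, which by \eqref{eq:HE2} equals (up to sign) $e_{j-l}(\leftM^{(k)}-\leftM^{(k-1)})=e_{j-l}(\D)$ with $\D$ the single-variable cap alphabet. Since $|\D|=1$ this is nonzero only for $j-l\in\{0,1\}$, and the resulting dot $x^{j-l}$ turns $\chi_0^+$ into $\chi_{j-l}^+$. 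For the general monomial, all $r$ factors share the same cap, so the dots accumulate additively to $x^m$ with $m=\sum_p(i_p-j_p)$, giving $\chi_m^+$ --- exactly as you describe. With these two fixes your sketch is complete.
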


A priori, the complex $\qdeg^{b(a-b-1)}\tdeg^b\KMCSmin_{a,b}$ 
(which is homotopy equivalent to the right-hand side of the skein relation)
is graded both by cohomological degree in $\MCSmin_{a,b}$ 
and the exterior algebra degree in $\largewedge[\xi_1,\ldots,\xi_b]$.  
According to \cite{HRW1}, the key to
understanding the colored skein relation is a refinement of the exterior grading
into two independent gradings.  This is accomplished with the following.

\begin{definition}\label{def:Pkls}
Let
\[
P_{k,l,s}  :=  \qdeg^{k(a-b+1)-2b} \tdeg^{2b-k}  \
W_k\otimes \largewedge^l[\zeta^{(k)}_1,\ldots, \zeta^{(k)}_k]\otimes
\largewedge^s[\zeta^{(k)}_{k+1},\ldots, \zeta^{(k)}_b],
\]
with indices constrained by $0\leq s\leq b$ and $0\leq l\leq k\leq b-s$.
\end{definition}

\begin{proposition}[{\cite[\KMCSdiffs]{HRW1}}]\label{prop:KMCSdiffs}
We have
\begin{equation}\label{eq:KMCS decomp}
\qdeg^{b(a-b-1)}\tdeg^b\KMCSmin_{a,b} \ \cong \ \tw_{\d^v+\d^h+\d^c}\left(\bigoplus_{0\leq l\leq k\leq b-s} P_{k,l,s}\right),
\end{equation}
where $\d^v$, $\d^h$, $\d^c$ are pairwise anti-commuting differentials given as
follows:
\begin{itemize}
\item the \emph{vertical differential} $\d^v: P_{k,l,s}\rightarrow P_{k,l-1,s}$
is the direct sum of Koszul differentials, up to the sign $(-1)^k$; its component
\[
W_k \otimes \zeta^{(k)}_{i_1}\cdots \zeta^{(k)}_{i_r} \xrightarrow{\d^v}
W_{k}\otimes \zeta^{(k)}_{i_1}\cdots \widehat{\zeta^{(k)}_{i_j}}\cdots
\zeta^{(k)}_{i_r}
\]
is $(-1)^{-k+j-1} (e_{i_j}(\leftM^{(k)})-e_{i_j}({\rightM}^{(k)}))$ if $1\leq i_j\leq k$
(and all other components are zero). 
\item  the \emph{horizontal differential} $\d^h$ and the connecting differential
$\d^c$ are uniquely characterized by $\d^h+\d^c=\d^H$ from Proposition
\ref{prop:diffKMCSexplicit}, together with
\[
\pushQED{\qed} 
\d^h(P_{k,l,s})\subset P_{k-1,l,s} \, , \quad \d^c(P_{k,l,s})\subset P_{k-1,l-1,s+1}. \qedhere
\popQED
\]
\end{itemize}
\end{proposition}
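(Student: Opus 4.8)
The plan is to prove Proposition~\ref{prop:KMCSdiffs} by carefully unpacking the isomorphism \eqref{eq:KMCS decomp} as a reorganization of the one-sided twisted complex $\qdeg^{b(a-b-1)}\tdeg^b \KMCSmin_{a,b}$ from Definition~\ref{def:MCS and KMCS}, using the basis change \eqref{eq:xi and zeta} together with Propositions~\ref{prop:MCS}, \ref{prop:diffKMCSexplicit}. First I would establish the underlying graded isomorphism: by Proposition~\ref{prop:diffKMCSexplicit}, $K(W_k) \cong \tw_\d(W_k \otimes \largewedge[\zeta^{(k)}_1,\ldots,\zeta^{(k)}_b])$, and splitting the exterior algebra as $\largewedge[\zeta^{(k)}_1,\ldots,\zeta^{(k)}_k]\otimes \largewedge[\zeta^{(k)}_{k+1},\ldots,\zeta^{(k)}_b]$ refines the single exterior degree into the pair $(l,s)$ with $0\le l\le k$ and $0\le s\le b-k$; matching the grading shifts with those in $\KMCSmin_{a,b}$ (recall $K(W_k)$ sits in the complex with shift $\qdeg^{k(a-b+1)}\tdeg^k$, and $\KMCSmin$ carries an overall $\qdeg^{b(a-b-1)}\tdeg^b$) gives precisely the shifts $\qdeg^{k(a-b+1)-2b}\tdeg^{2b-k}$ appearing in Definition~\ref{def:Pkls}. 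This is bookkeeping, but must be done carefully to justify the constraints $0\le l\le k\le b-s$.

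Next I would identify the three differentials. The vertical differential $\d^v$ is simply the Koszul differential $\d = \sum_i (e_i(\leftM^{(k)}) - e_i(\rightM^{(k)}))\otimes (\zeta^{(k)}_i)^\ast$ from Proposition~\ref{prop:diffKMCSexplicit}, restricted to the ``bottom'' variables $\zeta^{(k)}_1,\ldots,\zeta^{(k)}_k$; the claim that it annihilates the ``top'' variables $\zeta^{(k)}_{k+1},\ldots,\zeta^{(k)}_b$ is exactly because $e_i(\leftM^{(k)}) - e_i(\rightM^{(k)})$ vanishes for $i>k$ (as $|\leftM^{(k)}| = |\rightM^{(k)}| = k$), so $\d$ has no component involving $(\zeta^{(k)}_{i})^\ast$ for $i>k$. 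The sign $(-1)^k$ is chosen so that $\d^v$ anticommutes with the other pieces; I would track this through the Koszul sign conventions of \S\ref{ss:ssbim} and the one-sided-twisted-complex structure. For the remaining differential $\d^H = K(\chi^+_0)$, Proposition~\ref{prop:diffKMCSexplicit} tells us that under the $\zeta$-basis, the component from $W_k\otimes \zeta^{(k)}_{i_1}\cdots\zeta^{(k)}_{i_r}$ to $W_{k-1}\otimes\zeta^{(k-1)}_{j_1}\cdots\zeta^{(k-1)}_{j_r}$ is $\chi^+_m$ with $m = \sum(i_p - j_p)$, nonzero only when each $i_p - j_p \in \{0,1\}$. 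The key observation is that $\d^H$ lowers $k$ by exactly $1$, hence it either preserves $s$ (when the indices dropping below the $W_{k-1}$ threshold do not cross over) or increases $s$ by $1$ (when exactly the index $\zeta^{(k)}_k$, now above threshold $k-1$, appears and is sent to $\zeta^{(k-1)}_{k}$, i.e., $i_p = j_p = k$ contributes to $\largewedge[\zeta_{k+1}^{(k-1)},\ldots]$). Decomposing $\d^H$ according to its effect on $s$ yields $\d^h$ (the $s$-preserving, $l$-preserving part, landing in $P_{k-1,l,s}$) and $\d^c$ (the part landing in $P_{k-1,l-1,s+1}$); I would verify there is no $s$-decreasing component and no component changing $s$ by more than $1$, which follows since $\d^H$ changes $k$ by $1$ and the index sets overlap in all but at most one slot.

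The main obstacle will be verifying that $\d^v$, $\d^h$, $\d^c$ pairwise anti-commute and that $\d^v + \d^h + \d^c$ squares to zero (equivalently, that the decomposition is well-defined and the total differential is unchanged). Since $(\d^v + \d^H)^2 = 0$ holds in $\KMCSmin_{a,b}$ (it is a genuine complex), expanding $(\d^v + \d^h + \d^c)^2 = 0$ and sorting terms by their tri-degree $(k,l,s)$-shift gives: $(\d^v)^2 = 0$ (Koszul), $(\d^h)^2 = 0$ and $(\d^c)^2 = 0$ and $\d^h\d^c + \d^c\d^h = 0$ (these three come from sorting $(\d^H)^2 = 0$ by the change in $s$, which shifts by $0$, $1$, $2$ respectively), $\d^v\d^h + \d^h\d^v = 0$ and $\d^v\d^c + \d^c\d^v = 0$ (from sorting $\d^v\d^H + \d^H\d^v = 0$ by the change in $s$). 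The sign $(-1)^k$ in the definition of $\d^v$ is precisely what makes the last two identities hold with the conventional signs, so I would check this sign bookkeeping explicitly in one representative case and argue the general case follows by the same computation. Finally, I would remark that the characterization of $\d^h$ and $\d^c$ by their target subspaces together with $\d^h + \d^c = \d^H$ makes them unique, completing the proof.
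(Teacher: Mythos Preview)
Your approach is correct. Note that the present paper does not prove this proposition; it is quoted from the companion paper \cite{HRW1} (their Proposition~3.12) and stated here with a \qed. Your strategy---splitting $\largewedge[\zeta^{(k)}_1,\ldots,\zeta^{(k)}_b]$ into bottom and top factors indexed by $(l,s)$, observing that the Koszul differential only touches the bottom variables because $e_i(\leftM^{(k)}) - e_i(\rightM^{(k)}) = 0$ for $i > k = |\leftM^{(k)}|$, and then analyzing how $\d^H$ moves indices across the threshold $k$---is the natural one and is essentially what is carried out in \cite{HRW1}. One small slip: in your description of $\d^c$, the index that crosses over is $i_p = j_p = k$, which lands in $\largewedge[\zeta^{(k-1)}_{k},\ldots,\zeta^{(k-1)}_b]$ (not $\zeta^{(k-1)}_{k+1}$), since for $W_{k-1}$ the top variables begin at index $k$. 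Your degree-sorting argument for extracting the pairwise anti-commutation relations from $(\d^v+\d^H)^2=0$ is standard and correct.
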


In other words, $\d^h$ is the part of $\d^H$ which preserves 
$s$-degree and $\d^c$ is the part of $\d^H$ which increases $s$-degree by $1$.
Since the differentials $\d^v$ and $\d^h$ preserve $s$-degree, 
we may reorganize the direct sum \eqref{eq:KMCS decomp} as follows:
\[
\qdeg^{b(a-b-1)}\tdeg^b\KMCSmin_{a,b}
\cong \tw_{\d^c}\left(\bigoplus_{0\leq s\leq b} \tw_{\d^v+\d^h}\left(\bigoplus_{0\leq l\leq k\leq b-s} P_{k,l,s}\right)\right).
\]
The skein relation essentially amounts to a topological interpretation of the $s^{th}$ summand above.  

\begin{proposition}[{\cite[\HRWSkeinrel]{HRW1}}]\label{prop:skein}
The complexes
\begin{equation}\label{eq:MCCSmin}
\MCCSmin^s_{a,b}:= 
\qdeg^{-s(b-1)}\tdeg^{-s} \tw_{\d^v+\d^h}
\left(\bigoplus_{0\leq l\leq k\leq b-s}P_{k,l,s}\right).
\end{equation}
satisfy
\[
\MCCSmin_{a,b}^s\simeq \left\llbracket
\begin{tikzpicture}[scale=.4,smallnodes,rotate=90,anchorbase]
   \draw[very thick] (1,-1) to [out=150,in=270] (0,0); %under strand
   \draw[line width=5pt,color=white] (0,-2) to [out=90,in=270] (.5,0) to [out=90,in=270] (0,2);
   \draw[very thick] (0,-2) node[right=-2pt]{$a$} to [out=90,in=270] (.5,0) 
   	to [out=90,in=270] (0,2) node[left=-2pt]{$a$};
   \draw[very thick] (1,1) to (1,2) node[left=-2pt]{$b$};
   \draw[line width=5pt,color=white] (0,0) to [out=90,in=210] (1,1); 
   \draw[very thick] (0,0) to [out=90,in=210] (1,1); %over strand
   \draw[very thick] (1,-2) node[right=-2pt]{$b$} to (1,-1); 
   \draw[very thick] (1,-1) to [out=30,in=330] node[above=-2pt]{$s$} (1,1); 
   \end{tikzpicture}
\right\rrbracket
\]
by \cite[\MCCSCor]{HRW1}. Consequently, the isomorphism
\[
\tw_{\d^c} \! \left(\bigoplus_{s=0}^b \qdeg^{s(b-1)} \tdeg^s \MCCSmin_{a,b}^s\right)
\cong 
\qdeg^{b(a-b-1)}\tdeg^b \KMCSmin_{a,b} 
\]
yields a homotopy equivalence
\begin{equation} \label{eq:convrecall}
\tw_{D_1}\!\!\left( \bigoplus_{s=0}^b 
	\qdeg^{s(b-1)} \tdeg^s \left\llbracket
\begin{tikzpicture}[scale=.4,smallnodes,rotate=90,anchorbase]
   \draw[very thick] (1,-1) to [out=150,in=270] (0,0); %under strand
   \draw[line width=5pt,color=white] (0,-2) to [out=90,in=270] (.5,0) to [out=90,in=270] (0,2);
   \draw[very thick] (0,-2) node[right=-2pt]{$a$} to [out=90,in=270] (.5,0) 
   	to [out=90,in=270] (0,2) node[left=-2pt]{$a$};
   \draw[very thick] (1,1) to (1,2) node[left=-2pt]{$b$};
   \draw[line width=5pt,color=white] (0,0) to [out=90,in=210] (1,1); 
   \draw[very thick] (0,0) to [out=90,in=210] (1,1); %over strand
   \draw[very thick] (1,-2) node[right=-2pt]{$b$} to (1,-1); 
   \draw[very thick] (1,-1) to [out=30,in=330] node[above=-2pt]{$s$} (1,1); 
   \end{tikzpicture}
		\right\rrbracket
	\right) \simeq 
	\qdeg^{b(a-b-1)}\tdeg^b
	K \!\!\left(
		 \left\llbracket
\begin{tikzpicture}[scale=.4,smallnodes,anchorbase,rotate=270]
\draw[very thick] (1,-1) to [out=150,in=270] (0,1) to (0,2) node[right=-2pt]{$b$}; %under strand
\draw[line width=5pt,color=white] (0,-2) to (0,-1) to [out=90,in=210] (1,1);
\draw[very thick] (0,-2) node[left=-2pt]{$b$} to (0,-1) to [out=90,in=210] (1,1);
\draw[very thick] (1,1) to (1,2) node[right=-2pt]{$a$};
\draw[very thick] (1,-2) node[left=-2pt]{$a$} to (1,-1); 
\draw[very thick] (1,-1) to [out=30,in=330] node[below=-1pt]{$a{-}b$} (1,1); 
\end{tikzpicture}
		\right\rrbracket \right)
\end{equation}
by Propositions \ref{prop:MCS} and \ref{prop:HPT}. \qed
\end{proposition}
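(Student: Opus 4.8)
The plan is to build the homotopy equivalence \eqref{eq:convrecall} out of three ingredients that are already available: the topological identification of each $\MCCSmin^s_{a,b}$ with a threaded-digon complex, the block decomposition of $\KMCSmin_{a,b}$ into the pieces $P_{k,l,s}$ from Proposition \ref{prop:KMCSdiffs}, and homological perturbation (Proposition \ref{prop:HPT}). First I would invoke \cite[\MCCSCor]{HRW1} to record, for each $0\leq s\leq b$, a homotopy equivalence $\MCCSmin^s_{a,b}\simeq \llbracket\cdots\rrbracket$ onto the complex of the $s$-strand threaded digon appearing on the left-hand side of \eqref{eq:convrecall}; at this stage these are equivalences in $\CS_{a,b}$ together with their explicit homotopy data, and I would keep track of the shift $\qdeg^{s(b-1)}\tdeg^s$ that relates the normalizations in Definition \ref{def:Pkls} and \eqref{eq:MCCSmin}.

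Next I would use Proposition \ref{prop:KMCSdiffs} to write $\qdeg^{b(a-b-1)}\tdeg^b\KMCSmin_{a,b}\cong \tw_{\d^v+\d^h+\d^c}\big(\bigoplus_{0\leq l\leq k\leq b-s}P_{k,l,s}\big)$ and then reorganize the triple sum by $s$-degree. Since $\d^v$ and $\d^h$ preserve $s$-degree while $\d^c$ raises it by one, this reorganization produces exactly $\tw_{\d^c}\big(\bigoplus_{s=0}^b \qdeg^{s(b-1)}\tdeg^s\MCCSmin^s_{a,b}\big)$ by the definition \eqref{eq:MCCSmin}. In particular $\d^c$ is a strictly $s$-increasing (one-sided) perturbation, which is the source of the one-sidedness of $D_1$ claimed in the statement, and it is nilpotent since the whole complex is bounded.

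Finally I would transport the structures along the equivalences. Applying Proposition \ref{prop:HPT} with $\alpha=\d^c$ — invertibility of $\mathrm{Id}+\alpha\circ k_X$ being guaranteed by one-sidedness, as in Remark \ref{rem:onesided} — along the direct sum of the equivalences $\MCCSmin^s_{a,b}\simeq\llbracket\cdots\rrbracket$ yields a homotopy equivalence $\tw_{D_1}\big(\bigoplus_s\qdeg^{s(b-1)}\tdeg^s\llbracket\cdots\rrbracket\big)\simeq \tw_{\d^c}\big(\bigoplus_s\qdeg^{s(b-1)}\tdeg^s\MCCSmin^s_{a,b}\big)$, where $D_1$ is the perturbed twist; $D_1$ remains strictly $s$-increasing because the perturbation formulas of Proposition \ref{prop:HPT} preserve the one-sided property. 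Combining this with the isomorphism of the previous paragraph and then with Proposition \ref{prop:MCS}, which gives $K(\MCSmin_{a,b})\simeq K(\MCS_{a,b})$ (and hence replaces $\qdeg^{b(a-b-1)}\tdeg^b\KMCSmin_{a,b}$ by $\qdeg^{b(a-b-1)}\tdeg^b K(\MCS_{a,b})$ on the right-hand side), produces \eqref{eq:convrecall}.

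I expect the main obstacle to be bookkeeping rather than anything structural: one must check that the two perturbation steps — replacing $\KMCSmin$ by $K(\MCS)$ via Proposition \ref{prop:MCS}, and replacing each $\MCCSmin^s_{a,b}$ by the threaded-digon complex via \cite[\MCCSCor]{HRW1} — are mutually compatible, i.e.\ that the Gaussian-elimination homotopies underlying Proposition \ref{prop:MCS} do not disturb the $s$-grading (equivalently, the one-sidedness of $\d^c$) used to organize the direct sum, and that all grading shifts ($\qdeg^{s(b-1)}\tdeg^s$, $\qdeg^{b(a-b-1)}\tdeg^b$, and those internal to $P_{k,l,s}$ and $\MCSmin_{a,b}$) line up. Since $\d^c$ and the relevant homotopies all respect the filtration by $s$-degree, these compatibilities should go through, but verifying them carefully is where the real work lies.
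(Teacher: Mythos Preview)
Your proposal is correct and follows essentially the same approach as the paper: the proposition is stated with a \qed and no separate proof, since the argument is precisely the one you outline---invoke \cite[\MCCSCor]{HRW1} for the identification of each $\MCCSmin^s_{a,b}$, reorganize $\KMCSmin_{a,b}$ by $s$-degree via Proposition \ref{prop:KMCSdiffs}, apply homological perturbation (Proposition \ref{prop:HPT}) along the direct sum of equivalences using one-sidedness of $\d^c$, and finish with Proposition \ref{prop:MCS}. Your caveat about bookkeeping is well-placed but, as you suspect, the compatibilities hold since everything respects the $s$-filtration.
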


\subsection{Curving the right-hand side} \label{ss:curved RHS}

We now aim to promote Proposition \ref{prop:skein} to the curved setting. To
begin, we immediately note that the complex
\begin{equation}
	\label{eq:VKMCS}
\YKMCSmin_{a,b} := \YK(\MCSmin) \simeq  \YK(\MCS)
\end{equation}
provides a curved lift of $\KMCSmin_{a,b}$. We will write
$\YKMCSmin_{a,b}$ explicitly as a one-sided twisted complex
\begin{equation}\label{eq:YKMCSmin}
\YKMCSmin_{a,b} =
\left(
\YK(W_b) \xrightarrow{\d^H} 
\qdeg^{-(a-b+1)} \tdeg \YK(W_{b-1})  \xrightarrow{\d^H} 
\cdots  \xrightarrow{\d^H} \qdeg^{-b(a-b+1)} \tdeg^b \YK(W_0)\right)
\end{equation}
where we slightly abuse notation in writing $\d^H$ for $\YK(\d^H)$. Note that
each of the objects $\YK(W_k)$ is itself a curved complex, whose differentials
we draw as arrows pointing downward and upward in illustrations such as
\eqref{eq:exaYKMCS} and Example~\ref{exa:KMCS} below.

Our goal is to write down  the appropriate curved version of Proposition
\ref{prop:KMCSdiffs}.  First, consider the curved Koszul complex
$\YK(W_k):=\tw_{\d+\Delta}(W_k\otimes \largewedge[\xi_1,\ldots,\xi_b])$ 
from Definition \ref{def:dv}. 
Recall that
$\d= \sum_{i=1}^b h_i(\X_2-\X_2')\otimes \xi_i^\ast$ and $\Delta = \sum_{i=1}^b
\bar{v}_i \otimes \xi_i$ , so $(\d+\Delta)^2 = \sum_i h_i(\leftX_2 - \rightX_2) \bar{v}_i$.

\begin{lemma}\label{lemma:zeta koszul}
In terms of the $\zeta$-basis we have
\[
\YK(W_k)\cong \tw_{\d+\Delta}(W_k\otimes \largewedge[\zeta^{(k)}_1,\ldots,\zeta^{(k)}_b]),
\]
where
\[
\d = \sum_{1\leq i\leq b} (e_i(\M)-e_i(\M'))\otimes (\zeta^{(k)})^\ast
\, , \quad \Delta = \sum_{1\leq j\leq l\leq b} (-1)^{j-1} h_{l-j}(\M)\bar{v}_l\otimes \zeta^{(k)}_j.
\]
\end{lemma}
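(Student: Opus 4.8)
The statement is a routine change of basis inside the curved Koszul complex $\YK(W_k)$, tracking how the differential $\d$ and the Maurer--Cartan perturbation $\Delta$ transform under the substitution \eqref{eq:xi and zeta}. The claim about $\d$ is exactly Proposition \ref{prop:diffKMCSexplicit} (specialized here via the relabeling $e_i(\leftM^{(k)})$, $e_i(\rightM^{(k)})$, which I will abbreviate as $e_i(\M)$, $e_i(\M')$), so the only genuinely new content is the formula for $\Delta$ in the $\zeta$-basis. The plan is therefore: (i) recall that $\Delta = \sum_{1\leq i\leq b} \bar v_i\otimes \xi_i$ in the $\xi$-basis; (ii) substitute the second identity in \eqref{eq:xi and zeta}, namely $\xi_i = \sum_{j=1}^i (-1)^{j-1} h_{i-j}(\M)\otimes \zeta^{(k)}_j$, into this expression; and (iii) reindex the resulting double sum to read off the stated formula.

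Carrying this out: after substitution,
\[
\Delta = \sum_{1\leq i\leq b} \bar v_i \otimes \xi_i
= \sum_{1\leq i\leq b}\ \sum_{1\leq j\leq i} (-1)^{j-1} h_{i-j}(\M)\, \bar v_i \otimes \zeta^{(k)}_j \, .
\]
Swapping the order of summation (set $l := i$, so the constraint $j\leq i\leq b$ becomes $j\leq l\leq b$) gives
\[
\Delta = \sum_{1\leq j\leq l\leq b} (-1)^{j-1} h_{l-j}(\M)\, \bar v_l \otimes \zeta^{(k)}_j \, ,
\]
which is the asserted formula. For the $\d$ part, I would simply invoke Proposition \ref{prop:diffKMCSexplicit}, which already records that in the $\zeta$-basis the Koszul differential on $K(W_k)$ has the form $\sum_{1\leq i\leq k}(e_i(\M)-e_i(\M'))\otimes (\zeta^{(k)}_i)^\ast$; passing to $\YK(W_k)$ changes nothing about $\d$ since $\YK$ only adds the perturbation $\Delta$ (Definition \ref{def:dv}), and one should note that the upper limit in the sum may be taken to be $b$ rather than $k$ since $e_i(\M)=e_i(\M')=0$ for $i>k=|\M|$ would make those terms... wait — rather, $\M$ has cardinality $a-k$ and $\M'$ cardinality $b-k$, so one keeps the honest range as written; I would state it with upper limit $b$ matching the claim, noting that the extra terms $e_i(\M)-e_i(\M')$ for $i>k$ are not automatically zero but are still part of the Koszul differential on the full exterior algebra $\largewedge[\zeta^{(k)}_1,\ldots,\zeta^{(k)}_b]$.

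\textbf{Main obstacle.} There is no real obstacle; the only subtlety worth a sentence is bookkeeping the cardinalities of $\M = \leftM^{(k)}$ and $\M' = \rightM^{(k)}$ (which are $a-k$ and $b-k$, not $k$) so that the ranges of summation and the vanishing of higher $e_i$ and $h_i$ terms are stated correctly, and confirming that $h_{l-j}(\M)$ with $l-j$ possibly large is not forced to vanish. One should also double-check that the substitution \eqref{eq:xi and zeta} is indeed an invertible change of basis over $\Sym(\M)$ — this is the content of \eqref{eq:xi and zeta} being a pair of mutually inverse formulas, which in turn follows from \eqref{eq:wellknown} (or \eqref{eq:HE2}) exactly as in the $u$-versus-$v$ computation of Definition \ref{def:U and V}. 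Finally, one verifies consistency: $(\d+\Delta)^2$ computed in the $\zeta$-basis should again equal $\sum_i h_i(\leftX_2-\rightX_2)\bar v_i$, which is guaranteed since a change of basis is an isomorphism of curved complexes and hence preserves the curvature.
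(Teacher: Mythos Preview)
Your approach is correct and matches the paper's own proof exactly: invoke Proposition~\ref{prop:diffKMCSexplicit} for the $\d$ part and substitute the second formula in \eqref{eq:xi and zeta} into $\Delta = \sum_i \bar v_i \otimes \xi_i$, then reindex. The paper's proof says precisely this, in one sentence.

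One small correction in your aside about cardinalities: from \eqref{eq:Walphabets} we have $|\leftM^{(k)}| = |\rightM^{(k)}| = k$, not $a-k$ and $b-k$. So your \emph{first} instinct was right: $e_i(\M) = e_i(\M') = 0$ for $i>k$, and extending the upper limit of the sum for $\d$ from $k$ (as in Proposition~\ref{prop:diffKMCSexplicit}) to $b$ (as in the lemma statement) just adds zero terms. Your retraction of this was the error, not the original claim.
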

\begin{proof}
The formula for the uncurved differential $\d$ is given above in Proposition \ref{prop:diffKMCSexplicit}; 
the formula for $\Delta$ is immediate from \eqref{eq:xi and zeta}.
\end{proof}

\begin{proposition}\label{prop:KMCSdiffs curved} We have
\begin{equation}\label{eq:KMCS decomp curved}
\qdeg^{b(a-b-1)}\tdeg^b\YKMCSmin_{a,b} \ \cong \ \tw_{\d^v+\Delta^v+\d^h+\d^c+\Delta^c}\left(\bigoplus_{0\leq l\leq k\leq b-s} P_{k,l,s}\right),
\end{equation}
where $\d^v$, $\d^h$, $\d^c$ are as in Proposition \ref{prop:KMCSdiffs}, and
\begin{equation}\label{eq:twists}
\begin{aligned}
	\Delta^v \colon P_{k,l,s}\to P_{k,l+1,s} \, , \quad & 
	\Delta^v = \sum_{\substack{1\leq j \leq l\leq b\\ j\leq k}} (-1)^{j-1}  h_{l-j}(\M^{(k)}) \otimes \zeta_j^{(k)}\bar{v}_l\, ,\\
	\Delta^c\colon P_{k,l,s}\rightarrow P_{k,l,s+1} \, , \quad  &
	\Delta^c = \sum_{\substack{1\leq k\leq l\leq b \\ j>k}} (-1)^{j-1}  h_{l-j}(\M^{(k)}) \otimes \zeta_j^{(k)}\bar{v}_l \, .
\end{aligned}
\end{equation} 
Moreover, the endomorphisms $\d^v,\Delta^v,\d^h,\d^c,\Delta^c$ satisfy 
the following relations:
\[
(\d^h+\d^v+\Delta^v)^2 = \sum_i h_i(\leftX_2 - \rightX_2) \bar{v}_i \, , \quad
(d^c+\Delta^c)^2 = 0 \, , \quad
[\d^h+\d^v+\Delta^v,d^c+\Delta^c]=0 
\]
%\[
%[\d^h+\d^c, \Delta^v + \Delta^c] =0,\quad [\d^h,\Delta^v]=0,\quad [\d^v,\Delta^v]=\sum_{i=1}^b
%h_i(\X_2-\X_2')\bar{v}_i,\quad [\d^c+\Delta^c, \d^h+\d^v+\Delta^v]=0.
%\]
%We get
%\[
%[\d^H,\Delta]=0 \quad \Longrightarrow \quad
%[\d^h,\Delta^v]=0 \;\; , \;\; 
%[\d^h,\Delta^c] + [\d^c,\Delta^v]=0 \;\; , \;\;
%[\d^c,\Delta^c]=0
%\]
%which gives the first two above, and
%\[
%\sum_i h_i(\leftX_2 - \rightX_2) \bar{v}_i = [\d,\Delta] \quad \Longrightarrow \quad
%[\d^v,\Delta^v] = \sum_i h_i(\leftX_2 - \rightX_2) \bar{v}_i
%\;\; , \;\; [\d^v,\Delta^c]=0
%\]
%which gives the third. The last one follows from
%\[
%[\d,\d^H]=0 \quad \Longrightarrow \quad
%[\d^v,\d^h]=0 \;\; , \;\; [\d^v,\d^c]=0 
%\]
%and
%\[
%0=(\d^H)^2 = [\d^h,\d^c] \;\; , \;\; 0=\Delta^2=[\Delta^v,\Delta^c].
%\]
%
\end{proposition}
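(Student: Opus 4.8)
The plan is to deduce everything from the uncurved decomposition (Proposition~\ref{prop:KMCSdiffs}) together with the explicit description of the curved Koszul complex in the $\zeta$-basis (Lemma~\ref{lemma:zeta koszul}). First I would observe that $\YKMCSmin_{a,b}$ is by definition $\YK$ applied to the one-sided twisted complex $\MCSmin_{a,b}$, so as a graded object it is the same as $\qdeg^{b(a-b-1)}\tdeg^b\KMCSmin_{a,b}$ (up to the shift), and hence equals $\bigoplus_{0\leq l\leq k\leq b-s} P_{k,l,s}$ with some twisted differential. The content is to identify that differential. The uncurved part is handled by Proposition~\ref{prop:KMCSdiffs}: it contributes $\d^v+\d^h+\d^c$. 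The remaining, $\bar v$-linear, part of the differential comes from two sources: (i) the curvature twist $\Delta=\sum_i \bar v_i\otimes\xi_i$ internal to each $\YK(W_k)$, rewritten in the $\zeta$-basis via Lemma~\ref{lemma:zeta koszul} as $\sum_{1\leq j\leq l\leq b}(-1)^{j-1}h_{l-j}(\M^{(k)})\bar v_l\otimes\zeta_j^{(k)}$, and (ii) potentially $\bar v$-linear components of $\d^H=\YK(\d^H)$; but $\d^H$ comes from the uncurved map $\chi_0^+$ composed with the dg-functor $\YK$, which is $\k[\bar v]$-linear, so $\d^H$ carries no $\bar v$. Thus the entire $\bar v$-linear part of the differential is the internal twist $\sum_k\Delta|_{\YK(W_k)}$. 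I would then split this sum according to whether the raised index $\zeta_j^{(k)}$ lies in $\largewedge[\zeta_1^{(k)},\dots,\zeta_k^{(k)}]$ (the ``$l$-part'', giving $\Delta^v$ which raises the $l$-degree by $1$ and fixes $k$ and $s$) or in $\largewedge[\zeta_{k+1}^{(k)},\dots,\zeta_b^{(k)}]$ (the ``$s$-part'', giving $\Delta^c$ which raises the $s$-degree by $1$ and fixes $k$ and $l$). Matching the ranges of summation $j\leq k$ resp.\ $j>k$ with Definition~\ref{def:Pkls} gives exactly the formulas \eqref{eq:twists}. This establishes \eqref{eq:KMCS decomp curved}.

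For the three relations, I would argue as follows. The total curved differential on $\YKMCSmin_{a,b}$ squares to its curvature, which is inherited from $\YK$, namely $\sum_i h_i(\leftX_2-\rightX_2)\bar v_i$; on the other hand, setting all $\bar v_l=0$ recovers the uncurved complex, whose differential $\d^v+\d^h+\d^c$ squares to zero. So it suffices to track the $\bar v$-linear and $\bar v$-quadratic parts of $(\d^v+\Delta^v+\d^h+\d^c+\Delta^c)^2=\sum_i h_i(\leftX_2-\rightX_2)\bar v_i$. Sorting the cross-terms by their effect on the bigrading $(s,l)$ (recall $\d^v$ lowers $l$, $\Delta^v$ raises $l$, $\d^h$ lowers $k$ keeping $s$, and both $\d^c,\Delta^c$ raise $s$) decouples the identity: the part landing in fixed $s$-degree gives $(\d^h+\d^v+\Delta^v)^2=\sum_i h_i(\leftX_2-\rightX_2)\bar v_i$; the part raising $s$-degree by $2$ gives $(\d^c+\Delta^c)^2=0$; and the part raising $s$-degree by exactly $1$ gives $[\d^h+\d^v+\Delta^v,\d^c+\Delta^c]=0$. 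Here I use that $\d^h$ and $\d^c$ together are exactly $\d^H$ (Proposition~\ref{prop:KMCSdiffs}) and that $\Delta^v,\Delta^c$ together are exactly the internal curvature twist, so no term of the square is dropped in this bookkeeping.

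The main obstacle, and the only place requiring genuine care, is the $s$-degree bookkeeping of the mixed term $[\d^h,\Delta^v]+[\d^h,\Delta^c]+[\d^c+\Delta^c,\d^v+\Delta^v]$: one must check that $[\d^h,\Delta^v]$ cancels against the $s$-fixed piece coming from the uncurved $[\d^h,\d^v]$-type relations combined with the change-of-basis identities \eqref{eq:xi and zeta}, and that the genuinely new commutator $[\d^v+\Delta^v,\d^c+\Delta^c]$ vanishes. For the latter I would not compute directly; instead I would note that $\d^c+\Delta^c$ is precisely the part of the total curved differential that strictly increases $s$-degree, $\d^h+\d^v+\Delta^v$ is the part that fixes $s$-degree, and since the curvature $\sum_i h_i(\leftX_2-\rightX_2)\bar v_i$ is $s$-homogeneous of degree $0$, the ``square equals curvature'' equation, read off in each $s$-degree separately, forces exactly the three displayed relations. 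In other words all three are simultaneous consequences of the single identity $(\d^{\tot})^2=F$ once one knows the $s$-filtration is respected in the stated way, which in turn is immediate from \eqref{eq:twists} and Proposition~\ref{prop:KMCSdiffs}. This reduces the proof to the already-completed identification \eqref{eq:KMCS decomp curved} plus a one-line grading argument.
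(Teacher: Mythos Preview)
Your proposal is correct and follows essentially the same approach as the paper. The paper's proof is a terse two lines---``the first statement holds by construction, and everything else follows by taking components in $(\d+\Delta)^2 = \sum_i h_i(\leftX_2 - \rightX_2) \bar{v}_i$''---and you have simply unpacked both of these: ``by construction'' means exactly the combination of Proposition~\ref{prop:KMCSdiffs} with Lemma~\ref{lemma:zeta koszul} and the $j\le k$ versus $j>k$ split that you describe, and ``taking components'' means exactly your $s$-degree bookkeeping in the equation $(\d^{\tot})^2=F$.
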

\begin{proof}
The first statement holds by construction, and everything else 
follows by taking components in $(\d+\Delta)^2 = \sum_i h_i(\leftX_2 - \rightX_2) \bar{v}_i$.
\end{proof}

Since $\Delta^v$ preserves $s$, we may define curved lifts of $\MCCSmin_{a,b}^s$
as follows.

\begin{definition}\label{def:curved MCCS}
Let
\begin{equation}\label{def:YMCCSmin}
\YMCCSmin_{a,b}^s:= \qdeg^{-s(b-1)}\tdeg^{-s} \
\tw_{\d^v+\d^h+\Delta^v} 
\left(\bigoplus_{0\leq l \leq k \leq b-s} P_{k,l,s}\right).
\end{equation}
This is a well-defined $1$-morphism in $\VSred_{a,b}$ by 
Proposition \ref{prop:KMCSdiffs curved}.
\end{definition}

The following holds by construction. 
\begin{prop}
\label{prop:reduced-curved-skein-rel}
For all integers $a,b\geq 0$ we have
\begin{equation}
\label{eq:curvedskeinmincx}
\tw_{\d^c+\Delta^c}\left(\bigoplus_{s=0}^b 
\qdeg^{s(b-1)} \tdeg^s \YMCCSmin_{a,b}^s \right)
\cong
\qdeg^{b(a-b-1)}\tdeg^b \ \YKMCSmin_{a,b} 
\end{equation}
in which the Maurer--Cartan element $\d^c+\Delta^c$ increases the index $s$ by
one. \qed
\end{prop}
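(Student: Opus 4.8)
The plan is to show that Proposition~\ref{prop:reduced-curved-skein-rel} follows almost tautologically from the graded decomposition in Proposition~\ref{prop:KMCSdiffs curved} together with the definition of $\YMCCSmin_{a,b}^s$ in Definition~\ref{def:curved MCCS}. Recall that Proposition~\ref{prop:KMCSdiffs curved} gives an isomorphism of the underlying $1$-morphism in $\VSred_{a,b}$
\[
\qdeg^{b(a-b-1)}\tdeg^b\YKMCSmin_{a,b} \ \cong \
\tw_{\d^v+\Delta^v+\d^h+\d^c+\Delta^c}\left(\bigoplus_{0\leq l\leq k\leq b-s} P_{k,l,s}\right),
\]
where the five endomorphisms $\d^v,\Delta^v,\d^h,\d^c,\Delta^c$ have the stated effect on the triple grading $(k,l,s)$: namely $\d^v,\Delta^v,\d^h$ all preserve $s$, while $\d^c$ and $\Delta^c$ each raise $s$ by one. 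The first step is to reorganize the direct sum $\bigoplus_{0\leq l\leq k\leq b-s} P_{k,l,s}$ by collecting terms with fixed $s$, exactly as in the uncurved case following Proposition~\ref{prop:KMCSdiffs}. Since $\d^v+\d^h+\Delta^v$ preserves $s$, it restricts to an endomorphism of each fixed-$s$ block $\bigoplus_{0\leq l\leq k\leq b-s} P_{k,l,s}$; by Proposition~\ref{prop:KMCSdiffs curved} the relation $(\d^h+\d^v+\Delta^v)^2 = \sum_i h_i(\leftX_2-\rightX_2)\bar v_i$ holds, so this restriction defines a curved complex with the correct curvature, which (after the shift $\qdeg^{-s(b-1)}\tdeg^{-s}$) is precisely $\YMCCSmin_{a,b}^s$ by Definition~\ref{def:curved MCCS}.

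Next I would account for the parts of the total differential that change $s$, namely $\d^c+\Delta^c$. By Proposition~\ref{prop:KMCSdiffs curved}, $(\d^c+\Delta^c)^2=0$ and $[\d^h+\d^v+\Delta^v,\,\d^c+\Delta^c]=0$, so $\d^c+\Delta^c$ is a Maurer--Cartan element (with zero curvature) for the direct sum $\bigoplus_{s=0}^b \qdeg^{s(b-1)}\tdeg^s\,\YMCCSmin_{a,b}^s$ equipped with its natural internal differentials. Assembling these observations gives the chain of identifications
\[
\qdeg^{b(a-b-1)}\tdeg^b\YKMCSmin_{a,b}
\;\cong\;
\tw_{\d^c+\Delta^c}\left(\bigoplus_{s=0}^b \tw_{\d^v+\d^h+\Delta^v}\Big(\bigoplus_{0\leq l\leq k\leq b-s}P_{k,l,s}\Big)\right)
\;\cong\;
\tw_{\d^c+\Delta^c}\left(\bigoplus_{s=0}^b \qdeg^{s(b-1)}\tdeg^s\,\YMCCSmin_{a,b}^s\right),
\]
where in the last step one just redistributes the overall shift $\qdeg^{b(a-b-1)}\tdeg^b$ against the shifts $\qdeg^{-s(b-1)}\tdeg^{-s}$ absorbed into the definition of $\YMCCSmin_{a,b}^s$; this matches the shift bookkeeping already carried out in the uncurved setting between \eqref{eq:KMCS decomp} and the reorganized sum preceding Proposition~\ref{prop:skein}. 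Finally I would note that, by construction, the Maurer--Cartan element $\d^c+\Delta^c$ strictly increases $s$, which is the last assertion of the proposition.

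The main thing to be careful about — and the only real content beyond bookkeeping — is verifying that the restriction of $\d^v+\d^h+\Delta^v$ to a fixed-$s$ block genuinely agrees with the internal differential of $\YMCCSmin_{a,b}^s$ as \emph{defined} in Definition~\ref{def:curved MCCS}, i.e.\ that Definition~\ref{def:curved MCCS} is well-posed and that no cross-terms mixing different $s$ have been silently dropped. This is immediate from the grading statements in \eqref{eq:twists}: $\d^v$ lowers $l$ by one and fixes $(k,s)$; $\Delta^v$ raises $l$ by one and fixes $(k,s)$; $\d^h$ lowers $k$ by one and fixes $(l,s)$; whereas $\Delta^c$ and $\d^c$ are exactly the components that raise $s$. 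So the decomposition of the total differential into an $s$-preserving part and an $s$-raising part is canonical, and there is nothing left to check. Since everything is an equality/isomorphism of $1$-morphisms in $\VSred_{a,b}$ assembled from Proposition~\ref{prop:KMCSdiffs curved} and Definition~\ref{def:curved MCCS}, the proof is short — the phrase ``holds by construction'' in the statement is accurate, and the write-up mainly serves to make the reorganization explicit.
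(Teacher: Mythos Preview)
Your proposal is correct and is precisely an unpacking of the paper's ``holds by construction'': it uses Proposition~\ref{prop:KMCSdiffs curved} to decompose the total differential into the $s$-preserving part $\d^v+\d^h+\Delta^v$ (giving $\YMCCSmin_{a,b}^s$ via Definition~\ref{def:curved MCCS}) and the $s$-raising part $\d^c+\Delta^c$, with the shift bookkeeping inherited from the uncurved case. There is no additional content beyond this reorganization, and your write-up makes that explicit.
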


\begin{exa}\label{exa:KMCS} 
We illustrate the complex $\YKMCSmin_{2,2}$, 
as well as the subquotients 
$P_{\bullet,\bullet,s}=\qdeg^{s} \tdeg^s \ \MCCSmin^s_{2,2}$ 
for $0\leq s\leq 2$. We use the symbol $\cdot$ instead of
$\otimes$ to declutter the diagram. 
\[
	\begin{tikzpicture}[anchorbase]
		\draw[dotted]  (2,3.75) to (2,3) to [out=270,in=180] (3.25,1.75) to (7,1.75);
		\draw[dotted] (-2,3.75) to (-2,3) to [out=270,in=180] (2,-1) to (7,-1);
		\node[scale=1] at (-4.75,3.75){$P_{\bullet,\bullet,0}$};
		\node[scale=1] at (0,3.75){$\BLUE{P_{\bullet,\bullet,1}}$};
		\node[scale=1] at (4.75,3.75){$\GREEN{P_{\bullet,\bullet,2}}$};
		\node[scale=1] at (0,0.5){
\begin{tikzcd}[row sep=3em,column sep=-3.2em]
%% first row
& 
\qdeg^{-2}W_2\cdot \zeta^{(2)}_1\zeta^{(2)}_2 \cdot 1 
	\arrow[ddl, "e_2'-e_2", shift left] 
	\arrow[dr,"e_1-e_1'", shift left]
	\arrow[from=dr, shift left,purple,"\bar{v}_1+h_1\bar{v}_2"] 
	\arrow[rrr,gray, "\GRAY{\chi^+_0}"] & & & 
\BLUE{\qdeg^{-3}W_1\cdot\zeta^{(1)}_1 \cdot \zeta^{(1)}_2}
	\arrow[dr,blue,"\BLUE{e_1'-e_1}", shift left] 
	\arrow[from=dr, shift left,purple,"-\bar{v}_1-h_1\bar{v}_2" pos=.3] 
	\arrow[rrr,gray, "\GRAY{\chi^+_0}"] & & & 
\GREEN{\qdeg^{-4}W_0\cdot 1 \cdot \zeta^{(0)}_1\zeta^{(0)}_2}	
	\arrow[from=dr,dashed,gray,swap,"\bar{v}_1"] & 
\\
%% second row
& &
\qdeg^{-2}W_2\cdot \zeta^{(2)}_2 \cdot  1	
	\arrow[ldd, "e_2-e_2'" near end, shift left]
	\arrow[from=ldd, shift left,purple,"-\bar{v}_2" near end] 
	\arrow[dr,, "\chi^+_1"] 
	\arrow[rrr,gray, "\GRAY{\chi^+_0}" near start] & & & 
\BLUE{\qdeg^{-3}W_1\cdot 1 \cdot \zeta^{(1)}_2}  
	\arrow[from=ldd,dashed, gray,"\bar{v}_2" near end] 
	\arrow[dr,blue,"\BLUE{\chi^+_1}"] 
	\arrow[rrr,blue, "\BLUE{\chi^+_0}" near end]  & & & 
\BLUE{\qdeg^{-4}W_0\cdot 1 \cdot \zeta^{(0)}_2}  
	\arrow[from=ldd,dashed,gray,"-\bar{v}_2"]
\\
%% third row
\qdeg^{-2}W_2\cdot \zeta^{(2)}_1 \cdot 1	
	\arrow[dr,"e_1-e_1'", shift left]
	\arrow[from=dr, shift left,purple,"\bar{v}_1+h_1\bar{v}_2"] 
	\arrow[rrr,crossing over, "\chi^+_0" near start] 
	\arrow[uur,shift left,purple,"\bar{v}_2"] & & & 
\qdeg^{-3}W_1\cdot \zeta^{(1)}_1 \cdot 1	
	\arrow[dr,"e_1'-e_1", shift left]
	\arrow[from=dr, shift left,purple,"-\bar{v}_1-h_1\bar{v}_2"] 
	\arrow[rrr,crossing over,gray, "\GRAY{\chi^+_0}" near start] 
	\arrow[uur,crossing over,dashed,gray,"-\bar{v}_2" near end] & & & 
\BLUE{\qdeg^{-4}W_0\cdot 1 \cdot \zeta^{(0)}_1}	
	\arrow[from=dr,dashed,gray,"\bar{v}_1"] 
	\arrow[uur,crossing over,dashed,gray,"\bar{v}_2" near end] & & 
\\
%% fourth row
&
 \qdeg^{-2}W_2 \cdot 1 \cdot 1 
 	\arrow[rrr, "\chi^+_0"]& & & 
\qdeg^{-3}W_1 \cdot 1\cdot 1  
	\arrow[rrr, "\chi^+_0"]& & & 
\qdeg^{-4} W_0 \cdot 1 \cdot 1	 &
\end{tikzcd}
};
\draw[thick,decoration={brace,mirror,raise=0.5cm},decorate] (-7,-2) to (-3,-2);
\node at (-5,-3) {$P_{2,\bullet,\bullet}$};
\draw[thick,decoration={brace,mirror,raise=0.5cm},decorate] (-2.5,-2) to (2.5,-2);
\node at (0,-3) {$P_{1,\bullet,\bullet}$};
\draw[thick,decoration={brace,mirror,raise=0.5cm},decorate] (3,-2) to (7,-2);
\node at (5,-3) {$P_{0,\bullet,\bullet}$};
\end{tikzpicture}
\]

Black and blue horizontal arrows correspond to components of $\d^h$. All other
black and blue arrows indicate non-zero components of $\d^v$.   The curved twist 
$\Delta^v$ is indicated by red arrows.  Finally the connecting differential $\d^c$ and its 
curved correction $\Delta^c$ are marked using grey horizontal arrows and grey dashed 
arrows, respectively.
\end{exa}

\subsection{The curved colored skein relation}\label{ss:curved skein}
The curved colored skein relation will follow by showing that 
the complex $\YMCCSmin_{a,b}^s$ is homotopy equivalent 
to a curved lifts of the complex
\[
\MCCS_{a,b}^s =
\left\llbracket
\begin{tikzpicture}[scale=.4,smallnodes,rotate=90,anchorbase]
   \draw[very thick] (1,-1) to [out=150,in=270] (0,0); %under strand
   \draw[line width=5pt,color=white] (0,-2) to [out=90,in=270] (.5,0) to [out=90,in=270] (0,2);
   \draw[very thick] (0,-2) node[right=-2pt]{$a$} to [out=90,in=270] (.5,0) 
   	to [out=90,in=270] (0,2) node[left=-2pt]{$a$};
   \draw[very thick] (1,1) to (1,2) node[left=-2pt]{$b$};
   \draw[line width=5pt,color=white] (0,0) to [out=90,in=210] (1,1); 
   \draw[very thick] (0,0) to [out=90,in=210] (1,1); %over strand
   \draw[very thick] (1,-2) node[right=-2pt]{$b$} to (1,-1); 
   \draw[very thick] (1,-1) to [out=30,in=330] node[above=-2pt]{$s$} (1,1); 
   \end{tikzpicture}
\right\rrbracket.
\] 
We begin by defining these curved lifts.
Since $\MCCS_{a,b}^s$ is not invertible for $s\neq 0,b$, 
we cannot simply invoke Lemma \ref{lem:curvinginvertibles} to define the curved lift. 
Instead, we will bootstrap from the $s=0$ case.

\begin{conv}\label{conv:FT}
Since
\[
\MCCS_{a,b}^0 = 
\left\llbracket
 \begin{tikzpicture}[scale=.5,smallnodes,anchorbase,rotate=90]
 	\draw[very thick] (1,0) to [out=90,in=270] (0,1.5) node[left=-2pt]{$a$};
 	\draw[line width=5pt,color=white] (1,-1.5) to [out=90,in=270] (0,0) 
		to [out=90,in=270] (1,1.5);
 	\draw[very thick] (1,-1.5) node[right,xshift=-2pt]{$b$} to [out=90,in=270] (0,0) 
		to [out=90,in=270] (1,1.5) node[left=-2pt]{$b$};
 	\draw[line width=5pt,color=white] (0,-1.5) to [out=90,in=270] (1,0);
 	\draw[very thick] (0,-1.5) node[right,xshift=-2pt]{$a$} to [out=90,in=270] (1,0);
 \end{tikzpicture}
\right\rrbracket
\]
is the Rickard complex assigned to the $(a,b)$-colored 
($2$-strand) \emph{full twist} braid, 
we will denote this complex by $\FT_{a,b} := \MCCS_{a,b}^0$.
Similarly, we let $\FTmin_{a,b} := \MCCSmin_{a,b}^0$.
\end{conv}

We will make use of the following functors in studying 
$\MCCS_{a,b}^s$ when $s>0$.
\begin{definition}
Let $\I^{(s)}: \CS_{a,\ell}\rightarrow \CS_{a,\ell+s}$ denote the functor defined by
\[
\I^{(s)}(X):=
\begin{tikzpicture}[scale=.5,smallnodes,rotate=90,anchorbase]
	\draw[very thick] (1,2.25) to (1,3) node[left=-2pt]{$\ell{+}s$};
	\draw[very thick] (1,-3) node[right=-2pt]{$\ell{+}s$} to (1,-2.25); 
	\draw[very thick] (1,-2.25) to [out=30, in=270] (1.75,-1.6) 
		to (1.75,0) node[below,yshift=1pt]{$s$}to (1.75,1.6)  to  [out=90,in=330] (1,2.25); 
	\node[yshift=-2pt] at (0,0) {\normalsize$X$};
	\draw[very thick] (.75,1.6) rectangle (-.75,-1.6);
	\draw[very thick] (1,-2.25) to [out=150,in=270] 
		node[right,xshift=-1pt,yshift=-1pt]{$\ell$} (.25,-1.6);
	\draw[very thick] (1,2.25) to [out=210,in=90] 
		node[left,xshift=1pt,yshift=-1pt]{$\ell$} (.25,1.6);
	\draw[very thick] (-.25,3) node[left=-2pt]{$a$} to (-.25,1.6);
	\draw[very thick] (-.25,-3) node[right=-2pt]{$a$} to (-.25,-1.6);
\end{tikzpicture} \, .
\]
%We will write $\I^{(1)}$ simply as $\I$.
\end{definition}

More precisely, $\I^{(s)} \colon \CS_{a,\ell}\rightarrow \CS_{a,\ell+s}$ is defined as the composition of first
applying $ ( - ) \boxtimes \oone_{s}$ 
and then horizontal pre- and post-composing 
with $\oone_{a}\boxtimes {}_{(\ell,s)}S_{(\ell+s)}$ and 
$\oone_{a}\boxtimes {}_{(\ell+s)}M_{(\ell,s)}$, respectively.
Here, ${}_{(\ell,s)}S_{(\ell+s)}$ and ${}_{(\ell+s)}M_{(\ell,s)}$ 
denote the split and merge bimodules from \eqref{eq:GenWeb},
and $|\leftL| = \ell = |\rightL|$.

\begin{conv}\label{conv:I alphabets} 
When considering endomorphisms of complexes
of the form $\I^{(s)}(X)$, we will often use the alphabet naming convention
indicated below:
\[
\I^{(s)}(X):=
\begin{tikzpicture}[scale=.5,smallnodes,rotate=90,anchorbase]
	\draw[very thick] (1,2.25) to (1,3) node[left]{$\leftX_2$};
	\draw[very thick] (1,-3) node[right]{$\rightX_2$} to (1,-2.25); 
	\draw[very thick] (1,-2.25) to [out=30, in=270] (1.75,-1.6) 
		to (1.75,0) node[below]{$\B$}to (1.75,1.6)  to  [out=90,in=330] (1,2.25); 
	\node[yshift=-2pt] at (0,0) {\normalsize$X$};
	\draw[very thick] (.75,1.6) rectangle (-.75,-1.6);
	\draw[very thick] (1,-2.25) to [out=150,in=270] 
		node[right,xshift=-1pt,yshift=-1pt]{$\rightL$} (.25,-1.6);
	\draw[very thick] (1,2.25) to [out=210,in=90] 
		node[left,xshift=1pt,yshift=-1pt]{$\leftL$} (.25,1.6);
	\draw[very thick] (-.25,3) node[left]{$\leftX_1$} to (-.25,1.6);
	\draw[very thick] (-.25,-3) node[right]{$\rightX_1$} to (-.25,-1.6);
\end{tikzpicture}.
\]
In other words, the above diagram indicates how
$\Sym(\leftX_1|\rightX_1|\leftX_2|\rightX_2|\leftL|\rightL|\B)$ acts on the functor $\I^{(s)}$ by
natural transformations.
\end{conv}

Set $b=\ell +s$.
We now extend $\I^{(s)}$ to categories of curved complexes 
$\I^{(s)}\colon \VSred_{a,\ell}\rightarrow \VSred_{a,b}$. 
Note that since ${}_{(\ell,s)}S_{(\ell+s)}$ and ${}_{(\ell+s)}M_{(\ell,s)}$
are not $1$-morphisms in $\YS(\SSBim)$, 
we cannot simply invoke the $2$-categorical operations from \S \ref{ss:curved cxs}.

\begin{definition}\label{def:functor Ikb}
Let $I^{(s)} \colon \VSred_{a,\ell}\rightarrow  \VSred_{a,b}$  be the functor defined on objects by
\[
I^{(s)}\big( \tw_{\Delta}(X) \big) = \tw_{I^{(s)}(\Delta)} (I^{(s)}(X))
\]
and on morphisms by the map
\[
\Hom_{\CS_{a,\ell}}(X,Y) \otimes \k[\overline{\V}^{(\ell)}]
\to
\Hom_{\CS_{a,b}} \big( I^{(s)}(X),I^{(s)}(Y)\big) \otimes \k[\overline{\V}^{(b)}]
\]
induced from the map
\[
I^{(s)} \colon \Hom_{\CS_{a,\ell}}(X,Y) \to \Hom_{\CS_{a,b}} \big( I^{(s)}(X),I^{(s)}(Y)\big)
\]
and the assignment \eqref{eq:vbar l and vbar b} (with
$\leftL=\X_{[a+1,\ldots,a+\ell]}$ and $\rightL=\rightX_{[a+1,\ldots,a+\ell]}$).
\end{definition}
Note that $\I^{(s)}$ preserves the curvature elements by \eqref{eq:reduced
curvature stability}, together with the observation that
\[
h_i(\leftL-\rightL) = h_i(\X_2-\X_2').
\]
Thus, this functor is indeed well-defined.

Let $\VFT_{a,b-s} \in \VSred_{a,b-s} \subset \VS_{a,b-s}$ 
denote a chosen curved lift of the full twist Rickard complex $\FT_{a,b-s}$;
this exists since the latter is invertible in $\CS_{a,b-s}$.
Applying the functor $I^{(s)}$ then defines the following curved lift of $\MCCS_{a,b}^s$:
\[
%\VMCCS_{a,b}^s :=
\left\llbracket
\begin{tikzpicture}[scale=.4,smallnodes,rotate=90,anchorbase]
   \draw[very thick] (1,-1) to [out=150,in=270] (0,0); %under strand
   \draw[line width=5pt,color=white] (0,-2) to [out=90,in=270] (.5,0) to [out=90,in=270] (0,2);
   \draw[very thick] (0,-2) node[right=-2pt]{$a$} to [out=90,in=270] (.5,0) 
   	to [out=90,in=270] (0,2) node[left=-2pt]{$a$};
   \draw[very thick] (1,1) to (1,2) node[left=-2pt]{$b$};
   \draw[line width=5pt,color=white] (0,0) to [out=90,in=210] (1,1); 
   \draw[very thick] (0,0) to [out=90,in=210] (1,1); %over strand
   \draw[very thick] (1,-2) node[right=-2pt]{$b$} to (1,-1); 
   \draw[very thick] (1,-1) to [out=30,in=330] node[above=-2pt]{$s$} (1,1); 
   \end{tikzpicture}
\right\rrbracket_\VS
:=
\I^{(s)}(\VFT_{a,b-s})\, .
\]

\begin{proposition}\label{prop:MCCS topologically} 
	We have $\YMCCSmin_{a,b}^s \simeq  \I^{(s)}(\VFT_{a,b-s})$ for all $0\leq s\leq b$.
\end{proposition}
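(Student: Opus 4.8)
The plan is to bootstrap the curved statement from the uncurved homotopy equivalence $\MCCSmin_{a,b}^s \simeq \MCCS_{a,b}^s = \I^{(s)}(\FT_{a,b-s})$ (Proposition \ref{prop:skein}, together with the definition of $\I^{(s)}$), using homological perturbation theory to lift that equivalence through the curved twists. First I would observe that the complex $\YMCCSmin_{a,b}^s = \qdeg^{-s(b-1)}\tdeg^{-s}\,\tw_{\d^v + \d^h + \Delta^v}\bigl(\bigoplus_{0\le l\le k\le b-s} P_{k,l,s}\bigr)$ is, by construction, a curved lift of $\MCCSmin_{a,b}^s = \qdeg^{-s(b-1)}\tdeg^{-s}\,\tw_{\d^v+\d^h}\bigl(\bigoplus P_{k,l,s}\bigr)$: setting all $\bar v_i = 0$ kills $\Delta^v$ and leaves $\d^v + \d^h$. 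So $\YMCCSmin_{a,b}^s = \tw_{\Delta^v}(\MCCSmin_{a,b}^s)$ where $\Delta^v$ is $\overline{\V}$-irrelevant and, by Proposition \ref{prop:KMCSdiffs curved}, has curvature $(\d^h+\d^v+\Delta^v)^2 = \sum_i h_i(\leftX_2 - \rightX_2)\bar v_i$, i.e. the $h\Delta$-curvature defining $\VSred_{a,b}$. Hence $\YMCCSmin_{a,b}^s$ is an honest $1$-morphism in $\VSred_{a,b}$.

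Next I would apply Proposition \ref{prop:conservation} (homological perturbation, in the curved-lift form of Proposition \ref{prop:HPT} with Remark \ref{rem:onesided} guaranteeing the necessary nilpotence, since $P_{\bullet,\bullet,s}$ is a finite one-sided twisted complex and $\Delta^v$ raises the $l$-index): the uncurved homotopy equivalence $\MCCSmin_{a,b}^s \simeq \I^{(s)}(\FTmin_{a,b-s})$ — which follows from Proposition \ref{prop:skein} and Proposition \ref{prop:MCS} applied inside $\I^{(s)}$ — lifts the curved twist $\Delta^v$ on the left-hand side to \emph{some} curved twist on $\I^{(s)}(\FTmin_{a,b-s})$, yielding a homotopy equivalence $\YMCCSmin_{a,b}^s \simeq \tw_{\Delta'}\bigl(\I^{(s)}(\FTmin_{a,b-s})\bigr)$ in $\VSred_{a,b}$ for a twist $\Delta'$ with the correct ($h\Delta$) curvature. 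The remaining task is to identify $\tw_{\Delta'}\bigl(\I^{(s)}(\FTmin_{a,b-s})\bigr)$ with $\I^{(s)}(\VFT_{a,b-s})$. For this I would use that $\VFT_{a,b-s}$ is the curved lift of the \emph{invertible} complex $\FT_{a,b-s} = \MCCS_{a,b-s}^0$ in $\VSred_{a,b-s}$, and that curved lifts of invertible complexes are unique up to homotopy equivalence by Lemma \ref{lem:curvinginvertibles} (the relevant sheet-algebra-sliding homotopies along the $2$-strand full twist exist, e.g. by Lemma \ref{lem:crossing Theta} or the cited \cite{RW,HRW1}). Since the functor $\I^{(s)}$ is well-defined on curved complexes (it preserves $h\Delta$-curvature by \eqref{eq:reduced curvature stability} and the identity $h_i(\leftL - \rightL) = h_i(\leftX_2 - \rightX_2')$) and takes homotopy equivalences to homotopy equivalences, it suffices to check that $\tw_{\Delta'}(\FTmin_{a,b-s})$ and $\VFT_{a,b-s}$ are both curved lifts of the invertible complex $\FT_{a,b-s}$ in $\VSred_{a,b-s}$ after undoing $\I^{(s)}$ — more carefully, that $\Delta'$ is the image under $\I^{(s)}$ of a twist on $\FTmin_{a,b-s}$; this last point follows because the perturbed twist is built from the homotopies in the image of $\I^{(s)}$, and because $\I^{(s)}$ is faithful enough on the relevant $\mathrm{End}$-complexes (its effect on $2$-morphisms is extension of scalars along \eqref{eq:vbar l and vbar b}). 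Then uniqueness (Lemma \ref{lem:curvinginvertibles}) gives $\tw_{\Delta'}(\FTmin_{a,b-s}) \simeq \VFT_{a,b-s}$, and applying $\I^{(s)}$ and composing homotopy equivalences yields $\YMCCSmin_{a,b}^s \simeq \I^{(s)}(\VFT_{a,b-s})$.

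The main obstacle I anticipate is the bookkeeping in the final identification step: one must verify that the perturbed twist $\Delta'$ produced by Proposition \ref{prop:HPT} genuinely lies in the image of $\I^{(s)}$ (equivalently, that the curved lift of $\I^{(s)}(\FTmin_{a,b-s})$ obtained is itself of the form $\I^{(s)}$ of a curved lift of $\FTmin_{a,b-s}$), so that Lemma \ref{lem:curvinginvertibles} can be applied \emph{before} rather than after applying $\I^{(s)}$. An alternative route that sidesteps this is to prove uniqueness of curved lifts of $\I^{(s)}(\FTmin_{a,b-s})$ directly: although $\MCCS_{a,b}^s$ is not invertible for $0 < s < b$, the curved lift $\I^{(s)}(\VFT_{a,b-s})$ might still be characterized by the homotopies it encodes for the sheet-algebra actions, via an obstruction-theoretic argument modeled on the proof of Lemma \ref{lem:curvinginvertibles} and \cite[Section 2.10]{GH} — the key input being that $\I^{(s)}$ is (homotopy) fully faithful on endomorphism dg algebras, so that $\End_{\CS_{a,b}}\bigl(\I^{(s)}(\FT_{a,b-s})\bigr) \simeq \End_{\CS_{a,b-s}}(\FT_{a,b-s}) \simeq \End_{\CS_{a,b-s}}(\oone_{a,b-s})$, killing all obstructions in negative cohomological degree. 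Either way the conclusion is the same, and I would present whichever of the two arguments reads most cleanly given the machinery already set up in \S\ref{ss:curved cxs} and \S\ref{ss:curved rickard}.
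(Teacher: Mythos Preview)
Your overall strategy---perturb the uncurved equivalence and then invoke uniqueness of curved lifts---is sound, but your Route~2 justification contains an error: $\I^{(s)}$ is \emph{not} homotopy fully faithful, and $\End_{\CS_{a,b}}(\I^{(s)}(\FT_{a,b-s})) \not\simeq \End_{\CS_{a,b-s}}(\FT_{a,b-s})$. Already for $X=\oone_{a,b-s}$ one has $\End_{\CS_{a,b}}(\I^{(s)}(\oone_{a,b-s}))=\End(\Dig_{a,b}^s)$, and since $\Dig_{a,b}^s$ splits as a direct sum of shifted copies of $\oone_{a,b}$ this is strictly larger than $\Sym(\X_1|\L)$. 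What you actually need, and what \emph{is} true, is the weaker statement that $\End_{\CS_{a,b}}(\I^{(s)}(\FT_{a,b-s}))$ has homology concentrated in cohomological degree zero: invertibility of $\FT_{a,b-s}\boxtimes\oone_s$ together with biadjointness of merge and split reduces this $\End$-complex to (shifts of) $\End(\Dig_{a,b}^s)$, which is a bimodule rather than a genuine complex. That suffices for the obstruction theory, so Route~2 can be repaired. Route~1, by contrast, is hard to make rigorous: the perturbed twist $\Delta'$ produced by Proposition~\ref{prop:HPT} involves the homotopy $k_X$, and there is no reason the homotopy data for $\MCCSmin_{a,b}^s \simeq \I^{(s)}(\FTmin_{a,b-s})$ lie in the image of $\I^{(s)}$.

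The paper takes a different route that sidesteps uniqueness for non-invertible objects entirely. It first handles $s=0$, where $\FT_{a,b-s}$ is invertible and Lemma~\ref{lem:curvinginvertibles} applies directly to give $\YMCCSmin_{a,b-s}^0 \simeq \VFT_{a,b-s}$ in $\VSred_{a,b-s}$; applying the functor $\I^{(s)}$ then yields $\I^{(s)}(\YMCCSmin_{a,b-s}^0) \simeq \I^{(s)}(\VFT_{a,b-s})$. The remaining step is an explicit \emph{isomorphism} $\YMCCSmin_{a,b}^s \cong \I^{(s)}(\YMCCSmin_{a,b-s}^0)$, established by checking that under the uncurved identification $\I^{(s)}(W_k^{(b-s)}) \cong W_k^{(b)}\otimes\largewedge^s[\zeta^{(k)}_{k+1},\ldots,\zeta^{(k)}_b]$ from \cite{HRW1} the curved component $\Delta^v$ matches on both sides---a symmetric-function computation using $h$-reduction (Lemma~\ref{lemma:h reduction}) and the substitution rule \eqref{eq:vbar l and vbar b}. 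The paper thus trades your soft obstruction-theoretic argument for a concrete calculation; your approach (once corrected) is conceptually cleaner, but the degree-zero concentration of the relevant $\End$-complex deserves its own lemma rather than the one-line claim you gave.
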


\begin{proof}
Recall that \cite[\QzeroFT]{HRW1} proves $\MCCSmin^0_{a,b{-}s}
\simeq \FT_{a,b-s}$.  Proposition~\ref{prop:conservation} guarantees that there
exists an induced homotopy equivalence between the curved lift
$\YMCCSmin^0_{a,b{-}s}$ of $\MCCSmin^0_{a,b{-}s}$ and some curved lift of
$\FT_{a,b-s}$. However, since $\FT_{a,b-s}$ is invertible in $\CS_{a,b-s}$,
Lemma~\ref{lem:curvinginvertibles} implies such lifts are unique up to homotopy
equivalence, and we get $\YMCCSmin^0_{a,b{-}s} \simeq \VFT_{a,b-s}$.   Applying
the functor $\I^{(s)}$ then gives
\[
\I^{(s)}(\YMCCSmin^0_{a,b{-}s}) \simeq \I^{(s)}(\VFT_{a,b-s})\, .
\]
To complete the proof, we will show that
\begin{equation}\label{eq:MCCS curved}
\YMCCSmin_{a,b}^s \cong \I^{(s)}(\YMCCSmin_{a,b-s}^0) \, .
\end{equation}
To distinguish the webs $W_k$ living in the categories ${}_{a,b-s}\SSBim_{a,b-s}$ 
and those in ${}_{a,b}\SSBim_{a,b}$, we will use the notation $W_k^{(b-s)}$ and $W_k^{(b)}$, 
mimicking our notation for the $v$-variables.
First, note that $\I^{(s)}(\YMCCSmin_{a,b-s}^0)$ is a direct sum (with shifts) of terms of the form 
$\I^{(s)}(W_k^{(b-s)})\otimes \largewedge[\zeta^{(k)}_1,\ldots,\zeta^{(k)}_k]$.  
\cite[\IofWk]{HRW1} establishes an isomorphism
\[
\I^{(s)}(W_k^{(b-s)}) \cong W_k^{(b)}\otimes \largewedge^s[\zeta^{(k)}_{k+1},\ldots,\zeta^{(k)}_b]\, .
\]
Crucially for our present considerations, this isomorphism is $\Sym(\leftM^{(k)})$-linear.

To prove \eqref{eq:MCCS curved}, we only need to verify the commutativity of squares of the form
\[
\begin{tikzcd}[row sep=3em,column sep=5em]
\I^{(s)}(W_k^{(b-s)}\otimes \largewedge[\zeta^{(k)}_1,\ldots,\zeta^{(k)}_k]) 
\arrow[r,"\I^{(s)}(
%	\d^v \text{ or } 
	\Delta^v)"] 
&\I^{(s)}(W_k^{(b-s)}\otimes \largewedge[\zeta^{(k)}_1,\ldots,\zeta^{(k)}_k]) \\
W_k^{(b)}\otimes \largewedge[\zeta^{(k)}_1,\ldots,\zeta^{(k)}_k]\otimes 
	\largewedge^s[\zeta^{(k)}_{k+1},\ldots,\zeta^{(k)}_b] 
\arrow[u,"\cong"] \arrow[r,"
% \d^v \text{ or } 
\Delta^v"] 
& W_k^{(b)}\otimes \largewedge[\zeta^{(k)}_1,\ldots,\zeta^{(k)}_k]\otimes 
	\largewedge^s[\zeta^{(k)}_{k+1},\ldots,\zeta^{(k)}_b] 
\arrow[u,"\cong"]
\end{tikzcd}
\]
since the commutativity of analogous squares for $\d^v$ and $\d^h$ has already
been established in the proof of \cite[\PropQs]{HRW1}.

Equation \eqref{eq:twists} shows that
the action of $\Delta^v$ on $W_k^{(b-s)}\otimes \largewedge[\zeta_1^{(k)},\ldots,\zeta_k^{(k)}]$ is given by
\[
\Delta^v\Big|_{W_k^{(b-s)}\otimes \wedge[\zeta_1^{(k)},\ldots,\zeta_k^{(k)}]}
= \sum_{j=1}^k  (-1)^{j-1}\zeta_j^{(k)}\sum_{i=j}^{b-s}  h_{i-j}(\M^{(k)}) \bar{v}_i^{(b-s)}\, .
\]
(Here, and in the following, we slightly abuse notation in the ordering of our tensor factors; 
since both $h_i(\leftM^{(k)})$ and all $v$-variables have even cohomological degree, 
this does not cause any hidden sign issues.)
Now, we apply $\I^{(s)}$ to this, obtaining
\begin{multline}\label{eq:temp}
\I^{(s)}\left(\Delta^v\Big|_{W_k^{(b-s)}\otimes \wedge[\zeta_1^{(k)},\ldots,\zeta_k^{(k)}]}\right) 
= \sum_{j=1}^k  (-1)^{j-1}\zeta_j^{(k)} \sum_{i=j}^{b-s}  h_{i-j}(\M^{(k)}) \I^{(s)}(\bar{v}_i^{(b-s)}) \\
= \sum_{j=1}^k  (-1)^{j-1}\zeta_j^{(k)} \sum_{i=j}^{b-s}  h_{i-j}(\M^{(k)})
	\left( \bar{v}_i^{(b)} + (-1)^{b-s-i}\!\!\! \sum_{m=b-s+1}^b\!\!\! \Schur_{(m-b+s-1|b-s-i)}(\leftL) \bar{v}_m^{(b)} \right).
\end{multline}
On the other hand, 
\begin{equation}\label{eq:tempt}
\Delta^v\Big|_{W_k^{(b)}\otimes \wedge[\zeta_1^{(k)},\ldots,\zeta_k^{(k)}]} 
= \sum_{j=1}^k (-1)^{j-1} \zeta_j^{(k)} \sum_{i=j}^{b}  h_{i-j}(\M^{(k)}) \bar{v}_i^{(b)}
\end{equation}
and it suffices to show that \eqref{eq:tempt} equals \eqref{eq:temp}.
By comparing coefficients of $\zeta_j^{(k)}$, this follows from
\[
\sum_{i=j}^b h_{i-j}(\leftM^{(k)}) \bar{v}_i^{(b)}
= 
\sum_{i=j}^{b-s}  h_{i-j}(\M^{(k)}) \bar{v}_i^{(b)} 
+ \sum_{i=j}^{b-s} (-1)^{b-s-i} h_{i-j}(\M^{(k)}) \sum_{m=b-s+1}^b \Schur_{(m-b+s-1|b-s-i)}(\leftL) \bar{v}_m^{(b)}
\]
for $1 \leq j \leq k$, or, after reordering terms:
\[
\sum_{m=b-s+1}^b h_{m-j}(\leftM^{(k)}) \bar{v}_m^{(b)}
= \sum_{m=b-s+1}^b \sum_{i=j}^{b-s} (-1)^{b-s-i} h_{i-j}(\M^{(k)}) \Schur_{(m-b+s-1|b-s-i)}(\leftL) \bar{v}_m^{(b)}
\]
The latter is an application of Lemma~\ref{lemma:h reduction} with $\X =
\leftM^{(k)}$, $\Y = \leftL - \leftM^{(k)}$, $r=m-b+s$, and $c=b-s-j$. 
\end{proof}

\begin{cor}[Curved skein relation]
\label{cor:curvedMCCS} 
We have a homotopy equivalence in $\VS_{a,b}$ of the form
\[
\tw_{(\d')^c+(\Delta')^c}\left( \bigoplus_{s=0}^b 
	\qdeg^{s(b-1)} \tdeg^s \left\llbracket
\begin{tikzpicture}[scale=.4,smallnodes,rotate=90,anchorbase]
   \draw[very thick] (1,-1) to [out=150,in=270] (0,0); %under strand
   \draw[line width=5pt,color=white] (0,-2) to [out=90,in=270] (.5,0) to [out=90,in=270] (0,2);
   \draw[very thick] (0,-2) node[right=-2pt]{$a$} to [out=90,in=270] (.5,0) 
   	to [out=90,in=270] (0,2) node[left=-2pt]{$a$};
   \draw[very thick] (1,1) to (1,2) node[left=-2pt]{$b$};
   \draw[line width=5pt,color=white] (0,0) to [out=90,in=210] (1,1); 
   \draw[very thick] (0,0) to [out=90,in=210] (1,1); %over strand
   \draw[very thick] (1,-2) node[right=-2pt]{$b$} to (1,-1); 
   \draw[very thick] (1,-1) to [out=30,in=330] node[above=-2pt]{$s$} (1,1); 
   \end{tikzpicture}
		\right\rrbracket_\VS \right) \simeq 
	\qdeg^{b(a-b-1)}\tdeg^b\cdot \YK\left(
		 \left\llbracket
\begin{tikzpicture}[scale=.4,smallnodes,anchorbase,rotate=270]
\draw[very thick] (1,-1) to [out=150,in=270] (0,1) to (0,2) node[right=-2pt]{$b$}; %under strand
\draw[line width=5pt,color=white] (0,-2) to (0,-1) to [out=90,in=210] (1,1);
\draw[very thick] (0,-2) node[left=-2pt]{$b$} to (0,-1) to [out=90,in=210] (1,1);
\draw[very thick] (1,1) to (1,2) node[right=-2pt]{$a$};
\draw[very thick] (1,-2) node[left=-2pt]{$a$} to (1,-1); 
\draw[very thick] (1,-1) to [out=30,in=330] node[below=-1pt]{$a{-}b$} (1,1); 
\end{tikzpicture}
		\right\rrbracket\right)
\]
where the twist on the left-hand side strictly increases the parameter $s$ and
$(\Delta')^c$ is $\V$-irrelevant.  
\end{cor}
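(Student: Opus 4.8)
The plan is to assemble the curved skein relation by combining the already-established pieces: the reduced curved identity \eqref{eq:curvedskeinmincx} from Proposition \ref{prop:reduced-curved-skein-rel}, the topological identification of the reduced curved complexes in Proposition \ref{prop:MCCS topologically}, and the passage from the reduced categories $\VSred_{a,b}$ to the full categories $\VS_{a,b}$ via the reduction functor $\pi$ of Definition \ref{def:reduction}. Concretely, I would first recall that Proposition \ref{prop:reduced-curved-skein-rel} already gives an \emph{isomorphism} in $\VSred_{a,b}$
\[
\tw_{\d^c+\Delta^c}\left(\bigoplus_{s=0}^b \qdeg^{s(b-1)} \tdeg^s \YMCCSmin_{a,b}^s \right)
\cong
\qdeg^{b(a-b-1)}\tdeg^b \ \YKMCSmin_{a,b}
\]
with the Maurer--Cartan element $\d^c+\Delta^c$ strictly increasing the index $s$, and that by \eqref{eq:VKMCS} and the definition $\YK(-)=\YK(\MCS)$ (Definition \ref{def:dv}), the right-hand side is exactly $\qdeg^{b(a-b-1)}\tdeg^b \YK(\MCS_{a,b})$ up to homotopy equivalence --- i.e. the curved Koszul complex appearing on the right-hand side of the claimed equivalence. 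Since $\VSred_{a,b}$ is (canonically) the image of the idempotent $\pi$ inside $\VS_{a,b}$ (Remark \ref{rem:Vprime}), this isomorphism and homotopy equivalence immediately hold in $\VS_{a,b}$ as well.

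\textbf{Key steps.}
The second step is to replace each summand $\YMCCSmin_{a,b}^s$ on the left-hand side by the curved lift $\left\llbracket \ \cdot \ \right\rrbracket_\VS := \I^{(s)}(\VFT_{a,b-s})$ of $\MCCS_{a,b}^s$. Proposition \ref{prop:MCCS topologically} supplies homotopy equivalences $\YMCCSmin_{a,b}^s \simeq \I^{(s)}(\VFT_{a,b-s})$ for each $0\leq s\leq b$; taking the direct sum (with the appropriate grading shifts $\qdeg^{s(b-1)}\tdeg^s$) gives a homotopy equivalence of the underlying graded objects. The third step is to transport the twist $\d^c+\Delta^c$ across this homotopy equivalence using homological perturbation theory (Proposition \ref{prop:HPT}): the component maps of $\d^c+\Delta^c$ strictly increase $s$, so the relevant $\Id + \alpha\circ k$ is invertible by the one-sided/nilpotence argument of Remark \ref{rem:onesided}, and we obtain a twist $(\d')^c+(\Delta')^c$ on $\bigoplus_s \qdeg^{s(b-1)}\tdeg^s \I^{(s)}(\VFT_{a,b-s})$ together with a homotopy equivalence
\[
\tw_{\d^c+\Delta^c}\left(\bigoplus_{s=0}^b \qdeg^{s(b-1)}\tdeg^s \YMCCSmin_{a,b}^s\right)
\simeq
\tw_{(\d')^c+(\Delta')^c}\left(\bigoplus_{s=0}^b \qdeg^{s(b-1)}\tdeg^s \I^{(s)}(\VFT_{a,b-s})\right).
\]
Composing with the isomorphism from step one yields precisely the stated homotopy equivalence. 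To conclude, I would check the two auxiliary claims about the transported twist: that $(\d')^c+(\Delta')^c$ still strictly increases $s$ (this is automatic because the conjugation by $\Id + \alpha\circ k$ in Proposition \ref{prop:HPT} only adds terms that increase $s$ further, since both $\alpha$ and the homotopy respect the $s$-filtration suitably), and that $(\Delta')^c$ is $\V$-irrelevant (immediate because $\Delta^c$ is $\V$-irrelevant by \eqref{eq:twists}, the homotopy equivalences of Proposition \ref{prop:MCCS topologically} are morphisms in $\VS_{a,b}$, and the perturbation formulae preserve the property of vanishing modulo $\langle \V\rangle$, so the $\V$-degree-zero part of the transported twist equals the transported $\d^c$, which is $\V$-irrelevant's complement).

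\textbf{Main obstacle.}
I expect the main subtlety to be bookkeeping around the interaction of the perturbation transport with the two relevant filtrations simultaneously --- the cohomological $\tdeg$-grading internal to each $\YMCCSmin^s_{a,b}$ (used to invoke boundedness for nilpotence) and the ``$s$-filtration'' by the skein index --- so that one can legitimately claim the transported Maurer--Cartan element $(\d')^c+(\Delta')^c$ is again \emph{one-sided} with respect to $s$. This requires knowing that the homotopy equivalences $\YMCCSmin^s_{a,b}\simeq \I^{(s)}(\VFT_{a,b-s})$, and the homotopies realizing them, are $s$-preserving (which they are, as each is internal to a single value of $s$), so that the perturbed data $\tilde f, \tilde g$ of Proposition \ref{prop:HPT} still only involve the $s$-increasing part of $\d^c+\Delta^c$ when computing $\beta = f\circ\alpha\circ\tilde g$. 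Once this filtration compatibility is spelled out, the remaining verifications are routine, following the pattern of the proof of Theorem \ref{thm:braidinvariant} and the uncurved skein relation in Proposition \ref{prop:skein}.
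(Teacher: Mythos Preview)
Your proposal is correct and follows essentially the same approach as the paper: invoke \eqref{eq:VKMCS} and Proposition~\ref{prop:reduced-curved-skein-rel} to identify the right-hand side with $\tw_{\d^c+\Delta^c}\big(\bigoplus_s \qdeg^{s(b-1)}\tdeg^s \YMCCSmin_{a,b}^s\big)$, then use Proposition~\ref{prop:MCCS topologically} together with homological perturbation (Proposition~\ref{prop:HPT}) to transport the twist across the summand-wise equivalences $\YMCCSmin_{a,b}^s\simeq \I^{(s)}(\VFT_{a,b-s})$. Your additional remarks on why the transported twist remains one-sided in $s$ and why $(\Delta')^c$ is $\V$-irrelevant are correct elaborations that the paper leaves implicit (the former follows from the formula $\beta=f\circ\alpha\circ\tilde g$ with $s$-preserving $f,g,k$; the latter from the fact that the perturbation formulae are $\k[\V]$-linear and $\Delta^c$ is already $\V$-irrelevant).
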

\begin{proof} The right-hand side is a shift of $\YK(\MCS)\simeq
\YKMCSmin_{a,b}$, as observed in \eqref{eq:VKMCS}.
Proposition~\ref{prop:reduced-curved-skein-rel} shows this is homotopy
equivalent to 
$\tw_{\d^c+\Delta^c} \big(\bigoplus_{s=0}^b \qdeg^{s(b-1)} \tdeg^s \YMCCSmin_{a,b}^s\big)$
where the twist $\d^c+\Delta^c$ raises $s$-degree by one.
In Proposition~\ref{prop:MCCS topologically} we have
seen that $\YMCCSmin_{a,b}^s\simeq  \I^{(s)}(\VFT_{a,b-s})$. 
Homological perturbation (i.e. Proposition \ref{prop:HPT}) now extends 
the direct sum of these equivalences to a homotopy equivalence:
\[
\tw_{\d^c+\Delta^c} \big(\bigoplus_{s=0}^b \qdeg^{s(b-1)} \tdeg^s \YMCCSmin_{a,b}^s \big) 
\simeq 
\tw_{(\d')^c+(\Delta')^c} \big(\bigoplus_{s=0}^b \qdeg^{s(b-1)} \tdeg^s \I^{(s)}(\VFT_{a,b-s}) \big)
\]
for some twist $(\d')^c+(\Delta')^c$ that necessarily increases $s$-degree by 
the formula for the induced twist in Proposition \ref{prop:HPT}.
\end{proof}

\section{The splitting map}
\label{sec:splitting}

Recall from Convention \ref{conv:FT} that $\FTmin_{a,b} \defeq \MCCSmin_{a,b}^0$, 
which is a simplified model for the Rickard complex 
$\FT_{a,b} \defeq \MCCS_{a,b}^0$ of the $(a,b)$-colored full twist braid.
Let $\VFTmin_{a,b} := \YMCCSmin_{a,b}^0$ be its curved lift from Definition \ref{def:curved MCCS}.
In this section, we study a canonical closed morphism $\Sigma_{a,b} \in
\Hom^{0}_{\VSred_{a,b}}(\VFTmin_{a,b}, \oone_{a,b})$ that is the colored analogue
of the ``link splitting'' map from \cite{GH}. We will use the notation from
\S\ref{ss:two strand V cats} throughout.

\subsection{Characterization of the splitting map}
\label{ss:splitting map}

To begin, we show that, up to homotopy and scalar multiple, 
there is a unique closed degree-zero map $\VFTmin_{a,b} \to \oone_{a,b}$. 
This morphism admits an abstract description as a perturbation of the following.

\begin{definition}\label{def:splittingmapzero} 
Let the \emph{undeformed splitting map} $\splitt_{a,b} \in \Hom_{\CS_{a,b}}(\FTmin_{a,b}, \oone_{a,b})$ 
be the closed, degree-zero morphism given as the composite of the projection 
$\FTmin_{a,b}\rightarrow \qdeg^{ab} W_b$
with the unzip morphism $\un \colon \qdeg^{a b} W_b\rightarrow \oone_{a,b}$.
\end{definition}

We now have the following.

\begin{prop}\label{defthm:splittingmap} Precomposition with $\splitt_{a,b}$
gives an $\End_{\CS_{a,b}}(\oone_{a,b})$-linear homotopy equivalence
\[
\End_{\CS_{a,b}}(\oone_{a,b}) \to \Hom_{\CS_{a,b}}(\FTmin_{a,b}, \oone_{a,b}).
\]
Moreover, there exists a closed, degree-zero lift $\Sigma_{a,b} \in
\Hom_{\VS_{a,b}}(\VFTmin_{a,b}, \oone_{a,b})$ of $\splitt_{a,b}$ such that
precomposition with $\Sigma_{a,b}$ gives an
$\End_{\VS_{a,b}}(\oone_{a,b})$-linear homotopy equivalence
\[
\End_{\VS_{a,b}}(\oone_{a,b}) \to \Hom_{\VS_{a,b}}(\VFTmin_{a,b}, \oone_{a,b}).
\]
\end{prop}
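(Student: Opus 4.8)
The strategy is a two-step argument: first establish the statement at the level of uncurved complexes in $\CS_{a,b}$, then bootstrap to the curved setting using homological perturbation and the general machinery of \S\ref{ss:curved cxs}. For the uncurved statement, I would first compute the $\Hom$-complex $\Hom_{\CS_{a,b}}(\FTmin_{a,b},\oone_{a,b})$ using the explicit model $\FTmin_{a,b} = \MCCSmin_{a,b}^0$, which is built from the webs $W_k$ via the differentials $\chi_0^+$. The key input is the known homotopy equivalence $\FTmin_{a,b}\simeq \FT_{a,b}$ together with the invertibility of the full twist Rickard complex in $\CS_{a,b}$: since $\FT_{a,b}$ is invertible, horizontal composition with its inverse gives
\[
\Hom_{\CS_{a,b}}(\FTmin_{a,b},\oone_{a,b}) \simeq \Hom_{\CS_{a,b}}(\oone_{a,b}, \FT_{a,b}^{-1}) \simeq \End_{\CS_{a,b}}(\oone_{a,b}),
\]
where the last equivalence uses that the inverse of the full twist is again homotopy equivalent to a complex supported in appropriate degrees. (Alternatively, one invokes the explicit description of $\End_{\CS_{a,b}}(\FT_{a,b})$ from \cite{HRW1}.) One then checks that under this chain of equivalences, the class of $\id_{\oone_{a,b}}$ corresponds (up to nonzero scalar) to $\splitt_{a,b}$; concretely, $\splitt_{a,b}$ is the composition of the projection onto the top term $\qdeg^{ab}W_b$ with the unzip morphism $\un$, and one verifies this represents the generator in cohomological degree zero. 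This gives $\End_{\CS_{a,b}}$-linearity of precomposition with $\splitt_{a,b}$ as an equivalence, since the equivalence intertwines the module structures by construction.

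For the curved statement, the plan is to apply Proposition~\ref{prop:conservation} (homological perturbation) to lift $\splitt_{a,b}$. Since $\VFTmin_{a,b} = \YMCCSmin_{a,b}^0$ is by construction a curved lift of $\FTmin_{a,b}$ in $\VSred_{a,b}$, and $\oone_{a,b}$ carries the trivial curved lift (its curvature vanishes, as $h_k(\leftX_i-\rightX_i)$ acts by zero on the identity bimodule), Proposition~\ref{prop:conservation} applied to the homotopy equivalence $\FTmin_{a,b}\simeq \oone_{a,b}\star\FT_{a,b}$ — or more directly, the analysis of the morphism complex — produces a homotopy equivalence $\Hom_{\VS_{a,b}}(\VFTmin_{a,b},\oone_{a,b})\simeq \End_{\VS_{a,b}}(\oone_{a,b})$. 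Here $\End_{\VS_{a,b}}(\oone_{a,b})\cong \Sym(\leftX_1|\leftX_2)[\V]$ (with zero differential, by the curvature computation). The closed degree-zero lift $\Sigma_{a,b}$ is then defined as the morphism corresponding to $\id_{\oone_{a,b}}$ under this equivalence; its image in $\CS_{a,b}$ under the forgetful functor of Remark~\ref{rem:curvedlift} is $\splitt_{a,b}$ because the perturbation procedure of Proposition~\ref{prop:HPT} is the identity modulo the ideal $\k[\V]_{>0}$.

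The final point — that precomposition with $\Sigma_{a,b}$ gives an $\End_{\VS_{a,b}}(\oone_{a,b})$-linear homotopy equivalence — follows since the homotopy equivalence from homological perturbation is $\End_{\VS_{a,b}}(\oone_{a,b})$-linear: the module structure on both sides comes from horizontal post-composition with endomorphisms of $\oone_{a,b}$, which commutes with the perturbation datum (the twist $\Delta$ acts "internally" on $\VFTmin_{a,b}$ and is left fixed). The main obstacle I anticipate is verifying carefully that the chain of equivalences in the uncurved case genuinely sends $\id_{\oone_{a,b}}$ to a nonzero multiple of $\splitt_{a,b}$ rather than to a morphism that merely agrees with it up to higher-order corrections — this requires unwinding the explicit form of the generator of $\End_{\CS_{a,b}}^0(\FT_{a,b})$ and matching it against the projection-then-unzip description of $\splitt_{a,b}$. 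Once that identification is pinned down, the curved statement is a formal consequence of the perturbation lemma and the vanishing of $\End_{\VS_{a,b}}^{<0}(\oone_{a,b})$, which forces the induced twist on the target $\oone_{a,b}$ to be zero (cf.\ the argument in Lemma~\ref{lem:curvinginvertibles}).
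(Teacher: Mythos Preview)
Your curved argument is essentially correct and matches the paper's strategy: once the uncurved statement is in hand, one applies homological perturbation (Proposition~\ref{prop:HPT}) to the Hom-complex, noting that the induced twist on $\End_{\VS_{a,b}}(\oone_{a,b})$ must vanish for degree reasons, and $E_{a,b}$-linearity follows because all the data in the perturbation are $E_{a,b}$-linear.

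The gap is in your uncurved argument. Invertibility of $\FT_{a,b}$ gives you $\Hom_{\CS_{a,b}}(\FTmin_{a,b},\oone_{a,b}) \simeq \Hom_{\CS_{a,b}}(\oone_{a,b},\FT_{a,b}^{-1})$, but the second equivalence you write, $\Hom_{\CS_{a,b}}(\oone_{a,b},\FT_{a,b}^{-1}) \simeq \End_{\CS_{a,b}}(\oone_{a,b})$, is \emph{not} a formal consequence of invertibility. For an invertible object $X$ one always has $\End(X)\simeq\End(\oone)$, but $\Hom(X,\oone)\simeq\Hom(\oone,X^{-1})$ genuinely depends on $X$ (already for $X=\oone[n]$ this is a nontrivial shift). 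So the step you flag as ``the main obstacle'' is in fact the entire content of the proposition, and your proposed route just relocates it to the inverse full twist without resolving it.

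The paper's proof avoids this circularity by working directly with the explicit model. It writes $\FTmin_{a,b}=\tw_{\d^v}\big(\bigoplus_{l=0}^b R_l\big)$, where $R_l$ is the $l^{\text{th}}$ ``row'' complex of \cite[\PropRows]{HRW1}, and then proves the key vanishing $\Hom_{\CS_{a,b}}(R_l,\oone_{a,b})\simeq 0$ for $0\leq l\leq b-1$ (Corollary~\ref{cor:negstab}). This vanishing is a genuine Hochschild-type computation: one identifies $R_l$ with a braided web, applies duality to rewrite the Hom as $\HH^0$ of $C^\vee_{b,a}\hComp\F^{(a-b+l)}\E^{(l)}$, and then shows via a Markov-move/partial-trace argument (Proposition~\ref{prop:negstab}) that this $\HH^\bullet$ is supported in strictly positive $\adeg$-degree. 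Only the top row $R_b=\qdeg^{ab}W_b$ survives, and $\Hom(\qdeg^{ab}W_b,\oone_{a,b})$ is freely generated over $\End(\oone_{a,b})$ by the unzip map (Corollary~\ref{cor:basicHom}); this identifies the generator with $\splitt_{a,b}$ on the nose, not just up to scalar.
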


\begin{proof}
Note that $\FTmin_{a,b}$ equals the finite one-sided twisted complex 
$\tw_{\d^v} \big(\bigoplus_{l=0}^b R_l \big)$,
where $R_l := \big( \bigoplus_{k=l}^b P_{k,l,0}, \d^h \big)$ 
and $\delta^v$ decreases the $l$-degree by one.
It follows that 
\[
\Hom_{\CS_{a,b}}(\FTmin_{a,b}, \oone_{a,b}) = 
\tw_{(\d^v)^*} \left(\bigoplus_{l=0}^b \Hom_{\CS_{a,b}}(R_{l},\oone_{a,b}) \right)
\]
is likewise a one-sided twisted complex ($(\d^v)^*$ now increases $l$-degree by one).
Considering the inclusion
\begin{equation}\label{eq:inHomFT}
\iota \colon \Hom_{\CS_{a,b}}(R_b, \oone_{a,b}) \hookrightarrow \Hom_{\CS_{a,b}}(\FTmin_{a,b}, \oone_{a,b})\, ,
\end{equation} 
we see that
\[
\cone(\iota) \simeq \tw_{(\d^v)^*} \left(\bigoplus_{l=0}^{b-1} \Hom_{\CS_{a,b}}(R_{l},\oone_{a,b}) \right).
\]
Now, \cite[\PropRows]{HRW1} gives that
\[
R_{l} \simeq
\left\llbracket
\begin{tikzpicture}[scale=.5,smallnodes,anchorbase]
	\draw[very thick] (-2,0) node[left]{$a$} to (0,0) \pr (2,1) node[right]{$b$};
	\draw[line width=5pt,color=white] (0,1) \pr (2,0);
	\draw[very thick] (-2,1) node[left]{$b$} to (0,1) \pr (2,0) node[right]{$a$};
	\draw[very thick] (-1.75,1) to (-1.5,.5)node[right,yshift=-1pt,xshift=-1pt]{$l$} to  (-1.25,0);
	\draw[very thick] (-.75,0) to (-.5,.5) to  (-.25,1);
\end{tikzpicture}
\right\rrbracket
\]
so Corollary~\ref{cor:negstab} (proved below) implies that
$\Hom_{\CS_{a,b}}(R_{l},\oone_{a,b}) \simeq 0$ when $0 \leq l \leq b-1$.
Proposition \ref{prop:HPT} allows for these equivalences to be applied to $\cone(\iota)$
term-wise (see Remark \ref{rem:onesided}), thus $\cone(\iota) \simeq 0$.
As a consequence, \eqref{eq:inHomFT} is a homotopy equivalence; 
observe that it is $\End_{\CS_{a,b}}(\oone_{a,b})$-linear.
Now, 
\[
R_{b} 
=
\qdeg^{ab}
\begin{tikzpicture}[smallnodes,rotate=90,anchorbase,scale=.5]
	\draw[very thick] (.5,.375) to [out=150,in=270] (0,1) node[left,xshift=2pt]{$a$};
	\draw[very thick] (.5,.375) to [out=30,in=270] (1,1) node[left,xshift=2pt]{$b$};
	\draw[very thick] (.5,-.375) to (.5,.375);
	\draw[very thick] (1,-1) node[right,xshift=-2pt]{$b$} to  [out=90,in=330] (.5,-.375);
	\draw[very thick] (0,-1) node[right,xshift=-2pt]{$a$} to [out=90,in=210] (.5,-.375);
\end{tikzpicture}
\]
so Corollary \ref{cor:basicHom} implies that
\[
\End_{\CS_{a,b}}(\oone_{a,b}) \to \Hom_{\CS_{a,b}}(R_b, \oone_{a,b})
\, , \quad
\phi \mapsto \phi \circ \un
\]
is an $\End_{\CS_{a,b}}(\oone_{a,b})$-linear isomorphism.
The first result now follows from Definition \ref{def:splittingmapzero}.

Next, we consider the $\V$-deformation. Let
\[
\begin{tikzcd}[cramped]
\Hom_{\CS_{a,b}}(\FTmin_{a,b}, \oone_{a,b})[\V]
\arrow[r,"f",bend left]
\arrow[loop left, "k"]
&
\End_{\CS_{a,b}}(\oone_{a,b})[\V]
\arrow[l,"g",bend left]
\arrow[loop right, "\overline{k}"]
\end{tikzcd}
\]
be the data giving the homotopy equivalence 
just constructed, with scalars extended to $\k[\V]$.
Note that all of the indicated maps are therefore $\End_{\CS_{a,b}}(\oone_{a,b})[\V]$-linear, 
and we have that $g(1)=\splitt_{a,b}$.
Next, note that
$\Hom_{\VS_{a,b}}(\VFTmin_{a,b}, \oone_{a,b}) 
= \tw_{\a}\big( \Hom_{\CS_{a,b}}(\FTmin_{a,b}, \oone_{a,b})[\V] \big)$
for
\begin{equation}\label{eq:alphposVdeg}
\a \in \End_{E_{a,b}}\big( \Hom_{\CS_{a,b}}(\FTmin_{a,b}, \oone_{a,b}) \big) \otimes_\k \k[\V]_{>0}
\end{equation}
with $\wt(\alpha) = \qdeg^0\tdeg^1$.
Here, we use the shorthand 
\begin{equation}\label{eq:firstE}
E_{a,b} := \End_{\CS_{a,b}}(\oone_{a,b})[\V] = \End_{\VS_{a,b}}(\oone_{a,b})\, .
\end{equation}
Since $\FTmin_{a,b}$ is bounded, this implies that $k \circ \alpha$ is nilpotent.
Proposition \ref{prop:HPT} then implies that $\tilde{g} = (1+k\circ \a)^{-1} \circ g$ 
is a homotopy equivalence
from $\tw_{\alpha'}\big( \End_{\CS_{a,b}}(\oone_{a,b})[\V] \big)$ to
$\Hom_{\VS_{a,b}}(\VFTmin_{a,b}, \oone_{a,b})$
for some twist $\alpha'$, which must be zero since $\oone_{a,b}$ is 
supported in a single cohomological degree. Thus,
\[
\tilde{g} \colon \End_{\VS_{a,b}}(\oone_{a,b})
\to
\Hom_{\VS_{a,b}}(\VFTmin_{a,b}, \oone_{a,b})
\]
is a homotopy equivalence.
Set $\Sigma_{a,b} = \tilde{g}(1)$.
Equation \eqref{eq:alphposVdeg} implies that $\tilde{g}(1) = g(1) \mod \k[\V]_{>0}$, 
so $\Sigma_{a,b}$ is indeed a curved lift of $g(1) = \splitt_{a,b}$.
Finally, since $g,k,$ and $\alpha$ are all $E_{a,b}$-linear, 
the same is true for $\tilde{g}$, thus for $\phi \in  \End_{\VS_{a,b}}(\oone_{a,b})$ 
we compute that
\[
\tilde{g}(\phi) = \phi \circ \tilde{g}(1) = \phi \circ \Sigma_{a,b}
\]
as desired.
\end{proof}

\begin{rem}\label{rem:uniqueSigma} Note that the morphism $\Sigma_{a,b}$
``constructed'' in the proof of Proposition \ref{defthm:splittingmap} is not
explicitly specified, since we have not specified the constituent morphisms
(this will be done in Definition~\ref{def:splitting map model} below). However,
any two closed, degree-zero lifts of $\splitt_{a,b}$ are necessarily homotopic,
so $\Sigma_{a,b}$ is uniquely determined up to homotopy. We call this map the
(deformed) \emph{splitting map}. Indeed, the difference between any two such
curved lifts of $\splitt_{a,b}$ is a closed, degree-zero element in the
$\V$-irrelevant submodule $\Hom_{\CS_{a,b}}(\VFTmin_{a,b}, \oone_{a,b})\otimes
\k[\V]_{>0} \subset \Hom_{\VS_{a,b}}(\VFTmin_{a,b}, \oone_{a,b})$. However,
$\Hom_{\VS_{a,b}}(\VFTmin_{a,b}, \oone_{a,b}) \simeq
\End_{\VS_{a,b}}(\oone_{a,b})$ and the only such element in the latter is zero
(which is null-homotopic).
\end{rem}

\begin{example}
For $a\geq b=1$, and using the notation introduced in \S\ref{ss:two strand
V cats}, the curvature element in $\VSred_{a,1}$ is $(x_{a+1}-x_{a+1}')\bar{v}_1$.
A curved lift $\Sigma_{a,1} \colon \VFTmin_{a,1} \to \oone_{a,1}$ of the splitting map
is then given by:
\begin{equation}\label{eq:a1LinkSplit}
\begin{tikzcd}[column sep = large]
\qdeg^a \;
\begin{tikzpicture}[smallnodes,rotate=90,anchorbase,scale=.5]
	\draw[very thick] (0,.375) to [out=150,in=270] (-.5,1);
	\draw[very thick] (0,.375) to [out=30,in=270] (.5,1);
	\draw[very thick] (0,-.375) to (0,.375);
	\draw[very thick] (.5,-1) node[right,xshift=-2pt]{$1$} to [out=90,in=330] (0,-.375);
	\draw[very thick] (-.5,-1) node[right,xshift=-2pt]{$a$} to [out=90,in=210] (0,-.375);
\end{tikzpicture}
\arrow[r, shift left, "x_{a+1} - x_{a+1}' "] \arrow[d, "\chi^+_0"]
&
\qdeg^{a-2} \tdeg \;
\begin{tikzpicture}[smallnodes,rotate=90,anchorbase,scale=.5]
	\draw[very thick] (0,.375) to [out=150,in=270] (-.5,1);
	\draw[very thick] (0,.375) to [out=30,in=270] (.5,1);
	\draw[very thick] (0,-.375) to (0,.375);
	\draw[very thick] (.5,-1) node[right,xshift=-2pt]{$1$} to [out=90,in=330] (0,-.375);
	\draw[very thick] (-.5,-1) node[right,xshift=-2pt]{$a$} to [out=90,in=210] (0,-.375);
\end{tikzpicture}
\arrow[r,"\chi^+_0"] \arrow[l, shift left, "\bar{v}_1"]
&
\qdeg^{-2} \tdeg^2 \;
\begin{tikzpicture}[scale=.5,smallnodes,rotate=90,anchorbase]
		\draw[very thick] (0,-2) node[right,xshift=-2pt]{$a$} to (0,0);
		\draw[very thick] (1,-2) node[right,xshift=-2pt]{$1$} to (1,0);
\end{tikzpicture}
\arrow[dll, "-\bar{v}_1"]
\\
\begin{tikzpicture}[scale=.5,smallnodes,rotate=90,anchorbase]
		\draw[very thick] (0,-2) node[right,xshift=-2pt]{$a$} to (0,0);
		\draw[very thick] (1,-2) node[right,xshift=-2pt]{$1$} to (1,0);
\end{tikzpicture} & &
\end{tikzcd}.
\end{equation}
Note that $\Sigma_{a,1} = \chi^+_0 \mod \bar{v}_1$.
\end{example}

\begin{rem}\label{rem:genSigma} Since $\VFTmin_{a,b} \simeq \VFT_{a,b}$, we also
obtain a degree-zero closed splitting map $\Sigma_{a,b} \colon \VFT_{a,b} \to
\oone_{a,b}$, that is unique up to homotopy. Using this, we can construct a
\emph{deformed splitting map} for the colored positive full twist braid on any
number of strands. Indeed, let $\FT_{\brc}$ denote the Rickard complex
associated to the full twist braid with strands colored $\bre_1,\ldots,\bre_m$.
The full twist braid can be written as a composition of (pure) braids of the
form:
\[
A_{i,j} = \agen_{j-1} \cdots \agen_{i+1} \agen_i^2 \agen_{i+1}^{-1} \cdots \agen_{j-1}^{-1} =
\begin{tikzpicture}[scale=.35,smallnodes,anchorbase]
	\draw[very thick] (0,0) to node[right=-3pt]{$\mydots$} (0,4);
	\draw[very thick] (1,0) to (1,4);	
	\draw[very thick] (4,0) to node[right=-3pt]{$\mydots$} (4,4);
	\draw[very thick] (5,0) to (5,4);	
	\draw[very thick] (7,0) to node[right=-3pt]{$\mydots$} (7,4);
	\draw[very thick] (8,0) to (8,4);	
	\draw[line width=5pt,color=white] (6,0) to [out=90,in=270] (2,2);
	\draw[very thick] (6,0) to [out=90,in=270] (2,2);
	\draw[line width=5pt,color=white] (2,0) to [out=90,in=270] (3,2);
	\draw[very thick] (2,0) to [out=90,in=270] (3,2) to [out=90,in=270] (2,4);
	\draw[line width=5pt,color=white] (2,2) to [out=90,in=270] (6,4);
	\draw[very thick] (2,2) to [out=90,in=270] (6,4);
\end{tikzpicture}
\]
(here, in the interest of space, we break our conventions and write the braid vertically). 
Thus, the $2$-strand splitting maps $\Sigma_{a,b} \colon\VFT_{a,b} \to \oone_{a,b}$
assemble to define the desired splitting map $\Sigma_{\brc} \colon \VFT_{\brc}
\to \oone_{\brc}$. Here, $\VFT_{\brc}$ is the unique curved lift of $\FT_{\brc}$
that is guaranteed to exist by Lemma \ref{lem:curvinginvertibles}. This
more-general splitting map plays a role in \S\ref{s:colored Hilb} below,
and is further generalized in \S\ref{sec:linksplit}.
\end{rem}

\subsection{Explicit description of the full twist splitting map}
\label{ss:explicit splitting map}

In this section, we explicitly
describe the splitting map $\Sigma_{a,b} \colon \VFTmin_{a,b}\rightarrow \oone_{a,b}$
as a morphism in $\VSred_{a,b}$.  We will adopt the convention for alphabets
labeling the web $W_k$ as in \eqref{eq:Walphabets} and the convention for
deformation parameters in $\VSred_{a,b}$ as in \S\ref{ss:two strand V cats}.
In particular, we consider deformation parameters 
$\overline{\V}^{(k)} = \{\bar{v}^{(k)}_1,\ldots,\bar{v}^{(k)}_k\}$
for each $k\geq 0$, as in 
the discussion following Remark \ref{rem:Vprime}. 
By convention we abbreviate
$\bar{v}_i:=\bar{v}_i^{(b)}$ and $\overline{\V}:=\overline{\V}^{(b)}$.
The alphabets $\overline{\V}^{(k)}$ and $\overline{\V}$ are therefore related by 
the substitution rule \eqref{eq:vbar l and vbar b}, with $\leftM^{(k)}=\X_{[a+1,a+k]}$,
and we identify each
$\k[\X_{[1,a+k]},\X_{[1,a+k]}',\overline{\V}^{(k)}]$ as a subalgebra of
$\k[\X,\rightX,\overline{\V}]$ accordingly.
For $a+1\leq i\leq a+k$, we have the elements
\[
\yred_i = \sum_{l=1}^k x_i^{l-1} \bar{v}_l^{(k)} 
\in \k[\X_{[1,a+k]},\X_{[1,a+k]}',\overline{\V}^{(k)}] \subset \k[\X,\rightX,\overline{\V}]
\]
which, as elements of $\k[\X,\rightX,\overline{\V}]$,
do not depend on $k$ so long as $1 \leq k \leq b$.

\begin{remark}\label{rmk:y acts on W} The algebra
$\k[\X,\rightX,\overline{\V}]^{\symg_{a}\times \symg_{k}\times
\symg_{b-k}}$ acts on the web $W_k$, thought of as an object of
$\CS_{a,b}\otimes\k[\overline{\V}]$.  In particular, $\bar{v}_i^{(k)}$ and
appropriately partially symmetric expressions in the $\yred_i$ may be regarded
as an endomorphism of $W_k$.
\end{remark}

We begin with a reformulation of Lemma \ref{lemma:v from y}.

\begin{lem}\label{lemma:v from y2}
For $1 \leq r \leq b$, we have
\begin{equation}\label{eq:ytov}
\bar{v}_r^{(k)}
= (-1)^{r-1} \partial_{a+1} \cdots \partial_{a+k-1}\big(e_{k-r}(\X_{[a+1,a+k-1]}) \cdot \yred_{a+k} \big)
\end{equation}
\end{lem}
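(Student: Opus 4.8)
The plan is to derive the divided-difference formula \eqref{eq:ytov} directly from Lemma~\ref{lemma:v from y}, which already expresses $\bar v_r^{(k)}$ as a ratio of a Haiman determinant to a Vandermonde. More precisely, after relabelling the alphabet $\{x_{a+1},\ldots,x_{a+k}\}$ as the ``$\X$'' of Definition~\ref{def:ic} (so $a \mapsto k$, and the role of $y_i$ is played by $\yred_i$), Lemma~\ref{lemma:v from y} gives
\[
\bar v_{k-j+1}^{(k)} = (-1)^{k-j}\,\frac{\Delta_{\mathcal M_j}(\X_{[a+1,a+k]},\yred)}{\Delta(\X_{[a+1,a+k]})},
\]
where $\mathcal M_j = \{x^{k-1},\ldots,\widehat{x^{k-j}},\ldots,1,y\}$. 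Setting $r = k-j+1$, the task reduces to recognizing that the right-hand side above equals the iterated divided difference $(-1)^{r-1}\partial_{a+1}\cdots\partial_{a+k-1}\big(e_{k-r}(\X_{[a+1,a+k-1]})\,\yred_{a+k}\big)$.

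First I would recall from Example~\ref{exa:Demazure} that the composite divided difference $\partial_{a+1}\cdots\partial_{a+k-1}$ is precisely the Frobenius trace $\partial\colon \k[x_{a+1},\ldots,x_{a+k}] \to \Sym(x_{a+1},\ldots,x_{a+k})$, which is given by $f \mapsto \mathrm{Alt}(f)/\Delta(\X_{[a+1,a+k]})$ where $\mathrm{Alt}$ antisymmetrizes over $\symg_k$ acting on the last $k$ variables; note here that the trace in Example~\ref{exa:Demazure} uses the longer word $(\partial_1\cdots\partial_{N-1})\cdots\partial_1$, but only the single ``staircase'' factor $\partial_{a+1}\cdots\partial_{a+k-1}$ appears in \eqref{eq:ytov}, and one checks directly that this shorter product still computes $\mathrm{Alt}(-)/\Delta$ when applied to a polynomial of $x$-degree at most $k-1$ in $x_{a+k}$ and symmetric in $x_{a+1},\ldots,x_{a+k-1}$ — which is exactly the shape of the argument $e_{k-r}(\X_{[a+1,a+k-1]})\yred_{a+k}$. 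Then I would antisymmetrize: $\mathrm{Alt}\big(e_{k-r}(x_{a+1},\ldots,x_{a+k-1})\,\yred_{a+k}\big)$ is, by expanding the antisymmetrizer along the position of the entry carrying $\yred$, exactly the cofactor expansion of a determinant whose rows are the monomial vectors $(x_{a+i}^{n})$ for the appropriate exponents together with the $\yred$-row — i.e. the Haiman determinant $\Delta_{\mathcal M_j}$ up to sign, since $e_{k-r}(x_1,\ldots,x_{k-1})$ is (up to sign) the coefficient extracting the complementary minor. Matching the sign $(-1)^{k-j} = (-1)^{k-r+1}$ against the prefactor $(-1)^{r-1}$ and the sign produced by moving the $\yred$-row into position then completes the identification.

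The main obstacle I anticipate is purely combinatorial bookkeeping: getting all the signs to agree. There are three independent sign sources — the $(-1)^{r-1}$ in \eqref{eq:ytov}, the $(-1)^{k-j}$ from Lemma~\ref{lemma:v from y}, and the sign incurred when the determinant with rows $\{x^{k-1},\ldots,x^{k-j}\text{ removed},\ldots,x^0,y\}$ is rewritten with $y$ in row $j$ rather than row $k$ — and one must also check that $e_{k-r}(\X_{[a+1,a+k-1]})$ expands correctly as the signed sum of products of the ``missing'' powers of the first $k-1$ variables, which is the elementary-symmetric-function-as-sum-of-monomials identity. A clean way to package this is to observe that $\partial_{a+k-1}$ applied last produces $\yred_{a+k}$ symmetrized in $x_{a+k-1},x_{a+k}$ with denominator $x_{a+k-1}-x_{a+k}$, and to induct on $k$, but I expect the direct determinant-expansion argument above to be shortest provided the signs are tracked carefully. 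An alternative, equally viable route is to verify \eqref{eq:ytov} by checking that both sides have the same image under the substitution $\yred_i \mapsto \sum_{l} x_i^{l-1}\bar v_l^{(k)}$ from \eqref{eq:ybar and vbar}, reducing everything to the divided-difference computation $\partial_{a+1}\cdots\partial_{a+k-1}\big(e_{k-r}(\X_{[a+1,a+k-1]})\,x_{a+k}^{l-1}\big) = \delta_{l,r}$, which is a bialternant/Jacobi-Trudi identity and avoids Haiman determinants entirely; I would likely present this second version as the actual proof since it is self-contained modulo \S\ref{ss:h reduction} and Example~\ref{exa:Demazure}.
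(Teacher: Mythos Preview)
Your second route---substituting $\yred_{a+k}=\sum_{l=1}^k x_{a+k}^{l-1}\bar v_l^{(k)}$ and reducing to the orthogonality relation $\partial_{a+1}\cdots\partial_{a+k-1}\big(e_{k-r}(\X_{[a+1,a+k-1]})\,x_{a+k}^{l-1}\big)=(-1)^{r-1}\delta_{l,r}$---is exactly what the paper does. The paper identifies $\partial_{a+1}\cdots\partial_{a+k-1}$ as the Sylvester operator for the Frobenius extension $\Sym(\X_{[a+1,a+k]})\hookrightarrow\Sym(\X_{[a+1,a+k-1]}\mid\{x_{a+k}\})$ and invokes the dual-basis description from Example~\ref{exa:Sylvester} (with $|\X_1|=k-1$, $|\X_2|=1$) to get that orthogonality immediately; this is cleaner than calling it a bialternant identity or appealing to \S\ref{ss:h reduction}, and it fixes the sign with no further work. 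Your first route via Lemma~\ref{lemma:v from y} and Haiman determinants is also correct but, as you anticipated, carries avoidable sign bookkeeping---there is no reason to go through it.
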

\begin{proof}
Example \ref{exa:Sylvester} shows that 
$\big \{e_{k-i}(\X_{[a+1,a+k-1]}) \big\}_{i=1}^{k}$ and 
$\big\{ (-1)^{i-1} x_{a+k}^{i-1} \big\}_{i=1}^{k}$
are dual bases with respect to the Sylvester operator
\[
\partial_{a+1} \cdots \partial_{a+k-1} \colon 
\Sym(\X_{[a+1,a+k-1]} | \{x_{a+k}\}) \to \Sym(\X_{[a+1,a+k]})\, .
\]
We thus compute
\begin{align*}
\partial_{a+1} \cdots \partial_{a+k-1}\big(e_{k-r}(\X_{[a+1,a+k-1]}) \cdot \yred_{a+k} \big)
&= \sum_{l=1}^k \partial_{a+1} \cdots \partial_{a+k-1}
	\big(e_{k-r}(\X_{[a+1,a+k-1]}) \cdot x_{a+k}^{l-1} \bar{v}_l^{(k)} \big) \\
&= \sum_{l=1}^k \partial_{a+1} \cdots \partial_{a+k-1}
	\big(e_{k-r}(\X_{[a+1,a+k-1]}) \cdot x_{a+k}^{l-1} \big) \bar{v}_l^{(k)} \\
&= (-1)^{r-1} \bar{v}_r^{(k)}\, . \qedhere
\end{align*}
\end{proof}

We now give an explicit model for the deformed splitting map 
from Proposition \ref{defthm:splittingmap}.

\begin{definition}\label{def:splitting map model}
Let $\Sigma_{a,b} \in \Hom_{\VS_{a,b}}(\VFTmin_{a,b} , \oone_{a,b})$ have non-zero components 
as indicated by the following diagram:
\begin{equation}\label{eq:splitting map diagram}
\begin{tikzpicture}[baseline=8em]
\node (zz) at (0, 6) {$\oone_{a,b}$};
\node (aa) at (0,0) {$P_{0,0,0}$};
\node (ba) at (-2,0) {$P_{1,0,0}$};
\node (ca) at (-4,0) {$P_{2,0,0}$};
\node (da) at (-6,0) {$P_{3,0,0}$};
\node (ea) at (-8,0) {$\cdots$};
\node (bb) at (-2,2) {$P_{1,1,0}$};
\node (cb) at (-4,2) {$P_{2,1,0}$};
\node (db) at (-6,2) {$P_{3,1,0}$};
\node (eb) at (-8,2) {$\cdots$};
\node (cc) at (-4,4) {$P_{2,2,0}$};
\node (dc) at (-6,4) {$P_{3,2,0}$};
\node (ec) at (-8,4) {$\cdots$};
\node (dd) at (-6,6) {$P_{3,3,0}$};
\node (ed) at (-8,6) {$\cdots$};
\node at (-7,7) {$\ddots$};
\path[->,>=stealth,shorten >=1pt,auto,node distance=1.8cm]
%
% horizontal arrows
%
(ea) edge node {$\d^h$} (da)
(da) edge node {$\d^h$} (ca)
(ca) edge node {$\d^h$} (ba)
(ba) edge node {$\d^h$} (aa)
(eb) edge node {$\d^h$} (db)
(db) edge node {$\d^h$} (cb)
(cb) edge node {$\d^h$} (bb)
(ec) edge node {$\d^h$} (dc)
(dc) edge node {$\d^h$} (cc)
(ed) edge node {$\d^h$} (dd)
%
% down arrows
%
([xshift=-2pt] dd.south) edge node[left]  {$\d^v$} ([xshift=-2pt] dc.north)
([xshift=-2pt] dc.south) edge node[left]  {$\d^v$} ([xshift=-2pt] db.north)
([xshift=-2pt] db.south) edge node[left]  {$\d^v$} ([xshift=-2pt] da.north)
([xshift=-2pt] cc.south) edge node[left]  {$\d^v$} ([xshift=-2pt] cb.north)
([xshift=-2pt] cb.south) edge node[left]  {$\d^v$} ([xshift=-2pt] ca.north)
([xshift=-2pt] bb.south) edge node[left] {$\d^v$} ([xshift=-2pt] ba.north)
%
% up arrows
%
([xshift=2pt] dc.north)  edge node[right] {$\Delta^v$} ([xshift=2pt] dd.south)
([xshift=2pt] db.north) edge node[right] {$\Delta^v$} ([xshift=2pt] dc.south)
([xshift=2pt] da.north) edge node[right] {$\Delta^v$} ([xshift=2pt] db.south)
([xshift=2pt] cb.north) edge node[right] {$\Delta^v$} ([xshift=2pt] cc.south)
([xshift=2pt] ca.north) edge node[right] {$\Delta^v$} ([xshift=2pt] cb.south)
([xshift=2pt] ba.north) edge node[right] {$\Delta^v$} ([xshift=2pt] bb.south)
%
% splitting map components
%
(aa) edge node {$\Sigma_{a,b}^0$} (zz)
(bb) edge node {$\Sigma_{a,b}^1$} (zz)
(cc) edge node {$\Sigma_{a,b}^2$} (zz)
(dd) edge node {$\Sigma_{a,b}^3$} (zz);
\end{tikzpicture}
\end{equation}
The map $\Sigma_{a,b}^k$ is given as  
$(-1)^{\genfrac(){0pt}{2}{b}{2}}$ times the composition
\[
P_{k,k,0} \xrightarrow{\approx} W_k 
\xrightarrow{%(-1)^{\genfrac(){0pt}{2}{k}{2}}
\yred_{a+k+1}\cdots \yred_{a+b}}
W_k\xrightarrow{\thickchi^+} \oone_{a,b}.
\]
Here, the first map (denoted $\approx$) is a ``slanted identity'' 
$P_{k,k,0} = \qdeg^{k(a-b+k)-2(b-k)} \tdeg^{2(b-k)} W_k \xrightarrow{\id} W_k$ 
(which therefore has weight $\qdeg^{-k(a-b+k)+2(b-k)}\tdeg^{-2(b-k)}$).
The second is multiplication by $\yred_{a+k+1}\cdots \yred_{a+b}$,
which is a well-defined element in $\Sym(\B)\otimes \k[\overline{\V}]$.
The final morphism is 
\[
\thickchi^+ := \chi^+_{0}\cdots \chi^+_{k-1},
\]
which has weight $\qdeg^{k(a-b+k)}\tdeg^0$.
\end{definition}

\begin{remark}\label{rem:thickcap}
The morphism $\thickchi^+ \colon W_k \rightarrow \oone_{a,b}$
is given in extended graphical calculus as
\[
\CQGsgn{(-1)^{k(b-k)+\frac{1}{2}k(k-1)}}
\begin{tikzpicture}[baseline=0,smallnodes]
	\draw[CQG,ultra thick,<-] (0,-.5) node[below]{\scriptsize$k$}  to (0,.625);
	\draw[CQG,ultra thick] (.75,-.5) node[below]{\scriptsize$k$} to (.75,.625);
	\draw[CQG,thick, directed=.75] (.75,0) to [out=90,in=90] 
		node[black,yshift=-.5pt]{$\bullet$} node[below,black]{\scriptsize$k{-}1$} (0,0);
	\draw[CQG,thick, directed=.75] (.75,.55) to [out=90,in=90] 
		node[black,yshift=-.5pt]{$\bullet$} node[below,black]{\scriptsize$1$} (0,.55);
	\draw[CQG, thick,directed=.5] (.76,.625) to [out=90,in=0] (.375,1.125) to [out=180,in=90] (-.01,.625);
	\node at (.5,.375) {$\vdots$};
\end{tikzpicture}
=
(-1)^{\genfrac(){0pt}{2}{k}{2}} \cdot
\CQGsgn{(-1)^{k(b-k)}}
\begin{tikzpicture}[anchorbase, scale=.5,tinynodes, yscale=-1]
	\draw[ultra thick,CQG,->] (1,1.5) node[below=-2pt]{$k$} to [out=270,in=0] (.5,.5) to [out=180,in=270] (0,1.5);
		%node[above=-3pt,xshift=15pt,black]{$(a,b)$} to [out=180,in=270] (0,1.5);
\end{tikzpicture}
\]
In the perpendicular graphical calculus from \S\ref{ss:ssbim}, we have
\[
\begin{tikzpicture}[baseline=0em]
	\draw[FS,ultra thick,->] (0,-.25) node[below=-2pt]{\scriptsize$a{+}k$} to (0,.75) 
		node[above=-2pt]{\scriptsize $a$};
	\draw[FS,ultra thick,->] (.75,-.25) node[below=-2pt]{\scriptsize$b{-}k$} to (.75,.75) 
		node[above=-2pt]{\scriptsize $b$};
	\draw[FS,ultra thick, directed=.75] (0,0) to [out=90,in=270]  (.75,.5) ;
	\node[FS] at (0.25,0) {\scriptsize$k$};
\end{tikzpicture}
=
\CQGsgn{(-1)^{k(b-k)}}
\begin{tikzpicture}[anchorbase, scale=.5,tinynodes, yscale=-1]
	\draw[ultra thick,CQG,->] (1,1.5) node[below=-2pt]{$k$} to [out=270,in=0] (.5,.5) to [out=180,in=270] (0,1.5);
		%node[above=-3pt,xshift=15pt,black]{$(a,b)$} to [out=180,in=270] (0,1.5);
\end{tikzpicture} 
\]
thus the components of $\Sigma_{a,b}$ from Definition \ref{def:splitting map model}
are succinctly described as
\begin{equation}\label{eq:SigmaPGC}
\Sigma_{a,b}^k = 
(-1)^{\genfrac(){0pt}{2}{b}{2}+\genfrac(){0pt}{2}{k}{2}}
\begin{tikzpicture}[anchorbase]
	\draw[FS,ultra thick,->] (0,-.4) node[below=-2pt]{\scriptsize$a{+}k$}  to (0,.7)node[above=-2pt]{\scriptsize $a$};
	\draw[FS,ultra thick,->] (.75,-.4) node[below=-2pt]{\scriptsize$b{-}k$} to (.75,.7) node[above=-2pt]{\scriptsize $b$};
	\draw[FS,ultra thick, directed=.75] (0,0) to  [out=90,in=270]  (.75,.5) ;
	\node[FS] at (0.25,0) {\scriptsize$k$};
	\node at (.75,0) {$\bullet$};
	\node at (1.85,0) {\tiny $\yred_{a+k+1}\cdots \yred_{a+b}$};
\end{tikzpicture} .
\end{equation}
\end{remark}

\begin{thm}\label{thm:splittingmap}
The map $\Sigma_{a,b}$ from Definition \ref{def:splitting map model} is closed, has degree zero, 
and is a curved lift of $\splitt_{a,b}$.
As such, it is an explicit model for the deformed splitting map 
$\Sigma_{a,b}\colon \VFTmin_{a,b}\to \oone_{a,b}$ from Proposition \ref{defthm:splittingmap}.
\end{thm}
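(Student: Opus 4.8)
The plan is to verify three things about the explicit map $\Sigma_{a,b}$ from Definition \ref{def:splitting map model}: that it has degree $(0,0)$ (i.e. weight $\qdeg^0 \tdeg^0$), that it is closed with respect to the total differential $\d^v + \d^h + \Delta^v$ on $\VFTmin_{a,b} = \YMCCSmin_{a,b}^0$, and that it reduces modulo $\k[\V]_{>0}$ to the undeformed splitting map $\splitt_{a,b}$ of Definition \ref{def:splittingmapzero}. Once these are established, Remark \ref{rem:uniqueSigma} (uniqueness of closed degree-zero curved lifts of $\splitt_{a,b}$ up to homotopy) immediately identifies $\Sigma_{a,b}$ with the abstractly constructed splitting map from Proposition \ref{defthm:splittingmap}, completing the proof.

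First I would check the degree. Each component $\Sigma_{a,b}^k \colon P_{k,k,0} \to \oone_{a,b}$ is a composite of the slanted identity $P_{k,k,0} \to W_k$ of weight $\qdeg^{-k(a-b+k)+2(b-k)} \tdeg^{-2(b-k)}$, multiplication by $\yred_{a+k+1}\cdots \yred_{a+b}$ of weight $\qdeg^{2(b-k)} \tdeg^{2(b-k)}$ (since each $\yred_i = \sum_l x_i^{l-1}\bar v_l$ with $\wt(x_i^{l-1}\bar v_l) = \qdeg^{2(l-1)}\qdeg^{-2l}\tdeg^2 = \qdeg^{-2}\tdeg^2$, so a product of $b-k$ of them has weight $\qdeg^{-2(b-k)}\tdeg^{2(b-k)}$ — wait, I must recompute: $\wt(\yred_i) = \qdeg^{-2}\tdeg^2$, giving total $\qdeg^{-2(b-k)}\tdeg^{2(b-k)}$), and finally $\thickchi^+ = \chi_0^+ \cdots \chi_{k-1}^+$ of weight $\qdeg^{k(a-b+k)}\tdeg^0$ (using that $\chi_r^+ \colon C_{a,b}^j \to C_{a,b}^{j+1}$-type maps here have the degrees recorded after \eqref{eq:chi minus}, specialized to the $W_k$ setting). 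I would track these carefully and confirm the $\qdeg$ and $\tdeg$ exponents cancel; the sign $(-1)^{\binom b2}$ is degree-neutral. Reduction mod $\k[\V]_{>0}$ is then easy: setting all $\bar v_l = 0$ kills $\yred_i$ for all $i$, so $\Sigma_{a,b}^k = 0$ for $k < b$ and $\Sigma_{a,b}^b$ becomes $(-1)^{\binom b2 + \binom b2} \thickchi^+ = \thickchi^+ \colon W_b \to \oone_{a,b}$ composed with the slanted identity $P_{b,b,0} = \qdeg^{ab} W_b \to W_b$; one checks this is precisely the composite $\FTmin_{a,b} \twoheadrightarrow \qdeg^{ab} W_b \xrightarrow{\un} \oone_{a,b}$ of Definition \ref{def:splittingmapzero}, using that $\thickchi^+ = \chi_0^+\cdots\chi_{b-1}^+$ coincides with the unzip morphism $\un$ up to the correct sign (this identification is standard, e.g. via the extended/perpendicular graphical calculus of \S\ref{ss:ssbim}, and is essentially the content of Remark \ref{rem:thickcap}).

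The main obstacle is proving closedness, $[\d^v + \d^h + \Delta^v, \Sigma_{a,b}] = 0$, which amounts to a collection of local identities among the components. Since $\oone_{a,b}$ is concentrated in cohomological degree zero with zero differential, we need $\Sigma_{a,b} \circ (\d^v + \d^h + \Delta^v) = 0$. This breaks into: (a) $\Sigma_{a,b}^0 \circ \d^h = 0$ on $P_{1,0,0}$ — i.e. $\thickchi^+$ postcomposed with $\d^h = \chi_0^+$ vanishes, which is the statement $\chi_0^+ \circ (\text{map into } W_0) = 0$, a known relation in the thick calculus (two consecutive $\chi^+$'s with a gap, or rather the fact that the undeformed $\splitt_{a,b}$ is closed, already proved in \cite{HRW1}); (b) the "diagonal" compatibility $\Sigma_{a,b}^{k} \circ \Delta^v = -\Sigma_{a,b}^{k-1}\circ \d^v$ on $P_{k-1,k-1,0}$ wait — more precisely, for each $k$, the sum of contributions reaching $\oone_{a,b}$ from $P_{k,k,0}$ via $\d^v$ then $\Sigma^{k-1}$ and via $\Delta^v$ then... — I would organize this by the observation that $\d^v$ and $\Delta^v$ on the relevant strip $\bigoplus_l P_{l,l,0}$ assemble into the Koszul differential and its curved partner from Lemma \ref{lemma:zeta koszul}/Proposition \ref{prop:KMCSdiffs curved}, and the key computational input is Lemma \ref{lemma:v from y2}, which expresses $\bar v_r^{(k)}$ as a Sylvester-operator image of $e_{k-r}(\X_{[a+1,a+k-1]})\yred_{a+k}$. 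Concretely, the passage from $\Sigma_{a,b}^{k-1}$ to $\Sigma_{a,b}^k$ under the differential should be governed by "absorbing" one more factor $\yred_{a+k}$ using a dot-migration/digon relation (moving a dot through $\chi^+$), and the curved term $\Delta^v$, whose coefficients are $h_{l-j}(\leftM^{(k)})\bar v_l$, precisely accounts for the difference between $\yred_{a+k}$ acting on $W_{k-1}$ versus $W_k$; the bookkeeping is exactly the $h$-reduction identity Lemma \ref{lemma:h reduction} (as already used in the proof of Proposition \ref{prop:MCCS topologically}). I expect the sign $(-1)^{\binom b2}$ and the slanted-identity shifts to conspire so the terms cancel in pairs. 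I would carry out this verification component-by-component, reducing each to a relation in the $\USd(\glnn 2)$ thick calculus of \cite{KLMS} together with the symmetric-function identities of \S\ref{sec:symfns}, and where convenient appeal to the perpendicular-calculus formula \eqref{eq:SigmaPGC} to make the dot-slides transparent.
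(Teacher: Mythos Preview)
Your overall strategy is right, and the degree and reduction-mod-$\V$ checks are fine. The closedness argument, however, has a real gap: you have misidentified which pieces of the differential $\d^v + \d^h + \Delta^v$ are relevant.

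Since $\Sigma_{a,b}$ is supported on the corners $P_{k,k,0}$, the only summands $P_{k,l,0}$ from which the differential can land in a corner are $P_{k,k-1,0}$: from there, $\d^h$ lands in $P_{k-1,k-1,0}$ and $\Delta^v$ lands in $P_{k,k,0}$, while $\d^v$ lands in $P_{k,k-2,0}$, which is not a corner. So the \emph{only} identity to verify is
\[
\Sigma_{a,b}^k \circ \Delta^v \;=\; -\,\Sigma_{a,b}^{k-1}\circ \d^h
\qquad\text{on } P_{k,k-1,0},\quad 1\le k\le b.
\]
Your item (a), asserting $\Sigma_{a,b}^0 \circ \d^h = 0$ on $P_{1,0,0}$, is false: that composite is multiplication by $\yred_{a+1}\cdots\yred_{a+b}$ followed by $\chi_0^+\colon W_1\to\oone_{a,b}$, which is nonzero. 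It must be cancelled by $\Sigma_{a,b}^1\circ\Delta^v$, not vanish on its own. Your item (b) mixes up $\d^v$ with $\d^h$; $\d^v$ plays no role in the check.

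Once the target identity is correctly isolated, the paper's route is to change from the $\zeta^{(k)}$ basis back to the $\xi$ basis via \eqref{eq:xi and zeta}. This is the step you are missing, and it is what makes the computation tractable: in the $\xi$ basis one finds the clean formula $\Delta^v\big|_{P_{k,\bullet,0}} = \sum_{i=1}^k \bar v_i^{(k)}\,\xi_i$ (derived from the hook-Schur identity of Lemma \ref{lemma:hook rewrite} together with \eqref{eq:vbar l and vbar b}), and an explicit description of the restriction of $\d^h$ to $P_{k,k-1,0}$. The verification then reduces to a single perpendicular-calculus identity, namely that the digon with internal decoration $\yred_{a+k}\cdot e_{k-i}(\leftM^{(k-1)})$ collapses via the Sylvester operator to $(-1)^{i-1}\bar v_i^{(k)}$; this is exactly Lemma \ref{lemma:v from y2}, which you correctly flagged as the key input. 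Lemma \ref{lemma:h reduction} is not used here.
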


\begin{proof}
Since $\wt(\yred_{a+k+1}\cdots \yred_{a+b}) = \qdeg^{-2(b-k)}\tdeg^{2(b-k)}$, $\Sigma_{a,b}$
has degree zero. The component $\Sigma_{a,b}^b\colon P_{b,b,0} = \qdeg^{ab} W_b \to
\oone_{a,b}$ is $(-1)^{\genfrac(){0pt}{2}{b}{2}} \thickchi^+ = \un$, thus agrees with $\splitt_{a,b}$.
Since the sum of the remaining components is an element of
$\Hom_{\CS_{a,b}}(\FTmin_{a,b},\oone_{a,b}) \otimes \k[\V]_{>0}$, it remains to
check that the prescribed map is a chain map, i.e. that 
$\Sigma_{a,b}^k \circ \Delta^v = - \Sigma_{a,b}^{k-1} \circ \d^h$ as maps $P_{k,k-1,0} \to \oone_{a,b}$.

This verification is most-easily given in terms of the variables $\{\xi_i\}_{i=1}^b$, 
rather than the variables $\{\zeta_i^{(k)}\}_{i=1}^b$ that are adapted to the ``columns'' $\YK(W_k)$. 
Recall from \eqref{eq:xi and zeta} that the change of variables between these sets of variables 
is lower-triangular, so we have identifications
\[
P_{k,k,0} \approx W_k \otimes \xi_1\cdots \xi_k \quad \text{and} \quad
P_{k,k-1,0} \approx W_k \otimes \k \{ \xi_1\cdots \hat{\xi_i}\cdots  \xi_k \}_{i=1}^k\, .
\]

To begin, we compute
\[
\begin{aligned}
\Delta^v |_{P_{k,\bullet,0}}
&= \sum_{j=1}^k (-1)^{j-1} \zeta_j^{(k)} \sum_{l=j}^{b}  h_{l-j}(\M^{(k)}) \bar{v}_l \\
& \stackrel{\eqref{eq:xi and zeta}}{=}
\sum_{j=1}^k (-1)^{j-1} \sum_{i=1}^j (-1)^{i-1} e_{j-i} \big(\leftM^{(k)} \big) \xi_i \sum_{l=j}^{b}  h_{l-j}(\M^{(k)}) \bar{v}_l \\
&= \sum_{i=1}^k \left( \sum_{l=1}^b \left( \sum_{j=1}^k (-1)^{j-i} e_{j-i}(\leftM^{(k)}) h_{l-j}(\leftM^{(k)}) \right) \bar{v}_l \right) \xi_i\, .
\end{aligned}
\]
Next, we have that
\[
\sum_{j=1}^k (-1)^{j-i} e_{j-i}(\leftM^{(k)}) h_{l-j}(\leftM^{(k)}) =
\begin{cases}
1 & \text{if } l=i (\leq k) \\
(-1)^{k-i} \Schur_{(l-k-1 | k-i)}(\leftM^{(k)}) & \text{else}
\end{cases}
\]
using Lemma \ref{lemma:hook rewrite}. In the latter case, note that this is zero
unless $l > k$. Thus, \eqref{eq:vbar l and vbar b} shows that in 
the variables $\bar{v}_i^{(k)}$ the map $\Delta^v$ takes the simplified form
\[
\Delta^v |_{P_{k,\bullet,0}} = \sum_{i=1}^k \bar{v}_i^{(k)} \xi_i\, .
\]

Next, we compute $\d^h |_{P_{k,k-1,0}}$ in the variables $\{\xi_i\}_{i=1}^k$. 
This is simply given by the composition
\[
P_{k,k-1,0} \hookrightarrow \YK(W_k) \xrightarrow{\d^H} \YK(W_{k-1}) 
\twoheadrightarrow P_{k-1,k-1,0}
\]
where $\d^H$ is induced by $\chi_0$. In particular, $\d^H$ is determined in
Koszul degree $1$ by sending $W_k\otimes \xi_i \xrightarrow{\chi_0}
W_{k-1}\otimes \xi_i$. Lower-triangularity of the change of basis from
$\{\xi_i\}_{i=1}^{k-1}$ to $\{\zeta^{(k-1)}\}_{i=1}^{k-1}$ implies that $\d^h$
still acts by $\chi_0$ on terms indexed by (products of)
$\xi_1,\ldots,\xi_{k-1}$. However, for $\xi_k$, we compute
\[
\xi_k 
%= \sum_{j=1}^k (-1)^{j-1}h_{k-j}(\leftM^{(k-1)}) \otimes \zeta_j^{(k-1)}
=  \sum_{j=1}^{k-1} (-1)^{j-1}h_{k-j}(\leftM^{(k-1)})\zeta_j^{(k-1)} + (-1)^{k-1} \zeta_k^{(k-1)}\, .
\]
The value of $\d^h(\xi_k)$ is obtained by projecting onto 
the span of $\{\zeta_j^{(k-1)}\}_{j=1}^{k-1}$, thus
\[
\begin{aligned}
\d^h |_{P_{k,\bullet,0}} \colon \xi_k 
&\xmapsto{\chi_0}
%\chi_0^+ \otimes
 \sum_{j=1}^{k-1} (-1)^{j-1}h_{k-j}(\leftM^{(k-1)})\zeta_j^{(k-1)} \\
&=
%\chi_0^+ \otimes
 \sum_{j=1}^{k-1} (-1)^{j-1}h_{k-j}(\leftM^{(k-1)}) \sum_{i=1}^j (-1)^{i-1} e_{j-i}(\leftM^{(k-1)}) \xi_i \\
&=  
%\chi_0^+ \otimes
 \sum_{i=1}^{k-1} \left( \sum_{j=i}^{k-1} (-1)^{j-i} h_{k-j}(\leftM^{(k-1)}) e_{j-i}(\leftM^{(k-1)}) \right) \xi_i \\
&= 
%\chi_0^+ \otimes
 \sum_{i=1}^{k-1} (-1)^{k-i+1} e_{k-i}(\leftM^{(k-1)}) \xi_i\, .
\end{aligned}
\]

Recalling that $P_{k,l,0} = \qdeg^{k(a-b+1)-2b} \tdeg^{2b-k}  \ W_k\otimes \largewedge^l[\zeta^{(k)}_1,\ldots, \zeta^{(k)}_k]$,
it therefore remains to verify that the diagram
\[
\begin{tikzcd}[column sep = 4cm]
\tdeg^{2b-k} W_k \otimes \xi_1\cdots \xi_i\cdots  \xi_k
\arrow[r,"\chi^+_{0}\cdots \chi^+_{k-1} \cdot \yred_{a+k+1}\cdots \yred_{a+b}"] 
&
\oone_{a,b}
\\
\tdeg^{2b-k} W_k \otimes \xi_1\cdots \hat{\xi_i}\cdots  \xi_k
\arrow[u,"(-1)^{i-1-k} \bar{v}_i^{(k)}"]
\arrow[r,"e_{k-i}(\leftM^{(k-1)}) \cdot \chi_0^+"]
&
\tdeg^{2b-k+1} W_{k-1} \otimes \xi_1\cdots \xi_i\cdots  \xi_{k-1}
\arrow[u,swap,"\chi^+_{0}\cdots \chi^+_{k-2} \cdot \yred_{a+k}\cdots \yred_{a+b}"]
\end{tikzcd},
\]
anti-commutes for $1 \leq i \leq k$ (here we have omitted the $\qdeg$-degree shifts
on the bimodules, and the overall factor of $(-1)^{\genfrac(){0pt}{2}{b}{2}}$ on
the components of $\Sigma_{a,b}$). To do so, we use the perpendicular graphical
calculus. Recall from \eqref{eq:perpchi} and Remark \ref{rem:thickcap} that
\[
\chi_m^+ \defeq
\begin{tikzpicture}[baseline=0em]
	\draw[FS,ultra thick,->] (0,-.5) node[below=-2pt]{\scriptsize$a{+}k$}  to (0,1);
	\draw[FS,ultra thick,->] (.75,-.5) node[below=-2pt]{\scriptsize$b{-}k$} to (.75,1);
	\draw[FS,thick, directed=.75] (0,0) to  [out=90,in=270]  (.75,.5) ;
	\node[FS] at (0.25,0) {\scriptsize$1$};
	\node at (.375,.25) {\scriptsize$\bullet$};
	\node at (.375,.45) {\scriptsize$m$};
\end{tikzpicture} \colon W_{k} \to W_{k-1}
\, , \quad
\thickchi^+ \defeq \chi^+_{0}\cdots \chi^+_{k-1} =
(-1)^{\genfrac(){0pt}{2}{k}{2}}
\begin{tikzpicture}[baseline=0em]
	\draw[FS,ultra thick,->] (0,-.5) node[below=-2pt]{\scriptsize$a{+}k$} to (0,1) 
		node[above=-2pt]{\scriptsize $a$};
	\draw[FS,ultra thick,->] (.75,-.5) node[below=-2pt]{\scriptsize$b{-}k$} to (.75,1) 
		node[above=-2pt]{\scriptsize $b$};
	\draw[FS,ultra thick, directed=.75] (0,0) to [out=90,in=270]  (.75,.5) ;
	\node[FS] at (0.25,0) {\scriptsize$k$};
\end{tikzpicture} \colon W_k\to W_0 \, .
\]
We then compute
\begin{multline*}
\chi^+_{0}\cdots \chi^+_{k-2} \cdot \yred_{a+k}\cdots \yred_{a+b} \cdot  e_{k-i}(\M^{(k-1)}) \cdot \chi_0 
=
(-1)^{\genfrac(){0pt}{2}{k-1}{2}}
\begin{tikzpicture}[anchorbase, xscale=1.25, smallnodes]
	\draw[FS,ultra thick,->] (0,-1.25) node[below=-2pt]{\scriptsize$a{+}k$}  to (0,.7)node[above=-2pt]{\scriptsize $a$};
	\draw[FS,ultra thick,->] (.75,-1.25) node[below=-2pt]{\scriptsize$b{-}k$} to (.75,.7) node[above=-2pt]{\scriptsize $b$};
	\draw[FS,ultra thick, directed=.75] (0,0) to [out=90,in=270] node[above=-1pt]{$k{-}1$} (.75,.5);
	\node at (.75,0) {$\bullet$};
	\node at (1.375,0) {\tiny $\prod_{j=k}^b \yred_{a+j}$}; %{\tiny $\yred_k\cdots \yred_b$};
	\node at (.375,-.375) {\tiny $\CQGbox{e_{k-i}}$};
	\draw[FS,thick, directed=.75] (0,-1) to  [out=90,in=270]  (.75,-.5) ;
	\node[FS] at (0.25,-1) {\scriptsize$1$};
\end{tikzpicture} \\
=
(-1)^{\genfrac(){0pt}{2}{k-1}{2}}
\begin{tikzpicture}[anchorbase,xscale=1.25, smallnodes]
	\draw[FS,ultra thick,->] (0,-1.25) node[below=-2pt]{\scriptsize$a{+}k$}  to (0,.7)node[above=-2pt]{\scriptsize $a$};
	\draw[FS,ultra thick,->] (.75,-1.25) node[below=-2pt]{\scriptsize$b{-}k$} to (.75,.7) node[above=-2pt]{\scriptsize $b$};
	\draw[FS,ultra thick, directed=.75] (0,-.25) to  [out=90,in=270]  (.75,.25) ;
	\node[FS] at (0.375,-.25) {\scriptsize$k{-}1$};
	\node at (.75,-1) {$\bullet$};
	\node at (1.5,-1) {\tiny $\prod_{j=k+1}^b \yred_{a+j}$}; %{\tiny $\yred_{a{+}k{+}1}\cdots \yred_{a+b}$};
	\node at (.375,.25) {\tiny $e_{k-i}$};
	\node at (.375,0) {$\bullet$};
	\draw[FS,thick, directed=.75] (0,-1) to  [out=90,in=270]  (.75,-.5) ;
	\node at (.375,-.5) {\tiny $\yred_{a+k}$};
	\node at (.375,-.75) {$\bullet$};
	\node[FS] at (0.25,-1) {\scriptsize$1$};
\end{tikzpicture}
=
(-1)^{\genfrac(){0pt}{2}{k-1}{2}}
\begin{tikzpicture}[anchorbase,smallnodes, xscale=1.25]
	\draw[FS,ultra thick,->] (-.45,-1.5) node[below=-2pt]{\scriptsize$a{+}k$}  to (-.45,.5)node[above=-2pt]{\scriptsize $a$};
	\draw[FS,ultra thick,->] (1.2,-1.5) node[below=-2pt]{\scriptsize$b{-}k$} to (1.2,.5) node[above=-2pt]{\scriptsize $b$};
	%\node[FS] at (0.375,-.25) {\scriptsize$k{-}1$};
	\draw[FS,ultra thick, directed=.8, directed=.2] (-.45,-1) \pu node[above=-2pt]{$k$} (.375,-.75) 
		\pu (.125,-.375) \pu (.375,0) \pu node[above=-2pt]{$k$} (1.2,.25);
	\draw[FS,thick] (.375,-.75) \pu (.625,-.375) \pu (.375,0);
	\node at (.875,-.5) {\tiny $\yred_{a+k}$};
	\node at (.625,-.375) {$\bullet$};
	\node at (1.2,-1.25) {$\bullet$};
	\node at (.5,-1.25) {\tiny $\prod_{j=k+1}^b \yred_{a+j}$}; %{\tiny $\yred_{a+k+1}\cdots \yred_{a+b}$};
	\node at (-.1,-.1) {\tiny $e_{k-i}$};
	\node at (.125,-.375) {\tiny $\bullet$};
\end{tikzpicture}.
\end{multline*}
Now, we may simplify this latter diagram since the middle portion is locally the composition
\[
\begin{tikzpicture}[scale=.35,smallnodes,rotate=90,anchorbase]
	\draw[very thick] (1,-1) to (1,1) node[left=-2pt]{$\leftM^{(k)}$} ;
\end{tikzpicture}
\xrightarrow{ \ \cre \ }
\begin{tikzpicture}[scale=.35,tinynodes,rotate=90,anchorbase]
	\draw[very thick] (1,1) to (1,2);
	\draw[very thick] (1,-2) to (1,-1); 
	\draw[very thick] (1,-1) to [out=150,in=210] node[below=0pt]{$\leftM^{(k-1)}$} (1,1);
	\draw[very thick] (1,-1) to [out=30,in=330] (1,1); 
\end{tikzpicture}
\xrightarrow{y_{a+k} \cdot e_{k-i}(\leftM^{(k-1)})}
\begin{tikzpicture}[scale=.35,tinynodes,rotate=90,anchorbase]
	\draw[very thick] (1,1) to (1,2);
	\draw[very thick] (1,-2) to (1,-1); 
	\draw[very thick] (1,-1) to [out=150,in=210] node[below=0pt]{$\leftM^{(k-1)}$} (1,1);
	\draw[very thick] (1,-1) to [out=30,in=330] (1,1); 
\end{tikzpicture}
\xrightarrow{ \ \col \ }
\begin{tikzpicture}[scale=.35,smallnodes,rotate=90,anchorbase]
	\draw[very thick] (1,-1) node[right=-2pt]{$\leftM^{(k)}$} to (1,1);
\end{tikzpicture}
\]
i.e. it is multiplication by the element
\[
\partial_{a+1} \cdots \partial_{a+k-1} \big( \yred_{a+k} \cdot e_{k-i}(\leftM^{(k-1)}) \big)
\stackrel{\eqref{eq:ytov}}{=} (-1)^{i-1} \bar{v}_i^{(k)} .
\]
This implies that
\[
(-1)^{\genfrac(){0pt}{2}{k-1}{2}}
\begin{tikzpicture}[anchorbase,smallnodes, xscale=1.25]
	\draw[FS,ultra thick,->] (-.45,-1.5) node[below=-2pt]{\scriptsize$a{+}k$}  to (-.45,.5)node[above=-2pt]{\scriptsize $a$};
	\draw[FS,ultra thick,->] (1.2,-1.5) node[below=-2pt]{\scriptsize$b{-}k$} to (1.2,.5) node[above=-2pt]{\scriptsize $b$};
	%\node[FS] at (0.375,-.25) {\scriptsize$k{-}1$};
	\draw[FS,ultra thick, directed=.8, directed=.2] (-.45,-1) \pu node[above=-2pt]{$k$} (.375,-.75) 
		\pu (.125,-.375) \pu (.375,0) \pu node[above=-2pt]{$k$} (1.2,.25);
	\draw[FS,thick] (.375,-.75) \pu (.625,-.375) \pu (.375,0);
	\node at (.875,-.5) {\tiny $\yred_{a+k}$};
	\node at (.625,-.375) {$\bullet$};
	\node at (1.2,-1.25) {$\bullet$};
	\node at (.5,-1.25) {\tiny $\prod_{j=k+1}^b \yred_{a+j}$}; %{\tiny $\yred_{a+k+1}\cdots \yred_{a+b}$};
	\node at (-.1,-.1) {\tiny $e_{k-i}$};
	\node at (.125,-.375) {\tiny $\bullet$};
\end{tikzpicture}
= 
(-1)^{\genfrac(){0pt}{2}{k-1}{2}+i-1}
\begin{tikzpicture}[anchorbase,xscale=1.25, smallnodes]
	\draw[FS,ultra thick,->] (0,-1) node[below=-2pt]{\scriptsize$a{+}k$} to (0,.75)node[above=-2pt]{\scriptsize $a$};
	\draw[FS,ultra thick,->] (.75,-1) node[below=-2pt]{\scriptsize$b{-}k$} to (.75,.75) node[above=-2pt]{\scriptsize $b$};
	\draw[FS,ultra thick, directed=.75] (0,-.25) to  [out=90,in=270]  (.75,.25) ;
	\node[FS] at (0.25,-.25) {\scriptsize$k$};
	\node at (.75,-.75) {$\bullet$};
	\node at (1.5,-.75) {\tiny $\prod_{j=k+1}^b \yred_{a+j}$}; %{\tiny $\yred_{a{+}k{+}1}\cdots \yred_{a+b}$};
	\node at (.375,.3) {\tiny $\bar{v}_i^{(k)}$};
	\node at (.375,0) {$\bullet$};
\end{tikzpicture}
= 
(-1)^{\genfrac(){0pt}{2}{k}{2}-k+i}
\begin{tikzpicture}[anchorbase,xscale=1.5, smallnodes]
	\draw[FS,ultra thick,->] (0,-1) node[below=-2pt]{\scriptsize$a{+}k$} to (0,.75)node[above=-2pt]{\scriptsize $a$};
	\draw[FS,ultra thick,->] (.75,-1) node[below=-2pt]{\scriptsize$b{-}k$} to (.75,.75) node[above=-2pt]{\scriptsize $b$};
	\draw[FS,ultra thick, directed=.75] (0,-.25) to  [out=90,in=270]  (.75,.25) ;
	\node[FS] at (0.25,.25) {\scriptsize$k$};
	\node at (.75,-.75) {$\bullet$};
	\node at (1.375,-.75) {\tiny $\prod_{j=k+1}^b \yred_{a+j}$}; %{\tiny $\yred_{a{+}k{+}1}\cdots \yred_{a+b}$};
	\node at (.375,-.5) {\tiny $\CQGbox{\bar{v}_i^{(k)}}$};
\end{tikzpicture}
\]
and the result follows. (Here, we used that $\genfrac(){0pt}{2}{k-1}{2} = \genfrac(){0pt}{2}{k}{2} - k + 1$.)
\end{proof}

We record an immediate consequence, which will be used below. 

\begin{cor}\label{cor:section}
Let $\psired_{a,b}\colon \oone_{a,b} \to \VFTmin_{a,b}$ be the morphism of weight $\qdeg^{-2b}\tdeg^{2b}$
given by the composition
\[
\oone_{a,b} \xrightarrow{\approx} \qdeg^{-2b}\tdeg^{2b} \oone_{a,b} = P_{0,0,0} \hookrightarrow \VFTmin_{a,b}\, ,
\]
then
$
\Sigma_{a,b} \circ \psired_{a,b} = \yred_{a+1}\cdots \yred_{a+b} \cdot \id_{\oone_{a,b}}
$ and $
\psired_{a,b} \circ \Sigma_{a,b} \sim \yred_{a+1}\cdots \yred_{a+b} \cdot
\id_{\VFTmin_{a,b}} $.
\end{cor}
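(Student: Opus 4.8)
The plan is to read the first identity directly off the explicit model of $\Sigma_{a,b}$ and then to deduce the second identity formally, using that $\VFTmin_{a,b}$ is invertible.

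For $\Sigma_{a,b}\circ\psired_{a,b}$: first I would observe that $\psired_{a,b}$ factors through the summand $P_{0,0,0}$ of $\VFTmin_{a,b}$, so that the composite is computed by the single component $\Sigma^0_{a,b}\colon P_{0,0,0}\to\oone_{a,b}$ of the morphism of Definition \ref{def:splitting map model}. When $k=0$ the web $W_0$ equals $\oone_{a,b}$ and $\thickchi^+=\chi^+_0\cdots\chi^+_{-1}$ is the empty composition, so $\Sigma^0_{a,b}$ is, up to the fixed sign $(-1)^{\binom{b}{2}}$, the slanted identity $P_{0,0,0}\to W_0$ postcomposed with multiplication by $\yred_{a+1}\cdots\yred_{a+b}$. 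Precomposing with the slanted identity $\oone_{a,b}\to P_{0,0,0}$ built into $\psired_{a,b}$ cancels the grading shift and leaves precisely multiplication by $\yred_{a+1}\cdots\yred_{a+b}$ on $\oone_{a,b}$; this is a routine unwinding of Theorem \ref{thm:splittingmap} (the normalization of $\psired_{a,b}$ absorbing the overall sign).

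For the second identity, I would set $\yred:=\yred_{a+1}\cdots\yred_{a+b}$, regarded as an element of the endomorphism ring $E_{a,b}=\End_{\VS_{a,b}}(\oone_{a,b})$ of \eqref{eq:firstE} --- a polynomial ring, hence an integral domain --- and $h:=\psired_{a,b}\circ\Sigma_{a,b}\in\End_{\VS_{a,b}}(\VFTmin_{a,b})$. Since $\FTmin_{a,b}$ is invertible in $\CS_{a,b}$, its curved lift $\VFTmin_{a,b}$ is invertible in $\VS_{a,b}$ by Lemma \ref{lem:curvinginvertibles}, so there is an algebra homotopy equivalence $\End_{\VS_{a,b}}(\VFTmin_{a,b})\simeq E_{a,b}$. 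In particular $\End_{\VS_{a,b}}(\VFTmin_{a,b})$ has cohomology concentrated in cohomological degree zero, equal there to $E_{a,b}$, and two closed degree-zero endomorphisms of $\VFTmin_{a,b}$ are homotopic if and only if they represent the same class in this $E_{a,b}$ (write $[\,\cdot\,]$ for the class). Using that $\psired_{a,b}$ is a bimodule map, that $\yred$ is a symmetric polynomial in the alphabet of the $b$-colored strand and the $\overline{\V}$-variables, and Remark \ref{rmk:strict} (so $\yred\cdot\id$ is well defined up to homotopy and slides past $\psired_{a,b}$), the first identity yields
\[
h^2=\psired_{a,b}\circ(\Sigma_{a,b}\circ\psired_{a,b})\circ\Sigma_{a,b}=\psired_{a,b}\circ(\yred\cdot\id_{\oone_{a,b}})\circ\Sigma_{a,b}=\yred\cdot h,
\]
so $[h]\,([h]-\yred)=0$ in the domain $E_{a,b}$, whence $[h]=0$ or $[h]=\yred$. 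To exclude $[h]=0$, note that if $h\sim 0$ then so is $\Sigma_{a,b}\circ h=(\Sigma_{a,b}\circ\psired_{a,b})\circ\Sigma_{a,b}=\yred\cdot\Sigma_{a,b}$; but by Proposition \ref{defthm:splittingmap} the space $\Hom_{\VS_{a,b}}(\VFTmin_{a,b},\oone_{a,b})$ is a free $E_{a,b}$-module of rank one generated, up to homotopy, by $\Sigma_{a,b}$, and $\yred\ne0$ in $E_{a,b}$, so $\yred\cdot\Sigma_{a,b}$ is not null-homotopic --- a contradiction. Hence $[h]=\yred=[\,\yred\cdot\id_{\VFTmin_{a,b}}]$, which by the characterization of homotopy classes above gives $\psired_{a,b}\circ\Sigma_{a,b}\sim\yred_{a+1}\cdots\yred_{a+b}\cdot\id_{\VFTmin_{a,b}}$.

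I do not expect a serious obstacle: the first identity is bookkeeping, and the second is a short formal argument. The only points needing a little care are that the element $\yred$ genuinely slides past the bimodule maps $\psired_{a,b}$ and $\Sigma_{a,b}$ and is a nonzerodivisor in $E_{a,b}$ (immediate), and that both the homotopy equivalence $\End_{\VS_{a,b}}(\VFTmin_{a,b})\simeq E_{a,b}$ and the rank-one freeness of $\Hom_{\VS_{a,b}}(\VFTmin_{a,b},\oone_{a,b})$ are applied in the correct (left-module) sense --- all of this is supplied by Lemma \ref{lem:curvinginvertibles} and Proposition \ref{defthm:splittingmap}.
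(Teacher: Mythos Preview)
Your argument is correct and uses the same ingredients as the paper (invertibility of $\VFTmin_{a,b}$ and torsion-freeness of $\Hom_{\VS_{a,b}}(\VFTmin_{a,b},\oone_{a,b})$ from Proposition \ref{defthm:splittingmap}). The paper is slightly more direct for the second identity: writing $\psired_{a,b}\circ\Sigma_{a,b}\sim c\cdot\id_{\VFTmin_{a,b}}$ and composing on the left with $\Sigma_{a,b}$ gives $\yred\cdot\Sigma_{a,b}\sim c\cdot\Sigma_{a,b}$, hence $c=\yred$ by torsion-freeness---this is precisely your exclusion step, so the $h^2$ computation and the quadratic factorization are an avoidable detour.
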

\begin{proof} 
The first identity follows directly from the explicit description of $\Sigma_{a,b}$ in Theorem~\ref{thm:splittingmap}. 

Next, $\VFTmin_{a,b}$ is invertible since it is (homotopy equivalent to) a curved Rickard complex, so we have
\[
\End_{\VSred_{a,b}}(\VFTmin_{a,b}) \simeq \End_{\VSred_{a,b}}(\oone_{a,b}) 
= \Sym(\X_1|\X_2)[\overline{\V}]\, .
\]
Thus, $\psired_{a,b} \circ \Sigma_{a,b}$ is homotopic to some multiple of $\id_{\VFTmin_{a,b}}$, 
i.e. $\psired_{a,b} \circ \Sigma_{a,b} \sim c \cdot \id_{\VFTmin_{a,b}}$ for some 
$c \in \Sym(\X_1|\X_2)[\overline{\V}]$.
We then compute
\[
\yred_{a+1}\cdots \yred_{a+b} \cdot \Sigma_{a,b}\simeq (\Sigma_{a,b} \circ \psired_{a,b}) \circ \Sigma_{a,b} 
= \Sigma_{a,b} \circ (\psired_{a,b} \circ \Sigma_{a,b}) \simeq c\cdot \Sigma_{a,b}\, .
\]
Proposition \ref{defthm:splittingmap} implies that
$\Hom_{\VSred_{a,b}}(\VFTmin_{a,b}, \oone_{a,b})$ is a torsion-free
$\Sym(\X_1|\X_2)[\yred_{a+1}\cdots \yred_{a+b}]$-module, 
thus $c= \yred_{a+1}\cdots \yred_{a+b}$.
\end{proof}

\subsection{Splitting the skein relation}
\label{ss:splitting skein rel}

We now consider the curved splitting map $\Sigma_{a,b} \colon \VFTmin_{a,b}\rightarrow \oone_{a,b}$
in the context of the skein relation.
Recall that in the proof of Proposition~\ref{prop:MCCS topologically} (see \eqref{eq:MCCS curved}) 
we have established the isomorphism
\[
\VMCCSmin_{a,b}^s \cong \I^{(s)}(\VFTmin_{a,b-s})
\]
where $\I^{(s)}$ is the functor from Definition \ref{def:functor Ikb}.
We use this isomorphism to identify these complexes, which in turn
allows us to identify the left-hand side of equation \eqref{eq:curvedskeinmincx} 
with the complex:
\begin{equation}\label{eq:DCFT}
%\DC(\VFTmin)_{a,b}:=
\VTDmin_b(a) :=
\tw_{\d^c+\Delta^c}\left(\bigoplus_{s=0}^b \qdeg^{s(b-1)} \tdeg^s \I^{(s)}(\VFTmin_{a,b-s})\right).
\end{equation}
In light of the topological interpretation of $\I^{(s)}(\VFTmin_{a,b-s}) \cong
\VMCCSmin_{a,b}^s$ afforded by Proposition \ref{prop:MCCS topologically}, 
we will refer to this as the curved complex of \emph{threaded digons}. 
The curved splitting map suggests the consideration of the analogous 
(curved) complex wherein each instance of $\VFTmin_{a,b-s}$ 
is replaced by the corresponding identity bimodule $\oone_{a,b-s}$. 

\begin{definition}\label{def:digoncomplex}
The complex of \emph{(unthreaded) digons} is the complex
\[
\oone_a \boxtimes \VTDmin_b(0) = \tw_{d}\left(\bigoplus_{s=0}^b \qdeg^{s(b-1)} \tdeg^s \I^{(s)}(\oone_{a,b-s})\right)
\]
with differential $d=\bigoplus_{s=0}^{b-1} d_s$ given by
\[
d_s \colon 
\begin{tikzpicture}[scale=.5,tinynodes,rotate=90,anchorbase]
	\draw[very thick] (0,-2) node[right=-2pt]{$a$} to (0,2) ;
	\draw[very thick] (1,1) to (1,2);
	\draw[very thick] (1,-2) node[right=-2pt]{$b$} to (1,-1); 
	\draw[very thick] (1,-1) to [out=150,in=210] (1,1);
	\draw[very thick] (1,-1) to [out=30,in=330] node[above=-3pt]{$s$} (1,1); 
\end{tikzpicture}
\xrightarrow{\cre}
\begin{tikzpicture}[scale=.5,tinynodes,rotate=90,anchorbase]
	\draw[very thick] (0,-2) node[right=-2pt]{$a$} to (0,2) ;
	\draw[very thick] (1,1) to (1,2);
	\draw[very thick] (1,-2) node[right=-2pt]{$b$} to (1,-1); 
	\draw[very thick] (1,-1) to [out=150,in=210] (1,1);
	\draw[very thick] (1,-1) to [out=30,in=330] node[above=-3pt]{$s$} (1,1);
	\draw[very thick] (.75,-.75) to [out=30,in=330] node[below=-2pt]{$1$} (.75,.75);
\end{tikzpicture}
\xrightarrow{\cong}
\begin{tikzpicture}[scale=.5,tinynodes,rotate=90,anchorbase]
	\draw[very thick] (0,-2) node[right=-2pt]{$a$} to (0,2) ;
	\draw[very thick] (1,1) to (1,2);
	\draw[very thick] (1,-2) node[right=-2pt]{$b$} to (1,-1); 
	\draw[very thick] (1,-1) to [out=150,in=210] (1,1);
	\draw[very thick] (1,-1) to [out=30,in=330] node[above=-3pt]{$s$} (1,1);
	\draw[very thick] (1.25,-.75) to [out=150,in=210] node[above=-3pt]{$1$} (1.25,.75);
\end{tikzpicture}
\xrightarrow{\col}
\begin{tikzpicture}[scale=.5,tinynodes,rotate=90,anchorbase]
	\draw[very thick] (0,-2) node[right=-2pt]{$a$} to (0,2) ;
	\draw[very thick] (1,1) to (1,2);
	\draw[very thick] (1,-2) node[right=-2pt]{$b$} to (1,-1); 
	\draw[very thick] (1,-1) to [out=150,in=210] (1,1);
	\draw[very thick] (1,-1) to [out=30,in=330] node[above=-2pt]{$s{+}1$} (1,1); 
\end{tikzpicture}
\]
\end{definition}

We introduce the following notation:
\[
\Dig^s_{a,b} := \I^{(s)}(\oone_{a,b-s}) 
=
\begin{tikzpicture}[scale=.35,smallnodes,rotate=90,anchorbase]
	\draw[very thick] (0,-2) node[right=-2pt]{$a$} to (0,2) ;
	\draw[very thick] (1,1) to (1,2);
	\draw[very thick] (1,-2) node[right=-2pt]{$b$} to (1,-1); 
	\draw[very thick] (1,-1) to [out=150,in=210] (1,1);
	\draw[very thick] (1,-1) to [out=30,in=330] node[above,yshift=-2pt]{$s$} (1,1); 
\end{tikzpicture}
= \oone_{a} \boxtimes \big( \E^{(s)} \F^{(s)}\oone_{b,0} \big)
\]
for the digon webs appearing in $\oone_a \boxtimes \VTDmin_b(0)$. 
In this notation, the components of the differential $d$ take the form 
$d_s = \id_{\oone_a} \boxtimes d'_s$, 
where $d'_s$ admits the following descriptions in perpendicular and extended graphical calculus:
\[
d'_s :=
\begin{tikzpicture}[anchorbase,smallnodes]
	\draw[FS,ultra thick,->] (0,-.75) node[below=-2pt]{$b{-}s$}  to (0,.25);
	\draw[FS,ultra thick,->] (.75,-.75) node[below=-2pt]{$s$} to (.75,.25);
	\draw[FS,thick, directed=.5] (0,-.5) node[right]{$1$} to [out=90,in=270] (.75,0) ;
\end{tikzpicture}
=
\CQGsgn{(-1)^{s}}
\begin{tikzpicture}[anchorbase,smallnodes]
	\draw[CQG,ultra thick,->] (0,-.75) node[below=-2pt]{$s$} to (0,0.25);
	\draw[CQG,ultra thick,<-] (.75,-.75) node[below=-2pt]{$s$} to (.75,0.25);
	\draw[CQG,thick, rdirected=.5] (0,-.125) to [out=270,in=270]  (.75,-.125);
	\node at (1.25,0) {$(b,0)$};
\end{tikzpicture}.
\]

We now observe that the complex $\oone_a \boxtimes \VTDmin_b(0)$ is homotopically trivial.

\begin{lemma}\label{lemma:Dig complex} 
The complex $\oone_a \boxtimes \VTDmin_b(0)$ is contractible, 
with null homotopy given by $k=\bigoplus_{s=1}^b k_s$ 
with $k_s=\oone_a\boxtimes k'_s$ where 
\[ 
k'_s:=
(-1)^{b-s}
\begin{tikzpicture}[anchorbase,smallnodes]
	\draw[FS,ultra thick,->] (0,-.75) node[below]{\scriptsize$b{-}s$}  to (0,.25);
	\draw[FS,ultra thick,->] (.75,-.75) node[below,yshift=-1pt]{\scriptsize$s$} to (.75,.25);
	\draw[FS,thick, directed=.75] (.75,-.5) node[left]{$1$} to  [out=90,in=270] node[black]{$\bullet$} 
		node[above=-1pt,black]{$b{-}1$}  (0,0) ;
\end{tikzpicture}
=
(-1)^{b-s}
\CQGsgn{\cdot (-1)^{b-1}}
\begin{tikzpicture}[anchorbase,smallnodes]
	\draw[CQG,ultra thick,->] (0,-.75) node[below,yshift=-1pt]{\scriptsize$s$} to (0,0.25);
	\draw[CQG,ultra thick,<-] (.75,-.75) node[below,yshift=-1pt]{\scriptsize$s$} to (.75,0.25);
	\draw[CQG,thick, directed=.75] (0,-.375) to [out=90,in=90] node[black]{$\bullet$} 
		node[above=-1pt,black]{$b{-}1$} (.75,-.375);
	\node at (1.25,0) {$(b,0)$};
\end{tikzpicture}
\]
Moreover, the components satisfy  $k_{s-1}\circ k_s=0$.
\end{lemma}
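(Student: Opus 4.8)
The plan is to verify Lemma~\ref{lemma:Dig complex} by a direct computation in the graphical calculus, exploiting the fact that $\oone_a \boxtimes \VTDmin_b(0)$ is built from the functor $\I^{(s)}$ applied to \emph{identity} bimodules $\oone_{a,b-s}$, so there is no curvature to worry about --- it is an honest (uncurved) complex, and each $\Dig^s_{a,b}$ is the digon web $\oone_a \boxtimes (\E^{(s)}\F^{(s)}\oone_{b,0})$. Since everything happens ``away from the $a$-strand'', i.e. all morphisms have the form $\id_{\oone_a} \boxtimes (-)$, it suffices to work in the $2$-category of singular Soergel bimodules (or the $\slnn{2}$ thick calculus of \cite{KLMS}) with objects $(b,0)$. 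The first step is to record the local descriptions of $d'_s$ and the proposed homotopy $k'_s$ in perpendicular/extended graphical calculus (these are written in the statement), and then to establish the two claimed identities: (i) $d_{s}\circ k_{s+1} + k_{s}\circ d_{s} = \id_{\Dig^s_{a,b}}$ for all $0 \le s \le b$ (with the conventions $k_0 = 0$, $d_b = 0$), and (ii) $k_{s-1}\circ k_s = 0$.

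For step (i), I would reduce both composites to endomorphisms of $\E^{(s)}\F^{(s)}\oone_{b,0}$ and compute them using the standard relations for thick cups/caps: $d'_s$ is a ``merge of a $1$-strand'' followed by nothing (it is the map increasing the digon label by one built from $\cre$, a relabeling isomorphism, and $\col$), while $k'_s$ is the same kind of local picture decorated with a dot to the power $b-1$. Composing $d'_s$ after $k'_{s+1}$ and $k'_s$ after $d'_s$ produces, in each case, a ``bubble'' with a dot; the key input is the bubble evaluation (or ``curl''/Reidemeister-type) relation in the thick calculus, which expresses such a decorated bubble as a $\Sym$-linear combination of identity maps --- and because the outgoing label on the $0$-strand side is $0$, all but the top term vanish, leaving exactly $\id$ up to a sign. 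The sign $(-1)^{b-s}$ in $k'_s$ together with the $(-1)^s$ in $d'_s$ is designed precisely so that the two composites add (rather than cancel) to give $+\id$; I would track these signs carefully, most efficiently by passing through the \CQGsgn{green}-signed extended calculus where the underlying ``naive'' movie relations are sign-free. For step (ii), $k_{s-1}\circ k_s$ is a stacking of two of the dotted cup-cap pictures; this composite contains a cup immediately capped off on the $0$-colored side, and since $\oone_{b,0}$ is the identity on $(b,0)$ the relevant merge--split composite is (a multiple of) the identity, but the two dots of total degree $2(b-1)$ collide on a $1$-labeled edge feeding into a region of ``width $0$'', forcing the result to be zero by the bubble relation (a single decorated bubble on a strand of thickness $1$ with a neighbouring $0$-strand vanishes once the dot power is positive, which it is since $b \ge 1$).

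The main obstacle I anticipate is purely bookkeeping: getting the signs and the precise bubble-evaluation coefficients right in the thick $\slnn{2}$ calculus, where both the extended and perpendicular conventions are in play and the shifts $\qdeg^{s(b-1)}\tdeg^s$ on the summands must be reconciled with the stated weights of $d'_s$ and $k'_s$. There is also a minor subtlety in that $\I^{(s)}$ involves pre- and post-composition with split/merge bimodules ${}_{(\ell,s)}S_{(\ell+s)}$ and ${}_{(\ell+s)}M_{(\ell,s)}$ (with $\ell = b-s$), so ``locality'' of $d'_s$ and $k'_s$ must be justified by a fork-slide/associativity argument showing the $1$-strand being created and annihilated can be moved past these structural merges; I expect this to follow from the standard web isotopy relations but it is worth stating explicitly. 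Once the local bubble identities are in hand, assembling them into the claimed homotopy for the whole one-sided twisted complex is routine (each $\Dig^s_{a,b}$ sits in a single cohomological degree, so there are no extra cross-terms), and the identity $k_{s-1}\circ k_s = 0$ makes $k$ into a ``strict'' contracting homotopy, which is exactly what will be needed in the subsequent application (e.g.\ to run homological perturbation against the threaded version $\VTDmin_b(a)$).
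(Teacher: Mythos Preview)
Your approach is correct and essentially the same as the paper's: reduce to the $\slnn{2}$ thick calculus on the $(b,0)$ weight space and verify the homotopy identity there via bubble/curl relations. The paper is more economical, simply citing \cite[Lemma~4.6.4]{KLMS} for the identity $k'_{s+1}\circ d'_s + d'_{s-1}\circ k'_s = \id$ (note your stated relation ``$d_s\circ k_{s+1}+k_s\circ d_s=\id_{\Dig^s}$'' does not type-check --- the correct form at $\Dig^s$ is $k_{s+1}\circ d_s + d_{s-1}\circ k_s=\id$), and dispatches $k'_{s-1}\circ k'_s=0$ by the same appeal to the extended calculus; your fork-slide worry is moot since $d'_s,k'_s$ are already defined directly as maps of digon bimodules.
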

	
\begin{proof}
We illustrate the chain complex and the homotopy as:
\[
\left(
\begin{tikzcd}
\Dig_{a,b}^0 \ar[r,shift left=2pt,"d_0"]
	& \qdeg^{b-1} \tdeg^1 \Dig_{a,b}^1 \ar[r,shift left=2pt,"d_1"] \ar[l,shift left=2pt,"k_1"]
	& \cdots \ar[r,shift left=2pt,"d_{b-1}"] \ar[l,shift left=2pt,"k_2"]
	&\qdeg^{b(b-1)} \tdeg^b \Dig_{a,b}^b \ar[l,shift left=2pt,"k_b"]
\end{tikzcd}
\right)
\]

It suffices to show that
$k'_{s+1}\circ d'_s + d'_{s-1}\circ k'_s = \id_{\E^{(s)}F^{(s)} \oone_{b,0}}$. 
In extended graphical calculus, this is
\begin{equation}
(-1)^{b-s-1} \CQGsgn{(-1)^{b-1+s}}
\begin{tikzpicture}[anchorbase,smallnodes]
	\draw[CQG,ultra thick,->] (0,-.75) node[below=-2pt]{$s$} to (0,.75);
	\draw[CQG,ultra thick,<-] (.75,-.75) node[below=-2pt]{$s$} to (.75,.75);
	\draw[CQG,thick, rdirected=.35] (.75,.125) to [out=90,in=90] 
		node[black,yshift=-1pt]{\normalsize$\bullet$} node[black,above]{$b{-}1$} (0,.125);
	\draw[CQG,thick, rdirected=.5] (0,-.125) to [out=270,in=270] (.75,-.125);
	\node at (1.25,.5) {\normalsize$(b,0)$};
\end{tikzpicture}
+
(-1)^{b-s} \CQGsgn{(-1)^{b-1+s-1}}
\begin{tikzpicture}[anchorbase,smallnodes]
	\draw[CQG,ultra thick,->] (0,-.75) node[below=-2pt]{$s$} to (0,.75);
	\draw[CQG,ultra thick,<-] (.75,-.75) node[below=-2pt]{$s$} to (.75,.75);
	\draw[CQG,thick, rdirected=.35] (.75,-.5) to [out=90,in=90] 
		node[black,yshift=-1pt]{\normalsize$\bullet$} node[above,black]{$b{-}1$} (0,-.5);
	\draw[CQG,thick, rdirected=.5] (0,.5) to [out=270,in=270] (.75,.5);
	\node at (1.25,.5) {\normalsize$(b,0)$};
\end{tikzpicture}
=
\begin{tikzpicture}[anchorbase,smallnodes]
	\draw[CQG,ultra thick,->] (0,-.75) node[below=-2pt]{$s$} to (0,.75);
	\draw[CQG,ultra thick,<-] (.75,-.75) node[below=-2pt]{$s$} to (.75,.75);
	\node at (1.25,.5) {\normalsize$(b,0)$};
\end{tikzpicture}
\end{equation}
which is a specialization of \cite[Lemma 4.6.4]{KLMS}. 
The relation $k'_{s-1}\circ k'_s=0$ similarly follows from a computation in extended graphical calculus.
\end{proof}

Note that $\Dig_{a,b}^s$ can be viewed as an object in $\VSred_{a,b}$, 
since the relevant curvature acts on it by zero.
Applying the functor $\I^{(s)}$ to the curved splitting maps 
$\Sigma_{a,b-s} \colon \VFTmin_{a,b-s} \to \oone_{a,b-s}$ gives a family of maps
\[
\VMCCSmin_{a,b}^s \xrightarrow{\I^{(s)}(\Sigma_{a,b-s})} \Dig_{a,b}^s
\]
The main goal of this section is to assemble these into a chain map
$\VTDmin_b(a) \to \oone_a \boxtimes \VTDmin_b(0)$ as follows.

\begin{thm}[Skein relation splitter]\label{thm:skein split}
There is a degree-zero closed morphism 
\[
\Phi \colon \VTDmin_b(a) \to \oone_a \boxtimes \VTDmin_b(0)
\]
whose component 
$\Phi^{r,s} \colon \qdeg^{s(b-1)}\tdeg^s \VMCCSmin_{a,b}^{s} \rightarrow \qdeg^{r(b-1)}\tdeg^r \Dig_{a,b}^r$ is 
homotopic to the splitting map $\I^{(s)}(\Sigma_{a,b-s})$ if $s=r$, and zero otherwise.
\end{thm}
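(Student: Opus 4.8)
The plan is to build the morphism $\Phi$ one diagonal at a time, exploiting the one-sided (upper-triangular with respect to $s$) structure of the twist $\d^c + \Delta^c$ on $\VTDmin_b(a)$ and the contractibility of the target $\oone_a \boxtimes \VTDmin_b(0)$. The starting data is the family of splitting maps $\I^{(s)}(\Sigma_{a,b-s}) \colon \VMCCSmin_{a,b}^s \cong \I^{(s)}(\VFTmin_{a,b-s}) \to \I^{(s)}(\oone_{a,b-s}) = \Dig_{a,b}^s$ from Theorem \ref{thm:splittingmap}, applied via the functor $\I^{(s)}$ of Definition \ref{def:functor Ikb}. These are closed degree-zero morphisms in $\VSred_{a,b}$, but they do \emph{not} assemble into a chain map on the nose because the connecting differentials $\d^c + \Delta^c$ on the source and the digon differential $d = \bigoplus d_s$ on the target need not intertwine; this failure is what the lower-triangular correction terms $\Phi^{r,s}$ (for $r < s$) are designed to absorb.

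\textbf{Key steps, in order.} First I would set $\Phi^{s,s} := \I^{(s)}(\Sigma_{a,b-s})$ and compute the obstruction to $\Phi^{\mathrm{diag}} := \bigoplus_s \Phi^{s,s}$ being a chain map: the discrepancy $\partial(\Phi^{\mathrm{diag}})$ lands in $\Hom$-groups between $\VMCCSmin_{a,b}^s$ and $\Dig_{a,b}^r$ with $r < s$, because both $\d^c+\Delta^c$ and $d$ strictly increase the index $s$ (respectively $r$) by one, and $\I^{(s)}(\Sigma)$ preserves the index. Second, I would invoke the contractibility of $\oone_a \boxtimes \VTDmin_b(0)$ from Lemma \ref{lemma:Dig complex}: the null-homotopy $k = \bigoplus k_s$ (which further satisfies $k_{s-1}\circ k_s = 0$) lets me solve the inductive system for the lower corrections $\Phi^{r,s}$ by the standard obstruction-theoretic recursion, i.e. $\Phi^{s-1,s}$ is obtained by applying $k$ to the degree-one discrepancy produced at the previous stage, and so on down the diagonals. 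The $k_{s-1}\circ k_s = 0$ relation and the one-sidedness of both twists guarantee that this recursion terminates after finitely many steps (the index difference is bounded by $b$) and produces a genuine closed chain map $\Phi$. Third, I would record that by construction $\Phi^{s,s} = \I^{(s)}(\Sigma_{a,b-s})$ exactly (not merely up to homotopy) and $\Phi^{r,s} = 0$ for $r > s$, matching the statement; the claimed homotopy wiggle-room on the diagonal is exactly the freedom in choosing the corrections.

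\textbf{Alternative packaging.} A cleaner way to organize the recursion is to view $\VTDmin_b(a)$ and $\oone_a \boxtimes \VTDmin_b(0)$ as one-sided twisted complexes over the poset $\{0,1,\dots,b\}$ with vertices $\qdeg^{s(b-1)}\tdeg^s\,\VMCCSmin_{a,b}^s$ and $\qdeg^{s(b-1)}\tdeg^s\,\Dig_{a,b}^s$ respectively, and to apply a perturbation-type argument (Proposition \ref{prop:HPT}, in the spirit of Remark \ref{rem:onesided}): the diagonal map $\Phi^{\mathrm{diag}}$ is a morphism of the underlying $\bigoplus_s$-complexes intertwining the ``$s$-preserving'' parts of the differentials, and the connecting parts $\d^c+\Delta^c$ and $d$ are perturbations that strictly raise $s$; since the source perturbation $\d^c+\Delta^c$ composed with the target null-homotopy $k$ is nilpotent (index strictly decreases, bounded below), the perturbed transfer map exists. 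Either way, one should double-check that all the maps involved are $\V$-linear and live in $\VSred_{a,b}$, which is immediate since $\I^{(s)}$, $\Sigma$, and $k$ are all constructed within these categories.

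\textbf{Main obstacle.} The hard part will be controlling the lower-triangular corrections precisely enough to see that no corrections with $r > s$ are forced — equivalently, that the recursion can be run purely ``downward'' in $s$. This rests on the compatibility of the functor $\I^{(s)}$ with the connecting differentials $\d^c+\Delta^c$ worked out in the proof of Proposition \ref{prop:MCCS topologically} (the commuting squares for $\d^v$, $\d^h$, and $\Delta^v$), together with a careful accounting of how $\Sigma_{a,b-s}$ interacts with the creation/collapse morphisms $\cre, \col$ that define $d_s$; the explicit model of $\Sigma_{a,b}$ in Theorem \ref{thm:splittingmap} (in particular the components $\Sigma_{a,b}^k$ built from $\thickchi^+$ and multiplication by $\yred_{a+k+1}\cdots\yred_{a+b}$) is what makes this bookkeeping tractable, but it is genuinely the technical heart of the argument.
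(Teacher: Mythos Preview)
Your approach has the right spirit—use the null-homotopy $k$ on the contractible target to repair the naive map $\Phi^{\mathrm{diag}} = \bigoplus_s \I^{(s)}(\Sigma_{a,b-s})$—but the bookkeeping is off in a way that matters. Since both $D := \d^c + \Delta^c$ and $d$ \emph{raise} their respective indices by one, the defect $\partial(\Phi^{\mathrm{diag}})$ has nonzero component $d_s\Sigma_s - \Sigma_{s+1}D_s$ sitting in $\Hom(\VMCCSmin_{a,b}^s, \Dig_{a,b}^{s+1})$, i.e.\ at $r = s+1 > s$, not $r < s$. Consequently, applying $k$ (which \emph{lowers} $r$) to this defect does not produce off-diagonal corrections $\Phi^{r,s}$ with $r<s$; it produces a correction to the \emph{diagonal}. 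This is precisely the paper's move: replace $\Sigma_s := \I^{(s)}(\Sigma_{a,b-s})$ by
\[
g_s \;:=\; d_{s-1}\, k_s\, \Sigma_s \;+\; k_{s+1}\, \Sigma_{s+1}\, D_s \;=\; \Sigma_s - k_{s+1}\bigl(d_s\Sigma_s - \Sigma_{s+1}D_s\bigr),
\]
and verify directly (using $D^2=0$ and $d\,k + k\,d = \id$) that the squares with $g_s, g_{s+1}$ commute \emph{strictly}. The resulting $\Phi = \bigoplus_s g_s$ is then purely diagonal; no recursion over off-diagonal positions is needed, and $\Phi^{r,s}=0$ for $r\neq s$ holds on the nose. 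Your alternative packaging via Proposition~\ref{prop:HPT} also does not apply as stated, since $\Phi^{\mathrm{diag}}$ is a chain map between the associated graded complexes but not a homotopy equivalence.

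The substantive step you flag as the ``main obstacle'' is correct in spirit but needs to be unpacked into two pieces, each proved separately in the paper. First, one checks that the \emph{uncurved} squares (with $\splitt_{a,b-s}$, $\d^c$, and $d$) commute on the nose---this is Lemma~\ref{lem:skeinsplituncurved}, a direct diagrammatic computation of the kind you allude to. Second, one upgrades this to commutativity up to homotopy in the curved setting (Lemma~\ref{lem:nonstrictladder}); this is not just bookkeeping but requires a Hom-space computation (Corollary~\ref{cor:HomFromVFT}, via the adjunction of Lemma~\ref{lem:homs from I(X) to 1}) to see that the residual $\V$-irrelevant closed defect is necessarily null-homotopic. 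Only with Lemma~\ref{lem:nonstrictladder} in hand does one obtain $g_s \sim \Sigma_s$, which is what the theorem asserts on the diagonal.
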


The proof of this theorem will occupy the remainder of this section.
We begin with a number of preparatory lemmata.
First, the following implies that Theorem \ref{thm:skein split} 
holds in the undeformed setting.

\begin{lem}\label{lem:skeinsplituncurved}
The square
\begin{equation}\label{eq:skeinsplituncurved}
\begin{tikzcd}%[column sep = large]
\I^{(s)}(\FTmin_{a,b-s}) 
\arrow[r,"\d^c"] 
\arrow[d,"\I^{(s)}(\splitt_{a,b-s})"]
&
\I^{(s+1)}(\FTmin_{a,b-s-1}) 
\arrow[d,"\I^{(s+1)}(\splitt_{a,b-s-1})"]\\
\I^{(s)}(\oone_{a,b-s}) 
\arrow[r,"d"] 
&
\I^{(s+1)}(\oone_{a,b-s-1}) 
\end{tikzcd}
\end{equation}
commutes in $\CS_{a,b}$. By slight abuse of notation, 
$\d^c$ here denotes the morphism
\[
\I^{(s)}(\FTmin_{a,b-s}) \cong \MCCSmin_{a,b}^s 
\xrightarrow{\d^c}
\MCCSmin_{a,b}^{s+1} \cong \I^{(s+1)}(\FTmin_{a,b-s-1})\, .
\]
\end{lem}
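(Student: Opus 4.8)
\textbf{Proof plan for Lemma~\ref{lem:skeinsplituncurved}.}
The plan is to verify the commutativity of~\eqref{eq:skeinsplituncurved} by unwinding both $\d^c$ and $\splitt_{a,\bullet}$ into their defining elementary morphisms and comparing. First I would recall that, by Definition~\ref{def:splittingmapzero}, the map $\splitt_{a,\ell}\colon \FTmin_{a,\ell} \to \oone_{a,\ell}$ is the projection $\FTmin_{a,\ell}=\MCCSmin^0_{a,\ell} \twoheadrightarrow \qdeg^{a\ell} W_\ell$ followed by the unzip $\un\colon \qdeg^{a\ell} W_\ell \to \oone_{a,\ell}$. Under the functor $\I^{(s)}$, this becomes a morphism $\I^{(s)}(\FTmin_{a,b-s}) \to \I^{(s)}(\oone_{a,b-s})$ that is zero on all summands $P_{k,l,0}$ with $(k,l)\neq (b-s,b-s)$ and equals $\I^{(s)}(\un)$ on the top summand $P_{b-s,b-s,0}$. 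Thus both composites in~\eqref{eq:skeinsplituncurved} vanish on any summand $P_{k,l,0}$ of $\I^{(s)}(\FTmin_{a,b-s})$ unless that summand can reach the ``top'' summand $P_{b-s-1,b-s-1,0}$ of $\I^{(s+1)}(\FTmin_{a,b-s-1})$ (for the right-then-down composite) or unless $(k,l)=(b-s,b-s)$ (for the down-then-right composite). So the only summand on which anything nontrivial happens is $P_{b-s,b-s,0}$, and it suffices to check the single square obtained by restricting to it.

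Next I would identify, on this top summand, the relevant component of $\d^c$. By Proposition~\ref{prop:KMCSdiffs}, $\d^c(P_{k,l,s})\subset P_{k-1,l-1,s+1}$, so on $P_{b-s,b-s,0}$ the connecting differential $\d^c$ lands in $P_{b-s-1,b-s-1,1}$, which is precisely the top summand of $\MCCSmin^{s+1}_{a,b}\cong \I^{(s+1)}(\FTmin_{a,b-s-1})$. Using the explicit isomorphism $\I^{(s)}(W^{(b-s)}_{b-s}) \cong W^{(b)}_{b-s}\otimes \largewedge^s[\zeta^{(b-s)}_{b-s+1},\ldots]$ from \cite[\IofWk]{HRW1} (invoked already in the proof of Proposition~\ref{prop:MCCS topologically}), together with Proposition~\ref{prop:diffKMCSexplicit}, the component of $\d^c$ from $W_{b-s}\otimes \zeta^{(b-s)}_1\cdots\zeta^{(b-s)}_{b-s}$ to $W_{b-s-1}\otimes\zeta^{(b-s-1)}_1\cdots$ is (up to sign) given by a $\chi^+_m$-type morphism; concretely it is the piece of $\d^H=\chi^+_0$-induced differential that increases $s$-degree. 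Meanwhile, after applying $\un$ and passing through $\I^{(s)}$, the down-then-right composite is unzip on $W^{(b)}_{b-s}$ followed by the digon-creation/collapse morphism $d$ of Definition~\ref{def:digoncomplex}, which in perpendicular graphical calculus is the crossing-cup morphism shown there. The crux is then a local identity between webs/foams: the composite $\un \circ (\text{\,}\zip\text{-type edge creation})$ equals the composite (zip-unzip through the digon) $\circ\, \chi^+$. This should follow from the fork-slide and twist-zipper relations of Proposition~\ref{prop:forkslide}, or directly from a computation in the extended graphical calculus of \cite{KLMS}, since both sides are ``the same'' trivalent-vertex manipulation drawn two ways.

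The main obstacle I expect is bookkeeping the signs and grading shifts: the shifts defining $P_{k,l,s}$ in Definition~\ref{def:Pkls}, the sign $(-1)^k$ decorating $\d^v$, and the various $(-1)^{\binom{b}{2}}$, $(-1)^{\binom{k}{2}}$, and Koszul signs built into $\thickchi^+$ and the change of basis~\eqref{eq:xi and zeta}. I would organize this by first checking the underlying (unsigned) web identity, then carefully chasing each sign through the isomorphism $\I^{(s)}(W_{b-s})\cong W_{b-s}\otimes\largewedge^s[\ldots]$ and the definitions, much as in the sign verifications already carried out in Theorem~\ref{thm:splittingmap} and the proof of Proposition~\ref{prop:MCCS topologically}. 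Since all the constituent morphisms are $\Sym$-linear, no hidden homotopies are needed and the square commutes on the nose, not merely up to homotopy.
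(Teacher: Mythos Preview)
Your approach matches the paper's: reduce to the top (minimal $\tdeg$-degree) summand, identify the relevant component of $\d^c$ there as $\chi_0^+$, and then check the resulting square with vertices $W_{b-s}$, $W_{b-s-1}$, $\Dig^s_{a,b}$, $\Dig^{s+1}_{a,b}$, with $\un$ on the verticals and $\chi_0^+$, $d$ on the horizontals. Two small corrections: once you pass through $\I^{(s)}(\FTmin_{a,b-s})\cong\MCCSmin_{a,b}^s$ the summands are $P_{k,l,s}$ (not $P_{k,l,0}$), and the final square is verified in the paper by expanding directly into elementary $\cre/\col/\zip/\un$ moves (Frobenius structure plus a foam isotopy) rather than via fork-slide or twist-zipper, with no additional sign bookkeeping needed.
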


\begin{proof}
Since $\splitt_{a,b-s}$ is supported on the minimal $\tdeg$-degree summand, the same is
true for $I^{(s)}(\splitt_{a,b-s})$. In $\MCCSmin_{a,b}^s \cong
\I^{(s)}(\FTmin_{a,b-s})$, that summand is $\qdeg^{a(b-s)-b(b+1)} \tdeg^{b}
W_{b-s} \zeta_1^{(b-s)} \cdots \zeta_{b-s}^{(b-s)} \cong \qdeg^{a(b-s)} W_{b-s}$
(see e.g. Definition \ref{def:Pkls} and equation \eqref{def:YMCCSmin}).  By
definition (see Proposition \ref{prop:KMCSdiffs}) the connecting differential
restricted to such summands is given by
\[
\qdeg^{a(b-s)-b(b+1)} \tdeg^{b} W_{b-s} \zeta_1^{(b-s)} \cdots \zeta_{b-s}^{(b-s)}
\xrightarrow{\chi_0^+} \qdeg^{a(b-s)-b(b+1)} \tdeg^{b} W_{b-s} \zeta_1^{(b-s-1)} \cdots \zeta_{b-s-1}^{(b-s)}\, .
\]
Definition \ref{def:splittingmapzero}
then implies that it
thus suffices to verify that the square
\[
\begin{tikzcd}%[column sep = large]
\qdeg^{a(b-s)} W_{b-s}
\arrow[r,"\chi_0^+"] 
\arrow[d,"\un"]
&
\qdeg^{a(b-s-1)} W_{b-s-1}
\arrow[d,"\un"]\\
\Dig_{a,b}^s
\arrow[r,"d"] 
&
\Dig_{a,b}^{s+1}
\end{tikzcd}
\]
commutes. This follows from the definitions of $\chi_0^+$ and $d$.
Indeed, (ignoring the shifts) the above square can be expanded to the diagram
\[
\begin{tikzcd}
{\begin{tikzpicture}[rotate=90,anchorbase,smallnodes]
	\draw[very thick] (0,.25) to [out=150,in=270] (-.25,.875);
	\draw[very thick] (.5,.5) to (.5,.875);
	\draw[very thick] (0,.25) to (.5,.5);
	\draw[very thick] (0,-.25) to (0,.25);
	\draw[very thick] (.5,-.5) to [out=30,in=330] node[above=-3pt]{$s$} (.5,.5);
	\draw[very thick] (0,-.25) to (.5,-.5);
	\draw[very thick] (.5,-.875) node[right=-3pt]{$b$} to (.5,-.5);
	\draw[very thick] (-.25,-.875) node[right=-3pt]{$a$} to [out=90,in=210] (0,-.25);
\end{tikzpicture}}
\ar[r,"\cre \hComp \cre"] \ar[dd,"\un"]
&
{\begin{tikzpicture}[rotate=90,anchorbase,smallnodes]
	\draw[very thick] (0,.25) to [out=150,in=270] (-.25,.875);
	\draw[very thick] (.5,.5) to (.5,.875);
	\draw[very thick] (0,.25) to (.5,.5);
	\draw[very thick] (.125,.3125) to [out=300,in=225] (.375,.25) 
		node[above,yshift=-2pt]{$1$} to [out=45,in=300] (.375,.4375);
	\draw[very thick] (0,-.25) to (0,.25);
	\draw[very thick] (.5,-.5) to [out=30,in=330] node[above=-3pt]{$s$} (.5,.5);
	\draw[very thick] (.125,-.3125) to [out=60,in=135] (.375,-.25) 
		node[above,yshift=-2pt]{$1$} to [out=315,in=60] (.375,-.4375);	
	\draw[very thick] (0,-.25) to (.5,-.5);
	\draw[very thick] (.5,-.875) node[right=-3pt]{$b$} to (.5,-.5);
	\draw[very thick] (-.25,-.875) node[right=-3pt]{$a$} to [out=90,in=210] (0,-.25);
\end{tikzpicture}}
\ar[r,"\cong"] \ar[d,"\un"]
&
{\begin{tikzpicture}[rotate=90,anchorbase,smallnodes]
	\draw[very thick] (0,.25) to [out=150,in=270] (-.25,.875);
	\draw[very thick] (.5,.5) to (.5,.875);
	\draw[very thick] (0,.25) to (.5,.5);	
	\draw[very thick] (0,.125) to node[above,yshift=-2pt,xshift=2pt]{$1$} (.625,.375);
	\draw[very thick] (0,-.25) to (0,.25);
	\draw[very thick] (.5,-.5) to [out=30,in=330] node[above=-3pt]{$s$} (.5,.5);
	\draw[very thick] (0,-.125) to node[above,yshift=-2pt,xshift=-2pt]{$1$} (.625,-.375);
	\draw[very thick] (0,-.25) to (.5,-.5);
	\draw[very thick] (.5,-.875) node[right=-3pt]{$b$} to (.5,-.5);
	\draw[very thick] (-.25,-.875) node[right=-3pt]{$a$} to [out=90,in=210] (0,-.25);
\end{tikzpicture}}
\ar[r,"\un"]
&
{\begin{tikzpicture}[rotate=90,anchorbase,smallnodes]
	\draw[very thick] (0,.25) to [out=150,in=270] (-.25,.875);
	\draw[very thick] (.5,.5) to (.5,.875);
	\draw[very thick] (0,.25) to (.5,.5);	
	\draw[very thick] (0,-.25) to (0,.25);
	\draw[very thick] (.5,-.5) to [out=30,in=330] node[above=-3pt]{$s$} (.5,.5);
	\draw[very thick] (.625,.375) to [out=210,in=90] 
		node[above]{$1$} (.125,0) to [out=270,in=150] (.625,-.375);
	\draw[very thick] (0,-.25) to (.5,-.5);
	\draw[very thick] (.5,-.875) node[right=-3pt]{$b$} to (.5,-.5);
	\draw[very thick] (-.25,-.875) node[right=-3pt]{$a$} to [out=90,in=210] (0,-.25);
\end{tikzpicture}}
\ar[r,"\col"] \ar[dd,"\un"]
&
{\begin{tikzpicture}[rotate=90,anchorbase,smallnodes]
	\draw[very thick] (0,.25) to [out=150,in=270] (-.25,.875);
	\draw[very thick] (.5,.5) to (.5,.875);
	\draw[very thick] (0,.25) to (.5,.5);
	\draw[very thick] (0,-.25) to (0,.25);
	\draw[very thick] (.5,-.5) to [out=30,in=330] node[above=-3pt]{$s{+}1$} (.5,.5);
	\draw[very thick] (0,-.25) to (.5,-.5);
	\draw[very thick] (.5,-.875) node[right=-3pt]{$b$} to (.5,-.5);
	\draw[very thick] (-.25,-.875) node[right=-3pt]{$a$} to [out=90,in=210] (0,-.25);
\end{tikzpicture}} 
\ar[dd,"\un"]
\\
&
{\begin{tikzpicture}[scale=.4,smallnodes,rotate=90,anchorbase]
	\draw[very thick] (0,-2) node[right=-3pt]{$a$} to (0,2) ;
	\draw[very thick] (1,1) to (1,2);
	\draw[very thick] (1,-2) node[right=-3pt]{$b$} to (1,-1); 
	\draw[very thick] (1,-1) to [out=150,in=210] (1,1);
	\draw[very thick] (1,-1) to [out=30,in=330] node[above=-3pt]{$s$} (1,1); 
	\draw[very thick] (.75,.75) to [out=330,in=90] (1,.5) to [out=270,in=0] (.5,.25);
	\draw[very thick] (.75,-.75) to [out=30,in=270] (1,-.5) to [out=90,in=0] (.5,-.25);
\end{tikzpicture}}
\ar[rd,"\un"] \ar[r,"\cong"]
&
{\begin{tikzpicture}[scale=.4,smallnodes,rotate=90,anchorbase]
	\draw[very thick] (0,-2) node[right=-3pt]{$a$} to (0,2) ;
	\draw[very thick] (1,1) to (1,2);
	\draw[very thick] (1,-2) node[right=-3pt]{$b$} to (1,-1); 
	\draw[very thick] (1,-1) to [out=150,in=210] (1,1);
	\draw[very thick] (1,-1) to [out=30,in=330] node[above=-3pt]{$s$} (1,1); 
	\draw[very thick] (1.375,.5) to (.5,.25);
	\draw[very thick] (1.375,-.5) to (.5,-.25);
\end{tikzpicture}}
\ar[rd,"\un"]
& &
\\
{\begin{tikzpicture}[scale=.4,smallnodes,rotate=90,anchorbase]
	\draw[very thick] (0,-2) node[right=-3pt]{$a$} to (0,2) ;
	\draw[very thick] (1,1) to (1,2);
	\draw[very thick] (1,-2) node[right=-3pt]{$b$} to (1,-1); 
	\draw[very thick] (1,-1) to [out=150,in=210] (1,1);
	\draw[very thick] (1,-1) to [out=30,in=330] node[above=-3pt]{$s$} (1,1); 
\end{tikzpicture}}
\ar[ur,"\cre \hComp \cre"] \ar[rr,"\cre"]
& &
{\begin{tikzpicture}[scale=.4,smallnodes,rotate=90,anchorbase]
	\draw[very thick] (0,-2) node[right=-3pt]{$a$} to (0,2) ;
	\draw[very thick] (1,1) to (1,2);
	\draw[very thick] (1,-2) node[right=-3pt]{$b$} to (1,-1); 
	\draw[very thick] (1,-1) to [out=150,in=210] (1,1);
	\draw[very thick] (1,-1) to [out=30,in=330] node[above=-3pt]{$s$} (1,1); 
	\draw[very thick] (.625,.5) to [out=330,in=90] (1.125,0) to [out=270,in=30] (.625,-.5);
\end{tikzpicture}}
\ar[r,"\cong"]
&
{\begin{tikzpicture}[scale=.4,smallnodes,rotate=90,anchorbase]
	\draw[very thick] (0,-2) node[right=-3pt]{$a$} to (0,2) ;
	\draw[very thick] (1,1) to (1,2);
	\draw[very thick] (1,-2) node[right=-3pt]{$b$} to (1,-1); 
	\draw[very thick] (1,-1) to [out=150,in=210] (1,1);
	\draw[very thick] (1,-1) to [out=30,in=330] node[above=-3pt]{$s$} (1,1); 
	\draw[very thick] (1.375,.5) to [out=210,in=90] (.875,0) to [out=270,in=150] (1.375,-.5);
\end{tikzpicture}}
\ar[r,"\col"]
&
{\begin{tikzpicture}[scale=.4,smallnodes,rotate=90,anchorbase]
	\draw[very thick] (0,-2) node[right=-3pt]{$a$} to (0,2) ;
	\draw[very thick] (1,1) to (1,2);
	\draw[very thick] (1,-2) node[right=-3pt]{$b$} to (1,-1); 
	\draw[very thick] (1,-1) to [out=150,in=210] (1,1);
	\draw[very thick] (1,-1) to [out=30,in=330] node[above=-3pt]{$s{+}1$} (1,1); 
\end{tikzpicture}}
\end{tikzcd} .
\]
All of the squares are readily seen to commute. The Frobenius extension
structure discussed in \S\ref{sec:Frobenius} implies that the triangle
commutes, and commutativity of the hexagon can be verified using an explicit
computation (or is a consequence of \cite[Equation (3.8)]{QR} and foam isotopy).
\end{proof}

Adjoining the squares \eqref{eq:skeinsplituncurved} for $0\leq s \leq b$ gives a
chain map 
\[
\KMCSmin_{a,b} \defeq \tw_{\d^c} \! \left(\bigoplus_{s=0}^b \qdeg^{s(b-1)} \tdeg^s \MCCSmin_{a,b}^s\right) 
\longrightarrow 
\oone_a \boxtimes \VTDmin_b(0)
\]
which is the uncurved analogue of Theorem \ref{thm:skein split}. 
It remains to deform this map, and our argument will proceed in two steps. 
First, we will show that the a deformation of the square \eqref{eq:skeinsplituncurved} 
commutes up to homotopy. Second, we will ``straighten'' these maps to give the desired closed morphism.

To aid in the former, we next establish a particular instance of an 
adjunction involving the functors $I^{(s)}$.
Set
\[
\overline{E}_{a,(b-s,s)} := \End_{\VSred_{a,b}}(\Dig_{a,b}^s) 
= \Sym(\X_1|\L|\B) \otimes \k[\bar{v}_1^{(b)}, \ldots, \bar{v}_b^{(b)}]
\]
Here, we follow Convention \ref{conv:I alphabets} and denote
the alphabet on the $s$-labeled edge in $\Dig_{a,b}^s$ by $\B$ and 
the alphabet on the $(b-s)$-labeled edge by $\L$.
We view $\overline{E}_{a,(b-s,s)}$ as a module over
\[
\overline{E}_{a,b-s} := \End_{\VSred_{a,b-s}}(\oone_{a,b-s}) 
= \Sym(\X_1|\L) \otimes \k[\bar{v}_1^{(b-s)}, \ldots, \bar{v}_{b-s}^{(b-s)}]
\]
via the inclusion $\Sym(\X_1|\L) \hookrightarrow \Sym(\X_1|\L|\B)$ and the map 
\[
\k[\bar{v}_1^{(b-s)}, \ldots, \bar{v}_{b-s}^{(b-s)}] \to \Sym(\L) \otimes \k[\bar{v}_1^{(b)}, \ldots, \bar{v}_b^{(b)}]
\]
given in \eqref{eq:vbar l and vbar b}.

\begin{lem}\label{lem:homs from I(X) to 1}
Let $X$ be a curved complex in $\VSred_{a,b-s}$, then there is an isomorphism
\[
\qdeg^{-s(b-s)} 
\Hom_{\VSred_{a,b-s}}(X, \oone_{a,b-s}) \otimes_{\overline{E}_{a,b-s}} \overline{E}_{a,(b-s,s)}
\xrightarrow{\cong} 
\Hom_{\VSred_{a,b}}(I^{(s)}(X) , \oone_{a,b})
\]
of dg $\overline{E}_{a,(b-s,s)}$-modules that is natural in $X$.
\end{lem}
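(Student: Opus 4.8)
The plan is to prove this adjunction-type isomorphism by unwinding the definition of $\I^{(s)}$ as a composition of external tensoring and horizontal pre-/post-composition with split and merge bimodules, and then reducing to the (co)unit structure of the Frobenius extension $\Sym(\L+\B) \hookrightarrow \Sym(\L|\B)$. Recall from Definition \ref{def:functor Ikb} that $\I^{(s)}(X) = \oone_a \boxtimes \big( {}_{(\ell+s)}M_{(\ell,s)} \hComp (X' \boxtimes \oone_s) \hComp {}_{(\ell,s)}S_{(\ell+s)}\big)$ where $\ell = b-s$ and $X = \oone_a \boxtimes X'$, and that its effect on $v$-variables is governed by \eqref{eq:vbar l and vbar b}.

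First I would establish the uncurved statement, i.e. the isomorphism of the underlying $\Z_\qdeg \times \Z_\tdeg$-graded vector spaces (term-wise in $X$). Using Proposition \ref{prop:dualityonMS}, we have ${}_{(\ell+s)}M_{(\ell,s)}^\vee \cong \qdeg^{\ell s} {}_{(\ell,s)}S_{(\ell+s)}$ and the corresponding (co)unit maps are the digon creation/collapse and (un)zip morphisms. This gives, via the $\hComp$-adjunction in $\SSBim$,
\[
\Hom_{\CS_{a,b}}\big(\I^{(s)}(X), \oone_{a,b}\big)
\cong \Hom_{\CS_{a,\ell}}\big(X, \oone_a \boxtimes ({}_{(\ell)}M_{(\ell,s)}' \hComp \oone_s' \hComp {}_{(\ell,s)}'S_{(\ell)})\big) \otimes (\text{shift}),
\]
and the object $\oone_a \boxtimes ({}_{(\ell)}M \hComp \oone_s \hComp S_{(\ell)})$ is the "digon closed into a circle," which by the Frobenius structure of Example \ref{exa:Sylvester} (a graded free module of rank $\binom{b}{s}$ over $\Sym(\X_1|\L) = \End(\oone_{a,\ell})$) contains $\oone_{a,\ell}$ as a direct summand with complement in strictly positive $\tdeg$-degree — actually for the bare $\Hom$ into $\oone_{a,b}$, only the top piece survives. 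The precise bookkeeping of gradings produces the shift $\qdeg^{-s(b-s)}$, and the rank-$\binom{b}{s}$ freeness as a $\Sym(\X_1|\L)$-module is exactly matched by $\overline{E}_{a,(b-s,s)}$ being free of the same rank over $\Sym(\X_1|\L)[\overline{\V}^{(b-s)}]$ after the substitution \eqref{eq:vbar l and vbar b}. The cleanest way to see this matching is to note $\overline{E}_{a,(b-s,s)} = \Sym(\X_1|\L|\B)[\overline{\V}^{(b)}]$ and $\Sym(\L|\B)$ is free over $\Sym(\L+\B)$ of rank $\binom{b}{s}$, while Lemma \ref{lemma:curvature stability} / Proposition \ref{prop:va and vb} identifies $\Sym(\L+\B)[\overline{\V}^{(b)}]$-modules with $\Sym(\L)[\overline{\V}^{(b-s)}]$-modules compatibly with curvature.

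Second, I would promote this to the curved/dg level. Since $\I^{(s)}$ is defined so as to preserve curvature (by \eqref{eq:reduced curvature stability} together with $h_i(\leftL - \rightL) = h_i(\X_2-\X_2')$, as noted after Definition \ref{def:functor Ikb}), the twist on $\I^{(s)}(X)$ is $\I^{(s)}(\Delta_X)$, and the differential on the $\Hom$-complex $\Hom_{\VSred_{a,b}}(\I^{(s)}(X), \oone_{a,b})$ is built from $\d_{\I^{(s)}(X)} = \I^{(s)}(\d_X) + \I^{(s)}(\Delta_X)$ and the zero differential on $\oone_{a,b}$. Under the vector-space isomorphism constructed in step one, this differential is intertwined with the one on $\Hom_{\VSred_{a,b-s}}(X, \oone_{a,b-s}) \otimes_{\overline{E}_{a,b-s}} \overline{E}_{a,(b-s,s)}$, because $\I^{(s)}$ is a dg functor and the adjunction isomorphism of step one is a chain isomorphism built from the closed structure morphisms (zip, unzip, digon creation/collapse) — all of which are honest $2$-morphisms in $\SSBim$, hence commute with differentials. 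Naturality in $X$ is immediate from naturality of the $\hComp$-adjunction.

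The main obstacle I anticipate is pinning down the grading shift $\qdeg^{-s(b-s)}$ and verifying that no residual curvature or sign discrepancy appears: one must carefully track the shifts from \eqref{eq:MergeSplit} (the $\qdeg^{\ell(\aa)-\ell(\bb)}$ in the merge bimodule), the shift $\qdeg^{\ell s}$ in Proposition \ref{prop:dualityonMS}, and the internal $\qdeg$-shifts hidden in $\I^{(s)}$, and confirm they combine to $\qdeg^{-s(b-s)}$. This is a routine but error-prone degree count; everything else follows formally from the Frobenius-extension adjunction and the fact that $\I^{(s)}$ is a curvature-preserving dg functor.
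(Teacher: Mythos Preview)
Your overall strategy---use the $\hComp$-adjunction from Proposition~\ref{prop:dualityonMS} to move the merge and split bimodules to the other side, identify the resulting extra factor, then lift to the curved setting using that $\I^{(s)}$ preserves curvature---is exactly the paper's approach. However, your execution contains several genuine errors. First, you write $X = \oone_a \boxtimes X'$; this is false in general, since $X$ is an arbitrary object of $\VSred_{a,b-s}$. The correct expression is $\I^{(s)}(X) = (\oone_a \boxtimes {}_{b}M_{b-s,s}) \hComp (X \boxtimes \oone_s) \hComp (\oone_a \boxtimes {}_{b-s,s}S_{b})$, with no factoring of $X$. Second, your intermediate computation is muddled: there is no ``$\tdeg$-degree'' at this stage (we are at the level of bimodules), and the rank-$\binom{b}{s}$ claim is wrong. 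The freeness of $\Sym(\L|\B)$ over $\Sym(\L+\B)$ is not what governs the tensor factor here, and your assertion that $\Sym(\L+\B)[\overline{\V}^{(b)}]$-modules are identified with $\Sym(\L)[\overline{\V}^{(b-s)}]$-modules cannot be right, since these rings have different numbers of generators.

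What the paper actually does for the uncurved step is invoke Corollary~\ref{cor:I(X)Hom} (proved in the appendix via the partial trace $\Tr^c$ and Lemma~\ref{lem:TrMS}), which gives directly
\[
\Hom_{\CS_{a,b}}(\I^{(s)}(X), \oone_{a,b}) \cong \qdeg^{-s(b-s)} \Hom_{\CS_{a,b-s}}(X, \oone_{a,b-s}) \otimes \Sym(\B),
\]
with the extra factor being the full polynomial ring $\Sym(\B) = \End_{\CS}(\oone_s)$, not something of rank $\binom{b}{s}$. The curved lift is then a short computation: tensor both sides with $\k[\overline{\V}^{(b)}]$, insert and remove $\overline{E}_{a,b-s}(\B) := \overline{E}_{a,b-s} \otimes \Sym(\B)$ via the change of variables~\eqref{eq:vbar l and vbar b}, and recognize the result as $\Hom_{\VSred_{a,b-s}}(X,\oone_{a,b-s}) \otimes_{\overline{E}_{a,b-s}} \overline{E}_{a,(b-s,s)}$. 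The grading shift $\qdeg^{-s(b-s)}$ comes out of Corollary~\ref{cor:I(X)Hom} without any further bookkeeping.
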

\begin{proof}
First, we consider the undeformed setting.
Let $X$ be a $1$-morphism in $\CS_{a,b}$, then we have that
\[
I^{(s)}(X) \defeq (\oone_a \boxtimes {}_{b}M_{b-s,s}) \hComp (X \boxtimes \oone_s) 
	\hComp (\oone_a \boxtimes {}_{b-s,s}S_{b}).
\]
Using Proposition \ref{prop:dualityonMS} and Corollary \ref{cor:I(X)Hom}, 
we compute
\begin{equation}\label{eq:IsoInC}
\begin{aligned}
\Hom_{\CS_{a,b}}(I^{(s)}(X) , \oone_{a,b}) 
&= \Hom_{\CS_{a,b}}\big( (\oone_a \boxtimes {}_{b}M_{b-s,s}) \hComp (X \boxtimes \oone_s) 
	\hComp (\oone_a \boxtimes {}_{b-s,s}S_{b}) , \oone_{a,b} \big) \\ 
&\cong \qdeg^{-2s(b-s)} \Hom_{\CS_{a,b}} \big(X \boxtimes \oone_s , 
	\oone_{a} \boxtimes ({}_{b-s,s}S_{b} \hComp {}_{b}M_{b-s,s}) \big) \\
&\cong \qdeg^{-s(b-s)} \Hom_{\CS_{a,b}} \big(X , \oone_{a,b-s})
	\otimes \End_{\CS_{a,b}}(\oone_s) \, .
\end{aligned}
\end{equation}

In our present notation, we have that $\End_{\CS_{a,b}}(\oone_s) = \Sym(\B)$.
Set $\overline{E}_{a,b-s}(\B) := \overline{E}_{a,b-s} \otimes \Sym(\B)$, then
Definition \ref{def:functor Ikb} then gives that
\[
\begin{aligned}
\Hom_{\VSred_{a,b}}(I^{(s)}(X) , \oone_{a,b}) 
&\defeq \Hom_{\CS_{a,b}}(I^{(s)}(X) , \oone_{a,b}) \otimes \k[\bar{v}_1^{(b)},\ldots,\bar{v}_b^{(b)}] \\
&\cong \Hom_{\CS_{a,b}}(I^{(s)}(X) , \oone_{a,b}) \otimes_{\Sym(\X_1|\L|\B)} \overline{E}_{a,(b-s,s)} \\
&\cong \Hom_{\CS_{a,b}}(I^{(s)}(X) , \oone_{a,b}) \otimes_{\Sym(\X_1|\L|\B)} \overline{E}_{a,b-s}(\B) 
	\otimes_{\overline{E}_{a,b-s}(\B)} \overline{E}_{a,(b-s,s)}\, .
\end{aligned}
\]
Using the isomorphism in \eqref{eq:IsoInC}, we obtain
\[
\begin{aligned}
\Hom_{\VSred_{a,b}}(I^{(s)}(X) , \oone_{a,b}) 
&\cong \qdeg^{-s(b-s)} \big( \Hom_{\CS_{a,b-s}}(X , \oone_{a,b-s}) \otimes \Sym(\B) \big) \otimes_{\Sym(\X_1|\L|\B)} \overline{E}_{a,b-s}(\B) 
	\otimes_{\overline{E}_{a,b-s}(\B)} \overline{E}_{a,(b-s,s)} \\
&\cong \qdeg^{-s(b-s)} \Hom_{\CS_{a,b-s}}(X , \oone_{a,b-s}) \otimes_{\Sym(\X_1|\L)} \overline{E}_{a,b-s} \otimes_{\overline{E}_{a,b-s}} 
	\overline{E}_{a,(b-s,s)}\, .
\end{aligned}
\]
Since
\[
\Hom_{\VSred_{a,b-s}}(X, \oone_{a,b-s}) 
\cong \Hom_{\CS_{a,b-s}}(X, \oone_{a,b-s}) \otimes_{\Sym(\X_1,\L)} \overline{E}_{a,b-s}
\]
this implies our result. Indeed, it remains to see that the isomorphism is an isomorphism of complexes, 
and this follows from Definition \ref{def:functor Ikb}, which implies that
the differential on both complexes is induced from that on $X$.
\end{proof}

\begin{cor}\label{cor:HomFromVFT}
There is an $\overline{E}_{a,b}$-linear homotopy equivalence
\[
\Hom_{\VSred_{a,b}}(\I^{(s)}(\oone_{a,b-s}),\oone_{a,b}) 
\xrightarrow{\simeq} 
\Hom_{\VSred_{a,b}}(\I^{(s)}(\VFTmin_{a,b-s}),\oone_{a,b})
\]
defined by sending $f\mapsto f \circ \I^{(s)}(\Sigma_{a,b-s})$.
\end{cor}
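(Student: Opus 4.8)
The plan is to deduce the statement from the $(a,b{-}s)$-strand case of Proposition~\ref{defthm:splittingmap}, transported through the functor $\I^{(s)}$ by means of the adjunction-type isomorphism of Lemma~\ref{lem:homs from I(X) to 1}.

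First I would record the evident reduced analogue of Proposition~\ref{defthm:splittingmap}, with $\VS_{a,b-s}$ replaced by $\VSred_{a,b-s}$ and $E_{a,b-s}$ by $\overline{E}_{a,b-s}$: precomposition with $\Sigma_{a,b-s}$ gives an $\overline{E}_{a,b-s}$-linear homotopy equivalence
\[
\Sigma_{a,b-s}^{\ast}\colon \End_{\VSred_{a,b-s}}(\oone_{a,b-s}) \;\xrightarrow{\ \simeq\ }\; \Hom_{\VSred_{a,b-s}}(\VFTmin_{a,b-s},\oone_{a,b-s}).
\]
The proof of Proposition~\ref{defthm:splittingmap} applies line by line in the reduced setting: its inputs (Corollaries~\ref{cor:negstab} and~\ref{cor:basicHom} and the homological perturbation result Proposition~\ref{prop:HPT}) are insensitive to the ambient curved category, and $\FTmin_{a,b-s}$, $\VFTmin_{a,b-s}=\YMCCSmin^0_{a,b-s}$, and the explicit map $\Sigma_{a,b-s}$ of Theorem~\ref{thm:splittingmap} already live in $\VSred$. (Alternatively, one can transport the original statement along the reduction functor $\pi$ of Definition~\ref{def:reduction}.) In particular $\Sigma_{a,b-s}$ is a closed, degree-zero morphism in $\VSred_{a,b-s}$, so Lemma~\ref{lem:homs from I(X) to 1} is applicable to it.

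Next I would apply Lemma~\ref{lem:homs from I(X) to 1} to $X=\oone_{a,b-s}$ and to $X=\VFTmin_{a,b-s}$, obtaining natural isomorphisms of dg $\overline{E}_{a,(b-s,s)}$-modules
\[
\Hom_{\VSred_{a,b}}(\I^{(s)}(\oone_{a,b-s}),\oone_{a,b}) \;\cong\; \qdeg^{-s(b-s)}\,\overline{E}_{a,(b-s,s)},
\]
\[
\Hom_{\VSred_{a,b}}(\I^{(s)}(\VFTmin_{a,b-s}),\oone_{a,b}) \;\cong\; \qdeg^{-s(b-s)}\,\Hom_{\VSred_{a,b-s}}(\VFTmin_{a,b-s},\oone_{a,b-s})\otimes_{\overline{E}_{a,b-s}}\overline{E}_{a,(b-s,s)}.
\]
Naturality of these isomorphisms with respect to the morphism $\Sigma_{a,b-s}\colon\VFTmin_{a,b-s}\to\oone_{a,b-s}$ identifies the map $f\mapsto f\circ\I^{(s)}(\Sigma_{a,b-s})$ (up to the common shift $\qdeg^{-s(b-s)}$) with the base change $\Sigma_{a,b-s}^{\ast}\otimes_{\overline{E}_{a,b-s}}\id_{\overline{E}_{a,(b-s,s)}}$. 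By the previous paragraph $\Sigma_{a,b-s}^{\ast}$ is a homotopy equivalence of dg $\overline{E}_{a,b-s}$-modules, and applying the dg functor $(-)\otimes_{\overline{E}_{a,b-s}}\overline{E}_{a,(b-s,s)}$ carries homotopy equivalences to homotopy equivalences (it is $\k$-linear, so it transports any homotopy-inverse datum); hence $f\mapsto f\circ\I^{(s)}(\Sigma_{a,b-s})$ is a homotopy equivalence. It is $\overline{E}_{a,b}$-linear, since precomposition with $\I^{(s)}(\Sigma_{a,b-s})$ manifestly commutes with the postcomposition action of $\End_{\VSred_{a,b}}(\oone_{a,b})=\overline{E}_{a,b}$ on $\Hom_{\VSred_{a,b}}(-,\oone_{a,b})$. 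This proves the corollary.

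The only step carrying genuine content is the reduced version of Proposition~\ref{defthm:splittingmap}, and there the work is already done: one simply re-runs that proof in $\VSred_{a,b-s}$. Everything else is formal, resting on the naturality assertion in Lemma~\ref{lem:homs from I(X) to 1} and on the elementary fact that additive functors preserve homotopy equivalences; the minor point to keep straight is that $\Sigma_{a,b-s}$ genuinely lies in $\VSred_{a,b-s}$ (immediate from the explicit model in Theorem~\ref{thm:splittingmap}) so that Lemma~\ref{lem:homs from I(X) to 1} can be applied to it.
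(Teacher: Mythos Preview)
Your proposal is correct and takes essentially the same approach as the paper: both use the naturality of Lemma~\ref{lem:homs from I(X) to 1} to reduce the statement to the $(a,b{-}s)$ case of Proposition~\ref{defthm:splittingmap}, tensored up along $\overline{E}_{a,b-s}\hookrightarrow\overline{E}_{a,(b-s,s)}$. You are more explicit than the paper about passing to the reduced category $\VSred$, but this is a presentational refinement rather than a different argument.
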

\begin{proof}
The preceding lemma gives the commutative diagram
\[
\begin{tikzcd}
\qdeg^{-s(b-s)} \Hom_{\VSred_{a,b-s}}(\oone_{a,b-s}, \oone_{a,b-s}) \otimes_{\overline{E}_{a,b-s}} \overline{E}_{a,(b-s,s)}
\ar[r,"\cong"] \ar[d,"(-) \circ \Sigma_{a,b-s}"]
& \Hom_{\VSred_{a,b}}(I^{(s)}(\oone_{a,b-s}) , \oone_{a,b}) \ar[d, "(-) \circ I^{(s)}(\Sigma_{a,b-s}) "] \\
\qdeg^{-s(b-s)} \Hom_{\VSred_{a,b-s}}(\VFTmin_{a,b-s}, \oone_{a,b-s}) \otimes_{\overline{E}_{a,b-s}} \overline{E}_{a,(b-s,s)}
\ar[r,"\cong"]
& \Hom_{\VSred_{a,b}}(I^{(s)}(\VFTmin_{a,b-s}) , \oone_{a,b})\, .
\end{tikzcd}
\]
Proposition \ref{defthm:splittingmap} gives that the left vertical map is a homotopy equivalence,
thus the right vertical map is as well.
\end{proof}

We now have a curved analogue of Lemma \ref{lem:skeinsplituncurved}.

\begin{lemma}\label{lem:nonstrictladder}
The square
\begin{equation}\label{eq:nonstrictladder}
\begin{tikzcd}%[column sep = large]
\I^{(s)}(\VFTmin_{a,b-s}) 
\arrow[r,"\d^c+\Delta^c"] 
\arrow[d,"\I^{(s)}(\Sigma_{a,b-s})"]
&
\I^{(s+1)}(\VFTmin_{a,b-s-1}) 
\arrow[d,"\I^{(s+1)}(\Sigma_{a,b-s-1})"]\\
\I^{(s)}(\oone_{a,b-s}) 
\arrow[r,"d"] 
&
\I^{(s+1)}(\oone_{a,b-s-1}) 
\end{tikzcd}
\end{equation}
in $\VSred_{a,b}$ commutes up to homotopy.
\end{lemma}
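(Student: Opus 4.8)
The plan is to prove that the square \eqref{eq:nonstrictladder} commutes up to homotopy by a standard obstruction-theoretic argument, using that it is a curved lift of the uncurved square \eqref{eq:skeinsplituncurved} (which commutes strictly by Lemma \ref{lem:skeinsplituncurved}), combined with the computation of the relevant $\Hom$-complex afforded by Corollary \ref{cor:HomFromVFT}. First I would observe that the two composites
\[
\big(d \circ \I^{(s)}(\Sigma_{a,b-s})\big)
\, , \quad
\big(\I^{(s+1)}(\Sigma_{a,b-s-1}) \circ (\d^c+\Delta^c)\big)
\]
are both closed, degree-zero morphisms in $\Hom_{\VSred_{a,b}}\big(\I^{(s)}(\VFTmin_{a,b-s}), \I^{(s+1)}(\oone_{a,b-s-1})\big)$, and that (by Definition \ref{def:functor Ikb} and Remark \ref{rem:curvedlift}) each reduces, modulo the $\V$-irrelevant part, to the corresponding composite in the uncurved square \eqref{eq:skeinsplituncurved}. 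Hence their difference $g$ lies in the $\V$-irrelevant submodule $\Hom_{\CS_{a,b}}\big(\I^{(s)}(\FTmin_{a,b-s}), \I^{(s+1)}(\oone_{a,b-s-1})\big)\otimes \k[\overline{\V}]_{>0}$ and is closed of degree zero.

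The key step is then to show that every such closed, $\V$-irrelevant, degree-zero morphism is null-homotopic. For this I would identify the target $\I^{(s+1)}(\oone_{a,b-s-1}) \cong \Dig_{a,b}^{s+1}$ and use Corollary \ref{cor:HomFromVFT} (with $s$ replaced by $s+1$, and $\I^{(s+1)}(\Sigma_{a,b-s-1})$ precomposition) together with the topological identification $\I^{(s)}(\VFTmin_{a,b-s}) \cong \VMCCSmin^s_{a,b}$ from \eqref{eq:MCCS curved}. More directly: since $\VFTmin_{a,b-s}$ is a curved lift of the invertible complex $\FTmin_{a,b-s}$, Lemma \ref{lem:curvinginvertibles} (or the computations underlying Proposition \ref{defthm:splittingmap}) gives
\[
\Hom_{\VSred_{a,b}}\big(\I^{(s)}(\VFTmin_{a,b-s}), \Dig_{a,b}^{s+1}\big)
\simeq \Hom_{\VSred_{a,b}}\big(\I^{(s)}(\oone_{a,b-s}), \Dig_{a,b}^{s+1}\big)
\]
via Lemma \ref{lem:homs from I(X) to 1}, and the right-hand side is a free module (with appropriate shift) over $\overline{E}_{a,(b-s-1,s+1)}$, hence concentrated in a single cohomological degree — so its $H^0$ is its space of closed degree-zero elements, and by \eqref{eq:ComponentWt} the $\V$-irrelevant part of this $H^0$ is zero (exactly as in Remark \ref{rem:uniqueSigma}). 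Therefore $g$ is null-homotopic, which is precisely the assertion that \eqref{eq:nonstrictladder} commutes up to homotopy.

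The main obstacle I anticipate is bookkeeping: one must carefully verify that the two composites in question are genuinely closed (using $(\d^c+\Delta^c)^2 = 0$ and $(\d+\Delta)^2$ being the curvature, from Proposition \ref{prop:KMCSdiffs curved}) and that both reduce modulo $\V$-irrelevant terms to the \emph{same} uncurved composite — this requires tracking how $\I^{(s)}$ interacts with the connecting differentials $\d^c$, $\Delta^c$ and with the grading shifts $\qdeg^{s(b-1)}\tdeg^s$ hidden in Definition \ref{def:curved MCCS} and \eqref{eq:DCFT}. Once that is in place, the vanishing argument is formal, driven entirely by the cohomological-degree concentration of $\Hom$ out of an invertible (curved) complex into an identity-type bimodule, which is the same mechanism used repeatedly in \S\ref{ss:curved cxs} and \S\ref{ss:splitting map}.
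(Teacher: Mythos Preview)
Your proposal is correct and follows essentially the same route as the paper: reduce to the uncurved square via Lemma~\ref{lem:skeinsplituncurved}, observe that the difference of the two composites is a closed degree-zero $\V$-irrelevant element, then use Corollary~\ref{cor:HomFromVFT} (together with $\Dig_{a,b}^{s+1}\cong \bigoplus \oone_{a,b}$) to see that the relevant $\Hom$-complex is homotopy equivalent to one supported in a single cohomological degree, forcing the difference to be null-homotopic as in Remark~\ref{rem:uniqueSigma}. One small slip: the parenthetical ``with $s$ replaced by $s+1$, and $\I^{(s+1)}(\Sigma_{a,b-s-1})$ precomposition'' is not the right way to invoke Corollary~\ref{cor:HomFromVFT} here (the domain is $\I^{(s)}(\VFTmin_{a,b-s})$, so one precomposes with $\I^{(s)}(\Sigma_{a,b-s})$, exactly as the paper does), but your ``more directly'' argument recovers the correct statement.
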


\begin{proof}
First, observe that \eqref{eq:nonstrictladder} specializes to
\eqref{eq:skeinsplituncurved}, thus by Lemma \ref{lem:skeinsplituncurved} we
have that the chain maps $d \circ \I^{(s)}(\Sigma_{a,b-s})$ and
$\I^{(s+1)}(\Sigma_{a,b-s}) \circ (\d^c+\Delta^c)$ are curved lifts in
$\Hom_{\VSred_{a,b}}\big(\I^{(s)}(\VFTmin_{a,b-s}),\I^{(s+1)}(\oone_{a,b-s-1})
\big)$ of the same morphism in
$\Hom_{\CS_{a,b}}\big(\I^{(s)}(\FTmin_{a,b-s}),\I^{(s+1)}(\oone_{a,b-s-1})
\big)$. Hence, the morphism 
\[
d \circ \I^{(s)}(\Sigma_{a,b-s}) - \I^{(s+1)}(\Sigma_{a,b-s-1}) \circ (\d^c+\Delta^c)
\] 
is a closed, degree-zero element of the $\V$-irrelevant
submodule of
$\Hom_{\VSred_{a,b}}\big(\I^{(s)}(\VFTmin_{a,b-s}),\I^{(s+1)}(\oone_{a,b-s-1})
\big)$.

Now, since
\[
\I^{(s+1)}(\oone_{a,b-s-1}) = \Dig_{a,b}^{s+1} \cong \bigoplus_{\qbinom{b}{s+1}} \oone_{a,b} \, ,
\]
Corollary \ref{cor:HomFromVFT} gives an $\overline{E}_{a,b}$-linear homotopy equivalence
\[
\Hom_{\VSred_{a,b}}\big(\I^{(s)}(\VFTmin_{a,b-s}),\I^{(s+1)}(\oone_{a,b-s-1}) \big)
\simeq
\Hom_{\VSred_{a,b}}\big(\I^{(s)}(\oone_{a,b-s}),\I^{(s+1)}(\oone_{a,b-s-1}) \big) \, .
\]
As in Remark \ref{rem:uniqueSigma}, any degree-zero element in the latter that
is $\V$-irrelevant is zero, hence null-homotopic. Thus 
\[
d \circ \I^{(s)}(\Sigma_{a,b-s}) - \I^{(s+1)}(\Sigma_{a,b-s-1}) \circ (\d^c+\Delta^c) \sim 0 
\]
in $\Hom_{\VSred_{a,b}}\big(\I^{(s)}(\VFTmin_{a,b-s}),\I^{(s+1)}(\oone_{a,b-s-1}) \big)$, 
as desired.
\end{proof}

Using this, we can now give the following.

\begin{proof}[Proof of Theorem \ref{thm:skein split}]
We will prove that there exist (strictly) commutative squares of the form
\begin{equation}\label{eq:strictsquare}
\begin{tikzcd}%[column sep = large]
\I^{(s)}(\VFTmin_{a,b-s}) 
\arrow[r,"\d^c+\Delta^c"] 
\arrow[d,"g_s"]
&
\I^{(s+1)}(\VFTmin_{a,b-s-1}) 
\arrow[d,"g_{s+1}"]\\
\I^{(s)}(\oone_{a,b-s}) 
\arrow[r,"d"] 
&
\I^{(s+1)}(\oone_{a,b-s-1}) 
\end{tikzcd}
\end{equation}
wherein $g_s \sim \I^{(s)}(\Sigma_{a,b-s})$.
Commutativity of these diagrams then implies that the $g_s$ assemble to give the 
requisite closed morphism. Set
\[
D_s := \d^c+\Delta^c \in \Hom_{\VSred_{a,b}}\big(\I^{(s)}(\VFTmin_{a,b-s}) ,\I^{(s+1)}(\VFTmin_{a,b-s})  \big)\, ,
\]
and let $k_s \in \Hom_{\VSred_{a,b}}\big(\I^{(s)}(\oone_{a,b-s}) ,\I^{(s-1)}(\oone_{a,b-s+1}) \big)$
be the homotopy from Lemma \ref{lemma:Dig complex}. 
Define
\[
g_s := d_{s-1} \circ k_s \circ \I^{(s)}(\Sigma_{a,b-s}) + k_{s+1} \circ \I^{(s+1)}(\Sigma_{a,b-s-1}) \circ D_s\, ,
\]
then we compute that
\[
d_s \circ g_s = d_s \circ k_{s+1} \circ \I^{(s+1)}(\Sigma_{a,b-s-1}) \circ D_s
=  g_{s+1} \circ D_s
\]
so the square \eqref{eq:strictsquare} indeed strictly commutes.
Finally, Lemma \ref{lem:nonstrictladder} gives that 
$d_s \circ \I^{(s)}(\Sigma_{a,b-s}) \sim \I^{(s+1)}(\Sigma_{a,b-s-1}) \circ D_s$, 
so
\begin{align*}
g_s &\defeq  d_{s-1} \circ k_s \circ \I^{(s)}(\Sigma_{a,b-s}) + k_{s+1} \circ \I^{(s+1)}(\Sigma_{a,b-s-1}) \circ D_s \\
&\sim d_{s-1} \circ k_s \circ \I^{(s)}(\Sigma_{a,b-s}) + k_{s+1} \circ d_s \circ \I^{(s)}(\Sigma_{a,b-s}) 
= \I^{(s)}(\Sigma_{a,b-s})
\end{align*}
as desired.
\end{proof}

\section{Colored full twists and Hilbert schemes}
\label{s:colored Hilb}

As conjectured in \cite{GNR} and shown in \cite{GH}, 
the relation between Soergel bimodules and Hilbert schemes is mediated by the 
(positive) full twist braid.  
In this section, we speculate on the extension to the colored setting, 
culminating in Conjecture \ref{conj:FT ideal}.
In the following \S\ref{s:hopf link}, we establish this conjecture in the $2$-strand case.

\subsection{The full twist ideals}
\label{ss:FT setup}

Let $\brc = (\bre_1,\ldots,\bre_m)$ be an object in $\SSBim$. Extending the
notation from \S \ref{ss:two strand V cats}, we let $\CS_{\brc}:=
\CS({}_{\brc}\SSBim_{\brc}),$ and denote the corresponding alphabets of
cardinality $\bre_i$ acting on the left and right by $\leftX_i$ and $\rightX_i$,
respectively (for $1 \leq i \leq m$). Following the notation in
\eqref{eq:Vnotation}, we denote the corresponding dg category of curved
complexes by $\VS_{\brc}$. Recall that the deformation parameters therein are
denoted by $\V_i = \{v_{i,1},\ldots, v_{i,\bre_i}\}$ and the curvature element
is
\[
(\d_X+\Delta)^2 = \sum_{i=1}^m \sum_{r=1}^{\bre_i} h_r(\leftX_i - \rightX_i) v_{i,r}\, .
\] 
Let $\FT_{\brc}\in \CS_{\brc}$ denote the Rickard complex associated to the
$\brc$-colored positive full twist braid, and let $\VFT_{\brc}$ denote its
(unique, up to homotopy) lift to $\VS_{\brc}$.

We now introduce the main objects of interest.  
\begin{definition}\label{def:EM} Extending \eqref{eq:firstE}, we let
\[
E_{\brc} := \End_{\VS_{\brc}}(\oone_{\brc}) = \Sym(\X_1|\cdots|\X_m)[\V_1,\ldots,\V_m]
\]
and set
\[
M_{\brc}:=\Hom_{\VS_{\brc}}(\oone_{\brc},\VFT_{\brc})\, ,
\] 
which we regard as a (differential) bigraded $E_{\brc}$-module. 
\end{definition}

Note that $M_{\brc}$ is a bigraded complex whose homology $H(M_{\brc})$
is the lowest Hochschild-degree summand of the deformed homology of the
$\brc$-colored $(m,m)$-torus link $T(m,m;\brc)$, up to a $\qdeg$-shift. 
We can also regard $E_{\brc}$ as a bigraded complex with zero differential.
The splitting map $\Sigma_\brc\colon \VFT_{\brc}\rightarrow \oone_{\brc}$ from Remark \ref{rem:genSigma}
induces an $E_{\brc}$-linear morphism 
\begin{equation}\label{eq:MtoE}
M_{\brc} \xrightarrow{\Sigma_\brc\circ -} E_{\brc},
\end{equation} 
which is a chain map, since $\Sigma_{\brc}$ is degree-zero and closed.

\begin{definition}\label{def:J}
Let $H(\Sigma_{\brc})\colon H(M_{\brc}) \to E_{\brc}$ denote the map induced on homology by 
the chain map \eqref{eq:MtoE}. We set $J_{\brc} := \im(H(\Sigma_{\brc}))$ and call 
$J_{\brc}\vartriangleleft E_{\brc}$ the $\brc$-\emph{colored full twist ideal}. 
(By construction, it is an ideal in $E_{\brc}$.)
\end{definition}

\begin{remark}
Recall that the (two strand) splitting map $\Sigma_{a,b}\colon \VFT_{a,b} \to \oone_{a,b}$ is only canonical 
only up to homotopy. Since the general splitting map is built from $\Sigma_{a,b}$'s, 
replacing the latter with a homotopic map would in turn replace $\Sigma_{\brc}$ with a homotopic map.
However, since $E_{\brc}$ has zero differential, the full twist ideal $J_{\brc}$ remains well-defined, 
regardless of our specific model for $\Sigma_{a,b}$.
\end{remark}

It is an important problem to compute $H(M_{\brc})$ and $J_{\brc}$ explicitly. 
Indeed, in the case that $\brc=1^m := (1,\ldots,1)$, they were shown to be isomorphic, 
and the latter ideal was described explicitly by E.~Gorsky and the first-named author \cite{GH}.
In turn, this computation was used to provide an explicit connection between triply-graded
Khovanov--Rozansky homology and the geometry of the Hilbert scheme of points in $\C^2$.
As such, we anticipate that an explicit presentation of $J_{\brc}$ for general $\brc$
will provide an analogous relation between \emph{colored} Khovanov--Rozansky homology and 
the geometry of the Hilbert scheme.

The explicit description of $H(M_{1^m}) \cong J_{1^m}$ in \cite{GH} relies on
work of Elias and the first-named author \cite{MR3880028}. Therein, they show
that the \emph{uncolored} $(m,m)$-torus link $T(m,m)$ is \emph{parity} (Definition \ref{def:parity}).  
Based on this, and Conjecture \ref{conj:cables}, we propose the following. 

\begin{conj}\label{conj:FT is parity} 
The $\brc$-colored $(m,m)$-torus link $T(m,m;\brc)$ is parity. 
More generally, all colored positive torus links are parity.
\end{conj}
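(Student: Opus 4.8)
The plan is to prove Conjecture \ref{conj:FT is parity} by reducing to the uncolored case via the cabling relationship between colored and uncolored link homologies (as in Theorem \ref{thm:cables} and Conjecture \ref{conj:cables}), combined with the known parity result for uncolored positive torus links from \cite{MR3880028}. The key observation is that the $\brc$-colored $(m,m)$-torus link $T(m,m;\brc)$ is obtained from an uncolored torus link $T(N,N)$ (where $N = \sum_i \bre_i$) by inserting appropriate Jones--Wenzl-type projectors (or, in the singular Soergel bimodule language, by restricting to the trivial isotypic component of an $\mf{S}_{\bre_1} \times \cdots \times \mf{S}_{\bre_m}$-action on the cable). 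Since the $b$-colored strand corresponds to the ``symmetric part'' of a $b$-cable — recall the merge/split bimodule picture in \S\ref{ss:ytouv} — the colored homology $H_{\KR}(T(m,m;\brc))$ is a direct summand of the uncolored homology $H_{\KR}(T(N,N))$.

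First I would make precise the cabling statement: the $\brc$-colored full twist braid $\FT_{\brc}$ on $m$ strands, when each $\bre_i$-colored strand is expanded into $\bre_i$ parallel uncolored strands, becomes (up to conjugation and the insertion of smaller full twists on the parallel bundles) the uncolored full twist $\FT_{1^N}$ on $N$ strands. Concretely, the colored Rickard complex $C(\FT_{\brc})$ is a direct summand of $C(\FT_{1^N})$ after horizontally pre- and post-composing with the merge and split bimodules ${}_{\brc}M_{1^N}$ and ${}_{1^N}S_{\brc}$, which project onto the appropriate partially-symmetric subspaces. Taking Hochschild homology and applying Proposition \ref{prop:HH tracelike 2} (conjugacy invariance) shows that $H_{\KR}(T(m,m;\brc))$ is a direct summand — as a triply-graded vector space — of $H_{\KR}(T(N,N))$, possibly after a global grading shift determined by the numerical invariants in Definition \ref{def:braid numerical invts}.

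Once this summand statement is established, the parity conclusion is immediate: the uncolored torus link homology $H_{\KR}(T(N,N))$ is supported in even cohomological ($\tdeg$-)degrees by \cite{MR3880028} (and \cite{mellit2017homology, hogancamp2019torus}), and a direct summand of a complex supported in even degrees is itself supported in even degrees. A grading shift by the even quantity $\e + N - n$ appearing in Definition \ref{def:HHH} preserves parity; one should check that the relevant shift is indeed even, but this follows from the remark after Definition \ref{def:HHH} applied to the cabled braid. For the ``more generally'' clause concerning arbitrary colored positive torus links $T(p,q;\brc)$, the same strategy applies verbatim, reducing to parity of the uncolored $T(p',q')$ for appropriate $p',q'$ obtained by cabling, which is again known by the cited works.

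The main obstacle I expect is verifying rigorously that the colored homology is literally a \emph{direct summand} of the uncolored homology at the level of chain complexes (or at least graded vector spaces), rather than merely being related by a spectral sequence or a more complicated filtration. The subtlety is that Hochschild homology does not obviously commute with passing to isotypic summands of an $\mf{S}_\brc$-action when that action is only defined up to homotopy — indeed, Theorem \ref{thm:cables} only asserts a homotopy-coherent action, and Conjecture \ref{conj:cables} (which we may \emph{not} assume, as it is itself conjectural) would be needed for the cleanest argument. A workable fallback is to use that the singular Bott--Samelson bimodule ${}_{\brc}M_{1^N} \hComp {}_{1^N}S_{\brc}$ is an honest (non-homotopical) idempotent up to isomorphism after the appropriate normalization, so that $C(\FT_{\brc})$ genuinely splits off $C(\FT_{1^N})$ inside $\CS({}_{\brc}\SSBim_{1^N}) \hComp \CS({}_{1^N}\SSBim_{\brc})$; then Hochschild homology, being additive, respects this splitting. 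Making this last point airtight — in particular handling the grading shifts and ensuring no odd-degree contributions sneak in through the normalization — is where the real work lies.
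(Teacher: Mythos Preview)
This statement is a genuine open conjecture; the paper establishes only the $m=2$ case (Proposition~\ref{prop:Hopfparityandgens}) by direct computation via Lemma~\ref{lem:Hopfrow}, not by any cabling reduction.

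Your argument is circular. The sandwich $M \hComp C(\FT_{1^N}) \hComp S$ (with $M = {}_\brc M_{1^N}$, $S = {}_{1^N}S_\brc$) does contain $C(\FT_\brc)$ as a $\qdeg$-shifted summand --- indeed, sliding $\FT_{1^N}$ through $M$ via fork-slide and twist-zipper yields $M \hComp C(\FT_{1^N}) \hComp S \simeq C(\FT_\brc) \hComp M \hComp S$, and $M \hComp S$ splits as $\qdeg$-shifted copies of $\oone_\brc$. But applying $\HH_\bullet$ and the trace property (Proposition~\ref{prop:HH tracelike 2}) gives $\HH_\bullet(M \hComp C(\FT_{1^N}) \hComp S) \cong \HH_\bullet(C(\FT_{1^N}) \hComp S \hComp M)$, which is \emph{not} $H_{\KR}(T(N,N)) = \HH_\bullet(C(\FT_{1^N}))$: the split-merge bimodule $S \hComp M \cong R^{1^N} \otimes_{R^\brc} R^{1^N}$ does not contain $\oone_{1^N}$ as a direct summand (already for $\brc=(2)$ it is an indecomposable Soergel bimodule). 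So all you have shown is that $\HH_\bullet(C(\FT_\brc))$ is a summand of a direct sum of copies of itself. The alternative route --- realizing $H_{\KR}(T(m,m;\brc))$ as a summand of the uncolored homology of the actual $\brc$-cable of $T(m,m)$ --- fares no better: that cable is not a torus link (e.g.\ the $0$-framed $(2,1)$-cable of the Hopf link is the closure of $\sigma_2\sigma_1^2\sigma_2$, a three-component link with pairwise linking numbers $0,1,1$), so its parity is not covered by \cite{MR3880028, mellit2017homology, hogancamp2019torus}. Moreover, the summand relation between colored and cabled triply-graded homology that you invoke is itself not a theorem --- Theorem~\ref{thm:cables} is stated for $\glN$ only, and the triply-graded analogue is the open Conjecture~\ref{conj:cables}.
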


As Theorem~\ref{thm:parityfree} shows, 
the following is an immediate consequence of parity.

\begin{proposition}\label{prop:M is free general} If Conjecture \ref{conj:FT is
parity} holds, then there is an isomorphism of bigraded $\k[\V]$-modules:
\begin{equation*}
	\pushQED{\qed}
	\YS H_{\KR}(T(m,m;\brc)) \cong H_{\KR}(T(m,m;\brc)) \otimes \k[\V] \, .\qedhere
	\popQED
\end{equation*}
\end{proposition}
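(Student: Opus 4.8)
The plan is to invoke Theorem~\ref{thm:parityfree} directly, since Conjecture~\ref{conj:FT is parity} asserts precisely that the colored torus link $T(m,m;\brc)$ is parity in the sense of Definition~\ref{def:parity}. Recall that $T(m,m;\brc)$ is by definition the closure of the $\brc$-colored positive full twist braid $\FT_{\brc}$, and that $\YS H_{\KR}(T(m,m;\brc))$ is defined (Definition~\ref{def:YHHH}) by applying the deformed Hochschild homology construction to $\YS C(\FT_{\brc})$, with the deformation parameters bundled according to the components $\pi_0(T(m,m;\brc))$.

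The single step is then: assuming $T(m,m;\brc)$ is parity, apply Theorem~\ref{thm:parityfree} with $\LB = T(m,m;\brc)$. That theorem yields a homotopy equivalence
\[
\YS C_{\KR}(T(m,m;\brc)) \simeq H_{\KR}(T(m,m;\brc)) \otimes \k[\V^{\pi_0(T(m,m;\brc))}]
\]
of differential $\Z_\adeg \times \Z_\qdeg \times \Z_\tdeg$-graded $\k[\V^{\pi_0(T(m,m;\brc))}]$-modules, where the right-hand side carries zero differential. Passing to homology and using that $\k[\V^{\pi_0(T(m,m;\brc))}]$ is flat (indeed free), we obtain
\[
\YS H_{\KR}(T(m,m;\brc)) \cong H_{\KR}(T(m,m;\brc)) \otimes \k[\V^{\pi_0(T(m,m;\brc))}]
\]
as bigraded (in fact triply-graded) $\k[\V]$-modules. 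The only bookkeeping needed is to match the notation $\k[\V]$ in the statement with $\k[\V^{\pi_0(T(m,m;\brc))}]$: by Remark~\ref{rem:YC(L)}, the deformation parameters $\V^{\pi_0(\LB)}$ are precisely one alphabet $\V^\comp$ of size $b(\comp)$ per component $\comp$, which here are the alphabets $\V_1, \ldots, \V_m$ associated to the strands of $\FT_{\brc}$ (each strand of the full twist giving a distinct component of the torus link), so $\k[\V] = \k[\V_1, \ldots, \V_m] = \k[\V^{\pi_0(T(m,m;\brc))}]$.

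There is no real obstacle here beyond the hypothesis itself: the entire content is Conjecture~\ref{conj:FT is parity}, and Theorem~\ref{thm:parityfree} does all the work once parity is granted. If one wanted to be careful, the mild point worth checking is that the grading collapse in Theorem~\ref{thm:parityfree} is harmless for our purposes — the freeness conclusion and the module isomorphism hold at the level of $\Z_\adeg\times\Z_\qdeg\times\Z_\tdeg$-graded vector spaces — and that the $\k[\V]$-module structure on the right-hand side is the evident one, which is built into the statement of Theorem~\ref{thm:parityfree}.
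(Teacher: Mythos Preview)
Your proposal is correct and matches the paper's approach exactly: the paper states this proposition as an immediate consequence of Theorem~\ref{thm:parityfree} (indeed, the \qed is placed inside the statement itself), and your argument simply unpacks that reference together with the identification $\k[\V]=\k[\V^{\pi_0(T(m,m;\brc))}]$.
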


We next record an important consequence of the expected ``flatness'' statement in 
Proposition~\ref{prop:M is free general}. %and Theorem~\ref{thm:parityinjective}.

\begin{cor}\label{cor:M is J general} 
Let $T(m,m;\brc)$ be the $\brc$-colored $(m,m)$-torus link, 
and let $U(\brc)$ be the $\brc$-colored unlink.
If Conjecture \ref{conj:FT is parity} holds, then the map 
$\YS H_{\KR}(\Sigma_{\brc})\colon \YS H_{\KR}\big( T(m,m;\brc) \big) \to \YS H_{\KR}\big( U(\brc) \big)$
induced by the splitting map $\Sigma_{\brc}\colon \VFT_{\brc} \to \oone_{\brc}$ 
is injective.
In particular, the map 
$H(\Sigma_\brc) \colon H(M_{\brc}) \rightarrow E_{\brc}$ is injective and
thus an isomorphism onto its image $J_{\brc}\subset E_{\brc}$. 
\end{cor}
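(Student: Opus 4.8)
The plan is to combine Theorem~\ref{thm:parityfree}, Proposition~\ref{prop:M is free general}, and Corollary~\ref{cor:section} with a short commutative-algebra argument. Granting Conjecture~\ref{conj:FT is parity}, both $T(m,m;\brc)$ and the $\brc$-colored unlink $U(\brc)$ are parity (the latter unconditionally), so Theorem~\ref{thm:parityfree} produces homotopy equivalences $\YS C_{\KR}(T(m,m;\brc))\simeq H_{\KR}(T(m,m;\brc))\otimes\k[\V^{\pi_0}]$ and $\YS C_{\KR}(U(\brc))\simeq H_{\KR}(U(\brc))\otimes\k[\V^{\pi_0}]$ with vanishing differentials. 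Hence the chain map induced by $\Sigma_\brc$ on the normalized Hochschild-homology complexes is (after transport) its own map on homology, so ``injective on homology'' is the same as ``injective as a chain map''; moreover by Proposition~\ref{prop:M is free general} the source $\YS H_{\KR}(T(m,m;\brc))$ is a free $\k[\V^{\pi_0}]$-module, and restricting to the lowest Hochschild ($\adeg=0$) summand recovers $H(\Sigma_\brc)\colon H(M_\brc)\to E_\brc$, which gives the ``in particular'' assertion.

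Next I would build an approximate inverse to the splitting map. Just as $\Sigma_\brc$ is assembled in Remark~\ref{rem:genSigma} from the two-strand splitting maps $\Sigma_{a,b}$ along a decomposition of the full twist into blocks $A_{i,j}$, the two-strand sections $\psired_{a,b}\colon\oone_{a,b}\to\VFTmin_{a,b}\simeq\VFT_{a,b}$ of Corollary~\ref{cor:section} assemble into a morphism $\psired_\brc\colon\oone_\brc\to\VFT_\brc$ in $\VS_\brc$. Using the graded middle interchange law~\eqref{eq:MidInt} together with $\Sigma_{a,b}\circ\psired_{a,b}=\yred_{a+1}\cdots\yred_{a+b}\cdot\id$ and $\psired_{a,b}\circ\Sigma_{a,b}\sim\yred_{a+1}\cdots\yred_{a+b}\cdot\id$ from Corollary~\ref{cor:section}, I expect
\[
\Sigma_\brc\circ\psired_\brc = P\cdot\id_{\oone_\brc}\, ,\qquad \psired_\brc\circ\Sigma_\brc\sim P\cdot\id_{\VFT_\brc}
\]
in $\VS_\brc$, where $P\in E_\brc=\Sym(\X_1|\cdots|\X_m)[\V]$ is the product over the blocks of the associated $\yred$-products $\prod_{x\in\X_i}\!\big(\sum_r x^{r-1}v_{i,r}\big)$. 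Applying $\HH_\bullet$ (with the bundled-curvature substitution) and renormalizing then yields maps on homology with $\YS H_{\KR}(\Sigma_\brc)\circ\YS H_{\KR}(\psired_\brc)=P\cdot\id$ and $\YS H_{\KR}(\psired_\brc)\circ\YS H_{\KR}(\Sigma_\brc)=P\cdot\id$, the action of $P$ being through the module structure over the sheet algebras.

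The conclusion is then commutative algebra. From these two identities one reads off that $\ker\YS H_{\KR}(\Sigma_\brc)$ coincides with the submodule of $P$-torsion in $\YS H_{\KR}(T(m,m;\brc))$, so it remains to see that $P$ is a non-zerodivisor there. Viewing $P$ as a polynomial in $\V$ with coefficients in $\Sym(\X_1|\cdots|\X_m)$, the term in which every factor of every $\yred$-product contributes its $x$-free part is a monomial purely in the variables $v_{i,1}$ with coefficient $1\in\k$, and this is the unique minimal $\V$-monomial of $P$ for the weight order that assigns $v_{i,r}$ weight $r$. Since $\YS H_{\KR}(T(m,m;\brc))$ is $\k[\V^{\pi_0}]$-free, a leading-term argument — fix a $\k[\V^{\pi_0}]$-basis, suppose $0\neq x$ lies in the kernel, and compare coefficients of the product of the minimal $\V$-monomial of $P$ with the minimal $\V$-monomial occurring in the expansion of $x$ — forces $x=0$, the monic leading term of $P$ being exactly what kills the cross terms. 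This proves $\YS H_{\KR}(\Sigma_\brc)$ injective, hence $H(\Sigma_\brc)$ injective, so $H(M_\brc)\xrightarrow{\ \sim\ }J_\brc$.

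The main obstacle is the bookkeeping of the second step: checking that the $\psired_{a,b}$ genuinely assemble into a morphism $\psired_\brc$ compatible with the assembly of $\Sigma_\brc$, and carefully computing $\Sigma_\brc\circ\psired_\brc$ around the full twist (tracking grading shifts, the various changes of deformation variables between the categories $\VSred_{a,b}$ and $\VS_\brc$, and the $\yred$-products attached to each block) so as to pin down $P$ and confirm that its minimal $\V$-monomial has coefficient $1$. Once $P$ is identified, all remaining steps are either formal or an elementary polynomial-ring computation.
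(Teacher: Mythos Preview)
Your strategy matches the paper's: assemble a section $\psired_\brc$ to $\Sigma_\brc$ from the two-strand sections of Corollary~\ref{cor:section}, obtain $\psired_\brc\circ\Sigma_\brc \sim P\cdot\id$, and combine $\k[\V]$-freeness (from parity, via Theorem~\ref{thm:parityfree}) with the leading $\V$-weight term of $P$ to force injectivity. One correction worth flagging: the factors of $P$ are the $\yred_j=\sum_r x_j^{r-1}\bar v_r$ built from the \emph{reduced} parameters $\bar v_r$ of each two-strand block (not the per-strand $v_{i,r}$), and on $\oone_\brc$ one has $\bar v_1 = v_{j,1}-v_{i,1}$ for the block involving strands $i$ and $j$; hence the minimal-$\V$-weight part of $P$ is a product of powers of differences $(v_{j,1}-v_{i,1})$ rather than a monic monomial --- but it is still a nonzero element of $\k[\V]$, so your leading-term argument on a free $\k[\V]$-module goes through unchanged (this is exactly how the paper phrases it in the two-strand case, working in the coordinates $\k[\V]=\k[\V_L^{(a)},\overline\V]$ where $\bar v_1^b$ \emph{is} a monomial).
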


\begin{proof}
Consider the $m=2$ case (i.e. $\brc=(a,b)$).
Corollary \ref{cor:section} shows that there exists a morphism 
$\psired_{a,b} \in \Hom_{\VSred_{a,b}}(\oone_{a,b},\VFT_{a,b})$ 
that satisfies 
\[
\psired_{a,b} \circ \Sigma_{a,b} \sim \yred_{a+1}\cdots \yred_{a+b} \cdot \id_{\VFT_{a,b}}
\]
(in our current notation, the $\yred$ variables correspond to the alphabet $\V_2$ 
corresponding to the $b$-labeled strand).
It follows that the induced map on homology satisfies
\[
\YS H_{\KR}(\psired_{a,b}) \circ \YS H_{\KR}(\Sigma_{a,b}) 
= \yred_{a+1}\cdots \yred_{a+b} \cdot \id_{\YS H_{\KR}( T(2,2;(a,b)))}\, .
\]

Assuming Conjecture \ref{conj:FT is parity},
Proposition \ref{prop:M is free general} implies that $H(M_{(a,b)})$ is $\V$-torsion free. 
Since
\[
\yred_j = \sum_{k=1}^b x_j^{k-1} \bar{v}_k %= \sum_{k=1}^b x_j^{k-1} ( v_{R,k}^{(b)} - v_{L,k}^{(b)}),
\]
we have that $\yred_{a+1}\cdots \yred_{a+b}$ is the sum of the monomial 
$(\bar{v}_1)^b$ and terms involving the alphabet $\X_2$ that are lower order with respect to 
the alphabet $\overline{\V}$ (if we impose the monomial order $\bar{v}_1 > \cdots > \bar{v}_b$).
Since\footnote{In the $2$-strand notation established in \S\ref{ss:two strand V cats},
we denote $V_1 = \V_L^{(a)}$ and $\V_2=\V_R^{(b)}$.} 
$\k[\V] = \k[\V_L^{(a)},\V_R^{(b)}] = \k[\V_L^{(a)},\overline{\V}]$, 
this implies that multiplication by $\yred_{a+1}\cdots \yred_{a+b}$ is injective.
Hence, $\YS H_{\KR}(\Sigma_{a,b})$ is injective as well.

The general case follows analogously, using the description of the full twist
braid as the horizontal composition of two-strand full twists from Remark
\ref{rem:genSigma}. See also Theorem \ref{thm:parityinjective} below for a more
general result.
\end{proof}

Assuming the validity of Conjecture \ref{conj:FT is parity}, 
the computation of the deformed colored homology of the colored torus link $T(m,m;\brc)$ 
in lowest Hochschild-degree amounts to finding a presentation of the ideal $J_{\brc} \lhd E_{\brc}$. 
We now aim to formulate a precise conjectural description of $J_{\brc}$.
To help motivate this, we now recall the description in the uncolored case from \cite{GH}. 
We begin by establishing some conventions.

\begin{conv}\label{conv:J E uncolored} 
As above, we write $1^N$ for the sequence $(1,\ldots,1)$ of length $N$. 
If $\brc= 1^N$ then each alphabet $\X_i$ consists of a single variable $x_i$
and we have a single deformation parameter for each $i$ that we denote by $y_i := v_{i,1}$. 
The latter thus satisfy $\wt (y_i) = \qdeg^{-2}\tdeg^2$.
In this case, define the \emph{total alphabets} by
\[
\X = \bigsqcup_{i=1}^m \X_i = \{x_1,\ldots,x_N\} \, , \quad 
\Y = \bigsqcup_{i=1}^m \V_i = \{y_1,\ldots,y_N\}
\] 
so $E_{1^N} = \k[\X,\Y]$.
\end{conv}

\begin{conv}\label{conv:ideals}
Below, we will consider various algebras $E$ and ideals $I \vartriangleleft E$ that are 
generated by collections of elements $S \subset E$. 
Since we will consider different algebras that contain the same subsets $S$, 
we will denote such ideals by $I = E \cdot S$ in order to make clear the algebra in 
which each ideal lives.
\end{conv}

The following combines the torus link computations of \cite{MR3880028} 
(see also \cite{mellit2017homology, hogancamp2019torus}) 
with \cite[Theorem 6.16 and Corollary 6.17]{GH}.

\begin{thm}\label{thm:FT uncolored} 
The parity conjecture (Conjecture \ref{conj:FT is parity}) holds when $\brc=1^N$.
Further, the full twist ideal $J_N:=J_{1^N}$ 
is equal to the ideal $I_N \lhd \k[\X,\Y]$ generated by polynomials in $\k[\X,\Y]$ 
that are antisymmetric with respect to the diagonal $\symg_N$-action.
\end{thm}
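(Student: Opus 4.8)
The plan is to reduce Theorem~\ref{thm:FT uncolored} to the known uncolored results, since the essential content already appears in \cite{MR3880028, mellit2017homology, hogancamp2019torus} and \cite{GH}. First I would invoke the parity statement for the uncolored $(N,N)$-torus link, which is \cite[Theorem 1.1 (or the main torus link computation)]{MR3880028} (see also the later proofs in \cite{mellit2017homology, hogancamp2019torus}): the undeformed triply-graded homology $H_{\KR}(T(N,N))$ is supported in even cohomological degrees. Combined with Theorem~\ref{thm:parityfree}, this immediately gives that $\YS H_{\KR}(T(N,N)) \cong H_{\KR}(T(N,N)) \otimes \k[\Y]$ as bigraded $\k[\Y]$-modules, in particular that $\YS H_{\KR}(T(N,N))$ is a free $\k[\Y]$-module and that $H(M_N)$ is $\Y$-torsion-free. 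This establishes the first assertion, namely that Conjecture~\ref{conj:FT is parity} holds for $\brc = 1^N$.

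For the identification $J_N = I_N$, the key point is that in the uncolored case the deformation category $\VS_{1^N}$ is precisely (a version of) the category $\YS(\SBim)$ of curved complexes of Soergel bimodules studied in \cite{GH}, via the translation recorded in \S\ref{ss:ytouv} (the curvature $\sum_i h_1(\leftX_i - \rightX_i) v_i = \sum_i (x_i - x_i') y_i$ is exactly the ``thin'' curvature \eqref{eq:x-x}). Under this identification, the curved Rickard complex $\VFT_{1^N}$ corresponds to the $y$-ified full twist complex $\YS C(\FT_N)$ of \cite{GH}, and the module $M_{1^N} = \Hom_{\VS_{1^N}}(\oone_{1^N}, \VFT_{1^N})$ corresponds to $\Hom_{\YS\SBim_N}(\oone_N, \FT_N)$. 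Moreover, the splitting map $\Sigma_{1^N} \colon \VFT_{1^N} \to \oone_{1^N}$ constructed in Remark~\ref{rem:genSigma} (assembled from the two-strand maps $\Sigma_{1,1}$ of \eqref{eq:a1LinkSplit}) is a curved lift of the ``saddle-type'' undeformed splitting morphism; by Proposition~\ref{defthm:splittingmap} and the uniqueness in Remark~\ref{rem:uniqueSigma}, it is homotopic to the link-splitting map of \cite{GH} up to a unit. Therefore $J_N = \im\big(H(\Sigma_{1^N})\big)$ coincides with the image computed in \cite[Theorem 6.16]{GH}, which is exactly $I_N$. Finally, Corollary~\ref{cor:M is J general} (whose hypothesis we have just verified) upgrades this to the statement that $H(M_N) \cong J_N = I_N$, matching \cite[Corollary 6.17]{GH}.

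The main obstacle is making the dictionary between $\VS_{1^N}$ and $\YS(\SBim)$ precise enough that the splitting map on our side is literally identified (up to homotopy and a unit) with the one in \cite{GH}; this requires checking that our $\Sigma_{1^N}$ restricts correctly through the tower of two-strand maps and that the reduction conventions of \S\ref{ss:v-curved cxs}--\S\ref{ss:two strand V cats} are compatible with the conventions of \cite{GH}. Since both maps are, up to homotopy, the unique closed degree-zero lift of the same undeformed morphism (by Proposition~\ref{defthm:splittingmap}), this compatibility should follow formally once the equivalence of categories is set up, but it is the step requiring the most care. Everything else is an appeal to cited results: parity from \cite{MR3880028, mellit2017homology, hogancamp2019torus}, the ideal computation from \cite{GH}, and the bookkeeping from Theorem~\ref{thm:parityfree} and Corollary~\ref{cor:M is J general}.
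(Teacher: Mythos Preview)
Your proposal is correct and matches the paper's approach exactly: the paper does not give a proof but simply states (in the sentence preceding the theorem) that the result combines the torus link parity computations of \cite{MR3880028, mellit2017homology, hogancamp2019torus} with \cite[Theorem 6.16 and Corollary 6.17]{GH}. Your elaboration of the dictionary between $\VS_{1^N}$ and $\YS(\SBim)$ and the uniqueness-of-splitting-map argument is a reasonable fleshing-out of why those citations suffice, but the paper treats the theorem as a direct citation rather than something requiring further argument.
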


Inspired by this theorem, and also our results in \S\ref{s:hopf link} below, 
we now formulate a conjecture concerning the ideals $J_{\brc}$ 
for general $\brc=(\bre_1,\ldots,\bre_m)$ that extends Theorem \ref{thm:FT uncolored}.

\begin{conv}\label{conv:total}
We extend Convention \ref{conv:J E uncolored} to the case of general $\brc = (\bre_1,\ldots,\bre_m)$
by defining the \emph{total alphabets}
\[
\X := \bigsqcup_{i=1}^m \X_i \, , \quad \V=\bigsqcup_{i=1}^m \V_i\, .
\]
We write $\X = \{x_1,\ldots,x_N\}$ where $N:=\bre_1+\cdots+\bre_m$
and thus identify the alphabets $\X_i$ (of cardinality $\bre_i$) with the subsets
\[
\X_i = \{x_j \mid \bre_1+\cdots + \bre_{i-1}<j\leq \bre_1+\cdots + \bre_{i} \} \subset \X\, .
\]
It follows that
\[
E_\brc = \Sym(\X_1|\cdots|\X_m)[\V_1,\ldots,\V_m]
= \k[\X,\V]^{\symg_{\bre_1}\times \cdots \times \symg_{\bre_m}}\, ,
\]
provided we let $\symg_{\bre_1}\times \cdots \times \symg_{\bre_m}$ act on the alphabet $\V$ trivially.
\end{conv}

For the duration of this section, let $\brc = (\bre_1,\ldots,\bre_m)$ and set
$N=\bre_1+\cdots+\bre_m$. Given an index $1 \leq j \leq N$, we write 
$\varpi(j) = i$ if 
$\bre_1+\cdots + \bre_{i-1} +1 \leq j \leq \bre_1+\cdots + \bre_{i-1}+\bre_i$. 

\begin{definition} \label{def:ys and vs}
Consider the alphabets
\[
\Y_i := \{y_{\bre_1+\cdots + \bre_{i-1} +1},\ldots, y_{\bre_1+\cdots + \bre_{i-1} +\bre_i}\}
\subset \Y = \{y_1,\ldots,y_N\}\, ,
\]
and regard $\k[\X_i,\Y_i]$ as a subalgebra of $\k[\X_i,\V_i]$ via the assignment 
\begin{equation}\label{eq:y to v 2}
y_j\mapsto \sum_{r=1}^{\bre_{\varpi(j)}} x_{j}^{r-1} v_{\varpi(j),r}
\end{equation}
that is analogous to \eqref{eq:y and v}.
Let $\symg_N$ act diagonally on $\k[\X,\Y]$, 
which we view as a subalgebra of $\k[\X,\V]$.
Following Convention \ref{conv:ideals}, define the ideal
\begin{equation}\label{eq:Idef}
I_{\brc}:= E_{\brc}\cdot \left\{\frac{f(\X,\Y)}{\Delta(\X_1)\cdots \Delta(\X_m)} \mid
f\in \k[\X,\Y] \text{ is antisymmetric for } \symg_{N} \right\}
\end{equation}
where $\Delta(\X_i)$ denotes the Vandermonde determinant in the alphabet $\X_i$.
\end{definition}

Set $\symg_\brc := \symg_{\bre_1}\times\cdots\times \symg_{\bre_m}$. 
The inclusion $\k[\X,\Y] \hookrightarrow \k[\X,\V]$ given by the 
assignment \eqref{eq:y to v 2} is $\symg_\brc$-equivariant, 
provided we let this group act trivially on the alphabet $\V$.
It follows that polynomials $f\in \k[\X,\Y]$ that are 
antisymmetric for the action of $\symg_N$ are 
antisymmetric for the action of $\symg_\brc$ 
when viewed as elements in $\k[\X,\V]$. 
Thus, such $f$ are divisible by the product of Vandermonde
determinants $\Delta(\X_1)\cdots \Delta(\X_m)$ and the resulting quotient 
is $\symg_\brc$-invariant.
Thus, the ideal $I_\brc$ is indeed well-defined.

Recall from \S\ref{ss:haiman dets} that any
set $S$ of monic monomials in $\k[x,y]$ with $|S|=N$ determines 
a Haiman determinant $\Delta_S(\X,\Y) \in \k[\X,\Y]$. 
The latter are antisymmetric for the diagonal $\symg_N$ action on $\k[\X,\Y]$, 
thus determine elements
\begin{equation}\label{eq:HaimanRatio}
\frac{\Delta_S(\X,\Y)}{\Delta(\X_1)\cdots\Delta(\X_m)} \in I_\brc\, .
\end{equation}
In \S\ref{s:hopf link} below, we show that when $m=2$ (i.e. when $\brc = (a,b)$), 
certain elements of the form \eqref{eq:HaimanRatio} generate the ideal $J_\brc$. 
Motivated by this, 
we propose the following as a $\brc$-colored analogue of Theorem \ref{thm:FT uncolored}.

\begin{conj}\label{conj:FT ideal} 
The $\brc$-colored full twist ideal $J_{\brc}\vartriangleleft E_{\brc}$ 
agrees with the ideal $I_\brc$ from Definition~\ref{def:ys and vs}.
\end{conj}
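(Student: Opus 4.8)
The plan is to reduce Conjecture~\ref{conj:FT ideal} to the two-strand case $\brc=(a,b)$, which is established in Theorem~\ref{thm:Jab} (and whose Haiman-determinant formulation is Corollary~\ref{cor:JHaiman}), and then bootstrap to general $\brc$ using the decomposition of the colored full twist braid as a horizontal composite of two-strand full twists recorded in Remark~\ref{rem:genSigma}. First I would set up the inductive framework: writing $\FT_{\brc}$ as a composite of the pure braids $A_{i,j}$ from Remark~\ref{rem:genSigma}, the splitting map $\Sigma_{\brc}$ factors through a sequence of partial splittings, each of which only splits one two-strand full twist and therefore, by Theorem~\ref{thm:skein split} and the explicit model in Theorem~\ref{thm:splittingmap}, is controlled by the two-strand answer. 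The target is to show $\im H(\Sigma_{\brc}) = I_{\brc}$, so both containments must be addressed.

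For the containment $J_{\brc}\subseteq I_{\brc}$: I would argue that every element in the image of $H(\Sigma_{\brc})$ arises, via the factorization above, from iterated application of the two-strand splitting maps, each of whose image is generated (after clearing the relevant Vandermonde denominators) by Haiman-type determinants as in Corollary~\ref{cor:JHaiman}; since each such determinant is antisymmetric under the full $\symg_N$ action on $\k[\X,\Y]$ after the substitution \eqref{eq:y to v 2}, its image lies in $I_{\brc}$ by construction \eqref{eq:Idef}, and $I_{\brc}$ is an ideal, so products and $E_{\brc}$-linear combinations stay inside. This direction should be the more routine one, essentially a bookkeeping exercise tracking how the $\yred$-decorations in \eqref{eq:SigmaPGC} multiply together under horizontal composition and how the denominators $\Delta(\X_i)$ assemble.

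For the reverse containment $I_{\brc}\subseteq J_{\brc}$ --- which I expect to be the main obstacle --- the strategy is to exhibit an explicit antisymmetric preimage for each generator of $I_{\brc}$. The key tool is the \emph{simultaneous splitting} of the complex of threaded digons from Theorem~\ref{thm:skein split}: threading the two-strand full twist by additional colored strands produces, via Corollary~\ref{cor:curvedMCCS} and the topological interpretation in Proposition~\ref{prop:MCCS topologically}, exactly the auxiliary complexes needed to lift Haiman determinants built from monomials $x^i y^j$ with larger $j$. One would then run an induction on $m$ (or on $N$), using at each stage the two-strand result Theorem~\ref{thm:Jab} for the ``innermost'' strand pair, together with the stability of interpolation coordinates under alphabet inclusion (Proposition~\ref{prop:va and vb} and Lemma~\ref{lemma:curvature stability}) to match up the $v$-variables across the composition. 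Assuming Conjecture~\ref{conj:FT is parity} (so that, by Corollary~\ref{cor:M is J general}, $H(\Sigma_{\brc})$ is injective and $H(M_{\brc})\cong J_{\brc}$), one reduces to a purely algebraic statement: the ideal generated by the images of all Haiman ratios \eqref{eq:HaimanRatio} obtained from the threaded-digon construction equals $I_{\brc}$.

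The hard part will be proving that these particular Haiman ratios generate \emph{all} of $I_{\brc}$ --- i.e. that the $\symg_N$-antisymmetric elements of $\k[\X,\Y]$, divided by $\Delta(\X_1)\cdots\Delta(\X_m)$, are spanned over $E_{\brc}$ by the determinants coming from the iterated two-strand splittings. In the uncolored case this is precisely the content of the Haiman-theoretic arguments behind Theorem~\ref{thm:FT uncolored}, and for general $\brc$ one likely needs an analogous combinatorial description of a generating set of $\symg_N$-antisymmetric polynomials adapted to the parabolic $\symg_{\brc}$, compatible with the substitution \eqref{eq:y to v 2}; controlling the interaction of this combinatorics with the horizontal-composition structure of $\Sigma_{\brc}$ is where the real work lies. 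I would expect that a clean proof may require first proving a strengthened three-strand or ``one big strand plus one small strand'' case to seed the induction, in the same spirit as the remark after Conjecture~\ref{conj:E} about needing an $m=2$-or-larger base case. Absent that, the honest statement is that this conjecture remains open beyond $m=2$, with Theorem~\ref{thm:Jab} as the decisive piece of evidence.
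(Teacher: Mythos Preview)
This statement is a \emph{conjecture} in the paper, not a theorem: the paper does not prove it. The only case established is $\brc=(a,b)$ (Theorem~\ref{thm:Jab}), and you correctly arrive at this conclusion in your final sentence. So there is no ``paper's own proof'' to compare against for general $\brc$.

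That said, your sketched strategy has a real gap even at the heuristic level, in the direction you call ``the more routine one.'' Factoring $\Sigma_{\brc}$ through two-strand splittings as in Remark~\ref{rem:genSigma} does not obviously produce elements of $I_{\brc}$: each two-strand splitting $\Sigma_{b_i,b_j}$ (after clearing Vandermondes) yields polynomials antisymmetric under $\symg_{b_i+b_j}$ acting on the alphabets $\X_i\cup\X_j$ and $\Y_i\cup\Y_j$, but membership in $I_{\brc}$ requires antisymmetry under the full $\symg_N$ acting on $\X$ and $\Y$. A product of such ``pairwise-antisymmetric'' elements is not in general $\symg_N$-antisymmetric, so the bookkeeping you describe does not close. (In the $m=2$ proof of $J_{a,b}\subset I_{a,b}$, Proposition~\ref{prop:JinI}, this issue does not arise because there is only one pair; the argument there goes through the Key Lemma~\ref{lem:keylemma} and the reduced/unreduced comparison in Corollary~\ref{cor:RedToUn}, not through any factorization.) A correct approach to $J_{\brc}\subset I_{\brc}$ for general $m$ would more plausibly mimic that $m=2$ argument directly---computing generators via corner maps and identifying them with reduced Haiman determinants---rather than attempting to iterate the two-strand answer.

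Your assessment of the reverse containment $I_{\brc}\subset J_{\brc}$ is accurate: this is the hard direction, it relies on Conjecture~\ref{conj:FT is parity} (via Corollary~\ref{cor:M is J general}), and the paper offers no method beyond $m=2$. The inductive scheme using Theorem~\ref{thm:skein split} that you outline is in the spirit of the paper's two-strand proof (\S\ref{sec:induction}), but there the induction is on $b$ within a fixed $m=2$, not on $m$; extending it to larger $m$ would require new exactness statements for complexes of ideals analogous to Proposition~\ref{prop:Jexact}, which are not available.
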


When $\brc=1^N$, this is simply a restatement of Theorem \ref{thm:FT uncolored}, 
since by definition $I_N = I_{1^N}$.
We will prove the $\brc = (a,b)$ case of Conjecture \ref{conj:FT ideal} below in Theorem \ref{thm:Jab} 
by pairing explicit computations of certain Haiman determinants (from \S \ref{ss:haiman dets})
with an elaborate inductive argument. In this case, Corollary \ref{cor:JHaiman}
identifies a specific set of $2^b$ Haiman determinants that generate $I_{a,b}$.

\subsection{Full twists and Hilbert schemes}\label{ss:connection to Hilbert schemes}

Pioneering work of Haiman \cite{Haiman} shows that the 
ideal
\[
I_N = \k[\X,\Y] \cdot \left\{ f(\X,\Y) \mid f\in \k[\X,\Y] \text{ is antisymmetric for } \symg_{N} \right\}
\]
appearing in Theorem \ref{thm:FT uncolored} plays a crucial role in the study of the 
Hilbert scheme $\Hilb_N(\C^2)$ of $N$ points in $\C^2$.
Recall that the \emph{isospectral Hilbert scheme} $X_N$ is defined to be the 
reduced fiber product
\[
\begin{tikzcd}
X_N \arrow[r] \arrow[d] & (\C^2)^N \arrow[d] \\
\Hilb_N(\C^2) \arrow[r] & S^N(\C^2)
\end{tikzcd}
\]
and that in \cite[Proposition 3.4.2]{Haiman} it is shown that $X_N$ is isomorphic to the 
blowup of $(\C^2)^N$ at the ideal $\C \otimes_\k I_N \vartriangleleft \C[\X,\Y]$.
Haiman goes on to to show that $X_N$ is normal, Cohen-Macaulay, and Gorenstein.
The following seemingly (but not) straightforward result plays a role in these considerations, 
and will be used below.

\begin{lem}[{\cite[Corollary 3.8.3]{Haiman}}]\label{lem:HaimanI}
$\displaystyle I_N = \bigcap_{1\leq i<j \leq N} \k[\X,\Y]\cdot\{x_i - x_j, y_i-y_j\}.$
\end{lem}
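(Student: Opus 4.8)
The claim is the identity $I_N = \bigcap_{1 \leq i < j \leq N} \k[\X,\Y]\cdot\{x_i-x_j,\,y_i-y_j\}$, where $I_N$ is the ideal of $\symg_N$-antisymmetric polynomials in $\k[\X,\Y]$. Write $J := \bigcap_{i<j} \mathfrak{p}_{ij}$ with $\mathfrak{p}_{ij} := \k[\X,\Y]\cdot\{x_i-x_j,\,y_i-y_j\}$. The inclusion $I_N \subseteq J$ is the easy direction: any antisymmetric polynomial $f$ satisfies $s_{ij}(f) = -f$ for the transposition $s_{ij}$, hence $f = \tfrac{1}{2}(f - s_{ij}(f))$ lies in the ideal generated by the differences $x_i - x_j$ and $y_i - y_j$ (this is the standard fact that $g - s_{ij}(g)$ is divisible by, and in fact lies in the ideal of, $x_i - x_j, y_i - y_j$ when one works over a field of characteristic $\neq 2$, and more carefully by a Taylor/divided-difference argument that $g - s_{ij}(g) \in (x_i-x_j, y_i - y_j)$ integrally). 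So each antisymmetric $f$ lies in every $\mathfrak{p}_{ij}$, hence in $J$; since $J$ is an ideal, $I_N \subseteq J$.

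The reverse inclusion $J \subseteq I_N$ is the substantive direction, and this is where I would lean on Haiman's geometry rather than attempt a direct combinatorial argument. The point is that $\mathfrak{p}_{ij}$ is the ideal of the locus in $(\C^2)^N$ where the $i$-th and $j$-th points coincide, so $J$ is (the $\k$-form of) the ideal of the ``big diagonal'' $\bigcup_{i<j}\{p_i = p_j\} \subset (\C^2)^N$, at least set-theoretically; the content is that this intersection of ideals is actually radical and equals $I_N$ scheme-theoretically. I would invoke Haiman's main structural results on the isospectral Hilbert scheme $X_N$: by \cite[Proposition 3.4.2]{Haiman}, $X_N$ is the blowup of $(\C^2)^N$ along $\C\otimes_\k I_N$, and Haiman proves $X_N$ is normal, Cohen--Macaulay, and Gorenstein, with the blowup being ``small'' in a precise sense. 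The key consequence I want is that $I_N$ is integrally closed and that $(\C^2)^N \setminus V(I_N)$ is exactly the complement of the big diagonal — equivalently, $V(I_N) = \bigcup_{i<j}\{p_i = p_j\}$ as sets and $I_N$ is the \emph{radical} ideal of this union when restricted appropriately, or rather that $I_N$ already contains every element vanishing on each pairwise diagonal. Concretely: $J$ consists of all $f \in \k[\X,\Y]$ such that for each $i<j$, $f \in \mathfrak{p}_{ij}$; I would argue that membership in all $\mathfrak{p}_{ij}$ forces $f$ to be antisymmetric, by the following localization argument.

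The cleanest route for $J \subseteq I_N$: suppose $f \in J$. For a fixed pair $i<j$, the quotient $\k[\X,\Y]/\mathfrak{p}_{ij}$ is a polynomial ring (identify $x_i = x_j$, $y_i = y_j$), and $f \mapsto 0$ there means $f$ vanishes after setting $p_i = p_j$. I want to deduce $s_{ij}(f) \equiv -f$ modulo something, but that's not literally forced by $f \in \mathfrak{p}_{ij}$ alone — e.g. $f = (x_i - x_j)^2$ lies in $\mathfrak{p}_{ij}$ but is symmetric. So the single-pair statement is genuinely weaker; the strength comes from intersecting over \emph{all} pairs simultaneously together with some regularity. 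This is precisely the non-trivial input. The honest plan is therefore: (1) establish $I_N \subseteq J$ directly as above; (2) for the reverse, reduce to showing $\k[\X,\Y]/I_N \hookrightarrow \prod_{i<j} \k[\X,\Y]/\mathfrak{p}_{ij}$ is injective with image the ``compatible'' tuples, which is exactly the statement that $I_N = J$; (3) prove this using that $X_N = \mathrm{Bl}_{I_N}(\C^2)^N$ is normal and that the exceptional divisor maps onto the big diagonal, so that $I_N$ is locally principal away from codimension $\geq 2$ — combined with $\k[\X,\Y]$ being a UFD (so an ideal locally principal in codimension $1$ and integrally closed is determined by its codimension-one behaviour) and the fact that at a generic point of $\{p_i = p_j\}$ the ideal $I_N$ is generated by a single antisymmetric element (the Vandermonde-type factor), one concludes $I_N$ and $J$ agree generically on each component of the big diagonal, hence agree as divisorial/integrally-closed ideals, hence are equal. \textbf{The main obstacle} is precisely step (3): controlling $I_N$ in codimension one along the big diagonal and upgrading ``equal generically on each pairwise diagonal'' to ``equal as ideals,'' which requires either Haiman's Cohen--Macaulayness/normality of $X_N$ or an independent argument that $J$ is integrally closed and unmixed of the right dimension. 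Since the paper only cites \cite[Corollary 3.8.3]{Haiman}, the expected proof is simply: cite Haiman's corollary verbatim, with a one-line remark that over $\k = \Q$ the statement follows from the $\C$-version by flat base change (both sides are defined over $\Q$ and commute with field extension). I would present it that way — attribute to Haiman, give the trivial $\subseteq$ direction for the reader's convenience, and note the descent from $\C$ to $\Q$.

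\begin{proof}
This is \cite[Corollary 3.8.3]{Haiman}, stated there over $\C$; the identity descends to $\k=\Q$ since both $I_N$ and each ideal $\k[\X,\Y]\cdot\{x_i-x_j,y_i-y_j\}$ are defined over $\Q$ and their formation commutes with the flat base change $\Q \hookrightarrow \C$ (a $\Q$-subspace of $\Q[\X,\Y]$ equals the intersection of its $\C$-span with $\Q[\X,\Y]$). For the reader's convenience we note the elementary inclusion $I_N \subseteq \bigcap_{i<j}\k[\X,\Y]\cdot\{x_i-x_j,y_i-y_j\}$: if $f\in\k[\X,\Y]$ is antisymmetric for $\symg_N$ and $s_{ij}$ denotes the transposition of indices $i$ and $j$, then $f = \tfrac12\big(f - s_{ij}(f)\big)$, and $g - s_{ij}(g)$ lies in the ideal generated by $x_i-x_j$ and $y_i-y_j$ for any $g$; since the latter ideals and $I_N$ are ideals, this gives the inclusion, and the reverse inclusion is the content of Haiman's result.
\end{proof}
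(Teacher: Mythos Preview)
Your proposal is correct and matches the paper's approach: the paper does not prove this lemma at all, simply citing it as \cite[Corollary 3.8.3]{Haiman}. Your added remarks on the easy inclusion $I_N \subseteq J$ and on descent from $\C$ to $\Q$ are fine supplementary commentary, though the paper itself includes neither.
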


Inspired by the connections between uncolored, triply-graded link homology and 
the isospectral Hilbert scheme $X_N$, we propose the following as a ``colored'' 
analogue of the latter.

\begin{definition}\label{def:singular Hilb}
For $\brc=(\bre_1,\ldots,\bre_m)$, 
let $X_{\brc}$ be the blowup of $\Spec(E_\brc)$ at the ideal $I_\brc \vartriangleleft E_\brc$. 
In other words, set
\[
\cal{A}_{\brc} := \bigoplus_d I_{\brc}^d z^d \subset E_\brc [z]
\]
and let $X_{\brc}:=\Proj(\cal{A}_{\brc})$.
\end{definition}

We expect that the relation between Soergel bimodules and 
the (isospectral) Hilbert scheme in \cite{GNR,ObRozSBim,GH}
has an analogue for singular Soergel bimodules, with $X_\brc$ playing 
the role of ``colored'' isospectral Hilbert scheme.
In particular, let $\cal{B}_\brc$ be the colored ``full-twist dg algebra'',
\[
\cal{B}_\brc := \bigoplus_d \Hom_{\VS_\brc}(\oone_{\brc},\VFT_{\brc}^d)
\]
Post-composing with the splitting maps $\VFT_\brc^d\rightarrow \oone_{\brc}$
gives us a map of dg algebras $\cal{B}_\brc \rightarrow \cal{A}_{\brc}$. 

\begin{conj}
The map of dg algebras $\cal{B}_{\brc}\rightarrow A_{\brc}$ is a quasi-isomorphism. 
\end{conj}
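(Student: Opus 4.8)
The plan is to reduce the conjectured quasi-isomorphism $\cal{B}_{\brc} \to \cal{A}_{\brc}$ to two separate assertions: first, that the dg algebra $\cal{B}_{\brc}$ is formal — i.e. quasi-isomorphic to its homology $H(\cal{B}_{\brc}) = \bigoplus_d H\big(\Hom_{\VS_\brc}(\oone_{\brc},\VFT_\brc^d)\big)$ — and second, that the induced map on homology $H(\cal{B}_{\brc}) \to \cal{A}_{\brc} = \bigoplus_d I_{\brc}^d z^d$ is an isomorphism of graded rings. The first assertion follows at once from the parity hypothesis of Conjecture~\ref{conj:FT is parity}: each $\VFT_{\brc}^d$ is the curved Rickard complex of the $d$-th power of the colored full twist, whose closure is a positive colored torus link, hence parity; Theorem~\ref{thm:parityfree} then gives $\YS C_{\KR}(\VFT_{\brc}^d) \simeq H_{\KR}(\cdot) \otimes \k[\V]$ as differential graded modules with \emph{zero} differential on the right, and a standard argument (cf. Lemma~\ref{lemma:simplifying YC} together with the degree reasons used in Theorem~\ref{thm:parityfree}) promotes this to a formality statement for the full dg algebra $\cal{B}_{\brc}$. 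So the crux is entirely the algebraic identification in each twist-power degree $d$.

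For $d=0$ and $d=1$ this is known: $\Hom_{\VS_\brc}(\oone_\brc,\oone_\brc) = E_\brc = \cal{A}_\brc^0$ trivially, and $H(M_\brc) \cong J_\brc$ by Corollary~\ref{cor:M is J general} (using parity), with $J_\brc = I_\brc$ being exactly the content of Conjecture~\ref{conj:FT ideal} (proved for $m \le 2$ in Theorem~\ref{thm:Jab}). For general $d$, the natural strategy is induction on $d$ using the splitting map: the decomposition of the full twist braid as a horizontal composite of two-strand full twists $A_{i,j}$ (Remark~\ref{rem:genSigma}) lets one factor $\VFT_\brc^d \simeq \VFT_\brc^{d-1} \hComp \VFT_\brc$, and post-composition with $\Sigma_\brc$ on the last factor exhibits $\Hom_{\VS_\brc}(\oone_\brc, \VFT_\brc^d)$ as a module over $\Hom_{\VS_\brc}(\oone_\brc,\VFT_\brc^{d-1})$. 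One then wants to show that the image of $H\big(\Hom(\oone_\brc,\VFT_\brc^d)\big)$ under iterated splitting is precisely $I_\brc^d$, and that the map is injective — injectivity should follow from an iterated version of the section morphism $\psired_{a,b}$ from Corollary~\ref{cor:section} (multiplication by a product of $\yred$-variables is injective on the $\V$-torsion-free module guaranteed by parity, exactly as in the proof of Corollary~\ref{cor:M is J general}). Surjectivity onto $I_\brc^d$ is the subtler half: one needs that products of the Haiman-determinant ratios \eqref{eq:HaimanRatio} generating $I_\brc$ lift to classes in $H\big(\Hom(\oone_\brc,\VFT_\brc^d)\big)$, which plausibly follows from functoriality of the skein-splitter $\Phi$ of Theorem~\ref{thm:skein split} applied power-by-power.

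\textbf{The main obstacle} I expect is precisely this power-of-the-ideal identification $H\big(\Hom(\oone_\brc,\VFT_\brc^d)\big) \cong I_\brc^d$ for $d \ge 2$ and general $\brc$. Even in the uncolored case $\brc = 1^N$ this is a deep theorem resting on Haiman's results that $X_N$ is Cohen--Macaulay and that $I_N^d$ has no higher $\mathrm{Tor}$ against the bimodule resolutions involved; the colored analogue would require either a corresponding geometric input for $X_\brc$ (which is not yet available — indeed $X_\brc$ is only \emph{defined} here, with no normality or Cohen--Macaulayness established) or a purely combinatorial proof that the iterated splitting maps surject onto $I_\brc^d$ with the right kernel. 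A realistic partial target, paralleling Theorem~\ref{thm:Jab}, would be to settle the $m=2$ case ($\brc = (a,b)$): here one can hope to combine the explicit model of $\Sigma_{a,b}$ from Theorem~\ref{thm:splittingmap}, the curved skein relation (Corollary~\ref{cor:curvedMCCS}), and the explicit generating set of $2^b$ Haiman determinants for $I_{a,b}$ from Corollary~\ref{cor:JHaiman} to compute $H\big(\Hom(\oone_{a,b},\VFT_{a,b}^d)\big)$ directly and match it against $I_{a,b}^d$, with the multiplicativity $I_{a,b}^d = (I_{a,b})^d$ checked via the intersection description of Lemma~\ref{lem:HaimanI} (suitably adapted). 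The general-$m$ case I would leave, as the paper does, as a conjecture pending a better understanding of $X_\brc$.
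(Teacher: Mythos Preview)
The statement you are addressing is a \emph{conjecture} in the paper, not a theorem: the paper provides no proof and explicitly writes ``We save investigations along these lines for future work'' immediately after stating it. There is therefore no paper proof to compare your proposal against.

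Your proposal is better read as a research outline than as a proof, and indeed you recognize this yourself: you identify the identification $H\big(\Hom_{\VS_\brc}(\oone_\brc,\VFT_\brc^d)\big) \cong I_\brc^d$ for $d \ge 2$ as the main obstacle and note that even the uncolored case relies on deep input from Haiman. That assessment is accurate. Your reduction to formality plus a homology computation is reasonable, and your observation that the $d=0,1$ cases are handled by the paper's existing results (modulo Conjectures \ref{conj:FT is parity} and \ref{conj:FT ideal}) is correct. But the inductive step you sketch --- factoring $\VFT_\brc^d \simeq \VFT_\brc^{d-1} \hComp \VFT_\brc$ and controlling the image under iterated splitting --- does not by itself yield the power $I_\brc^d$ rather than merely an ideal contained in it; this is precisely where the geometric or combinatorial input you flag as missing would be required. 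So your proposal correctly identifies the shape of the problem and its central difficulty, but does not resolve it, which is consistent with the paper's own position that this remains open.
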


We save investigations along these lines for future work.

\section{Homology of the colored Hopf link}
\label{s:hopf link}
In this section we prove Conjecture \ref{conj:FT is parity} and Conjecture
\ref{conj:FT ideal} for the (positive) Hopf link, using the explicit description of the
splitting map from \S\ref{ss:explicit splitting map}, as well as the
curved skein relation from \S\ref{s:curved skein rel}. After the general
setup from \S\ref{s:colored Hilb} we now return to the notational
conventions for the 2-strand case that were established in \S\ref{ss:two strand V cats}.

We start by showing that the colored Hopf link is parity, 
which implies that the link splitting map to the corresponding colored unlink homology is injective. 
In \S\ref{sec:cornermaps} we compute generators for the Hopf link homology 
in lowest $\adeg$-degree and in \S\ref{ss:haimandethopf} we show 
that their images under the splitting map are (reduced) Haiman determinants. 
This implies the inclusion $J_{a,b}\subset I_{a,b}$ between the ideals 
from Definitions \ref{def:J} and \ref{def:ys and vs}, in the $2$-strand case.
We will refer to the ideal $J_{a,b}$ as the \emph{Hopf link ideal}, 
since it is isomorphic to the (lowest Hochschild summand of the) deformed colored homology of 
the $(a,b)$-colored Hopf link.
In Sections \ref{sec:families} and \ref{sec:induction}, we establish the opposite inclusion, 
thus proving Conjecture \ref{conj:FT ideal} when $\brc = (a,b)$.
The argument proceeds via induction using a family of ideals $J_{a,(b-s,s)}$
that are related to the curved colored skein relation.

Since the positive $(a,b)$-colored Hopf link is isotopic to the $(b,a)$-colored Hopf link, 
without loss of generality we assume that $a \geq b$ for the duration.

\subsection{Parity of the colored Hopf link}
\label{sec:Hopf-parity}

Here, we compute the (undeformed) Khovanov--Rozansky homology of the $(a,b)$-colored Hopf link
in lowest $\adeg$-degree and show that it is supported in cohomological degrees
congruent to $ab$ modulo 2. 
In other words, we show that the colored Hopf link is \emph{parity}. 
The argument is analogous to the one given in \cite[Example 4.14]{Wed1}. 
By Definition \ref{def:Rickardcx}, we have
\[
C_{b,a}\hComp C_{a,b} = 
\tw_{\d\hComp \id}\left( \bigoplus_{l=0}^b \qdeg^{-l} \tdeg^{l} \F^{(b-l)}\E^{(a-l)} \hComp C_{a,b} \right).
\]
We begin with the following computation.

\begin{lem}\label{lem:Hopfrow}
There is a homotopy equivalence
\[
\HH_\bullet( \F^{(b-l)}\E^{(a-l)} \hComp C_{a,b} ) 
\simeq
\adeg^{-l} \qdeg^{ab} \tdeg^{l} 
\HH_\bullet \left(
%\left\llbracket
\begin{tikzpicture}[scale=.5,smallnodes,anchorbase]
	\draw[very thick] (-2,0) node[left]{$a{+}b{-}l$} to (-1.25,0);
	\draw[very thick] (-1.25,0) to [out=300,in=240] node[below]{$a{-}l$} (1.25,0);
	\draw[very thick] (-1.25,0) to [out=60,in=180] (-.5,.5);
	\draw[very thick] (-.5,.5) to [out=300,in=240] node[below]{$l$} (.5,.5);
	\draw[very thick] (-.5,.5) to [out=60,in=120] node[above,yshift=-2pt]{$b{-}l$} (.5,.5);
	\draw[very thick] (1.25,0) to [out=120,in=0] (.5,.5);
	\draw[very thick] (1.25,0) to (2,0) node[right]{$a{+}b{-}l$};
\end{tikzpicture}
%\right\rrbracket	
\right)
\]
of complexes of bigraded vector spaces.
\end{lem}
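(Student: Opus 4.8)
The plan is to compute $\HH_\bullet(\F^{(b-l)}\E^{(a-l)} \hComp C_{a,b})$ by first simplifying the complex $\F^{(b-l)}\E^{(a-l)} \hComp C_{a,b}$ inside $\CS(\SSBim)$, and then applying the trace-like behavior of Hochschild homology (Proposition~\ref{prop:HH tracelike 2}). The key observation is that $\F^{(b-l)}\E^{(a-l)}\oone_{a,b}$ is the singular Bott--Samelson bimodule corresponding to a web with a ``bigon'' (a split followed by a merge, recording the $l$-labeled internal edge). Horizontal composition with the Rickard complex $C_{a,b}$ can be reorganized: using the fork-slide and twist-zipper relations of Proposition~\ref{prop:forkslide} (applied to push the split/merge vertices of $\F^{(b-l)}\E^{(a-l)}$ past the crossings in $C_{a,b}$), one expects the composite to simplify to a complex supported on a single web, up to grading shift. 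Concretely, the crossing $C_{a,b}$ is itself built from $\F^{(k)}\E^{(k)}\oone_{a,b}$ terms, and composing with the extra cap/cup forces a collapse; I would track carefully the resulting $\qdeg$-shift, which should total $\qdeg^{ab}$ after accounting for the shifts in the merge/split bimodules \eqref{eq:MergeSplit} and the internal differential of $C_{a,b}$.

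First I would write $C_{b,a} \hComp C_{a,b}$ (or rather the relevant summand $\F^{(b-l)}\E^{(a-l)} \hComp C_{a,b}$) and use the $\hComp$-associativity together with Proposition~\ref{prop:forkslide} to slide the trivalent vertices of $\F^{(b-l)}\E^{(a-l)}$ through the Rickard complex. The effect of this is to turn the composite into $\I$-type insertion of a twist (in the language of \S\ref{ss:splitting skein rel}, this is essentially $\I^{(l)}$ applied to a smaller full twist or identity), which is then a complex whose underlying graded bimodule is a shift of a single web: the web shown on the right-hand side, namely the ``theta graph'' with edges labeled $a+b-l$, $a-l$, $l$, $b-l$. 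The cohomological shift $\tdeg^l$ comes from the position of the $l$-th term in $C_{b,a}$, and the Hochschild-degree shift $\adeg^{-l}$ arises because after this simplification the complex differs from a genuine bimodule (rather than a complex) by exactly $l$ Koszul-type generators that get absorbed — more precisely, the simplification uses Gaussian elimination that trades cohomological generators for Hochschild ones, analogous to the mechanism in the proof of Lemma~\ref{lem:Markov2} and equation~\eqref{eq:HHvHH tot}.

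The main obstacle I anticipate is bookkeeping the precise interaction between the internal differential of $C_{a,b}$ and the vertices of $\F^{(b-l)}\E^{(a-l)}$ after the fork-slides: one must check that the differential of the simplified complex vanishes (so that we genuinely land on a single web with zero differential) or at least that what remains is chain-homotopy equivalent to $\HH_\bullet$ of a single bimodule with the claimed shift. This requires either an explicit Gaussian elimination argument in the style of \cite[Proposition 2.25]{HRW1} / \cite[Example 4.14]{Wed1}, or invoking the known decomposition of $C_{b,a}\hComp C_{a,b}$ into indecomposables. The cleanest route is probably to follow the cited Example~4.14 of \cite{Wed1} very closely: there, the analogous composite is resolved term-by-term and one identifies each summand's Hochschild homology directly. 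I would then collect the grading shifts: the $\qdeg^{ab}$ is the ``global'' shift of the full twist (compare the $\qdeg^{b(a-b-1)}$ and related shifts appearing throughout \S\ref{s:curved skein rel}), while $\adeg^{-l}\tdeg^l$ is local to the $l$-th summand. Once the underlying bimodule and shifts are pinned down, applying $\HH_\bullet$ and using its functoriality (Proposition~\ref{prop:HH tracelike 2}) to commute it past the simplification completes the argument.

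Finally, I would verify the claimed equivalence is one of \emph{complexes} of bigraded vector spaces, not merely of graded vector spaces: this means checking that the differential induced on the right-hand side (which is just $\HH_\bullet$ of the zero differential on a single web, hence zero) matches, so the statement reduces to an isomorphism of bigraded vector spaces plus the observation that the ambient differential $\d \hComp \id$ on $\bigoplus_l \F^{(b-l)}\E^{(a-l)}\hComp C_{a,b}$ induces the differential on $\HH_\bullet(C_{b,a}\hComp C_{a,b})$ — but since this Lemma is only about a single summand, the internal consistency is automatic once the single-web identification is established.
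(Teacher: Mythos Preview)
Your proposal has the right list of ingredients (fork-slide, twist-zipper, trace-like property of $\HH_\bullet$), but it applies them in the wrong order, and this leads to a genuine gap in the explanation of the $\adeg^{-l}\tdeg^l$ shift.

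You propose to first simplify $\F^{(b-l)}\E^{(a-l)} \hComp C_{a,b}$ inside $\CS(\SSBim)$ to a single web, and only afterward invoke the trace-like property of $\HH_\bullet$. But this complex does \emph{not} simplify to a shift of a single bimodule in $\CS(\SSBim)$: fork-sliding the rungs of $\F^{(b-l)}\E^{(a-l)}$ through the crossing just moves them to the other side of the crossing, leaving the crossing intact. No Gaussian elimination inside $\CS(\SSBim)$ will produce an $\adeg$-shift, since the Hochschild grading does not exist there. Your account of the $\adeg^{-l}$ shift as ``Koszul-type generators that get absorbed'' is not a mechanism that exists in this setting.

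The paper's argument uses the trace-like property (Proposition~\ref{prop:HH tracelike 2}) as the \emph{first} substantive step, not the last: one cycles the leftmost split of $\F^{(b-l)}\E^{(a-l)}$ around to the right-hand side of the diagram, then fork-slides it through $C_{a,b}$. After this maneuver the $l$-labeled strand closes up into a Markov-II configuration, and Lemma~\ref{lem:Markov2} applied to that $l$-labeled loop is precisely what produces the $\adeg^{-l}\qdeg^{l^2}\tdeg^l$ shift. The remaining crossings are then removed by twist-zipper \eqref{eq:twistzipper}, contributing the rest of the $\qdeg$-shift so that the total is $\qdeg^{ab}$. So the missing idea is: cycle via the trace, then apply Markov II to the $l$-strand; that is the source of both the $\adeg^{-l}$ and the $\tdeg^l$.
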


\begin{proof}
We compute 
\[
\begin{aligned}
\HH_\bullet( \F^{(b-l)}\E^{(a-l)} \hComp C_{a,b} ) 
&= \HH_\bullet\left(
\left\llbracket
	\begin{tikzpicture}[scale=.5,smallnodes,anchorbase]
		\draw[very thick] (-2,0) node[left]{$a$} to (0,0) \pr (2,1) node[right]{$b$};
		\draw[line width=5pt,color=white]  (0,1) \pr (2,0);
		\draw[very thick] (-2,1) node[left]{$b$} to node[above,yshift=-2pt]{$l$}  (0,1)  \pr (2,0) node[right]{$a$};
		\draw[very thick] (-1.75,1) to (-1.5,.5)node[right,yshift=-1pt,xshift=-1pt]{} to  (-1.25,0);
		\draw[very thick] (-.75,0) to (-.5,.5) to  (-.25,1);
	\end{tikzpicture}
\right\rrbracket	
\right)
\cong 
\HH_\bullet\left(
\left\llbracket
	\begin{tikzpicture}[scale=.5,smallnodes,anchorbase]
		\draw[very thick] (-1,0) node[left]{$a{+}b{-}l$} to (0,0) \pr (2,1) to (3,1) node[right]{$l$};
		\draw[line width=5pt,color=white] (0,1) \pr (2,0);
		\draw[very thick] (-1,1) node[left]{$l$} to (0,1) \pr (2,0)  to (3,0)node[right]{$a{+}b{-}l$};
		\draw[very thick] (2.25,1) to (2.5,.5)node[right,yshift=-1pt,xshift=-1pt]{$b{-}l$} to  (2.75,0);
		\draw[very thick] (-.75,0) to (-.5,.5) to  (-.25,1);
	\end{tikzpicture}
\right\rrbracket	
\right) \\
&\simeq 
\HH_\bullet\left(
\left\llbracket
	\begin{tikzpicture}[scale=.5,smallnodes,anchorbase]
		\draw[very thick] (-1,0) node[left]{$a{+}b{-}l$} to (-.25,0) \pr (1.75,1) to (3,1) node[right]{$l$};
		\draw[very thick] (-.25,0) to (.125,0) \pr (2.125,.75) 
			to [out=0,in=120] (2.5,.5)node[right,yshift=-1pt,xshift=-1pt]{$b{-}l$} to (2.75,0);
		\draw[line width=3pt,color=white]  (-0.125,.75) \pr (1.875,0) (0.25,1) \pr (2.25,0);
		\draw[very thick] (-1,1) node[left]{$l$} to (0.25,1) \pr (2.25,0)  to (3,0)node[right]{$a{+}b{-}l$};
		\draw[very thick] (-.75,0) to [out=60,in=180]  (-0.125,.75) \pr (1.875,0) to (2.25,0);
	\end{tikzpicture}
\right\rrbracket	
\right) \\
&\simeq
\adeg^{-l}  \qdeg^{l^2} \tdeg^{l} 
\HH_\bullet\left(
\left\llbracket
	\begin{tikzpicture}[scale=.5,smallnodes,anchorbase]
		\draw[very thick] (-1,0) node[left]{$a{+}b{-}l$} to (-.25,0) \pr (1,.75);
		\draw[very thick] (-.25,0) to (.125,0) \pr (2.125,.75) 
			to [out=0,in=120] (2.5,.5) node[right,yshift=1pt,xshift=-2pt]{$b{-}l$} to (2.75,0);
		\draw[line width=3pt,color=white]  (-0.125,.75) \pr (1.875,0) (1,.75) \pr (2.25,0);
		\draw[very thick] (1,.75) \pr (2.25,0)  to (3,0)node[right]{$a{+}b{-}l$};
		\draw[very thick] (-.75,0) to [out=60,in=180]  (-0.125,.75) \pr (1.875,0) to (2.25,0);
		\node at (1,1) {$l$};
	\end{tikzpicture}
\right\rrbracket	
\right) \\
&\simeq
\adeg^{-l} \qdeg^{l^2 + b(a-l) + l(b-l)} \tdeg^{l}  
\HH_\bullet\left(
\left\llbracket
	\begin{tikzpicture}[scale=.5,smallnodes,anchorbase]
		\draw[very thick] (-2,0) node[left]{$a{+}b{-}l$} to (-1.25,0);
		\draw[very thick] (-1.25,0) to [out=300,in=240] node[below]{$a{-}l$} (1.25,0);
		\draw[very thick] (-1.25,0) to [out=60,in=180] (-.5,.5);
		\draw[very thick] (-.5,.5) to [out=300,in=240] node[below]{$l$} (.5,.5);
		\draw[very thick] (-.5,.5) to [out=60,in=120] node[above,yshift=-2pt]{$b{-}l$} (.5,.5);
		\draw[very thick] (1.25,0) to [out=120,in=0] (.5,.5);
		\draw[very thick] (1.25,0) to (2,0) node[right]{$a{+}b{-}l$};
	\end{tikzpicture}
\right\rrbracket	
\right)
\end{aligned}
\]
where we have used 
%the trace property of Hochschild homology, 
Propositions~\ref{prop:forkslide} and \ref{prop:HH tracelike 2}
and Lemma~\ref{lem:Markov2}.
\end{proof}

We also record the following useful result, which has a completely analogous proof. 
\begin{prop}
\label{prop:negstab}
The complex of bigraded vector spaces $\HH^\bullet( C^\vee_{b,a}\hComp
\F^{(a-b+l)}\E^{(l)})$ has homology supported in strictly positive
$\adeg$-degrees when $0 \leq l \leq b-1$.
\end{prop}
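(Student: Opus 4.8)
The statement \ref{prop:negstab} is the ``negative stabilization'' analogue of Lemma \ref{lem:Hopfrow}, and the proof should run in close parallel. First I would expand $C^\vee_{b,a}$ as a one-sided twisted complex using Definition~\ref{def:Rickardcx}, writing
\[
C^\vee_{b,a}\hComp \F^{(a-b+l)}\E^{(l)}\oone_{a,b} = \tw_{\d^\vee\hComp\id}\left(\bigoplus_{k=0}^{b}\qdeg^{k}\tdeg^{-k}\,\F^{(b-k)}\E^{(a-k)}\oone_{b,a}\hComp \F^{(a-b+l)}\E^{(l)}\oone_{a,b}\right),
\]
and then apply $\HH^\bullet$ term-wise as in \eqref{eq:HHcomplex}. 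Since the $\adeg$-grading lower bound of a one-sided twisted complex is the minimum over the lower bounds of its terms (the connecting differentials in $\HH^\bullet(\d^\vee)$ only raise the total $\adeg$-degree, or at worst preserve it), it suffices to show that each summand $\HH^\bullet(\F^{(b-k)}\E^{(a-k)}\hComp \F^{(a-b+l)}\E^{(l)})$ is supported in \emph{strictly positive} $\adeg$-degrees whenever $0\le l\le b-1$.

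**Reduction to a web computation.** For each fixed $k$ I would simplify the web $\F^{(b-k)}\E^{(a-k)}\hComp \F^{(a-b+l)}\E^{(l)}$ using the fork-slide and twist-zipper relations (Proposition~\ref{prop:forkslide}) together with the trace-like property of Hochschild (co)homology (Proposition~\ref{prop:HH tracelike 2}) and Markov-II behavior (Lemma~\ref{lem:Markov2}, or rather its cohomological form \eqref{eq:R1HochCo}). Just as in the proof of Lemma~\ref{lem:Hopfrow}, these moves collapse the composite into (a grading shift of) a single ``theta-like'' web — two trivalent vertices joined by three edges — closed up into an annular configuration, whose Hochschild cohomology I can compute explicitly. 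The key numerical observation is that because $l\le b-1$, at least one internal edge label of this theta web is strictly positive in a way that forces the partial-trace (i.e.\ $\Tr^c$) of an identity bimodule on a strand of positive color to contribute its exterior generators $\eta_1,\dots,\eta_c$, all of which carry \emph{positive} $\adeg$-degree by Proposition~\ref{prop:trace of one}; moreover there is no compensating term in negative $\adeg$-degree, so the whole summand lands in $\adeg$-degrees $\ge 1$.

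**Assembling the conclusion.** Once every term $\HH^\bullet(\F^{(b-k)}\E^{(a-k)}\hComp \F^{(a-b+l)}\E^{(l)})$ is shown to be supported in strictly positive $\adeg$-degree, the one-sided twisted complex $\HH^\bullet(C^\vee_{b,a}\hComp \F^{(a-b+l)}\E^{(l)})$ is a finite complex all of whose chain groups are supported in $\adeg$-degrees $\ge 1$, hence so is its homology. I would remark that the hypothesis $a\ge b$ (assumed for the duration of the section) and $0\le l\le b-1$ are exactly what is needed to rule out the degenerate case $l=b$, where one would instead recover a shifted identity and pick up the $\adeg$-degree-zero summand.

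**Main obstacle.** The genuinely delicate step is the web simplification: unlike in Lemma~\ref{lem:Hopfrow}, here the inner complex $C^\vee_{b,a}$ is the \emph{inverse} Rickard complex, so the fork-slide/twist-zipper manipulations produce a mildly different sequence of grading shifts, and one must track signs and $\qdeg$-shifts carefully to be sure nothing conspires to produce an $\adeg^0$ contribution. I expect this to be routine but bookkeeping-heavy, and I would either carry it out in full detail or (as the excerpt suggests with ``completely analogous proof'') point precisely to the steps of Lemma~\ref{lem:Hopfrow} and \cite[Example~4.14]{Wed1} that transpose, noting only the changes: the replacement of $C_{a,b}$ by $C^\vee_{b,a}$, the corresponding sign change in the Markov-II normalization from \eqref{eq:R1HochCo}, and the constraint $l\le b-1$ which guarantees a strand of positive color survives in the final theta web.
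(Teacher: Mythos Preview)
Your plan has a genuine gap at the very first step: you expand $C^\vee_{b,a}$ into its constituent bimodules and then try to show that each term $\HH^\bullet\big(\F^{(b-k)}\E^{(a-k)}\oone_{b,a}\hComp \F^{(a-b+l)}\E^{(l)}\oone_{a,b}\big)$ is supported in strictly positive $\adeg$-degree. But each such term is the Hochschild cohomology of an honest singular Soergel bimodule $W$, and $\HH^0(W)=\Hom_{\SSBim}(\oone_{a,b},W)$ is nonzero for any nonzero web $W$ (compare Corollary~\ref{cor:homs to W}). Your appeal to Proposition~\ref{prop:trace of one} only tells you that the exterior generators $\eta_i^\ast$ carry positive $\adeg$-degree; it does not eliminate the constant term $1$ of the exterior algebra, which sits in $\adeg$-degree zero. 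Relatedly, you invoke Lemma~\ref{lem:Markov2}/\eqref{eq:R1HochCo} on these web composites, but Markov~II is a statement about the partial trace of an \emph{intact} Rickard complex $C^\vee_{c,c}$, not of a single chain bimodule; once you have expanded $C^\vee_{b,a}$ you have destroyed precisely the structure that makes Markov~II applicable.

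The paper's ``completely analogous'' argument does the opposite: it keeps $C^\vee_{b,a}$ intact and manipulates the \emph{web} factor, exactly as in Lemma~\ref{lem:Hopfrow}. After the trace-like move (Proposition~\ref{prop:HH tracelike 2}) and fork-slides (Proposition~\ref{prop:forkslide}), one reaches a configuration in which the top rail of $\F^{(a-b+l)}\E^{(l)}\oone_{a,b}$---which carries the label $b-l$---loops once through a \emph{negative} crossing. The negative half of \eqref{eq:R1HochCo}, namely $\Tr^{\,b-l}(C^\vee_{b-l,b-l})\simeq \adeg^{\,b-l}\qdeg^{-2(b-l)^2-(b-l)}\oone_{b-l}$, then contributes an overall shift of $\adeg^{\,b-l}$, and what survives is (a shift of) the Hochschild cohomology of a single theta web, supported in non-negative $\adeg$-degree. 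The hypothesis $l\le b-1$ is precisely $b-l\ge 1$, so the whole complex lands in $\adeg$-degree~$\ge 1$. By contrast, in Lemma~\ref{lem:Hopfrow} the top rail of $\F^{(b-l)}\E^{(a-l)}\oone_{b,a}$ has label $l$ and the \emph{positive} Markov~II gives no $\adeg$-shift in cohomology---this is why the strict positivity is a feature of the negative-crossing version only.
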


\begin{cor} \label{cor:negstab}
For $0 \leq l \leq b-1$, there is a homotopy equivalence of dg $\End_{\CS_{a,b}}(\oone_{a,b})$-modules
$\Hom_{\CS_{a,b}}(\oone_{a,b}\F^{(l)}\E^{(a-b+l)} \hComp C_{a,b},\oone_{a,b}) \simeq 0$.
\end{cor}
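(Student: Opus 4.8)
The plan is to deduce Corollary~\ref{cor:negstab} from Proposition~\ref{prop:negstab} by an adjunction argument, translating the $\Hom$-space into a Hochschild cohomology computation. First I would use the dualities ${}^\vee(-)$ and $(-)^\vee$ on $\SSBim$ (see Proposition~\ref{prop:dualityonMS}) together with the fact that $C_{a,b}^\vee = C_{b,a}^\vee$ (Definition~\ref{def:Rickardcx}) to rewrite
\[
\Hom_{\CS_{a,b}}\big(\oone_{a,b}\F^{(l)}\E^{(a-b+l)} \hComp C_{a,b},\oone_{a,b}\big)
\simeq
\Hom_{\CS(\SSBim)}\big(\oone_{a,b}, C^\vee_{b,a} \hComp \F^{(a-b+l)}\E^{(l)}\oone_{a,b}\big)
\]
up to a grading shift, where the right-hand side is computed in the derived $2$-category $\DS(\Bim)$ after including $\CS(\SSBim) \hookrightarrow \CS(\DS(\Bim))$. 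Here I am using that $(\F^{(l)}\E^{(a-b+l)})^\vee$ is, up to shift, $\F^{(a-b+l)}\E^{(l)}$ (this is the standard biadjunction in the thick $\slnn 2$-calculus, encoded by the digon and (un)zip maps), and that horizontal composition with the invertible Rickard complex $C_{a,b}$ may be absorbed into $C^\vee_{b,a}$ on the other side.

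Next I would invoke the identification $\Hom_{\DS(\Bim)}(R^{\brc}, X) = \HH^\bullet(X)$ from \S\ref{sec:pHHH}: applied term-wise to the complex $C^\vee_{b,a} \hComp \F^{(a-b+l)}\E^{(l)}\oone_{a,b}$, this yields exactly $\HH^\bullet(C^\vee_{b,a}\hComp \F^{(a-b+l)}\E^{(l)})$ (with an overall grading shift that is irrelevant to the vanishing statement). Proposition~\ref{prop:negstab} then says this complex has homology supported in strictly positive $\adeg$-degree whenever $0 \le l \le b-1$. Since the differential on $\Hom_{\CS_{a,b}}(\oone_{a,b}\F^{(l)}\E^{(a-b+l)} \hComp C_{a,b},\oone_{a,b})$ is the one induced from the Rickard differential and its dual, the identification above is an isomorphism of complexes, so the homology of the $\Hom$-complex is likewise concentrated in strictly positive $\adeg$-degree.

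To conclude $\Hom_{\CS_{a,b}}(\oone_{a,b}\F^{(l)}\E^{(a-b+l)} \hComp C_{a,b},\oone_{a,b}) \simeq 0$, I would note that $\End_{\CS_{a,b}}(\oone_{a,b}) = \Sym(\leftX_1|\leftX_2) = R^{a,b}$ is concentrated in $\adeg$-degree $0$, so a dg $R^{a,b}$-module whose homology vanishes in non-positive $\adeg$-degree must be acyclic (the $\adeg$-grading is a direct summand decomposition preserved by the module action, and the top nonzero $\adeg$-graded piece would be a subcomplex contributing to homology). Hence the complex is acyclic and, being a bounded complex of free — or at least projective — graded modules over the polynomial ring $R^{a,b}$, it is contractible as a dg $\End_{\CS_{a,b}}(\oone_{a,b})$-module. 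The main obstacle I anticipate is bookkeeping the grading shifts and signs in the adjunction isomorphism precisely enough to be sure the $\adeg$-degree of the shifted complex is still non-negative (rather than being shifted into negative $\adeg$-degree); this requires carefully tracking the $\adeg$-contributions of the duality functors and of the shift relating Hochschild homology and cohomology in \eqref{eq:HHvHH tot}, but it is routine once set up, and Proposition~\ref{prop:negstab} is stated with exactly the needed range $0 \le l \le b-1$.
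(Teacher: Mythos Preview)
Your overall strategy matches the paper's: use adjunction and duality to rewrite the $\Hom$-space, then invoke Proposition~\ref{prop:negstab}. However, there is a genuine confusion in your execution that leads you astray.

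You write that the right-hand side of your first displayed isomorphism ``is computed in the derived $2$-category $\DS(\Bim)$,'' and you then identify it with $\HH^\bullet$. This is where things go wrong. The original $\Hom$ is taken in $\CS_{a,b} = \CS({}_{a,b}\SSBim_{a,b})$, where there is no $\adeg$-grading at all. The adjunctions from Proposition~\ref{prop:dualityonMS} and the invertibility of $C_{a,b}$ hold already in $\SSBim$ (singular Soergel bimodules are free, so no derived functors are needed), so after applying them you remain in $\CS(\SSBim)$. There, $\Hom_{\CS(\SSBim)}(\oone_{a,b}, X)$ is exactly $\HH^0(X)$, \emph{not} $\HH^\bullet(X)$. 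This is precisely what the paper does: it identifies the $\Hom$-space with $\HH^0(C^\vee_{b,a}\hComp \F^{(a-b+l)}\E^{(l)})$, and since Proposition~\ref{prop:negstab} places all homology of $\HH^\bullet$ in strictly positive $\adeg$-degree, the $\adeg=0$ summand $\HH^0$ is acyclic on the nose.

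Your attempt to recover from this in the final paragraph --- arguing that ``a dg $R^{a,b}$-module whose homology vanishes in non-positive $\adeg$-degree must be acyclic'' --- is not correct as stated (a complex with homology concentrated in positive $\adeg$-degree is certainly not acyclic). Your anticipated obstacle of tracking $\adeg$-shifts through the adjunction is also misplaced: the functors $(-)^\vee$ and ${}^\vee(-)$ on $\SSBim$ introduce only $\qdeg$-shifts (see Proposition~\ref{prop:dualityonMS}), never $\adeg$-shifts, since $\SSBim$ itself carries no $\adeg$-grading. Once you recognize that you are computing $\HH^0$ rather than $\HH^\bullet$, the whole argument collapses to two lines, and your final observation (an acyclic bounded complex of free graded modules over a polynomial ring is contractible) correctly upgrades acyclicity to $\simeq 0$.
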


\begin{proof}
The complex $\Hom_{\CS_{a,b}}(\oone_{a,b}\F^{(l)}\E^{(a-b+l)} \hComp C_{a,b},\oone_{a,b})$ 
inherits an action of $\End_{\CS_{a,b}}(\oone_{a,b})$ via its central action on the left in $\CS_{a,b}$.
We then have:
	\begin{align*}
		\Hom_{\CS_{a,b}}(\oone_{a,b}\F^{(l)}\E^{(a-b+l)} \hComp
	C_{a,b},\oone_{a,b})
	& \cong
	\Hom_{\CS_{a,b}}(\oone_{a,b}\F^{(l)}\E^{(a-b+l)}
	,C^\vee_{b,a})\\
	& \cong
	\Hom_{\CS_{a,b}}(\oone_{a,b}
	,C_{b,a}^\vee\hComp \F^{(a-b+l)}\E^{(l)})\\
	&= \HH^0(C^\vee_{b,a}\hComp \F^{(a-b+l)}\E^{(l)}) \simeq 0
	\end{align*}
where we have used duality, Proposition~\ref{prop:dualityonMS}, and Proposition~\ref{prop:negstab}. 
\end{proof}

The following is now an immediate consequence of Lemma~\ref{lem:Hopfrow}
together with Proposition \ref{prop:HPT}.

\begin{proposition} \label{prop:Hopfparityandgens}
The $(a,b)$-colored Hopf link is parity.
\end{proposition}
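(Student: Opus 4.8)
The plan is to compute the lowest Hochschild-degree part of $H_{\KR}$ of the $(a,b)$-colored Hopf link directly from its Rickard complex $C_{b,a} \hComp C_{a,b}$ and check that the result lives in cohomological degrees all congruent to $ab \bmod 2$. Concretely, applying $\HH_\bullet$ termwise to the decomposition
\[
C_{b,a}\hComp C_{a,b} = \tw_{\d\hComp \id}\left( \bigoplus_{l=0}^b \qdeg^{-l} \tdeg^{l} \F^{(b-l)}\E^{(a-l)} \hComp C_{a,b} \right)
\]
produces a one-sided twisted complex whose $l$-th ``row'' is, up to shift, $\HH_\bullet(\F^{(b-l)}\E^{(a-l)} \hComp C_{a,b})$. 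Lemma~\ref{lem:Hopfrow} identifies each such row with $\adeg^{-l}\qdeg^{ab}\tdeg^l$ times the Hochschild homology of the closed web with a $(b-l)$-labeled digon threaded by an $l$-labeled strand, sitting inside an $(a{+}b{-}l)$-labeled circle. The point is that this is the Hochschild homology of an honest singular Soergel bimodule (no crossings remain), so it is concentrated in a single $\tdeg$-degree (the homological degree $l$ visible in the shift) and in non-positive $\adeg$-degrees.

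First I would extract the \emph{lowest} $\adeg$-degree piece: since $\HH_i$ vanishes below $i = 0$, the bottom Hochschild summand of $\HH_\bullet$ of the $l$-th web is its ordinary $\HH_0$, which is a free module over the relevant symmetric polynomial ring and is supported purely in $\tdeg$-degree $l$ (after accounting for the $\tdeg^l$ shift). Next, I would invoke homological perturbation theory (Proposition~\ref{prop:HPT}, via the one-sided structure in Remark~\ref{rem:onesided}): since over a field the row complexes are homotopy equivalent to their homology, and the twist $\d \hComp \id$ strictly decreases the index $l$, the total complex $\HH_\bullet(C_{b,a}\hComp C_{a,b})$ is homotopy equivalent to a twisted complex whose underlying bigraded object is $\bigoplus_l \adeg^{-l}\qdeg^{\ast}\tdeg^l H(\text{row}_l)$. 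In particular, in each fixed Hochschild degree the cohomological degrees that appear all have a fixed parity determined by $l$ — but $l$ ranges over $0,\dots,b$, so one must check the parity is globally consistent. The resolution is that the shift $\qdeg^{-l}\tdeg^{l}$ contributes $\tdeg^l$ while the internal homological degree of $\HH_\bullet$ of row$_l$ is \emph{even} (it's Hochschild homology of a bimodule, supported in even cohomological degree after fixing the Hochschild grading), so the total cohomological degree of the row-$l$ contribution is $\equiv l \bmod 2$; combined with the grading shift $(\adeg \tdeg\inv)^{\frac12(\e + N - n)}$ in Definition~\ref{def:HHH} — where for the Hopf link $\e = 2ab$ — all contributions land in cohomological degrees $\equiv ab \bmod 2$.

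The main obstacle is bookkeeping the parity carefully: one has to confirm that the homotopy equivalences of Lemma~\ref{lem:Hopfrow} (which use the fork-slide relations of Proposition~\ref{prop:forkslide}, conjugation invariance Proposition~\ref{prop:HH tracelike 2}, and Markov~II via Lemma~\ref{lem:Markov2}) preserve the cohomological $\tdeg$-grading on the nose, so that the row-$l$ summand really does contribute in $\tdeg$-degree exactly $l$, and then that the differentials $\d \hComp \id$ (and any perturbed differentials produced by Proposition~\ref{prop:HPT}) shift $\tdeg$-degree by $1$, hence connect summands of opposite parity — which is automatic for a differential and is precisely why no cancellation across parities can occur in the homology. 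Since all the surviving homology is then concentrated in cohomological degrees of one fixed parity, the colored Hopf link is parity in the sense of Definition~\ref{def:parity}. I would present the argument as: (1) apply $\HH_\bullet$ termwise and invoke Lemma~\ref{lem:Hopfrow}; (2) observe each row's homology is $\tdeg$-homogeneous of degree $l$; (3) apply Proposition~\ref{prop:HPT} to reduce the total complex to its homology-level twisted complex; (4) read off that all cohomological degrees are $\equiv ab \bmod 2$ after the normalization shift.
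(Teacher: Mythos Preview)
Your overall strategy is exactly the paper's: decompose $\HH_\bullet(C_{b,a}\hComp C_{a,b})$ as a one-sided twisted complex indexed by $l$, apply Lemma~\ref{lem:Hopfrow} to each row via homological perturbation (Proposition~\ref{prop:HPT}), and then argue that the resulting Maurer--Cartan element must vanish for parity reasons. The gap is in your $\tdeg$-degree bookkeeping, and it is fatal to the parity argument as you have written it.

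You account for the shift $\qdeg^{-l}\tdeg^l$ coming from the decomposition of $C_{b,a}\hComp C_{a,b}$, but you miss the \emph{additional} $\tdeg^l$ shift appearing in the statement of Lemma~\ref{lem:Hopfrow} itself:
\[
\HH_\bullet(\F^{(b-l)}\E^{(a-l)} \hComp C_{a,b}) \simeq \adeg^{-l}\qdeg^{ab}\tdeg^{l}\,\HH_\bullet(\text{web}_l).
\]
Combining both shifts, the $l$-th summand after perturbation sits in $\tdeg$-degree $2l$, not $l$. This is the whole point: every summand is in \emph{even} cohomological degree, and since the perturbed Maurer--Cartan element $\beta$ has $\tdeg$-degree one, it has nowhere to map and must vanish. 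The homology is then simply the direct sum, supported in even $\tdeg$-degrees, and parity follows immediately.

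With your bookkeeping (summands in degrees $0,1,\ldots,b$) the argument collapses: a degree-one differential \emph{can} connect adjacent summands of opposite parity, and cancellation across parities is exactly what such a differential produces. Your sentence ``hence connect summands of opposite parity --- which is automatic for a differential and is precisely why no cancellation across parities can occur'' is backwards. The global normalization shift $(\adeg\tdeg^{-1})^{\frac12(\epsilon+N-n)}$ cannot rescue this either, since it is independent of $l$ and therefore cannot force contributions in degrees of varying parity into a single parity class. Once you record the correct shift $\tdeg^{2l}$, steps (1)--(3) of your plan are fine and step (4) becomes the one-line observation that $\beta=0$.
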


\begin{proof}
Up to a global shift of magnitude $\adeg^{ab} \qdeg^{-2ab} \tdeg^{-ab}$ (see
Definition~\ref{def:HHH}), the complex $C_{\KR}(\widehat{\FT_{a,b}})$ associated
to the closure of the $(a,b)$-colored full twist is given by
\[
\HH_\bullet( C_{b,a}\hComp C_{a,b} ) = 
\tw_{\a}
\left( \bigoplus_{l=0}^b  \qdeg^{-l}\tdeg^l \HH_\bullet(\F^{(b-l)}\E^{(a-l)} \hComp C_{a,b}) \right)
\]
for some twist $\a$ which strictly increases the index $l$.
Here, the right-hand side is a finite one-sided twisted complex (see Remark \ref{rem:onesided}), 
hence Proposition \ref{prop:HPT} (the homological perturbation lemma) 
allows us to apply the homotopy equivalences from Lemma~\ref{lem:Hopfrow} term-wise.
We thus obtain
\[
\HH_\bullet( C_{b,a}\hComp C_{a,b} ) 
\simeq \tw_{\b} \left(\bigoplus_{l=0}^b \adeg^{-l} \qdeg^{ab - l} \tdeg^{2l} 
\HH_\bullet \left(
\begin{tikzpicture}[scale=.5,smallnodes,anchorbase]
	\draw[very thick] (-2,0) node[left]{$a{+}b{-}l$} to (-1.25,0);
	\draw[very thick] (-1.25,0) to [out=300,in=240] node[below]{$a{-}l$} (1.25,0);
	\draw[very thick] (-1.25,0) to [out=60,in=180] (-.5,.5);
	\draw[very thick] (-.5,.5) to [out=300,in=240] node[below]{$l$} (.5,.5);
	\draw[very thick] (-.5,.5) to [out=60,in=120] node[above,yshift=-2pt]{$b{-}l$} (.5,.5);
	\draw[very thick] (1.25,0) to [out=120,in=0] (.5,.5);
	\draw[very thick] (1.25,0) to (2,0) node[right]{$a{+}b{-}l$};
\end{tikzpicture}
\right)
\right)
\]
for some Maurer--Cartan element $\b$. 
Since the Maurer--Cartan element $\b$ has $\tdeg$-degree one, it must be zero
since it is acting on a complex which is supported in even cohomological degrees.
\end{proof}

For the reader's convenience, we include also the Hochschild cohomology version of the above 
(in particular note that the shift $\adeg^{-l}$ disappears):
\[
\HH^\bullet(C_{b,a}\hComp C_{a,b}) \ \simeq \ \bigoplus_{l\geq 0}
\qdeg^{2(a-l)(b-l)-2l} \tdeg^{2l} \left(\qdeg^{ab-l^2} \HH^\bullet \left(
\begin{tikzpicture}[scale=.5,smallnodes,anchorbase]
	\draw[very thick] (-2,0) node[left]{$a{+}b{-}l$} to (-1.25,0);
	\draw[very thick] (-1.25,0) to [out=300,in=240] node[below]{$a{-}l$} (1.25,0);
	\draw[very thick] (-1.25,0) to [out=60,in=180] (-.5,.5);
	\draw[very thick] (-.5,.5) to [out=300,in=240] node[below]{$l$} (.5,.5);
	\draw[very thick] (-.5,.5) to [out=60,in=120] node[above,yshift=-2pt]{$b{-}l$} (.5,.5);
	\draw[very thick] (1.25,0) to [out=120,in=0] (.5,.5);
	\draw[very thick] (1.25,0) to (2,0) node[right]{$a{+}b{-}l$};
\end{tikzpicture}	
\right) \right) 
\]

\begin{cor}\label{cor:FT-split-inj}
Let $T(2,2;a,b)$ be the $(a,b)$-colored Hopf link, and let $U(a,b)$ be the $(a,b)$-colored unlink.  
The map 
$\YS H_{\KR}(\Sigma_{a,b}) \colon \YS H_{\KR}(T(2,2;a,b) \to \YS H_{\KR}(U(a,b))$ is injective. 
In particular, the map
$H(\Sigma_{a,b}) \colon H(M_{a,b}) \rightarrow E_{a,b}$ is injective.
\end{cor}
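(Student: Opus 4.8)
The plan is to deduce Corollary~\ref{cor:FT-split-inj} from the parity statement (Proposition~\ref{prop:Hopfparityandgens}) together with the general machinery relating parity to freeness, which is exactly the content of Corollary~\ref{cor:M is J general} in the $m=2$ case. So the argument is essentially a matter of assembling results already in hand, specialized to $\brc = (a,b)$.

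First I would invoke Proposition~\ref{prop:Hopfparityandgens}, which tells us that the $(a,b)$-colored Hopf link $T(2,2;a,b)$ is parity, i.e. $H_{\KR}(T(2,2;a,b))$ is supported in cohomological degrees of a single parity. By Theorem~\ref{thm:parityfree}, this implies that $\YS H_{\KR}(T(2,2;a,b))$ is a free $\k[\V^{\pi_0}]$-module; more precisely $\YS C_{\KR}(T(2,2;a,b)) \simeq H_{\KR}(T(2,2;a,b)) \otimes \k[\V^{\pi_0}]$ as differential graded $\k[\V^{\pi_0}]$-modules, and in particular $H(M_{a,b})$ is $\V$-torsion free (here $H(M_{a,b})$ is, up to a $\qdeg$-shift, the lowest Hochschild-degree summand of $\YS H_{\KR}(T(2,2;a,b))$, by Definition~\ref{def:EM} and the discussion following it).

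Next I would use Corollary~\ref{cor:section}, which provides a section-like morphism $\psired_{a,b} \in \Hom_{\VSred_{a,b}}(\oone_{a,b}, \VFT_{a,b})$ with $\psired_{a,b} \circ \Sigma_{a,b} \sim \yred_{a+1}\cdots \yred_{a+b} \cdot \id_{\VFT_{a,b}}$. Passing to homology gives
\[
\YS H_{\KR}(\psired_{a,b}) \circ \YS H_{\KR}(\Sigma_{a,b}) = \yred_{a+1}\cdots \yred_{a+b} \cdot \id .
\]
Then I would argue, exactly as in the proof of Corollary~\ref{cor:M is J general}, that multiplication by $\yred_{a+1}\cdots\yred_{a+b}$ is injective on $H(M_{a,b})$: writing $\yred_j = \sum_{k=1}^b x_j^{k-1}\bar v_k$, the product $\yred_{a+1}\cdots\yred_{a+b}$ has leading term $(\bar v_1)^b$ with respect to the monomial order $\bar v_1 > \cdots > \bar v_b$, so it is a non-zero-divisor in $\k[\X_2,\overline{\V}] \subset \k[\V]$, hence acts injectively on the torsion-free module $H(M_{a,b})$. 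Since $\YS H_{\KR}(\psired_{a,b}) \circ \YS H_{\KR}(\Sigma_{a,b})$ is injective, $\YS H_{\KR}(\Sigma_{a,b})$ is injective, and restricting to lowest Hochschild degree gives injectivity of $H(\Sigma_{a,b}) \colon H(M_{a,b}) \to E_{a,b}$.

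There is no serious obstacle here — the ``hard part'' was Proposition~\ref{prop:Hopfparityandgens} itself, which is already proved. The only point requiring a little care is bookkeeping: confirming that ``lowest Hochschild degree summand of $\YS H_{\KR}(T(2,2;a,b))$'' is indeed $H(M_{a,b})$ with the $\V$-module structure matching the one on which $\yred_{a+1}\cdots\yred_{a+b}$ acts via $\V_2 = \V_R^{(b)}$, and that the identification of $\yred$ with interpolation coordinates for the $b$-labeled strand is consistent with the conventions of \S\ref{ss:two strand V cats}; this is routine given Definition~\ref{def:EM} and Remark~\ref{rem:YC(L)}.
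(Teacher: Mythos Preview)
Your proposal is correct and follows exactly the paper's approach: the paper's proof is the one-line ``This is an immediate consequence of Corollary~\ref{cor:M is J general},'' with Proposition~\ref{prop:Hopfparityandgens} supplying the parity hypothesis, and you have simply unpacked that corollary's argument in the $m=2$ case. The only minor imprecision is that $\psired_{a,b}$ in Corollary~\ref{cor:section} targets $\VFTmin_{a,b}$ rather than $\VFT_{a,b}$, but these are homotopy equivalent so it makes no difference.
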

\begin{proof}
This is an immediate consequence of Corollary \ref{cor:M is J general}.
\end{proof}

Below, we will compute the image of the map 
$H(\Sigma_{a,b}) \colon H(M_{a,b}) \rightarrow E_{a,b}$.
Recall from \S \ref{ss:FT setup} that 
\[
E_{a,b} \defeq \End_{\VS_{a,b}}(\oone_{a,b}) \, , \quad
M_{a,b}\defeq \Hom_{\VS_{a,b}}(\oone_{a,b},\VFTmin_{a,b})\, ,
\]
and that the homology of the latter is isomorphic to the lowest Hochschild
degree summand of $\YS H_{\KR}(T(2,2;a,b)$ (up to shift). 

\subsection{Corner maps}\label{sec:cornermaps}

It will be useful to have an explicit set of generators for the homology of 
$M_{a,b} \defeq \Hom_{\VS_{a,b}}(\oone_{a,b},\VFTmin_{a,b})$.
Recall from Definition \ref{def:curved MCCS} that
\[
\VFTmin_{a,b} = \tw_{\d^h+\d^v+\Delta^v} \left( \bigoplus_{b\geq k\geq r\geq 0} P_{k,r,0} \right)
\]
where 
$P_{k,r,0}:= \qdeg^{k(a-b+1)-2b} \tdeg^{2b-k}  W_k \otimes \largewedge^r[\xi_1,\ldots,\xi_k]$.
Motivated (visually) by \eqref{eq:splitting map diagram},
we will refer to the summands $P_{k,k,0} \subset \VFTmin_{a,b}$ as \emph{corners}.
Reindexing by taking $k=b-l$, we have that
\begin{align*}
P_{b-l,b-l,0}  &= \qdeg^{(b-l)(a-b+1)-2b} \tdeg^{b+l} W_k\otimes \xi_1\cdots \xi_{b-l} \\
&\cong \qdeg^{(b-l)(a-b+1)-2b} \tdeg^{b+l} \qdeg^{(b-l)(b-l+1)} \tdeg^{-b+l} W_{b-l}
= \qdeg^{(b-l)(a-l)-2l} \tdeg^{2l}  W_{b-l}.
\end{align*}

\begin{definition}\label{def:Psi and corner} 
For $0\leq l\leq b$ let 
$\PSI_l \colon  \Hom_{\SSBim}(\oone_{a,b},W_{b-l}) \to 
\Hom_{\VS_{a,b}}(\oone_{a,b},\VFTmin_{a,b})$ 
be the map sending 
$\phi\colon \oone_{a,b}\rightarrow W_{b-l}$ 
to the composite
\[
\oone_{a,b} \xrightarrow{\phi} W_{b-l} \xrightarrow{\approx} \qdeg^{(b-l)(a-l)-2l} \tdeg^{2l}  
W_{b-l} \xrightarrow{\cong}  P_{b-l,b-l,0} \hookrightarrow \VFTmin_{a,b}\, ,
\]
where (as before) the map denoted $\approx$ is a slanted identity of $W_{b-l}$. 
\end{definition}

We can regard $\PSI_l$ as a degree-zero map 
\[
\qdeg^{(b-l)(a-l)-2l} \tdeg^{2l} \Hom_{\SSBim}(\oone_{a,b},W_{b-l})\rightarrow
\Hom_{\VS_{a,b}}(\oone_{a,b},\VFTmin_{a,b})\, .
\]

\begin{lemma}\label{lemma:including corners} 
If we regard
$\Hom_{\SSBim}(\oone_{a,b},W_{b-l})$ as a complex with zero differential, 
then the morphism $\PSI_l$ from Definition \ref{def:Psi and corner} is closed.
Furthermore, the induced map of complexes
\[
\bigoplus_{l=0}^b \qdeg^{(b-l)(a-l)-2l} \tdeg^{2l} \Hom_{\SSBim}(\oone_{a,b},W_{b-l})\otimes\k[\V] 
\xrightarrow{\bigoplus_l \PSI_l} \Hom_{\VS_{a,b}}(\oone_{a,b},\VFTmin_{a,b})
\]
is surjective in homology.
\end{lemma}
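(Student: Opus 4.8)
The plan is to analyze the complex $M_{a,b} = \Hom_{\VS_{a,b}}(\oone_{a,b}, \VFTmin_{a,b})$ via the filtration by the index $l$ (equivalently, by $k = b-l$), just as $\VFTmin_{a,b}$ itself is built as a one-sided twisted complex out of the corners $P_{k,k,0}$ and the lower pieces $P_{k,r,0}$ with $r < k$. First I would verify that $\PSI_l$ is closed: since $\Hom_{\SSBim}(\oone_{a,b}, W_{b-l})$ carries the zero differential, closedness of $\PSI_l$ amounts to checking that the composite $\oone_{a,b} \xrightarrow{\phi} W_{b-l} \approx P_{b-l,b-l,0} \hookrightarrow \VFTmin_{a,b}$ lands in the kernel of $\d_{\VFTmin_{a,b}}$ composed appropriately—but the only outgoing differentials from a corner $P_{k,k,0}$ in $\VFTmin_{a,b}$ are $\d^h \colon P_{k,k,0} \to P_{k-1,k,0}$ (which vanishes since $r = k > k-1$ forces $P_{k-1,k,0} = 0$) and $\d^v, \Delta^v$ which lower/raise the $r$-index; a glance at \eqref{eq:splitting map diagram} and Definition \ref{def:curved MCCS} shows $\d^v$ vanishes on the top corner and $\Delta^v$ moves into $P_{k,k+1,0} = 0$. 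Hence each corner is a subcomplex-like piece and $\PSI_l$ is closed; since $\phi$ ranges over all of $\Hom_{\SSBim}(\oone_{a,b}, W_{b-l})$, which has zero differential, and the target receives only the "incoming" pieces of $\d_{\VFTmin_{a,b}}$ at the corner, this is a genuine chain map after extending scalars to $\k[\V]$.

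Next, for surjectivity in homology, I would set up the associated spectral sequence (or argue directly by induction on the number of filtration steps, using Gaussian elimination / the one-sided twisted complex structure as in Remark \ref{rem:onesided}). The key input is the description of the columns $P_{k,\bullet,0}$: fixing $k$, the subcomplex $\big(\bigoplus_{r=0}^k P_{k,r,0}, \d^v + \Delta^v\big)$ is precisely the curved Koszul complex $\YK(W_k)$ restricted to its $s = 0$ part (see Lemma \ref{lemma:zeta koszul} and Proposition \ref{prop:KMCSdiffs curved}), and after the change of basis to the $\bar v_i^{(k)}$ variables (as computed in the proof of Theorem \ref{thm:splittingmap}) the curved differential $\d^v + \Delta^v$ on this column takes the simple Koszul form with terms $e_i(\leftM^{(k)}) - e_i(\rightM^{(k)})$ and $\bar v_i^{(k)}$. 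Applying $\Hom_{\SSBim}(\oone_{a,b}, -) \otimes \k[\V]$ and taking homology of each column kills everything except the top corner $P_{k,k,0}$—this is the standard fact that a Koszul complex on a regular sequence together with its "curved" dual variables is acyclic except in top exterior degree once one inverts or retains the deformation parameters appropriately. Concretely, $\Hom_{\SSBim}(\oone_{a,b}, W_k)$ is a free module over $\Sym(\leftM^{(k)})$ (a Frobenius extension computation, cf. Example \ref{exa:Sylvester} and \cite[\PropRows]{HRW1}), so the curved Koszul differential on the column is built from a regular sequence, and its homology is concentrated in the corner.

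I would then assemble these column computations: the $l$-graded pieces of the associated graded of $M_{a,b}$ (with respect to the $k$-filtration) have homology supported exactly on the corners $P_{b-l,b-l,0} \otimes \k[\V] \cong \qdeg^{(b-l)(a-l)-2l}\tdeg^{2l}\Hom_{\SSBim}(\oone_{a,b}, W_{b-l}) \otimes \k[\V]$, which is the image of $\PSI_l$. Since the filtration is finite and the homology of the associated graded is concentrated in these corner pieces, a standard argument (the spectral sequence degenerates enough, or one runs Gaussian elimination) shows that $H(M_{a,b})$ is a subquotient built entirely from the corner contributions, hence $\bigoplus_l \PSI_l$ is surjective on homology. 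The main obstacle I anticipate is the bookkeeping in the column acyclicity step: one must be careful that the relevant differentials $\d^v + \Delta^v$ within a single column $P_{k,\bullet,0}$ genuinely form the Koszul complex on the regular sequence $\{e_i(\leftM^{(k)}) - e_i(\rightM^{(k)}), \bar v_i^{(k)}\}_{i=1}^k$ after the triangular change of basis, and that no "connecting" pieces of $\d^h$ interfere at the level of the associated graded—this requires invoking Proposition \ref{prop:KMCSdiffs curved} and the triangularity established in the proof of Theorem \ref{thm:splittingmap}, together with the freeness of $\Hom_{\SSBim}(\oone_{a,b}, W_k)$ over the appropriate symmetric polynomial ring to guarantee regularity.
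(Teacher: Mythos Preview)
Your closedness argument contains an error: $\d^v$ does \emph{not} vanish structurally on the corner $P_{k,k,0}$. It maps $P_{k,k,0}\to P_{k,k-1,0}$, and the target is nonzero for $k\geq 1$ (look again at diagram~\eqref{eq:splitting map diagram}: the $\d^v$ arrows leave every corner). The correct reason $\d^v\circ\phi=0$ for $\phi\in\Hom_{\SSBim}(\oone_{a,b},W_{b-l})$ is that the components of $\d^v$ are multiplication by $e_i(\leftM^{(k)})-e_i(\rightM^{(k)})$ (equivalently $h_i(\leftX_2-\rightX_2)$), and these central elements act by zero on any morphism out of the identity bimodule. This is exactly the paper's argument. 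Ironically, you invoke precisely this structure of $\d^v$ in your surjectivity paragraph, but you do not use it where it is actually needed.

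For surjectivity, your filtration/spectral-sequence route is genuinely different from the paper's and, once repaired, is more self-contained: the paper deduces surjectivity from injectivity of $H(\Sigma_{a,b})$ (Corollary~\ref{cor:M is J general}), which in turn relies on parity of the Hopf link (Proposition~\ref{prop:Hopfparityandgens}), whereas your argument avoids parity entirely. But your column analysis is garbled in the same way. You speak of a ``Koszul complex on a regular sequence $\{e_i(\leftM)-e_i(\rightM),\,\bar v_i^{(k)}\}$,'' yet the $e_i(\leftM)-e_i(\rightM)$ act by \emph{zero} on $\Hom(\oone_{a,b},W_k)$; they are not regular there. The same observation that fixes closedness fixes this: after applying $\Hom(\oone_{a,b},-)$ the differential $\d^v$ vanishes identically, and the $k$-th column reduces to $\Hom(\oone_{a,b},W_k)\otimes\k[\V]\otimes\largewedge[\xi_1,\dots,\xi_k]$ with only the wedging differential $\Delta^v=\sum_i \bar v_i^{(k)}\xi_i$. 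Now $\bar v_1^{(k)},\dots,\bar v_k^{(k)}$ \emph{are} a regular sequence (they are a unitriangular transform of $\bar v_1^{(b)},\dots,\bar v_k^{(b)}$ over $\Sym(\leftM^{(k)})$ by~\eqref{eq:vbar l and vbar b}, and $\Hom(\oone_{a,b},W_k)\otimes\k[\V]$ is free over the relevant polynomial ring by Corollary~\ref{cor:homs to W}), so this honest Koszul complex has homology concentrated at the top corner. Your spectral-sequence conclusion then goes through, using that the source of $\bigoplus_l\PSI_l$ carries zero differential so that surjectivity on $E_1$ propagates to $E_\infty$.
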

\begin{proof}
The differential of $\PSI_l$ sends $\phi\in \Hom_{\SSBim}(\oone_{a,b},W_{b-l})$ to
$(\d^h+\d^v+\Delta^v)\circ \phi$; here we identify
$P_{b-l,b-l,0}$ with $W_{b-l}$ (up to a shift). 
Each of the differentials
$\Delta^v$ and $\d^h$ restrict to zero at the corners, 
%i.e.
%\[
%\Delta^v|_{P_{b-l,b-l,0}} = \d^h|_{P_{b-l,b-l,0}} = 0.
%\]
 thus to see that $\PSI_l$ is closed it suffices to show that 
$\d^v\circ \phi=0$ for all $\phi\in \Hom(\oone_{a,b},W_{b-l})$.
This follows since $\d^v$ is the Koszul differential associated to the action of 
$h_i(\X_2-\X_2')$ on $W_{b-l}$ for $1\leq i\leq b-l$, 
and the central elements $h_i(\X_2-\X_2')$ act by zero 
on $\Hom_{\SSBim}(\oone_{a,b},W_{b-l})$.
This proves the first statement.

For the second statement, Corollary \ref{cor:M is J general} 
implies that post-composing with the splitting map 
\[
\Sigma_{a,b} \colon \VFT_{a,b} \rightarrow \oone_{a,b}
\]
gives a map 
$
M_{a,b} \defeq \Hom_{\VS_{a,b}}(\oone_{a,b},\VFT_{a,b}) 
\to \End_{\VS_{a,b}}(\oone_{a,b}) \defeq E_{a,b}
$
which is injective in homology.
Thus $H(\Sigma_{a,b}) \colon H(M_{a,b}) \to E_{a,b}$ 
is an isomorphism onto its image $J_{a,b}$.
By Definition \ref{def:splitting map model}, the splitting map $\Sigma_{a,b}$ 
is supported on the corners, i.e. $\Sigma|_{P_{k,r,0}} =0$ unless $k=r$.
Thus, $J_{a,b}$ is spanned by elements of the form $\Sigma_{a,b} \circ \PSI_l(\phi)$,
and the result follows from injectivity of $H(\Sigma_{a,b})$.
\end{proof}

Fix $0 \leq l \leq b$.
We now explicitly describe the $\Hom$-spaces $\Hom_{\SSBim}(\oone_{a,b}, W_{b-l})$. 
\begin{definition}\label{def:corner maps}
Consider the following elements of $\Hom_{\SSBim}(\oone_{a,b}, W_{b-l})$,
\[
\phi_{a,b,l}(\lambda) := 
\begin{tikzpicture}[anchorbase, smallnodes]
		\draw[FS,ultra thick,->] (0,-.5) node[below]{$a$}  to (0,.7)node[above=-1pt]{$a{+}b{-}l$};
		\draw[FS,ultra thick,->] (.75,-.5) node[below]{$b$} to (.75,.7)node[above=-1pt]{$l$};
		\draw[FS,ultra thick, directed=.75] (.75,-.25) to [out=90,in=270] node[below]{$b{-}l$} (0,.25) ;
		%\node[FS] at (0.375,0.25) {\scriptsize$k$};
		\node at (.75,.25) {$\bullet$};
		\node at (1,.25) {$\Schur_{\lambda}$};
	\end{tikzpicture}
\colon
\begin{tikzpicture}[rotate=90,anchorbase,smallnodes]
		\draw[very thick] (.4,-1) to  node[above,xshift=2pt]{$b$}  (.4,1);
		\draw[very thick] (-.3,-1) to node[below,yshift=-3pt]{$a$} (-.3,1);
		\draw[densely dotted, FS] (-.5,0) to (.6,0);
\end{tikzpicture}
\longrightarrow
\begin{tikzpicture}[rotate=90,anchorbase,smallnodes]
		\draw[very thick] (0,.25) to [out=150,in=270] (-.25,1) 
			node[left,xshift=2pt]{$a$};
		\draw[very thick] (.5,.5) to (.5,1) node[left,xshift=2pt]{$b$};
		\draw[very thick] (0,.25) to  (.5,.5);
		\draw[very thick] (0,-.25) to node[below,yshift=-3pt]{$a{+}b{-}l$} (0,.25);
		\draw[very thick] (.5,-.5) to [out=30,in=330] 
			node[above,yshift=-2pt]{$l$} (.5,.5);
		\draw[very thick] (0,-.25) to  (.5,-.5);
		\draw[very thick] (.5,-1) node[right,xshift=-2pt]{$b$} to (.5,-.5);
		\draw[very thick] (-.25,-1)node[right,xshift=-2pt]{$a$} 
			to [out=90,in=210] (0,-.25);
		\draw[densely dotted, FS] (-.4,0) to (1.1,0);
\end{tikzpicture}
\]
where $\lambda \in P(l,b-l)$ is a partition in the $l \times b-l$ rectangle.
%=(\lambda_1,\ldots,\lambda_r)$ is a partition with $r\leq l$ and $\lambda_1\leq b-l$.
\end{definition}

Here, we have written $\phi_{a,b,l}(\lambda)$ using the perpendicular graphical
calculus from \S\ref{ss:ssbim}.  In particular, taking $\lambda=\emptyset$
gives us the canonical map 
$\phi_{a,b,l}(\emptyset)  \colon \oone_{a,b} \to W_{b-l}$ of weight $\qdeg^{(a-l)(b-l)}$ 
given by $\cre$ (digon creation on the $b$-labeled strand) followed by $\zip$.

We now show that these maps span $\Hom_{\SSBim}(\oone_{a,b}, W_{b-l})$.
Recall the alphabet labeling conventions from Convention \ref{conv:I alphabets}, 
and observe that
\[
W_{b-l} = I^{(l)}({}_{a,b-l}S_{a+b-l}M_{a,b-l}) =
\begin{tikzpicture}[rotate=90,anchorbase,smallnodes]
		\draw[very thick] (0,.25) to [out=150,in=270] (-.25,1) 
			node[left,xshift=2pt]{$\leftX_1$};
		\draw[very thick] (.5,.5) to (.5,1) node[left,xshift=2pt]{$\leftX_2$};
		\draw[very thick] (0,.25) to node[left=-1pt,yshift=-1pt]{$\leftL$}  (.5,.5);
		\draw[very thick] (0,-.25) to (0,.25);
		\draw[very thick] (.5,-.5) to [out=30,in=330] 
			node[above,yshift=-2pt]{$\B$} (.5,.5);
		\draw[very thick] (0,-.25) to node[right=-1pt,yshift=-1pt]{$\rightL$} (.5,-.5);
		\draw[very thick] (.5,-1) node[right,xshift=-2pt]{$\rightX_2$} to (.5,-.5);
		\draw[very thick] (-.25,-1)node[right,xshift=-2pt]{$\rightX_1$} 
			to [out=90,in=210] (0,-.25);
\end{tikzpicture}
\]
This gives an action of $\Sym(\leftX_1 | \leftL | \B)$ on $W_{b-l}$, 
and hence on $\Hom_{\SSBim}(\oone_{a,b},W_{b-l})$ by post-composition.
In our current situation, $|\B| = l$ and $|\L| = b-l$.

For use here and below, we record the following.

\begin{lem}\label{lem:easy 1 to I(X)}
Let $X$ be a complex in $\CS_{a,b-l}$, then there is an isomorphism
\[
\Hom_{\CS_{a,b}}(\oone_{a,b},I^{(l)}(X)) \cong 
\qdeg^{-l(b-l)} \Hom_{\CS_{a,b-l}}(\oone_{a,b-l} , X) \otimes \Sym(\B)
\]
of dg $\Sym(\leftX_1 | \leftL | \B)$-modules that is natural in $X$.
\end{lem}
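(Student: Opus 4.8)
The plan is to unwind the definition of $I^{(l)}$ and use the adjunction properties of merge and split bimodules recorded in Proposition \ref{prop:dualityonMS}, exactly as in the proof of Lemma \ref{lem:homs from I(X) to 1}. Recall that by definition
\[
I^{(l)}(X) = (\oone_a \boxtimes {}_{b}M_{b-l,l}) \hComp (X \boxtimes \oone_l) \hComp (\oone_a \boxtimes {}_{b-l,l}S_{b}),
\]
so that
\[
\Hom_{\CS_{a,b}}(\oone_{a,b}, I^{(l)}(X))
= \Hom_{\CS_{a,b}}\big(\oone_{a,b}, (\oone_a \boxtimes {}_{b}M_{b-l,l}) \hComp (X \boxtimes \oone_l) \hComp (\oone_a \boxtimes {}_{b-l,l}S_{b})\big).
\]
First I would apply the two adjunctions from \S\ref{ss:ssbim} to move the outer merge and split bimodules across the $\Hom$. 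Using that ${}^\vee(-)$ and $(-)^\vee$ of ${}_{b}M_{b-l,l}$ and ${}_{b-l,l}S_{b}$ are (shifts of) one another by Proposition \ref{prop:dualityonMS}, this rewrites the left-hand side as
\[
\qdeg^{?}\,\Hom_{\CS_{a,b}}\big((\oone_a \boxtimes {}_{b-l,l}S_b \hComp {}_{b}M_{b-l,l} \hComp {}_{b-l,l}S_b)\;,\; X\boxtimes \oone_l\big)
\]
or, more efficiently, as $\Hom$ out of $\oone_a\boxtimes\big({}_{b-l,l}S_b\hComp{}_b M_{b-l,l}\big)$ into $X\boxtimes\oone_l$; tracking the grading shifts present in \eqref{eq:MergeSplit} produces the overall $\qdeg^{-l(b-l)}$. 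The composite ${}_{b-l,l}S_b \hComp {}_b M_{b-l,l}$ is a singular Bott--Samelson bimodule that, on the $(b-l,l)$-side, is free with basis indexed by $P(b-l,l)$ (Example \ref{exa:Sylvester}); but more simply, $\Hom$ out of it into $X\boxtimes\oone_l$ factors through $\Hom_{\CS_{a,b-l}}(\oone_{a,b-l},X)$ tensored with $\End_{\CS_{a,b}}(\oone_l) = \Sym(\B)$, since the $l$-labeled strand is a trivial factor that contributes exactly this symmetric polynomial ring.

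The key steps, in order, are: (1) expand $I^{(l)}(X)$ via its definition as a horizontal composite; (2) apply the two $\hComp$-adjunctions to strip off the outer merge/split, being careful with Koszul signs and the grading shifts in \eqref{eq:MergeSplit}, arriving at $\qdeg^{-l(b-l)}\Hom_{\CS_{a,b}}\big(\oone_a\boxtimes({}_{b-l,l}S_b\hComp{}_bM_{b-l,l})\,,\,X\boxtimes\oone_l\big)$; (3) use that $\boxtimes$-factors are independent over $\k$ to split this as $\qdeg^{-l(b-l)}\Hom_{\CS_{a,b-l}}(\oone_{a,b-l},X)\otimes\End_{\CS_{a,b}}(\oone_l)$, identifying $\End_{\CS_{a,b}}(\oone_l)\cong\Sym(\B)$; (4) check that the resulting isomorphism is $\Sym(\leftX_1|\leftL|\B)$-linear---the $\leftX_1$ and $\leftL$ actions pass through because they commute with the (co)unit morphisms, and the $\B$ action is literally the $\Sym(\B)$-factor---and that it is a chain map, since all the adjunction isomorphisms are dg and the differential on both sides is induced solely from that on $X$; (5) confirm naturality in $X$, which is automatic as the adjunction isomorphisms are natural. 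This is precisely the argument appearing in the proof of Lemma \ref{lem:homs from I(X) to 1}, specialized by replacing the source complex there with $\oone_{a,b}$ and dropping the final $\overline{E}$-module base change (since here we work in $\CS$ rather than $\VSred$).

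The main obstacle I anticipate is bookkeeping the grading shifts and the $\Sym(\leftX_1|\leftL|\B)$-module structure simultaneously: one must verify that under the chain of adjunctions the alphabet $\leftL$ (which acts on the internal $(b-l)$-labeled edge of the digon web defining $I^{(l)}$) is identified correctly with the alphabet acting on the corresponding boundary strand of $X$ in $\CS_{a,b-l}$, and that $\leftX_1$, $\leftL$, $\B$ remain ``separately symmetric'' throughout. Once the specialization of Lemma \ref{lem:homs from I(X) to 1}'s proof is set up with $X$ replaced by $\oone_{a,b-l}$ on the right, no genuinely new computation is required---so the proof will essentially read ``this is the $\Hom(\oone_{a,b},-)$ analogue of the isomorphism \eqref{eq:IsoInC} established in the proof of Lemma \ref{lem:homs from I(X) to 1}, and we omit the (identical) details,'' perhaps after noting that in that argument one can freely replace $\Hom_{\CS_{a,b}}(I^{(s)}(X),\oone_{a,b})$ by $\Hom_{\CS_{a,b}}(\oone_{a,b},I^{(l)}(X))$ by dualizing, or simply rerun the same adjunction calculation in the opposite variance.
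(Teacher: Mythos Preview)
Your outline matches the paper's proof: unwind $I^{(l)}$, apply the adjunctions from Proposition~\ref{prop:dualityonMS}, then split off the $\Sym(\B)$ factor; and you are right that this is the $\Hom(\oone_{a,b},-)$ analogue of the computation \eqref{eq:IsoInC} in the proof of Lemma~\ref{lem:homs from I(X) to 1}.

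The one place your description understates the work is step~(3). After the two adjunctions the shift is $\qdeg^{-2l(b-l)}$ (not $\qdeg^{-l(b-l)}$), and the intermediate $\Hom$-space involves the bimodule ${}_{b-l,l}S_b\hComp{}_bM_{b-l,l}$, which is \emph{not} of the form $A\boxtimes B$. So the passage to $\Hom_{\CS_{a,b-l}}(\oone_{a,b-l},X)\otimes\Sym(\B)$ is not a consequence of ``$\boxtimes$-factors are independent over $\k$.'' In both the present lemma and in the proof of Lemma~\ref{lem:homs from I(X) to 1} you cite, this step is handled by Corollary~\ref{cor:I(X)Hom}, which is a partial-trace computation (via Proposition~\ref{prop:TrAdj} and Lemma~\ref{lem:TrMS}) and is where the compensating $\qdeg^{l(b-l)}$ reappears. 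Once you replace your step~(3) by an invocation of Corollary~\ref{cor:I(X)Hom}, the argument is exactly the paper's.
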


\begin{proof}
Proposition \ref{prop:dualityonMS} gives that
\[
\begin{aligned}
\Hom_{\CS_{a,b}}(\oone_{a,b},\I^{(l)}(X))
&= \Hom_{\CS_{a,b}}\big(\oone_{a,b} , 
	(\oone_a \boxtimes {}_{b}M_{b-l,l}) \hComp (X \boxtimes \oone_l) \hComp (\oone_a \boxtimes {}_{b-l,l}S_{b}) \big) \\
&\cong \qdeg^{-2l(b-l)} \Hom_{\CS_{a,b}} \big(\oone_{a,b-l,l} , 
	(\oone_a \boxtimes ({}_{b-l,l}S_{b} \hComp {}_{b}M_{b-l,l})) \hComp (X \boxtimes \oone_l) \big)\, .
\end{aligned}
\]
Corollary \ref{cor:I(X)Hom} then gives
\[
\Hom_{\CS_{a,b}}(\oone_{a,b},\I^{(l)}(X))
%\cong \qdeg^{-\ell s} \Hom_{\CS_{a,\ell}}(\oone_{a,\ell} , X) \otimes \End(\oone_s)
\cong \qdeg^{-l(b-l)} \Hom_{\CS_{a,b-l}}(\oone_{a,b-l} , X) \otimes \Sym(\B)\, .
\]
The result follows since each isomorphism is $\Sym(\leftX_1 | \leftL | \B)$-linear 
and natural in $X$.
\end{proof}

\begin{cor}\label{cor:homs to W}
There is an isomorphism 
\[
\qdeg^{(a-l)(b-l)} \Sym(\leftX_1 | \leftL | \B) \cong \Hom_{\SSBim}(\oone_{a,b},W_{b-l})
\]
of $\Sym(\leftX_1 | \leftL | \B)$-modules sending $1 \to \phi_{a,b,l}(\emptyset)$.
Consequently, $\Hom_{\SSBim}(\oone_{a,b},W_{b-l})$ is a free
$\Sym(\X_1|\X_2)$-module with basis given by the
morphisms $\phi_{a,b,l}(\lambda)$ from Definition \ref{def:corner maps}.
\end{cor}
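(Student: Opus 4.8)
The plan is to deduce Corollary~\ref{cor:homs to W} directly from Lemma~\ref{lem:easy 1 to I(X)} by specializing to the case $X = {}_{a,b-l}S_{a+b-l}M_{a,b-l}$, i.e.~$X$ equal to the web obtained by merging the $a$- and $(b-l)$-labeled strands and then splitting them again. First I would recall that $W_{b-l} \cong I^{(l)}({}_{a,b-l}S_{a+b-l}M_{a,b-l})$, which is already noted in the discussion preceding the corollary. Applying Lemma~\ref{lem:easy 1 to I(X)} with this choice of $X$ then gives an isomorphism of $\Sym(\leftX_1 \mid \leftL \mid \B)$-modules
\[
\Hom_{\SSBim}(\oone_{a,b}, W_{b-l}) \cong \qdeg^{-l(b-l)}\Hom_{\CS_{a,b-l}}(\oone_{a,b-l}, {}_{a,b-l}S_{a+b-l}M_{a,b-l}) \otimes \Sym(\B)\, .
\]
So the first key step is to compute $\Hom_{\CS_{a,b-l}}(\oone_{a,b-l}, {}_{a,b-l}S_{a+b-l}M_{a,b-l})$ as a $\Sym(\leftX_1 \mid \leftL)$-module. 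The merge-then-split bimodule is just $R^{a,b-l} \otimes_{R^{a+b-l}} R^{a,b-l}$, and by the duality/adjunction relating merge and split bimodules (Proposition~\ref{prop:dualityonMS}) together with the Frobenius extension $\Sym(\X_1+\leftL) \hookrightarrow \Sym(\X_1 \mid \leftL)$ of rank $\binom{a+b-l}{a}$ from Example~\ref{exa:Sylvester}, this $\Hom$-space is free of rank one over $\Sym(\X_1 \mid \leftL)$, generated by the canonical map of weight $\qdeg^{a(b-l)}$ given by $\zip \circ \cre$. Combining the two displayed computations and tracking the grading shift $\qdeg^{-l(b-l)} \cdot \qdeg^{a(b-l)} = \qdeg^{(a-l)(b-l)}$ gives the stated isomorphism
\[
\qdeg^{(a-l)(b-l)}\Sym(\leftX_1 \mid \leftL \mid \B) \cong \Hom_{\SSBim}(\oone_{a,b}, W_{b-l})\, ,
\]
and chasing the generator $1$ through these identifications shows it maps to $\phi_{a,b,l}(\emptyset) = \zip\circ\cre$, since the latter is exactly $I^{(l)}$ applied to $\zip\circ\cre$ tensored with $1 \in \Sym(\B)$ (this is essentially the definition of $\phi_{a,b,l}(\emptyset)$ following Definition~\ref{def:corner maps}).

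For the ``consequently'' clause, I would pass from the $\Sym(\leftX_1 \mid \leftL \mid \B)$-module structure to the coarser $\Sym(\X_1 \mid \X_2)$-module structure, where $\Sym(\X_2) = \Sym(\leftL + \B)$ acts through the inclusion $\Sym(\leftL + \B) \hookrightarrow \Sym(\leftL \mid \B)$. By Example~\ref{exa:Sylvester} again, $\Sym(\leftL \mid \B)$ is free over $\Sym(\leftL + \B) = \Sym(\X_2)$ with basis the Schur polynomials $\Schur_\lambda(\leftL)$ for $\lambda \in P(b-l, l)$, or equivalently — after applying the automorphism $h \leftrightarrow e$, or simply by the symmetry of the Sylvester construction — with basis $\Schur_\lambda(\B)$ for $\lambda \in P(l, b-l)$. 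Multiplying the rank-one generator $\phi_{a,b,l}(\emptyset)$ by $\Schur_\lambda(\B)$ yields precisely $\phi_{a,b,l}(\lambda)$ as defined in Definition~\ref{def:corner maps} (the dot labeled $\Schur_\lambda$ sits on the $b$-labeled strand, i.e.~carries the $\B$-alphabet). Hence $\Hom_{\SSBim}(\oone_{a,b}, W_{b-l})$ is free over $\Sym(\X_1 \mid \X_2) = \Sym(\X_1) \otimes \Sym(\X_2)$ with basis $\{\phi_{a,b,l}(\lambda) : \lambda \in P(l, b-l)\}$, which is the assertion.

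I do not anticipate a serious obstacle here; the main point requiring care is bookkeeping of grading shifts and making sure the identification of the generator with $\phi_{a,b,l}(\emptyset)$ is compatible with the explicit $I^{(l)}$ in Lemma~\ref{lem:easy 1 to I(X)} and with the alphabet conventions of Convention~\ref{conv:I alphabets} (in particular that $|\B| = l$ and $|\leftL| = b-l$ in this instance). A secondary subtlety is choosing the right indexing convention for the Schur basis so that the partitions range over $P(l, b-l)$ rather than its transpose-complement; this is a matter of applying the $h\leftrightarrow e$ involution consistently, and the result is stated to match Definition~\ref{def:corner maps}, so I would simply verify compatibility there. Everything else is a direct application of the Frobenius extension structure recalled in \S\ref{sec:Frobenius} and the adjunctions of Proposition~\ref{prop:dualityonMS}.
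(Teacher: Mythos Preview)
Your proposal is correct and follows essentially the same route as the paper: apply Lemma~\ref{lem:easy 1 to I(X)} with $X={}_{a,b-l}S_{a+b-l}M_{a,b-l}$, identify $\Hom_{\CS_{a,b-l}}(\oone_{a,b-l},{}_{a,b-l}S_{a+b-l}M_{a,b-l})$ as free of rank one generated by $\zip$, and then use the Schur basis from Example~\ref{exa:Sylvester} for the second claim. The only cosmetic difference is that the paper packages the rank-one computation into Corollary~\ref{cor:basicHom} (proved in the appendix via the partial trace machinery) rather than arguing it directly from the Frobenius adjunction as you do; both arguments are valid and lead to the same conclusion.
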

\begin{proof}
Lemma \ref{lem:easy 1 to I(X)} and Corollary \ref{cor:basicHom} give that
\[
\begin{aligned}
\Hom_{\SSBim}(\oone_{a,b},W_{b-l}) &\cong 
\qdeg^{-l(b-l)} \Hom_{\CS_{a,b-l}}(\oone_{a,b-l} , {}_{a,b-l}S_{a+b-l}M_{a,b-l}) \otimes \Sym(\B) \\
& \cong \qdeg^{(a-l)(b-l)} \Sym(\leftX_1 | \leftL) \otimes \Sym(\B)
\end{aligned}
\]
and show that under this isomorphism $\phi_{a,b,l}(\emptyset)$ is sent to $1$.
The second statement then follows from Example~\ref{exa:Sylvester}.
\end{proof}

Lemma \ref{lemma:including corners} and Corollary \ref{cor:homs to W} immediately 
imply the following.

\begin{prop}\label{prop:corners span hopf} As a module over $E_{a,b}$, the
homology of $M_{a,b} = \Hom_{\VS_{a,b}}(\oone_{a,b},\VFTmin_{a,b})$ is spanned by the 
classes of the elements $\PSI(\phi_{a,b,l}(\lambda))$, 
where $\PSI := \bigoplus_L \PSI_l$ 
is the inclusion from Lemma \ref{lemma:including corners}.\qed
\end{prop}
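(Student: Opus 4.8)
The plan is to assemble the previously established facts in sequence, with essentially no new computation required. The statement follows by combining three inputs: the surjectivity of the corner maps in homology (Lemma \ref{lemma:including corners}), the explicit basis for $\Hom_{\SSBim}(\oone_{a,b}, W_{b-l})$ provided by Corollary \ref{cor:homs to W}, and the fact that $E_{a,b}$-linearity lets us promote a $\k$-spanning set of the $\Hom$-spaces to an $E_{a,b}$-spanning set of the homology.

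First I would recall that Lemma \ref{lemma:including corners} gives a surjection in homology
\[
H\left( \bigoplus_{l=0}^b \qdeg^{(b-l)(a-l)-2l} \tdeg^{2l} \Hom_{\SSBim}(\oone_{a,b},W_{b-l})\otimes\k[\V] \right)
\xrightarrow{\PSI_*} H(M_{a,b})\, ,
\]
where $\PSI = \bigoplus_l \PSI_l$ and the left-hand complexes carry zero differential on the $\Hom$-factor (so their homology is just $\bigoplus_l \Hom_{\SSBim}(\oone_{a,b},W_{b-l})\otimes\k[\V]$, up to shift). Thus $H(M_{a,b})$ is spanned, as a $\k[\V]$-module, by the classes $\PSI_l(\phi)$ for $\phi$ ranging over any $\k$-spanning set of $\Hom_{\SSBim}(\oone_{a,b},W_{b-l})$. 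Since $\PSI_l$ is by construction a morphism in $\VS_{a,b}$ (it is given by post-composition into a fixed summand $P_{b-l,b-l,0} \hookrightarrow \VFTmin_{a,b}$), and since post-composition commutes with the left $E_{a,b}$-action on $\Hom_{\VS_{a,b}}(\oone_{a,b},-)$, the map $\PSI_l$ is in fact $E_{a,b}$-linear after extending scalars. Hence it suffices to span each $\Hom_{\SSBim}(\oone_{a,b},W_{b-l})$ over the subring $\Sym(\X_1|\X_2) \subset E_{a,b}$.

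Next I would invoke Corollary \ref{cor:homs to W}, which states precisely that $\Hom_{\SSBim}(\oone_{a,b},W_{b-l})$ is a free $\Sym(\X_1|\X_2)$-module with basis $\{\phi_{a,b,l}(\lambda)\}_{\lambda \in P(l,b-l)}$. Combining this with the previous paragraph: every element of $H(M_{a,b})$ is an $E_{a,b}$-linear combination of the classes $\PSI_l(\phi_{a,b,l}(\lambda))$, with $0 \le l \le b$ and $\lambda \in P(l,b-l)$. Writing $\PSI(\phi_{a,b,l}(\lambda))$ for $\PSI_l(\phi_{a,b,l}(\lambda))$, as in the statement, this is exactly the assertion that these elements span $H(M_{a,b})$ over $E_{a,b}$. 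I expect the only point requiring a line of care — and the closest thing to an obstacle, though it is minor — is verifying that the $E_{a,b}$-module structure used in Lemma \ref{lemma:including corners} (extension of scalars to $\k[\V]$, with $\Sym(\X_1|\X_2)$ acting via post-composition at the source object $\oone_{a,b}$) is compatible with the $\Sym(\X_1|\X_2)$-module structure on $\Hom_{\SSBim}(\oone_{a,b},W_{b-l})$ appearing in Corollary \ref{cor:homs to W}; this follows because in both cases the action is by post-composition with the central morphisms $f(\X_1), f(\X_2)$ acting on $\oone_{a,b}$, and the slanted-identity maps defining $\PSI_l$ are morphisms of $(R^{a,b},R^{a,b})$-bimodules, hence intertwine these actions. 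With that compatibility in hand, the proof is complete.
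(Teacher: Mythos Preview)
Your proposal is correct and follows exactly the route the paper intends: the proposition is stated with a \qed and is prefaced by the remark that Lemma~\ref{lemma:including corners} and Corollary~\ref{cor:homs to W} immediately imply it. You have simply unpacked that implication, including the (harmless) verification of $E_{a,b}$-linearity, which the paper leaves implicit.
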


Recall the reduction functor $\pi\colon \VS_{a,b}\rightarrow \VS_{a,b}$ from Definition~\ref{def:reduction}. 
This induces an algebra endomorphism of $E_{a,b}=\End_{\VS_{a,b}}(\oone_{a,b})$ 
that we denote by the same symbol. By Definition~\ref{def:reduction}, it maps 
\[
v_{L,i}^{(a)}\mapsto 0 \, , \quad v_{R,i}^{(b)}\mapsto v_{R,i}^{(b)} - v_{L,i}^{(b)}
\] 
since the actions of $\leftX_2$ and $\rightX_2$ appearing in \eqref{eq:vred}
are identified on $E_{a,b}$. 
We will also use $\pi$ to refer to the $\k[\X_1,\X_2]$-linear endomorphism of
$\k[\X_1,\X_2,\V_L^{(a)},\V_R^{(b)}]$ determined by the same formulas. This map sends:
\[
y_i\mapsto 
\begin{cases}
	0 & 1\leq i\leq a \\
	\yred_i & a< i \leq a+b
\end{cases}
\]
when we express $y_i$ as in \eqref{eq:def-yi-2strand} and $\yred_i$ as in \eqref{eq:ybar and vbar}.

\begin{lemma}\label{lemma:Jab reduced}
We have $J_{a,b} = \langle \pi(J_{a,b})\rangle$ as ideals in $E_{a,b}$.
\end{lemma}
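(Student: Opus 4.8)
The plan is to show that the full twist ideal $J_{a,b}$ is stable under the reduction endomorphism $\pi$ of $E_{a,b}$, in the strong sense that $J_{a,b} = \langle \pi(J_{a,b}) \rangle$. The inclusion $\langle \pi(J_{a,b}) \rangle \subseteq J_{a,b}$ is not automatic, so both inclusions require argument; however, both will follow from the same source, namely the functorial behavior of the reduction functor $\pi \colon \VS_{a,b} \to \VS_{a,b}$ together with the fact that $\pi$ fixes the relevant morphisms up to the ambiguities that do not affect ideals.

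First I would recall that, by Proposition \ref{prop:corners span hopf}, $J_{a,b}$ is generated over $E_{a,b}$ by the elements $\Sigma_{a,b} \circ \PSI(\phi_{a,b,l}(\lambda))$ as $l$ and $\lambda$ vary, where $\Sigma_{a,b}$ is the explicit splitting map from Definition \ref{def:splitting map model}. The key observation is that every constituent morphism in this composite — the corner inclusions $\PSI_l$, the maps $\phi_{a,b,l}(\lambda)$, and the components $\Sigma_{a,b}^k$ of the splitting map — is built only from the structure maps of $\VSred_{a,b}$ (digon creation/collapse, (un)zip, decoration by symmetric functions in the alphabets, and multiplication by the $\yred$ variables, which are expressions in $\X_2$ and $\overline{\V}$). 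In particular, these morphisms all lie in the image of the reduction functor $\pi$, or become so after the $\k[\X_1,\X_2]$-linear identification of $\yred_i$ with $\pi(y_i)$. Concretely, I would check that $\pi$ applied to the generator $\Sigma_{a,b} \circ \PSI(\phi_{a,b,l}(\lambda))$ — where $\pi$ now denotes the induced endomorphism of $E_{a,b}$ — lands back inside $J_{a,b}$, giving $\pi(J_{a,b}) \subseteq J_{a,b}$ and hence $\langle \pi(J_{a,b}) \rangle \subseteq J_{a,b}$; and conversely that each generator is itself a $\pi$-image (up to a unit or up to elements already in $\pi(J_{a,b})$), giving $J_{a,b} \subseteq \langle \pi(J_{a,b}) \rangle$.

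The cleaner route to the nontrivial inclusion $J_{a,b} \subseteq \langle \pi(J_{a,b}) \rangle$ is to use the definition $J_{a,b} = \im(H(\Sigma_{a,b}))$ directly. Applying $\pi$ to the chain map $M_{a,b} \xrightarrow{\Sigma_{a,b} \circ -} E_{a,b}$ yields another chain map $\pi(M_{a,b}) \to \pi(E_{a,b})$ computing $\pi(J_{a,b})$ in homology; since $\pi$ is idempotent with image $\VSred_{a,b}$ (Remark \ref{rem:Vprime}), and since both $\VFTmin_{a,b}$ and $\Sigma_{a,b}$ already live in $\VSred_{a,b}$ (indeed $\VFTmin_{a,b} = \YMCCSmin^0_{a,b}$ is reduced by construction, and the model of Definition \ref{def:splitting map model} is a reduced morphism), applying $\pi$ to these has no effect beyond the substitution $v^{(a)}_{L,i} \mapsto 0$, $v^{(b)}_{R,i} \mapsto v^{(b)}_{R,i} - v^{(b)}_{L,i}$ on the $E_{a,b}$-module structure. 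One then observes that $E_{a,b} = \k[\X_1,\X_2][\V_L^{(a)}, \V_R^{(b)}]$ is free as a module over its subalgebra $\pi(E_{a,b}) \cdot \k[\V_L^{(a)}]$ (equivalently, $\pi \colon E_{a,b} \to E_{a,b}$ is an algebra automorphism after the change of variables $v^{(b)}_{R,i} \mapsto v^{(b)}_{R,i} - v^{(b)}_{L,i}$, which is invertible), so that $\langle \pi(J_{a,b})\rangle$ recovers $J_{a,b}$ after undoing the substitution. I expect the main obstacle to be bookkeeping: making precise the sense in which $\pi$ is an automorphism of $E_{a,b}$ (it fixes $\X_1,\X_2$, kills $\V_L^{(a)}$, and shifts $\V_R^{(b)}$, so it is visibly \emph{not} injective on $E_{a,b}$, only on the subalgebra generated by $\X_1, \X_2, \V_R^{(b)}$), and correctly identifying which "copy" of the deformation parameters each side of the asserted ideal equality refers to. Once the precise statement is pinned down — that $J_{a,b}$ is generated by $\pi(J_{a,b})$ together with its translates under multiplication by $\k[\V_L^{(a)}]$, and that these translates are absorbed because $\pi$ followed by the inverse change of variables recovers the original generators — the equality $J_{a,b} = \langle \pi(J_{a,b})\rangle$ drops out. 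I would conclude by verifying on the explicit generators $\Sigma_{a,b} \circ \PSI(\phi_{a,b,l}(\lambda))$ that this really holds, which is a short check given the perpendicular-graphical-calculus formula \eqref{eq:SigmaPGC} and the fact that $\yred_{a+k+1}\cdots\yred_{a+b} = \pi(y_{a+k+1}\cdots y_{a+b})$.
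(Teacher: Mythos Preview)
Your core observation is exactly right and matches the paper's proof: the explicit generators $\Sigma_{a,b} \circ \PSI(\phi_{a,b,l}(\lambda))$ of $J_{a,b}$ are fixed by $\pi$, because $\VFTmin_{a,b}$, the splitting map $\Sigma_{a,b}$ of Definition~\ref{def:splitting map model}, and the corner inclusions $\PSI_l$ are all reduced (built in $\VSred_{a,b}$, involving only the $\overline{\V}$ variables and the alphabets $\X_1,\X_2$). This is precisely the paper's argument, which is three sentences long.

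However, you are badly overcomplicating the deduction. Once you know the generators $g_\alpha$ satisfy $\pi(g_\alpha)=g_\alpha$ and that $\pi$ is an algebra endomorphism of $E_{a,b}$, both inclusions are immediate: $g_\alpha = \pi(g_\alpha)\in \pi(J_{a,b})$ gives $J_{a,b}\subseteq \langle \pi(J_{a,b})\rangle$, while $\pi\big(\sum_\alpha e_\alpha g_\alpha\big)=\sum_\alpha \pi(e_\alpha)\,g_\alpha \in J_{a,b}$ gives $\langle \pi(J_{a,b})\rangle \subseteq J_{a,b}$. The ``bookkeeping obstacle'' you anticipate (that $\pi$ is not injective on $E_{a,b}$) is irrelevant, and the detour through viewing $\pi$ as an automorphism after a change of variables, or through translates by $\k[\V_L^{(a)}]$, is unnecessary and somewhat confused. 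Drop the entire ``cleaner route'' paragraph; the first half of your proposal already contains the complete proof.
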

\begin{proof}
We claim that $J_{a,b}$ admits a set of generators, which is preserved by $\pi$.
Indeed, as defined, $\VFTmin_{a,b}$ and the splitting map 
$\Sigma_{a,b} \colon \VFTmin_{a,b}\rightarrow \oone_{a,b}$ are both reduced, hence fixed by $\pi$.
Moreover, the elements $\PSI(\phi_{a,b,l}(\lambda))$ from Proposition \ref{prop:corners span hopf} 
are fixed by $\pi$ since the deformation parameters $\V$ do not enter into their definition.  
It follows that $\Sigma_{a,b} \circ \PSI(\phi_{a,b,l}(\lambda)) \in E_{a,b}$ is fixed by $\pi$.  By
Proposition~\ref{prop:corners span hopf}, these elements generate $J_{a,b}$.
\end{proof}

\subsection{Haiman determinants and the Hopf link}
\label{ss:haimandethopf}

We now compute the generators 
$\Sigma_{a,b} \circ \PSI(\phi_{a,b,l}(\lambda))\in J_{a,b}$ explicitly. 
We will see that these elements are special cases of the 
Haiman determinants from \S\ref{ss:haiman dets}.

\begin{definition}\label{def:keyshape}
For $a\geq b\geq l\geq 0$ and a partition $\l\in P(l,b-l)$,
%in the $l\times(b-l)$ rectangle, 
the associated \emph{key shape} $\Key_l(\l)$ is defined to be
the set of monic monomials in $\k[x,y]$:
\[
\Key_l(\l):=\{x^{a+b-l-1},\ldots,x,1\}\cup\{x^{\l_1+l-1}y,\ldots,x^{\l_l}y\}
\]
ordered as indicated.
\end{definition}

\begin{conv}\label{conv:key}
We will identify finite sets $S$ of monic monomials in $\k[x,y]$ with
finite subsets of $\Z_{\geq 0}\times \Z_{\geq 0}$, 
illustrated by collections of boxes living in the $4^{th}$ quadrant.
In general, we will call such collections of boxes \emph{shapes}. 

For example, for $a=5$, $b=3$ and $l=2$ there are three key shapes:
\[ 
\begin{tikzpicture}[scale=.7,smallnodes,anchorbase]
	\draw (0,0) rectangle (1,1);
	\node at (.5,.5) {$1$};
	\draw (1,0) rectangle (2,1);
	\node at (1.5,.5) {$x$};
	\draw (2,0) rectangle (3,1);
	\node at (2.5,.5) {$x^2$};
	\draw (3,0) rectangle (4,1);
	\node at (3.5,.5) {$x^3$};
	\draw (4,0) rectangle (5,1);
	\node at (4.5,.5) {$x^4$};
	\draw (5,0) rectangle (6,1);
	\node at (5.5,.5) {$x^5$};
	\draw (0,-1) rectangle (1,0);
	\node at (.5,-.5) {$y$};
	\draw (1,-1) rectangle (2,0);
	\node at (1.5,-.5) {$xy$};
	\node at (0,-1.4) {$\l_2$};
	\node at (1,-1.4) {$\l_1{+}1$};
	\draw[dotted] (3,0) to (3, -1);
	\node at (3,-1.4) {$b$};
	\draw[dotted] (5,0) to (5, -1);
	\node at (5,-1.4) {$a$};
	\draw[dotted] (6,0) to (6, -1);
	\node at (6,-1.4) {$a{+}b{-}l$};
\end{tikzpicture}	
,\quad
\begin{tikzpicture}[scale=.7,smallnodes,anchorbase]
	\draw (0,0) rectangle (1,1);
	\node at (.5,.5) {$1$};
	\draw (1,0) rectangle (2,1);
	\node at (1.5,.5) {$x$};
	\draw (2,0) rectangle (3,1);
	\node at (2.5,.5) {$x^2$};
	\draw (3,0) rectangle (4,1);
	\node at (3.5,.5) {$x^3$};
	\draw (4,0) rectangle (5,1);
	\node at (4.5,.5) {$x^4$};
	\draw (5,0) rectangle (6,1);
	\node at (5.5,.5) {$x^5$};
	\draw (0,-1) rectangle (1,0);
	\node at (.5,-.5) {$y$};
	\draw (2,-1) rectangle (3,0);
	\node at (2.5,-.5) {$x^2y$};
	\node at (0,-1.4) {$\l_2$};
	\node at (2,-1.4) {$\l_1{+}1$};
	\draw[dotted] (5,0) to (5, -1);
	\node at (5,-1.4) {$a$};
	\draw[dotted] (6,0) to (6, -1);
	\node at (6,-1.4) {$a{+}b{-}l$};
\end{tikzpicture}
,\quad
\begin{tikzpicture}[scale=.7,smallnodes,anchorbase]
	\draw (0,0) rectangle (1,1);
	\node at (.5,.5) {$1$};
	\draw (1,0) rectangle (2,1);
	\node at (1.5,.5) {$x$};
	\draw (2,0) rectangle (3,1);
	\node at (2.5,.5) {$x^2$};
	\draw (3,0) rectangle (4,1);
	\node at (3.5,.5) {$x^3$};
	\draw (4,0) rectangle (5,1);
	\node at (4.5,.5) {$x^4$};
	\draw (5,0) rectangle (6,1);
	\node at (5.5,.5) {$x^5$};
	\draw (1,-1) rectangle (2,0);
	\node at (1.5,-.5) {$xy$};
	\draw (2,-1) rectangle (3,0);
	\node at (2.5,-.5) {$x^2y$};
	\node at (1,-1.4) {$\l_2$};
	\node at (2,-1.4) {$\l_1{+}1$};
	\draw[dotted] (5,0) to (5, -1);
	\node at (5,-1.4) {$a$};
	\draw[dotted] (6,0) to (6, -1);
	\node at (6,-1.4) {$a{+}b{-}l$};
\end{tikzpicture}		
\]
which are associated with the partitions $\emptyset$, $(1)$ and $(1,1)$ inside the $2\times 1$ box, 
respectively.

In a similar way, every partition $\l$ specifies a finite set of monic
monomials via the coordinates of the boxes in the Young diagram for $\l$.
We caution the reader that this set of monomials na\"{i}vely associated to a partition $\l$ 
is unrelated to both the key shape $\Key_l(\l)$ and the
set of monomials $\mathcal{M}_N(\l)$ from Definition~\ref{def:monomial list}.
\end{conv}

\begin{example}\label{ex:keys} The Haiman determinant associated to a key shape takes the form
\begin{equation}\label{eq:key general}
\Delta_{\Key_l(\l)} =
\begin{vmatrix}
x^{a+b-l-1}_1 &  \cdots & x^{a+b-l-1}_a & x^{a+b-l-1}_{a+1} &\cdots & x^{a+b-l-1}_{a+b} \\ 
\vdots & \ddots & \vdots & \vdots  & \ddots & \vdots\\
1 &  \cdots & 1 & 1 &\cdots & 1 \\
x^{\l_1+l-1}_1 y_1 &  \cdots & x^{\l_1+l-1}_a y_a & x^{\l_1+l-1}_{a+1} y_{a+1}&\cdots & x^{\l_1+l-1}_{a+b} y_{a+b}\\
\vdots & \ddots & \vdots & \vdots  & \ddots & \vdots\\
x^{\l_l}_1 y_1 &  \cdots & x^{\l_l}_a y_a & x^{\l_l}_{a+1} y_{a+1}&\cdots & x^{\l_l}_{a+b} y_{a+b} 
\end{vmatrix}
\end{equation}
\end{example}

\begin{lemma}[Key lemma] \label{lem:keylemma}
For $a \geq b\geq l \geq 0$ and a partition $\lambda \in P(l,b-l)$, 
we have the following identity in $E_{a,b}$:
\begin{equation} \label{eq:keylemma}
	\Sigma_{a,b} \circ \PSI(\phi_{a,b,k}(\lambda)) 
	= \pm \frac{\pi(\Delta_{\Key_l(\lambda)}(\X,\Y))}{\Delta(\X_1)\Delta(\X_2)} \cdot \id \, .
\end{equation}
Here, $\X = \X_1 \cup \X_2 = \{x_1,\ldots,x_{a+b}\}$, $\Y = \{y_1,\ldots,y_{a+b}\}$, 
and $\pi$ is the reduction functor from Definition \ref{def:reduction}
(as discussed before Lemma \ref{lemma:Jab reduced} above).
\end{lemma}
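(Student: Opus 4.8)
The plan is to compute the composite $\Sigma_{a,b} \circ \PSI(\phi_{a,b,l}(\lambda))$ explicitly by threading together the three explicit descriptions available to us: the description of $\PSI_l$ from Definition \ref{def:Psi and corner}, the description of the corner map $\phi_{a,b,l}(\lambda)$ from Definition \ref{def:corner maps}, and the explicit model for the splitting map $\Sigma_{a,b}$ from Definition \ref{def:splitting map model} (proven correct in Theorem \ref{thm:splittingmap}). Since $\Sigma_{a,b}$ is supported on the corners $P_{k,k,0}$, and $\PSI(\phi_{a,b,l}(\lambda))$ lands in the corner $P_{b-l,b-l,0}$, only the component $\Sigma_{a,b}^{b-l}$ contributes. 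So the first step is to write out $\Sigma_{a,b}^{b-l} \circ \PSI_l(\phi_{a,b,l}(\lambda))$ as a composite in $\SSBim$ (extended with $\V$-scalars): it is the slanted identity, followed by $\phi_{a,b,l}(\lambda)$, then the slanted identity into $P_{b-l,b-l,0}$, then multiplication by $\yred_{a+(b-l)+1} \cdots \yred_{a+b}$, then $\thickchi^+ = \chi_0^+ \cdots \chi_{b-l-1}^+$ into $\oone_{a,b}$, up to the sign $(-1)^{\binom{b}{2} + \binom{b-l}{2}}$. The slanted identities just contribute grading shifts and cancel, so what remains is an endomorphism of $\oone_{a,b}$, i.e.\ an element of $E_{a,b} = \Sym(\X_1|\X_2)[\V]$.

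Next I would evaluate this endomorphism using the perpendicular graphical calculus. Recall from Remark \ref{rem:thickcap} that $\thickchi^+ \colon W_{b-l} \to \oone_{a,b}$ is (up to sign) a thick cap merging the split $(b-l)$-labeled edge back in, while $\phi_{a,b,l}(\lambda)$ is a thick cup with a Schur polynomial $\Schur_\lambda$ decoration on the $b$-labeled strand. Composing a thick cup and thick cap of the type $\oone_{a,b} \to W_{b-l} \to \oone_{a,b}$, with decorations $\Schur_\lambda$ and the product $\yred_{a+b-l+1}\cdots\yred_{a+b}$ along the way, produces a ``bubble'' which evaluates to a partial-trace (Sylvester operator) expression. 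Concretely, using Example \ref{exa:Sylvester} and the interpolation identity $\yred_i = \sum_{r} x_i^{r-1} \bar v_r$, the result should be $\partial_{a+1}\cdots\partial_{a+b-1}$ applied to $\Schur_\lambda(\X_2) \cdot x_{a+b-l+1}^{?}\cdots$ times the appropriate product of $\yred$'s — but the cleanest route is to recognize the whole computation as the antisymmetrization/Sylvester expansion of a determinant. Here is where the Haiman determinant enters: expanding $\Delta_{\Key_l(\lambda)}(\X,\Y)$ along the last $b$ columns (the columns indexed $a+1,\ldots,a+b$, on which the $y_i$ become $\yred_i$ after applying $\pi$, since $\pi(y_i) = \yred_i$ for $i > a$ and $\pi(y_i)=0$ for $i \le a$) and recognizing that the first $a$ columns contribute the Vandermonde-type minor giving $\Delta(\X_1)\Schur_{(a+b-l-1-a,\ldots)}$ — i.e.\ using Jacobi's bialternant formula (as in \eqref{eq:Schur}) — one sees that $\pi(\Delta_{\Key_l(\lambda)}(\X,\Y))/(\Delta(\X_1)\Delta(\X_2))$ is exactly the symmetric function produced by the bubble evaluation. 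I would make this precise by a Laplace expansion of the determinant \eqref{eq:key general} grouping the first $a$ and last $b$ columns, then identifying each block with a Schur polynomial via the bialternant formula and matching against the graphical calculus computation.

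The main obstacle I anticipate is bookkeeping: matching signs and grading shifts between the graphical-calculus side and the determinant side, and correctly tracking which $x$-powers in the key shape $\Key_l(\lambda)$ correspond to which decorations in $\phi_{a,b,l}(\lambda)$ and $\thickchi^+$. The degree shift present in $\PSI_l$ (the slanted identity of weight $\qdeg^{-(b-l)(a-l)+2l}\tdeg^{-2l}$) must be checked to cancel against the weights of $\phi_{a,b,l}(\emptyset)$ (weight $\qdeg^{(a-l)(b-l)}$), $\thickchi^+$ (weight $\qdeg^{(b-l)(a-b+(b-l))}$... I would recompute this carefully), and the $\yred$ product (weight $\qdeg^{-2l}\tdeg^{2l}$), so that the result genuinely lands in degree-zero endomorphisms, i.e.\ is a scalar in $E_{a,b}$ as claimed. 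The ``$\pm$'' in the statement gives me freedom on the overall sign, which removes the most painful part of the sign analysis. A secondary technical point: I must ensure the partial-trace (digon collapse) composite is computed on the correct alphabet, using $\B = \leftX_2$-restricted to the $l$-labeled edge versus the full $\leftX_2$; Convention \ref{conv:I alphabets} and Lemma \ref{lemma:v from y2} (which expresses $\bar v_r^{(k)}$ via a Sylvester operator applied to $\yred$) are the tools that make this transparent, and I would lean on Lemma \ref{lemma:v from y2} rather than recomputing from scratch.

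Once this identity is established, it immediately yields (combined with Proposition \ref{prop:corners span hopf} and Lemma \ref{lemma:Jab reduced}) that $J_{a,b}$ is generated by the reduced Haiman determinants $\pi(\Delta_{\Key_l(\lambda)})/(\Delta(\X_1)\Delta(\X_2))$ over all $0 \le l \le b$ and $\lambda \in P(l,b-l)$ — which is the inclusion $J_{a,b} \subseteq I_{a,b}$ of the ideals from Definitions \ref{def:J} and \ref{def:ys and vs}, and the starting point for the inductive argument establishing the reverse inclusion in the subsequent subsections.
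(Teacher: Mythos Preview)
Your proposal is correct and follows essentially the same approach as the paper: compute the composite via perpendicular graphical calculus on one side, compute $\pi(\Delta_{\Key_l(\lambda)})$ via Laplace expansion on the other, and match. The paper expands in the first $a$ columns rather than the last $b$ (equivalent), and on the graphical side makes the ``bubble evaluation'' precise by inserting the dual Schur bases $\{\Schur_\beta(\X_1)\}$ and $\{(-1)^{|\hat\beta|}\Schur_{\hat\beta}\}$ from Example~\ref{exa:Sylvester} on the $(b-l)$-labeled edge to split the composite into a sum over $\beta\in P(a,b-l)$; each summand is then recognized as $\frac{\hdet(\mathcal{M}_a(\beta))}{\Delta(\X_1)}\cdot\frac{\hdet(\mathcal{M}_{b-l}(\hat\beta)\cup\mathcal{M}_l(\lambda)\yred)}{\Delta(\X_2)}$, exactly the term appearing in the Laplace expansion of $|M|$. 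Your reference to Lemma~\ref{lemma:v from y2} is not needed here (that lemma is used instead in the proof of Theorem~\ref{thm:splittingmap}).
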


\begin{proof}
We will use Definition \ref{def:monomial list} to help abbreviate parts of the proof.  
Specifically, for each $c\geq 1$ and each weakly decreasing sequence
$\b=(\b_1\geq \cdots \b_c\geq 0)$ of length $c$, 
let $\mathcal{M}_{c}(\b) = \{x^{\b_1+c-1},\ldots, x^{\b_{c-1}-1},x^{\b_c}\}$ 
be the associated list of monomials.  
We note the following properties of $\mathcal{M}_{c}(\b)$:
\begin{enumerate}
\item $\mathcal{M}_{c}(\emptyset) = \{x^{c-1},\ldots,x,1\}$.
\item $\hdet\mathcal{M}_{c}(\b) = \Schur_\b(\X)\Delta(\X)$, where $|\X|=c$.
\item If $\b\in P(c,d)$, then the dual complementary partition $\hat{\b}\in P(d,c)$ satisfies 
\[
\mathcal{M}_{d}(\hat{\b}) = \{x^{c+d-1},\ldots,x,1\}\smallsetminus \mathcal{M}_{c}(\b)\, .
\]
\end{enumerate}

Now, consider the determinant $\Delta_{\Key_l(\l)}$, depicted in \eqref{eq:key general}, 
regarded as an element of $\k[\X,\V]$ via \eqref{eq:y to v 2}. 
If we apply the algebra endomorphism $\pi$, the result is:
\begin{equation}\label{eq:keys reduced}
\pi(\Delta_{\Key_l(\l)}) =
\begin{vmatrix}
x^{a+b-l-1}_1 &  \cdots & x^{a+b-l-1}_a & x^{a+b-l-1}_{a+1} &\cdots & x^{a+b-l-1}_{a+b} \\ 
\vdots & \ddots & \vdots & \vdots  & \ddots & \vdots\\
1 &  \cdots & 1 & 1 &\cdots & 1 \\
0 &  \cdots & 0 & x^{\l_1+l-1}_{a+1} \yred_{a+1}&\cdots & x^{\l_1+l-1}_{a+b} \yred_{a+b}\\
\vdots & \ddots & \vdots & \vdots  & \ddots & \vdots\\
0 &  \cdots &0 & x^{\l_l}_{a+1} \yred_{a+1}&\cdots & x^{\l_l}_{a+b} \yred_{a+b} 
\end{vmatrix}
\end{equation}
This determinant will be computed by 
(multiple column) Laplace expansion in the first $a$ columns. 
Let $M$ denote the matrix
appearing on the right-hand side of \eqref{eq:keys reduced}, and let
$C=\{1,\dots, a\}$ be the indexing set for the selected columns.  The result of
the expansion is of the form:
\[|M| = \sum_{\substack{R\subset \{1,\dots, a+b\}\\ |R|=a}}
(-1)^{\epsilon_{R,C}} |M_{R,C}||M_{R^c,C^c}|\] where the sum ranges over size
$a$ subsets of rows, denoted $R$.
The summands are the product of the $a\times a$-minor 
determined by the pair $R,C$ and the complementary $b\times b$-minor, 
with a sign determined by the permutation shuffling $C$ to $R$. 

It is clear that $|M_{R,C}|=0$ unless $R\subset\{1,\dots, a+b-l\}$.  In other words, $R$ 
corresponds to a selection of $a$ of the first $a+b-l$ rows.  Equivalently, the choice of $R$ 
corresponds to the choice of $a$ monomials from $\{x^{a+b-l-1},\ldots,x,1\}$.  Such choices 
are parametrized exactly by partitions $\b\in P(a,b-l)$, via the assignment $\b\mapsto 
\mathcal{M}_a(\b)\subset \{x^{a+b-l-1},\ldots,x,1\}$.  Thus:
\[
|M_{R,C}| = 
\begin{vmatrix}
x_1^{\b_1+a-1} & \dots & x_a^{\b_1+a-1} \\ 
\vdots &  & \vdots \\ 
x_1^{\b_a} & \dots & x_a^{\b_a}  
\end{vmatrix} = \hdet(\mathcal{M}_a(\b))\, .
\]
The complementary choice of monomials 
$\{x^{a+b-l-1},\ldots,x,1\}\smallsetminus \mathcal{M}_a(\b)$ coincides with $\mathcal{M}_{b-l}(\hat{\b})$ for 
the dual complementary partition $\hat{\b}\in P(b-l,a)$, so:
\[
|M_{R^c,C^c}| = \begin{vmatrix}
x_{a+1}^{\hat{\b}_1+b-l-1} & \dots & x_{a+b}^{\hat{\b}_1+b-l-1} \\ 
\vdots &  & \vdots \\ 
x_{a+1}^{\hat{\b}_{b-l}} & \dots & x_{a+b}^{\hat{\b}_{b-l}}  \\
x^{\l_1+l-1}_{a+1} \yred_{a+1}&\cdots & x^{\l_1+l-1}_{a+b} \yred_{a+b}\\
\vdots  & \ddots & \vdots\\
x^{\l_l}_{a+1} \yred_{a+1}&\cdots & x^{\l_l}_{a+b} \yred_{a+b} 
\end{vmatrix} = \hdet(\mathcal{M}_{b-l}(\hat{\b})\cup \mathcal{M}_l(\l) \yred)\, .
\]
Thus, Laplace expansion yields the following identity:
\[
|M|= \sum_{\b\in P_{a,b-l}} (-1)^{|\hat{\b}|} \hdet\Big(\mathcal{M}_a(\b)\Big) 
	\hdet\Big(\mathcal{M}_{b-l}(\hat{\b})\cup  \mathcal{M}_l(\l)\yred\Big)\, ,
\]
where the sign is obtained from shuffling $\{x^{a+b-l-1},\ldots,x,1\}$ into
$\mathcal{M}_a(\b)\cup \mathcal{M}_{b-l}(\hat{\b})$.

We now compute the left-hand side of \eqref{eq:keylemma}
using perpendicular graphical calculus. 
Up to the $\pm$ sign coming from \eqref{eq:SigmaPGC}, 
%it's $(-1)^{\genfrac(){0pt}{2}{b}{2}+\genfrac(){0pt}{2}{k}{2}}$
this equals
\begin{equation}\label{eq:keylhs}
	\begin{tikzpicture}[anchorbase,smallnodes]
		\draw[FS,ultra thick,->] (0,-1.25) node[below]{$a$}  to (0,.7)node[above]{$a$};
		\draw[FS,ultra thick,->] (.75,-1.25) node[below]{$b$} to (.75,.7) node[above]{$b$};
		\draw[FS,ultra thick, directed=.75] (0,0) to [out=90,in=270] node[below]{$b{-}l$}  (.75,.5) ;
		\node at (.75,0) {$\bullet$};
		\node at (1.25,0) {$\prod_j \yred_j$};
		\node at (.75,-.5) {$\bullet$};
		\node at (1,-.5) {$\Schur_{\lambda}$};
		\draw[FS,ultra thick, directed=.75] (.75,-1) to [out=90,in=270] node[below]{$b{-}l$} (0,-.5) ;
	\end{tikzpicture}\!\!\!\!
=
	\!\!\!\! \sum_{\beta\in P(a,k)} \!\!\!(-1)^{|\hat{\beta}|} \!
	\begin{tikzpicture}[anchorbase,smallnodes]
		\draw[FS,ultra thick,->] (0,-1.25) node[below]{$a$}  to (0,.7)node[above]{$a$};
		\draw[FS,ultra thick,->] (1,-1.25) node[below]{$b$} to (1,.7) node[above]{$b$};
		\draw[FS,ultra thick, directed=.75] (.3,0) to [out=90,in=270] node[above,pos=.3]{$b{-}l$} (1,.5);
		\node at (1,0) {$\bullet$};
		\node at (1.5,0) {$\prod_j \yred_j$};
		\node at (1,-.5) {$\bullet$};
		\node at (1.25,-.5) {$\Schur_{\lambda}$};
		\draw[FS,ultra thick] (1,-1) to  [out=90,in=270]  (.3,-.5) to (.3,0) ;
		\node at (.3,-.3) {$\bullet$};
		\node at (.6,-.3) {$\Schur_{\hat{\beta}}$};
		\node at (0,-.3) {$\bullet$};
		\node at (-.3,-.3) {$\Schur_{\beta}$};
	\end{tikzpicture}
=
	\!\!\!\! \sum_{\beta\in P(a,k)} \!\!(-1)^{|\hat{\beta}|}\!
	\begin{tikzpicture}[anchorbase,smallnodes]
		\draw[FS,ultra thick,->] (0,-1.25) node[below]{$a$}  to (0,.7)node[above]{$a$};
		\draw[FS,ultra thick] (1.5,-1.25) node[below]{$b$} to (1.5,-1) ;
		\draw[FS,ultra thick,->] (1.5,.5) to (1.5,.7) node[above]{$b$};
		\draw[FS] (1.48,-1) \pu (.5,-.25) \pu (1.48,.5) ;
		\draw[FS] (1.49,-1) \pu (.75,-.25) \pu (1.49,.5) ;
		\draw[FS] (1.5,-1) \pu (1,-.25) \pu (1.5,.5) ;
		\draw[FS] (1.5,-1) \pu (2,-.25) \pu (1.5,.5) ;
		\draw[FS] (1.51,-1) \pu (2.25,-.25) \pu (1.51,.5) ;
		\draw[FS] (1.52,-1) \pu (2.5,-.25) \pu (1.52,.5) ;
		\node at (.75,-.25) {$\CQGbox{\mathcal{M}_{k}(\hat{\b})}$};
		\node at (2.25,-.25) {$\CQGbox{\mathcal{M}_l(\l)\yred}$};
		\node at (0,-.25) {$\bullet$};
		\node at (-.25,-.25) {$\Schur_{\beta}$};
	\end{tikzpicture}
\end{equation}
where $\prod_j \yred_j = \prod_{j=a+b-l+1}^{a+b} \yred_j$.
Here, $\mathcal{M}_l(\l)\yred := \{x^{\lambda_1+l-1} \yred,\ldots,x^{\lambda_l} \yred\}$, 
and we use the equality
\[
\begin{tikzpicture}[anchorbase,smallnodes]
\draw[FS,ultra thick,->] (0,-.75) node[below]{$l$} to 
	node[pos=.3,black]{$\bullet$} node[pos=.3,right,black]{$\Schur_\lambda$} 
	node[pos=.7,black]{$\bullet$} node[pos=.7,right,black]{$\prod_j \yred_j$} 
	(0,.75) node[above=-2pt]{$l$};
\end{tikzpicture}
=
\begin{tikzpicture}[anchorbase,smallnodes]
\draw[FS,ultra thick,->] (0,.375) to
	node[pos=.4,black]{$\bullet$} node[pos=.4,right,black]{$\prod_j \yred_j$} 
	(0,.75) node[above=-2pt]{$l$};
\draw[FS,thick] (0,-.375) to [out=150,in=270] node[black,pos=.7]{$\bullet$} 
	node[black,pos=.7,left=-2pt,yshift=-2pt]{\tiny$\lambda_1{+}l{-}1$} (-.5,0) 
	to [out=90,in=210] (0,.375);
\draw[FS,thick] (0,-.375) to [out=30,in=270] node[black,pos=.7]{$\bullet$} 
	node[black,pos=.7,right=-2pt,yshift=-2pt]{\tiny$\lambda_l$} (.5,0) 
	to [out=90,in=330] (0,.375);
\node at (0,0) {$\cdots$};
\draw[FS,ultra thick] (0,-.75) node[below]{$l$} to (0,-.375);
\end{tikzpicture}
=
\begin{tikzpicture}[anchorbase,smallnodes]
\draw[FS,ultra thick,->] (0,.375) to (0,.75) node[above=-2pt]{$l$};
\draw[FS,thick] (0,-.375) to [out=150,in=270] node[black,pos=.7]{$\bullet$} 
	node[black,pos=.7,left=-2pt,yshift=-2pt]{\tiny$\lambda_1{+}l{-}1$} (-.5,0) 
	to [out=90,in=210] node[black]{$\bullet$} 
	node[black,left=-2pt,yshift=2pt]{$\yred_{a{+}b{-}l{+}1}$} (0,.375);
\draw[FS,thick] (0,-.375) to [out=30,in=270] node[black,pos=.7]{$\bullet$} 
	node[black,pos=.7,right=-2pt,yshift=-2pt]{\tiny$\lambda_l$} (.5,0) 
	to [out=90,in=330] node[black]{$\bullet$} 
	node[black,right=-2pt,yshift=2pt]{$\yred_{a{+}b}$} (0,.375);
\node at (0,0) {$\cdots$};
\draw[FS,ultra thick] (0,-.75) node[below]{$l$} to (0,-.375);
\end{tikzpicture}
=:
\begin{tikzpicture}[anchorbase,smallnodes]
\draw[FS,ultra thick,->] (0,.375) to (0,.75) node[above=-2pt]{$l$};
\draw[FS,thick] (0,-.375) to [out=150,in=270] (-.25,0) to [out=90,in=210] (0,.375);
\draw[FS,thick] (0,-.375) to [out=30,in=270] (.25,0) to [out=90,in=330] (0,.375);
\node at (0,0) {$\CQGbox{\mathcal{M}_l(\l)\yred}$};
\draw[FS,ultra thick] (0,-.75) node[below]{$l$} to (0,-.375);
\end{tikzpicture}
\]
In the middle step, we can slide $\prod_j \yred_j$ through the 
top vertex (which acts via the Demazure operator associated with the 
longest element from Example \ref{exa:Demazure}) 
since it is $\symg_l$-symmetric.
Now, we can express the right-hand side of \eqref{eq:keylhs} as:
\begin{equation} \label{eq:keymid}
\eqref{eq:keylhs} = \!\!\! \sum_{\beta\in P(a,k)} \!\!(-1)^{|\hat{\beta}|}
\left(\frac{\hdet(\mathcal{M}_a(\b))}{\Delta(\X_1)}\right)
\left(\frac{\pi(\hdet(\mathcal{M}_{b-l}(\hat{\b})\cup \mathcal{M}_l(\l)y))}{\Delta(\X_2)}\right)
= \frac{|M|}{\Delta(\X_1)\Delta(\X_2)}\, .
\end{equation}
and the result follows.
\end{proof}

\begin{example} Let $a\geq b$ and consider the determinant associated to the
``maximal'' key shape $\Key_b(\emptyset)$. We compute
\[
  \pi(\Delta_{\Key_b(\emptyset)}(\X,\Y))= \begin{vmatrix} x_1^{a-1} & \dots &
  x_a^{a-1} & x_{a+1}^{a-1} &\dots & x_{a+b}^{a-1}\\  
      \vdots &  & \vdots  & \vdots & &\vdots\\
      1 & \dots & 1 & 1 &\dots & 1\\ 0 & \dots & 0 & x_{a+1}^{b-1} \yred_{a+1}
      &\dots & x_{a+b}^{b-1} \yred_{a+b} \\ \vdots &  & \vdots & \vdots & &
      \vdots \\ 0 & \dots & 0 &  \yred_{a+1} &\dots & \yred_{a+b} \\
      \end{vmatrix} = \yred_{a+1}\dots \yred_{a+b}  \Delta(\X_1)\Delta(\X_2) 
\]
by expanding the determinant in the first $a$ columns. The product
$\yred_{a+1}\dots \yred_{a+b}$ is familiar from Corollary~\ref{cor:section} 
(which, in fact, had already shown that $\yred_{a+1}\dots \yred_{a+b} \in J_{a,b}$).
\end{example}

\subsection{Reduced vs. unreduced}

Recall that Conjecture \ref{conj:FT ideal} proposes an explicit algebraic description of the full twist 
ideal. In the present (two-strand) case, it posits that the Hopf link ideal $J_{a,b}$ is equal to the ideal
\begin{equation}\label{eq:IabDef}
I_{a,b} = E_{a,b} \cdot \left\{\frac{f(\X,\Y)}{\Delta(\X_1)\Delta(\X_2)} \mid
f\in \k[\X,\Y] \text{ is antisymmetric for } \symg_{a+b} \right\} .
\end{equation}
Here, $\X = \X_1 \cup \X_2 = \{x_1,\ldots,x_{a+b}\}$ and $\Y = \{y_1,\ldots,y_{a+b}\}$. 
In order to establish a relation between the full twist ideal $J_{a,b}$ and the ideal $I_{a,b}$, 
we now relate (certain) reduced and unreduced Haiman determinants.

To begin, we recall the method of computing determinants via the \emph{Schur complement}.
\begin{prop}\label{prop:SC}
Let $A,B,C,$ and $D$ be $n\times n$, $n\times m$, $m\times n$, and $m\times m$ matrices (respectively) 
with coefficients in a commutative ring, and consider the $(n+m) \times (n+m)$ matrix
\[
M = 
\left(
\begin{array}{c|c}
A & B \\
\hline
C & D
\end{array}
\right)
\]
If $A$ is invertible, then
$\det(M)= \det(A) \det(D-CA^{-1}B)$.
\end{prop}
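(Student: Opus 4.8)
The final statement to prove is Proposition~\ref{prop:SC}, the classical Schur complement formula for the determinant of a block matrix. Since this is a standard linear algebra fact, I would give a short, clean proof rather than anything elaborate.

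The plan is to exhibit an explicit block LU-type factorization of $M$ and then take determinants. First I would write down the identity
\[
\left(
\begin{array}{c|c}
A & B \\
\hline
C & D
\end{array}
\right)
=
\left(
\begin{array}{c|c}
I_n & 0 \\
\hline
C A^{-1} & I_m
\end{array}
\right)
\left(
\begin{array}{c|c}
A & B \\
\hline
0 & D - C A^{-1} B
\end{array}
\right),
\]
which one verifies by directly multiplying the two block matrices on the right-hand side: the $(1,1)$ block is $A$, the $(1,2)$ block is $B$, the $(2,1)$ block is $C A^{-1} A = C$, and the $(2,2)$ block is $C A^{-1} B + (D - C A^{-1} B) = D$. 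This factorization makes sense since $A$ is invertible over the commutative ring in question (so $A^{-1}$ has entries in that ring).

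Next I would take determinants of both sides, using multiplicativity of $\det$ over the commutative ring together with the standard fact that the determinant of a block triangular matrix is the product of the determinants of its diagonal blocks. The first factor is lower block triangular with identity diagonal blocks, hence has determinant $1$; the second factor is upper block triangular with diagonal blocks $A$ and $D - C A^{-1} B$, hence has determinant $\det(A)\det(D - C A^{-1} B)$. Combining, $\det(M) = \det(A)\det(D - C A^{-1} B)$, which is the claimed formula.

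There is no real obstacle here: the only points requiring a word of justification are that the block-triangular determinant formula holds over an arbitrary commutative ring (which follows from the Leibniz expansion, as the only surviving permutations are those preserving the block decomposition) and that the factorization is literally an equality of matrices (a one-line block multiplication). If desired, I could alternatively cite this as a well-known fact, but the factorization proof is self-contained and short enough to include in full.
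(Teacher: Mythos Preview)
Your proof is correct and is essentially the same as the paper's, which simply reads ``Gaussian elimination.'' Your block LU factorization is exactly block Gaussian elimination written out, so there is nothing to add.
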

\begin{proof}
Gaussian elimination.
\end{proof}

This immediately implies the following. 

\begin{cor}\label{cor:SCkey}
Let $\Key_l(\l)$ be a key shape and let
\begin{equation}\label{eq:SCmatrices}
\begin{gathered}
A = 
\begin{pmatrix}
1 & \cdots & 1 \\
x_1 & \cdots & x_a \\
\vdots & & \vdots \\
x_1^{a-1} & \cdots & x_a^{a-1}
\end{pmatrix} 
\, , \quad
B=
\begin{pmatrix}
1 & \cdots & 1 \\
x_{a+1} & \cdots & x_{a+b} \\
\vdots & & \vdots \\
x_{a+1}^{a-1} & \cdots & x_{a+b}^{a-1}
\end{pmatrix} \\
C_l(\lambda) :=
\begin{pmatrix}
x_1^{a} & \cdots & x_a^{a} \\
\vdots & & \vdots \\
x_1^{a+b-l-1} & \cdots & x_a^{a+b-l-1} \\
x_1^{\lambda_l} y_1 & \cdots & x_a^{\lambda_l} y_a \\
\vdots & & \vdots \\
x_1^{\lambda_1+l-1} y_1 & \cdots & x_a^{\lambda_1+l-1} y_a
\end{pmatrix} 
\, , \quad
D_l(\lambda) :=
\begin{pmatrix}
x_{a+1}^{a} & \cdots & x_{a+b}^{a} \\
\vdots & & \vdots \\
x_{a+1}^{a+b-l-1} & \cdots & x_{a+b}^{a+b-l-1} \\
x_{a+1}^{\lambda_l} y_{a+1} & \cdots & x_{a+b}^{\lambda_l} y_{a+b} \\
\vdots & & \vdots \\
x_{a+1}^{\lambda_1+l-1} y_{a+1} & \cdots & x_{a+b}^{\lambda_1+l-1} y_{a+b}
\end{pmatrix} 
\end{gathered}
\end{equation}
then
$
\Delta_{\Key_l(\l)}(\X,\Y) = \pm \Delta(\X_1) \det(D_l(\lambda) - C_l(\lambda)A^{-1}B).
%sign is $(-1)^{\genfrac(){0pt}{2}{a+b-l+1}{2} + \genfrac(){0pt}{2}{l+1}{2}}
	%=(-1)^{\genfrac(){0pt}{2}{a+b+1}{2}+l(l-a-b)}$
$
\end{cor}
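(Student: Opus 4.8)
The plan is to apply the Schur complement formula (Proposition \ref{prop:SC}) directly to the Haiman determinant $\Delta_{\Key_l(\lambda)}(\X,\Y) = \hdet(\Key_l(\lambda))$. First I would set up the block decomposition of the $(a+b)\times(a+b)$ matrix defining this determinant. By Definition \ref{def:keyshape}, the rows of this matrix are indexed by the monomials $x^{a+b-l-1},\ldots,x,1$ followed by $x^{\lambda_1+l-1}y,\ldots,x^{\lambda_l}y$, and the columns by the variable slots $1,\ldots,a,a+1,\ldots,a+b$. The key observation is that after reordering the rows (picking up a sign $(-1)^{\epsilon}$ for the permutation moving rows $1,\ldots,a$ — i.e. the monomials $x^{a-1},\ldots,x,1$ — to the top), the first $a$ rows give exactly the Vandermonde-type rows $\{1,x,\ldots,x^{a-1}\}$, and the first $a$ columns restricted to these rows form precisely the matrix $A$ from \eqref{eq:SCmatrices}. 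The remaining blocks are then $B$ (first $a$ rows, last $b$ columns), $C_l(\lambda)$ (last $b$ rows, first $a$ columns), and $D_l(\lambda)$ (last $b$ rows, last $b$ columns), matching the definitions in the corollary statement.

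Next I would note that $A$ is invertible over the fraction field $\k(\X_1)$ (it is the transpose of a Vandermonde matrix, with $\det(A) = \pm\Delta(\X_1)$, which is nonzero as an element of $\k[\X_1]$ and hence a unit in the fraction field). This licenses the application of Proposition \ref{prop:SC}, giving
\[
\det(M) = \det(A)\det\bigl(D_l(\lambda) - C_l(\lambda)A^{-1}B\bigr) = \pm\,\Delta(\X_1)\,\det\bigl(D_l(\lambda) - C_l(\lambda)A^{-1}B\bigr).
\]
Since the row reordering contributes only the overall sign $(-1)^\epsilon$ and $\det(M)$ differs from $\Delta_{\Key_l(\lambda)}(\X,\Y)$ only by that same sign, this yields the claimed identity $\Delta_{\Key_l(\lambda)}(\X,\Y) = \pm\,\Delta(\X_1)\det(D_l(\lambda) - C_l(\lambda)A^{-1}B)$ as an identity in $\k(\X_1)[\X_2,\Y]$, hence in $\k[\X,\Y]$ after clearing the (a priori) denominator — but in fact the left-hand side is manifestly polynomial, so no clearing is needed and the identity holds in $\k[\X,\Y]$.

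The only genuine bookkeeping is verifying that the block decomposition after row reordering matches \eqref{eq:SCmatrices} on the nose, including the precise ordering of the $y$-rows. In $\Key_l(\lambda)$ the $y$-monomials are listed as $x^{\lambda_1+l-1}y$ down to $x^{\lambda_l}y$ (decreasing), whereas $C_l(\lambda)$ and $D_l(\lambda)$ in \eqref{eq:SCmatrices} list $x^{\lambda_l}y$ up to $x^{\lambda_1+l-1}y$ (increasing); this contributes a further fixed sign $(-1)^{\binom{l}{2}}$ from reversing $l$ rows, which is harmless since the statement only asserts equality up to sign. Likewise, the pure-$x$ rows of $C_l(\lambda)$ and $D_l(\lambda)$ are the monomials $x^a,\ldots,x^{a+b-l-1}$, which together with $\{1,\ldots,x^{a-1}\}$ account for all of $\{1,\ldots,x^{a+b-l-1}\}$ — exactly the pure-$x$ part of $\Key_l(\lambda)$. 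I expect this sign- and index-chasing to be the main (though entirely routine) obstacle; there is no conceptual difficulty, as the statement is an immediate corollary of Proposition \ref{prop:SC} once the blocks are correctly identified.
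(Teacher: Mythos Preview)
Your proposal is correct and follows essentially the same approach as the paper: the corollary is an immediate application of the Schur complement formula (Proposition~\ref{prop:SC}) once the $(a+b)\times(a+b)$ matrix is block-decomposed, with the $\pm$ sign arising precisely from the row reordering needed to match the block structure in \eqref{eq:SCmatrices} to the monomial ordering of Definition~\ref{def:keyshape}. The paper states this in a single sentence and defers the sign bookkeeping you spell out.
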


Note that the $\pm$ sign occurs here since our ordering of monomials differs 
from the conventions for Haiman determinants (established in Definition \ref{def:shapes and dets})
that is used in Definition \ref{def:keyshape}.
We will continue to use this ordering for the remainder of this section, 
since, for our current considerations, Proposition \ref{prop:SC} is best-adapted to this ordering.
Corollary \ref{cor:SCkey} motivates the study of the matrix $D_l(\lambda) - C_l(\lambda)A^{-1}B$. 
Our next result computes the entries of this matrix.

\begin{lem}\label{lem:FunkyMonomials}
Let $z$ be an indeterminate and let $r,s \geq 0$.
If $A$ is the (Vandermonde) matrix from \eqref{eq:SCmatrices}, 
then
\[
m_{\X_1,\V_L}^{r,s}(z) :=
\begin{pmatrix}
x_1^r y_1^s & \cdots & x_a^r y_a^s
\end{pmatrix}
\cdot A^{-1} \cdot
\begin{pmatrix}
1 \\
z \\
\vdots \\
z^{a-1}
\end{pmatrix}
\]
is a polynomial in $E_{a,0}[z]$ that satisfies
\begin{equation}\label{eq:Minterpolation}
m_{\X_1,\V_L}^{r,s}(x_i) = x_i^r y_i^s
\end{equation}
for all $x_i \in \X_1$ (thus is the unique such polynomial).
\end{lem}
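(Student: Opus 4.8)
\textbf{Proof proposal for Lemma \ref{lem:FunkyMonomials}.}

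The plan is to exhibit $m_{\X_1,\V_L}^{r,s}(z)$ explicitly via Lagrange interpolation and then verify the interpolation property \eqref{eq:Minterpolation} directly, from which uniqueness is automatic. First I would record the key structural fact about the Vandermonde matrix $A$: its inverse $A^{-1}$ is exactly the change-of-basis matrix between the monomial basis $\{1, z, \ldots, z^{a-1}\}$ and the Lagrange basis $\{L_1(z), \ldots, L_a(z)\}$ for polynomials of degree $<a$ in $z$, where $L_j(z) = \prod_{k \neq j}(z - x_k)/(x_j - x_k)$. Concretely, if we write $L_j(z) = \sum_{i=1}^a (A^{-1})_{j,i}\, z^{i-1}$, then the column vector $A^{-1}(1, z, \ldots, z^{a-1})^T$ has $j$th entry $L_j(z)$. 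This is the standard observation that $A$ is the evaluation matrix sending a polynomial (in the Lagrange basis) to its coefficient vector in the monomial basis.

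Granting this, the product $m_{\X_1,\V_L}^{r,s}(z)$ becomes $\sum_{j=1}^a x_j^r y_j^s\, L_j(z)$, which is manifestly a polynomial in $z$ of degree at most $a-1$ with coefficients in $\k[x_1,\ldots,x_a, y_1,\ldots,y_a]$; since each $y_j = \sum_{t=1}^a x_j^{t-1} v_{L,t}$ (via \eqref{eq:def-yi-2strand}), these coefficients in fact lie in the subring $E_{a,0} = \Sym(\X_1)[\V_L^{(a)}]$ — this requires checking that $\sum_j x_j^r y_j^s L_j(z)$ is symmetric under $\symg_a$, which follows because permuting the $x_j$ (and correspondingly the $y_j$) permutes the summands. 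Then I would evaluate at $z = x_i$: since $L_j(x_i) = \delta_{i,j}$, the sum collapses to $x_i^r y_i^s$, giving \eqref{eq:Minterpolation}. Uniqueness of a polynomial of degree $< a$ taking prescribed values at the $a$ distinct points $x_1, \ldots, x_a$ is the classical interpolation statement; I should note that distinctness of the $x_i$ is used here in the sense that $\Delta(\X_1)$ is a nonzerodivisor in $E_{a,0}$, so the argument is valid over this coefficient ring (or after localizing at $\Delta(\X_1)$, as in the treatment of the reduction functor).

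The main obstacle — really a bookkeeping point rather than a genuine difficulty — is making the identification of $A^{-1}$ with the Lagrange coefficient matrix precise over the polynomial ring rather than over a field, i.e. checking that the entries of $A^{-1}$ genuinely lie in $E_{a,0}[1/\Delta(\X_1)]$ and that the resulting polynomial $m_{\X_1,\V_L}^{r,s}(z)$ has coefficients back in $E_{a,0}$ (no denominators survive once we symmetrize). One clean way to sidestep explicit inversion is to argue indirectly: $A$ is invertible over $\k(\X_1)$, so there is a unique vector $w(z) \in \k(\X_1)[z]^a$ with $A w(z) = (x_1^r y_1^s, \ldots)^T$ (namely the Lagrange expansion), hence $m^{r,s}_{\X_1,\V_L}(z) = (1, \ldots, z^{a-1}) \cdot w(z) = \sum_j x_j^r y_j^s L_j(z)$, and then the two verifications (coefficients in $E_{a,0}$, and the interpolation identity) are immediate as above. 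This will be the form of the write-up.
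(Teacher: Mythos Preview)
Your argument is correct, but it proceeds differently from the paper's. The paper does not write out the Lagrange expansion; instead it first uses \eqref{eq:y to v 2} to write $x_i^r y_i^s$ as a polynomial in $x_i$ with coefficients in $\k[\V_L]$, and then invokes Corollary~\ref{cor:monomial reduction} ($h$-reduction) to rewrite each high power $x_i^m$ as a $\Sym(\X_1)$-linear combination of $1,x_i,\ldots,x_i^{a-1}$. This directly produces a polynomial $m(z)\in E_{a,0}[z]$ of degree $<a$ with $m(x_i)=x_i^r y_i^s$; uniqueness of interpolation over the fraction field of $\k[\X_1,\Y_1]$ then forces $m(z)=m_{\X_1,\V_L}^{r,s}(z)$, so the latter lies in $E_{a,0}[z]$. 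In short, the paper bypasses the question of clearing denominators by constructing the $E_{a,0}[z]$-representative first and identifying it with $m_{\X_1,\V_L}^{r,s}(z)$ after the fact.

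Your route via $m_{\X_1,\V_L}^{r,s}(z)=\sum_j x_j^r y_j^s\,L_j(z)$ is equally valid and more self-contained, but the step you flag as ``bookkeeping'' deserves one more sentence in the write-up: symmetry alone does not kill denominators in a general localization. What actually happens is that each coefficient of $z^k$ has the form $p/\Delta(\X_1)$ with $p\in\k[\X_1,\V_L]$ (from Cramer's rule for $A^{-1}$); your $\symg_a$-invariance then forces $p$ to be antisymmetric, hence divisible by $\Delta(\X_1)$, so the quotient lands in $\Sym(\X_1)[\V_L]=E_{a,0}$. With that line added, your proof is complete.
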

\begin{proof}
Note that $E_{a,0} = \Sym(\X_1)[\V_L]$. By equation \eqref{eq:y to v 2} and
Corollary \ref{cor:monomial reduction}, there exists a polynomial $m(z) \in
E_{a,0}[z]$ of degree $\leq a-1$ that satisfies $m(x_i) = x_i^ry_i^s$ for all
$x_i \in \X_1$. Now, by definition, $m_{\X_1,\V_L}^{r,s}(z)$ is a polynomial
with coefficients in the field of fractions of $\k[\X_1,\Y_1]$ 
of degree $\leq a-1$ satisfying
\eqref{eq:Minterpolation}. Since there is a unique such polynomial and $E_{a,0}$
is a subring of this field of fractions, we have $m_{\X_1,\V_L}^{r,s}(z) =
m(z)$. Thus $m_{\X_1,\V_L}^{r,s}(z) \in E_{a,0}[z]$.
\end{proof}

We now relate the entries of $D_l(\lambda) - C_l(\lambda)A^{-1}B$ in the 
unreduced and reduced setting.

\begin{lem}\label{lem:MonomialDifference}
Let $x_j \in \X_2$ and let $r \geq 0$, 
then
\[
x_j^r y_j - m_{\X_1,\V_L}^{r,1}(x_j) =
x_j^r \yred_j +
\sum_{k=a-r+1}^a \big( x_j^{r+k-1} - m_{\X_1,\V_L}^{r+k-1,0}(x_j) \big) v_{L,k}^{(a)}\, .
\]
(By convention, the summation on the right-hand side is zero when $r=0$.)
\end{lem}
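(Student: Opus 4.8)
The proof will be a direct computation unpacking the definitions of the interpolation coordinates $y_j$ and $\yred_j$ together with the expansion of the monomial $x_j^{r+k-1}$ via $h$-reduction, i.e. Corollary~\ref{cor:monomial reduction}. First I would recall that for $x_j \in \X_2$ (so $a < j \leq a+b$) the reduction functor $\pi$ sends $y_j \mapsto \yred_j$, and that by definition $y_j = \sum_{k=1}^{a} x_j^{k-1} v_{L,k}^{(a)}$ when viewed as an element of $\k[\X_1,\X_2,\V_L^{(a)},\V_R^{(b)}]$ via the identification preceding Lemma~\ref{lemma:Jab reduced}; wait—more precisely, I should use \eqref{eq:y to v 2}, which for a $\X_2$-variable gives $y_j = \sum_{k=1}^{b} x_j^{k-1} v_{R,k}^{(b)}$, while the ``$a$-version'' $\sum_{k=1}^{a} x_j^{k-1} v_{L,k}^{(a)}$ is what appears inside the matrix $C_l(\lambda) A^{-1} B$. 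The cleanest route is to note that $m_{\X_1,\V_L}^{r,1}(z)$ is, by Lemma~\ref{lem:FunkyMonomials}, the unique polynomial of degree $\le a-1$ in $z$ with $\k[\X_1,\V_L]$-coefficients agreeing with $x_i^r y_i$ at each $x_i \in \X_1$; since $y_i$ for $x_i \in \X_1$ is $\sum_{k=1}^a x_i^{k-1} v_{L,k}^{(a)}$, we have the explicit identity $m_{\X_1,\V_L}^{r,1}(z) = \sum_{k=1}^{a} m_{\X_1,\V_L}^{r+k-1,0}(z) v_{L,k}^{(a)}$, which follows by interpolating both sides against $\X_1$ and invoking uniqueness.

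Then I would substitute $z = x_j$ and compute
\[
x_j^r y_j - m_{\X_1,\V_L}^{r,1}(x_j)
= x_j^r \Big( \sum_{k=1}^{a} x_j^{k-1} v_{L,k}^{(a)} + \yred_j \Big)
- \sum_{k=1}^{a} m_{\X_1,\V_L}^{r+k-1,0}(x_j) v_{L,k}^{(a)},
\]
using the appropriate expression for $y_j$ on a $\X_2$-strand—here I need to be careful which ``$y_j$'' the statement intends; given the right-hand side involves both $\yred_j$ and the $v_{L,k}^{(a)}$, the intended $y_j$ is the element $\sum_{k=1}^{a} x_j^{k-1} v_{L,k}^{(a)} + \yred_j$ obtained by writing $y_j$ over the larger alphabet and separating the ``left'' and ``reduced'' parts via \eqref{eq:VLb} and \eqref{eq:vred}, exactly as in the discussion of the reduction functor in \S\ref{ss:two strand V cats}. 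Regrouping gives
\[
x_j^r \yred_j + \sum_{k=1}^{a} \big( x_j^{r+k-1} - m_{\X_1,\V_L}^{r+k-1,0}(x_j) \big) v_{L,k}^{(a)}.
\]
The final step is to observe that the terms with $1 \le k \le a-r$ vanish: for such $k$ we have $r+k-1 \le a-1$, so $x_j^{r+k-1}$ is already a monomial of degree $\le a-1$ in $x_j$, and hence $m_{\X_1,\V_L}^{r+k-1,0}(z) = z^{r+k-1}$ identically by the uniqueness clause of Lemma~\ref{lem:FunkyMonomials} (the polynomial $z^{r+k-1}$ itself interpolates the values $x_i^{r+k-1}$). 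Therefore $x_j^{r+k-1} - m_{\X_1,\V_L}^{r+k-1,0}(x_j) = 0$ for those $k$, and the sum collapses to the range $a-r+1 \le k \le a$, which is precisely the claimed identity; when $r=0$ this range is empty, matching the stated convention.

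I expect the main obstacle to be purely bookkeeping: pinning down exactly which incarnation of $y_j$ (over $\X$, over $\X_2$, or the split into $\V_L^{(a)}$ plus $\yred$) is in force, and confirming that the identity $m_{\X_1,\V_L}^{r,1}(z) = \sum_{k=1}^{a} m_{\X_1,\V_L}^{r+k-1,0}(z)\, v_{L,k}^{(a)}$ is legitimate, i.e. that interpolation against the $a$-element alphabet $\X_1$ combined with the uniqueness of the interpolating polynomial of degree $\le a-1$ genuinely forces equality in $E_{a,0}[z]$ rather than merely in a fraction field. This is exactly the content of Lemma~\ref{lem:FunkyMonomials}, so once that is cited the argument is a short algebraic manipulation with no real analytic or homological input.
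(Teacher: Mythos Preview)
Your approach is correct and is genuinely more economical than the paper's. The paper expands $m_{\X_1,\V_L}^{r,1}(z)$ explicitly in terms of hook Schur functions via \eqref{eq:Mc0}, likewise expands $\yred_j$ using \eqref{eq:VLb}, and then performs a brute-force simplification of the difference \eqref{eq:MonomialDifference} with a case split according to whether $b\ge a-r$ or $b\le a-r$, invoking Corollary~\ref{cor:monomial reduction} several times to collapse Schur-weighted sums back into monomials. Your argument bypasses all of this by using interpolation uniqueness twice: first to obtain the clean identity $m_{\X_1,\V_L}^{r,1}(z)=\sum_{k=1}^a m_{\X_1,\V_L}^{r+k-1,0}(z)\,v_{L,k}^{(a)}$, and second to recognise that $m_{\X_1,\V_L}^{r+k-1,0}(z)=z^{r+k-1}$ whenever $r+k-1\le a-1$. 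What the paper's route buys is that every step is a concrete symmetric-function manipulation; what your route buys is brevity and the absence of any case analysis.

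The one place where your sketch should be tightened is the justification of $y_j-\yred_j=\sum_{k=1}^a x_j^{k-1} v_{L,k}^{(a)}$ for $x_j\in\X_2$. From \eqref{eq:vred} (with $\leftX_2=\rightX_2$ in $E_{a,b}$) one gets $y_j-\yred_j=\sum_{k=1}^b x_j^{k-1} v_{L,k}^{(b)}$; passing from this to $\sum_{k=1}^a x_j^{k-1} v_{L,k}^{(a)}$ via \eqref{eq:VLb} is exactly the content of Proposition~\ref{prop:va and vb} (applied with $\X^{(c)}=\X_2$, $c=b$, $d=a$), not merely ``the discussion of the reduction functor.'' Once you cite that proposition, your proof is complete.
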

\begin{proof}
To begin, note that Corollary \ref{cor:monomial reduction} implies that
\begin{equation}\label{eq:Mc0}
m_{\X_1,\V_L}^{c,0}(z) = \sum_{t=1}^a (-1)^{a-t} \Schur_{(c-a|a-t)}(\X_1) z^{t-1}
\end{equation}
when $c \geq a$.
Next, suppose that $x_i \in \X_1$, then
\[
\begin{aligned}
x_i^r y_i &\stackrel{\eqref{eq:y to v 2}}{=} x_i^r \sum_{k=1}^{a} x_i^{k-1} v_{L,k}^{(a)} = \sum_{k=1}^{a} x_i^{r+k-1} v_{L,k}^{(a)} \\
&\stackrel{\eqref{eq:Mc0}}{=} \sum_{k=1}^{a-r} x_i^{r+k-1} v_{L,k}^{(a)} 
	+ \sum_{k=a-r+1}^{a} \left( \sum_{t=1}^a (-1)^{a-t} \Schur_{(r+k-1-a|a-t)}(\X_1) x_i^{t-1} \right) v_{L,k}^{(a)} \, .
\end{aligned}
\]
This implies that 
\[
m_{\X_1,\V_L}^{r,1}(z) =  \sum_{k=1}^{a-r} z^{r+k-1} v_{L,k}^{(a)} 
	+ \sum_{k=a-r+1}^{a} \left( \sum_{t=1}^a (-1)^{a-t} \Schur_{(r+k-1-a|a-t)}(\X_1) z^{t-1} \right) v_{L,k}^{(a)}\, .
\]
On the other hand, recall from \eqref{eq:vred} that we have
\[
\begin{aligned}
\yred_j &= \sum_{k=1}^b x_j^{k-1} \bar{v}_k = \sum_{k=1}^b x_j^{k-1} ( v_{R,k}^{(b)} - v_{L,k}^{(b)})
	= y_j - \sum_{k=1}^b x_j^{k-1} v_{L,k}^{(b)} \\
&= y_j - \sum_{k=1}^b x_j^{k-1} \left( v_{L,k}^{(a)}+(-1)^{b-k} \sum_{i=1}^{a-b} \Schur_{(i-1|b-k)}(\X_2) v_{L,b+i}^{(a)} \right)
\end{aligned}
\]
(since $\leftX_2 = \rightX_2$ in $E_{a,b}$).
We thus compute that
\begin{multline}\label{eq:MonomialDifference}
x_j^r \yred_j - x_j^r y_j + m_{\X_1,\V_L}^{r,1}(x_j) = 
\sum_{k=1}^{a-r} x_j^{r+k-1} v_{L,k}^{(a)} - \sum_{k=1}^b x_j^{r+k-1} v_{L,k}^{(a)} \\
	+ \sum_{\substack{a-r+1\leq k \leq a \\ 1 \leq t \leq a}} (-1)^{a-t} \Schur_{(r+k-1-a|a-t)}(\X_1) x_j^{t-1} v_{L,k}^{(a)}
	- \sum_{\substack{1 \leq k \leq b \\ 1 \leq i \leq a-b}} (-1)^{b-k} \Schur_{(i-1|b-k)}(\X_2) x_j^{r+k-1}  v_{L,b+i}^{(a)} \, .
%	+ \sum_{k=a-r+1}^{a} \left( \sum_{t=1}^a (-1)^{a-t} \Schur_{(r+k-1-a|a-t)}(\X_1) x_j^{t-1} \right) v_{L,k}^{(a)}
%	- \sum_{k=1}^b x_j^{r+k-1} (-1)^{b-k} \sum_{i=1}^{a-b} \Schur_{(i-1|b-k)}(\X_2) v_{L,b+i}^{(a)}
\end{multline}

There are now two cases.
First, suppose that $b \geq a-r$, then
\[
\begin{aligned}
\eqref{eq:MonomialDifference} 
&= \sum_{k=a-r+1}^b 
\left( \left(\sum_{t=1}^{a} (-1)^{a-t} \Schur_{(r+k-1-a|a-t)}(\X_1) x_j^{t-1} \right) - x_j^{r+k-1} \right) v_{L,k}^{(a)} \\
& \qquad + \sum_{i=1}^{a-b} 
\left( \sum_{t=1}^{a} (-1)^{a-t} \Schur_{(r+b+i-1-a|a-t)}(\X_1) x_j^{t-1} 
	- \sum_{k=1}^{b} (-1)^{b-k} \Schur_{(i-1|b-k)}(\X_2) x_j^{r+k-1} \right) v_{L,b+i}^{(a)} \\
&\stackrel{\eqref{eq:Mc0}}{=} \sum_{k=a-r+1}^b \left( m_{\X_1,\V_L}^{r+k-1,0}(x_j) - x_j^{r+k-1} \right) v_{L,k}^{(a)} \\
& \qquad + \sum_{i=1}^{a-b} 
\left( m_{\X_1,\V_L}^{r+b+i-1,0}(x_j)
	- x_j^r \sum_{k=1}^{b} (-1)^{b-k}  \Schur_{(i+b-1-b|b-k)}(\X_2) x_j^{k-1} \right) v_{L,b+i}^{(a)} \\
&= \sum_{k=a-r+1}^b \left( m_{\X_1,\V_L}^{r+k-1,0}(x_j) - x_j^{r+k-1} \right) v_{L,k}^{(a)} 
+ \sum_{i=1}^{a-b} \left( m_{\X_1,\V_L}^{r+b+i-1,0}(x_j) - x_j^r \cdot x_j^{i+b-1} \right) v_{L,b+i}^{(a)} \\
&= \sum_{k=a-r+1}^a \left( m_{\X_1,\V_L}^{r+k-1,0}(x_j) - x_j^{r+k-1} \right) v_{L,k}^{(a)}
\end{aligned}
\]
as desired. When $b \leq a-r$, the computation is similar:
\begin{align*}
\eqref{eq:MonomialDifference} 
&= \sum_{k=b+1}^{a-r} 
\left( x_j^{r+k-1} - \sum_{t=1}^{b} (-1)^{b-t} \Schur_{(k-1-b|b-t)}(\X_2) x_j^{r+t-1} \right) v_{L,k}^{(a)} \\
& \qquad + \sum_{k=a-r+1}^{a} 
\left( \sum_{t=1}^{a} (-1)^{a-t} \Schur_{(r+k-1-a|a-t)}(\X_1) x_j^{t-1} 
	- \sum_{t=1}^{b} (-1)^{b-t} \Schur_{(k-1-b|b-t)}(\X_2) x_j^{r+t-1} \right) v_{L,k}^{(a)} \\
&\stackrel{\eqref{eq:Mc0}}{=} \sum_{k=b+1}^{a-r} x_j^r
\left( x_j^{k-1} - \sum_{t=1}^{b} (-1)^{b-t} \Schur_{(k-1-b|b-t)}(\X_2) x_j^{t-1} \right) v_{L,k}^{(a)} \\
& \qquad + \sum_{k=a-r+1}^{a} 
\left(m_{\X_1,\V_L}^{r+k-1,0}(x_j)
	- x_j^r \sum_{t=1}^{b} (-1)^{b-t} \Schur_{(k-1-b|b-t)}(\X_2) x_j^{t-1} \right) v_{L,k}^{(a)} \\
&= 0 + \sum_{k=a-r+1}^{a} 
\left(m_{\X_1,\V_L}^{r+k-1,0}(x_j) - x_j^r \cdot x_j^{k-1} \right) v_{L,k}^{(a)}
= \sum_{k=a-r+1}^a \left( m_{\X_1,\V_L}^{r+k-1,0}(x_j) - x_j^{r+k-1} \right) v_{L,k}^{(a)} \, .
\qedhere
\end{align*}
\end{proof}

Lemmata \ref{lem:FunkyMonomials} and \ref{lem:MonomialDifference} immediately 
relate the (unreduced) Haiman determinants $\Delta_{\Key_l(\l)}(\X,\Y)$ to certain
reduced Haiman determinants $\pi(\Delta_S(\X,\Y))$.
Namely, recall from Convention \ref{conv:key} that we identify sets of monic monomials $S$ 
with finite subsets of $\Z_{\geq 0} \times \Z_{\geq 0}$ 
which are illustrated as collections of boxes that we call shapes.

\begin{defi}\label{def:Smonomial}
for $0 \leq l \leq b$, let $\mathsf{S}_l$ be the collection of all subsets of monomials $S$ of the form
\[
\{1,x,\ldots,x^{a-1}\} \cup
\{x^{s_1},\ldots,x^{s_{b-l}}\} \cup \{x^{r_1}y,\ldots,x^{r_l}y\}
\]
with $a \leq s_1 < \cdots < s_{b-l} \leq a+b$ and $0 \leq r_1 < \cdots < r_{l} \leq b$.
\end{defi}

In other words, $\mathsf{S}_l$ consists of shapes that only have boxes in the first two rows, 
with exactly $l$ boxes in the second row confined to the first $b$ positions,
and with $a+b-l$ boxes in the first row confined to the first $a+b$ positions and 
necessarily occupying the first $a$ boxes in that row.
In particular, note that $\Key_l(\l) \in \mathsf{S}_l$.

\begin{prop}\label{prop:UNvsRED}
Let $S \in \mathsf{S}_l$, then 
\[
\Delta_S(\X,\Y) - \pi(\Delta_S(\X,\Y)) = \sum_{R \in \mathsf{S}_{\leq l-1}} c_{S,R} \cdot \pi(\Delta_R(\X,\Y))
\]
for $c_{S,R} \in \k[\V_L]$ and $\mathsf{S}_{\leq l-1} = \bigcup_{k=0}^{l-1} \mathsf{S}_{k}$.
\end{prop}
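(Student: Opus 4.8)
The statement says that each unreduced Haiman determinant $\Delta_S(\X,\Y)$ with $S \in \mathsf{S}_l$ equals its reduction $\pi(\Delta_S(\X,\Y))$ plus a $\k[\V_L]$-linear combination of reduced Haiman determinants indexed by shapes in $\mathsf{S}_{\leq l-1}$. The idea is to expand $\Delta_S(\X,\Y)$ via the Schur complement exactly as set up in Corollary \ref{cor:SCkey}: writing $S = \{1,\ldots,x^{a-1}\} \cup \{x^{s_1},\ldots,x^{s_{b-l}}\} \cup \{x^{r_1}y,\ldots,x^{r_l}y\}$, we have $\Delta_S(\X,\Y) = \pm \Delta(\X_1) \det(D_S - C_S A^{-1} B)$, where $A, B$ are the Vandermonde-type matrices from \eqref{eq:SCmatrices} and $C_S, D_S$ are the blocks built from the last $b$ rows (the $x^{s_i}$-rows and the $x^{r_j}y$-rows) evaluated on $\X_1$ and $\X_2$ respectively. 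The $(i,j)$-entry of $D_S - C_S A^{-1}B$ is then $x_j^{s_i} - m_{\X_1,\V_L}^{s_i,0}(x_j)$ for a pure-$x$ row and $x_j^{r_i} y_j - m_{\X_1,\V_L}^{r_i,1}(x_j)$ for a $y$-row, by the very definition of $m_{\X_1,\V_L}^{r,s}(z)$ in Lemma \ref{lem:FunkyMonomials}. The same expansion applied to $\pi(\Delta_S(\X,\Y))$ replaces $y_j$ by $\yred_j$ throughout; since the pure-$x$ rows are unchanged, the only difference between the two matrices is in the $l$ rows coming from the $x^{r_j}y$-monomials.

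First I would record that $\pi$ fixes $\Delta(\X_1)$ (it only alters the $\V_L$-variables and acts trivially on $\X_1$), so it suffices to compare $\det(D_S - C_S A^{-1}B)$ with its $\pi$-image. Then I would invoke Lemma \ref{lem:MonomialDifference}, which gives, for each $y$-row index $r$ and each $x_j \in \X_2$,
\[
\big( x_j^{r} y_j - m_{\X_1,\V_L}^{r,1}(x_j) \big) - \big( x_j^{r} \yred_j \big) = \sum_{k=a-r+1}^a \big( x_j^{r+k-1} - m_{\X_1,\V_L}^{r+k-1,0}(x_j) \big) v_{L,k}^{(a)}.
\]
In other words, each $y$-row of the unreduced Schur-complement matrix equals the corresponding $y$-row of the reduced matrix plus a $\k[\V_L]$-linear combination of \emph{pure-$x$ rows} of the shape $x^{r+k-1}$ with $r+k-1$ ranging over $\{a, a+1, \ldots, a+r-1\} \subseteq \{a,\ldots,a+b-1\}$ (using $0 \le r \le b$). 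The key point is that the coefficient rows that appear are of precisely the type $x_j^{c} - m_{\X_1,\V_L}^{c,0}(x_j)$ with $a \le c \le a+b-1$, i.e. the rows corresponding to $x$-monomials legal in an $\mathsf{S}_k$-shape.

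Next I would expand $\det(D_S - C_S A^{-1}B)$ multilinearly in its $l$ $y$-rows, substituting the above identity row by row. The term where every $y$-row is replaced by its reduced counterpart gives $\pi(\det(D_S - C_S A^{-1}B))$; every other term has at least one $y$-row replaced by a pure-$x$ row (with a $v_{L,k}^{(a)}$ coefficient pulled out). A term with $j \ge 1$ such replacements is, up to sign and a monomial in $\k[\V_L]$, a determinant whose rows are: the $a$ rows from the Vandermonde block, the $b-l$ original pure-$x$ rows of $S$, the $l-j$ remaining reduced $y$-rows, and $j$ new pure-$x$ rows of the form $x^{c}$ with $a \le c \le a+b-1$. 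Re-assembling with the $\Delta(\X_1)$ factor, this is exactly $\pm\, c \cdot \pi(\Delta_R(\X,\Y))$ for the shape $R$ obtained from $S$ by deleting $j$ of the second-row $y$-boxes and adding $j$ first-row $x$-boxes in positions $a,\ldots,a+b-1$; if the new $x$-row coincides with one already present, the determinant vanishes, and otherwise $R$ has $l - j \le l-1$ boxes in the second row, all in positions $0,\ldots,b$, and $a+b-l+j$ boxes in the first row including the first $a$ — that is, $R \in \mathsf{S}_{l-j} \subseteq \mathsf{S}_{\leq l-1}$. Collecting terms gives the claimed formula with $c_{S,R} \in \k[\V_L]$.

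\textbf{Main obstacle.} The genuine bookkeeping hurdle is making the multilinear-expansion argument precise while keeping track of which shapes $R$ actually arise and confirming they all lie in $\mathsf{S}_{\leq l-1}$ (in particular that the new $x$-row indices land in $\{a,\ldots,a+b-1\}$ and that collisions simply kill the term rather than producing an illegal shape). The bound $r \le b$ for the $y$-rows of an $\mathsf{S}_l$-shape is what guarantees $r+k-1 \le a+b-1$ in Lemma \ref{lem:MonomialDifference}, so one must check that this constraint is respected throughout; this is exactly why the definition of $\mathsf{S}_l$ confines the $y$-boxes to the first $b$ columns. Beyond this, the argument is a routine (if notationally heavy) unwinding of Corollary \ref{cor:SCkey} and Lemma \ref{lem:MonomialDifference}, together with multilinearity of the determinant; no new ideas are needed, and the signs can be absorbed into the $\pm$ and the coefficients $c_{S,R}$.
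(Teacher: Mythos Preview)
Your proposal is correct and follows essentially the same approach as the paper: both apply the Schur complement formula to write $\Delta_S(\X,\Y) = \pm\Delta(\X_1)\det(M_S)$, identify the entries of $M_S$ via Lemma~\ref{lem:FunkyMonomials}, use Lemma~\ref{lem:MonomialDifference} to express each $y$-row as its reduced counterpart plus a $\k[\V_L]$-combination of pure-$x$ rows with exponents in $\{a,\ldots,a+b-1\}$, and then expand multilinearly to obtain shapes in $\mathsf{S}_{\leq l-1}$. The paper's proof is slightly terser but structurally identical; your identification of the bookkeeping on the range of new $x$-exponents as the only real hurdle is accurate.
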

\begin{proof}
Proposition \ref{prop:SC} gives an analogue of Corollary \ref{cor:SCkey} 
for the Haiman determinant $\Delta_S(\X,\Y)$. 
Namely, it gives that 
\[
\Delta_S(\X,\Y) = \det(A) \cdot \det(M_S)
\]
for the $b \times b$ matrix
\[
M_S = 
\begin{pmatrix}
x_{a+1}^{s_1} & \cdots & x_{a+b}^{s_1} \\
\vdots & & \vdots \\
x_{a+1}^{s_{b-l}} & \cdots & x_{a+b}^{s_{b-l}} \\
x_{a+1}^{r_1} y_{a+1} & \cdots & x_{a+b}^{r_1} y_{a+b} \\
\vdots & & \vdots \\
x_{a+1}^{r_l} y_{a+1} & \cdots & x_{a+b}^{r_l} y_{a+b}
\end{pmatrix} 
-
\begin{pmatrix}
x_{1}^{s_1} & \cdots & x_{a}^{s_1} \\
\vdots & & \vdots \\
x_{1}^{s_{b-l}} & \cdots & x_{a}^{s_{b-l}} \\
x_{1}^{r_1} y_{1} & \cdots & x_{a}^{r_1} y_{a} \\
\vdots & & \vdots \\
x_{1}^{r_l} y_{1} & \cdots & x_{a}^{r_l} y_{a}
\end{pmatrix} 
A^{-1}
B
\]
with $A$ and $B$ as in \eqref{eq:SCmatrices}.
Lemma \ref{lem:FunkyMonomials} then implies that
\begin{equation}\label{eq:MDblock}
M_S = 
\begin{pmatrix}
x_{a+1}^{s_1} - m_{\X_1,\V_L}^{s_1,0}(x_{a+1}) & \cdots & x_{a+b}^{s_1} - m_{\X_1,\V_L}^{s_1,0}(x_{a+b}) \\
\vdots & & \vdots \\
x_{a+1}^{s_{b-l}} - m_{\X_1,\V_L}^{s_{b-1},0}(x_{a+1}) & \cdots & x_{a+b}^{s_{b-l}} - m_{\X_1,\V_L}^{s_{b-1},0}(x_{a+1}) \\
x_{a+1}^{r_1} y_{a+1} - m_{\X_1,\V_L}^{r_1,1}(x_{a+1}) & \cdots & x_{a+b}^{r_1} y_{a+b} - m_{\X_1,\V_L}^{r_1,1}(x_{a+b}) \\
\vdots & & \vdots \\
x_{a+1}^{r_l} y_{a+1} - m_{\X_1,\V_L}^{r_l,1}(x_{a+1}) & \cdots & x_{a+b}^{r_l} y_{a+b} - m_{\X_1,\V_L}^{r_l,1}(x_{a+b}).
\end{pmatrix} 
\end{equation}
The result now follows by applying Lemma \ref{lem:MonomialDifference} to the
last $l$ rows of \eqref{eq:MDblock}. Indeed, this expresses each row of $M_S$ as
the sum of the corresponding row in $\pi(M_S)$ and a $\k[\V_L]$-linear
combination of rows corresponding to monomials $x^s$ for $a \leq s \leq a+b-1$.
Thus, we have that
\[
\det(M_S) = \det(\pi(M_S)) + \sum_{R \in \mathsf{S_{\leq l-1}}} c_{S,R} \cdot \det(\pi(M_R))
\]
for some $c_{R,S} \in \k[\V_L]$ and the result follows. (Informally, when we use
Lemma \ref{lem:MonomialDifference} and linearly expand $\det(M_S)$ along the
relevant rows, in each term a box in the second row either: becomes reduced, or
moves to row one in a position between the $a^{th}$ and the $(a+b-1)^{st}$ 
with a coefficient in $\k[\V_L]$. 
Note that there is no distinction between reduced versus unreduced monomials in the first row.)
\end{proof}

\begin{cor}\label{cor:RedToUn}
For any key shape $\Key_l(\l)$, we have that
\[
\pi(\Delta_{\Key_l(\l)}(\X,\Y)) = \sum_{R \in \mathsf{S}_{\leq l}} c_{l,\l,R} \cdot \Delta_R(\X,\Y)
\]
for $c_{l,\l,R} \in \k[\V_L]$ and $\mathsf{S}_{\leq l} = \bigcup_{k=0}^{l} \mathsf{S}_{k}$.
\end{cor}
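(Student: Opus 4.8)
The statement follows by inverting the relation established in Proposition \ref{prop:UNvsRED}. The plan is to argue by induction on $l$, the number of boxes in the second row. For the base case $l=0$, the key shape $\Key_0(\l)$ is simply $\{x^{a+b-1},\ldots,x,1\}$ (with $\l$ necessarily empty), which contains no $y$-variables, so $\pi$ acts as the identity and $\pi(\Delta_{\Key_0(\l)}(\X,\Y)) = \Delta_{\Key_0(\l)}(\X,\Y)$; since $\Key_0(\l) \in \mathsf{S}_0 \subset \mathsf{S}_{\leq 0}$, this is of the desired form.

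For the inductive step, fix $l \geq 1$ and assume the result holds for all key shapes $\Key_{l'}(\mu)$ with $l' < l$. Since $\Key_l(\l) \in \mathsf{S}_l$, Proposition \ref{prop:UNvsRED} gives
\[
\pi(\Delta_{\Key_l(\l)}(\X,\Y)) = \Delta_{\Key_l(\l)}(\X,\Y) - \sum_{R \in \mathsf{S}_{\leq l-1}} c_{\Key_l(\l),R} \cdot \pi(\Delta_R(\X,\Y))
\]
with coefficients $c_{\Key_l(\l),R} \in \k[\V_L]$. The first term $\Delta_{\Key_l(\l)}(\X,\Y)$ is already of the desired form, as $\Key_l(\l) \in \mathsf{S}_l \subset \mathsf{S}_{\leq l}$. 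For the sum, I would observe that every $R \in \mathsf{S}_{k}$ with $k \leq l-1$ is itself (by the ordering conventions in Convention \ref{conv:key} and Definition \ref{def:keyshape}) of the form $\Key_k(\mu)$ for a suitable partition $\mu \in P(k, b-k)$ — indeed, the defining data of an element of $\mathsf{S}_k$ is exactly a choice of $k$ second-row positions in $\{0,\ldots,b-1\}$ (encoding $\mu$ via $r_i = \mu_{k-i+1} + i - 1$) together with the forced first-row data, matching Definition \ref{def:keyshape}. Hence the inductive hypothesis applies to each $\pi(\Delta_R(\X,\Y))$, expressing it as a $\k[\V_L]$-linear combination of $\Delta_{R'}(\X,\Y)$ with $R' \in \mathsf{S}_{\leq l-1} \subset \mathsf{S}_{\leq l}$. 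Substituting and collecting coefficients (which remain in $\k[\V_L]$ since this ring is closed under the operations involved) yields the claim.

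The main obstacle I anticipate is the bookkeeping identification of $\mathsf{S}_k$ with the set of key shapes $\{\Key_k(\mu) : \mu \in P(k,b-k)\}$, including checking that the monomial orderings agree up to the sign ambiguity already flagged after Corollary \ref{cor:SCkey} (so that the Haiman determinants $\Delta_R$ and $\Delta_{\Key_k(\mu)}$ agree up to sign, which is harmless since the coefficients $c_{l,\l,R}$ are only claimed to lie in $\k[\V_L]$). Everything else is a routine finite induction with the coefficient ring tracked carefully. Alternatively, one could bypass the explicit induction entirely by noting that Proposition \ref{prop:UNvsRED} exhibits the transition between the families $\{\pi(\Delta_S) : S \in \mathsf{S}_{\leq l}\}$ and $\{\Delta_S : S \in \mathsf{S}_{\leq l}\}$ as unitriangular (with respect to the filtration by $l$) over $\k[\V_L]$, hence invertible over $\k[\V_L]$; expressing $\pi(\Delta_{\Key_l(\l)})$ in terms of the unreduced family is then immediate, and I would present whichever version is cleaner in context.
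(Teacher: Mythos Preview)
Your alternative unitriangularity argument at the end is correct and is exactly the paper's proof: Proposition~\ref{prop:UNvsRED} expresses each $\Delta_S$ for $S\in\mathsf{S}_{\leq l}$ as $\pi(\Delta_S)$ plus a $\k[\V_L]$-combination of $\pi(\Delta_R)$ with $R\in\mathsf{S}_{\leq l-1}$, so the change of basis from $\{\pi(\Delta_S)\}$ to $\{\Delta_S\}$ is unitriangular over $\k[\V_L]$ and hence invertible there.

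Your primary inductive argument, however, has a genuine gap. The claim that every $R\in\mathsf{S}_k$ is a key shape $\Key_k(\mu)$ is false. In Definition~\ref{def:Smonomial}, only the first $a$ positions of the first row are forced; the remaining $b-k$ first-row boxes are chosen freely from positions $\geq a$, and need not form the contiguous block $\{x^a,\ldots,x^{a+b-k-1}\}$ that Definition~\ref{def:keyshape} requires. (For instance, with $a=5$, $b=3$, $k=1$, the shape $\{1,x,\ldots,x^5,x^7\}\cup\{y\}$ lies in $\mathsf{S}_1$ but is not a key shape.) Thus your inductive hypothesis, stated only for key shapes, does not apply to the terms $\pi(\Delta_R)$ that arise.

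The induction is easily repaired: simply strengthen the inductive statement to ``for every $S\in\mathsf{S}_l$, $\pi(\Delta_S)$ is a $\k[\V_L]$-combination of $\Delta_R$ with $R\in\mathsf{S}_{\leq l}$.'' Proposition~\ref{prop:UNvsRED} already applies to all $S\in\mathsf{S}_l$, and the base case $l=0$ holds since $\pi$ fixes determinants with no $y$-entries. This strengthened induction is of course equivalent to the unitriangularity argument.
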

\begin{proof}
Proposition \ref{prop:UNvsRED} shows that there is a unitriangular matrix with coefficients in $\k[\V_L]$ relating 
$\{ \Delta_S(\X,\Y) \mid S \in \mathsf{S}_{\leq l} \}$ and $\{ \pi(\Delta_S(\X,\Y)) \mid S \in \mathsf{S}_{\leq l} \}$. 
The result then follows since $\pi(\Delta_{\Key_l(\l)}(\X,\Y)) \in \{ \pi(\Delta_S(\X,\Y)) \mid S \in \mathsf{S}_{\leq l} \}$.
\end{proof}

Combining this with Lemma \ref{lem:keylemma} and the results from \S\ref{sec:cornermaps} 
gives the following.

\begin{prop}\label{prop:JinI}
$J_{a,b} \subset I_{a,b}$
\end{prop}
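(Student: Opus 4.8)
The plan is to show the inclusion $J_{a,b} \subset I_{a,b}$ by combining the explicit description of generators for $J_{a,b}$ from \S\ref{sec:cornermaps} with the relation between reduced and unreduced Haiman determinants established in \S\ref{ss:haimandethopf}. First I would recall from Proposition \ref{prop:corners span hopf} that $H(M_{a,b})$ is spanned as an $E_{a,b}$-module by the classes $\PSI(\phi_{a,b,l}(\lambda))$ for $0 \leq l \leq b$ and $\lambda \in P(l,b-l)$. Since $J_{a,b} = \im(H(\Sigma_{a,b}))$ by Definition \ref{def:J}, it follows that $J_{a,b}$ is generated as an ideal in $E_{a,b}$ by the elements $\Sigma_{a,b} \circ \PSI(\phi_{a,b,l}(\lambda))$.

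Next I would invoke the Key Lemma (Lemma \ref{lem:keylemma}), which identifies these generators explicitly:
\[
\Sigma_{a,b} \circ \PSI(\phi_{a,b,l}(\lambda)) = \pm \frac{\pi(\Delta_{\Key_l(\lambda)}(\X,\Y))}{\Delta(\X_1)\Delta(\X_2)} \cdot \id \, .
\]
So it suffices to show that each element $\frac{\pi(\Delta_{\Key_l(\lambda)}(\X,\Y))}{\Delta(\X_1)\Delta(\X_2)}$ lies in $I_{a,b}$. Here I would apply Corollary \ref{cor:RedToUn}, which expresses $\pi(\Delta_{\Key_l(\lambda)}(\X,\Y))$ as a $\k[\V_L]$-linear combination $\sum_{R \in \mathsf{S}_{\leq l}} c_{l,\lambda,R} \cdot \Delta_R(\X,\Y)$ of \emph{unreduced} Haiman determinants. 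Since each $\Delta_R(\X,\Y)$ for $R \in \mathsf{S}_{\leq l}$ is the determinant of a matrix whose entries are monomials in $\k[\X,\Y]$, it is antisymmetric under the diagonal $\symg_{a+b}$-action, and hence $\frac{\Delta_R(\X,\Y)}{\Delta(\X_1)\Delta(\X_2)} \in I_{a,b}$ directly from the definition \eqref{eq:IabDef} of $I_{a,b}$. Since $c_{l,\lambda,R} \in \k[\V_L] \subset E_{a,b}$ and $I_{a,b}$ is an ideal in $E_{a,b}$, the $\k[\V_L]$-linear combination $\frac{\pi(\Delta_{\Key_l(\lambda)}(\X,\Y))}{\Delta(\X_1)\Delta(\X_2)}$ lies in $I_{a,b}$ as well.

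Finally, since the generators of the ideal $J_{a,b}$ all lie in $I_{a,b}$, we conclude $J_{a,b} \subset I_{a,b}$. The only subtle point — and the part that requires the most care rather than being a genuine obstacle — is making sure that the division by $\Delta(\X_1)\Delta(\X_2)$ is carried out consistently: one must confirm that each $\Delta_R(\X,\Y)$ with $R \in \mathsf{S}_{\leq l}$ is genuinely divisible by $\Delta(\X_1)\Delta(\X_2)$ inside $\k[\X,\V]$ (which holds because such $R$ give $\symg_{a+b}$-antisymmetric polynomials, hence in particular $\symg_a \times \symg_b$-antisymmetric ones, after applying \eqref{eq:y to v 2}) and that the resulting quotient is $\symg_a \times \symg_b$-invariant so that it indeed lands in $E_{a,b}$. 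This is exactly the same well-definedness argument that justifies the definition of $I_{a,b}$, so no new work is needed; the bulk of the content has already been done in Lemma \ref{lem:keylemma} and Corollary \ref{cor:RedToUn}.
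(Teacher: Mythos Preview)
Your proposal is correct and follows essentially the same argument as the paper: identify generators of $J_{a,b}$ via Proposition~\ref{prop:corners span hopf} and Lemma~\ref{lem:keylemma}, then use Corollary~\ref{cor:RedToUn} to express each $\pi(\Delta_{\Key_l(\lambda)})$ as a $\k[\V_L]$-linear combination of unreduced Haiman determinants, each of which is $\symg_{a+b}$-antisymmetric and hence yields an element of $I_{a,b}$. The paper additionally cites Lemma~\ref{lemma:Jab reduced} when listing generators, but your direct route through $J_{a,b} = \im(H(\Sigma_{a,b}))$ is equivalent.
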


\begin{proof}
Proposition \ref{prop:corners span hopf} and Lemmata \ref{lemma:Jab reduced} and \ref{lem:keylemma} 
show that $J_{a,b}$ is generated by the elements $\frac{\pi(\Delta_{\Key_l(\l)}(\X,\Y))}{\Delta(\X_1)\Delta(\X_2)}$ 
with $0 \leq l \leq b$ and $\lambda \in P(l,b-l)$. 
Corollary \ref{cor:RedToUn} expresses each such generator as
\[
\sum_{R \in \mathsf{S_{\leq l}}} c_{l,\l,R} \cdot \frac{\Delta_R(\X,\Y)}{\Delta(\X_1)\Delta(\X_2)}
\]
with $c_{l,\l,R} \in \k[\V_L] \subset E_{a,b}$. 
Since every $\Delta_R(\X,\Y)$ is $\symg_{a+b}$-antisymmetric, 
we see that $\frac{\pi(\Delta_{\Key_l(\l)}(\X,\Y))}{\Delta(\X_1)\Delta(\X_2)} \in I_{a,b}$.
\end{proof}

\subsection{Families of ideals}\label{sec:families}

Our proof of the opposite inclusion $I_{a,b} \subset J_{a,b}$ for $b >1$
requires an inductive argument that makes use of the curved skein relation via
Theorem \ref{thm:skein split}. We begin by introducing the relevant rings and
ideals. First, we generalize the Hopf link ideal $J_{a,b}$ to a family of ideals
that arise from the splitting map
\[
I^{(s)}(\Sigma) \colon
\I^{(s)}(\VFTmin_{a,b-s}) \to \I^{(s)}(\oone_{a,b-s})
\]
for threaded digons. 
For the duration of this section, 
we will use Convention \ref{conv:I alphabets} for notation relevant 
to the functor $I^{(s)}$. 
In particular, we will consider $|\B|=s$ and $|\L|=\ell$ 
(often we have $\ell=b-s$).

\begin{definition}\label{def:Eals}
Let
\[
E_{a,(\ell,s)} := \Sym(\X_1|\L|\B)[\V_L^{(a)},\V_R^{(\ell+s)}]
\]
where $\V_L^{(a)}$ and $\V_R^{(\ell+s)}$ are alphabets as in \eqref{eq:2strandVvars}.
Further, we identify $E_{a,(\ell,0)} = E_{a,\ell}$ 
(from Definition~\ref{def:EM}, with $\L$ in place of $\X_2$) 
and regard this as a subalgebra of $E_{a,(\ell,s)}$ 
via the identification:
\[
v_{R,i}^{(\ell)} = v_{R,i}^{(\ell+s)} + (-1)^{\ell-i}\sum_{j=\ell+1}^{\ell+s} \Schur_{(j-\ell-1|\ell-i)}(\L) v_{R,j}^{(\ell+s)}
\]
that is analogous to \eqref{eq:vbar l and vbar b}.
\end{definition}

The relevance of this algebra stems from the following result.

\begin{lemma}\label{lemma:homs from 1 to I(X)} 
Let $X$ be a curved complex in $\VS_{a,\ell}$, then there is an isomorphism
\[
\Hom_{\VS_{a,\ell+s}}(\oone_{a,\ell+s},\I^{(s)}(X)) \cong 
\qdeg^{-\ell s} \Hom_{\VS_{a,\ell}}(\oone_{a,\ell},X)\otimes_{E_{a,(\ell,0)}} E_{a,(\ell,s)}
\]
of dg $E_{a,(\ell,s)}$-modules that is natural in $X$.
\end{lemma}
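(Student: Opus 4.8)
The plan is to reduce the statement to the uncurved isomorphism established in Lemma \ref{lem:easy 1 to I(X)}, then extend scalars to the curved setting, carefully tracking the compatibility of the deformation parameters via the change-of-variables rule. First I would unwind Definition \ref{def:functor Ikb}, which tells us that $\I^{(s)}(X) = \tw_{\I^{(s)}(\Delta_X)}(\I^{(s)}(X))$ with morphism complexes obtained from those of $\CS$ by adjoining the variables $\V^{(b)}_R$ (here $b = \ell+s$) and applying the substitution \eqref{eq:vbar l and vbar b} to the parameters $\V^{(\ell)}_R$. Likewise, by the $\sigma = \id = \tau$ case of Definition \ref{def:Vs} (see \eqref{eq:Vnotation}), we have
\[
\Hom_{\VS_{a,\ell+s}}(\oone_{a,\ell+s}, \I^{(s)}(X)) = \Hom_{\CS_{a,\ell+s}}(\oone_{a,\ell+s}, \I^{(s)}(X)) \otimes \k[\V_L^{(a)},\V_R^{(\ell+s)}]
\]
as a graded object, with differential twisted by the image of $\Delta_X$ under $\I^{(s)}$. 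So the underlying graded isomorphism is just the uncurved isomorphism of Lemma \ref{lem:easy 1 to I(X)} (applied with $b \mapsto \ell+s$ and $l \mapsto s$, and noting $|\B| = s$), tensored up over $\k$ by $\k[\V_L^{(a)},\V_R^{(\ell+s)}]$. The point of introducing $E_{a,(\ell,s)}$ in Definition \ref{def:Eals} is precisely that
\[
\Sym(\X_1|\L|\B)\otimes \k[\V_L^{(a)},\V_R^{(\ell+s)}] \;\cong\; \big(\Sym(\X_1|\L)\otimes\k[\V_L^{(a)},\V_R^{(\ell)}]\big)\otimes_{E_{a,(\ell,0)}} E_{a,(\ell,s)}
\]
via the identification of $\V_R^{(\ell)}$ with its image in $E_{a,(\ell,s)}$; matching the $\qdeg$-shift $\qdeg^{-\ell s}$ against the $\qdeg^{-l(b-l)}$ of Lemma \ref{lem:easy 1 to I(X)} gives the stated shift.

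Next I would verify that this graded isomorphism is a chain map, i.e. that it intertwines the twisted differentials. On the left-hand side the differential is $\d_{\oone} + \I^{(s)}(\Delta_X)$-conjugation (more precisely, post-composition with the differential on $\I^{(s)}(X)$, whose Maurer--Cartan part is $\I^{(s)}(\Delta_X)$); on the right-hand side, after extending scalars, it is post-composition with $\d_X + \Delta_X$. By Lemma \ref{lem:easy 1 to I(X)} the uncurved isomorphism already intertwines the images of $\d_X$; it remains to check that it intertwines the images of the curved parts, which is exactly the statement that the functor $\I^{(s)}$ on morphisms (Definition \ref{def:functor Ikb}) together with the substitution \eqref{eq:vbar l and vbar b} is compatible with the isomorphism of Lemma \ref{lem:easy 1 to I(X)}. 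This compatibility is an immediate diagram-chase once one observes that both the uncurved $\Hom$-isomorphism and the functor $\I^{(s)}$ are built from the same adjunction morphisms (Proposition \ref{prop:dualityonMS} and Corollary \ref{cor:I(X)Hom}), and that the parameter substitution \eqref{eq:vbar l and vbar b} is exactly the one used to define $\I^{(s)}$ on morphisms. Naturality in $X$ likewise descends from naturality of the uncurved isomorphism in Lemma \ref{lem:easy 1 to I(X)}, since extension of scalars and the functor $\I^{(s)}$ are both natural.

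Finally, I would confirm $E_{a,(\ell,s)}$-linearity: the $E_{a,(\ell,s)}$-action on the left-hand side comes from the action of $\Sym(\X_1|\L|\B)$ by post-composition together with the $\k[\V_L^{(a)},\V_R^{(\ell+s)}]$-module structure, and on the right it is visibly the extension-of-scalars action; these match because the uncurved isomorphism of Lemma \ref{lem:easy 1 to I(X)} is already $\Sym(\X_1|\L|\B)$-linear and the scalar extension is by definition $E_{a,(\ell,s)}$-linear. The main obstacle I anticipate is purely bookkeeping: making sure the two descriptions of the deformation parameters ($\V_R^{(\ell)}$ inside the source of $\I^{(s)}$ versus $\V_R^{(\ell+s)}$ on the target, related by \eqref{eq:vbar l and vbar b}) are consistently identified with the subalgebra inclusion $E_{a,(\ell,0)} \hookrightarrow E_{a,(\ell,s)}$ of Definition \ref{def:Eals}, and that the curvature elements genuinely match — but this is guaranteed by \eqref{eq:reduced curvature stability} together with the identity $h_i(\leftL-\rightL) = h_i(\X_2-\X_2')$ already invoked in Definition \ref{def:functor Ikb}, so no real difficulty arises. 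No genuinely hard step is expected; the lemma is a curved upgrade of a known uncurved $\Hom$-computation, and the proof is essentially "apply Lemma \ref{lem:easy 1 to I(X)} and extend scalars."
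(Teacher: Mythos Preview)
Your proposal is correct and follows essentially the same approach as the paper: both reduce to the uncurved isomorphism of Lemma \ref{lem:easy 1 to I(X)}, tensor up by $\k[\V_L^{(a)},\V_R^{(\ell+s)}]$, and then recognize the result as $\Hom_{\VS_{a,\ell}}(\oone_{a,\ell},X)\otimes_{E_{a,(\ell,0)}} E_{a,(\ell,s)}$ via the identification of $\V_R^{(\ell)}$ inside $E_{a,(\ell,s)}$ from Definition \ref{def:Eals}. The paper compresses your chain-map and linearity checks into the single remark that Definitions \ref{def:functor Ikb} and \ref{def:Eals} guarantee compatibility of differentials, but the content is the same.
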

\begin{proof}
The endomorphism algebra
$\End_{\VS_{a,b}}(\I^{(s)}(X))$ is a 
$\Sym(\leftX_1 | \rightX_1 | \leftL | \rightL | \B)[\V_L^{(a)},\V_R^{(\ell+s)}]$-module.
(As a reminder, we denote alphabets as in Convention \ref{conv:I alphabets}.)
The induced action %of $\Sym(\leftX_1 | \rightX_1 | \leftL | \rightL | \B)[\V_L^{(a)},\V_R^{(\ell+s)}]$
on $\Hom_{\VS_{a,b}}(\oone_{a,b},\I^{(s)}(X))$ factors through the quotient in
which we identify $\leftX_1=\rightX_1$ and $\leftX_2=\rightX_2$, and thus also $\leftL=\rightL$.
Hence, we see that $\Hom_{\VS_{a,\ell+s}}(\oone_{a,\ell+s},\I^{(s)}(X))$ is indeed 
a dg $E_{a,(\ell,s)}$-module. 
(The differential is induced from the differential on $X$, 
which commutes with this action.)

We now establish the isomorphism. 
Lemma \ref{lem:easy 1 to I(X)} gives that
%First, Proposition \ref{prop:dualityonMS} gives that
%\[
%\begin{aligned}
%\Hom_{\CS_{a,\ell+s}}(\oone_{a,\ell+s},\I^{(s)}(X))
%&= \Hom_{\CS_{a,\ell+s}}\big(\oone_{a,\ell+s} , 
%	(\oone_a \boxtimes {}_{\ell+s}M_{\ell,s}) \hComp (X \boxtimes \oone_s) \hComp (\oone_a \boxtimes {}_{\ell,s}S_{\ell+s}) \big) \\
%&\cong \qdeg^{-2\ell s} \Hom_{\CS_{a,\ell+s}} \big(\oone_{a,\ell,s} , 
%	(\oone_a \boxtimes ({}_{\ell,s}S_{\ell+s} \hComp {}_{\ell+s}M_{\ell,s})) \hComp (X \boxtimes \oone_s) \big).
%\end{aligned}
%\]
%Applying Corollary \ref{cor:I(X)Hom} now gives
\[
\Hom_{\CS_{a,\ell+s}}(\oone_{a,\ell+s},\I^{(s)}(X))
%\cong \qdeg^{-\ell s} \Hom_{\CS_{a,\ell}}(\oone_{a,\ell} , X) \otimes \End(\oone_s)
\cong \qdeg^{-\ell s} \Hom_{\CS_{a,\ell}}(\oone_{a,\ell} , X) \otimes \Sym(\B)\, .
\]
and in the deformed setting, this gives
\[
\begin{aligned}
\Hom_{\VS_{a,\ell+s}}(\oone_{a,\ell+s},\I^{(s)}(X))
&= \Hom_{\CS_{a,\ell+s}}(\oone_{a,\ell+s},\I^{(s)}(X))[\V_L^{(a)},\V_R^{(\ell+s)}] \\
&\cong \qdeg^{-\ell s} \Hom_{\CS_{a,\ell}}(\oone_{a,\ell} , X) \otimes \Sym(\B)[\V_L^{(a)},\V_R^{(\ell+s)}] \\
&\cong \qdeg^{-\ell s} \Hom_{\VS_{a,\ell}}(\oone_{a,\ell},X)\otimes_{E_{a,(\ell,0)}} E_{a,(\ell,s)}\, .
\end{aligned}
\]
The result follows since Definitions \ref{def:functor Ikb} and \ref{def:Eals} 
show that this is an isomorphism of complexes.
\end{proof}

Recall the notation $\Dig_{a,b}^s \defeq I^{(s)}(\oone_{a,b-s})$ from 
\S\ref{ss:splitting skein rel}. Our last result has the following important consequence.

\begin{cor}\label{cor:Mals=Jals}
Let $a \geq b \geq 0$, then
$\Hom_{\VS_{a,b}}(\oone_{a,b} , \Dig_{a,b}^s) 
\cong \qdeg^{-s(b-s)} E_{a,(b-s,s)}$
and the map 
\[
H(\I^{(s)}(\Sigma)) \colon
H\big( \Hom_{\VS_{a,b}}(\oone_{a,b} , \I^{(s)}(\VFTmin_{a,b-s})) \big)
\to %\xrightarrow{\Hom(\oone_{a,b} , \I^{(s)}(\Sigma_{a,b-s}))} 
E_{a,(b-s,s)}
\]
on homology induced by the splitting map is injective.
\end{cor}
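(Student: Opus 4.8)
The first isomorphism is the $X = \oone_{a,b-s}$ case of Lemma \ref{lemma:homs from 1 to I(X)} (with $\ell = b-s$): indeed, $\Hom_{\VS_{a,b-s}}(\oone_{a,b-s},\oone_{a,b-s}) = E_{a,b-s} = E_{a,(b-s,0)}$ by Definition \ref{def:EM}, so Lemma \ref{lemma:homs from 1 to I(X)} gives
\[
\Hom_{\VS_{a,b}}(\oone_{a,b},\Dig_{a,b}^s) \cong \qdeg^{-s(b-s)} E_{a,(b-s,0)} \otimes_{E_{a,(b-s,0)}} E_{a,(b-s,s)} \cong \qdeg^{-s(b-s)} E_{a,(b-s,s)}\, .
\]
Here I should observe that the relevant complexes have zero differential, so these objects are concentrated in homological degree zero and there is nothing to compute on homology. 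The second assertion, injectivity of the splitting map on homology, is where the real work lies, and the plan is to reduce it to the $s=0$ case, which is Corollary \ref{cor:FT-split-inj}.

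First I would apply the naturality clause of Lemma \ref{lemma:homs from 1 to I(X)} to the splitting map $\Sigma_{a,b-s} \colon \VFTmin_{a,b-s} \to \oone_{a,b-s}$ (equivalently $\VFT_{a,b-s} \to \oone_{a,b-s}$, using $\VFTmin \simeq \VFT$). This produces a commutative square identifying the map $\I^{(s)}(\Sigma) \colon \Hom_{\VS_{a,b}}(\oone_{a,b},\I^{(s)}(\VFTmin_{a,b-s})) \to \Hom_{\VS_{a,b}}(\oone_{a,b},\Dig_{a,b}^s)$ with the map
\[
\qdeg^{-s(b-s)}\big(M_{a,b-s} \xrightarrow{\Sigma_{a,b-s}\circ -} E_{a,b-s}\big) \otimes_{E_{a,(b-s,0)}} E_{a,(b-s,s)}\, ,
\]
where $M_{a,b-s} = \Hom_{\VS_{a,b-s}}(\oone_{a,b-s},\VFTmin_{a,b-s})$. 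Since $\VFTmin_{a,b-s}$ and $\oone_{a,b-s}$ are again concentrated in a single homological degree on the target side ($E_{a,b-s}$ has zero differential), passing to homology commutes with the base change $\otimes_{E_{a,(b-s,0)}} E_{a,(b-s,s)}$ up to a flatness check. So it would suffice to show (i) $H(\Sigma_{a,b-s})\colon H(M_{a,b-s}) \to E_{a,b-s}$ is injective, which is exactly Corollary \ref{cor:FT-split-inj} applied with $b$ replaced by $b-s$ (valid since $a \geq b \geq b-s \geq 0$); and (ii) $E_{a,(b-s,s)}$ is flat — in fact free — as a module over $E_{a,(b-s,0)} = E_{a,b-s}$.

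For (ii), I would argue directly from Definition \ref{def:Eals}: $E_{a,(b-s,s)} = \Sym(\X_1|\L|\B)[\V_L^{(a)},\V_R^{(b-s+s)}]$, while $E_{a,b-s} = \Sym(\X_1|\L)[\V_L^{(a)},\V_R^{(b-s)}]$ embeds via the unitriangular change of variables relating $v_{R,i}^{(b-s)}$ to $v_{R,j}^{(b)}$. Since $\Sym(\X_1|\L|\B)$ is free over $\Sym(\X_1|\L)$ (it is $\Sym(\X_1|\L) \otimes \Sym(\B)$, and $\Sym(\B)$ is a free $\k$-module), and the polynomial ring $\k[\V_L^{(a)},\V_R^{(b)}]$ is free over the subring generated by $\V_L^{(a)}$ and the unitriangularly-substituted $\V_R^{(b-s)}$ (the substitution is an isomorphism onto a polynomial subring with $\bar{v}_{b-s+1}^{(b)},\dots,\bar{v}_b^{(b)}$ as complementary free generators), the extension $E_{a,b-s} \hookrightarrow E_{a,(b-s,s)}$ is free, hence flat. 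Tensoring the injection from (i) with a flat module preserves injectivity, which gives the claim after accounting for the grading shift $\qdeg^{-s(b-s)}$, which does not affect injectivity.

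\textbf{Main obstacle.} The step I expect to require the most care is the compatibility of ``taking homology'' with the base-change functor $\otimes_{E_{a,(b-s,0)}} E_{a,(b-s,s)}$ in a way that is also compatible with the naturality square. The subtlety is that $M_{a,b-s}$ is a genuine chain complex (it has a non-trivial differential $\d^h + \d^v + \Delta^v$), whereas the target $E_{a,b-s}$ does not; so I must confirm that $H\big(M_{a,b-s} \otimes_{E_{a,(b-s,0)}} E_{a,(b-s,s)}\big) \cong H(M_{a,b-s}) \otimes_{E_{a,(b-s,0)}} E_{a,(b-s,s)}$, which again follows from the freeness established in (ii) (a flat base change is exact, hence commutes with homology), and that the isomorphism of Lemma \ref{lemma:homs from 1 to I(X)} is genuinely an isomorphism of \emph{complexes}, not merely of modules — but this is explicitly asserted in that lemma's statement. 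Once these bookkeeping points are pinned down, injectivity of $H(\I^{(s)}(\Sigma))$ follows formally from Corollary \ref{cor:FT-split-inj}.
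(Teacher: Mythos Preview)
Your proof is correct and follows essentially the same approach as the paper: apply Lemma~\ref{lemma:homs from 1 to I(X)} and its naturality to obtain a commutative square, then deduce injectivity on the $\I^{(s)}$ side from injectivity of $H(\Sigma_{a,b-s})$ on the undeformed side (which is Corollary~\ref{cor:FT-split-inj}, equivalently Corollary~\ref{cor:M is J general} combined with Proposition~\ref{prop:Hopfparityandgens}). The only difference is that you spell out the freeness of $E_{a,(b-s,s)}$ over $E_{a,(b-s,0)}$ explicitly, whereas the paper leaves this implicit.
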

\begin{proof}
Applying Lemma \ref{lemma:homs from 1 to I(X)} to the (co)domain of the 
splitting map $\Sigma_{a,b-s}: \VFTmin_{a,b-s} \to \oone_{a,b-s}$ gives the 
commutative diagram:
\begin{equation}\label{eq:splitsquare}
\begin{tikzcd}
\Hom_{\VS_{a,b}}(\oone_{a,b},\I^{(s)}(\VFTmin_{a,b-s}))
\arrow[r,"\cong"]
\arrow[d, "{\Hom(\oone_{a,b},\I^{(s)}(\Sigma_{a,b-s}))}" ]
&
\qdeg^{-s(b-s)} \Hom_{\VS_{a,b-s}}(\oone_{a,b-s},
	\VFTmin_{a,b-s})\otimes_{E_{a,(b-s,0)}} E_{a,(b-s,s)} 
\arrow[d, "{\Hom(\oone_{a,b} , \Sigma_{a,b-s})}"] \\
\Hom_{\VS_{a,b}}(\oone_{a,b},\I^{(s)}(\oone_{a,b-s}))
\arrow[r,"\cong"]
&
\qdeg^{-s(b-s)} \Hom_{\VS_{a,b-s}}(\oone_{a,b-s},
	\oone_{a,b-s})\otimes_{E_{a,(b-s,0)}} E_{a,(b-s,s)}
\end{tikzcd}
\end{equation}
The first assertion follows since the bottom-right corner of this diagram is 
$\qdeg^{-s(b-s)} E_{a,(b-s,s)}$.
For the second, we note that Corollary \ref{cor:M is J general} and 
Proposition \ref{prop:Hopfparityandgens} imply that the right vertical map 
induces an injective map in homology. 
Commutativity of the diagram implies that the same is true for the left vertical map.
\end{proof}

We next introduce notation that extends Definitions \ref{def:EM} and \ref{def:J}.

\begin{definition}\label{def:Jfam}
For $a \geq b \geq 0$, let
$M_{a,(b-s,s)} := \qdeg^{s(b-s)} \Hom_{\VS_{a,b}}(\oone_{a,b}, \I^{(s)}(\VFTmin_{a,b-s}))$
and set
\[
J_{a,(b-s,s)} := \im \big( H(M_{a,(b-s,s)}) \xrightarrow{H(\I^{(s)}(\Sigma))} E_{a,(b-s,s)} \big) 
\vartriangleleft E_{a,(b-s,s)}\, .
\]
\end{definition}

In the next section, we will achieve our goal of showing that $J_{a,b} = I_{a,b}$ 
by using an inductive argument involving the collection $\{ J_{a,(b-s,s)} \}_{s=0}^b$. 
To this end, we now observe that $J_{a,(\ell,0)} = J_{a,\ell}$ 
completely determines $J_{a,(\ell,s)}$.

\begin{lemma}\label{lemma:J is stable} 
For $\ell+s \leq a$, 
we have $J_{a,(\ell,s)}= E_{a,(\ell,s)} \cdot  J_{a,(\ell,0)}$.
\end{lemma}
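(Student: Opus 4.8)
The idea is that the isomorphism in Lemma~\ref{lemma:homs from 1 to I(X)} (with $X = \VFTmin_{a,\ell}$) identifies the deformed hom-module computing $M_{a,(\ell,s)}$ with an extension of scalars of $M_{a,\ell}$ along $E_{a,(\ell,0)} \hookrightarrow E_{a,(\ell,s)}$, and that this identification intertwines the splitting maps $\I^{(s)}(\Sigma_{a,\ell})$ and $\Sigma_{a,\ell}$. So the image ideal $J_{a,(\ell,s)}$ should just be the image of $J_{a,(\ell,0)} \otimes_{E_{a,(\ell,0)}} E_{a,(\ell,s)}$ in $E_{a,(\ell,s)}$, which is exactly $E_{a,(\ell,s)} \cdot J_{a,(\ell,0)}$. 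The nontrivial point is that extension of scalars is exact \emph{enough} here — i.e.\ that taking homology commutes with $\otimes_{E_{a,(\ell,0)}} E_{a,(\ell,s)}$ — and this is where the parity hypothesis (via Proposition~\ref{prop:Hopfparityandgens}, which we have already established for the colored Hopf link) does the work.

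\textbf{Step 1.} Record the commutative square \eqref{eq:splitsquare} from the proof of Corollary~\ref{cor:Mals=Jals}, whose top and bottom horizontal arrows are the isomorphisms of dg $E_{a,(\ell,s)}$-modules from Lemma~\ref{lemma:homs from 1 to I(X)}, and whose vertical arrows are $\Hom(\oone_{a,\ell+s}, \I^{(s)}(\Sigma_{a,\ell}))$ on the left and $\Hom(\oone_{a,\ell}, \Sigma_{a,\ell}) \otimes_{E_{a,(\ell,0)}} \mathrm{id}$ on the right. Taking homology, $J_{a,(\ell,s)}$ is identified with the image of the right vertical map on homology. Since $E_{a,(\ell,s)}$ is free (hence flat) as a module over $E_{a,(\ell,0)}$ — it is obtained by adjoining the polynomial generators in $\B$ and in $\V_R^{(\ell+s)}$ beyond $\V_R^{(\ell)}$ — homology commutes with the functor $(-) \otimes_{E_{a,(\ell,0)}} E_{a,(\ell,s)}$. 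Thus $H(M_{a,(\ell,s)}) \cong H(M_{a,\ell}) \otimes_{E_{a,(\ell,0)}} E_{a,(\ell,s)}$ compatibly with the splitting maps.

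\textbf{Step 2.} It remains to see that the image of $J_{a,(\ell,0)} \otimes_{E_{a,(\ell,0)}} E_{a,(\ell,s)} \to E_{a,(\ell,s)}$ is exactly $E_{a,(\ell,s)} \cdot J_{a,(\ell,0)}$. By definition $J_{a,(\ell,0)} = \im(H(M_{a,\ell}) \to E_{a,(\ell,0)})$, so the image of the base-changed map is the $E_{a,(\ell,s)}$-submodule of $E_{a,(\ell,s)}$ generated by (the images of) generators of $J_{a,(\ell,0)}$, which is precisely the ideal $E_{a,(\ell,s)} \cdot J_{a,(\ell,0)}$ in the sense of Convention~\ref{conv:ideals}. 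To make this fully rigorous one checks that the natural map $J_{a,(\ell,0)} \otimes_{E_{a,(\ell,0)}} E_{a,(\ell,s)} \to E_{a,(\ell,s)} \cdot J_{a,(\ell,0)}$ is the one induced by the inclusion $J_{a,(\ell,0)} \hookrightarrow E_{a,(\ell,0)}$ base-changed; flatness of $E_{a,(\ell,s)}$ over $E_{a,(\ell,0)}$ means this base-changed inclusion remains injective, identifying the source with $E_{a,(\ell,s)} \cdot J_{a,(\ell,0)}$ inside $E_{a,(\ell,s)}$.

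\textbf{Main obstacle.} The delicate point is Step~1: I need that $H(M_{a,(\ell,s)})$ really is the extension of scalars of $H(M_{a,\ell})$, rather than merely receiving a map from it. This requires that the complex $M_{a,\ell}$ of $E_{a,(\ell,0)}$-modules behaves well under $(-)\otimes_{E_{a,(\ell,0)}} E_{a,(\ell,s)}$ — flatness of $E_{a,(\ell,s)}$ over $E_{a,(\ell,0)}$ handles this at the level of taking homology of a fixed complex, and the identification of complexes (not just homology) is exactly the content of Lemma~\ref{lemma:homs from 1 to I(X)} combined with the description of $\I^{(s)}$ on morphisms in Definition~\ref{def:functor Ikb}. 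The hypothesis $\ell + s \leq a$ is used to guarantee that the relevant $\V$-alphabets and the identification of $E_{a,(\ell,0)}$ as a subalgebra of $E_{a,(\ell,s)}$ via the $h$-reduction formula \eqref{eq:vbar l and vbar b}-type substitution are valid (i.e.\ $a-b$ is large enough that the Schur functions $\Schur_{(j-\ell-1|\ell-i)}(\L)$ appearing there are in the allowed range). So the proof is essentially a formal consequence of Lemma~\ref{lemma:homs from 1 to I(X)} and flatness, with the parity input already absorbed into Corollary~\ref{cor:Mals=Jals}; I expect the write-up to be short, citing the square \eqref{eq:splitsquare} and invoking exactness of flat base change.
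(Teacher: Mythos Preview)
Your proposal is correct and follows essentially the same route as the paper: use Lemma~\ref{lemma:homs from 1 to I(X)} (equivalently the square \eqref{eq:splitsquare}) to identify $M_{a,(\ell,s)}$ with $M_{a,(\ell,0)}\otimes_{E_{a,(\ell,0)}} E_{a,(\ell,s)}$, then take homology and invoke Corollary~\ref{cor:Mals=Jals}; you additionally spell out the flatness of $E_{a,(\ell,s)}$ over $E_{a,(\ell,0)}$ (it is a polynomial extension after the triangular change of variables $v_{R,i}^{(\ell+s)}\leadsto v_{R,i}^{(\ell)}$ for $i\leq \ell$), which the paper leaves implicit.

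One minor correction: your explanation of where $\ell+s\leq a$ enters is off. The inclusion $E_{a,(\ell,0)}\hookrightarrow E_{a,(\ell,s)}$ from Definition~\ref{def:Eals} and the Schur functions $\Schur_{(j-\ell-1\mid \ell-i)}(\L)$ involve only $\L,\ell,s$ and are perfectly well-defined without reference to $a$. The hypothesis $\ell+s\leq a$ is nothing more than the standing assumption $a\geq b$ (with $b=\ell+s$) under which $\VFTmin_{a,\ell}$, $M_{a,(\ell,s)}$, and $J_{a,(\ell,s)}$ are set up in this section; it plays no role in the algebraic manipulations of your Steps 1--2.
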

\begin{proof}
Lemma \ref{lemma:homs from 1 to I(X)} gives that 
\[
M_{a,(\ell,s)} \cong M_{a,(\ell,0)} \otimes_{E_{a,(\ell,0)}} E_{a,(\ell,s)}\, . 
\]
Taking homology and applying Corollary \ref{cor:Mals=Jals} gives
\[
J_{a,(\ell,s)} \cong J_{a,(\ell,0)} \otimes_{E_{a,(\ell,0)}} E_{a,(\ell,s)}
\]
which is a restatement of the desired result.
\end{proof}

Motivated by this, we introduce the following family of ideals 
that generalize the ideal $I_{a,b} \vartriangleleft E_{a,b}$.

\begin{definition}\label{def:Ials}
Let $\X^{(a+\ell)} = \X_1 \cup \L = \{x_1,\ldots,x_{a+\ell}\}$ and 
$\Y^{(a+\ell)} = \{y_1,\ldots,y_{a+\ell}\}$ and set
\[
I_{a,(\ell,s)} 
:= E_{a,(\ell,s)} \cdot \left\{\frac{f(\X^{(a+\ell)},\Y^{(a+\ell)})}{\Delta(\X_1) \Delta(\L)} \mid
f\in \k[\X^{(a+\ell)},\Y^{(a+\ell)}] \text{ is antisymmetric for } \symg_{a+\ell} \right\} .
\]
\end{definition}

Analogous to Lemma \ref{lemma:J is stable}, the following holds (essentially by definition):

\begin{lem}\label{lemma:I is stable}
For $\ell+s \leq a$, 
we have $I_{a,(\ell,s)} = E_{a,(\ell,s)} \cdot  I_{a,(\ell,0)}$.
\end{lem}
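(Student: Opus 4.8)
\textbf{Proof plan for Lemma \ref{lemma:I is stable}.}

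The plan is to unwind Definitions \ref{def:Eals} and \ref{def:Ials} and observe that the statement amounts to a compatibility between the algebra inclusion $E_{a,(\ell,0)} \hookrightarrow E_{a,(\ell,s)}$ and the generating sets of the two ideals. Concretely, $I_{a,(\ell,s)}$ is by definition generated over $E_{a,(\ell,s)}$ by the elements $\frac{f(\X^{(a+\ell)},\Y^{(a+\ell)})}{\Delta(\X_1)\Delta(\L)}$ as $f$ ranges over $\symg_{a+\ell}$-antisymmetric polynomials in $\k[\X^{(a+\ell)},\Y^{(a+\ell)}]$, and $I_{a,(\ell,0)}$ is generated over $E_{a,(\ell,0)}=E_{a,\ell}$ by exactly the same set of elements. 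So the assertion $I_{a,(\ell,s)} = E_{a,(\ell,s)}\cdot I_{a,(\ell,0)}$ is really the claim that forming the ideal generated by a fixed set of elements commutes with extension of scalars along $E_{a,(\ell,0)} \hookrightarrow E_{a,(\ell,s)}$ — which is a general triviality — \emph{once we have checked that the generators genuinely lie in (the image of) $E_{a,(\ell,0)}$ inside $E_{a,(\ell,s)}$, i.e.\ that they are interpreted consistently on both sides.}

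First I would pin down the one substantive point: that for $\symg_{a+\ell}$-antisymmetric $f$, the quotient $\frac{f(\X^{(a+\ell)},\Y^{(a+\ell)})}{\Delta(\X_1)\Delta(\L)}$ really defines an element of $E_{a,(\ell,0)}$, and that this element, pushed into $E_{a,(\ell,s)}$ via the inclusion of Definition \ref{def:Eals}, is the \emph{same} as the element named by the corresponding generator of $I_{a,(\ell,s)}$. The subtlety is exactly the one flagged around \eqref{eq:vbar l and vbar b}: the alphabet $\Y^{(a+\ell)} = \{y_1,\dots,y_{a+\ell}\}$ enters $\k[\X^{(a+\ell)},\Y^{(a+\ell)}]$ through the interpolation substitutions \eqref{eq:y to v 2}, which express $y_i$ in terms of $x_i$ and the $\V$-variables, and the relevant $\V$-variables ($\V_L^{(a)}$ for $x_i \in \X_1$, $\V_R^{(\ell)}$ for $x_i \in \L$) are reinterpreted under $E_{a,(\ell,0)} \hookrightarrow E_{a,(\ell,s)}$ via the rule of Definition \ref{def:Eals}. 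Here is where I would invoke Proposition \ref{prop:va and vb} (and more precisely the stability statement it contains, that $y_i \mapsto y_i$ under exactly this kind of inclusion): it guarantees that each $y_i$ for $1\leq i\leq a+\ell$ is sent to $y_i$, so any polynomial $f(\X^{(a+\ell)},\Y^{(a+\ell)})$ is sent to itself, and the same is true of $\Delta(\X_1)\Delta(\L)$ since these are just Vandermonde determinants in the $x$-variables. Hence the generator of $I_{a,(\ell,0)}$ and the generator of $I_{a,(\ell,s)}$ labelled by the same $f$ are literally identified by the inclusion.

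Once that identification is in place, the proof finishes formally: $I_{a,(\ell,0)} \subseteq I_{a,(\ell,s)}$ since the generators of the former are among the generators of the latter, so $E_{a,(\ell,s)}\cdot I_{a,(\ell,0)} \subseteq I_{a,(\ell,s)}$; conversely every generator of $I_{a,(\ell,s)}$ is already a generator of $I_{a,(\ell,0)}$ (the antisymmetric $f$'s range over the same set), hence lies in $E_{a,(\ell,s)}\cdot I_{a,(\ell,0)}$, giving the reverse inclusion. I do not expect any real obstacle; the only thing requiring care — and the reason the hypothesis $\ell+s\leq a$ appears — is verifying that Proposition \ref{prop:va and vb} / Lemma \ref{lemma:curvature stability} applies in the form needed, i.e.\ that the inclusion in Definition \ref{def:Eals} is precisely the one covered by those results (the reduction identities of the type \eqref{eq:vbar l and vbar b} require the relevant complementary alphabet to have the right cardinality). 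So the ``hard part,'' such as it is, is bookkeeping: matching the substitution rule in Definition \ref{def:Eals} against the hypotheses of Proposition \ref{prop:va and vb}, after which everything is immediate. This parallels exactly the proof of Lemma \ref{lemma:J is stable}, which is why the excerpt states it with ``essentially by definition.''
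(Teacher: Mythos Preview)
Your proposal is correct and follows essentially the same route as the paper: both argue that the two ideals share the same generating set $\{f/(\Delta(\X_1)\Delta(\L))\}$, and that the only substantive point is checking these generators are identified under the inclusion $E_{a,(\ell,0)}\hookrightarrow E_{a,(\ell,s)}$, which follows from Proposition~\ref{prop:va and vb} (the $y_i\mapsto y_i$ stability). The paper packages this as the commutativity of a triangle; you spell out the two inclusions of ideals, but the content is identical.

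One small correction: the hypothesis $\ell+s\leq a$ is not actually what makes Proposition~\ref{prop:va and vb} applicable here (that proposition only needs $\ell\leq \ell+s$, which is automatic). The hypothesis $\ell+s\leq a$ is simply the standing assumption $a\geq b$ of \S\ref{s:hopf link} and plays no role in this particular argument.
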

\begin{proof}
The ideals $I_{a,(\ell,0)} = I_{a,\ell}$ and $I_{a,(\ell,s)}$ both have generators of the form 
$\frac{f(\X^{(a+\ell)},\Y^{(a+\ell)})}{\Delta(\X_1) \Delta(\L)}$, which give elements of 
$E_{a,\ell}$ and $E_{a,(\ell,s)}$ by expanding $\{y_i\}_{i=1}^{a+\ell}$ in the alphabets
$\V_L^{(a)} \cup \V_R^{(\ell)}$ and $\V_L^{(a)} \cup \V_R^{(\ell+s)}$.
The result now follows from Definition \ref{def:Eals} and Proposition~\ref{prop:va and vb}, 
which shows that the triangle:
\[
\begin{tikzcd}
\left\{\frac{f(\X^{(a+\ell)},\Y^{(a+\ell)})}{\Delta(\X_1) \Delta(\L)} \mid
f\in \k[\X^{(a+\ell)},\Y^{(a+\ell)}] \text{ is antisymmetric for } \symg_{a+\ell} \right\}
\arrow[r,hook]
\arrow[dr,hook]
&
E_{a,(\ell,0)}
\arrow[d,hook] \\
&
E_{a,(\ell,s)}
\end{tikzcd}
\]
commutes.
\end{proof}

\subsection{The inductive argument}\label{sec:induction}

In this section, we prove the following result, which establishes Conjecture \ref{conj:FT ideal} in the $2$-strand case.

\begin{thm}\label{thm:Jab}
Let $a \geq b \geq 0$, then $J_{a,b} = I_{a,b}$.
\end{thm}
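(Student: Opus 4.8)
By Proposition \ref{prop:JinI} we already have the inclusion $J_{a,b} \subset I_{a,b}$, so the entire content of the theorem is the reverse inclusion $I_{a,b} \subset J_{a,b}$. The plan is to induct on $b$, the base case $b \leq 1$ being already established (for $b = 0$ both ideals are the unit ideal $E_{a,0}$; for $b = 1$ the generator $\yred_{a+1}$ of $I_{a,1}$ lies in $J_{a,1}$ by Corollary \ref{cor:section}, and conversely $J_{a,1} \subset I_{a,1}$ by Proposition \ref{prop:JinI} — in fact $I_{a,1} = J_{a,1} = E_{a,1} \cdot \{\yred_{a+1}\}$). For the inductive step, fix $b \geq 2$ and assume $J_{a',\ell} = I_{a',\ell}$ whenever $\ell < b$ (and $a' \geq \ell$); via Lemmata \ref{lemma:J is stable} and \ref{lemma:I is stable} this upgrades to $J_{a,(\ell,s)} = I_{a,(\ell,s)}$ for all $\ell < b$ and all $s$ with $\ell + s \leq a$, in particular for the pairs $(\ell,s) = (b-s, s)$ with $1 \leq s \leq b$.

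\textbf{Key steps.} First I would feed the curved skein relation (Corollary \ref{cor:curvedMCCS}, equivalently Theorem \ref{thm:mainthm}) and the skein-splitter morphism $\Phi \colon \VTDmin_b(a) \to \oone_a \boxtimes \VTDmin_b(0)$ of Theorem \ref{thm:skein split} into the functor $\Hom_{\VS_{a,b}}(\oone_{a,b}, -)$. The threaded-digon complex $\VTDmin_b(a) = \tw_{\d^c + \Delta^c}(\bigoplus_{s=0}^b \qdeg^{s(b-1)}\tdeg^s \I^{(s)}(\VFTmin_{a,b-s}))$ is a one-sided twisted complex in the index $s$, with $s=0$ piece $\VFTmin_{a,b}$ itself; so $\Hom_{\VS_{a,b}}(\oone_{a,b}, \VTDmin_b(a))$ is filtered with associated graded $\bigoplus_s M_{a,(b-s,s)}$ (up to shift), and there is a surjection (on homology, after the spectral sequence degenerates — which it does since we are looking at the lowest Hochschild summand of a link that will turn out to be parity) relating $H(M_{a,b})$ to the homologies of the other $M_{a,(b-s,s)}$ for $s \geq 1$ together with the $s=0$ term. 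The point is that the $s \geq 1$ terms have $J_{a,(b-s,s)} = I_{a,(b-s,s)}$ by the inductive hypothesis, so their images under the splitting map are understood. Composing the whole picture with the global splitting map and using that $\Phi^{s,s} \simeq \I^{(s)}(\Sigma_{a,b-s})$ gives a description of $J_{a,b}$ in terms of the $I_{a,(b-s,s)}$ and the connecting differential $\d^c + \Delta^c$.

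\textbf{Identifying $I_{a,b}$.} In parallel, I would give a purely algebraic description of $I_{a,b}$ that matches this. By Lemma \ref{lem:HaimanI}, the ideal $I_N \lhd \k[\X,\Y]$ of $\symg_N$-antisymmetric polynomials (divided by nothing) is the intersection of the ideals $\langle x_i - x_j, y_i - y_j\rangle$, and one expects the analogous statement: $\Delta(\X_1)\Delta(\X_2) \cdot I_{a,b}$ is the $\symg_a \times \symg_b$-antisymmetrization of $I_{a+b}$, so $I_{a,b}$ is built from Haiman determinants $\Delta_S(\X,\Y)/(\Delta(\X_1)\Delta(\X_2))$ ranging over monomial shapes $S$. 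Using $h$-reduction (Lemma \ref{lemma:h reduction}) one can cut down to shapes with boxes in only the first two rows, i.e. the families $\mathsf{S}_l$ of Definition \ref{def:Smonomial}; and Proposition \ref{prop:UNvsRED} together with Corollary \ref{cor:RedToUn} shows that, modulo $\k[\V_L]$-unitriangular change of basis, the reduced Haiman determinants $\pi(\Delta_{\Key_l(\l)}(\X,\Y))$ — which by Lemma \ref{lem:keylemma} are exactly $\pm \Delta(\X_1)\Delta(\X_2)$ times the generators $\Sigma_{a,b}\circ\PSI(\phi_{a,b,l}(\l))$ of $J_{a,b}$ — span the same space over $E_{a,b}$ as the shapes in $\mathsf{S}_{\leq b}$. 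Matching the shape families $\mathsf{S}_s$ (for the summand of threaded digons with $s$ threaded strands) against the pieces $I_{a,(b-s,s)}$ appearing in the skein decomposition is the combinatorial heart of the argument: each $I_{a,(b-s,s)}$ contributes precisely the Haiman determinants whose $S$ has exactly $s$ boxes confined to the $y$-row of the first $b$ columns, and the connecting differential $\d^c + \Delta^c$ implements the recursion on $s$ on the categorical side.

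\textbf{Main obstacle.} The hard part will be controlling the spectral sequence / one-sided twist: showing that applying $\Hom_{\VS_{a,b}}(\oone_{a,b}, -)$ to the curved skein equivalence and then passing to homology loses no information, i.e. that the filtration on $H(M_{a,b})$ coming from the index $s$ has associated graded pieces exactly the $J_{a,(b-s,s)}$ with the right multiplicities, and that the connecting maps $\d^c + \Delta^c$ act on generators in the way the key-shape combinatorics predicts. This requires the parity of the colored Hopf link (Proposition \ref{prop:Hopfparityandgens}) to kill higher differentials, plus Corollary \ref{cor:Mals=Jals} to know that each $H(M_{a,(b-s,s)})$ injects into $E_{a,(b-s,s)}$ so that "image of the splitting map" is an honest subquotient description, and finally a careful bookkeeping argument that the reduced Haiman determinants $\pi(\Delta_{\Key_l(\l)})$ indexed by $0 \leq l \leq b$, $\l \in P(l,b-l)$, together with the $E_{a,b}$-action, generate all of $I_{a,b}$ — which is where Proposition \ref{prop:UNvsRED}, Corollary \ref{cor:RedToUn}, and an argument that the $2^b$ maximal key shapes (Corollary \ref{cor:JHaiman}) already suffice all come together. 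Once these pieces are in place, the two inclusions close up and $J_{a,b} = I_{a,b}$, proving Conjecture \ref{conj:FT ideal} for $m = 2$; Corollary \ref{cor:JHaiman} extracting the explicit $2^b$-element generating set then follows by tracking which key shapes survive the reduction.
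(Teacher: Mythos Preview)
Your overall architecture --- induct on $b$, feed the curved skein relation through $\Hom_{\VS_{a,b}}(\oone_{a,b},-)$, and leverage the identification $J_{a,(b-s,s)} = I_{a,(b-s,s)}$ for $s \geq 1$ --- matches the paper's, but two genuine gaps prevent the argument from closing.

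\textbf{Base case.} Your claim that $I_{a,1} = E_{a,1}\cdot\{\yred_{a+1}\}$ is false: the antisymmetric polynomial $\Delta(\X_1 \cup \{x_{a+1}\})$ contributes the element $\prod_{i=1}^a(x_i - x_{a+1}) = e_a(\X_1 - \{x_{a+1}\})$ to $I_{a,1}$, and this is not a multiple of $\yred_{a+1} = \bar v_1$ in $E_{a,1}$. The paper (Lemma~\ref{lem:Ja1=Ia1}) handles $b=1$ by a nontrivial application of Haiman's intersection formula (Lemma~\ref{lem:HaimanI}) to show that every generator of $I_{a,1}$ lies in the ideal generated by $e_a(\X_1 - \{x_{a+1}\})$ and $\yred_{a+1}$, which together span $\pi(J_{a,1})$. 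Corollary~\ref{cor:section} alone does not suffice.

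\textbf{Inductive step.} Your sketch never isolates the mechanism that forces $I_{a,b} \subset J_{a,b}$. The paper's mechanism is the exactness statement Proposition~\ref{prop:Jexact}: the sequence $0 \to J_{a,(b,0)} \xrightarrow{d_0} J_{a,(b-1,1)} \xrightarrow{d_1} J_{a,(b-2,2)} \to \cdots$ is exact on the left, so $J_{a,b} = \ker(d_1)$. One then checks by a direct column-operation/Laplace-expansion argument that $d_0$ sends $I_{a,(b,0)}$ into $I_{a,(b-1,1)}$; since by induction $I_{a,(b-1,1)} = J_{a,(b-1,1)}$ and $I_{a,(b-2,2)} = J_{a,(b-2,2)}$, one gets $I_{a,(b,0)} \subset \ker(d_1|_{J}) = J_{a,(b,0)}$. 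Your spectral-sequence language gestures at this filtration, but the crucial exactness (which the paper extracts from the skein splitter $\Phi$ via Lemmata~\ref{lem:Im=J} and the surjectivity-from-the-top argument) and the separate algebraic check that $d_0(I_{a,(b,0)}) \subset I_{a,(b-1,1)}$ are both missing. Finally, you invoke Corollary~\ref{cor:JHaiman} (that the $2^b$ key-shape determinants generate $I_{a,b}$) as input to the inductive step, but in the paper this is a \emph{consequence} of $J_{a,b} = I_{a,b}$, not a tool for proving it; using it here would be circular.
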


More generally, Lemmata \ref{lemma:J is stable} and \ref{lemma:I is stable} then immediately give:

\begin{cor}\label{cor:Jals}
Let $a \geq b \geq s \geq 0$, then $J_{a,(b-s,s)} = I_{a,(b-s,s)}$. \qed
\end{cor}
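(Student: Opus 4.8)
The plan is to obtain this corollary directly from Theorem \ref{thm:Jab} together with the two ``stability'' lemmas, Lemma \ref{lemma:J is stable} and Lemma \ref{lemma:I is stable}; no new homological input is needed beyond what has already been established for the genuine $(a,b)$-colored Hopf link.

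First I would set up the bookkeeping. Given $a \geq b \geq s \geq 0$, put $\ell := b - s$, so that, in the notation of Definitions \ref{def:Eals}, \ref{def:Jfam}, and \ref{def:Ials}, we have $E_{a,(b-s,s)} = E_{a,(\ell,s)}$, $J_{a,(b-s,s)} = J_{a,(\ell,s)}$, and $I_{a,(b-s,s)} = I_{a,(\ell,s)}$. Since $\ell + s = b \leq a$, both stability lemmas apply with these values of $\ell$ and $s$, yielding
\[
J_{a,(\ell,s)} = E_{a,(\ell,s)} \cdot J_{a,(\ell,0)}
\, , \qquad
I_{a,(\ell,s)} = E_{a,(\ell,s)} \cdot I_{a,(\ell,0)} \, .
\]
Under the identifications $E_{a,(\ell,0)} = E_{a,\ell}$, $J_{a,(\ell,0)} = J_{a,\ell}$, and $I_{a,(\ell,0)} = I_{a,\ell}$, it therefore remains only to verify $J_{a,\ell} = I_{a,\ell}$, i.e.\ $J_{a,b-s} = I_{a,b-s}$. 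Since $a \geq b \geq b - s \geq 0$, this is exactly Theorem \ref{thm:Jab} applied to the $(a,b-s)$-colored Hopf link.

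Chaining these equalities gives $J_{a,(b-s,s)} = E_{a,(\ell,s)} \cdot J_{a,b-s} = E_{a,(\ell,s)} \cdot I_{a,b-s} = I_{a,(b-s,s)}$, as required. The only point that needs care — and it is a routine check rather than a genuine obstacle — is that the inclusion $E_{a,(\ell,0)} \hookrightarrow E_{a,(\ell,s)}$ appearing in Lemma \ref{lemma:J is stable} agrees with the one appearing in Lemma \ref{lemma:I is stable}, so that the two sides are being compared over a common ring; this is immediate from Definition \ref{def:Eals} and the $\symg_{\brc}$-equivariance of the substitution \eqref{eq:y to v 2}, as already exploited in the proof of Lemma \ref{lemma:I is stable}. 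The degenerate value $s = b$ (so $\ell = 0$) causes no trouble: the hypothesis $\ell + s = b \leq a$ still holds, $\VFTmin_{a,0} = \oone_{a,0}$, and both $J_{a,(0,0)}$ and $I_{a,(0,0)}$ are the unit ideal (for the latter, take $f = \Delta(\X_1)$), so the general argument already covers it.
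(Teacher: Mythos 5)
Your proposal is correct and is essentially the paper's own argument: the corollary is deduced immediately from Theorem \ref{thm:Jab} via Lemmata \ref{lemma:J is stable} and \ref{lemma:I is stable}, exactly as you do. The extra checks you record (compatibility of the two inclusions $E_{a,(\ell,0)} \hookrightarrow E_{a,(\ell,s)}$ and the degenerate case $s=b$) are sound and only make explicit what the paper leaves implicit.
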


Proposition \ref{prop:JinI} implies that Theorem \ref{thm:Jab} 
will follow by showing  that $I_{a,b} \subset J_{a,b}$. 
To begin, we establish the $b=1$ case.

\begin{lem}\label{lem:Ja1=Ia1}
$I_{a,1} \subset J_{a,1}$ (and thus $J_{a,1} = I_{a,1}$).
\end{lem}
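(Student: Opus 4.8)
\textbf{Proof proposal for Lemma \ref{lem:Ja1=Ia1}.}

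The plan is to exploit the completely explicit description of the deformed splitting map when $b=1$. Recall from equation \eqref{eq:a1LinkSplit} that $\VFTmin_{a,1}$ has exactly three chain groups, and that $\Sigma_{a,1}$ is given by the diagram there with $\Sigma_{a,1} = \chi_0^+ \mod \bar v_1$. By Proposition \ref{prop:corners span hopf} (together with Lemma \ref{lemma:Jab reduced}), the ideal $J_{a,1}$ is generated by the reduced elements $\Sigma_{a,1} \circ \PSI(\phi_{a,1,l}(\lambda))$ for $0 \le l \le 1$; and by the Key Lemma \ref{lem:keylemma} these equal (up to sign and the common denominator $\Delta(\X_1)$, noting $\Delta(\X_2)=1$ when $b=1$) the reduced Haiman determinants $\pi(\Delta_{\Key_l(\lambda)}(\X,\Y))/\Delta(\X_1)$. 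For $b=1$ there are precisely two key shapes: $\Key_0(\emptyset) = \{x^{a},\ldots,x,1\}$, whose Haiman determinant is $\Delta(\X^{(a+1)})$, and $\Key_1(\emptyset) = \{x^{a-1},\ldots,x,1,y\}$. So the first step is to write down these two generators of $J_{a,1}$ explicitly, using Corollary \ref{cor:RedToUn} (or a direct Laplace/Schur-complement computation via Corollary \ref{cor:SCkey}) to re-express $\pi(\Delta_{\Key_l})$ in terms of genuine (unreduced) Haiman determinants with $\k[\V_L]$-coefficients.

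Next I would identify $I_{a,1}$ concretely. Since $I_{a,1} = I_{a,(1,0)}$ is, by Definition \ref{def:Ials}, generated by $f(\X^{(a+1)},\Y^{(a+1)})/\Delta(\X_1)$ over all $\symg_{a+1}$-antisymmetric $f$, and since the Haiman determinants $\Delta_S$ span the $\symg_{a+1}$-antisymmetric polynomials over $\k[\X^{(a+1)},\Y^{(a+1)}]^{\symg_{a+1}}$, it suffices to show each $\Delta_S(\X^{(a+1)},\Y^{(a+1)})/\Delta(\X_1)$ lies in $J_{a,1}$. Here the crucial simplification for $b=1$ is that $\Y^{(a+1)}$ has only one variable $y_{a+1}$ interpolated against $\X_2=\{x_{a+1}\}$, so $y_{a+1} = \bar v_1$ modulo the left variables; concretely, any monomial $x_j^r y_j$ reduces via Lemma \ref{lem:MonomialDifference} (and Lemma \ref{lem:FunkyMonomials}) to $x_j^r \yred_j$ plus a $\k[\V_L]$-combination of pure $x$-power monomials. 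This means any $\Delta_S$ reduces, modulo the ideal generated by $\Delta(\X^{(a+1)})/\Delta(\X_1) = \Sigma_{a,1}\circ\PSI(\phi_{a,1,0}(\emptyset))$ (which is visibly in $J_{a,1}$), to a $\k[\V_L]$-multiple of the single shape with one box $y$ in row two, i.e.\ to a multiple of $\pi(\Delta_{\Key_1(\emptyset)})$. Thus both generators of $I_{a,1}$ lie in $J_{a,1}$, giving $I_{a,1}\subset J_{a,1}$; combined with Proposition \ref{prop:JinI} this yields equality.

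The main obstacle will be the bookkeeping in the reduction step: one must carefully track, using Proposition \ref{prop:UNvsRED} and Corollary \ref{cor:RedToUn} specialized to $b=1$, that the ``stray'' terms produced when reducing $x^r y$-boxes into $x$-power-boxes always have coefficients in $\k[\V_L] \subset E_{a,1}$ and land on shapes already accounted for. This is genuinely a finite, low-complexity computation for $b=1$ (there are only two relevant shapes $\mathsf{S}_0$ and $\mathsf{S}_1$, and $|\X_2|=1$), so I expect no real difficulty beyond care with signs and with the identification $E_{a,1} \supset \k[\V_L^{(a)}]$. An alternative, possibly cleaner, route is to invoke Theorem \ref{thm:FT uncolored} (the uncolored case, which is $\brc = 1^{a}$ on one strand)... but in fact $J_{a,1}$ is a genuinely colored object on the $a$-strand, so the honest path is the explicit one above; I would present that. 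A sanity check throughout: the maximal key shape computation in the Example following Lemma \ref{lem:keylemma} already shows $\yred_{a+1} \in J_{a,1}$, consistent with $v_1 \in I_{a,1}$ via $y_{a+1}$, which is the $b=1$ shadow of Corollary \ref{cor:section}.
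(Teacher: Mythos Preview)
Your approach differs from the paper's and contains a gap. You correctly identify that $\pi(J_{a,1})$ is generated by the two reduced Haiman ratios coming from the key shapes $\Key_0(\emptyset)$ and $\Key_1(\emptyset)$, i.e.\ by $\prod_{i=1}^a(x_i-x_{a+1})$ and $\yred_{a+1}$, and that $I_{a,1}$ is generated over $E_{a,1}$ by $\Delta_S(\X,\Y)/\Delta(\X_1)$ for \emph{all} $(a{+}1)$-element shapes $S$. The problem is your reduction step: the machinery you cite (Proposition~\ref{prop:UNvsRED}, Corollary~\ref{cor:RedToUn}, Lemma~\ref{lem:MonomialDifference}) is built on the Schur complement decomposition of Corollary~\ref{cor:SCkey}, which requires $S$ to contain $\{1,x,\ldots,x^{a-1}\}$ so that the $A$-block is the Vandermonde in $\X_1$. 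For $b=1$ this restricts you exactly to the two shapes in $\mathsf{S}_{\leq 1}$---the key shapes themselves---so the argument becomes circular. Your claim that ``any $\Delta_S$ reduces modulo $\langle\Delta(\X^{(a+1)})/\Delta(\X_1)\rangle$ to a multiple of $\pi(\Delta_{\Key_1(\emptyset)})$'' for shapes with multiple $y$-monomials, higher $y$-powers, or missing low $x$-powers is precisely the content of the lemma and is not established by the tools you invoke.

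The paper instead uses Haiman's nontrivial result (Lemma~\ref{lem:HaimanI}) that $I_{a+1}=\bigcap_{i<j}\k[\X,\Y]\cdot\{x_i-x_j,\,y_i-y_j\}$, and computes this intersection inside $\k[\X,\V]$ via the interpolation relations~\eqref{eq:y to v 2}: for $i<j\leq a$ one has $y_i-y_j\in\langle x_i-x_j\rangle$, while for $i\leq a$ one has $y_i-y_{a+1}\in\langle x_i-x_{a+1},\,\yred_{a+1}\rangle$. This forces any $\symg_{a+1}$-antisymmetric $f$, viewed in $\k[\X,\V]$, to lie in $\Delta(\X_1)\cdot\k[\X,\V]\cdot\{\prod_i(x_i-x_{a+1}),\,\yred_{a+1}\}$, hence $f/\Delta(\X_1)\in\pi(J_{a,1})\subset J_{a,1}$ after passing to $\symg_a$-invariants. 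Haiman's lemma is the genuinely new input that handles all antisymmetric $f$ at once, bypassing the shape-by-shape analysis.
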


\begin{proof}
Recall that generators for $I_{a,1}$ take the form
$\frac{f(\X,\Y)}{\Delta(\X_1)}$ where $f(\X,\Y)$ is anti-symmetric for the
diagonal action of $\symg_{a+1}$ on $\k[\X,\Y]$. Here,
$\X=\{x_1,\ldots,x_{a+1}\}$, $\X_1 = \X \smallsetminus \{x_{a+1}\}$, and
$\Y=\{y_1,\ldots,y_{a+1}\}$. Fix such a generator and note that $f(\X,\Y) \in
I_{a+1}$ (see \S \ref{ss:connection to Hilbert schemes}). Lemma
\ref{lem:HaimanI} gives have that
%\begin{equation}\label{eq:HaimanCon1}
\[
I_{a+1} = \bigcap_{1\leq i<j \leq a+1} \k[\X,\Y]\cdot\{x_i - x_j, y_i-y_j\} 
\subset \bigcap_{1\leq i<j \leq a+1} \k[\X,\V]\cdot\{x_i - x_j, y_i-y_j\}\, .
\]
%\end{equation}
Now, in $\k[\X,\V]$, for $1\leq i < j \leq a$ we have
\[
y_i - y_j = \sum_{r=1}^a \big( x_i^{r-1} - x_j^{r-1} \big) v_{L,r}^{(a)} \in \k[\X,\V] \cdot \{x_i-x_j\}
\] 
and for $1 \leq i \leq a$ we have
\[
y_i - y_{a+1} = \sum_{r=1}^a \big( x_i^{r-1} - x_{a+1}^{r-1} \big) v_{L,r}^{(a)}
			+ \left( \sum_{r=1}^a x_{a+1}^{r-1} v_{L,r}^{(a)} \right) - y_{a+1}
= \left( \sum_{r=1}^a \big( x_i^{r-1} - x_{a+1}^{r-1} \big) v_{L,r}^{(a)} \right) - \yred_{a+1}\, .
\]
Hence,
\[
\bigcap_{1\leq i<j \leq a+1} \k[\X,\V]\cdot\{x_i - x_j, y_i-y_j\} 
= \left( \bigcap_{1 \leq i < j \leq a} \k[\X,\V]\cdot \{x_i - x_j\} \right) \cap
	\left( \bigcap_{1 \leq i \leq a} \k[\X,\V]\cdot \{x_i - x_{a+1}, \yred_{a+1}\}  \right).
\]
Now, we have that
\[
\bigcap_{1 \leq i < j \leq a} \k[\X,\V]\cdot \{x_i - x_j\} 
	= \k[\X,\V] \cdot \Delta(\X_1)
\]
and
\begin{align*}
\bigcap_{1 \leq i \leq a} \k[\X,\V]\cdot \{x_i - x_{a+1}, \yred_{a+1}\} 
&= \k[\X,\V] \cdot \yred_{a+1} + \bigcap_{1 \leq i \leq a} \k[\X,\V]\cdot \{x_i - x_{a+1}\} \\
&= \k[\X,\V] \cdot \{e_a(\X_1 - \{x_{a+1}\}), \yred_{a+1}\}
\end{align*}

Thus, we have that
\begin{align*}
\bigcap_{1\leq i<j \leq a+1} \k[\X,\V]\cdot\{x_i - x_j, y_i-y_j\} 
&= \big( \k[\X,\V] \cdot \Delta(\X_1) \big) 
	\cap \big( \k[\X,\V] \cdot \{e_a(\X_1 - \{x_{a+1}\}), \yred_{a+1}\} \big) \\
&= \Delta(\X_1) \k[\X,\V] \cdot \{e_a(\X_1 - \{x_{a+1}\}), \yred_{a+1}\}
\end{align*}
and so 
\[
\frac{f(\X,\Y)}{\Delta(\X_1)} \in \k[\X,\V] \cdot \{e_a(\X_1 - \{x_{a+1}\}), \yred_{a+1}\}\, .
\]
Further, since $\frac{f(\X,\Y)}{\Delta(\X_1)} \in \k[\X,\V]$ is invariant 
under the $\symg_{a}$-action, this implies that in fact
\[
\frac{f(\X,\Y)}{\Delta(\X_1)} \in 
\Sym(\X_1| \{x_{a+1}\})[\V] \cdot \{e_a(\X_1 - \{x_{a+1}\}), \yred_{a+1}\} 
= \pi(J_{a,1}) \subset J_{a,1}
\]
and thus $I_{a,1} \subset J_{a,1}$, as desired.
\end{proof}

Our proof of Theorem \ref{thm:Jab} for $b \geq 2$ relies on
Theorem \ref{thm:skein split}, which has the following consequence.

\begin{lemma}\label{lemma:Js in Es} 
There is a (contractible) complex of the form
\[
E_{a,(b-\bullet,\bullet)} := 
\left(
\begin{tikzcd}
E_{a,(b-0,0)} \ar[r,"d_0"]
	& \cdots \ar[r,"d_{s-1}"]
	& \qdeg^{s(s-1)} \tdeg^s E_{a,(b-s,s)} \ar[r,"d_{s}"]
	& \cdots \ar[r,"d_{b-1}"]
	&\qdeg^{b(b-1)} \tdeg^b E_{a,(0,b)}
\end{tikzcd}
\right)
\]
with differentials induced from those in Definition \ref{def:digoncomplex}.
Further, $d_s(J_{a,(b-s,s)}) \subset J_{a,(b-s-1,s+1)}$, 
thus the ideals $J_{a,(b-s,s)} \vartriangleleft E_{a,(b-s,s)}$ form a subcomplex
\[
J_{a,(b-\bullet,\bullet)} := 
\left(
\begin{tikzcd}
J_{a,(b-0,0)} \ar[r,"d_0"]
	& \cdots \ar[r,"d_{s-1}"]
	& \qdeg^{s(s-1)} \tdeg^s J_{a,(b-s,s)} \ar[r,"d_{s}"]
	& \cdots \ar[r,"d_{b-1}"]
	&\qdeg^{b(b-1)} \tdeg^b J_{a,(0,b)}
\end{tikzcd}
\right)
\]
of $E_{a,(b-\bullet,\bullet)}$.
\end{lemma}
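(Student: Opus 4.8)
The plan is to obtain the complex $E_{a,(b-\bullet,\bullet)}$ directly from the complex $\oone_a \boxtimes \VTDmin_b(0)$ of (unthreaded) digons from Definition~\ref{def:digoncomplex}, by applying the functor $\Hom_{\VS_{a,b}}(\oone_{a,b}, -)$ and invoking the $\Hom$-space computation established in Corollary~\ref{cor:Mals=Jals}. Concretely, recall that $\oone_a \boxtimes \VTDmin_b(0) = \tw_d\left(\bigoplus_{s=0}^b \qdeg^{s(b-1)}\tdeg^s \Dig_{a,b}^s\right)$ with $\Dig_{a,b}^s = \I^{(s)}(\oone_{a,b-s})$, and by Lemma~\ref{lemma:Dig complex} this complex is contractible. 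Applying $\Hom_{\VS_{a,b}}(\oone_{a,b},-)$ term-wise and using the isomorphism $\Hom_{\VS_{a,b}}(\oone_{a,b}, \Dig_{a,b}^s) \cong \qdeg^{-s(b-s)} E_{a,(b-s,s)}$ from Corollary~\ref{cor:Mals=Jals}, one reads off that $\Hom_{\VS_{a,b}}(\oone_{a,b}, \oone_a \boxtimes \VTDmin_b(0))$ is (after the grading bookkeeping, which converts $\qdeg^{s(b-1)}\tdeg^s \cdot \qdeg^{-s(b-s)}$ into $\qdeg^{s(s-1)}\tdeg^s$) precisely the claimed complex $E_{a,(b-\bullet,\bullet)}$, with differentials $d_s$ induced by the $d_s$ of Definition~\ref{def:digoncomplex}. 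Its contractibility follows since $\Hom(\oone_{a,b},-)$ of a contractible complex is contractible.

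\textbf{Key steps.} First I would make precise the identification of $\Hom_{\VS_{a,b}}(\oone_{a,b}, \Dig_{a,b}^s)$ with $\qdeg^{-s(b-s)} E_{a,(b-s,s)}$, which is the content of Corollary~\ref{cor:Mals=Jals} (itself built from Lemma~\ref{lemma:homs from 1 to I(X)} applied to $X = \oone_{a,b-s}$), and track how the grading shifts in Definition~\ref{def:digoncomplex} interact with this identification to produce the stated shifts $\qdeg^{s(s-1)}\tdeg^s$. Second, I would verify that the induced differentials agree with those described in Definition~\ref{def:digoncomplex}: since $\Hom_{\VS_{a,b}}(\oone_{a,b},-)$ is a dg functor, it sends the differential $d = \bigoplus d_s$ on the digon complex to the claimed differential, and contractibility is automatic. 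Third — the part doing the real work — I would establish $d_s(J_{a,(b-s,s)}) \subset J_{a,(b-s-1,s+1)}$. For this I would use Theorem~\ref{thm:skein split}: the skein relation splitter $\Phi \colon \VTDmin_b(a) \to \oone_a \boxtimes \VTDmin_b(0)$ is a closed degree-zero chain map whose $(s,s)$-component is homotopic to $\I^{(s)}(\Sigma_{a,b-s})$. Applying $\Hom_{\VS_{a,b}}(\oone_{a,b},-)$ to $\Phi$ and passing to homology gives a commuting (up to the homotopies) ladder whose vertical maps are exactly the $H(\I^{(s)}(\Sigma))$ defining the ideals $J_{a,(b-s,s)}$ (Definition~\ref{def:Jfam}), and whose horizontal maps on the source side are the connecting differentials $\d^c + \Delta^c$ of the curved skein relation and on the target side are the $d_s$. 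Chasing this diagram: an element of $J_{a,(b-s,s)}$ is the image under $H(\I^{(s)}(\Sigma))$ of some homology class $z$ in $H(M_{a,(b-s,s)})$; commutativity of the ladder shows $d_s$ of that image equals $H(\I^{(s+1)}(\Sigma))$ of the image of $z$ under the connecting map, which lands in $J_{a,(b-s-1,s+1)}$ by definition.

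\textbf{Main obstacle.} I expect the main difficulty is the diagram chase in the third step, specifically handling the fact that the component $\Phi^{s,s}$ of the skein splitter is only \emph{homotopic} to $\I^{(s)}(\Sigma_{a,b-s})$ (and the off-diagonal components $\Phi^{r,s}$ for $r \neq s$ vanish), so that the ladder commutes only up to homotopy at the chain level. Since $E_{a,(b-\bullet,\bullet)}$ has zero differential, passing to homology absorbs these homotopies and the ladder commutes on the nose in homology; nonetheless one must be careful that the homology classes $H(M_{a,(b-s,s)})$ appearing are well-defined, that the homotopies in Theorem~\ref{thm:skein split} are compatible with the $E_{a,(b-s,s)}$-module structures (which follows since $\Phi$ is a morphism in $\VSred_{a,b}$ and the module actions are central), and that the grading identifications from Proposition~\ref{prop:MCCS topologically}/\eqref{eq:MCCS curved} — which identify $\VMCCSmin^s_{a,b}$ with $\I^{(s)}(\VFTmin_{a,b-s})$ — are applied consistently so that the source complex of the ladder is indeed computing the $M_{a,(b-s,s)}$. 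Once these compatibility points are nailed down, the inclusion $d_s(J_{a,(b-s,s)}) \subset J_{a,(b-s-1,s+1)}$ and hence the subcomplex structure of $J_{a,(b-\bullet,\bullet)}$ inside $E_{a,(b-\bullet,\bullet)}$ follow formally.
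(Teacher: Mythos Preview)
Your proposal is correct and follows essentially the same approach as the paper. The paper's proof is considerably more terse: it obtains $E_{a,(b-\bullet,\bullet)}$ by applying $\Hom_{\VS_{a,b}}(\oone_{a,b},-)$ to the digon complex (citing Lemma~\ref{lemma:homs from 1 to I(X)} rather than its Corollary~\ref{cor:Mals=Jals}, but these are equivalent here), and for the subcomplex statement simply writes ``the differential on $E_{a,(b-\bullet,\bullet)}$ restricts to a map $J_{a,(b-s,s)} \rightarrow J_{a,(b-s-1,s+1)}$ by Theorem~\ref{thm:skein split}'' without spelling out the diagram chase you describe. Your elaboration of that chase---that $\Phi$ being a chain map with diagonal components homotopic to $\I^{(s)}(\Sigma_{a,b-s})$ forces $d_s(J_{a,(b-s,s)}) \subset J_{a,(b-s-1,s+1)}$ once one passes to homology in the zero-differential target---is exactly the content the paper leaves implicit, and the subtleties you flag (homotopy vs.\ strict commutativity, compatibility with module structures) are the right ones to check.
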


We will occasionally abbreviate the notation of these complexes to
$J_\bullet \subset E_\bullet$ when there is no confusion as to the values of 
$a$ and $b$.

\begin{proof}
The complex $E_\bullet$ is obtained by applying the functor $\Hom_{\VS_{a,b}}
(\oone_{a,b},-)$ to the complex
\[
\oone_a \boxtimes \VTDmin_b(0)
\defeq
\left(
\begin{tikzcd}
\Dig_{a,b}^0 \ar[r,"d_0"]
	& \cdots \ar[r,"d_{s-1}"]
	& \qdeg^{s(b-1)} \tdeg^s \Dig_{a,b}^s \ar[r,"d_s"]
	& \cdots \ar[r,"d_{b-1}"]
	&\qdeg^{b(b-1)} \tdeg^b \Dig_{a,b}^b
\end{tikzcd}
\right)
\]
from Definition~\ref{def:digoncomplex} 
and using Lemma~\ref{lemma:homs from 1 to I(X)}, which gives that
\[
\Hom_{\VS_{a,b}}(\oone_{a,b},\qdeg^{s(b-1)} \tdeg^s \Dig_{a,b}^s) \cong \qdeg^{s(s-1)} E_{a,(b-s,s)}\, .
\]
The component of the differential 
$\qdeg^{s(s-1)} \tdeg^s E_{a,(b-s,s)} \xrightarrow{d_s} \qdeg^{(s+1)s} \tdeg^s E_{a,(b-s-1,s+1)}$ 
is explicitly given by:
\begin{align*}
\qdeg^{s(s-1)} E_{a,(b-s,s)} \defeq& \qdeg^{s(s-1)} \Sym(\X_1 | \L | \B)[\V_L^{(a)},\V_R^{(b)}] \\
\hookrightarrow &
\qdeg^{s(s-1)}\Sym(\X_1 | \L \smallsetminus \{ x_{a+b-s}\} | \{ x_{a+b-s}\} | \B)[\V_L^{(a)},\V_R^{(b)}] \\
\xrightarrow{\partial_{1,s}}&
\qdeg^{(s+1)s} \Sym(\X_1 | \L \smallsetminus \{ x_{a+b-s}\} | \{ x_{a+b-s}\} \cup \B)[\V_L^{(a)},\V_R^{(b)}] 
	\defeq \qdeg^{(s+1)s} E_{a,(b-s-1,s+1)}
\end{align*}
where here $\partial_{1,s}$ is (the tensor product of identity morphisms with) the Sylvester operator 
\[
\partial_{1,s} \colon
\Sym(\{x_{a+b-s}\} | \B) \to \Sym(\{ x_{a+b-s}\} \cup \B)
\]
from Example \ref{exa:Sylvester}, which has degree $-2|\B|=-2s$. (The complex
$E_\bullet$ is contractible by Lemma \ref{lemma:Dig complex}.) Finally, the
differential on $E_{a,(b-\bullet,\bullet)}$ restricts to a map $J_{a,(b-s,s)}
\rightarrow J_{a,(b-s-1,s+1)}$ by Theorem \ref{thm:skein split}.
\end{proof}

We next investigate the homology of the subcomplex $J_{a,(b-\bullet,\bullet)}$, since we are 
interested in identifying the quotient complex $J_{a,(b,0)}$ of the latter. 
We now work towards the proof of the following:

\begin{prop}\label{prop:Jexact}
The sequence 
\[
\begin{tikzcd}
0 \ar[r]
&J_{a,(b-0,0)} \ar[r,"d_0"]
& \cdots \ar[r,"d_{s-1}"]
& \qdeg^{s(s-1)} \tdeg^s J_{a,(b-s,s)} \ar[r,"d_{s}"]
& \cdots \ar[r,"d_{b-1}"]
&\qdeg^{b(b-1)} \tdeg^b J_{a,(0,b)}
\end{tikzcd}
\]
is exact.
\end{prop}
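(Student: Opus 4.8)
The plan is to establish exactness of the subcomplex $J_{a,(b-\bullet,\bullet)}$ by comparing it with the ambient complex $E_{a,(b-\bullet,\bullet)}$ and the quotient complex $(E/J)_{a,(b-\bullet,\bullet)}$, and deducing exactness of $J_\bullet$ from exactness (or known homology) of the other two. Since $E_\bullet$ is contractible by Lemma~\ref{lemma:Dig complex} (it is $\Hom_{\VS_{a,b}}(\oone_{a,b},-)$ applied to the contractible complex $\oone_a \boxtimes \VTDmin_b(0)$), the long exact sequence in homology associated to the short exact sequence of complexes $0 \to J_\bullet \to E_\bullet \to (E/J)_\bullet \to 0$ shows that $H_\ast(J_\bullet) \cong H_{\ast+1}((E/J)_\bullet)$. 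So the problem reduces to showing that the quotient complex $(E/J)_\bullet$ has homology concentrated in the single (rightmost, $s=b$) degree — equivalently, that the truncated sequence $0 \to (E/J)_{a,(b,0)} \to \cdots \to (E/J)_{a,(0,b)}$ is exact except possibly at the last spot.

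To analyze the quotient complex, the key is to identify $(E/J)_{a,(b-s,s)}$ concretely. By Corollary~\ref{cor:Mals=Jals} the splitting map $H(\I^{(s)}(\Sigma))$ is injective, so $J_{a,(b-s,s)}$ really is (isomorphic to) the homology $H(M_{a,(b-s,s)})$ sitting inside $E_{a,(b-s,s)}$, and the quotient $E_{a,(b-s,s)}/J_{a,(b-s,s)}$ is the cokernel of this splitting map. By Corollary~\ref{cor:Jals} (which follows from the inductive hypothesis — here is where the induction on $b$ enters, since $J_{a,(b-s,s)}$ for $s \geq 1$ is governed by $J_{a,(b-s,0)} = J_{a,b-s}$ with $b-s < b$), we know $J_{a,(b-s,s)} = I_{a,(b-s,s)}$ for all $s \geq 1$; only the $s=0$ term $J_{a,(b,0)} = J_{a,b}$ is the unknown we are trying to pin down. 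Thus the quotient complex $(E/J)_\bullet$ has all terms except the $s=0$ one explicitly identified as $E_{a,(b-s,s)}/I_{a,(b-s,s)}$, which by Definition~\ref{def:Ials} and Haiman's description (Lemma~\ref{lem:HaimanI}) of $I_N$ as an intersection of ideals has a transparent description in terms of the coordinate ring of a union of linear subspaces.

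The concrete computation I would carry out: identify $E_{a,(b-s,s)}/I_{a,(b-s,s)}$, via the inclusion $\k[\X^{(a+b)},\Y^{(a+b)}] \hookrightarrow E_{a,(b-s,s)}$ and Haiman's Lemma~\ref{lem:HaimanI}, with (a partially symmetrized version of) $\k[\X,\Y]/\bigcap_{i<j}(x_i-x_j,y_i-y_j)$ restricted to the indices lying in the ``$\B$ block'' — i.e., functions supported on the diagonal where the $\B$-alphabet variables collide in the $(x,y)$-plane. The differentials $d_s$, built from Sylvester operators $\partial_{1,s}$ as in Lemma~\ref{lemma:Js in Es}, then become the standard Koszul-type maps for symmetrizing one more variable into the $\B$-block, and the resulting complex $(E/J)_\bullet$ (in degrees $s \geq 1$, with the $s=0$ term replaced by $E_{a,b}/J_{a,b}$) should be recognizable as exact — this is essentially a fully-symmetrized version of the elementary fact that symmetrizing variables one at a time over a polynomial ring modulo a diagonal ideal gives an acyclic complex, combined with the contractibility input. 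Exactness of $(E/J)_\bullet$ in degrees $\geq 1$ then forces, via the long exact sequence, that $J_\bullet$ is exact, which is the claim.

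The main obstacle I anticipate is the bookkeeping in the middle step: correctly identifying the terms $E_{a,(b-s,s)}/I_{a,(b-s,s)}$ as coordinate rings of explicit (unions of) linear subspaces compatibly with the maps $d_s$, and verifying that under this identification the $d_s$ are precisely the maps whose acyclicity is known or can be checked directly. In particular one must handle the interaction between the $\symg_{b-s}$-symmetrization on the $\L$-alphabet, the $\symg_s$-symmetrization on the $\B$-alphabet, and the Haiman intersection description — the change-of-variables formulas \eqref{eq:vbar l and vbar b} and Proposition~\ref{prop:va and vb} will be needed to see that all the identifications are compatible across the complex. A secondary subtlety is making sure the shifts $\qdeg^{s(s-1)}\tdeg^s$ and the signs coming from the Sylvester operators are tracked consistently, though these should not affect exactness. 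I would expect the cleanest route is to prove acyclicity of the quotient complex directly by exhibiting an explicit contracting homotopy built from ``desymmetrization'' (splitting one variable off the $\B$-block), mirroring the homotopy $k$ from Lemma~\ref{lemma:Dig complex} but now at the level of these quotient rings.
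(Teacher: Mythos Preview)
Your approach via the short exact sequence $0\to J_\bullet\to E_\bullet\to (E/J)_\bullet\to 0$ is natural, but it contains a genuine gap that makes it circular. Since $E_\bullet$ is contractible, the long exact sequence gives $H^s(J_\bullet)\cong H^{s-1}((E/J)_\bullet)$ for all $s\geq 1$, so proving exactness of $J_\bullet$ in degrees $0,\ldots,b-1$ requires $H^s((E/J)_\bullet)=0$ for $s=0,\ldots,b-2$ --- including $s=0$. (Your assertion that ``exactness of $(E/J)_\bullet$ in degrees $\geq 1$'' suffices is an off-by-one slip.) The problem is that $(E/J)_0=E_{a,b}/J_{a,b}$ involves the unknown ideal $J_{a,b}$, not the known $I_{a,b}$. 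Concretely, using $J_{a,(b-s,s)}=I_{a,(b-s,s)}$ for $s\geq 1$ and the injectivity of $d_0$ on $E_{a,b}$, one computes
\[
H^0\big((E/J)_\bullet\big)\;=\;\{\,x\in E_{a,b}:d_0(x)\in I_{a,(b-1,1)}\,\}\big/J_{a,b},
\]
and if in addition $(E/I)_\bullet$ is exact at $0$ this equals $I_{a,b}/J_{a,b}$. Hence $H^1(J_\bullet)\cong I_{a,b}/J_{a,b}$, and the vanishing you need is precisely Theorem~\ref{thm:Jab} for the parameter $b$. Since Theorem~\ref{thm:Jab} is proved \emph{using} Proposition~\ref{prop:Jexact}, your argument is circular: the inductive hypothesis only controls $J_{a,m}$ for $m<b$, and no amount of algebraic analysis of $(E/I)_\bullet$ can tell you what $J_{a,b}$ is.

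The paper's proof takes a completely different route and does \emph{not} use the inductive hypothesis at all. The key input is Theorem~\ref{thm:skein split}, the skein relation splitter $\Phi\colon\VTDmin_b(a)\to\oone_a\boxtimes\VTDmin_b(0)$. One first shows (Lemma~\ref{lem:Im=J}) that $J_\bullet$ is exactly the image of $\Hom_{\VS_{a,b}}(\oone_{a,b},\YKMCSmin_{a,b})$ under $\Phi$, using that $\Phi^{s,s}$ is supported on the ``corners'' and that morphisms from $\oone_{a,b}$ into the corners are automatically closed. One then exploits the Koszul description $\YKMCSmin_{a,b}\simeq\tw_{\sum\bar v_i\xi_i}(\MCSmin_{a,b}\otimes\largewedge[\xi_1,\ldots,\xi_b])$: the homology of $\Hom(\oone_{a,b},\YKMCSmin_{a,b})$ is generated by a single element $\iota\otimes\xi_1\cdots\xi_b$ living in the top $s$-degree, because the twist is a Koszul differential for the regular sequence $\bar v_1,\ldots,\bar v_b$ and $\Hom(\oone_{a,b},\MCSmin_{a,b})$ has homology concentrated in $\tdeg$-degree $b$. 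A filtered-map argument then transfers this to show that the inclusion $J_{a,(0,b)}\hookrightarrow J_\bullet$ is surjective in homology, whence $H^s(J_\bullet)=0$ for $s<b$. The moral: to prove Proposition~\ref{prop:Jexact} you must use structural input about $J_{a,b}$ itself (via the skein relation and its explicit splitting), not just the inductive hypothesis.
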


The proof will follow almost immediately from the following two lemmata.

\begin{lem}\label{lem:Im=J}
The complex $J_{a,(b-\bullet,\bullet)}$ is equal to
the image of the complex $\Hom_{\VS_{a,b}}(\oone_{a,b},\YKMCSmin_{a,b})$ under the chain map 
to $E_{a,(b-\bullet,\bullet)}$ induced by the map 
\[
\YKMCSmin_{a,b} \cong 
\VTDmin_b(a) \xrightarrow{\Phi} \oone_a \boxtimes \VTDmin_b(0)
\]
from Theorem \ref{thm:skein split}.
\end{lem}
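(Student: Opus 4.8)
The statement to prove is Lemma \ref{lem:Im=J}: the subcomplex $J_{a,(b-\bullet,\bullet)}$ of $E_{a,(b-\bullet,\bullet)}$ coincides with the image of $\Hom_{\VS_{a,b}}(\oone_{a,b},\YKMCSmin_{a,b})$ under the chain map induced by $\Phi$. The plan is to compare two filtrations/decompositions of the left-hand side. First I would unwind the definitions: by the curved skein relation (Corollary \ref{cor:curvedMCCS} / Proposition \ref{prop:reduced-curved-skein-rel}), we have $\YKMCSmin_{a,b} \cong \VTDmin_b(a) = \tw_{\d^c+\Delta^c}\bigl(\bigoplus_{s=0}^b \qdeg^{s(b-1)}\tdeg^s \I^{(s)}(\VFTmin_{a,b-s})\bigr)$ as a one-sided twisted complex in the parameter $s$. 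Applying $\Hom_{\VS_{a,b}}(\oone_{a,b},-)$ and using Lemma \ref{lemma:homs from 1 to I(X)} to identify $\Hom_{\VS_{a,b}}(\oone_{a,b}, \qdeg^{s(b-1)}\tdeg^s \I^{(s)}(\VFTmin_{a,b-s})) \cong \qdeg^{s(s-1)}\tdeg^s M_{a,(b-s,s)}$, we get that $\Hom_{\VS_{a,b}}(\oone_{a,b},\YKMCSmin_{a,b})$ is a one-sided twisted complex built from the $M_{a,(b-s,s)}$. The chain map induced by $\Phi$ is, by the defining property in Theorem \ref{thm:skein split}, ``diagonal up to homotopy'': its $(s,s)$-component is the splitting map $\I^{(s)}(\Sigma)$ (composed with the appropriate identifications) and its off-diagonal components are built from the null-homotopy $k$ of Lemma \ref{lemma:Dig complex}.

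The key step is then to compute the image. I would argue term-by-term: the image of the $s$-th summand $M_{a,(b-s,s)}$ in $E_{a,(b-s,s)}$ under $\Hom(\oone_{a,b},\Phi^{s,s}) = \Hom(\oone_{a,b},\I^{(s)}(\Sigma_{a,b-s}))$ is by definition (Definition \ref{def:Jfam}) precisely $J_{a,(b-s,s)}$ (after passing to homology, where the off-diagonal homotopy-correction terms disappear). Since $\Phi$ is a chain map, its image is automatically a subcomplex, and combining with Lemma \ref{lemma:Js in Es} (which says $d_s(J_{a,(b-s,s)}) \subset J_{a,(b-s-1,s+1)}$) identifies this image subcomplex with $J_{a,(b-\bullet,\bullet)}$. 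The one subtlety is that $\Phi$'s diagonal components agree with $\I^{(s)}(\Sigma)$ only \emph{up to homotopy}, so at the chain level the image could a priori differ; but on homology the induced maps agree, and since $E_{a,(b-\bullet,\bullet)}$ has zero differential in the relevant sense (it is a complex of modules, and the ideals $J_{a,(b-s,s)}$ are defined as images on homology), passing to the image as submodules of the $E_{a,(b-s,s)}$ gives exactly the $J_{a,(b-s,s)}$. More carefully: I would observe that $\mathrm{im}(H(\Phi^{s,s})) = \mathrm{im}(H(\I^{(s)}(\Sigma)))$ as submodules of $E_{a,(b-s,s)}$ because the two maps on homology are equal, and the off-diagonal terms of $\Phi$ contribute only to lower $s$ (one-sidedness), so they do not enlarge the image in the top filtration piece; an induction on $s$ (or a direct filtration argument) then shows the total image is $\bigoplus_s J_{a,(b-s,s)}$ with the differential $d_s$.

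I expect the main obstacle to be bookkeeping the one-sidedness correctly: $\Phi$ is one-sided (strictly increasing $s$), and one must verify that the image of the full twisted complex, as a graded submodule of $\bigoplus_s E_{a,(b-s,s)}$, is exactly $\bigoplus_s J_{a,(b-s,s)}$ rather than something smaller or a non-split extension. This comes down to the fact that the $(s,s)$-component of $\Phi$ surjects (in homology) onto $J_{a,(b-s,s)}$ by definition, so even accounting for the lower-triangular corrections from $k$, one can peel off the image one step at a time starting from $s=0$. A clean way to phrase this is: filter both $\Hom(\oone_{a,b},\YKMCSmin_{a,b})$ and $E_{a,(b-\bullet,\bullet)}$ by $s$-degree; $\Phi$ respects the filtration; on the associated graded it acts by the diagonal maps $\I^{(s)}(\Sigma)$; hence $\mathrm{gr}(\mathrm{im}\,\Phi) = \bigoplus_s J_{a,(b-s,s)} = \mathrm{gr}(J_{a,(b-\bullet,\bullet)})$, and since $J_{a,(b-\bullet,\bullet)}$ is itself a filtered subcomplex whose associated graded is the same, the two subcomplexes coincide. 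I would then cite Lemma \ref{lemma:Js in Es} to confirm the differential restricts correctly, completing the proof.
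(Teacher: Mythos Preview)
There is a genuine gap in your argument. You correctly identify the subtlety: $\Phi^{s,s}$ agrees with $\I^{(s)}(\Sigma_{a,b-s})$ only up to homotopy, so their chain-level images in $E_{a,(b-s,s)}$ could differ. Your proposed fix---passing to homology, or using that $E_{a,(b-s,s)}$ has zero internal differential---does not close this gap. Concretely, writing $\Phi^{s,s} = \I^{(s)}(\Sigma_{a,b-s}) + h\circ d_{\mathrm{int}}$ (the target has zero differential, so the other homotopy term vanishes), for a non-cycle $x\in M_{a,(b-s,s)}$ we get $\Phi^{s,s}(x) = \I^{(s)}(\Sigma_{a,b-s})(x) + h(d_{\mathrm{int}}x)$, and there is no a priori reason the correction $h(d_{\mathrm{int}}x)$ lands in $J_{a,(b-s,s)}$. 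Your filtration argument inherits this problem: on the associated graded you only know the image of $\Phi^{s,s}$ on \emph{all} of $M_{a,(b-s,s)}$, not just on cycles, so you cannot identify $\mathrm{gr}(\mathrm{im}\,\Phi)$ with $\bigoplus_s J_{a,(b-s,s)}$ without further input.

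The paper's proof supplies exactly this missing input: it shows that $\Phi^{s,s}$ itself (not just $\I^{(s)}(\Sigma_{a,b-s})$) is supported on the ``corners'' $P_{\bullet,\bullet,s}\subset \I^{(s)}(\VFTmin_{a,b-s})$. This uses the explicit formula $\Phi^{s,s} = d_{s-1}\circ k_s\circ \I^{(s)}(\Sigma_{a,b-s}) + k_{s+1}\circ \I^{(s+1)}(\Sigma_{a,b-s-1})\circ(\d^c+\Delta^c)$ from the proof of Theorem~\ref{thm:skein split}, together with the nontrivial observation (from Proposition~\ref{prop:KMCSdiffs} and \eqref{eq:twists}) that the connecting differential $\d^c+\Delta^c$ never sends non-corners $P_{k,l,s}$ with $l<k$ to corners of $\I^{(s+1)}(\VFTmin_{a,b-s-1})$. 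Since $\Hom_{\VS_{a,b}}(\oone_{a,b},P_{\bullet,\bullet,s})$ is a subcomplex with zero differential, every element in the support of $\Phi^{s,s}$ is automatically a cycle, so the chain-level image equals the image on homology, which is $J_{a,(b-s,s)}$ by definition.
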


\begin{proof}
By definition, $J_{a,(b-s,s)}$ is the image of the homology 
\[
H(M_{a,(b-s,s)}) = H\big(\Hom_{\VS_{a,b}}(\oone_{a,b},\qdeg^{s(b-s)} \I^{(s)}(\VFTmin_{a,b-s})) \big)
\]
under the map induced by $\I^{(s)}(\Sigma_{a,b-s})$.
By Theorem \ref{thm:skein split}, this is the same as the image under the map induced 
on homology by the component
\[
\Phi^{s,s} \colon \qdeg^{s(b-s)} \I^{(s)}(\VFTmin_{a,b-s}) \to \qdeg^{s(b-s)} \Dig_{a,b}^s
\]
of $\Phi$. (Recall from \eqref{eq:DCFT} that $\I^{(s)}(\VFTmin_{a,b-s})$ is the 
summand of $\VTDmin_b(a)$ in $\tdeg$-degree $s$.)
Since $E_{a,(b-s,s)}$ has trivial differential, 
Theorem \ref{thm:skein split} implies that 
it suffices to show that every element in the image of the map 
\[
\Phi^{s,s} \colon \Hom_{\VS_{a,b}}(\oone_{a,b},\qdeg^{s(b-s)} \I^{(s)}(\VFTmin_{a,b-s})) 
\to \Hom_{\VS_{a,b}}(\oone_{a,b},\qdeg^{s(b-s)}\Dig_{a,b}^s)
\]
is the image of a cycle in $\Hom_{\VS_{a,b}}(\oone_{a,b},\qdeg^{s(b-s)} \I^{(s)}(\VFTmin_{a,b-s}))$.

To this end, recall from Definition \ref{def:splitting map model}
that the deformed splitting map $\Sigma_{a,b} \colon \VFTmin_{a,b} \to \oone_{a,b}$ is 
supported on the ``corners'' $\bigoplus_{l=0}^b P_{l,l,0} \subset \VFTmin_{a,b}$. 
Lemma \ref{lemma:homs from 1 to I(X)} and \eqref{eq:splitsquare} 
imply that the same is true 
for $\I^{(s)}(\VFTmin_{a,b-s})$, i.e. $\I^{(s)}(\Sigma_{a,b-s})$ is supported on the corners
\[
P_{\bullet,\bullet,s} :=\bigoplus_{l=0}^{b-s} P_{l,l,s} \subset \I^{(s)}(\VFTmin_{a,b-s})\, .
\]
In fact, this implies the same statement with $\I^{(s)}(\Sigma_{a,b-s})$ replaced by $\Phi^{s,s}$.
Indeed, the proof of Theorem \ref{thm:skein split} gives the explicit formula 
\[
\Phi^{s,s} = d_{s-1} \circ k_s \circ \I^{(s)}(\Sigma_{a,b-s}) 
	+ k_{s+1} \circ \I^{(s+1)}(\Sigma_{a,b-s-1}) \circ (\delta^c + \Delta^c)
\]
with $d_s$ and $k_s$ as in Definition \ref{def:digoncomplex} and Lemma \ref{lemma:Dig complex}.
Since $\I^{(s)}(\Sigma_{a,b-s})$ is supported on the corners, the same is true for the first summand.
For the second summand, 
note that Proposition \ref{prop:KMCSdiffs} and equation \eqref{eq:twists} 
imply that the connecting differential 
\[
\delta^c + \Delta^c \colon \I^{(s)}(\VFTmin_{a,b-s}) \to \I^{(s+1)}(\VFTmin_{a,b-s-1})
\]
never sends ``non-corners'' (i.e. summands $P_{k,l,s}$ with $l <k$) to corners. 
Since $\I^{(s+1)}(\Sigma_{a,b-s-1})$ is supported on the corners, the second summand is as well.

Finally, it follows from the description of the vertical component $\d^v$ of the differential on 
$\I^{(s)}(\VFTmin_{a,b-s}) \cong \YMCCSmin_{a,b}^s$
given by Proposition \ref{prop:KMCSdiffs} that 
\[
\Hom_{\VS_{a,b}}(\oone_{a,b},P_{\bullet,\bullet,s}) \subset 
\Hom_{\VS_{a,b}}(\oone_{a,b},\I^{(s)}(\VFTmin_{a,b-s})) 
\]
is a subcomplex with zero differential. Thus every element in the support of
$\Phi^{s,s}$ is a cycle, so its image is spanned by cycles.
\end{proof}

\begin{lem}
The inclusion $J_{a,(0,b)}\hookrightarrow J_{a,(b-\bullet,\bullet)}$ is surjective in homology.
\end{lem}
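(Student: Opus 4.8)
The claim is that the inclusion of the ``last term'' subcomplex $J_{a,(0,b)} \hookrightarrow J_{a,(b-\bullet,\bullet)}$ induces a surjection on homology. Combined with the exactness statement of Proposition \ref{prop:Jexact} (whose proof this lemma feeds into), this is what lets one read off $J_{a,(b,0)}$ as a quotient complex. The approach I would take is to reduce the statement to the corresponding fact about the ambient complex $E_{a,(b-\bullet,\bullet)}$, which is \emph{contractible} by Lemma \ref{lemma:Dig complex} (it is $\Hom_{\VS_{a,b}}(\oone_{a,b},-)$ applied to $\oone_a \boxtimes \VTDmin_b(0)$, and the homotopy $k$ from Lemma \ref{lemma:Dig complex} survives applying this functor since all the $k_s$ are $E_{a,b}$-linear bimodule maps).

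\textbf{Key steps.} First I would record the explicit null-homotopy: applying $\Hom_{\VS_{a,b}}(\oone_{a,b},-)$ to the contracting homotopy $k = \bigoplus_s k_s$ of $\oone_a \boxtimes \VTDmin_b(0)$ gives maps $\kappa_s \colon \qdeg^{s(s-1)}\tdeg^s E_{a,(b-s,s)} \to \qdeg^{(s-1)(s-2)}\tdeg^{s-1} E_{a,(b-s+1,s-1)}$ with $d_{s-1}\kappa_s + \kappa_{s+1} d_s = \id$ on each $E_{a,(b-s,s)}$, and $\kappa_{s-1}\kappa_s = 0$. Second, I would take a homology class in $J_{a,(b-\bullet,\bullet)}$, represented by a cycle $z \in J_{a,(b-s,s)}$, and use the contraction to write $z = d_{s-1}\kappa_s(z)$ (using $d_s z = 0$). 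This exhibits $z$ as a boundary \emph{in $E_\bullet$}; the content is that one can arrange the boundary to lie in $J_\bullet$. Third — and this is the crux — I would use Lemma \ref{lem:Im=J}, which identifies $J_{a,(b-\bullet,\bullet)}$ with the image of the chain map from $\Hom_{\VS_{a,b}}(\oone_{a,b},\YKMCSmin_{a,b})$ induced by $\Phi$. Since $\YKMCSmin_{a,b} \cong \VTDmin_b(a)$ is itself (homotopy equivalent to) a curved Rickard complex built via the $\tw_{\d^c+\Delta^c}$ construction, the summand in top $\tdeg$-degree is $\qdeg^{b(b-1)}\tdeg^b \I^{(b)}(\VFTmin_{a,0}) = \qdeg^{b(b-1)}\tdeg^b \I^{(b)}(\oone_{a,0})$, whose image under $\Phi^{b,b}$ is exactly $J_{a,(0,b)}$. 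Now Proposition \ref{prop:reduced-curved-skein-rel} / Corollary \ref{cor:curvedMCCS} presents $\YKMCSmin_{a,b}$ as a one-sided twisted complex in the parameter $s$ with twist \emph{strictly increasing} $s$; hence in $\Hom_{\VS_{a,b}}(\oone_{a,b},\YKMCSmin_{a,b})$ the induced differential is also one-sided in $s$, so the quotient onto the top $\tdeg$-degree piece is a chain map and, by Remark \ref{rem:onesided}–style perturbation, the whole complex is homotopy equivalent to that top piece together with an acyclic complement. Tracing through $\Phi$ (which by construction strictly preserves the $s$-grading on homology up to homotopy, as Theorem \ref{thm:skein split} records) then shows that every homology class of $J_{a,(b-\bullet,\bullet)}$ is represented in $J_{a,(0,b)}$.

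\textbf{Alternative cleaner route.} In fact I expect the slickest argument is: since $E_{a,(b-\bullet,\bullet)}$ is contractible and $J_\bullet \subset E_\bullet$ is a subcomplex, the quotient $E_\bullet / J_\bullet$ computes the homology of $J_\bullet$ shifted by one. By Lemma \ref{lemma:homs from 1 to I(X)} and the skein splitter identification $\Phi^{s,s} \sim \I^{(s)}(\Sigma_{a,b-s})$, the cokernel $E_{a,(b-s,s)} / J_{a,(b-s,s)}$ is the cokernel of the splitting map, which by Corollary \ref{cor:Mals=Jals} and Lemma \ref{lemma:J is stable} is $E_{a,(b-s,s)} \otimes_{E_{a,(b-s,0)}} (E_{a,(b-s,0)}/J_{a,(b-s,0)})$, i.e. a base change of the fixed module $E_{a,b-s}/J_{a,b-s}$. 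One then checks that the induced differential on $E_\bullet/J_\bullet$ matches the differential in the complex $\oone_a \boxtimes \VTDmin_b(0)$ with each $\I^{(s)}(\oone_{a,b-s})$ replaced by $\I^{(s)}(\VFTmin_{a,b-s})/\im$, and since the latter complex (before quotienting) is homotopy equivalent to $\YKMCSmin_{a,b}$ which is concentrated, up to the one-sided twist, at $s=b$, exactness away from the $s=b$ spot follows. Concretely, I would combine the long exact sequence of $0 \to J_\bullet \to E_\bullet \to E_\bullet/J_\bullet \to 0$ with $H(E_\bullet) = 0$ to get $H_i(J_\bullet) \cong H_{i+1}(E_\bullet/J_\bullet)$, then compute the right side directly.

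\textbf{Main obstacle.} The technical heart is controlling the twist $(\d')^c + (\Delta')^c$ from Corollary \ref{cor:curvedMCCS}: one needs that, after applying $\Hom_{\VS_{a,b}}(\oone_{a,b},-)$ and passing to images under $\Phi$, the resulting complex $J_{a,(b-\bullet,\bullet)}$ genuinely behaves like a one-sided twisted complex whose only ``surviving'' cohomology sits in the top $s = b$ slot. This requires knowing that the $\Phi^{r,s}$ for $r \neq s$ (the off-diagonal components, which Theorem \ref{thm:skein split} only says are null-homotopic, not zero) do not spoil the one-sidedness argument — i.e. one must either straighten them away using Proposition \ref{prop:HPT} or argue that their contribution to homology vanishes. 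I expect this bookkeeping, rather than any deep new idea, to be where the real work lies; everything else is formal homological algebra built on results already established in the excerpt.
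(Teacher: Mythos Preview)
Your overall strategy --- transfer the problem to $\Hom_{\VS_{a,b}}(\oone_{a,b},\YKMCSmin_{a,b})$ via the chain map $\Phi$ of Lemma~\ref{lem:Im=J} --- matches the paper's, and indeed the paper shows $\Phi$ is a quasi-isomorphism using exactly the $s$-filtration you mention (associated-graded quasi-isomorphism by Corollary~\ref{cor:Mals=Jals}, then a bounded-filtration five-lemma argument). But your argument for why the homology of $\Hom_{\VS_{a,b}}(\oone_{a,b},\YKMCSmin_{a,b})$ is supported at $s=b$ does not go through. One-sidedness of the twist $\d^c+\Delta^c$ only furnishes a spectral sequence; it says nothing about where the $E_\infty$ page is supported. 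The associated-graded pieces $\Hom_{\VS_{a,b}}(\oone_{a,b},\I^{(s)}(\VFTmin_{a,b-s}))$ for $s<b$ have \emph{nonzero} homology (namely $J_{a,(b-s,s)}$, by Corollary~\ref{cor:Mals=Jals}), so there is no ``acyclic complement'' and Remark~\ref{rem:onesided}-style perturbation has nothing to act on. Your concern about the off-diagonal components $\Phi^{r,s}$ is also misplaced: once $\Phi$ is known to be a quasi-isomorphism, the question lives entirely on the source complex. Your alternative route via the long exact sequence of $0\to J_\bullet\to E_\bullet\to E_\bullet/J_\bullet\to 0$ is formally valid, but ``compute the right side directly'' faces exactly the same missing input.

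The paper's actual mechanism uses a \emph{different} grading, absent from your proposal: the exterior $\xi$-degree in the presentation $\YKMCSmin_{a,b}=\YK(\MCSmin_{a,b})=\tw_{\d+\Delta}(\MCSmin_{a,b}\otimes\largewedge[\xi_1,\ldots,\xi_b])$. After applying $\Hom_{\VS_{a,b}}(\oone_{a,b},-)$, the Koszul part $\d=\sum_i h_i(\leftX_2-\rightX_2)\xi_i^\ast$ vanishes (since $\leftX_2=\rightX_2$ on $\oone_{a,b}$), leaving only $\d^H$ and the twist $\Delta=\sum_i\bar v_i\xi_i$. Because $\bar v_1,\ldots,\bar v_b$ form a regular sequence on the free $\k[\V]$-module underlying the Hom-space, the inclusion of the top-$\xi$-degree subcomplex $\Hom_{\CS_{a,b}}(\oone_{a,b},\MCSmin_{a,b})\otimes\k[\V]\otimes\xi_1\cdots\xi_b$ is surjective in homology. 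Then a genuinely \emph{topological} computation --- Proposition~\ref{prop:MCS} together with the partial-trace/Markov~II equivalence \eqref{eq:R1HochCo} --- shows that $\Hom_{\CS_{a,b}}(\oone_{a,b},\MCSmin_{a,b})$ has homology concentrated at the chain group $W_0=\oone_{a,b}$, which is $P_{0,0,b}$ and hence sits in $s$-degree $b$. Neither the Koszul-regularity step nor this topological input appears in your sketch, and without them (or something equivalent) there is no reason the connecting differentials should kill the homology coming from lower $s$.
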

\begin{proof}
Lemma \ref{lem:Im=J} gives a chain map
\begin{equation}\label{eq:PhiToJ}
\Phi \colon \Hom_{\VS_{a,b}}(\oone_{a,b},\YKMCSmin_{a,b}) \to J_{a,(b-\bullet,\bullet)}\, .
\end{equation}
Observe that the complex
\[
\Hom_{\VS_{a,b}}(\oone_{a,b},\YKMCSmin_{a,b}) \cong 
\tw_{\d^c+\Delta^c}\left(\bigoplus_{s=0}^b \Hom_{\VS_{a,b}} \big( \oone_{a,b} , \qdeg^{s(b-1)} \tdeg^s \I^{(s)}(\VFTmin_{a,b-s}) \big) \right)
\]
is filtered by $s$-degree, and
we can likewise view $J_{a,(b-\bullet,\bullet)} = \tw_d \left( \bigoplus_{s=0}^b J_{a,(b-s,s)} \right)$ as $s$-filtered.
Theorem \ref{thm:skein split} then implies that $\Phi$ is a filtered chain map. 
Further, by Corollary \ref{cor:Mals=Jals} and Definition \ref{def:Jfam}, 
the chain map \eqref{eq:PhiToJ} induces an isomorphism in homology 
for the associated graded complexes.
Since the $s$-filtration is bounded, a straightforward argument using the 
long exact sequence associated to a short exact sequence of chain complexes and the five lemma 
implies\footnote{Alternatively, this follows from the standard fact that if a morphism of spectral sequences 
is an isomorphism on a certain page, then it is an isomorphism on all subsequent pages.}
that $\Phi$ induces an isomorphism
\begin{equation}\label{eq:specseq}
H \big( \Hom_{\VS_{a,b}}(\oone_{a,b},\YKMCSmin_{a,b}) \big) \cong
%H \left( \bigoplus_{s=0}^b \qdeg^{s(s-1)} \tdeg^s H(M_{a,(b-s,s)}) , H(\d^c+\Delta^c) \right)
%\stackrel{\eqref{eq:Hom(1,SkSp)}}{\cong} 
H \big( J_{a,(b-\bullet,\bullet)} \big)\, .
\end{equation}

We are therefore interested in the complex: 
\begin{equation}\label{eq:hom 1 to VKMCS}
\Hom_{\VS_{a,b}}(\oone_{a,b},\YKMCSmin_{a,b}) \cong 
\tw_{\sum \bar{v}_i \xi_i} \left(\Hom_{\CS_{a,b}}(\oone_{a,b},\MCSmin_{a,b})\otimes \k[ \V_L^{(a)},\V_R^{(b)} ]
	\otimes \largewedge[\xi_1,\ldots,\xi_b]\right)
\end{equation}
Observe that the inclusion of the summand
\begin{equation}\label{eq:subsurH}
\Hom_{\CS_{a,b}}(\oone_{a,b},\MCSmin_{a,b})\otimes \k[ \V_L^{(a)},\V_R^{(b)} ]\otimes \xi_1\cdots \xi_b 
\hookrightarrow 
\text{right-hand side of } \eqref{eq:hom 1 to VKMCS}
\end{equation}
is a chain map, since the components of the differential that leave 
the summand
\[
\MCSmin_{a,b} \otimes \k[ \V_L^{(a)},\V_R^{(b)} ]\otimes \xi_1\cdots \xi_b
\subset \KMCSmin_{a,b}
\]
become zero upon applying $\Hom_{\CS_{a,b}}(\oone_{a,b},-)$.
Further, \eqref{eq:subsurH} is surjective in homology, 
because the twist in the right-hand side of \eqref{eq:hom 1 to VKMCS} 
is the Koszul differential associated to the action of the 
regular sequence $\bar{v}_1,\ldots,\bar{v}_{b}$ on
$\Hom_{\CS_{a,b}}(\oone_{a,b},\MCSmin_{a,b})\otimes \k[ \V_L^{(a)},\V_R^{(b)} ]$. 

Now, let $\iota \in \Hom_{\CS_{a,b}}(\oone_{a,b},\MCSmin_{a,b})$ 
be the inclusion of $\oone_{a,b}$ into $\MCSmin_{a,b}$ as the right-most chain group,
i.e. $\iota$ spans the chain group $\Hom_{\CS_{a,b}}^b(\oone_{a,b},\MCSmin_{a,b})$.
It follows from Proposition \ref{prop:MCS} and 
equations \eqref{eq:trace linearity} and \eqref{eq:R1HochCo} that
the homology of $\Hom_{\CS_{a,b}}(\oone_{a,b}, \MCSmin_{a,b})$ is supported in
$\tdeg$-degree $b$, hence is spanned by the class of $\iota$.
Thus, the composition 
\[
\End_{\CS_{a,b}}(\oone_{a,b})\otimes \k[ \V_L^{(a)},\V_R^{(b)} ]  
\rightarrow\Hom_{\CS_{a,b}}(\oone_{a,b},\MCSmin_{a,b})\otimes \k[ \V_L^{(a)},\V_R^{(b)} ] 
\hookrightarrow  \Hom_{\VS_{a,b}}(\oone_{a,b},\YKMCSmin_{a,b})
\]
sending $1\mapsto \iota \otimes \xi_1\cdots\xi_b$ is surjective in homology.
Upon inspection, we see that this is simply the inclusion
\[
\Hom_{\VS_{a,b}}( \oone_{a,b} , \I^{(b)} (\oone_{a,0}) )
\hookrightarrow
\Hom_{\VS_{a,b}}( \oone_{a,b} , \YKMCSmin_{a,b} )\, .
\]
It then follows, via %\eqref{eq:Hom(1,SkSp)} and 
\eqref{eq:specseq}, 
that the inclusion of $J_{a,(0,b)}$ into $J_{a,(b-\bullet,\bullet)}$ is surjective in homology.
\end{proof}

\begin{proof}[Proof (of Proposition \ref{prop:Jexact}).]
The homology of the complex 
\[
\begin{tikzcd}
0 \ar[r]
& J_{a,(b-0,0)} \ar[r,"d_0"]
& \cdots \ar[r,"d_{s-1}"]
& \qdeg^{s(s-1)} \tdeg^s J_{a,(b-s,s)} \ar[r,"d_{s}"]
& \cdots \ar[r,"d_{b-1}"]
& \qdeg^{b(b-1)} \tdeg^b J_{a,(0,b)} \ar[r]
& 0
\end{tikzcd}
\]
is supported on the far right. 
%In more detail,
%let $\vec{f}= (f_0,f_1,\ldots,f_b)$ denote an arbitrary element of $J_{(a,b-\bullet,\bullet)}$.  
%The differential of this element is $(0,\partial(f_0),\ldots, \partial(f_{b-1})$. 
%If $\vec{f}=(0,\ldots,f_i,\ldots,0)$ with $\partial(f_i)=0$ and $i<b$, then $\vec{f}$ is closed.
%Thus, there is an element $\vec{g}=(0,\ldots,0,g_b)$ and $\vec{h)}$ with $\partial(\vec{h})=\vec{f}-\vec{g}$, 
%i.e.
%\[
%(0,\partial(h_0),\ldots, \partial(h_{b-1}))  = (0,\ldots,0,f_i,0,\ldots,0,-g_b),
%\]
%in particular $f_i=\partial(h_{i-1})$.  This proves exactness.
\end{proof}

We now establish Theorem \ref{thm:Jab} and Corollary \ref{cor:Jals}.

\begin{proof}[Proof of Theorem \ref{thm:Jab}.]
We show that $I_{a,\ell} = J_{a,\ell}$ for $1 \leq \ell < a$ implies that $I_{a,\ell+1} = J_{a,\ell+1}$. 
The result then follows inductively from the base case\footnote{Our use of Proposition \ref{prop:Jexact} 
later in the proof will make clear that we cannot simply induct up from 
the obvious equality $I_{a,0} = J_{a,0}$.} 
$I_{a,1} = J_{a,1}$ that was established above in Lemma \ref{lem:Ja1=Ia1}.

To begin, we first claim that the inclusion $E_{a,(b,0)} \hookrightarrow E_{a,(b-1,1)}$ 
restricts to an inclusion $I_{a,(b,0)} \hookrightarrow I_{a,(b-1,1)}$, 
i.e. it sends elements in the former to the latter.
Indeed, recall that the ideal $I_{a,(b,0)}$ is generated over $E_{a,(b,0)}$ by
expressions 
\[
\frac{f(\X,\Y)}{\Delta(\X_1) \Delta(\X_2)} 
\]
where $f \in \k[\X,\Y]$ is anti-symmetric for the diagonal action of $\symg_{a+b}$. 
Similarly, Definition \ref{def:Ials} implies that $I_{a,(b-1,1)}$ 
is generated over $E_{a,(b-1,1)}$ by expressions 
\[
\frac{g(\X\smallsetminus\{x_{a+b}\},\Y\smallsetminus\{y_{a+b}\})}
	{\Delta(\X_1) \Delta(\X_2\smallsetminus\{x_{a+b}\})}\, ,
\]
where $g\in \k[\X,\Y]$ is anti-symmetric for the diagonal action of $\symg_{a+b-1}\times \symg_1$. 
Thus, for $\symg_{a+b}$-anti-symmetric $f \in \k[\X,\Y]$, it suffices to show that
\begin{equation}\label{eq:IinI}
f(\X,\Y) \in {E_{a,(b-1,1)}\text{-}\spann} 
\left\{ \frac{\Delta(\X_2)}{\Delta(\X_2\smallsetminus\{x_{a+b}\})} \cdot
g(\X,\Y) \mid g\in \k[\X, \Y] \text{ antisymmetric for } \symg_{a+b-1}\times \symg_{1}.
\right\}
\end{equation}
Note that $\Delta(\X_2)\Delta(\X_2\smallsetminus\{x_{a+b}\})^{-1}=\prod_{j=a+1}^{a+b-1} (x_{j}-x_{a+b})$.

By the $E_{a,(b,0)}$-linearity of the inclusion $E_{a,(b,0)} \hookrightarrow E_{a,(b-1,1)}$, 
it suffices to check \eqref{eq:IinI} in the case when $f$ is an antisymmetrized monomial, 
i.e. when $f(\X,\Y)=\Delta_S(\X,\Y)=\hdet(S)$ for some collection 
\[
S=\{m_i(x,y)\}_{i=1}^{a+b}\subset \k[x,y] 
\]
of monic monomials. 
In this case, we now see that \eqref{eq:IinI} follows from Laplace expansion in the last column ($j=a+b$), 
after performing certain column operations on $f(\X,\Y)=|m_i(x_j,y_j)|_{i,j=1}^{a+b}$. 

To describe these column operations we use the identity
\[
\sum_{j=a+1}^{a+b} (-1)^{a+b-j} \frac{\Delta(\X_2\smallsetminus\{x_j\})}{\Delta(\X_2)} \cdot m_i(x_{j},y_{j})
= \partial_{a+1}\cdots \partial_{a+b-1}(m_i(x_{a+b},y_{a+b})) \in E_{(a,b),0}\subset E_{(a,b-1),1}
\]
which is straightforward to verify by induction in $b$. 
(The containment in $E_{(a,b),0}$ holds since the Sylvester operator 
$\partial_{a+1}\cdots \partial_{a+b-1}$ maps 
$\Sym(\X_2\smallsetminus \{x_{a+b}\}|x_{a+b})[\V_R^{(b)}] \to \Sym(\X_2)[\V_R^{(b)}]$). 
The desired column operations change the entries of the last column as follows:
\begin{equation}\label{eq:colop}
\begin{aligned}
m_i(x_{a+b},y_{a+b}) 
\mapsto&  m_i(x_{a+b},y_{a+b}) + \sum_{j=a+1}^{a+b-1} (-1)^{a+b-j} 
	\frac{\Delta(\X_2\smallsetminus\{x_j\})}{\Delta(\X_2\smallsetminus\{x_{a+b}\})} \cdot m_i(x_{j},y_{j}) \\
&= \frac{\Delta(\X_2)}{\Delta(\X_2\smallsetminus\{x_{a+b}\})}
	\partial_{a+1}\cdots \partial_{a+b-1}(m_i(x_{a+b},y_{a+b}))\, .
\end{aligned}
\end{equation}
Laplace expansion in the last column then gives
\[
|m_i(x_j,y_j)|_{i,j=1}^{a+b}\\
= \sum_{k=1}^{a+b} (-1)^{a+b+k} \partial_{a+1}\cdots \partial_{a+b-1}(m_k(x_{a+b},y_{a+b})) 
\frac{\Delta(\X_2)}{\Delta(\X_2\smallsetminus\{x_{a+b}\})} |m_i(x_j,y_j)|_{i\neq k,j\neq a+b}\, ,
\]
which verifies \eqref{eq:IinI}.
(Note that we have to slightly extend scalars by inverting the expression
$\Delta(\X_2\smallsetminus\{x_{a+b}\})$ to perform \eqref{eq:colop}; 
the value of the determinant remains unaffected regardless.)

Suppose now that $2 \leq \ell \leq b$ and that we have shown that 
$I_{a,m} = J_{a,m}$ for $1 \leq m < \ell$. 
In particular, Lemmata \ref{lemma:J is stable} and \ref{lemma:I is stable} imply that 
$I_{a,(\ell-s,s)} = J_{a,(\ell-s,s)}$ for $1 \leq s \leq \ell-2$. 
We thus consider the following commutative diagram:
\[
\begin{tikzcd}
& I_{a,(\ell,0)} \ar[r,hook,"d_0"] & I_{a,(\ell-1,1)} \arrow[r,"d_1"] \ar[d,equals] & I_{a,(\ell-2,2)} \ar[d,equals]  \\
0 \arrow[r] & J_{a,(\ell,0)} \ar[r,hook,"d_0"] \arrow[u,hook] & J_{a,(\ell-1,1)} \arrow[r,"d_1"]  & J_{a,(\ell-2,2)} 
\end{tikzcd}
\]
where the vertical inclusion is given by Proposition \ref{prop:JinI}. 
Since $d_1 \circ d_0 = 0$ (by Lemma \ref{lemma:Js in Es}), 
we have that $I_{a,(\ell,0)} \subset \ker \big( I_{a,(\ell-1,1)} \xrightarrow{d_1} I_{a,(\ell-2,2)} \big)$. 
However, 
\[
\ker \big( I_{a,(\ell-1,1)} \xrightarrow{d_1} I_{a,(\ell-2,2)} \big) = 
\ker \big( J_{a,(\ell-1,1)} \xrightarrow{d_1} J_{a,(\ell-2,2)} \big)
\]
and Proposition \ref{prop:Jexact} implies that the latter equals $J_{a,(\ell,0)}$. 
Thus $I_{a,\ell} = I_{a,(\ell,0)} = J_{a,(\ell,0)} = J_{a,\ell}$, 
which establishes the result by induction.
\end{proof}

We record a consequence of Theorem \ref{thm:Jab}, 
which gives a generating set for the ideal $I_{a,b} \vartriangleleft E_{a,b}$ 
of cardinality $2^b$.
Note that its statement (unlike its proof) appears to have nothing to do with link homology!

\begin{cor}\label{cor:JHaiman}
$I_{a,b} = E_{a,b} \cdot \left\{ \frac{\Delta_{\Key_l(\lambda)}(\X,\Y)}{\Delta(\X_1)\Delta(\X_2)} 
	\mid 0\leq l \leq b \, , \ \lambda \in P(l,b-l) \right\}$\, .
\end{cor}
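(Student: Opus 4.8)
The plan is to combine Theorem~\ref{thm:Jab}, which identifies $J_{a,b} = I_{a,b}$, with the computation of generators for $J_{a,b}$ that was carried out in Sections~\ref{sec:cornermaps} and~\ref{ss:haimandethopf}. Recall that Proposition~\ref{prop:corners span hopf} shows $H(M_{a,b})$ is spanned over $E_{a,b}$ by the classes $\PSI(\phi_{a,b,l}(\lambda))$ for $0 \leq l \leq b$ and $\lambda \in P(l,b-l)$; since by Corollary~\ref{cor:FT-split-inj} the splitting map induces an injection $H(\Sigma_{a,b})\colon H(M_{a,b}) \hookrightarrow E_{a,b}$ with image $J_{a,b}$, the ideal $J_{a,b}$ is generated by the elements $\Sigma_{a,b}\circ \PSI(\phi_{a,b,l}(\lambda))$. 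The Key Lemma (Lemma~\ref{lem:keylemma}) computes each such element explicitly as $\pm \frac{\pi(\Delta_{\Key_l(\lambda)}(\X,\Y))}{\Delta(\X_1)\Delta(\X_2)}\cdot \id$, where $\pi$ is the reduction endomorphism of $E_{a,b}$.

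From here, the first step is to observe that, by Lemma~\ref{lemma:Jab reduced}, the ideal $J_{a,b}$ is generated by the reductions $\pi$ of its generators, so no information is lost by working with the reduced Haiman determinants $\pi(\Delta_{\Key_l(\lambda)}(\X,\Y))$. The second step is to invoke Corollary~\ref{cor:RedToUn}, which expresses each $\pi(\Delta_{\Key_l(\lambda)}(\X,\Y))$ as a $\k[\V_L]$-linear combination of the \emph{unreduced} Haiman determinants $\Delta_S(\X,\Y)$ for shapes $S \in \mathsf{S}_{\leq l}$; dividing through by $\Delta(\X_1)\Delta(\X_2)$ shows each generator of $J_{a,b}$ lies in the $E_{a,b}$-span of $\{\Delta_{\Key_{l'}(\lambda')}/\Delta(\X_1)\Delta(\X_2)\}$ together with the other $\Delta_S/\Delta(\X_1)\Delta(\X_2)$ with $S \in \mathsf{S}_{\leq l}$. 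Conversely, Proposition~\ref{prop:UNvsRED} (or more directly the unitriangularity argument in the proof of Corollary~\ref{cor:RedToUn}) shows that the transition matrix between $\{\Delta_S(\X,\Y)\}_{S\in\mathsf{S}_{\leq b}}$ and $\{\pi(\Delta_S(\X,\Y))\}_{S\in\mathsf{S}_{\leq b}}$ is unitriangular over $\k[\V_L]$, hence invertible; this lets me solve back and express each $\Delta_S(\X,\Y)$ with $S \in \mathsf{S}_{\leq b}$ as a $\k[\V_L]$-linear combination of the $\pi(\Delta_{\Key_{l}(\lambda)}(\X,\Y))$. Thus the $E_{a,b}$-ideal generated by $\{\Delta_{\Key_l(\lambda)}(\X,\Y)/(\Delta(\X_1)\Delta(\X_2)) : 0\leq l\leq b,\ \lambda\in P(l,b-l)\}$ equals the ideal generated by all $\{\Delta_S(\X,\Y)/(\Delta(\X_1)\Delta(\X_2)) : S \in \mathsf{S}_{\leq b}\}$, which equals $J_{a,b}$.

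The final step is to identify this ideal with $I_{a,b}$. One inclusion is Proposition~\ref{prop:JinI}, which is already proved. For the reverse inclusion, I would use Theorem~\ref{thm:Jab}: since $J_{a,b} = I_{a,b}$ and we have just shown $J_{a,b}$ is generated by the key-shape ratios, we are done — but to make the corollary genuinely informative one wants to see directly that the shapes $\mathsf{S}_{\leq b}$ suffice to generate $I_{a,b}$ among \emph{all} antisymmetrized monomials. This is where a small combinatorial reduction is needed: given an arbitrary collection $S'$ of $a+b$ monic monomials in $\k[x,y]$, one uses $h$-reduction / $y$-degree column operations (as in the proof of Proposition~\ref{prop:UNvsRED} and the column-expansion argument in the proof of Theorem~\ref{thm:Jab}) to rewrite $\Delta_{S'}(\X,\Y)/(\Delta(\X_1)\Delta(\X_2))$ as an $E_{a,b}$-combination of the $\Delta_S/(\Delta(\X_1)\Delta(\X_2))$ with $S$ having boxes only in the first two rows, in positions bounded as in Definition~\ref{def:Smonomial}. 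I expect this last combinatorial step — showing that arbitrary shapes reduce into $\mathsf{S}_{\leq b}$ modulo the action of $E_{a,b}$ — to be the main obstacle, though it is of the same flavor as computations already carried out in Sections~\ref{ss:haiman dets} and~\ref{sec:induction}; alternatively, one can simply cite Theorem~\ref{thm:Jab} directly and present the corollary as the statement that the $2^b$ key-shape determinants generate, which follows immediately from the generation statement for $J_{a,b}$ established in the previous two steps.
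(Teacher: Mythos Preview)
Your overall strategy is right, but there is a genuine gap in the second paragraph. You correctly observe that $J_{a,b}$ is generated by the \emph{reduced} key determinants $\pi(\Delta_{\Key_l(\lambda)})/(\Delta(\X_1)\Delta(\X_2))$, and you correctly cite the unitriangularity from Proposition~\ref{prop:UNvsRED} and Corollary~\ref{cor:RedToUn}. But that unitriangularity relates $\{\Delta_S\}_{S\in\mathsf{S}_{\leq b}}$ to $\{\pi(\Delta_S)\}_{S\in\mathsf{S}_{\leq b}}$ for the \emph{full} collection of shapes in $\mathsf{S}_{\leq b}$, not just for key shapes. Inverting it expresses each $\Delta_S$ as a $\k[\V_L]$-combination of the $\pi(\Delta_{S'})$ for \emph{all} $S'\in\mathsf{S}_{\leq b}$. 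It does not, by itself, let you ``solve back and express each $\Delta_S$ as a $\k[\V_L]$-linear combination of the $\pi(\Delta_{\Key_l(\lambda)})$,'' as you claim. The key shapes are only $2^b$ of the shapes in $\mathsf{S}_{\leq b}$; you have not shown that the other $\pi(\Delta_R)$ can be eliminated.

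The paper's proof closes exactly this gap, and it does so by using Theorem~\ref{thm:Jab} in a more substantive way than merely identifying $I_{a,b}=J_{a,b}$ at the end. The argument is: for any $R\in\mathsf{S}_{\leq l-1}$, the ratio $\Delta_R/(\Delta(\X_1)\Delta(\X_2))$ lies in $I_{a,b}$ by definition, hence in $J_{a,b}$ by Theorem~\ref{thm:Jab}. Applying $\pi$ (using Lemma~\ref{lemma:Jab reduced}) and the known generation of $J_{a,b}$ by reduced key determinants, one obtains $\pi(\Delta_R)=\sum_{m,\mu} c_{R,\mu}\,\pi(\Delta_{\Key_m(\mu)})$ with $c_{R,\mu}\in E_{a,b}$ and $m\leq l-1$ (the bound on $m$ comes from comparing $y$-degree). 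Plugging this back into Proposition~\ref{prop:UNvsRED} yields a unitriangular relation \emph{over $E_{a,b}$} directly between $\{\Delta_{\Key_l(\lambda)}\}$ and $\{\pi(\Delta_{\Key_l(\lambda)}\}$ themselves, which is what is actually needed. Your ``alternative'' of citing Theorem~\ref{thm:Jab} is on the right track, but you need to articulate precisely this step: Theorem~\ref{thm:Jab} is what lets you absorb the non-key shapes $\pi(\Delta_R)$ back into the key-shape generators. The combinatorial reduction you worry about in your final paragraph is then unnecessary.
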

\begin{proof}
Theorem \ref{thm:Jab} allows us to upgrade the statement of 
Proposition \ref{prop:UNvsRED} to the following:
\begin{equation}\label{eq:UNvsRED}
\Delta_{\Key_l(\lambda)}(\X,\Y) - \pi(\Delta_{\Key_l(\lambda)}(\X,\Y))
= \sum_{\substack{m \leq l-1 \\ \mu \in P(m,b-m)}} 
	c_{\lambda,\mu} \cdot \pi(\Delta_{\Key_m(\mu)}(\X,\Y))
\end{equation}
for $c_{\lambda,\mu} \in E_{a,b}$.
Indeed, for $R \in \mathsf{S}_{l-1}$ (recall Definition \ref{def:Smonomial}), 
we have that $\frac{\Delta_R(\X,\Y)}{\Delta(\X_1)\Delta(\X_2)} \in I_{a,b} = J_{a,b}$, 
thus $\frac{\pi(\Delta_R(\X,\Y))}{\Delta(\X_1)\Delta(\X_2)} \in \pi(J_{a,b}) \subset J_{a,b}$.
By Proposition \ref{prop:corners span hopf} and Lemma \ref{lem:keylemma}, 
this implies that 
\[
\frac{\pi(\Delta_R(\X,\Y))}{\Delta(\X_1)\Delta(\X_2)} 
= \sum_{\substack{m \leq l-1 \\ \mu \in P(m,b-m)}} 
	c_{R,\mu} \cdot \frac{\pi(\Delta_{\Key_m(\mu)}(\X,\Y))}{\Delta(\X_1)\Delta(\X_2)}
\]
where the bound $m \leq l-1$ follows by comparing $y$-degree. 
This gives \eqref{eq:UNvsRED}.

Now, \eqref{eq:UNvsRED} implies that there is a unitriangular matrix with coefficients 
in $E_{a,b}$ relating
\[
\{\Delta_{\Key_l(\lambda)}(\X,\Y) \mid 0 \leq l \leq b \, , \ \lambda \in P(l,b-l)\}
\]
and
\[
\{\pi(\Delta_{\Key_l(\lambda)}(\X,\Y)) \mid 0 \leq l \leq b \, , \ \lambda \in P(l,b-l)\} \, .
\] 
Thus,
\begin{align*}
I_{a,b} = J_{a,b} 
&= E_{a,b} \cdot \left\{ \frac{\pi(\Delta_{\Key_l(\lambda)}(\X,\Y))}{\Delta(\X_1)\Delta(\X_2)} 
	\mid 0\leq l \leq b\, , \ \lambda \in P(l,b-l) \right\} \\
&= E_{a,b} \cdot \left\{ \frac{\Delta_{\Key_l(\lambda)}(\X,\Y)}{\Delta(\X_1)\Delta(\X_2)} 
	\mid 0\leq l \leq b \, , \ \lambda \in P(l,b-l) \right\}
\end{align*}
where in the first line we again use 
Proposition \ref{prop:corners span hopf} and Lemma \ref{lem:keylemma}.
\end{proof}

\section{Link splitting and effective thickness}
\label{sec:linksplit}

The explicit description of the $2$-strand full twist ideal allows for the
extension of the link splitting results from \cite{BS,GH} to the setting of
colored, triply-graded link homology. In particular, in \S
\ref{ss:crossingchange} we establish an analogue of \cite[Corollary 4.4]{GH}
under the condition that certain elements are invertible. We call these elements
\emph{transparifers}, and
their invertibility is the colored analogue of the condition from \cite{GH} that
the differences of deformation parameters $y$ on distinct uncolored link
components are invertible. However, in the colored setting, Definitions
\ref{def:bundling v's} and \ref{def:YHHH} endow each link component with an
alphabet of deformation parameters of cardinality equal to the color of the link
component. Consequently, there are numerous other specializations of interest
that do not invert the transparifer. We save a complete investigation for future
work, but briefly discuss a heuristic for these deformations in \S
\ref{ss:effective thickness}. This makes contact with Conjecture \ref{conj:E}

\subsection{Crossing change morphisms and general splitting maps}
\label{ss:crossingchange}

Much of the machinery of \cite[Section 4]{GH} applies mutatis mutandis, 
using the following analogue of Corollary~\ref{cor:section}.

\begin{lemma}
	\label{lem:section}
Let $a \geq b \geq 0$. There exists a closed morphism
$\psi_{a,b}\colon \oone_{a,b} \to \VFTmin_{a,b}$ of
weight $\qdeg^{-2b}\tdeg^{2b}$ such that
\[
\Sigma_{a,b} \circ \psi_{a,b} = 
\frac{\Delta_{\Key_b(\emptyset)}(\X,\Y)}{\Delta(\X_1)\Delta(\X_2)} 
\cdot \id_{\oone_{a,b}}
\quad \text{and} \quad
\psi_{a,b} \circ \Sigma_{a,b} \sim \frac{\Delta_{\Key_b(\emptyset)}(\X,\Y)}{\Delta(\X_1)\Delta(\X_2)}  \cdot \id_{\VFTmin_{a,b}} \, .
\]
\end{lemma}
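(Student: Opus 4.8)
The plan is to construct $\psi_{a,b}$ as a deformation of the "bottom unit map," mirroring the construction of the splitting map $\Sigma_{a,b}$ in Proposition~\ref{defthm:splittingmap} but in the opposite direction. Recall from Definition~\ref{def:splitting map model} that $\VFTmin_{a,b} = \tw_{\d^h+\d^v+\Delta^v}\big(\bigoplus_{b\geq k\geq r\geq 0} P_{k,r,0}\big)$, and observe that the corner $P_{b,b,0} = \qdeg^{ab} W_b$ sits at one extreme of this complex. The morphism $\psired_{a,b}$ from Corollary~\ref{cor:section} uses the \emph{other} extreme corner $P_{0,0,0} = \qdeg^{-2b}\tdeg^{2b}\oone_{a,b}$, and satisfies $\Sigma_{a,b} \circ \psired_{a,b} = \yred_{a+1}\cdots\yred_{a+b}\cdot \id$. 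The issue is that in the present (unreduced) setting of $\VS_{a,b}$, one wants the RHS to be $\Delta_{\Key_b(\emptyset)}(\X,\Y)/(\Delta(\X_1)\Delta(\X_2))$ rather than its reduced image. By the Example following Corollary~\ref{cor:RedToUn}, these two expressions are related by the reduction functor: $\pi\big(\Delta_{\Key_b(\emptyset)}(\X,\Y)\big) = \yred_{a+1}\cdots\yred_{a+b}\,\Delta(\X_1)\Delta(\X_2)$. So first I would work in $\VSred_{a,b}$ with $\psired_{a,b}$, then lift to $\VS_{a,b}$.

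The cleanest approach is to use Proposition~\ref{defthm:splittingmap} directly. That proposition establishes that precomposition with $\Sigma_{a,b}$ gives an $\End_{\VS_{a,b}}(\oone_{a,b})$-linear homotopy equivalence $\End_{\VS_{a,b}}(\oone_{a,b}) \to \Hom_{\VS_{a,b}}(\VFTmin_{a,b},\oone_{a,b})$. Dually (using that $\VFTmin_{a,b}$ is invertible, being homotopy equivalent to a curved Rickard complex, so $\Hom_{\VS_{a,b}}(\oone_{a,b},\VFTmin_{a,b}) \simeq \End_{\VS_{a,b}}(\oone_{a,b})$ as well), postcomposition with $\Sigma_{a,b}$ gives a homotopy equivalence $\Hom_{\VS_{a,b}}(\oone_{a,b},\VFTmin_{a,b}) \to \End_{\VS_{a,b}}(\oone_{a,b})$. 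The element $\Delta_{\Key_b(\emptyset)}(\X,\Y)/(\Delta(\X_1)\Delta(\X_2)) \in E_{a,b}$ lies in the Hopf link ideal $J_{a,b}$ by Lemma~\ref{lem:keylemma} (it is $\pm\pi^{-1}$ of the image of a corner map; more directly, it is the image of $\PSI(\phi_{a,b,0}(\emptyset))$ under $\Sigma_{a,b}$, up to the reduction discussed above). Hence there exists a closed morphism $\psi_{a,b}\colon \oone_{a,b}\to\VFTmin_{a,b}$ with $\Sigma_{a,b}\circ\psi_{a,b} = \frac{\Delta_{\Key_b(\emptyset)}(\X,\Y)}{\Delta(\X_1)\Delta(\X_2)}\cdot\id$, and one checks that its weight is $\qdeg^{-2b}\tdeg^{2b}$ by comparing degrees: $\wt\big(\Delta_{\Key_b(\emptyset)}/(\Delta(\X_1)\Delta(\X_2))\big) = \wt(\yred_{a+1}\cdots\yred_{a+b}) = \qdeg^{-2b}\tdeg^{2b}$ since the $\yred_i$ have weight $\qdeg^{-2}\tdeg^2$.

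For the second identity, I would argue exactly as in the proof of Corollary~\ref{cor:section}. Since $\VFTmin_{a,b}$ is invertible, $\End_{\VSred_{a,b}}(\VFTmin_{a,b}) \simeq \End_{\VSred_{a,b}}(\oone_{a,b}) = E_{a,b}$ (after identifying reduced endomorphisms), so $\psi_{a,b}\circ\Sigma_{a,b} \sim c\cdot\id_{\VFTmin_{a,b}}$ for some $c \in E_{a,b}$. Then I would compute
\[
\tfrac{\Delta_{\Key_b(\emptyset)}(\X,\Y)}{\Delta(\X_1)\Delta(\X_2)} \cdot \Sigma_{a,b} \simeq (\Sigma_{a,b}\circ\psi_{a,b})\circ\Sigma_{a,b} = \Sigma_{a,b}\circ(\psi_{a,b}\circ\Sigma_{a,b}) \simeq c\cdot\Sigma_{a,b},
\]
and conclude $c = \Delta_{\Key_b(\emptyset)}(\X,\Y)/(\Delta(\X_1)\Delta(\X_2))$ using that $\Hom_{\VS_{a,b}}(\VFTmin_{a,b},\oone_{a,b})$ is a torsion-free module over the relevant polynomial subring (by Proposition~\ref{defthm:splittingmap}, since this Hom-space is homotopy equivalent to $E_{a,b}$, which is torsion-free over the subring generated by $\Delta_{\Key_b(\emptyset)}(\X,\Y)/(\Delta(\X_1)\Delta(\X_2))$ — one needs this element to be a nonzerodivisor in $E_{a,b}$, which follows since it equals $\yred_{a+1}\cdots\yred_{a+b}$ up to unit after reduction, and $\yred_{a+1}\cdots\yred_{a+b}$ was shown to be a nonzerodivisor in the proof of Corollary~\ref{cor:M is J general}). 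The main obstacle I anticipate is being careful about working in $\VS_{a,b}$ versus $\VSred_{a,b}$: the element $\Delta_{\Key_b(\emptyset)}(\X,\Y)/(\Delta(\X_1)\Delta(\X_2))$ is genuinely an element of the unreduced ring $E_{a,b}$ (involving $\V_L^{(a)}$), whereas the explicit splitting map $\Sigma_{a,b}$ from Theorem~\ref{thm:splittingmap} is reduced; reconciling these requires invoking that $\Sigma_{a,b}$ was constructed as a reduced morphism but can be regarded in $\VS_{a,b}$, and that the equality $\Sigma_{a,b}\circ\psi_{a,b} = \frac{\Delta_{\Key_b(\emptyset)}(\X,\Y)}{\Delta(\X_1)\Delta(\X_2)}\cdot\id$ holds after the appropriate identifications. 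This is essentially bookkeeping, but it is where care is needed.
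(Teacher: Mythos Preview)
Your overall structure matches the paper's proof---find $\psi_{a,b}$ by showing the target element lies in $\im(H(\Sigma_{a,b}\circ-))=J_{a,b}$, then deduce the second identity exactly as in Corollary~\ref{cor:section}---but there is one genuine error and one gap.

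\textbf{The error.} Your claim that $\Hom_{\VS_{a,b}}(\oone_{a,b},\VFTmin_{a,b}) \simeq \End_{\VS_{a,b}}(\oone_{a,b})$ and that postcomposition with $\Sigma_{a,b}$ is a homotopy equivalence is false. Invertibility of $\VFTmin_{a,b}$ gives $\End_{\VS_{a,b}}(\VFTmin_{a,b})\simeq \End_{\VS_{a,b}}(\oone_{a,b})$, not $\Hom_{\VS_{a,b}}(\oone_{a,b},\VFTmin_{a,b})\simeq\End_{\VS_{a,b}}(\oone_{a,b})$. Indeed, the map on homology induced by $\Sigma_{a,b}\circ(-)$ is the injection $H(M_{a,b})\hookrightarrow E_{a,b}$ with image the \emph{proper} ideal $J_{a,b}$ (already for $a=b=1$ one has $J_{1,1}=(x_1-x_2,y_1-y_2)$, which has no degree-zero part). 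So postcomposition with $\Sigma_{a,b}$ is not a homotopy equivalence, and you genuinely need the target element to lie in $J_{a,b}$.

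\textbf{The gap.} You correctly identify that you need $\Delta_{\Key_b(\emptyset)}(\X,\Y)/(\Delta(\X_1)\Delta(\X_2))\in J_{a,b}$, but Lemma~\ref{lem:keylemma} only yields $\pi(\Delta_{\Key_b(\emptyset)})/(\Delta(\X_1)\Delta(\X_2))=\yred_{a+1}\cdots\yred_{a+b}\in J_{a,b}$, i.e.\ the \emph{reduced} element. Passing from the reduced to the unreduced element is not ``bookkeeping'': the paper invokes Theorem~\ref{thm:Jab} ($I_{a,b}=J_{a,b}$), since the unreduced Haiman determinant is manifestly $\symg_{a+b}$-antisymmetric and hence lies in $I_{a,b}$. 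Once you cite that theorem, the existence of $\psi_{a,b}$ follows immediately (and your weight computation is fine). Your argument for the second identity, adapted verbatim from Corollary~\ref{cor:section} using torsion-freeness, is then correct and is exactly what the paper does.
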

\begin{proof} Since
$\Delta_{\Key_b(\emptyset)}(\X,\Y)\Delta(\X_1)^{-1}\Delta(\X_2)^{-1}
\in I_{a,b}=J_{a,b}=\im(H(\Sigma_{a,b}))$ 
we can find closed $\psi_{a,b}\in M_{a,b}$ satisfying the first equation. 
The second relation follows as in the proof of Corollary~\ref{cor:section}.
\end{proof}

As a reminder,
$\X = \X_1 \cup \X_2 = \{x_1,\ldots,x_{a+b}\}$ and $\Y = \{y_1,\ldots,y_{a+b}\}$. 
Further, recall from Remark \ref{rem:subtleV} that expressions of the form
$g \cdot \id_{X}$ with $g \in \End_{\YS(\SSBim)}(\oone_{a,b})$
are shorthand for the morphism
\[
X \cong \oone_{a,b} \hComp X \xrightarrow{g \hComp \Id} \oone_{a,b} \hComp X  \cong X \, .
\]
The morphism $\psi_{a,b}$ from Lemma \ref{lem:section} is the unreduced analogue
of $\psired_{a,b}$ from Corollary~\ref{cor:section}.
We work with it here, since it treats the deformation parameters 
on the $a$- and $b$-labeled strands more ``democratically'' 
than $\psired_{a,b}$.

\begin{remark} As Corollary \ref{cor:section} and Lemma \ref{lem:section} show, 
the ``sections'' of the $2$-strand full twist splitting map are not unique in the colored case. 
The maps $\psi_{a,b}$ and $\psired_{a,b}$ are two valid choices.
\end{remark}

Given alphabets $\X_i ,\X_j$ and associated alphabets $\Y_i$, $\Y_j$, set
$a=\max(|\X_i|,|\X_j|)$ and $b=\min(|\X_i|,|\X_j|)$. We will abbreviate
\begin{equation} \label{eqn:sectionval}
\sectionval_{i,j}:= \frac{\Delta_{\Key_b(\emptyset)}(\X_i+\X_j,\Y_i+\Y_j)}{\Delta(\X_i)\Delta(\X_j)} 
\end{equation}
and call this the \emph{transparifer} of the morphism $\psi_{a,b}$. Whenever the
transparifer is invertible, the morphism $\psi_{a,b}$ from
Lemma~\ref{lem:section} becomes a homotopy equivalence between the identity
bimodule and the $2$-strand full twist. 
Thus, strands in the latter become \emph{transparent} to each other.

\begin{example} \label{exa:transpval} 
Under the substitution $v_{i,r}\mapsto 0$ and $v_{j,r}\mapsto 0$
for $r>1$, the transparifer reduces to $\pm(v_{j,1}-v_{i,1})^b$. 
If we further specialize $v_{j,1}\mapsto z_j u$ for some scalars $z_j$ 
and invertible $u$ as in 
Example~\ref{exa:coefficients} \eqref{exa:coefficients3}, 
then the transparifer becomes invertible whenever $z_i\neq z_j$.   
\end{example}

\begin{lemma}
	\label{lem:crossingchangemaps}
Consider the $i^{th}$ colored Artin generator $(\agen_i)_{\brc}$ in $\Br_m(\Z_{\geq 1})$ 
and let $\perm \in \symg_m$. 
Set $b:=\min(b_i,b_{i+1})$, then there exists a pair of degree zero morphisms:
\[
\psi^{+} \colon \YS C((\agen_i)_{\brc,\perm}) 
	\to \YS C((\agen_i\inv)_{\brc,\perm})
\, , \quad 
\psi^{-} \colon \YS C((\agen_i\inv)_{\brc,\perm}) 
	\to \qdeg^{2b} \tdeg^{-2b} \YS C((\agen_i)_{\brc,\perm})
\]
such that
$\psi^{+}\circ \psi^{-} \sim \sectionval_{\perm(i),\perm(i+1)} \cdot \id_{\YS C(\agen_i\inv)}$, 
and $\psi^{-}\circ \psi^{+} \sim \sectionval_{\perm(i),\perm(i+1)} \cdot \id_{\YS C(\agen_i)}$.
\end{lemma}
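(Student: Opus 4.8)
The idea is to build the morphisms $\psi^\pm$ from the $2$-strand full-twist splitting data by horizontal composition with identity curved Rickard complexes, in the same way that $\YS C((\agen_i)_{\brc,\perm})$ is assembled via $\boxtimes$ from the $2$-strand complex $\YS C_{a,b}$. First I would observe that the crossing-change morphism amounts, locally, to a morphism relating $\YS C_{a,b}$ and $\YS C^\vee_{a,b}$. Recall that the $2$-strand full twist is $\FT_{a,b} = \MCCS_{a,b}^0 = C_{b,a}\hComp C_{a,b}$, so that, up to the grading conventions fixed in Definition \ref{def:Rickardcx}, there is a canonical identification
\[
\YS C^\vee_{a,b} \hComp \VFT_{a,b} \simeq \YS C_{a,b}
\]
in $\VS_{a,b}$ (using that $\VFT_{a,b}$ is an invertible $1$-morphism and $(C_{a,b})^\vee = C^\vee_{b,a}$). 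Composing this equivalence with the morphisms $\Sigma_{a,b}\colon \VFTmin_{a,b} \to \oone_{a,b}$ (respectively $\psi_{a,b}\colon \oone_{a,b} \to \VFTmin_{a,b}$) from \S\ref{ss:explicit splitting map} and Lemma \ref{lem:section}, horizontally composed on the left with $\id_{\YS C^\vee_{a,b}}$, produces morphisms between (the two-strand versions of) $\YS C((\agen_i))$ and $\YS C((\agen_i\inv))$. Here I should use that $\VFTmin_{a,b}\simeq \VFT_{a,b}$ so that $\Sigma_{a,b}$ and $\psi_{a,b}$ can equally be regarded as maps involving $\VFT_{a,b}$.

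Next I would check the composition identities. Since $\Sigma_{a,b}\circ\psi_{a,b} = \sectionval \cdot \id_{\oone_{a,b}}$ and $\psi_{a,b}\circ\Sigma_{a,b} \sim \sectionval\cdot \id_{\VFTmin_{a,b}}$ by Lemma \ref{lem:section} (with $\sectionval = \Delta_{\Key_b(\emptyset)}(\X,\Y)/(\Delta(\X_1)\Delta(\X_2))$), horizontally composing with $\id_{\YS C^\vee_{a,b}}$ and using that $\hComp$ is a (dg) bifunctor — so it preserves composition and homotopy, via the middle interchange law \eqref{eq:MidInt} — gives $\psi^+\circ\psi^- \sim \sectionval\cdot \id$ and $\psi^-\circ\psi^+ \sim \sectionval\cdot \id$ at the two-strand level. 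The factor $\sectionval$ here is exactly $\sectionval_{\perm(i),\perm(i+1)}$ once we rename the alphabets on the two strands according to the strand-numbering permutation $\perm$: on $\YS C((\agen_i)_{\brc,\perm})$ the left and right boundary alphabets of the crossing are indexed by $\perm(i)$ and $\perm(i+1)$ (this is precisely the bookkeeping role of $\sigma$ in Definition \ref{def:Y}, see the remark after it), and $\sectionval$ only depends on the colors and the deformation-parameter alphabets $\Y_{\perm(i)}, \Y_{\perm(i+1)}$, which is why \eqref{eqn:sectionval} is symmetric in its two arguments. The degree/weight bookkeeping ($\qdeg^{-2b}\tdeg^{2b}$ on $\psi^-$, weight zero on $\psi^+$) is inherited directly from the weights of $\psi_{a,b}$ and $\Sigma_{a,b}$, together with the shifts in Definition \ref{def:Rickardcx}; I would note $\wt(\sectionval) = \qdeg^{-2b}\tdeg^{2b}$, consistent with $\psi^+\circ\psi^-$ having that weight.

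Finally I would globalize: tensoring the two-strand morphisms with $\oone_{(b_1,\ldots,b_{i-1})}$ on one side and $\oone_{(b_{i+2},\ldots,b_m)}$ on the other via $\boxtimes$, as in Definition \ref{def:RickardGeneral} and equation \eqref{eq:ETenDef}, yields $\psi^\pm$ as morphisms in $\YS(\SSBim)$ (using that $\boxtimes$ is a monoidal dg bifunctor on $\YS(\SSBim)$, cf. the discussion after Lemma \ref{lem:hComp}). Applying $\boxtimes$ to the homotopies $\psi^+\circ\psi^-\sim \sectionval\cdot\id$ preserves them, and $\sectionval_{\perm(i),\perm(i+1)}\cdot\id$ on the two-strand factor tensors up to $\sectionval_{\perm(i),\perm(i+1)}\cdot\id$ on the whole complex (the transparifer acts only through the alphabets on the two relevant strands). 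The main obstacle I anticipate is not conceptual but notational: carefully tracking the grading shifts and the identification $\YS C^\vee_{a,b}\hComp \VFT_{a,b}\simeq \YS C_{a,b}$ compatibly with the curved structure — in particular making sure the curvature elements match up under this equivalence, which follows from Lemma \ref{lem:curvinginvertibles} (uniqueness of curved lifts of invertible complexes) but requires invoking it at the right point — and ensuring the strand-permutation relabeling $\sigma \leftrightarrow \perm$ is applied consistently so that the transparifer indices come out as $\perm(i), \perm(i+1)$ rather than $i, i+1$.
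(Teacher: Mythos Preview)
Your proposal is correct and follows essentially the same approach as the paper: take the splitting map $\Sigma_{a,b}$ and section $\psi_{a,b}$ from Lemma~\ref{lem:section} and horizontally compose with the curved Rickard complex for the inverse crossing, then globalize via $\boxtimes$. The only immaterial difference is that you compose with $\id_{\YS C^\vee_{a,b}}$ on the left whereas the paper phrases it as composition ``on the right''; both produce equivalent crossing-change morphisms, and your careful tracking of the equivalence $\YS C^\vee_{a,b}\hComp \VFT_{a,b}\simeq \YS C_{a,b}$, the permutation labeling, and the weight bookkeeping is exactly what the paper's one-line proof leaves implicit.
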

\begin{proof} This follows from Lemma~\ref{lem:section} by horizontal
	composition (on the right) with the curved Rickard complex for the inverse Artin generator.
\end{proof}

More generally, we have the following.

\begin{proposition}\label{prop:braidsplit}
	Let $\beta^+_{\brc}$ and $\beta^-_{\brc}$ be
colored braids in $\Br_m(\Z_{\geq 1})$ that differ in a single crossing, 
i.e.:
\[
\beta^{\pm}_{\brc} := \beta' \agen_i^{\pm} \beta''_{\brc}
\]
for some $\beta', \beta'' \in \Br_m(\Z_{\geq 1})$. 
Let $b:=\min(\beta''(\brc)_i,\beta''(\brc)_{i+1})$ 
(i.e. the minimal color involved in the crossing in the 
Artin generator $\agen_i^{\pm}$) and let $\perm \in \symg_m$.
Then, there exists a pair of degree zero morphisms
\[
\psi^{+}	%\psi^{+}_{\beta', i, \beta'',\brc,\perm}
	\colon \YS C(\beta^+_{\brc,\perm}) 
	\to \YS C(\beta^-_{\brc,\perm})
	\, , \quad 
\psi^{-}	%\psi^{-}_{\beta', i, \beta'',\brc,\perm}
	\colon \YS C(\beta^-_{\brc,\perm}) 
	\to \qdeg^{2b} \tdeg^{-2b} \YS C(\beta^+_{\brc,\perm})
\]
such that
\[
\psi^{+} \circ \psi^{-}
	%\psi^{+}_{\beta', i, \beta'',\brc,\perm} 
	%\circ \psi^{-}_{\beta', i, \beta'',\brc,\perm}  
	\sim \id_{\YS C(\beta')}\hComp (\sectionval_{(\beta''\perm)(i),(\beta''\perm)(i+1)} 
	\cdot \id_{\YS C(\agen_i\inv)}) \hComp \id_{\YS C(\beta''_{\brc,\perm})}
\]
and
\[
\psi^{-} \circ \psi^{+}
	%\psi^{-}_{\beta', i, \beta'',\brc,\perm} 
	%\circ \psi^{+}_{\beta', i, \beta'',\brc,\perm}  
	\sim \id_{\YS C(\beta')}\hComp (\sectionval_{(\beta''\perm)(i),(\beta''\perm)(i+1)} 
	\cdot \id_{\YS C(\agen_i)}) \hComp \id_{\YS C(\beta''_{\brc,\perm})} \, .
\]
\end{proposition}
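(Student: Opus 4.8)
The statement is the ``global'' crossing-change morphism obtained by horizontally composing the local morphisms from Lemma \ref{lem:crossingchangemaps} with the (curved) Rickard complexes of the braids $\beta'$ and $\beta''_{\brc}$ sitting on either side. The plan is to first reduce to the single-crossing case, then bootstrap from Lemma \ref{lem:section} exactly as in Lemma \ref{lem:crossingchangemaps}, and finally transport everything through $\hComp$ using the $2$-categorical structure of $\YS(\SSBim)$ (Theorem \ref{thm:intro2}) together with the middle interchange law \eqref{eq:MidInt}.

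First I would fix notation: write $\gamma := \beta'$ and $\delta := \beta''_{\brc}$, and let $\aa := \delta(\brc)$ be the object on which the Artin generator $\agen_i^{\pm}$ acts, so that the colors meeting the crossing are $\aa_i = (\beta''(\brc))_i$ and $\aa_{i+1} = (\beta''(\brc))_{i+1}$, with $b = \min(\aa_i,\aa_{i+1})$. Applying Lemma \ref{lem:crossingchangemaps} with $\perm$ replaced by the permutation $\delta \circ \perm$ (which records how the strands are numbered as they enter the $\agen_i^{\pm}$ box inside the larger braid) produces local morphisms
\[
\psi^{+}_{\mathrm{loc}} \colon \YS C((\agen_i)_{\aa, \delta \perm}) \to \YS C((\agen_i\inv)_{\aa,\delta\perm})
\, , \quad
\psi^{-}_{\mathrm{loc}} \colon \YS C((\agen_i\inv)_{\aa,\delta\perm}) \to \qdeg^{2b}\tdeg^{-2b} \YS C((\agen_i)_{\aa,\delta\perm})
\]
with $\psi^{+}_{\mathrm{loc}} \circ \psi^{-}_{\mathrm{loc}} \sim \sectionval_{(\delta\perm)(i), (\delta\perm)(i+1)} \cdot \id$ and $\psi^{-}_{\mathrm{loc}} \circ \psi^{+}_{\mathrm{loc}} \sim \sectionval_{(\delta\perm)(i),(\delta\perm)(i+1)} \cdot \id$. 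Since $\YS C(\beta^{\pm}_{\brc,\perm}) = \YS C(\gamma) \hComp \YS C((\agen_i^{\pm})_{\aa,\delta\perm}) \hComp \YS C(\delta_{\brc,\perm})$ by the construction of curved Rickard complexes in Theorem \ref{thm:braidinvariant} (cf.\ \eqref{eq:Rick-hComp} and the sketch in its proof), I would then simply set
\[
\psi^{\pm} := \id_{\YS C(\gamma)} \hComp \psi^{\pm}_{\mathrm{loc}} \hComp \id_{\YS C(\delta_{\brc,\perm})} \, .
\]
These are closed degree-zero $2$-morphisms in $\YS(\SSBim)$ because $\hComp$ of closed degree-zero $2$-morphisms is closed and degree-zero (Theorem \ref{thm:intro2}, and the explicit rules in \S\ref{ss:curved cxs}), and the grading shift $\qdeg^{2b}\tdeg^{-2b}$ is inherited unchanged.

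Next I would compute the compositions. Using the middle interchange law \eqref{eq:MidInt} (with $g_1 = \psi^{+}_{\mathrm{loc}}$, $g_2 = \psi^{-}_{\mathrm{loc}}$, both $f_i = \id$, and all the degrees involved even so that no Koszul sign appears), one gets
\[
\psi^{+} \circ \psi^{-} = \id_{\YS C(\gamma)} \hComp (\psi^{+}_{\mathrm{loc}} \circ \psi^{-}_{\mathrm{loc}}) \hComp \id_{\YS C(\delta_{\brc,\perm})} \, .
\]
Now $\psi^{+}_{\mathrm{loc}} \circ \psi^{-}_{\mathrm{loc}}$ is homotopic to $\sectionval_{(\delta\perm)(i),(\delta\perm)(i+1)} \cdot \id_{\YS C(\agen_i\inv)}$, and horizontally composing a homotopy with identity $1$-morphisms on either side yields a homotopy (again by the interchange law / functoriality of $\hComp$ on $2$-morphisms, cf.\ Remark after Theorem \ref{thm:intro2}), so
\[
\psi^{+} \circ \psi^{-} \sim \id_{\YS C(\gamma)} \hComp \big( \sectionval_{(\delta\perm)(i),(\delta\perm)(i+1)} \cdot \id_{\YS C(\agen_i\inv)} \big) \hComp \id_{\YS C(\delta_{\brc,\perm})}
\]
which, after unwinding $\delta\perm = \beta'' \perm$ (abusing notation to conflate braids and permutations, as done throughout the paper), is exactly the claimed formula for $\psi^{+}\circ \psi^{-}$. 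The computation of $\psi^{-}\circ \psi^{+}$ is identical with the roles of $\agen_i$ and $\agen_i\inv$ exchanged.

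\textbf{Main obstacle.} The only genuinely non-formal point is bookkeeping the permutations: one must check that the numbering of strands used when invoking Lemma \ref{lem:crossingchangemaps} inside the larger braid is $\delta \circ \perm$ (or more precisely the permutation that $\agen_i^{\pm}$ sees as its domain decoration), and that the transparifer $\sectionval_{(\beta''\perm)(i),(\beta''\perm)(i+1)}$ produced is attached to the correct pair of alphabets $\X_{(\beta''\perm)(i)}, \X_{(\beta''\perm)(i+1)}$ after one passes it from ``local'' coordinates to the coordinates of the ambient category via the substitution rules of \S\ref{ss:curved cxs}. This is the colored analogue of the indexing care needed in \cite[\S4]{GH}; it is routine but must be done carefully, and it is where I expect essentially all of the actual writing to go. Everything else is a direct application of the monoidal dg $2$-category structure on $\YS(\SSBim)$ and the interchange law.
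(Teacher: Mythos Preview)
Your proposal is correct and takes essentially the same approach as the paper: the paper's proof is the single sentence ``This is an immediate consequence of Lemma~\ref{lem:crossingchangemaps} using horizontal composition in $\YS(\SSBim)$,'' and you have simply unpacked what that horizontal composition entails, including the bookkeeping of the permutation $\beta''\perm$ and the use of the interchange law.
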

\begin{proof}
This is an immediate consequence of 
Lemma \ref{lem:crossingchangemaps} using horizontal composition 
in $\YS(\SSBim)$.
\end{proof}

Note that if the crossing in Proposition \ref{prop:braidsplit} occurs between 
strands that correspond to the same component of the closure of a balanced, 
colored braid, then the relevant transparifer acts null-homotopically on the complex 
$\YS C_{\KR}(\b_{\brc})$ from Definition \ref{def:YHHH} 
(since the alphabets $\X_i$ and $\X_j$ become identified, as do $\Y_i$ and $\Y_j$).
Thus, in the setting of link homology, 
Proposition \ref{prop:braidsplit} is most interesting when the crossing is between 
strands in distinct components of the closure.

Now, given a link $\LB$ presented as the closure $\hat{\beta}$ of a braid $\beta$, 
recall that it is possible to unlink a component of $\LB$ from the remaining 
components via a sequence of crossing changes.
More precisely, if $\LB$ has components $\{\LB_i\}_{i=1}^\ell$, then there is a sequence of 
crossing changes that take the braid $\beta$ to a braid $\beta'$ where
\[
\hat{\beta} = \LB = \LB_1 \cup \cdots \cup \LB_\ell
\]
and
\[
\hat{\beta'} = (\LB_1 \cup \cdots \cup \LB_{i-1} \cup \LB_{i+1} \cup \cdots \cup \LB_\ell) 
	\sqcup \LB_i \, .
\]
In the latter display, the square cup $\sqcup$ denotes the \emph{split union}, 
which is given by placing two links in disjoint $3$-balls in $S^3$. 
Passing to the complex $\YS C_{\KR}(\beta_{\brc})$ from Definition \ref{def:YHHH} 
(which is the relevant complex when considering braid closures), 
now immediately gives the following.

\begin{proposition}\label{prop:CYcrossingchangemaps} 
Let $\LB$ and $\LB'$ be colored links presented as the closures 
of colored braids $\b_{\brc}$ and $\b'_{\brc}$, and suppose that 
there is a sequence of crossing changes taking $\b_{\brc}$ to $\b'_{\brc}$.
Let $p_{i,j}$ and $n_{i,j}$ denote the number of positive-to-negative 
and negative-to-positive crossing changes between the components $\LB_i$ and $\LB_j$, 
then there exist closed morphisms
\[
\psi_{\LB\to \LB'} \colon \YS C_{\KR}(\b_{\brc}) \to \YS C_{\KR}(\b'_{\brc})
\, , \quad
\psi_{\LB'\to \LB} \colon \YS C_{\KR}(\b'_{\brc}) \to \YS C_{\KR}(\b_{\brc})
\]
so that 
\[
\psi_{\LB'\to \LB}\circ \psi_{\LB\to \LB'} \sim 
\prod_{i,j}\sectionval_{i,j}^{p_{i,j}+n_{i,j}} \cdot \id_{\YS C_{\KR}(\b_{\brc})}
\]
and
\[
\psi_{\LB\to \LB'}\circ \psi_{\LB'\to \LB}  \sim 
\prod_{i,j}\sectionval_{i,j}^{p_{i,j}+n_{i,j}}\cdot \id_{\YS C_{\KR}(\b'_{\brc})} \, .
\]
\end{proposition}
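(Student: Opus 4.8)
The plan is to derive Proposition~\ref{prop:CYcrossingchangemaps} from Proposition~\ref{prop:braidsplit} by iterating the single-crossing-change maps along the given sequence of crossing changes. First I would fix a sequence of braids $\b_{\brc} = \b^{(0)}_{\brc}, \b^{(1)}_{\brc}, \dots, \b^{(r)}_{\brc} = \b'_{\brc}$ in which each $\b^{(t)}_{\brc}$ is obtained from $\b^{(t-1)}_{\brc}$ by a single crossing change (of some sign, between strands corresponding to components $\LB_{i_t}$ and $\LB_{j_t}$). Proposition~\ref{prop:braidsplit} supplies, for each $t$, a pair of degree-zero morphisms $\psi^{+}_t \colon \YS C(\b^{(t-1),\text{or }t}_{\brc,\perm}) \to \YS C(\b^{(t\text{ or }t-1)}_{\brc,\perm})$ and its ``reverse'' $\psi^-_t$, composing (either way) to multiplication by the transparifer $\sectionval_{(\gamma''\perm)(i),(\gamma''\perm)(i+1)} \cdot \id$ where $\gamma''$ records the portion of the braid below the crossing. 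The first key step is to observe that when we pass to the closure and invoke Definition~\ref{def:YHHH} (and Remark~\ref{rem:Monodromies}), the action of this transparifer on $\YS C_{\KR}(\b^{(t)}_{\brc})$ depends only on the two link components meeting the crossing, and becomes precisely $\sectionval_{i_t,j_t} \cdot \id$ after bundling the deformation parameters per component via Lemma~\ref{lemma:bundled curvature}; this requires using that the alphabets $\X_k$ and $\X_k'$ on a strand are identified under $\HH_\bullet$, and that bundled curvature identifies $\leftX_{[i]}$ with $\rightX_{[i]}$, so that the transparifer's value is genuinely a scalar in $A_{\LB} = \bigotimes_\comp \Sym(\X_\comp)\otimes\k[\V^\comp]$ independent of where along the component it is inserted.

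Next I would define $\psi_{\LB\to\LB'}$ as the composite $\psi^{\pm}_r \circ \cdots \circ \psi^{\pm}_1$ (passing through the normalization shifts $\qdeg^{2b_t}\tdeg^{-2b_t}$ from Definition~\ref{def:HHH}, which telescope against the grading shifts already present in $\YS C_{\KR}(-)$) and $\psi_{\LB'\to\LB}$ as the composite of the reverse maps in the opposite order. The second key step is then a bookkeeping computation: $\psi_{\LB'\to\LB}\circ\psi_{\LB\to\LB'}$ is homotopic to the composite of the $r$ scalar multiplications, but one must commute each $\psi^{\mp}_t$ past the scalars produced by the earlier steps. Since each transparifer $\sectionval_{i_t,j_t}$ lies in $A_{\LB}$ and acts centrally (it is an endomorphism of an identity bimodule horizontally composed on the side), these scalars commute with all the $\psi^{\pm}$'s up to homotopy, so the total composite is homotopic to $\prod_t \sectionval_{i_t,j_t} \cdot \id = \prod_{i,j} \sectionval_{i,j}^{p_{i,j}+n_{i,j}} \cdot \id$, where the exponent counts exactly how many of the $r$ crossing changes occurred between $\LB_i$ and $\LB_j$. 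Here I would emphasize that the transparifer does not distinguish positive-to-negative from negative-to-positive changes — Lemma~\ref{lem:crossingchangemaps} gives the same $\sectionval$ in both directions — which is why the exponent is $p_{i,j}+n_{i,j}$ rather than a signed count; the same argument gives the symmetric statement for $\psi_{\LB\to\LB'}\circ\psi_{\LB'\to\LB}$.

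The main obstacle I anticipate is the grading/normalization bookkeeping: the renormalized complex $\YS C_{\KR}(\b_{\brc})$ from \eqref{eq:YC(b)} carries shifts depending on the colored writhe $\e(\beta_\brc)$ and the invariants $n(\b_\brc), N(\b_\brc)$, and a crossing change of sign (positive $\to$ negative) alters $\e$ by $-2ab$ where $\{a,b\}$ are the colors at the crossing. One must check that the grading shifts $\qdeg^{\pm 2b}\tdeg^{\mp 2b}$ appearing in Proposition~\ref{prop:braidsplit} exactly account for the change in renormalization, so that $\psi_{\LB\to\LB'}$ and $\psi_{\LB'\to\LB}$ are genuinely degree-zero morphisms between the renormalized complexes (and so that the weights advertised in the statement are correct). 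A secondary, more routine point is confirming that the homotopies provided by Proposition~\ref{prop:braidsplit} can be transported through the Hochschild homology functor and the bundling-of-variables construction of \S\ref{sec:deformed}; this follows from functoriality of $\HH_\bullet$ on $\CS(\SSBim)$ and the fact that $\Sigma_{a,b}$, $\psi_{a,b}$ are built from morphisms in $\SSBim$, but it should be stated carefully. Once these normalization checks are in place, the rest is the telescoping argument sketched above.
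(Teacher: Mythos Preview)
Your approach matches the paper's: iterate Proposition~\ref{prop:braidsplit} along the sequence of crossing changes and compose. The paper's proof is brief, flagging only one subtle point --- identifying the transparifer inserted ``in the middle'' (as in the conclusion of Proposition~\ref{prop:braidsplit}) with its action on the left.

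You locate this subtlety correctly, but your justification via bundling and $\HH_\bullet$ is slightly misdirected: those identify the \emph{endpoint} alphabets $\leftX_{[i]}$ and $\rightX_{[i]}$, not an interior insertion with a boundary one. The relevant mechanism is that the actions of the $\X$-alphabets (and hence of the $\Y$-alphabets built from them via \eqref{eq:y to v 2}) are homotopic along each strand, as encoded by the curved Maurer--Cartan element itself (cf.\ Remark~\ref{rmk:strict} and Proposition~\ref{prop:qiso}). This is exactly what the paper invokes. Once the transparifers act from the left via $\End(\oone_{\brc})$, your telescoping argument goes through by the interchange law.

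Your grading concern is overcautious: the statement does not assert the maps are degree zero, and the Remark immediately following the Proposition records their weights $\qdeg^{-2n}\tdeg^{2n}$ and $\qdeg^{-2p}\tdeg^{2p}$. The normalization bookkeeping is real but does not affect the existence of the maps or the homotopy relations claimed.
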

(Here, by slight abuse of notation, 
$\sectionval_{i,j}$ denotes the transparifer evaluated at the relevant quadrupel 
of alphabets associated with the $i^{th}$ and $j^{th}$ components of $\LB$.)
\begin{proof}
This is essentially an immediate consequence of Proposition \ref{prop:braidsplit}. 
The only subtle point is that we must identify the actions of transparifers ``in the middle'' of 
curved Rickard complexes with those acting on the left, up to homotopy. 
This follows since the actions of all the $\X$ alphabets along a strand are homotopic, 
thus the same holds for the associated $\Y$ alphabets.
\end{proof}

\begin{rem}
With notation as in Proposition \ref{prop:CYcrossingchangemaps}, 
set
\[
p=\sum_{i,j} p_{i,j} \min(\LB_i,\LB_j) \, , \quad n=\sum_{i,j} n_{i,j} \min(\LB_i,\LB_j)
\]
where $\min(\LB_i,\LB_j)$ denotes the smaller of the colors of these components, then
\[
\wt(\psi_{\LB \to \LB'}) = \qdeg^{-2n}\tdeg^{2n}
\, , \quad
\wt(\psi_{\LB' \to \LB}) = \qdeg^{-2p}\tdeg^{2p} \, .
\]
\end{rem}

Combining Definition \ref{def:YC(b,M)}
with Proposition \ref{prop:CYcrossingchangemaps} then gives the following.

\begin{cor}\label{cor:split}
Retain notation as in Proposition \ref{prop:CYcrossingchangemaps}.
Suppose that $M$ is a coefficient module as in Definition \ref{def:YC(b,M)}
such that each occurring $\sectionval_{i,j}$ acts invertibly on $M$, 
then $\psi_{\LB\to\LB'}$ induces a homotopy equivalence 
$\YS C_{\KR}(\b_{\brc},M) \simeq \YS C_{\KR}(\b'_{\brc},M)$ 
and thus an isomorphism $\YS H_{\KR}(\LB,M)\cong \YS H_{\KR}(\LB',M)$. \qed
\end{cor}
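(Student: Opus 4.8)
The plan is to combine Proposition~\ref{prop:CYcrossingchangemaps} with the definition of homology with coefficients from Definition~\ref{def:YC(b,M)}. The essential point is that Proposition~\ref{prop:CYcrossingchangemaps} already provides closed morphisms $\psi_{\LB\to\LB'}$ and $\psi_{\LB'\to\LB}$ between $\YS C_{\KR}(\b_{\brc})$ and $\YS C_{\KR}(\b'_{\brc})$ whose composites (in either order) are homotopic to multiplication by $\prod_{i,j}\sectionval_{i,j}^{p_{i,j}+n_{i,j}}$. The transparifers $\sectionval_{i,j}$ are elements of the coefficient ring over which $\YS C_{\KR}(\b_{\brc})$ is a module (either $\k[\V^{\pi_0(\LB)}]$ as in \eqref{eq:YLM1} or $A_{\LB}$ as in \eqref{eq:YLM2}; note each $\sectionval_{i,j}$ lies in the appropriate subring generated by the $v$-variables of the two components, via \eqref{eqn:sectionval} and \eqref{eq:y to v 2}). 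Since $\b_{\brc}$ and $\b'_{\brc}$ present links with the same underlying component data (crossing changes do not alter the set of components or their colors), there is a canonical identification of the coefficient rings for $\LB$ and $\LB'$, and the morphisms $\psi_{\LB\to\LB'}$, $\psi_{\LB'\to\LB}$ are linear over these rings.

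First I would tensor (or derive-tensor, in the case of \eqref{eq:YLM2}) the morphisms $\psi_{\LB\to\LB'}$ and $\psi_{\LB'\to\LB}$ with $M$ over the relevant coefficient ring, obtaining closed morphisms
\[
\psi_{\LB\to\LB'}\otimes\id_M \colon \YS C_{\KR}(\b_{\brc},M) \to \YS C_{\KR}(\b'_{\brc},M)
\]
and its partner in the other direction. Functoriality of $(-)\otimes M$ (resp. $(-)\stackrel{L}{\otimes}M$) takes the homotopies from Proposition~\ref{prop:CYcrossingchangemaps} to homotopies after tensoring, so the composite $(\psi_{\LB'\to\LB}\otimes\id_M)\circ(\psi_{\LB\to\LB'}\otimes\id_M)$ is homotopic to multiplication by the image of $\prod_{i,j}\sectionval_{i,j}^{p_{i,j}+n_{i,j}}$ in $\End$ of $\YS C_{\KR}(\b_{\brc},M)$, i.e. to the action of that element on $M$. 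By hypothesis each $\sectionval_{i,j}$ acts invertibly on $M$, hence so does the product $\prod_{i,j}\sectionval_{i,j}^{p_{i,j}+n_{i,j}}$; therefore this composite is a homotopy equivalence, and symmetrically for the composite in the other order. A standard two-out-of-six (or rather two-out-of-three for homotopy equivalences) argument then shows $\psi_{\LB\to\LB'}\otimes\id_M$ is itself a homotopy equivalence: if $g\circ f$ and $f\circ g$ are both homotopy equivalences then $f$ is one, since $f$ has a left homotopy inverse $(g\circ f)^{-1}\circ g$ and a right homotopy inverse $g\circ(f\circ g)^{-1}$, which must agree up to homotopy.

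Passing to homology then yields the isomorphism $\YS H_{\KR}(\LB,M)\cong\YS H_{\KR}(\LB',M)$, completing the proof. There is no serious obstacle here: the content is entirely in Proposition~\ref{prop:CYcrossingchangemaps} (which in turn rests on Lemma~\ref{lem:section} and hence on Theorem~\ref{thm:Jab}), and the present corollary is a formal consequence of the existence of the crossing-change maps together with invertibility of the transparifer on $M$. The only point requiring a small amount of care is bookkeeping: making sure that the identification of coefficient rings for $\LB$ and $\LB'$ is compatible with the morphisms $\psi_{\LB\to\LB'}$ and with the module $M$ (so that ``$\sectionval_{i,j}$ acts invertibly on $M$'' makes unambiguous sense on both sides), and in the derived setting \eqref{eq:YLM2} checking that tensoring preserves the homotopies — but this is immediate since $\YS C_{\KR}(\b_{\brc})$ is $A_{\LB}$-flat up to the derived tensor product by construction. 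I would state this as the one-line verification that $(-)\stackrel{L}{\otimes}_{A_{\LB}}M$ is a functor between homotopy categories.
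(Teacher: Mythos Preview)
Your proposal is correct and matches the paper's intent: the paper marks this corollary with a bare \qed\ immediately after the statement, treating it as an immediate formal consequence of Proposition~\ref{prop:CYcrossingchangemaps} and Definition~\ref{def:YC(b,M)}, which is exactly what you have spelled out.
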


In particular, the hypotheses of Corollary \ref{cor:split} hold under the specialization 
from Example \ref{exa:transpval}. In this special case, we recover 
\cite[Theorem 6.3]{MR4178751}.

Next, given a colored link $\LB$, 
let $\spli(\LB)$ denote the colored link obtained as the split union of its components.
In other words, if $\LB$ has components $\{\LB_i\}_{i=1}^\ell$, then
\[
\LB = \LB_1 \cup \cdots \cup \LB_\ell
\]
while
\[
\spli(\LB) = \LB_1 \sqcup \cdots \sqcup \LB_\ell \, .
\]
By the discussion preceding Proposition \ref{prop:CYcrossingchangemaps}, 
there exists a sequence of crossing changes taking a braid presentation for $\LB$ 
to one for $\spli(\LB)$.

\begin{definition} 
Let $\LB$ be a colored link that is presented as the closure of a colored braid $\b_{\brc}$.
Given a sequence of crossing changes between distinct components that transforms $\b_{\brc}$ 
into a braid $\spli(\b_{\brc})$ whose closure is $\spli(\LB)$, 
the closed morphism
\[
\psi_{\LB\to \spli(\LB)}\colon \YS C_{\KR}(\b_{\brc}) \to \YS C_{\KR}(\spli(\b_{\brc}))
\] 
associated to this sequence of crossing changes via Proposition \ref{prop:CYcrossingchangemaps}
will be called a \emph{splitting map} for $\LB$.
\end{definition}

Note that splitting maps and, more generally, the morphisms from
Proposition~\ref{prop:CYcrossingchangemaps} may depend on the sequence of crossing
changes (i.e. not just on the (co)domain complexes).

\begin{thm}\label{thm:parityinjective} 
Suppose that $\YS H_{\KR}(\LB)$ is free over $\k[\V^{\pi_0(\LB)}]$, 
e.g. for parity reasons as in Theorem~\ref{thm:parityfree}, 
then any splitting map induces an injective map on homology 
\[
\psi_{\LB\to \spli(\LB)}\colon \YS H_{\KR}(\LB) \to \YS H_{\KR}(\spli(\LB)).
\]
\end{thm}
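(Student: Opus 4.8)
The plan is to exploit the fact that the splitting map $\psi_{\LB \to \spli(\LB)}$ has a ``near-inverse'' given by multiplication by a product of transparifers. By Proposition \ref{prop:CYcrossingchangemaps}, there is a morphism $\psi_{\spli(\LB) \to \LB}$ in the opposite direction such that the composite $\psi_{\spli(\LB) \to \LB} \circ \psi_{\LB \to \spli(\LB)}$ is homotopic to multiplication by $\prod_{i,j} \sectionval_{i,j}^{p_{i,j}+n_{i,j}} \cdot \id_{\YS C_{\KR}(\b_{\brc})}$. Passing to homology, it therefore suffices to show that multiplication by this product of transparifers acts injectively on $\YS H_{\KR}(\LB)$. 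Since $\YS H_{\KR}(\LB)$ is assumed free (hence torsion-free) over $\k[\V^{\pi_0(\LB)}]$, it is enough to check that each transparifer $\sectionval_{i,j}$ is a nonzero (in fact, non-zero-divisor) element of $\k[\V^{\pi_0(\LB)}]$; then their product is also a non-zero-divisor on the free module $\YS H_{\KR}(\LB)$, so multiplication by it is injective, forcing $\psi_{\LB \to \spli(\LB)}$ to be injective on homology.

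First I would recall the precise form of $\sectionval_{i,j}$ from \eqref{eqn:sectionval}: it is the ratio $\Delta_{\Key_b(\emptyset)}(\X_i + \X_j, \Y_i + \Y_j) / (\Delta(\X_i)\Delta(\X_j))$, where $b = \min(b(\comp_i), b(\comp_j))$. When we pass to the link homology complex, the alphabets $\X_i$ (resp. $\Y_i$) on a given component get identified, so $\sectionval_{i,j}$ becomes an element of $A_{\LB}$, and after restricting coefficients it lives in $\k[\V^{\pi_0(\LB)}]$. The key computational input is the example following Lemma \ref{lem:keylemma}: the reduced determinant $\pi(\Delta_{\Key_b(\emptyset)})$ equals $\yred_{a+1}\cdots \yred_{a+b} \cdot \Delta(\X_1)\Delta(\X_2)$, so $\sectionval_{i,j}$ reduces to a product of $y$-variables attached to the two components. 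More to the point, under the specialization $v_{\comp,r} \mapsto 0$ for $r > 1$ (as in Example \ref{exa:transpval}), $\sectionval_{i,j}$ specializes to $\pm(v_{\comp_j,1} - v_{\comp_i,1})^b$, which is manifestly nonzero in $\k[\V^{\pi_0(\LB)}]$. Since a polynomial that specializes to something nonzero is itself nonzero, $\sectionval_{i,j} \neq 0$ in the integral domain $\k[\V^{\pi_0(\LB)}]$, hence a non-zero-divisor.

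The remaining step is to assemble these observations. Writing $\LB_i$ for the components and choosing any sequence of crossing changes realizing $\LB \rightsquigarrow \spli(\LB)$ between distinct components, Proposition \ref{prop:CYcrossingchangemaps} produces $\psi_{\LB\to\spli(\LB)}$ and $\psi_{\spli(\LB)\to\LB}$ with $\psi_{\spli(\LB)\to\LB} \circ \psi_{\LB\to\spli(\LB)} \sim c \cdot \id$ where $c = \prod_{i,j} \sectionval_{i,j}^{p_{i,j}+n_{i,j}} \in \k[\V^{\pi_0(\LB)}]$. Applying $H(-)$ and using that homotopic maps induce equal maps on homology, we get $H(\psi_{\spli(\LB)\to\LB}) \circ H(\psi_{\LB\to\spli(\LB)}) = c \cdot \id_{\YS H_{\KR}(\LB)}$. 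Because $\YS H_{\KR}(\LB)$ is a free $\k[\V^{\pi_0(\LB)}]$-module and $c$ is a non-zero-divisor (being a product of the nonzero elements $\sectionval_{i,j}$ in the polynomial domain $\k[\V^{\pi_0(\LB)}]$), multiplication by $c$ is injective on $\YS H_{\KR}(\LB)$; therefore $H(\psi_{\LB\to\spli(\LB)})$ must be injective as well. The main obstacle, and the only place requiring genuine care, is verifying that $\sectionval_{i,j}$ really is nonzero as an element of $\k[\V^{\pi_0(\LB)}]$ after the identifications of alphabets along components — here one must be slightly careful that the division by $\Delta(\X_i)\Delta(\X_j)$ is legitimate and that the resulting element does not accidentally vanish; the specialization argument of Example \ref{exa:transpval} handles this cleanly, but one should also note (via Theorem \ref{thm:parityfree} and Proposition \ref{prop:M is free general}) that the freeness hypothesis is exactly what converts ``non-zero-divisor action'' into the desired injectivity.
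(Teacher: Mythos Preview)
Your overall strategy matches the paper's exactly: invoke Proposition~\ref{prop:CYcrossingchangemaps} to get a reverse map, observe that the composite $\psi_{\spli(\LB)\to\LB}\circ\psi_{\LB\to\spli(\LB)}$ acts on homology as multiplication by the product $T=\prod_{i,j}\sectionval_{i,j}^{p_{i,j}+n_{i,j}}$, and then show that multiplication by $T$ is injective on the free $\k[\V^{\pi_0(\LB)}]$-module $\YS H_{\KR}(\LB)$.

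There is, however, a genuine gap. You assert that the transparifers, and hence $T$, ``live in $\k[\V^{\pi_0(\LB)}]$'', and then invoke freeness over $\k[\V^{\pi_0(\LB)}]$ together with $T\neq 0$ in that domain. But this is not correct: by \eqref{eqn:sectionval} the transparifer $\sectionval_{i,j}$ is built from a Haiman determinant in both $\X$ and $\Y$ (with $\Y$ expressed through $\X$ and $\V$), so $T$ genuinely involves the $\X$-alphabets and lies only in $A_\LB=\Sym(\X^{\pi_0(\LB)})[\V^{\pi_0(\LB)}]$. Freeness over the subring $\k[\V^{\pi_0(\LB)}]$ does \emph{not} by itself guarantee that a nonzero element of $A_\LB$ acts injectively (for instance, nothing in the hypothesis rules out some $\X$-variable acting by zero).

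The paper's fix is to decompose $T=T_0+T_{>0}$, where $T_{>0}$ lies in the $\X$-irrelevant ideal. A weight count (each monomial of $T$ has weight $\qdeg^{-2b}\tdeg^{2b}$, while $\wt(v_{\comp,r})=\qdeg^{-2r}\tdeg^2$ and $\wt(x)=\qdeg^2$) shows that monomials with $\X$-degree zero can involve only the variables $v_{\comp,1}$; this is exactly what Example~\ref{exa:transpval} computes, identifying $T_0$ with $\pm\prod(v_{\comp_j,1}-v_{\comp_i,1})^{b(p_{i,j}+n_{i,j})}\neq 0$ in $\k[\V^{\pi_0(\LB)}]$. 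Now $T_0$ \emph{is} a non-zero-divisor in $\k[\V^{\pi_0(\LB)}]$, so it acts injectively on the free module, and the paper concludes that injectivity of $T_0$ forces injectivity of $T$. Your specialization argument is the right ingredient for showing $T_0\neq 0$, but you need this decomposition step to bridge the gap between ``$T\neq 0$ in $A_\LB$'' and ``$T$ acts injectively on a module only assumed free over $\k[\V^{\pi_0(\LB)}]$''.
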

\begin{proof}
By Proposition \ref{prop:CYcrossingchangemaps}, 
composing $\psi_{\LB\to \spli(\LB)}$ with the reverse map $\psi_{\spli(\LB)\to \LB}$ 
produces an endomorphism of $\YS H_{\KR}(\LB)$ that acts by multiplication 
with a product of transparifers.
Denote this product of transparifers by $T$, and note that it is a homogeneous 
polynomial in $\Sym(\X^{\pi_{0}(\LB)})[\V^{\pi_0(\LB)}]$.
Moreover, Example~\ref{exa:transpval} shows that 
\[
T = T_0 + T_{>0}
\]
where $T_{>0}$ is an element of the $\X$-irrelevant ideal
(specifically, $T_0$ is a polynomial in the subalphabet $\{v_{c,1}\}_{c \in \pi_{0}(\LB)} \subset \V^{\pi_0(\LB)}$). 
By assumption, $\YS H_{\KR}(\LB)$ is free over $\k[\V^{\pi_0(\LB)}]$, 
so the injectivity of the action of $T$ follows from the injectivity of the action of $T_0$. 
% To see this, suppose there is a non-zero element of $\YS H_{\KR}(\LB)$ in kernel
% of the action of $T$. We expand it in terms of some generating set over
% $\k[\V^{\pi_0(\LB)}]$ and consider the component of maximal degree with respect
% to the variables $v_{c,1}$. Now note that the pure $\V$-component of the product
% of transparifers transports this component into higher $v_{c,1}$-degree, where
% it cannot be cancelled by the image of the rest; thus it had to be zero in the
% first place, a contradiction.
\end{proof}

Note that if $\LB$ is the closure of a \emph{pure} colored braid $\b_{\brc}$, then each of its components are 
unknots. Thus, any link splitting map gives a map
\[
\YS H_{\KR}(\LB) \to E_\brc\, .
\]
Further, if $\LB$ is parity, Theorem \ref{thm:parityinjective} implies that this map is injective, 
and thus identifies the former with an ideal in the latter. 
As such, Theorem \ref{thm:parityinjective} generalizes Corollary \ref{cor:M is J general}.

We conclude this section with an application to the undeformed colored Khovanov--Rozansky 
homology of parity links.

\begin{thm}\label{thm:parity and split}  
Let $\LB$ be a colored link that is parity. 
Upon collapsing the trigrading $(\deg_{\adeg}, \deg_{\qdeg}, \deg_{\tdeg})$ to the 
bigrading $(\deg_{\adeg}, \deg_{\qdeg}+\deg_{\tdeg})$, there is an isomorphism

\[
H_{\KR}(\LB)\cong H_{\KR}(\spli(\LB))
\]
of bigraded vector spaces.
\end{thm}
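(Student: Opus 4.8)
The plan is to deduce this from the deformed theory by a specialization-and-degeneration argument, mirroring the strategy used for Theorem~\ref{thm:parityinjective}. First I would recall from Theorem~\ref{thm:parityfree} that since $\LB$ is parity, we have a $\k[\V^{\pi_0(\LB)}]$-linear homotopy equivalence
\[
\YS C_{\KR}(\LB) \simeq H_{\KR}(\LB) \otimes \k[\V^{\pi_0(\LB)}]
\]
of differential triply-graded modules, and likewise $\YS C_{\KR}(\spli(\LB)) \simeq H_{\KR}(\spli(\LB)) \otimes \k[\V^{\pi_0(\LB)}]$ (note $\pi_0(\LB) = \pi_0(\spli(\LB))$, and the split union is also parity since its homology is the tensor product of the component homologies, each of which is parity). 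In particular both sides are \emph{free} $\k[\V^{\pi_0(\LB)}]$-modules. The goal is then to compare these two free modules by passing through a clever specialization of the deformation parameters at which the two links become homologically indistinguishable.

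The key step is to choose a coefficient module $M$ over $\k[\V^{\pi_0(\LB)}]$ on which all the transparifers $\sectionval_{i,j}$ from \eqref{eqn:sectionval} become invertible, so that Corollary~\ref{cor:split} applies and gives $\YS H_{\KR}(\LB,M) \cong \YS H_{\KR}(\spli(\LB),M)$. A natural choice, following Example~\ref{exa:transpval}, is $M = M_{\underline{z}}[u^{-1}]$: take a formal variable $u$ of weight $\qdeg^{-2}\tdeg^2$, fix pairwise distinct scalars $z_\comp$ for the components $\comp \in \pi_0(\LB)$, let $v_{\comp,1}$ act as $z_\comp u$ and $v_{\comp,r}$ act as $0$ for $r>1$, and invert $u$. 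Since $\LB$ is parity, $\YS C_{\KR}(\LB) \simeq H_{\KR}(\LB)\otimes \k[\V^{\pi_0(\LB)}]$ has zero differential, so tensoring with $M$ over $\k[\V^{\pi_0(\LB)}]$ still yields zero differential, giving
\[
\YS H_{\KR}(\LB,M) \cong H_{\KR}(\LB) \otimes_{\k[\V^{\pi_0(\LB)}]} M \, ,
\]
and similarly for $\spli(\LB)$. The module $M = \k[u^{\pm 1}]$, regarded as a $\k[\V^{\pi_0(\LB)}]$-module via the above, carries a single $\Z$-grading (into which the $\qdeg$- and $\tdeg$-gradings of $\V$ collapse according to $\wt(u) = \qdeg^{-2}\tdeg^2$), so $H_{\KR}(\LB) \otimes_{\k[\V^{\pi_0(\LB)}]} \k[u^{\pm 1}]$ is $H_{\KR}(\LB)$ with its $\qdeg$- and $\tdeg$-gradings merged into a single grading by $\deg_\qdeg + \deg_\tdeg$ (tensored with $\k[u^{\pm}]$, which only introduces an overall periodicity in that combined degree and in $\qdeg$ separately). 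Then Corollary~\ref{cor:split} gives the bigraded (in $(\deg_\adeg, \deg_\qdeg + \deg_\tdeg)$, up to the harmless $\k[u^{\pm}]$ factor) isomorphism $H_{\KR}(\LB) \otimes \k[u^{\pm}] \cong H_{\KR}(\spli(\LB))\otimes \k[u^{\pm}]$; dividing out the $\k[u^{\pm}]$ factor (equivalently, specializing $u=1$, which is legitimate on the collapsed bigrading) yields the claimed isomorphism $H_{\KR}(\LB)\cong H_{\KR}(\spli(\LB))$ of bigraded vector spaces.

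The main obstacle I anticipate is bookkeeping the gradings carefully: the map $\psi_{\LB\to\spli(\LB)}$ from Corollary~\ref{cor:split} is not degree-preserving on the nose (its weight is $\qdeg^{-2n}\tdeg^{2n}$ for some $n$, per the remark after Proposition~\ref{prop:CYcrossingchangemaps}), so it is only after collapsing $(\deg_\qdeg,\deg_\tdeg)$ to $\deg_\qdeg+\deg_\tdeg$ that this weight becomes trivial in the relevant direction — and one must check that $\sectionval_{i,j}$, which under the specialization equals $\pm(z_j u - z_i u)^{b_{ij}} = \pm(z_j-z_i)^{b_{ij}} u^{b_{ij}}$, indeed has weight a power of $\qdeg^{-2}\tdeg^2$ so that its invertibility is compatible with the collapsed grading. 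One should also confirm that freeness of $\YS H_{\KR}(\LB)$ over $\k[\V^{\pi_0(\LB)}]$ ensures $\YS H_{\KR}(\LB,M) = H_{\KR}(\LB) \otimes_{\k} \k[u^{\pm}]$ with no higher Tor contributions, which is immediate from Theorem~\ref{thm:parityfree}. Once these grading and flatness points are pinned down, the isomorphism is a formal consequence of Corollary~\ref{cor:split} and Theorem~\ref{thm:parityfree}.
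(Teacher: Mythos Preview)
Your overall strategy matches the paper's: specialize the deformation parameters so that the transparifers become invertible, apply Corollary~\ref{cor:split}, and use freeness from parity to identify the specialized deformed homology with the undeformed one. However, there is one genuine gap and one unnecessary complication.

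The gap is your claim that $\spli(\LB)$ is parity because ``each of the component homologies is parity.'' Parity of $\LB$ is a statement about $H_{\KR}(\LB)$; it does \emph{not} imply that the individual components $\LB_i$ are parity. The paper avoids this issue entirely: for the split side it invokes Proposition~\ref{prop:knots are boring}, which says that for any knot $\KB$ one has $\YS C_{\KR}(\KB)\simeq C_{\KR}(\KB)\otimes\k[\V]$ regardless of parity. Monoidality under $\sqcup$ then gives $\YS C_{\KR}(\spli(\LB))\simeq C_{\KR}(\spli(\LB))\otimes\k[\V^{\pi_0(\LB)}]$, and one passes to $H_{\KR}(\spli(\LB))$ at the very end using that complexes of vector spaces are homotopy equivalent to their homology. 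You should replace your parity claim for $\spli(\LB)$ with this argument.

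The complication is your choice $M=\k[u^{\pm 1}]$. The paper simply takes $M=\k$, with $v_{\comp,1}$ acting by pairwise distinct scalars and $v_{\comp,r}=0$ for $r>1$. Since $\wt(v_{\comp,1})=\qdeg^{-2}\tdeg^2$ has $\deg_\qdeg+\deg_\tdeg=0$, this $M$ is already bigraded for $(\deg_\adeg,\deg_\qdeg+\deg_\tdeg)$, and no auxiliary variable $u$ or its inversion is needed. Tensoring the equivalence of Lemma~\ref{lemma:simplifying YC} with this $M=\k$ directly yields $\YS C_{\KR}(\b_\brc,\k)\simeq H_{\KR}(\LB)$ on the $\LB$ side (the twist vanishes by parity), and the argument concludes without any $\k[u^{\pm}]$ factor to cancel.
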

\begin{proof}
Let $\LB = \LB_1\cup \cdots \cup \LB_\ell$ be an $\ell$-component colored link, 
presented as the closure of a colored braid $\b_{\brc}$, and consider $\YS C_{\KR}(\b_{\brc})$. 
View $M = \k$ as a module over $\k[\V^{\pi_0(\LB)}]$
on which $v_{c,k}$ act by zero if $k>1$, and $v_{1,1},\ldots,v_{\ell,1}$ act by distinct scalars.
This implies that $v_{c,1} - v_{c',1}$ is invertible for $c\neq c'$ in $\pi_0(\LB)$, 
thus Example \ref{exa:transpval} implies that the transparifers associated 
with distinct link components are invertible.
Since $\wt(v_{c,1}) = \qdeg^{-2}\tdeg^{2}$, 
we have that $\deg_{\qdeg}(v_{c,1})+\deg_{\tdeg}(v_{c,1})=0$. 
Thus, we may regard 
$M$ as a bigraded $\k[\V^{\pi_0(\LB)}]$-module, 
with gradings $\deg_{\adeg}$ and $\deg_{\qdeg}+\deg_{\tdeg}$.

Tensoring Lemma \ref{lemma:simplifying YC} with $M=\k$ gives
\begin{align*}
\YS C_{\KR}(\b_\brc,\k)
&= \YS C_{\KR}(\b_\brc)\otimes_{\k[\V^{\pi_0(\LB)}]} \k \\
&\simeq \tw_{\Delta''}  (H_{\KR}(\LB) \otimes \k[\V^{\pi_0(\LB)}])\otimes_{\k[\V^{\pi_0(\LB)}]} \k \\
&\cong \tw_{\Delta''}  (H_{\KR}(\LB) \otimes \k[\V^{\pi_0(\LB)}]\otimes_{\k[\V^{\pi_0(\LB)}]} \k)
\cong \tw_{\Delta''}(H_{\KR}(\LB))
\end{align*}
for some twist $\Delta''$.
Since $\LB$ is parity, the twist $\Delta''$ is zero on $H_{\KR}(\LB)$ for degree reasons, 
thus $\tw_{\Delta''}(H_{\KR}(\b_\brc)) = H_{\KR}(\LB)$.
On the other hand, Corollary \ref{cor:split} and Proposition \ref{prop:knots are boring} imply that
\begin{align*}
\YS C_{\KR}(\b_\brc,\k) 
&\simeq \YS C_{\KR}(\spli(\b_\brc),\k) \\
&= \YS C_{\KR}(\spli(\b_\brc))\otimes_{\k[\V^{\pi_0(\LB)}]} \k \\
&\simeq C_{\KR}(\spli(\b_\brc)) \otimes \k[\V^{\pi_0(\LB)}] \otimes_{\k[\V^{\pi_0(\LB)}]} \k 
= C_{\KR}(\spli(\b_\brc)) \, .
\end{align*}
Since complexes of vector spaces 
are homotopy equivalent to their homologies, we have
\[
H_\KR(\spli(\LB)) \simeq C_{\KR}(\spli(\b_\brc)) \simeq H_{\KR}(\LB) \, . \qedhere
\]
\end{proof}

\subsection{Effective thickness}
\label{ss:effective thickness}

In \S \ref{ss:crossingchange}, we focused on specializations of the deformation parameters 
that inverted the transparifer;
however, there are various other specializations of interest. 
This is already apparent in the description of $\VFTmin_{a,b}$ afforded by 
Proposition \ref{prop:KMCSdiffs curved} (see Example \ref{exa:KMCS} for the $a=2=b$ case).
Indeed, upon inverting the elements 
\[
\sum_{l=k+1}^b h_{l-k-1}(\M^{(k+i)}) \bar{v}_l
\]
acting in the $(k+i)^{th}$ column of $\VFTmin_{a,b}$ for all $i \geq 1$ 
(e.g. by inverting $\bar{v}_{k+1}$ and setting $\bar{v}_l = 0$ for $l>k+1$),
each of these columns becomes null-homotopic.
In turn, this forces $\VFTmin_{a,b}$ to closely resemble the complex $\VFTmin_{a,k}$. 

A more-precise formulation of this phenomenon is as follows.
Let $C(\b_\brc)$ be the (uncurved) complex of singular Soergel bimodules associated to a colored braid $\b_\brc$.
Let $\mathsf{s}$ be a connected component of $\b_\brc$ (i.e. a strand), 
and let $\leftX$ and $\rightX$ be the associated alphabets on the left and right, respectively.
Intuitively speaking, ``turning on'' strand-wise curvature of the form 
$h_{a'+1}(\leftX-\rightX) v_{a'+1}$ with $v_{a'+1}$ invertible 
makes $\mathsf{s}$ behave as if its color is $\leq a'$.
We might say that such a strand now has \emph{effective thickness} $\leq a'$. 

Let us illustrate this with a concrete example.
Extending our notation from \S \ref{ss:splitting skein rel}, 
let $\TD_b(a)$ denote the complex of (uncurved) threaded digons
appearing on the left-hand side in the \emph{undeformed} colored skein relation \eqref{eq:convrecall0}.
Omitting all explicit grading shifts, this is
\begin{equation}\label{eq:detect}
\TD_b(a) :=
\left(
\left\llbracket
\begin{tikzpicture}[scale=.35,smallnodes,rotate=90,baseline=.4em]
	\draw[very thick] (1,-1) to [out=150,in=270] (0,0); %under strand
	\draw[line width=5pt,color=white] (0,-2) to [out=90,in=270] (.5,0) 
		to [out=90,in=270] (0,2);
	\draw[very thick] (0,-2) node[right=-2pt]{$a$}to [out=90,in=270] (.5,0) 
		to [out=90,in=270] (0,2) node[left=-2pt]{$a$};
	\draw[line width=5pt,color=white] (0,0) to [out=90,in=210] (1,1); 
	\draw[very thick] (0,0) to [out=90,in=210] (1,1); %over strand
	\draw[very thick] (1,-2) node[right=-2pt]{$b$} to (1,-1); 
	\draw[dotted] (1,-1) to [out=30,in=330] node[above,yshift=-2pt]{$0$} (1,1); 
	\draw[very thick] (1,1) to (1,2) node[left=-2pt]{$b$};
\end{tikzpicture}
\right\rrbracket 
\to
\left\llbracket
\begin{tikzpicture}[scale=.35,smallnodes,rotate=90,baseline=.4em]
	\draw[very thick] (1,-1) to [out=150,in=270] (0,0); %under strand
	\draw[line width=5pt,color=white] (0,-2) to [out=90,in=270] (.5,0) 
		to [out=90,in=270] (0,2);
	\draw[very thick] (0,-2) node[right=-2pt]{$a$}to [out=90,in=270] (.5,0) 
		to [out=90,in=270] (0,2) node[left=-2pt]{$a$};
	\draw[line width=5pt,color=white] (0,0) to [out=90,in=210] (1,1); 
	\draw[very thick] (0,0) to [out=90,in=210] (1,1); %over strand
	\draw[very thick] (1,-2) node[right=-2pt]{$b$} to (1,-1); 
	\draw[very thick] (1,-1) to [out=30,in=330] node[above,yshift=-2pt]{$1$} (1,1); 
	\draw[very thick] (1,1) to (1,2) node[left=-2pt]{$b$};
\end{tikzpicture}
\right\rrbracket 
\to \cdots \to
\left\llbracket
\begin{tikzpicture}[scale=.35,smallnodes,rotate=90,baseline=.4em]
	\draw[dotted] (1,-1) to [out=150,in=270] (0,0); %under strand
	\draw[line width=5pt,color=white] (0,-2) to [out=90,in=270] (.5,0) 
		to [out=90,in=270] (0,2);
	\draw[very thick] (0,-2) node[right=-2pt]{$a$}to [out=90,in=270] (.5,0) 
		to [out=90,in=270] (0,2) node[left=-2pt]{$a$};
%	\draw[line width=5pt,color=white] (0,0) to [out=90,in=210] (1,1); 
	\draw[dotted] (0,0) to [out=90,in=210] (1,1); %over strand
	\draw[very thick] (1,-2) node[right=-2pt]{$b$} to (1,-1); 
	\draw[very thick] (1,-1) to [out=30,in=330] node[above,yshift=-2pt]{$b$} (1,1); 
	\draw[very thick] (1,1) to (1,2) node[left=-2pt]{$b$};
\end{tikzpicture}
\right\rrbracket 
\right) \, .
\end{equation}
By \cite[\shiftedRickard \ and \HRWSkeinrel]{HRW1}, 
$\TD_b(a) \simeq 0$ when $a<b$, 
so \eqref{eq:detect} detects the ``thickness'' of the $a$-labeled strand. 
If instead $a\geq b$, this complex is not contractible, 
but becomes so after deforming to give the $a$-labeled strand effective thickness 
smaller than $b$.
Thus, this deformation forces the $a$-labeled strand 
to ``act'' as if its label was smaller than $b$.

To see this, it is convenient to replace $\TD_b(a)$ by the (homotopy equivalent) 
complex on the right-hand side of the colored skein relation \eqref{eq:convrecall0}
\[
\KMCS_{a,b} = 
K\left(\left\llbracket
\begin{tikzpicture}[scale=.35,smallnodes,anchorbase,rotate=270]
\draw[very thick] (1,-1) to [out=150,in=270] (0,1) to (0,2) node[right=-2pt]{$b$}; %under strand
\draw[line width=5pt,color=white] (0,-2) to (0,-1) to [out=90,in=210] (1,1);
\draw[very thick] (0,-2) node[left=-2pt]{$b$} to (0,-1) to [out=90,in=210] (1,1);
\draw[very thick] (1,1) to (1,2) node[right=-2pt]{$a$};
\draw[very thick] (1,-2) node[left=-2pt]{$a$} to (1,-1); 
\draw[very thick] (1,-1) to [out=30,in=330] node[below=-1pt]{$a{-}b$} (1,1); 
\end{tikzpicture}
\right\rrbracket 
\right) =
\tw_{\d}\big( \MCS_{a,b} \otimes \largewedge[\xi_1,\ldots,\xi_b] \big) \, .
\]
(Again, we are suppressing grading shifts.)
Recall from Definition \ref{def:dv} that 
$\d=\sum_{1\leq i\leq b} h_i(\leftX_2-\rightX_2)\otimes \xi_i^\ast$.
Fix an integer $a'\geq 0$ and ``turn on'' a curved Maurer--Cartan element $\Delta$
with curvature $h_{a'+1}(\X_1-\X_1') v_{a'+1}$ with $v_{a'+1}$ invertible. 
If $a' < b$, we claim that this deformed complex is contractible.
Indeed, since $\leftX_1+\leftX_2 = \rightX_1+\rightX_2$ on $\KMCS_{a,b}$, 
equation \eqref{eq:somerelations4} gives that
\[
h_r(\leftX_1-\rightX_1) = h_r(\rightX_2-\leftX_2) = - \sum_{j=1}^r h_{r-j}(\rightX_2 - \leftX_2) h_j(\leftX_2 - \rightX_2)
\]
so we may take
\[
\Delta = -v_{a'+1} \sum_{j=1}^{a'+1} h_{a'+1-j}(\rightX_2 - \leftX_2) \xi_j
= -v_{a'+1} \xi_{a'+1} - \sum_{j=1}^{a'} h_{a'+1-j}(\rightX_2 - \leftX_2) \xi_j \, .
\]
It follows that
\[
\tw_\Delta(\KMCS_{a,b}) \cong 
\tw_{- \sum_{j=1}^{a'} h_{a'+1-j}(\rightX_2 - \leftX_2) \xi_j} 
\big(\tw_{-v_{a'+1} \xi_{a'+1}} (\KMCS_{a,b}) \big) \simeq 0 \, .
\]
Here, we have used that $\tw_{-v_{a'+1} \xi_{a'+1}} (\KMCS_{a,b})$ is contractible 
when $v_{a'+1}$ is invertible, together with Proposition \ref{prop:HPT}, 
which show that further twisting by 
$- \sum_{j=1}^{a'} h_{a'+1-j}(\rightX_2 - \leftX_2) \xi_j$ does not break contractibility.

Since the present paper is already quite long, 
we save further investigations along these lines for future work. 
However, to come full circle, 
we do comment that our discussion here informs Conjecture \ref{conj:E} 
from the introduction. 
Recall that part of this conjecture asserts that a generalization of \eqref{eq:detect} 
should be interpreted as the $b^{th}$ ``elementary symmetric function'' 
of certain braids associated with the strands threading the digons. 
Some motivation is thus: 
as with \eqref{eq:detect}, 
when the (effective) size of the inputs to 
the elementary symmetric function $e_b(-)$ is smaller than $b$,
it vanishes.

\appendix

\section{Some \texorpdfstring{$\Hom$}{Hom}-space computations}

We first recall the following.

\begin{lem}[{\cite[Lemma 4.10]{RTub2}}]
\label{lem:TrMS}
Let $b,c \geq 0$, 
then there is an isomorphism
\[
\Tr^{c}({}_{b,c}S_{b+c} \hComp {}_{b+c}M_{b,c}) \cong 
\prod_{i=1}^{c} (\qdeg^{b}+\adeg \qdeg^{-b-2i}) \cdot \oone_{b} \otimes %\Sym(\X)
\End_{\SSBim}(\oone_c)
\]
of $\End_{\SSBim}(\oone_c)$-modules.
(The $\End_{\SSBim}(\oone_c)$ action on the left-hand side is induced from 
the action on the left $c$-labeled boundary of ${}_{b,c}S_{b+c} \hComp {}_{b+c}M_{b,c}$.)
\end{lem}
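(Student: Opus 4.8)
The statement to prove is Lemma~\ref{lem:TrMS}, the computation of $\Tr^{c}({}_{b,c}S_{b+c} \hComp {}_{b+c}M_{b,c})$. Note that this is attributed to \cite[Lemma 4.10]{RTub2}, so in principle one could simply cite it; but here is how I would prove it directly, staying within the framework established in the excerpt.

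\textbf{Overall approach.} The plan is to compute the partial Hochschild (co)homology by combining the bilinearity of the colored partial trace (Proposition~\ref{prop:TrBi}), the computation of the trace of the identity (Proposition~\ref{prop:trace of one}), and the Frobenius extension structure relating $R^{b,c}$ and $R^{b+c}$ from \S\ref{sec:Frobenius}. The key observation is that ${}_{b,c}S_{b+c} \hComp {}_{b+c}M_{b,c}$ is, as an $(R^{b,c},R^{b,c})$-bimodule, just $R^{b,c}\otimes_{R^{b+c}} R^{b,c}$ (up to a grading shift coming from the definition of the merge bimodule in \eqref{eq:MergeSplit}), and applying $\Tr^c$ amounts to taking Hochschild cohomology with respect to the variables on the last $c$-labeled boundary.

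\textbf{Key steps, in order.} First I would unwind the definitions: by \eqref{eq:MergeSplit}, ${}_{b,c}S_{b+c} \hComp {}_{b+c}M_{b,c} \cong \qdeg^{\ell(b,c)-\ell(b+c)} \big(R^{b,c}\otimes_{R^{b+c}} R^{b,c}\big)$, and since $\ell(b+c)-\ell(b,c) = bc$ this shift is $\qdeg^{-bc}$. Wait — one must be careful with signs of shifts here; I would double-check against Proposition~\ref{prop:dualityonMS}, which records exactly the duality data for merge/split bimodules, to pin down the shift. Second, I would use the Frobenius extension $R^{b+c}\hookrightarrow R^{b,c}$ (Example~\ref{exa:Sylvester}, with $a\mapsto b$), which gives an explicit $R^{b+c}$-basis of $R^{b,c}$ indexed by partitions in $P(b,c)$, hence a free resolution / an explicit Koszul-type model for computing $\HH^\bullet$ along the $c$-labeled strand. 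Third, the crucial point: applying $\Tr^c$ (Definition~\ref{def:PT}) introduces exterior variables $\eta_1,\dots,\eta_c$ of weight $\adeg\qdeg^{-2i}$ (after the shift in $\Tr^c$ versus $\Tr_c$) together with the twist $\sum_i (e_i(\leftX)-e_i(\rightX))\eta_i^\ast$. Because ${}_{b,c}S_{b+c} \hComp {}_{b+c}M_{b,c}$ is induced from $R^{b+c}$, the action of $e_i$ of the $c$-labeled alphabet on the left and on the right agree \emph{modulo the image of the lower-strand relations}, and one can run the standard computation (identical in spirit to Proposition~\ref{prop:trace of one} and to the Koszul-resolution discussion in \S\ref{sec:HH}) to see that the Hochschild cohomology is a free module over $\End_{\SSBim}(\oone_b)\otimes \End_{\SSBim}(\oone_c)$ of the stated graded rank. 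Concretely, the generating function $\prod_{i=1}^c(\qdeg^b + \adeg\qdeg^{-b-2i})$ should emerge as the graded dimension of $\largewedge[\eta_1,\dots,\eta_c]$ tensored with the Frobenius basis, after accounting for the grading shift in the definition of $\Tr^c$ and the $\qdeg^{-bc}$ from the merge bimodule; I would verify this by a direct graded-rank bookkeeping, perhaps most cleanly by first computing $\Tr_c$ and then applying the shift $\adeg^c\qdeg^{-c(c+1)}$ from Definition~\ref{def:PT}. Finally, I would check $\End_{\SSBim}(\oone_c)$-equivariance: the module structure comes from post-composition with decoration endomorphisms on the left $c$-labeled boundary, and since all constructions above are natural in that alphabet, the identification is $\End_{\SSBim}(\oone_c)$-linear.

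\textbf{Main obstacle.} The genuinely delicate part is the precise bookkeeping of grading shifts — reconciling the shift built into the merge bimodule \eqref{eq:MergeSplit}, the shift distinguishing $\Tr^c$ from $\Tr_c$ in Definition~\ref{def:PT}, and the weights of the $\eta_i$ — so that the generating function comes out exactly as $\prod_{i=1}^c(\qdeg^b + \adeg\qdeg^{-b-2i})$ rather than some reindexed or differently-shifted variant. The homological content itself is routine once one recognizes the bimodule as induced from $R^{b+c}$ and invokes the Frobenius structure, so I expect essentially all the work to be in getting the normalization right; cross-checking against the $b=0$ or $c=1$ specializations, and against Proposition~\ref{prop:trace of one} (which is the $b=0$ statement up to the merge/split bookkeeping), would be the way to gain confidence.
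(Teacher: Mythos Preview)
The paper does not give a proof of this lemma: it is stated purely as a citation of \cite[Lemma 4.10]{RTub2} and then immediately used. So there is no ``paper's own proof'' to compare against; the paper treats it as a black box imported from the literature.

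Your sketch of a direct argument is reasonable and heads in the right direction --- recognizing ${}_{b,c}S_{b+c}\hComp {}_{b+c}M_{b,c}$ as (a shift of) $R^{b,c}\otimes_{R^{b+c}} R^{b,c}$, exploiting the Frobenius extension of Example~\ref{exa:Sylvester}, and then running the Koszul computation underlying Definition~\ref{def:PT}. You are also right that the only genuine hazard is the bookkeeping of shifts (the $\qdeg^{-bc}$ from the merge bimodule, the $\adeg^c\qdeg^{-c(c+1)}$ passing from $\Tr_c$ to $\Tr^c$, and the weights of the $\eta_i$), and that checking the specializations $b=0$ (which reduces to Proposition~\ref{prop:trace of one}) and $c=1$ is the sanest way to verify the normalization. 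If you want a complete proof rather than a sketch, consult the cited reference, where this computation is carried out.
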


We now use the colored partial trace to give quick proofs of the following results.
To save space, we abbreviate $\CS=\CS(\SSBim)$, $\CS(\DS) = \CS(\DS(\Bim))$, 
and ${}_{b,c}S_{b+c}M_{b,c} = {}_{b,c}S_{b+c} \hComp {}_{b+c}M_{b,c}$.

\begin{cor}\label{cor:I(X)Hom}
Let $a,b,c \geq 0$ and let $X,Y \in \CS({}_{a,b}\SSBim_{a,,b})$ be complexes of singular Soergel bimodules.
There is an isomorphism of $\End_{\SSBim}(\oone_c)$-modules
\[
\Hom_{\CS}\big( X \boxtimes \oone_c , 
	(\oone_a \boxtimes {}_{b,c}S_{b+c}M_{b,c}) \hComp (Y \boxtimes \oone_c) \big) 
\cong \qdeg^{bc} \Hom_{\CS}(X,Y) \otimes \End_{\CS}(\oone_c)
\]
natural in both $X$ and $Y$.
\end{cor}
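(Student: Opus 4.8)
The plan is to reduce this to the partial-trace computation in Lemma~\ref{lem:TrMS}, using the adjunction between $(-) \boxtimes \oone_c$ and $\Tr^c$ from Proposition~\ref{prop:TrAdj}, together with the bilinearity of partial trace (Proposition~\ref{prop:TrBi}). First I would invoke the inclusion $\CS(\SSBim)\hookrightarrow \CS(\DS(\Bim))$ and compute the $\Hom$-space on the right side of the adjunction; since singular Soergel bimodules are free over the relevant polynomial rings, this inclusion is fully faithful on $\Hom$-complexes of the type appearing here (derived tensor product agrees with ordinary tensor product), so no information is lost by passing to $\DS(\Bim)$.

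The core calculation is then:
\[
\begin{aligned}
\Hom_{\CS(\DS)}\big( X \boxtimes \oone_c , (\oone_a \boxtimes {}_{b,c}S_{b+c}M_{b,c}) \hComp (Y \boxtimes \oone_c) \big)
&\cong \Hom_{\CS(\DS)}\big( X , \Tr^c\big( (\oone_a \boxtimes {}_{b,c}S_{b+c}M_{b,c}) \hComp (Y \boxtimes \oone_c) \big) \big) \\
&\cong \Hom_{\CS(\DS)}\big( X , Y \hComp \Tr^c(\oone_a \boxtimes {}_{b,c}S_{b+c}M_{b,c}) \big) \\
&\cong \Hom_{\CS(\DS)}\big( X , Y \hComp (\oone_a \boxtimes \Tr^c({}_{b,c}S_{b+c}M_{b,c})) \big) \, .
\end{aligned}
\]
The first step is Proposition~\ref{prop:TrAdj}, the second is the naturality/bilinearity statement of Proposition~\ref{prop:TrBi} applied with $Y_1 = Y$ and $Y_2 = \oone_a \boxtimes \oone_b$ (here one pushes the outer $Y$-factor past $\Tr^c$, which only acts on the last $c$-labeled boundary), and the third again uses Proposition~\ref{prop:TrBi} to split off the $\oone_a$-factor. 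Now I would substitute Lemma~\ref{lem:TrMS}, which gives $\Tr^c({}_{b,c}S_{b+c}M_{b,c}) \cong \prod_{i=1}^c(\qdeg^b + \adeg\qdeg^{-b-2i}) \cdot \oone_b \otimes \End_{\SSBim}(\oone_c)$. This would naively introduce $\adeg$-shifts. However, the present corollary is stated for complexes of \emph{singular Soergel bimodules} with $\Hom$ taken in $\CS(\SSBim)$, not in $\DS(\Bim)$: the left-hand side of our statement lives in $\CS(\SSBim)$, where there is no nontrivial $\adeg$-grading, so only the $\adeg^0$ summand of the derived computation contributes. Taking $\adeg$-degree zero in the product $\prod_{i=1}^c(\qdeg^b + \adeg\qdeg^{-b-2i})$ picks out the single term $\qdeg^{bc}$, leaving $\qdeg^{bc}\cdot \oone_b \otimes \End_{\SSBim}(\oone_c)$. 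Substituting back yields
\[
\qdeg^{bc}\Hom_{\CS(\SSBim)}(X, Y \hComp \oone_{a,b}) \otimes \End_{\CS(\SSBim)}(\oone_c) \cong \qdeg^{bc}\Hom_{\CS(\SSBim)}(X,Y)\otimes \End_{\CS(\SSBim)}(\oone_c)\, ,
\]
which is the claimed isomorphism. The $\End_{\SSBim}(\oone_c)$-module structure is tracked throughout since every isomorphism above is $\End_{\SSBim}(\oone_c)$-linear (the action comes from the left $c$-labeled boundary, which is untouched by all the manipulations), and naturality in $X$ and $Y$ follows from the naturality clauses in Propositions~\ref{prop:TrAdj} and~\ref{prop:TrBi}.

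The main obstacle I anticipate is bookkeeping the grading shifts and the passage between $\CS(\SSBim)$ and $\CS(\DS(\Bim))$ cleanly: one must be careful that the ``take $\adeg$-degree zero'' step is legitimate, i.e. that $\Hom_{\CS(\SSBim)}(X, Y\hComp(\oone_a\boxtimes B))$ really is the $\adeg^0$-part of $\Hom_{\CS(\DS)}(X, Y\hComp(\oone_a\boxtimes B))$ for $B$ a genuine bimodule like $\Tr^c$ of a singular Bott--Samelson — this is where one uses that such $B$ is itself a complex of honest bimodules placed in $\adeg$-degrees $0,\dots,-c$ and that $X,Y$ are singular Soergel, so $\Hom_{\DS(\Bim)}$ between them reduces to $\Hom_{\SSBim}$ in $\adeg$-degree $0$. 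A secondary subtlety is verifying the precise form of Proposition~\ref{prop:TrBi} being applied — one needs the version where a $1$-morphism $Y$ which is \emph{not} of the form $(\text{something})\boxtimes \oone_c$ (but which has trivial action on the $c$-labeled strand because that strand is the identity) commutes past $\Tr^c$; this should follow from writing $Y\boxtimes \oone_c$ and the naturality in the first slot, but it warrants a careful check. Everything else is formal manipulation of adjunctions.
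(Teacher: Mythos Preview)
Your proposal is correct and follows essentially the same approach as the paper: pass to $\CS(\DS(\Bim))$, apply the adjunction of Proposition~\ref{prop:TrAdj} and the bilinearity of Proposition~\ref{prop:TrBi}, substitute Lemma~\ref{lem:TrMS}, and then extract the $\adeg$-degree zero part to obtain the $\qdeg^{bc}$ shift. The paper compresses your three displayed steps into one and writes the $\adeg=0$ restriction explicitly from the outset as $\Hom^{\adeg=0}_{\CS(\DS)}$, but the logic is identical. One small slip: in your application of Proposition~\ref{prop:TrBi}, the factor $Y\boxtimes\oone_c$ sits on the \emph{right} of $\oone_a\boxtimes{}_{b,c}S_{b+c}M_{b,c}$, so you should take $Y_2=Y$ (not $Y_1=Y$), yielding $\Tr^c(\oone_a\boxtimes SM)\hComp Y$ rather than $Y\hComp\Tr^c(\ldots)$ --- but since the traced object is a sum of shifts of $\oone_{a,b}$, the order is immaterial.
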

\begin{proof}
We compute using Propositions \ref{prop:TrAdj} and \ref{prop:TrBi} and Lemma \ref{lem:TrMS}:
\[
\begin{aligned}
\Hom_{\CS}\big( X \boxtimes \oone_c , 
	(\oone_a \boxtimes {}_{b,c}S_{b+c}M_{b,c}) \hComp (Y \boxtimes \oone_c) \big)
&= \Hom^{\adeg=0}_{\CS(\DS)}\big( X \boxtimes \oone_c , 
	(\oone_a \boxtimes {}_{b,c}S_{b+c}M_{b,c}) \hComp (Y \boxtimes \oone_c) \big) \\
&\cong \Hom^{\adeg=0}_{\CS(\DS)}\big( X, 
	(\oone_a \boxtimes \Tr^{c}({}_{b,c}S_{b+c}M_{b,c})) \hComp Y \big) \\
&\cong \Hom^{\adeg=0}_{\CS(\DS)}\big( X, 
	\prod_{i=1}^{c} (\qdeg^{b}+\adeg \qdeg^{-b-2i}) Y \otimes \End_{\CS}(\oone_c) \big) \\
&= \qdeg^{bc} \Hom_{\CS}(X,Y) \otimes \End_{\CS}(\oone_c)\, .
\end{aligned}
\]
The result follows since all of the constituent isomorphisms are natural in $X$ and $Y$.
\end{proof}

\begin{cor}\label{cor:basicHom}
Let $b,c \geq 0$, then the $\Hom$-spaces
\[
\Hom_{\SSBim}(\oone_{b,c} , {}_{b,c}S_{b+c} \hComp {}_{b+c}M_{b,c})
\quad \text{and} \quad
\Hom_{\SSBim}({}_{b,c}S_{b+c} \hComp {}_{b+c}M_{b,c} , \oone_{b,c})
\]
are free $\End_{\SSBim}(\oone_{b,c})$-modules generated by the 
zip and unzip morphisms, respectively.
\end{cor}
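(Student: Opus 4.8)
The plan is to deduce Corollary~\ref{cor:basicHom} from Corollary~\ref{cor:I(X)Hom} by specializing to the case $a=0$, together with the self-duality of the merge and split bimodules recorded in Proposition~\ref{prop:dualityonMS}. For the first $\Hom$-space, I would apply the adjunctions for the duality functors on $\SSBim$ to rewrite
\[
\Hom_{\SSBim}(\oone_{b,c} , {}_{b,c}S_{b+c} \hComp {}_{b+c}M_{b,c})
\cong \Hom_{\SSBim}({}_{b+c}M_{b,c} , {}^\vee({}_{b,c}S_{b+c}) \hComp \oone_{b,c})
\cong \qdeg^{bc}\Hom_{\SSBim}({}_{b+c}M_{b,c} , {}_{b+c}M_{b,c}),
\]
using that ${}^\vee_{b,c}S_{b+c} \cong \qdeg^{bc}\,{}_{b+c}M_{b,c}$ from Proposition~\ref{prop:dualityonMS}; symmetrically, the second $\Hom$-space becomes $\qdeg^{bc}\Hom_{\SSBim}({}_{b+c}M_{b,c},{}_{b+c}M_{b,c})$ as well (or one computes the endomorphism algebra of ${}_{b,c}S_{b+c}$ — these agree up to shift). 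So it suffices to identify $\End_{\SSBim}({}_{b+c}M_{b,c})$ as a free rank-one module over $\End_{\SSBim}(\oone_{b,c}) = \Sym(\X_1|\X_2)$ (with $|\X_1|=b$, $|\X_2|=c$), and to check that the distinguished generator corresponds to the (un)zip morphism under the chain of isomorphisms.

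The cleanest route to the freeness-of-rank-one statement is to invoke Corollary~\ref{cor:I(X)Hom} with $a=0$ and $X=Y=\oone_{b}$ (so that the $a$-labeled strand is absent): this gives
\[
\Hom_{\CS}\big(\oone_{b,c},\, {}_{b,c}S_{b+c}M_{b,c}\big) \cong \qdeg^{bc}\,\End_{\CS}(\oone_b)\otimes\End_{\CS}(\oone_c) = \qdeg^{bc}\,\End_{\SSBim}(\oone_{b,c}),
\]
and likewise $\Hom_{\CS}\big({}_{b,c}S_{b+c}M_{b,c},\oone_{b,c}\big)$ by applying the corollary to the $\vee$-dual statement (or simply by the symmetry between $X$ and $Y$ combined with duality). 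This immediately yields that both $\Hom$-spaces are free of rank one over $\End_{\SSBim}(\oone_{b,c})$. It then remains to check that the free generator can be taken to be the zip, respectively unzip, morphism: for this I would trace through the naturality of the isomorphism in Corollary~\ref{cor:I(X)Hom} (equivalently, the counit of the adjunction $\Tr^c \dashv (-\boxtimes\oone_c)$ together with the unit/counit identifications of Proposition~\ref{prop:dualityonMS}), observing that the image of $1\in\End_{\SSBim}(\oone_{b,c})$ is precisely the composite of digon creation with zip (which is the definition of $\zip$, cf. the elemental maps in \S\ref{ss:ssbim}), and dually that $1$ maps to $\un$; since the generator is determined only up to a unit of $\End_{\SSBim}(\oone_{b,c})$ and $\zip$, $\un$ are the minimal-degree elements, this pins them down.

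The main obstacle I anticipate is the bookkeeping in the last step: verifying that the abstract generator produced by the adjunction/duality really is the (un)zip morphism on the nose (up to a unit scalar), rather than some other degree-$\pm qbc$ map. This is essentially the assertion that the counit and unit morphisms in Proposition~\ref{prop:dualityonMS} are given by the digon creation/collapse and (un)zip morphisms — which is stated there — so the work is to carefully compose these and confirm nothing degenerates. A secondary, purely cosmetic point is matching the $\qdeg$-grading shifts: $\zip$ has weight $\qdeg^{ab}$ (here $\qdeg^{bc}$) while $\un$ has weight $\qdeg^{bc}$ as well, and the shifts appearing in Corollary~\ref{cor:I(X)Hom} and Proposition~\ref{prop:dualityonMS} are arranged precisely so that the generator sits in degree zero of the shifted module; I would just record this rather than belabor it. Everything else is a direct application of results already in hand, so the corollary follows without genuinely new input.
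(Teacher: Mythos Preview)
Your approach is correct and uses the same key ingredient as the paper: specializing Corollary~\ref{cor:I(X)Hom} to $a=0$, $X=Y=\oone_b$ to see that $\Hom_{\SSBim}(\oone_{b,c},{}_{b,c}S_{b+c}M_{b,c}) \cong \qdeg^{bc}\End_{\SSBim}(\oone_{b,c})$ as graded $\End_{\SSBim}(\oone_{b,c})$-modules. The difference lies entirely in how to pin down the generator as $\zip$. You propose to trace through the adjunction isomorphisms, and rightly flag this as the main obstacle. The paper sidesteps this completely: it defines the map $f\mapsto \zip\circ f$ directly, proves it is \emph{injective} using the relation $\un\circ\zip=\Schur_{c^b}(\X_1-\X_2)$ together with torsion-freeness of $\Sym(\X_1|\X_2)$, and then invokes the graded dimension count from Corollary~\ref{cor:I(X)Hom} to conclude surjectivity. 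This is cleaner because it never asks what element the abstract isomorphism sends $1$ to; injectivity plus equal graded dimensions is enough. Your detour through duality in the first paragraph (reducing to $\End_{\SSBim}({}_{b+c}M_{b,c})$) is not needed and not used in the paper.
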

\begin{proof}
Let $\X_1$ and $\X_2$ be alphabets with $|\X_1|=b$ and $|\X_2|=c$ 
and identify $\Sym(\X_1|\X_2) = \End_{\SSBim}(\oone_{b,c})$.
The map
\[
\End_{\SSBim}(\oone_{b,c}) \to \Hom_{\SSBim}(\oone_{b,c} , {}_{b,c}S_{b+c}M_{b,c})
\, , \quad
f \mapsto \zip \circ f
\]
is injective since $\un \circ \zip =  \Schur_{c^b}(\X_1 - \X_2)$ and $\Sym(\X_1|\X_2)$ is torsion-free.
To see that it is surjective, we compare graded dimensions.
For this, Corollary \ref{cor:I(X)Hom} gives that
\[
\Hom_{\SSBim}(\oone_{b,c} , {}_{b,c}S_{b+c}M_{b,c})
= \Hom_{\CS}(\oone_{b,c} , {}_{b,c}S_{b+c}M_{b,c})
\cong \qdeg^{bc} \End_{\CS}(\oone_b) \otimes \End_{\CS}(\oone_c)
= \qdeg^{bc} \End_{\SSBim}(\oone_{b,c})
\]
as desired. 

The other assertion follows either using a similar argument
(here, we use that $\zip \circ \un = \Schur_{c^b}(\leftX_1 - \rightX_2)$ 
is injective on ${}_{b,c}S_{b+c} \hComp {}_{b+c}M_{b,c}$), 
or by applying duality in $\SSBim$.
\end{proof}

\end{document}